\theoremstyle{plain}
\newtheorem{theorem}{Theorem}[section]
\newtheorem*{theorem*}{Theorem}
\newtheorem{corollary}{Corollary}[section]
\newtheorem*{corollary*}{Corollary}
\newtheorem{lemma}{Lemma}[section]
\newtheorem*{lemma*}{Lemma}
\newtheorem{proposition}{Proposition}[section]
\newtheorem*{proposition*}{Proposition}
\theoremstyle{definition}
\newtheorem{definition}{Definition}[section]
\newtheorem{example}{Example}[section]
\theoremstyle{remark}
\newtheorem{remark}{Remark}[section]
\newtheorem{claim}{Claim}
\numberwithin{equation}{section}
\newcommand{\lb}{[\![}
\newcommand{\rb}{]\!]}
\newcommand{\Cour}[1]{\lb #1\rb}
\newcommand{\la}{\langle}
\newcommand{\ra}{\rangle}
\newcommand{\mf}{\mathfrak}
\newcommand{\mbb}{\mathbb}
\newcommand{\mbf}{\mathbf}
\newcommand{\mc}{\mathcal}
\newcommand{\on}{\operatorname}
\newcommand{\Lied}{\mathcal{L}}
\newcommand{\ad}{\mathbf{ad}}
\newcommand{\g}{\mathfrak{g}}
\newcommand{\h}{\mathfrak{h}}
\newcommand{\gr}{\on{gr}}
\newcommand{\ann}{\on{ann}}
\newcommand{\mmat}[2][3em]{\matrix (#2) [matrix of math nodes, row sep=#1,
  column sep=#1, text height=1.5ex, text depth=0.25ex]}
\tikzset{node distance=2cm, auto}
\author{David Scott Li-Bland}
\title{$\mc{LA}$-Courant algebroids and their applications.}
\crefname{pluralequation}{Eqs.}{Eqs.}
\Crefname{pluralequation}{Eqs.}{Eqs.}
\begin{document}


\begin{preliminary}

\maketitle


\begin{abstract}
In this thesis we develop the notion of $\mc{LA}$-Courant algebroids, the infinitesimal analogue of multiplicative Courant algebroids. Specific applications include the integration of q-Poisson $(\mf{d},\g)$-structures, and the reduction of Courant algebroids. We also introduce the notion of pseudo-Dirac structures, (possibly non-Lagrangian) subbundles   $W\subseteq \mbb{E}$ of a Courant algebroid such that the Courant bracket endows $W$ naturally with the structure of a Lie algebroid. Specific examples of pseudo-Dirac structures arise in the theory of q-Poisson $(\mf{d},\g)$-structures.
\end{abstract}


\begin{dedication}
\begin{center}To Esther, the love of my life, \\and to Wesley, the apple of our eyes.\\Thanks be to God for his blessings.\end{center}
\end{dedication}


\begin{acknowledgements}
I would like to thank my father, John Bland, for inspiring a love of mathematics in me.
I would like to thank my supervisor, Eckhard Meinrenken, for his guidance,  patience,  advice, encouragement and help over the years. I would like to thank Pavol \v{S}evera for many delightful conversations, explanations, perspectives, and the math he taught me. I would like to thank Alfonso Gracia-Saz and Rajan Mehta for teaching me supergeometry and the theory of double and $\mc{LA}$-vector bundles. I would like to thank Alejandro Cabrera for many interesting discussions and teaching me about tangent prolongations.  I would also like to thank Anton Alekseev, Henrique Bursztyn, Arlo Caine, Marco Gualtieri, Travis Li, Jiang Hua Lu, Brent Pym, and Ping Xu for many interesting conversations.

Finally, I would like to thank my beautiful wife, Esther, for all her love, her support, her encouragement, and her strength of character. She is the most wise, inspiring, fascinating, and wonderful person I have ever met.
\end{acknowledgements}

\tableofcontents


\listoffigures

\end{preliminary}


\chapter{Introduction}\label{chp:intro}

\subsection{A brief history}

Courant algebroids and Dirac structures were first introduced by  Courant \cite{Courant:1990uy,Courant:tm} as a geometric framework for  Dirac's theory of Hamiltonian systems with constraints \cite{Dirac:1967ug}.
Courant's original setup was generalized in \cite{ManinTriplesBi}, as a means of constructing doubles for Lie bialgebroids. Courant algebroids have now found many uses, from the theory of moment maps \cite{Bursztyn:2009wi,Bursztyn:2005te,Xu03,Bursztyn03-1,Alekseev:2009tg} to generalized complex geometry \cite{Hitchin:2003kx,Gualtieri:2004wh}.

\subsubsection{The Dirac bracket}\label{sec:DirBrk}
We shall go into some more detail. In Hamiltonian mechanics, the phase space - the space of all possible states of a physical system - is described by a smooth manifold $M$. Smooth functions $f\in C^\infty(M)$ on the phase space describe various quantities one might wish to measure, such as energy, position, or momentum. Additionally, $M$ carries a Poisson structure: a bivector field $\pi\in\mf{X}^2(M)$, such that the bracket $$\{f,g\}:=\pi(df,dg),\quad f,g\in C^\infty(M)$$ endows the vector space $C^\infty(M)$ with the structure of a Lie algebra.  

 Noether's first theorem - that conserved quantities correspond to symmetries - arises as follows: the Poisson structure associates to any function $f\in C^\infty(M)$ (a conserved quantity) the vector field $$X_f:=\big(g\to\{f,g\}\big)\in\mf{X}(M)$$ (describing the corresponding symmetry of phase space).
 
 Introducing constraints on this system corresponds to describing a submanifold $S\subseteq M$. Ideally, one would like the Poisson bracket $\{\cdot,\cdot\}$ on $C^\infty(M)$ to descend to a bracket $\{\cdot,\cdot\}_S$ on $C^\infty(S)$ so that the restriction of functions to $S$ is a morphism of Lie algebras, i.e.
  $$\{i^*f,i^*g\}_S=i^*\{f,g\},\quad f,g\in C^\infty(M),$$ where $i:S\to M$ is the inclusion. Unfortunately, this is impossible in general, since the vanishing ideal $Z(S)\subseteq C^\infty(M)$ of $S\subseteq M$ might not be a Lie algebra ideal. Equivalently, for an arbitrary function $c\in Z(S)$, the vector field $$X_c:=\{c,\cdot\}\in\mf{X}(M)$$ might not vanish when restricted to $S$ (nor even be tangent to $S$).
 
 Let $D\subseteq TM\rvert_S$ be the distribution spanned by the vector fields $X_c\rvert_S$ for $c\in Z(S)$ (see \cref{fig:NonTangDist}).
  A function $f\in C^\infty(S)$ is called \emph{admissible} if $$v\cdot f=0$$ for any  vector $v\in D\cap TS$. Similarly, a function $\tilde f\in C^\infty(M)$ is called an \emph{admissible extension of $f$} if $$i^*\tilde f=f$$ and $$v\cdot \tilde f=0$$ for any  vector $v\in D$. Dirac showed that there exists a Lie bracket $\{\cdot,\cdot\}_{DB}$ (called the \emph{Dirac bracket}) on the subspace $C^\infty_{adm}(S)$ of admissible functions such that 
 \begin{equation}\label{eq:DiracBracket}\{f,g\}_{DB}=\{i^*\tilde f,i^*\tilde g\}_{DB}=i^*\{\tilde f,\tilde g\}\end{equation} for any admissible extensions $\tilde f,\tilde g\in C^\infty(M)$ of $f,g\in C^\infty_{adm}(S)$.
 
 \begin{figure}
\begin{center}
\def\svgwidth{10cm}
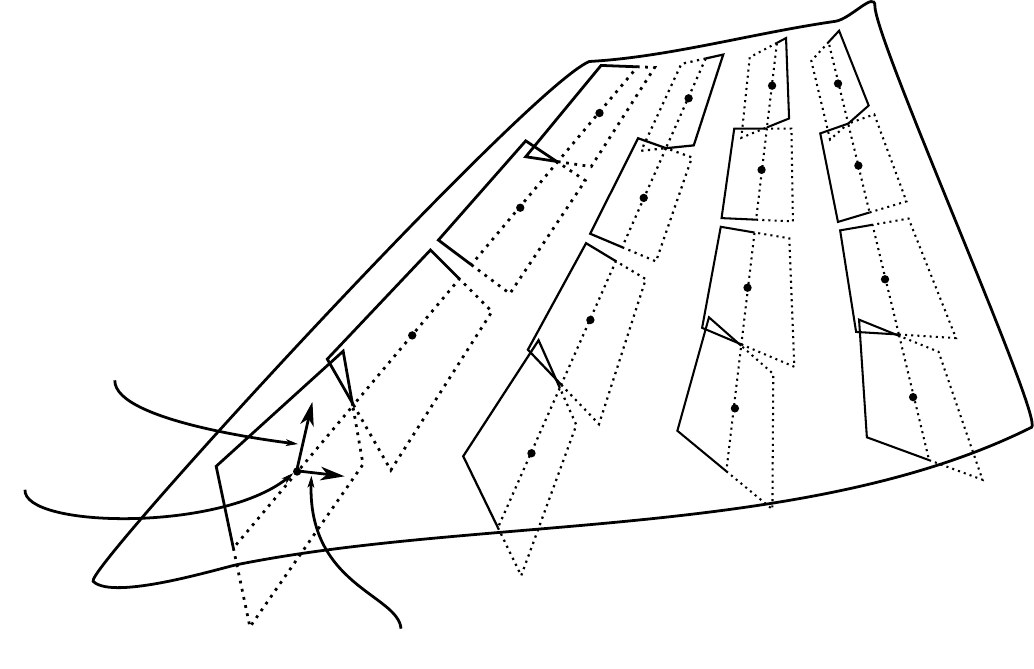
\end{center}
 \caption{\label{fig:NonTangDist}For functions $c,c'\in Z(S)$ vanishing on the submanifold $S\subseteq M$, the Hamiltonian vector fields $X_c\rvert_S$ and $X_{c'}\rvert_S$ may not be tangent to $S$. $D\subseteq TM$ is the distribution spanned by the restriction of all such vector fields to $S$.}
 \end{figure}
 
 \begin{remark}
One may interpret the distribution $D\cap TS$ as describing some gauge transformations of the physical system:
When $D\cap TS$ is of constant rank, then it is involutive, and thus, by Frobenius's theorem, defines a foliation. The admissible functions can then be interpreted as the algebra of functions on the leaf space. 

 \end{remark}
 
 \subsubsection{Courant algebroids}
 In 1986 Courant \cite{Courant:1990uy,Courant:tm} described a geometric framework for Dirac's theory of constrained Hamiltonian mechanics, which we shall briefly summarize. The \emph{Pontryagin bundle} $\mbb{T}M:=TM\oplus T^*M$ carries a  split signature metric defined by the natural paring
 $$\la(v,\mu),(w,\nu)\ra:=\mu(w)+\nu(v),\quad (v,\mu),(w,\nu)\in \mbb{T}M.$$
   Courant's insight was to consider Lagrangian subbundles of the Pontryagin bundle (i.e. $L\in\mbb{T}M$ such that $L^\perp=L$), replacing the Poisson bivector field $\pi\in \mf{X}^2(M)$ with its graph 
 $$\gr(\pi^\sharp):=\{(X,\alpha)\in TM\oplus T^*M\mid X=\pi(\alpha,\cdot)\}.$$
  He described a bracket, which is usually referred to as the \emph{Courant bracket} in the literature,\footnote{In fact, this bracket was introduced by Irene Dorfman in the context of two dimensional variational problems \cite{Dorfman:1993us}. Courant worked with its skew symmetrization instead.}  
 \begin{equation}\label{eq:DorfBrk}\Cour{(X,\alpha),(Y,\beta)}=\big([X,Y],\Lied_X\beta-\iota_Yd\alpha\big)\end{equation}
  on the space of sections $\Gamma(\mbb{T}M)$, and showed that this bracket restricts to a Lie bracket on $\Gamma\big(\gr(\pi^\sharp)\big)$. A \emph{Dirac structure} is defined to be an arbitrary Lagrangian subbundle $L\subseteq \mbb{T}M$ which is involutive with respect the Courant bracket. A function $f\in C^\infty(M)$ is called \emph{admissible} (with respect to $L$) if $$df\rvert_{L\cap TM}=0.$$ 
  Courant showed that the space  $C^\infty_{adm}(M)$ of admissible functions carries a well defined Lie bracket,
  \begin{equation}\label{eq:CourBrkFunct}\{f,g\}_L:=X_f\cdot g\end{equation}
   where $X \in\mf{X}(M)$ is any vector field such that $$(X_f,df)\in\Gamma(L).$$
 
 One of the key aspects of Courant's framework is the ability to impose constraints on a Dirac structure. More precisely, he described a procedure to \emph{restrict}\footnote{Courant calls this procedure \emph{reduction} in \cite{Courant:1990uy}. We refer to it as \emph{restriction} (as is also done in \cite{Jotz:2008wn}, for instance) or \emph{pull-back} (as is done in \cite{LiBland:2009ul}, for instance) to distinguish it from more general reduction procedures.} a Dirac structure $L\subseteq \mbb{T}M$ to a submanifold $S\subseteq M$, 
 \begin{equation}\label{eq:restProc}\big(L\subseteq \mbb{T}M\big)\to \big(L_S\subseteq \mbb{T}S),\end{equation} where $L_S\subseteq \mbb{T}S$ is computed by the formula 
\begin{equation}\label{eq:restForm}L_S:=\frac{L\cap TS\oplus T^*M\rvert_S}{L\cap \ann(TS)}\subseteq \mbb{T}S.\end{equation}
 Courant showed that  \cref{eq:restForm} describes a Dirac structure (under some cleanness assumptions).
In particular, when $L=\gr(\pi^\sharp)$ arises from a Poisson structure, then Courant's bracket between admissible functions, defined by \cref{eq:CourBrkFunct} for the Dirac structure $$\gr(\pi^\sharp)_S:=\frac{\gr(\pi^\sharp)\cap TS\oplus T^*M\rvert_S}{\gr(\pi^\sharp)\cap \ann(TS)}\subseteq \mbb{T}S,$$ coincides with Dirac's bracket \labelcref{eq:DiracBracket}.
 
 Courant's setup was generalized by Liu, Weinstein and Xu \cite{ManinTriplesBi}, who defined abstract Courant algebroids as a vector bundle $\mbb{E}\to M$ (replacing the Pontryagin bundle $\mbb{T}M$) carrying a fibrewise metric, $$\la\cdot,\cdot\ra:\mbb{E}\otimes\mbb{E}\to \mbb{R}\times M,$$ together with a bilinear bracket called the \emph{Courant bracket}, $$\Cour{\cdot,\cdot}:\Gamma(\mbb{E})\times\Gamma(\mbb{E})\to \Gamma(\mbb{E}),$$ satisfying certain axioms. 
 Dirac structures are defined to be Lagrangian subbundles $E\subseteq \mbb{E}$ which are involutive with respect to the Courant bracket. 
 
 As in Courant's original setup, there is a restriction procedure: Given a submanifold $S\subseteq M$, one may \emph{restrict}  both Courant algebroids $\mbb{E}\to M$,
 $$
 \begin{tikzpicture} 
  \matrix (m1)[matrix of math nodes,left delimiter=(,right delimiter=),row sep=1em,
  column sep=1em, text height=1.5ex, text depth=0.25ex] at (0,0)
  {\mbb{E}\\ M\\};
   \path[->] (m1-1-1) edge (m1-2-1);
   
  \matrix (m2)[matrix of math nodes,left delimiter=(,right delimiter=),row sep=1em,
  column sep=1em, text height=1.5ex, text depth=0.25ex] at (4,0)
  {\mbb{E}_S\\ S\\};
   \path[->] (m2-1-1) edge (m2-2-1);
   \path[->] (1.5,0) edge (2.5,0);
  \end{tikzpicture}$$
   and their Dirac structures $E\subseteq \mbb{E}$,
  $$\big(E\subseteq \mbb{E}\big)\to \big(E_S\subseteq \mbb{E}_S\big),$$ 
  generalizing Courant's original restriction procedure \labelcref{eq:restProc} (see \cite[Section~2.4.2]{LiBland:2009ul} for details). 
  
   Even more generally, restriction to a submanifold can be seen as a special case of composing a Dirac structure $E\subseteq\mbb{E}$ with certain linear relations $R:\mbb{E}\dasharrow\mbb{F}$ between two Courant algebroids, called \emph{Courant relations}. (Important examples of composing Dirac structures with Courant relations are called \emph{forward} and \emph{backward Dirac maps} \cite{Bursztyn03-1,Alekseev:2009tg,Bursztyn:2009wi,Bursztyn:2003ud,Bursztyn:2005wi}.  We shall review all this in more detail in \cref{chp:prelim}.)

\subsection{Multiplicative Courant algebroids, and their infinitesimal versions $\mc{LA}$-Courant algebroids}
In the early 80's Drinfel'd developed the theory of Lie groups  carrying compatible Poisson structures \cite{Drinfeld83}, and showed that these \emph{Poisson Lie groups} integrate Lie bialgebras. 
Shortly thereafter, it was noticed independently by Karasev and Weinstein \cite{Karasev:1986vg,Weinstein:1987ua} that Lie groupoids can carry non-degenerate Poisson structures (symplectic structures), and that these \emph{symplectic groupoids} integrate Poisson manifolds. 
Since then, many further examples of multiplicative Hamiltonian structures on Lie groupoids have appeared, such as Poisson groupoids, twisted symplectic groupoids, quasi-symplectic groupoids etc. \cite{weinstein87,SCattaneo:2004fe,LiBland:2010wi,Bursztyn03-1,Ponte:2005txa,Xu03}. These have found applications to quantization \cite{Weinstein:1987ua,Hawkins:2006vl,Cattaneo:2000du,Cattaneo:2008vf}, reduction and symmetries of Hamiltonian systems \cite{Semenov-Tian-Shansky85,thesis-3,Mikami:1988tv,Xu03}, and Morita equivalence \cite{Xu:1991vb,Xu:1992tb,Bursztyn07,Xu03}, among others.

Since Dirac structures provide a unified framework for these various Hamiltonian structures, it was natural to investigate multiplicative Dirac structures and Courant algebroids on Lie groupoids, as was first done in \cite{Bursztyn03-1,Ponte:2005txa,Xu03} (though general definitions did not appear until \cite{Ortiz:2009ux,LiBland:2010wi,Mehta:2009js}). In this thesis, we develop $\mc{LA}$-Courant algebroids, the infinitesimal version of multiplicative Courant algebroids. (Note that infinitesimal versions of multiplicative Courant algebroids and Dirac structures were already studied in \cite{Ortiz:2009ux,LiBland:2010wi,Mehta:2009to}, but always using the language of supergeometry). 

In more detail, an $\mc{LA}$-Courant algebroid consists of a Courant algebroid $\mbb{A}\to A$ such that $\mbb{A}$  also has the structure of a Lie algebroid $\mbb{A}\to V$ over a different base space $V$. The Lie algebroid and Courant algebroid structures must be compatible in an appropriate sense. In particular $\mbb{A}$ is the total space of a double vector bundle
$$\begin{tikzpicture}
\mmat{m}{\mbb{A}&V\\ A&M\\};
\draw[->] (m-1-1) edge (m-1-2)
			edge (m-2-1);
\draw[<-] (m-2-2) edge (m-1-2)
			edge (m-2-1);
\end{tikzpicture}$$
(a concept due to Pradines \cite{Pradines:1974tc}).
Dirac structures $L\subseteq \mbb{A}$ which are also subbundles of the vector bundle $\mbb{A}\to V$ are called $\mc{VB}$-Dirac structures. They are called $\mc{LA}$-Dirac structures if they are also Lie subalgebroids of $\mbb{A}\to V$.

As a first example, if $\mbb{E}$ is any Courant algebroid, then its tangent bundle
$$\begin{tikzpicture}
\mmat{m}{T\mbb{E}&\mbb{E}\\ TM&M\\};
\draw[->] (m-1-1) edge (m-1-2)
			edge (m-2-1);
\draw[<-] (m-2-2) edge (m-1-2)
			edge (m-2-1);
\end{tikzpicture}$$
is canonically an $\mc{LA}$-Courant algebroid, called the \emph{tangent prolongation} of $\mbb{E}$. This example was first studied by Courant \cite{Courant:1999ho} for the special case where $\mbb{E}=\mbb{T}M$. Later, it was studied in full generality  by  Boumaiza and  Zaalani \cite{Boumaiza:2009eg}.

$\mc{LA}$-Courant algebroids have a variety of applications, and we describe two main applications in this thesis: reduction, and integration. We shall summarize these applications later in the introduction, after first describing pseudo-Dirac structures.

\subsection{pseudo-Dirac structures}
In general, the Courant bracket \labelcref{eq:DorfBrk} does not define a Lie bracket  on the sections of $\mbb{T}M$, since it fails to be skew symmetric. 
In order to obtain a Lie bracket from the Courant bracket, Courant restricted the bracket to sections of Lagrangian subbundles $L\subseteq \mbb{T}M$, which ensures skew symmetry of the bracket. In this thesis, we will take a different approach to obtain a Lie bracket from the Courant bracket: we modify the Courant bracket itself.


In \cref{chp:TngProAndLieSub} we introduce the notion of a \emph{pseudo-Dirac structure} in the Courant algebroid $\mbb{T}M$: a subbundle $$W\subseteq \mbb{T}M$$ together with a map $$\nabla:\Omega^0(M,W)\to\Omega^1(M,W^*)$$ satisfying certain axioms, including
\begin{subequations}\label[pluralequation]{eq:PsConIntAll}
\begin{align}
\label{eq:PsConInt}\nabla f\sigma&=f\nabla\sigma+df\otimes \la\sigma,\cdot\ra,&\sigma\in\Gamma(W),\\
\label{eq:derPairInt}d\la\sigma,\tau\ra&=\la\nabla\sigma,\tau\ra+\la\sigma,\nabla\tau\ra,&\sigma,\tau\in\Gamma(W).
\end{align}
\end{subequations}
Since \cref{eq:PsConIntAll} reduce to the definition of a metric connection when $\la\cdot,\cdot\ra\rvert_W$ is non-degenerate, we call $\nabla$ a \emph{pseudo-connection}. 
\Cref{eq:derPairInt} guarantees that the modification to the Courant bracket, given by 
\begin{equation}\label{eq:ModBrkInt}[\sigma,\tau]:=\Cour{\sigma,\tau}-\mbf{a}^*\la\nabla\sigma,\tau\ra,\end{equation}
is skew symmetric. 
More importantly, in \cref{thm:LieSubIsVBDir}, we prove that \cref{eq:ModBrkInt} endows $W$ with the structure of a Lie algebroid (a concept due to Pradines \cite{Pradines:1967wn}). In particular, the sections of $W$ form a Lie algebra.

\Cref{lem:qIsNab} states that  subbundles of $\mbb{T}M$ endowed with a pseudo-connection are in one-to-one correspondence with Lagrangian double vector subbundles of the $\mc{LA}$-Courant algebroid $$\begin{tikzpicture}
\mmat{m}{T\mbb{T}M&\mbb{T}M\\ TM&M\\};
\draw[->] (m-1-1) edge (m-1-2)
			edge (m-2-1);
\draw[<-] (m-2-2) edge (m-1-2)
			edge (m-2-1);
\end{tikzpicture}$$ Thus it is quite natural to study them. Similarly, pseudo-Dirac structures in $\mbb{T}M$ are in one-to-one correspondence with $\mc{VB}$-Dirac structures in $T\mbb{T}M$.

As a consequence, it is easy to impose constraints on a pseudo-Dirac structure in $\mbb{T}M$:
Suppose that $S\subseteq M$ is the constraint submanifold. Courant's restriction procedure \labelcref{eq:restProc} allows us to restrict $\mc{VB}$-Dirac structures of $T\mbb{T}M$ to $\mc{VB}$-Dirac structure of $T\mbb{T}S$:
\begin{center}
\begin{tikzpicture}
\node (B) at (-4,-1) [text width=4cm,align=center] {$\mc{VB}$-Dirac structures in $T\mbb{T}M$};
\node (D) at (4,-1) [text width=4cm,align=center] {$\mc{VB}$-Dirac structures in $T\mbb{T}S$};
\draw [->] (B) edge node  [text width=4cm,align=center,below] {Courant's restriction procedure \labelcref{eq:restProc}} (D);
\end{tikzpicture}
\end{center} 
Composing this restriction procedure with the equivalence between $\mc{VB}$-Dirac structures in $T\mbb{T}M$ and pseudo-Dirac structures in $\mbb{T}M$ allows us to impose constraints on pseudo-Dirac structures in $\mbb{T}M$:
\begin{center}
\begin{tikzpicture}
\node (A) at (-4,1) [text width=4cm,align=center] {pseudo-Dirac structures in $\mbb{T}M$};
\node (B) at (-4,-1) [text width=4cm,align=center] {$\mc{VB}$-Dirac structures in $T\mbb{T}M$};
\node (C) at (4,1) [text width=4cm,align=center] {pseudo-Dirac structures in $\mbb{T}S$};
\node (D) at (4,-1) [text width=4cm,align=center] {$\mc{VB}$-Dirac structures in $T\mbb{T}S$};
\draw [->] (B) edge node  [text width=4cm,align=center,below] {Courant's restriction procedure \labelcref{eq:restProc}} (D);
\draw [->,dashed] (A) edge node [text width=4cm,align=center] {restriction procedure for pseudo-Dirac structures} (C);
\draw [<->] (A) edge (B);
\draw [<->] (C) edge (D);
\end{tikzpicture}
\end{center}



Finally, there are many examples of pseudo-Dirac structures, to list a few:
\begin{itemize}
\item Dirac structures,
\item Lie algebroid structures on $T^*M$,
\item Lie subalgebras of a quadratic Lie algebra,
\item action Courant algebroids, as introduced by Meinrenken and the author \cite{LiBland:2009ul},
\item quasi-Poisson structures on $M$, in the sense of Alekseev and Kosmann-Schwarzbach \cite{Alekseev99},
\item etc.
\end{itemize}

In particular, the concept of pseudo-Dirac structures allows us to unify the treatment of Poisson and q-Poisson structures on a manifold $M$: Both correspond to a pseudo-Dirac structure whose underlying bundle is $W=\gr(\mbf{a})\subseteq \mbb{T}M$ where $\mbf{a}:T^*M\to TM$ is the anchor map for a Lie algebroid structure on $T^*M$.

\subsection{Applications}

\subsubsection{Reduction of Courant algebroids}
Roughly speaking, reduction is the process of reducing the dimensions of a physical system (a Poisson structure, a Courant algebroid, etc.) through a combination of imposing constraints and quotienting out symmetries. %
In \cref{sec:reduc} we will present  a novel approach to reducing Courant algebroids in terms of $\mc{LA}$-Dirac and $\mc{VB}$-Dirac structures. Our framework is quite general, encompassing many known reduction procedures. However, to provide the reader with some context, we shall first summarize Marsden and Ratiu's reduction procedure for Poisson structures \cite{Marsden:1986vs}, before summarizing our approach to the reduction of Courant algebroids.
\begin{remark}
There are many interesting reduction procedures for both Poisson structures (see \cite{Falceto:2008gg,Marsden:1986vs,Liu:2000vf,Cattaneo:2010th,Mackenzie:2000ut,Mikami:1988tv,weinstein87,Cattaneo:2010vr}, and the references therein) and Courant algebroids (see \cite{Hu:2007wq,Vaisman:2007gg,Zambon:2008wj,Bursztyn:2007ko,Bursztyn:gcFguuB1,Yoshimura:2007gw,Stienon:2008cl,Jotz:2008wn,Hu:2009wl,Mehta:2010ux,Jotz:2011cz,Goldberg:2010wj,Calvo:2010bj}, and the references therein). Regrettably, we will not have space to discuss all these treatments here.
\end{remark}


Let $M$ be a Poisson manifold. Constraints are described by a submanifold $S\subseteq M$, while symmetries are described by a collection of vector fields on $S$ which are closed under the Lie bracket and which (we assume to) span a subbundle $$F\subseteq TS\subseteq TM\rvert_S.$$ Equivalently, $F\subseteq TM$ is a \emph{sub Lie algebroid}. By Frobenius's theorem, $F$ arises from a foliation of $S$ by maximal integral submanifolds, as pictured in \cref{fig:FolInt}.

 \begin{figure}
\begin{center}
\subfigure[A foliation of the submanifold $S\subseteq M$. $F\subseteq TS$ is the  subbundle tangent to the foliation.]
{\label{fig:FolInta}
\def\svgwidth{5cm}
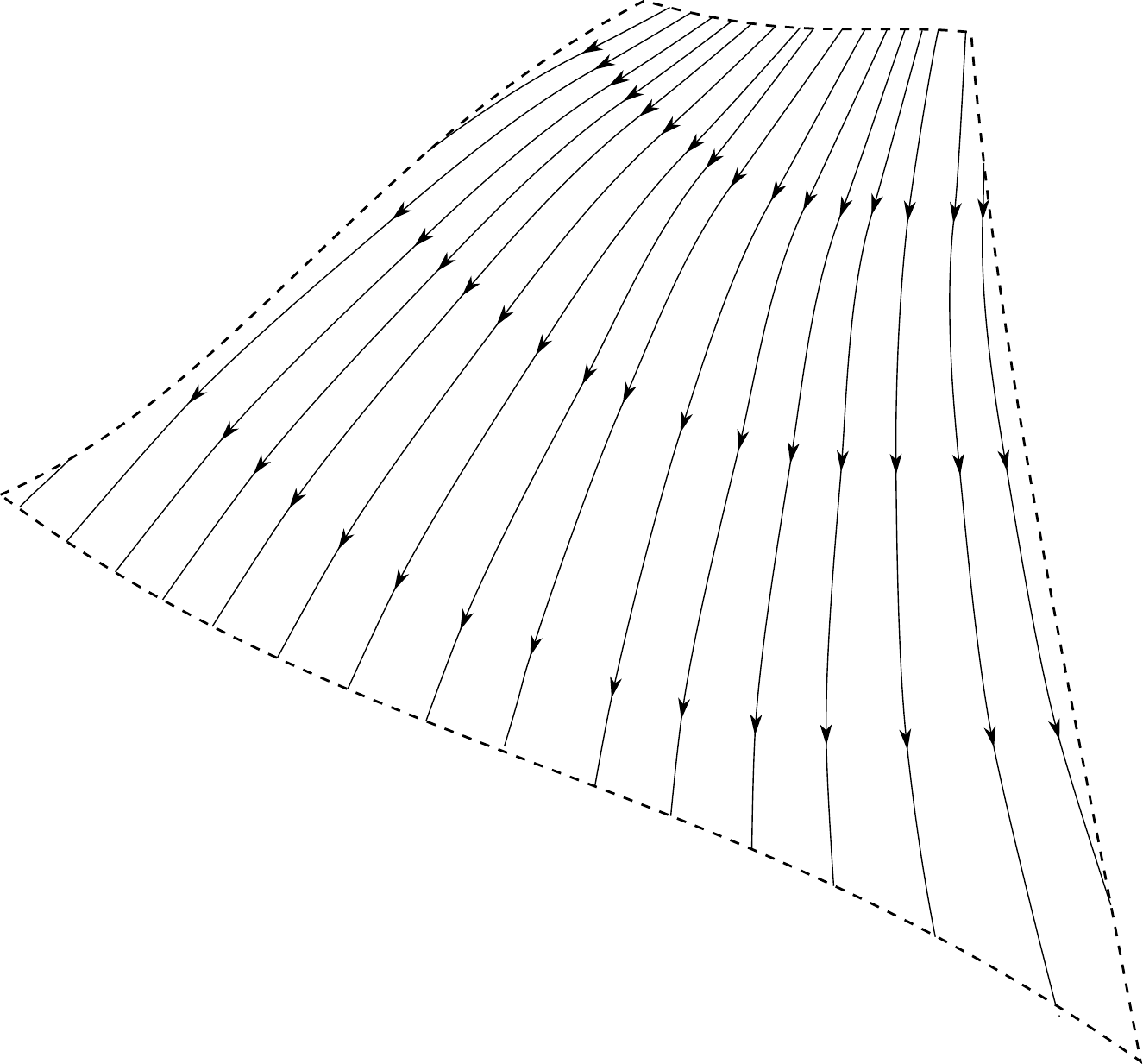}
\hskip 1cm
\subfigure[$N$ is the leaf space of the foliation and $\phi:S\to N$ the natural projection.]
{\label{fig:phiInt}
\def\svgwidth{5cm}
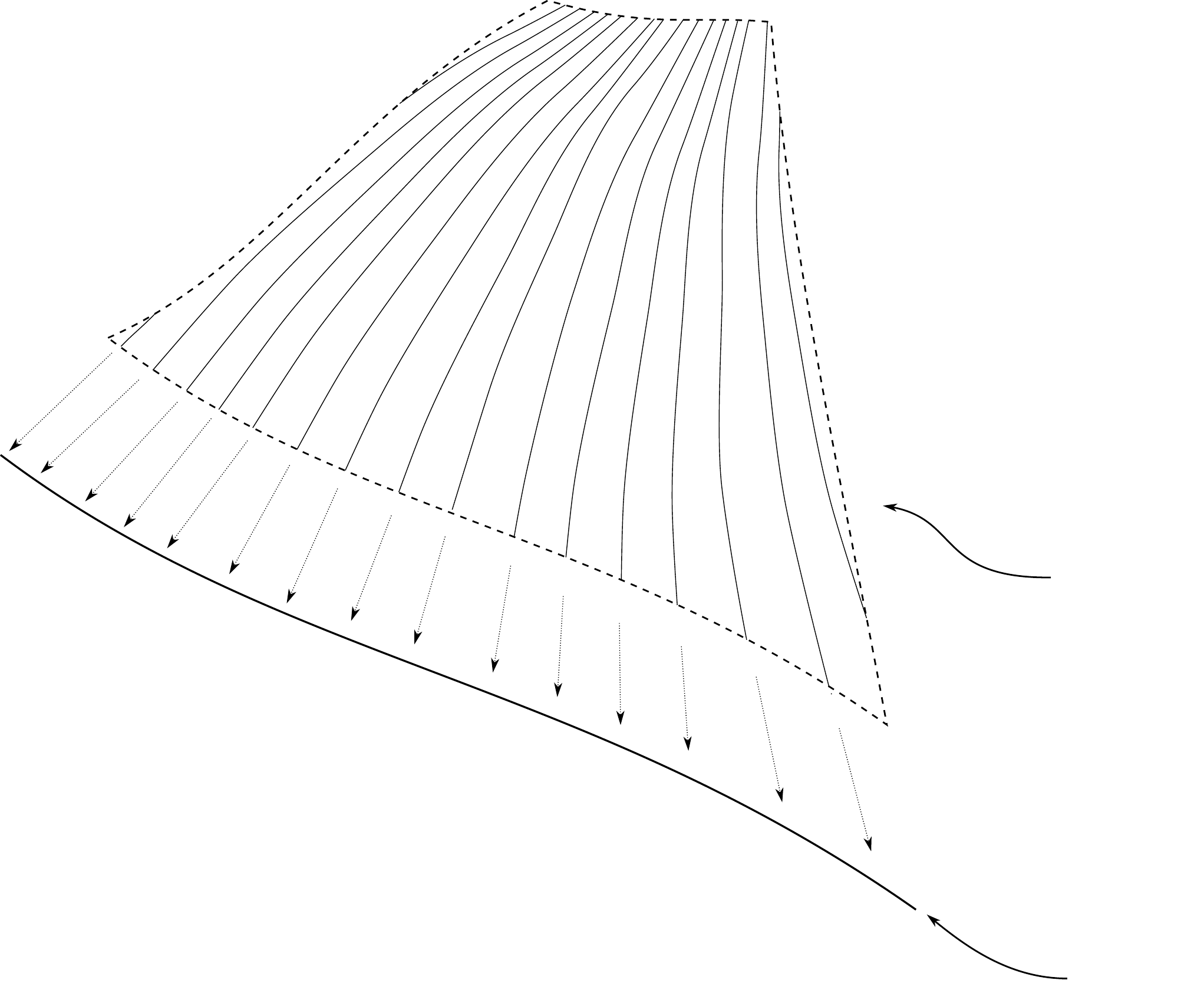}
\end{center}
 \caption{\label{fig:FolInt}The submanifold $S\subseteq M$ describes constraints on the physical system, while vector fields tangent to the foliation describe symmetries.}
 \end{figure}
 
Suppose that $N$ is the leaf space of the foliation, that is there is a surjective submersion $\phi:S\to N$, such that $F=\on{ker}(d\phi)$. A function $\tilde f\in C^\infty(M)$ is said to \emph{extend} a function $f\in C^\infty(N)$ if $$\phi^*f=i^*\tilde f,$$ where $i:S\to N$ is the inclusion.
If there exists a Poisson structure on $N$ such that
\begin{equation}\label{eq:RedInt} \phi^*\{f,g\}=i^*\{\tilde f,\tilde g\}\end{equation}  for any functions $f,g\in C^\infty(N)$ and extensions $\tilde f,\tilde g\in C^\infty(M)$, then the Poisson structure on $M$ is said to \emph{reduce} to a Poisson structure on $N$. 

As explained by Courant and Sanchez de Alvarez \cite{Courant:1999ho,SanchezdeAlvarez:1989vf}, the Poisson structure on $M$ induces a Poisson structure on $TM$ for which the equation $$\{df,dg\}=d\{f,g\}$$ is satisfied for any functions $f,g\in C^\infty(M)$ (where we interpret 1-forms on $M$ as linear functions on $TM$). This  \emph{tangent prolongation of the Poisson structure} is very relevant to reduction. Indeed, \cite[Remark 2.2]{Falceto:2008gg} implies the following theorem:

\begin{theorem*}[Coisotropic Reduction\footnote{This is a special case of results found in \cite{weinstein87,ManinTriplesBi,Falceto:2008gg}}]\label{prop:IntBscRed}
Let $M$ be a Poisson manifold, $S\subseteq M$ a submanifold, and $F\subseteq TS$ a regular foliation with leaf space $N$.
 The Poisson structure on $M$ reduces to a Poisson structure on $N$ if and only if $F\subseteq TM$ is coisotropic.\footnote{As defined by Weinstein \cite{weinstein87}, a submanifold $C\subseteq M$ of a Poisson manifold is called \emph{coisotropic} if the ideal $Z(C)\subseteq C^\infty(M)$ of functions vanishing on $C$ is closed under the Poisson bracket.}
 \end{theorem*}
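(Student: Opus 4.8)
The plan is to translate the statement into a fact about the vanishing ideal $Z(F)\subseteq C^\infty(TM)$ of the submanifold $F$, using the ``lift'' formulas for the tangent prolongation $\pi_T$ of the Poisson bivector $\pi$. Writing $\mbf{p}^*h$ for the pullback of $h\in C^\infty(M)$ along $\mbf{p}\colon TM\to M$, and $\ell_\alpha\in C^\infty(TM)$ for the fibrewise-linear function attached to a $1$-form $\alpha$ (so that $\ell_{df}=df$ in the notation of the excerpt), a local computation gives
\begin{align*}
\{\mbf{p}^*h,\mbf{p}^*k\}_{\pi_T}&=\mbf{p}^*\{h,k\}_M,\\
\{\ell_\alpha,\mbf{p}^*h\}_{\pi_T}&=\mbf{p}^*\langle dh,\pi^\sharp\alpha\rangle,\\
\{\ell_\alpha,\ell_\beta\}_{\pi_T}&=\ell_{[\alpha,\beta]_\pi},
\end{align*}
where $[\cdot,\cdot]_\pi$ is the Koszul bracket on $1$-forms; the identity $\{df,dg\}_{\pi_T}=d\{f,g\}_M$ recorded above is the case $\alpha=df$, $\beta=dg$.

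First I would pin down $Z(F)$. For $g\in C^\infty(N)$, call $\tilde g\in C^\infty(M)$ an \emph{extension} (of $g$) if $i^*\tilde g=\phi^*g$; then $(d\tilde g)|_{TS}=\phi^*(dg)$ annihilates $F=\ker(d\phi)$, and conversely every covector in $\ann(F)\subseteq T^*M|_S$ equals $(d\tilde g)_x$ for a suitable extension $\tilde g$. The functions $\mbf{p}^*h$, $h\in Z(S)$, cut out $TM|_S$, and the functions $\ell_{d\tilde g}$, $\tilde g$ an extension, then cut $TM|_S$ down to $F$ (using $\ann(\ann F)=F$); these cut $F$ out transversally, hence generate $Z(F)$. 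Since $\{\cdot,\cdot\}_{\pi_T}$ is a derivation, $F$ is coisotropic iff the brackets of these generators lie back in $Z(F)$. Plugging the three formulas in — and using that $(d\tilde g)_x$ ranges over $\ann(F_x)$ while $(dh)_x$ ranges over $\ann(T_xS)$ — this is equivalent to
\begin{itemize}
\item[(a)] $\pi^\sharp(\ann F)\subseteq TS$ along $S$, equivalently $\pi^\sharp(\ann TS)\subseteq F$ (which subsumes ``$S$ is coisotropic'', hence the brackets $\{\mbf{p}^*h,\mbf{p}^*k\}$); together with
\item[(b)] $i^*\big(d\{\tilde f,\tilde g\}_M\big)\in\Gamma(\ann F)$ for all extensions $\tilde f,\tilde g$, coming from $\{\ell_{d\tilde f},\ell_{d\tilde g}\}_{\pi_T}=\ell_{\,d\{\tilde f,\tilde g\}_M}$.
\end{itemize}

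Next I would match (a)--(b) with reducibility. By definition $\pi$ reduces to $N$ precisely when $i^*\{\tilde f,\tilde g\}_M$ is independent of the chosen extensions and is $\phi$-basic; the induced bracket on $C^\infty(N)$ is then automatically skew, Leibniz and Jacobi, being the image of $\{\cdot,\cdot\}_M$ under the injection $\phi^*$, hence Poisson. Condition (b) says exactly that $d\big(i^*\{\tilde f,\tilde g\}_M\big)$ kills $F$, i.e.\ — the fibres of $\phi$ being the connected leaves of $F$ — that $i^*\{\tilde f,\tilde g\}_M$ is $\phi$-basic. For extension-independence, put $c:=\tilde f-\tilde f'\in Z(S)$; at $x\in S$ one has $\{c,\tilde g\}_M(x)=-\langle(dc)_x,\pi^\sharp(d\tilde g)_x\rangle$, and since $(d\tilde g)_x\in\ann(F_x)$, (a) forces $\pi^\sharp(d\tilde g)_x\in T_xS$, which is annihilated by $(dc)_x\in\ann(T_xS)$; hence $i^*\{\tilde f,\tilde g\}_M=i^*\{\tilde f',\tilde g\}_M$. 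Running this backwards — letting $(dc)_x$ sweep out $\ann(T_xS)$ and $(d\tilde g)_x$ sweep out $\ann(F_x)$ — shows extension-independence forces (a). Therefore reducibility $\Leftrightarrow$ (a)$\,\wedge\,$(b) $\Leftrightarrow$ $F$ is coisotropic.

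The step I expect to be the real point is the choice of generators for $Z(F)$: read naively, ``$F$ is coisotropic'' tests the brackets $\{\ell_\alpha,\ell_\beta\}_{\pi_T}$ for \emph{all} $1$-forms with $i^*\alpha,i^*\beta\in\Gamma(\ann F)$, such $\alpha$ need not be exact, and that looks a priori stronger than reducibility; the resolution is to notice that generators of the restricted shape $\ell_{d\tilde f}$, $\tilde f$ an extension, together with $\mbf{p}^*Z(S)$, already cut $F$ out transversally. After that reduction, all that is left is the bookkeeping of the two expressions for $\{c,\tilde g\}_M$ and of $d\{\tilde f,\tilde g\}_M=[d\tilde f,d\tilde g]_\pi$, which is routine.
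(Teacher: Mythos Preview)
The paper does not give its own proof of this statement; the theorem is quoted in the introduction as a consequence of \cite[Remark~2.2]{Falceto:2008gg} and attributed more broadly to \cite{weinstein87,ManinTriplesBi,Falceto:2008gg}. There is therefore no in-paper argument to compare against; your proof fills this gap directly, and does so using exactly the tangent-prolongation machinery that the surrounding paragraph of the introduction is advertising.

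Your argument is essentially correct, but one of your tangent-lift formulas is wrong. Since $\pi_T$ is the \emph{linear} Poisson structure on $TM$ dual to the cotangent Lie algebroid $(T^*M,[\,\cdot\,,\cdot\,]_\pi,\pi^\sharp)$, the bracket of two fibrewise-constant functions vanishes identically:
\[
\{\mbf{p}^*h,\mbf{p}^*k\}_{\pi_T}=0,
\]
not $\mbf{p}^*\{h,k\}_M$; see the remark following the definition of linear Poisson structures and \cref{ex:PoisTngPro}. Fortunately this does no damage: with the correct value the coisotropy condition on that family of brackets is vacuous, and in any case you already observe that condition~(a) subsumes ``$S$ is coisotropic'', which is precisely what your incorrect formula would have demanded. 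The other two formulas are right, your identification of generators for $Z(F)$ is sound (the transversality claim is easily verified in adapted local coordinates for $S$ and $\phi$), and the equivalences (a)~$\Leftrightarrow$~extension-independence and (b)~$\Leftrightarrow$~$\phi$-basicness are argued cleanly. After correcting that one formula the proof stands.
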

 
 We will refer this this reduction procedure as \emph{coisotropic reduction}, since \cref{eq:RedInt} holds if and only if the composition $R:M\dashleftarrow S\dashrightarrow N$ of relations is coisotropic \cite[Proposition~(2.3.3)]{weinstein87}. As explained by Weinstein \cite{weinstein87}, coisotropic reduction fits snugly within the framework of coisotropic calculus.
Unfortunately, it is not broad enough to encompass certain important examples of reduction. For instance, if the Poisson structure on $M$ is non-degenerate and $S\subseteq M$ is a symplectic submanifold, then $S$ inherits a Poisson structure via the Dirac bracket. The definition of the Dirac bracket \labelcref{eq:DiracBracket} is structurally the same as that of  the reduced Poisson bracket \labelcref{eq:RedInt} described above (with $\phi:S\to S$ the identity map), except that it must be calculated via \emph{admissible extensions} of the functions $f,g\in C^\infty(S)$, rather than arbitrary extensions.

To incorporate examples such as this, Marsden and Ratiu\footnote{Marsden and Ratiu's reduction procedure \cite{Marsden:1986vs} predates coisotropic reduction \cite{weinstein87}.} considered a subbundle $D\subseteq TM\rvert_S$, as in \cref{fig:MRDistInt}, whose role is to prescribe how one extends functions on $S$ to all of $M$. They assume $F=D\cap TS$ is a constant rank involutive subbundle arising from a regular foliation. Let $N$ denote the corresponding leaf space, and $\phi:S\to N$ the quotient map. A function $\tilde f\in C^\infty(M)$ is called an \emph{admissible extension} of $f\in C^\infty(N)$ if 
$$\phi^* f= i^*\tilde f.$$
 \begin{figure}
\begin{center}
\subfigure[$D\subseteq TM\rvert_S$]
{
\def\svgwidth{4cm}
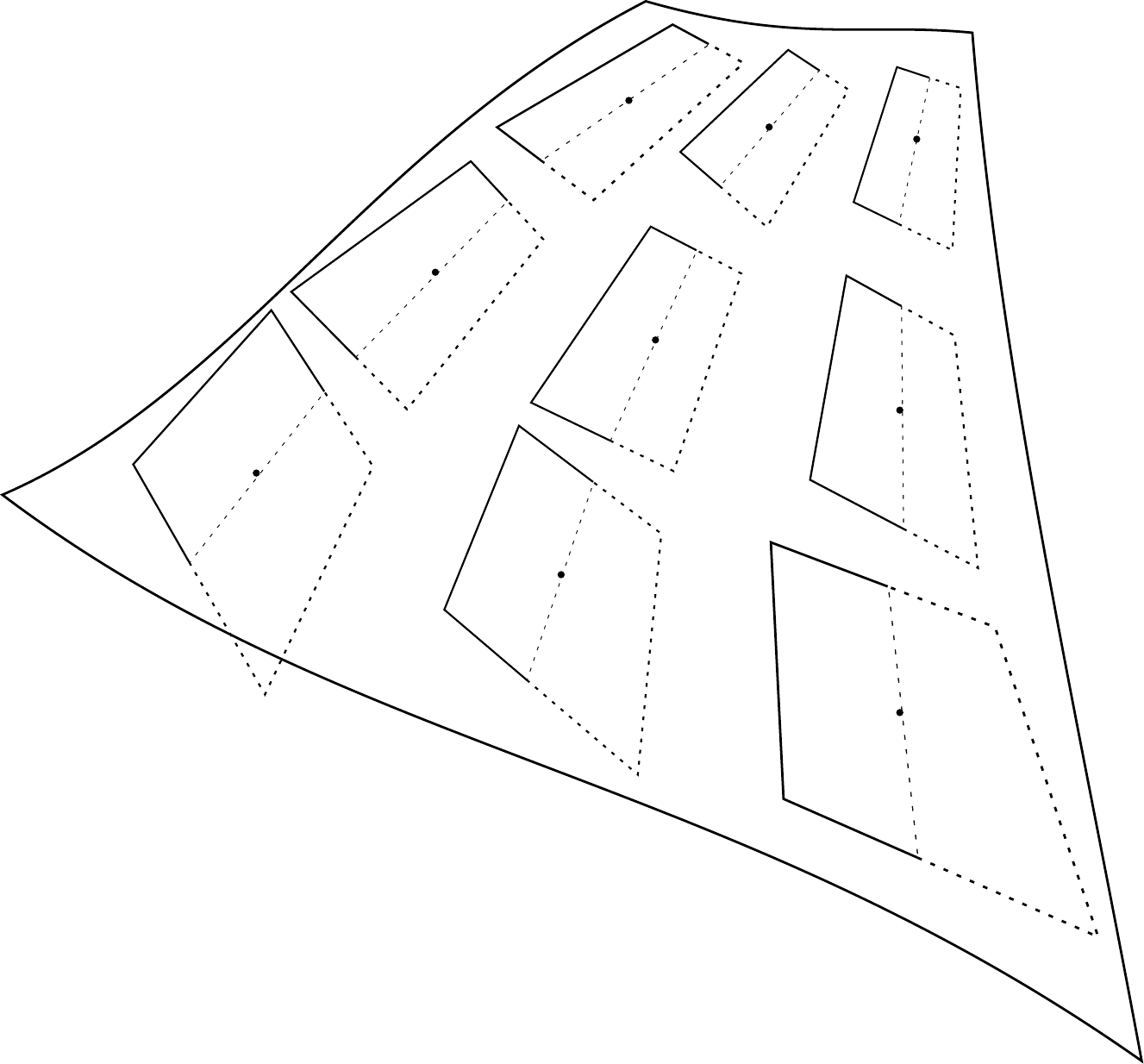}
\hskip 1cm
\subfigure[$F=D\cap TS$ is required to be involutive.]
{\label{fig:MRFolInt}
\def\svgwidth{4cm}
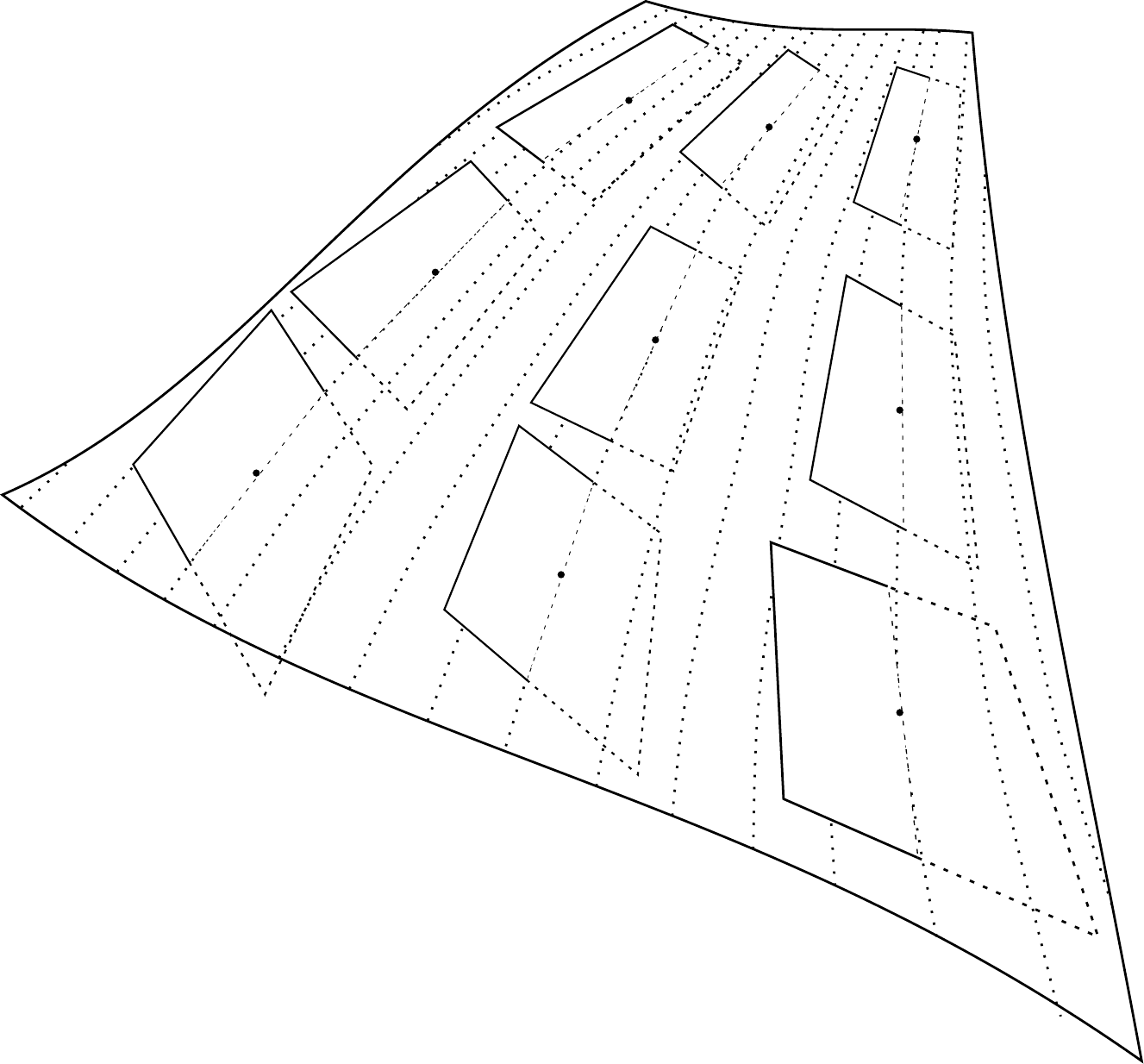}
\hskip 1cm
\subfigure[$N$ is the leaf space of the foliation corresponding to $F=D\cap TS$, and $\phi:S\to N$ the natural projection.]
{\def\svgwidth{4cm}
\input{Phi.pdf_tex}}
\end{center}
\caption{\label{fig:MRDistInt}The subbundle $D\subseteq TM\rvert_S$ of the ambient tangent space controls how functions on $S$ are extended to $M$}
 \end{figure}
If there is a Poisson structure on $N$ so that 
\begin{equation}\label{eq:MRPoisRedInt} \phi^*\{f,g\}=i^*\{\tilde f,\tilde g\}\end{equation} for any functions $f,g\in C^\infty(N)$ and any admissible extensions $\tilde f,\tilde g\in C^\infty(M)$, then the triple $(M,N,D)$ is said to be \emph{reducible} \cite{Marsden:1986vs}. Assuming $D\neq 0$, \cite[Remark 2.2]{Falceto:2008gg} allows us to rephrase Marsden and Ratiu's reduction theorem \cite{Marsden:1986vs} as follows:

\begin{theorem*}[Marsden-Ratiu Reduction]\label{prop:IntMRRed}
Let $M$ be a Poisson manifold, $S\subseteq M$ a submanifold, and $D\subseteq TM\rvert_S$ a non trivial subbundle such that $F=D\cap TS$ is a regular foliation with leaf space $N$.
 The triple $(M,N,D)$ is reducible if and only if $D\subseteq TM$ is coisotropic.
 \end{theorem*}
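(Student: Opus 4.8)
The plan is to express both conditions in terms of the tangent prolongation $\pi_T$ of $\pi$ on $TM$ (the Poisson structure recalled just above, characterised in particular by $\{dh,dh'\}_{\pi_T}=d\{h,h'\}$ when $1$-forms are read as linear functions on $TM$) and the Koszul bracket $[\cdot,\cdot]_\pi$ of $1$-forms on $M$, and then to compare them. On the algebra generators of $C^\infty(TM)$ -- the pullbacks $q^\ast h$ along $q\colon TM\to M$ and the fibrewise-linear functions $\ell_\alpha$, $\alpha\in\Omega^1(M)$ (with $\ell_{dh}$ equal to $dh$ read as a linear function) -- one has $\{q^\ast h,q^\ast h'\}_{\pi_T}=0$, $\{q^\ast h,\ell_\alpha\}_{\pi_T}=q^\ast\big(\iota_{\pi^\sharp dh}\alpha\big)$ and $\{\ell_\alpha,\ell_\beta\}_{\pi_T}=\ell_{[\alpha,\beta]_\pi}$, the Koszul bracket being determined by $[dh,dh']_\pi=d\{h,h'\}$ and $\pi^\sharp[\alpha,\beta]_\pi=[\pi^\sharp\alpha,\pi^\sharp\beta]$. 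I would also note at the outset that coisotropic reduction is the special case $D=F$ of the present statement: when $D=F=D\cap TS$, the constraint $d\tilde f|_D=0$ is automatic given $i^\ast\tilde f=\phi^\ast f$, so ``admissible extension'' simply means ``extension''; so the task is to push the coisotropic-reduction argument through with the extra freedom $D\supseteq F$.

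The first step is to rewrite ``$D$ is coisotropic in $(TM,\pi_T)$'' explicitly. Since $D$ is a vector subbundle of $TM|_S$, its vanishing ideal $Z(D)\subseteq C^\infty(TM)$ is generated by $q^\ast Z(S)$ together with the fibrewise-linear functions $\ell_\alpha$ for which $\alpha|_S\in\ann(D)$. Substituting these into the bracket formulas above and reducing to a local frame by the Leibniz rule, $\{Z(D),Z(D)\}_{\pi_T}\subseteq Z(D)$ splits into: (C1) $X_c|_S\in D$ for every $c\in Z(S)$ -- equivalently $D\supseteq D_0:=\pi^\sharp(\ann(TS)|_S)=\on{span}\{X_c|_S\mid c\in Z(S)\}$ -- which is exactly what forces the mixed brackets $\{q^\ast c,\ell_\alpha\}_{\pi_T}=q^\ast(\iota_{X_c}\alpha)$ into $Z(D)$; and (C2) $[\alpha,\beta]_\pi|_S\in\ann(D)$ for all $\alpha,\beta\in\Omega^1(M)$ with $\alpha|_S,\beta|_S\in\ann(D)$, which is what forces $\{\ell_\alpha,\ell_\beta\}_{\pi_T}$ into $Z(D)$.

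The second step is to rewrite reducibility. An admissible extension of $f\in C^\infty(N)$ is an $\tilde f\in C^\infty(M)$ with $i^\ast\tilde f=\phi^\ast f$ and $d\tilde f|_D=0$; conversely, since $F=D\cap TS=\ker(d\phi)$, any $\tilde f$ with $d\tilde f|_D=0$ descends to a function on $N$. Using $d\{\tilde f,\tilde g\}_\pi=[d\tilde f,d\tilde g]_\pi$, the demand that $i^\ast\{\tilde f,\tilde g\}_\pi$ be $\phi$-basic becomes $[d\tilde f,d\tilde g]_\pi|_S\in\ann(F)$, while the demands that it be independent of the chosen admissible extensions and that the induced bracket on $N$ be Poisson become conditions which, once unwound (using skew-symmetry of $\pi^\sharp$ and the Jacobi identity on $M$), are equivalent to (C1) together with ($\star$) $\pi^\sharp(\ann(D)|_S)\subseteq D+TS$; one also needs admissible extensions to exist for every $f\in C^\infty(N)$, which is part of the cleanness already implicit in the hypothesis that $F$ is a regular foliation with leaf space $N$. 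With this in hand the implication ``coisotropic $\Rightarrow$ reducible'' is immediate: (C1) gives ($\star$) (its left-hand side sits inside $\pi^\sharp(\ann(TS)|_S)=D_0\subseteq D$, and ($\star$) and its skew transpose $\pi^\sharp(\ann(D+TS)|_S)\subseteq D$ are interchangeable), and (C2) applied to $\alpha=d\tilde f$, $\beta=d\tilde g$ gives $[d\tilde f,d\tilde g]_\pi|_S\in\ann(D)\subseteq\ann(F)$.

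The hard part is the converse, and the precise obstacle is the mismatch between $\ann(F)$ (all that the ``$\phi$-basic'' clause of reducibility directly produces) and $\ann(D)$ (what coisotropy demands), together with having to extract (C2) for all $1$-forms vanishing on $D$ rather than only for exact ones. Bridging this is where the standing hypotheses ``$D\ne0$'' and ``$F$ regular with leaf space $N$'' are really used: they make admissible functions plentiful enough that their differentials span $\ann(D)$ pointwise along $S$, so a partition-of-unity and Leibniz argument first promotes $[d\tilde f,d\tilde g]_\pi|_S\in\ann(F)$ to $[\alpha,\beta]_\pi|_S\in\ann(F)$ for all $\alpha,\beta$ vanishing on $D$; then, feeding (C1) and the identity $\iota_{X_c}[\alpha,\beta]_\pi=-[\pi^\sharp\alpha,\pi^\sharp\beta]\cdot c=-\pi^\sharp\alpha\cdot(\pi^\sharp\beta\cdot c)+\pi^\sharp\beta\cdot(\pi^\sharp\alpha\cdot c)$ together with $\pi^\sharp\beta\cdot c=-\iota_{X_c}\beta\in Z(S)$ back into the computation improves $\ann(F)$ to $\ann(D)$, yielding (C2). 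This bookkeeping, and the verification that the cleanness hypotheses really do supply enough admissible functions, is what is carried out in \cite[Remark~2.2]{Falceto:2008gg}. (One could instead attempt to deduce the statement by applying Coisotropic Reduction to $(TM,\pi_T)$ along $D$ equipped with a foliation whose leaf space is $TN$; but identifying that foliation and checking its cleanness seems no easier than the direct calculation above.)
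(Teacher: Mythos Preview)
The paper does not actually prove this statement. It appears in the Introduction as background material, explicitly attributed: ``Assuming $D\neq 0$, \cite[Remark 2.2]{Falceto:2008gg} allows us to rephrase Marsden and Ratiu's reduction theorem \cite{Marsden:1986vs} as follows.'' There is no proof to compare against; the theorem is quoted from the literature to motivate the analogous reduction scheme for Courant algebroids that the thesis later develops.

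Your sketch is a reasonable outline of how such a proof goes, and you correctly identify the same reference at the end. A few remarks on the content itself: your decomposition of ``$D$ coisotropic in $(TM,\pi_T)$'' into (C1) and (C2) is right, and your deduction of ($\star$) from (C1) is correct once one reads it via the skew transpose (the sentence ``its left-hand side sits inside $\pi^\sharp(\ann(TS)|_S)$'' is literally false for $\ann(D)$ and only becomes true after passing to $\ann(D+TS)$, which you do note parenthetically --- you might reorder that sentence for clarity). For the converse you honestly flag the gap between $\ann(F)$ and $\ann(D)$ and defer the bookkeeping to Falceto--Zambon; that is exactly what the paper does too, so there is no discrepancy. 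If you wanted to make this self-contained you would need to actually carry out that promotion argument rather than cite it, but as the paper itself does not, your treatment is already at parity with the source.
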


Summarizing, we have the following table:
\begin{center}
\begin{tabular}{p{.2\textwidth}|p{.5\textwidth}|p{.3\textwidth}}
Reduction Procedure & Reduction Data & Reduction Requirements \\\hline\hline
Coisotropic Reduction & A  Lie subalgebroid $F\subseteq TM$ & $F\subseteq TM$ is coisotropic \\\hline
Marsden-Ratiu Reduction & A subbundle $D\subseteq TM\rvert_S$ such that $D\cap TS\subseteq TS$ is a  Lie subalgebroid & $D\subseteq TM$ is coisotropic
\end{tabular}
\end{center}

Having described the reduction procedures for Poisson structures, we now summarize the main features of our reduction procedure for Courant algebroids. More details will be given in \cref{sec:reduc}.
Our reduction procedure for a Courant algebroid $\mbb{E}$ is formulated in terms of double vector subbundles of the tangent prolongation of $\mbb{E}$:
$$\begin{tikzpicture}
\mmat{m1} at (-2,0) {L&W\\ D&S\\};
\draw[->] (m1-1-1) edge (m1-1-2)
			edge (m1-2-1);
\draw[<-] (m1-2-2) edge (m1-1-2)
			edge (m1-2-1);

\draw (0,0) node {$\subseteq$};

\mmat{m2} at (2,0) {T\mbb{E}&\mbb{E}\\ TM&M\\};
\draw[->] (m2-1-1) edge (m2-1-2)
			edge (m2-2-1);
\draw[<-] (m2-2-2) edge (m2-1-2)
			edge (m2-2-1);
\end{tikzpicture}$$
We give sufficient conditions on $L$ for the quotient vector bundle
 $$\begin{tikzpicture}
 \node (F) at (-2,1) {$\mbb{F}$};
 \node (N) at (-2,-1) {$N$};
\node (VB) at (0,1) {$W/(L\cap TW)$};
\node (Bs) at (0,-1) {$S/(D\cap TS)$};
\draw[->] (VB) edge (Bs);
\draw[->] (F) edge (N);
\node at (-1.5,-1) {$:=$};
\node at (-1.5,1) {$:=$};
\end{tikzpicture}$$
to inherit the structure of a Courant algebroid, which we summarize in the following table:
\begin{center}
\begin{tabular}{p{.2\textwidth}|p{.5\textwidth}|p{.3\textwidth}}
Reduction Procedure & Reduction Data & Reduction Requirements \\\hline\hline
Coisotropic-type Reduction & A double vector subbundle $L\subseteq T\mbb{E}$ which is also a  Lie subalgebroid & $L\subseteq T\mbb{E}$ is a Dirac structure (with support) \\\hline
Marsden-Ratiu-type Reduction & A double vector subbundle $L\subseteq T\mbb{E}$ such that $L\cap TW\subseteq TW$ is a  Lie subalgebroid & $L\subseteq T\mbb{E}$ is a Dirac structure (with support)
\end{tabular}
\end{center}
%

\begin{remark}
There is a clear similarity between the two tables above which summarize the reduction procedures for Poisson manifolds and Courant algebroids, respectively.
In fact, one could obtain the second table from the first by using \v{S}evera's dictionary to translate between Poisson manifolds and Courant algebroids \cite{Severa:2005vla}:
\begin{center}
\begin{tikzpicture}
\node (a) at (-4,1) {Poisson manifold};
\node (b) at (4,1) {Courant algebroid};
\node (c) at (-4,0) {Coisotropic submanifold};
\node (d) at (4,0) {Dirac structure (with support)};
\draw [<->,decorate,
     decoration={snake,amplitude=.4mm,segment length=4mm,post length=4mm,pre length=4mm}] (a) -- (b);
\draw [<->,decorate,
     decoration={snake,amplitude=.4mm,segment length=4mm,post length=4mm,pre length=4mm}] (c) -- (d);
\end{tikzpicture}
\end{center}
\end{remark}

\subsubsection{Integration of q-Poisson structures}

Roughly speaking, \emph{integration} is the process of concatenating an infinite string of infinitesimal symmetries to yield a definite symmetry: for example, passing from a Lie algebra to the corresponding simply connected Lie group. 

One of the earliest examples of a Lie algebra is the ring of functions $C^\infty(M)$ on a Poisson manifold $M$. Unfortunately, $C^\infty(M)$ is generally infinite dimensional, complicating integration. Nevertheless, Karasev and Weinstein \cite{Karasev:1986vg,Weinstein:1987ua} independently discovered  that there are finite dimensional objects, called symplectic groupoids, which integrate Poisson structures in an appropriate sense: 

Let $M$ be a Poisson manifold and $\pi\in\mf{X}^2(M)$ the corresponding bivector field. The space of 1-forms, $\Omega^1(M)$, carries the Lie bracket $$[\alpha,\beta]=d\pi(\alpha,\beta)+\iota_{\pi^\sharp\alpha}d\beta-\iota_{\pi^\sharp\beta}d\alpha,\quad\alpha,\beta\in\Omega^1(M)$$ called the Koszul bracket \cite{Koszul:1985wb}. With this bracket on its sections, the cotangent bundle $T^*M$ becomes a Lie algebroid. Lie algebroids are a concept due to Pradines,  who proved that they integrate to local Lie groupoids \cite{Pradines:1967wn,Pradines:1966ux,Pradines:1968wi}. Suppose that $K$ is a (local) Lie groupoid integrating $T^*M$. The canonical symplectic structure on $T^*M$ is compatible with the Lie algebroid structure. Using this fact, Mackenzie and Xu showed it induces a symplectic structure on $K$, which is compatible with the multiplication \cite{Mackenzie97,Weinstein:1987ua}. 


q-Poisson structures were introduced by Alekseev and Kosmann-Schwarzbach \cite{Alekseev99}, as a generalization of Poisson structures which could describe new examples of Lie group valued moment maps \cite{Alekseev97,Alekseev00}. Let $\mf{d}$ be a quadratic Lie algebra and $\g\subseteq \mf{d}$ a Lie subalgebra which is equal to its orthogonal complement, $\g=\g^\perp$. A q-Poisson $(\mf{d},\g)$ structure on a manifold $M$ consists of 
\begin{itemize}
\item an action of $\g$ on $M$, determined by a map $\rho:\g\times M\to TM$, and
\item a bivector field $\pi\in\mf{X}^2(M)$
\end{itemize} satisfying certain axioms. 

Consider the bracket
\begin{equation}\label{eq:qPbracketInt} \{f,g\}=\pi(df,dg),\quad f,g\in C^\infty(M),\end{equation}
which does not generally satisfy the Jacobi identity. One says that a function $f\in C^\infty(M)$ is \emph{admissible} if it is $\g$-invariant.  
Alekseev and Kosmann-Schwarzbach \cite{Alekseev99} prove that \labelcref{eq:qPbracketInt} defines a Lie bracket on the space $C^\infty_{adm}(M)$ of admissible functions.

q-Poisson $(\mf{d},\g)$-structures are most interesting when there exists a Lie subalgebra $\h\subseteq\mf{d}$ such that $\mf{d}=\g\oplus\h$ as a vector space. Two special cases are when $\h\subseteq\mf{d}$ is Lagrangian (i.e. $\h=\h^\perp$), or when $\h\subseteq\mf{d}$ is a quadratic ideal (i.e. $\h\cap\h^\perp=0$). In both cases, interesting things happen:
\begin{center}
\begin{tabular}{p{.5\textwidth}|p{.5\textwidth}}
$\h\subseteq\mf{d}$ is Lagrangian. & $\h\subseteq\mf{d}$ is a quadratic ideal. \\ \hline\hline
The triple $(\mf{d},\g,\h)$ is a \emph{Manin triple} \cite{Drinfeld:1988fq}, and $\g$ is a \emph{Lie bialgebra}. &
 $\h\cong \g$ and $\mf{d}=\g\oplus \bar\g$, with $\g$ embedded diagonally and  $\h$ embedded as the second factor. \\ \hline
\cref{eq:qPbracketInt} defines a \emph{Poisson} structure on $M$. &\\\hline
While $\pi\in\mf{X}^2(M)$ is not $\g$ invariant, the action of $\g$ on $M$ is a \emph{infinitesimal Poisson action} \cite{thesis-3,lu90}. &
 $\pi\in\mf{X}^2(M)$ is $\g$-invariant.
\end{tabular}
\end{center}

Moreover, when $\h\subseteq\mf{d}$ is Lagrangian, there is a known integration procedure due to Xu \cite{Xu95}. Meanwhile when $\h\subseteq\mf{d}$ is an ideal  and $\h\cap\h^\perp=0$, there is a known integration procedure due to \v{S}evera and the author \cite{LiBland:2010wi}. As a first step,  both integration procedures take advantage of a Lie algebroid morphism $\mu:T^*M\to \h$, constructed as follows:

 The dual of the action map $\rho:\g\times M\to TM$ is a map $T^*M\to \g^*\times M$. Composing this with the projection to the first factor defines a map $T^*M\to \g^*$. After identifying $\g^*\cong\h$ via the quadratic form on $\mf{d}$, we obtain the map $$\mu:T^*M\to \h.$$ 
 We outline the properties of $\mu$ in the table below.
\begin{center}
\begin{tabular}{p{.5\textwidth}|p{.5\textwidth}}
$\h\subseteq\mf{d}$ is Lagrangian. & $\h\subseteq\mf{d}$ is a quadratic ideal. \\ \hline\hline
$T^*M$ is a Lie bialgebroid \cite{Mackenzie-Xu94}. & $T^*M$ is a Lie algebroid \cite{LiBland:2010wi}\footnotemark.\\\hline
$\mu:T^*M\to \h$ is a morphism of Lie bialgebroids \cite{Xu95}. & $\mu:T^*M\to \h$ is a morphism of Lie algebroids \cite{LiBland:2010wi}.\\
\end{tabular}
\end{center}
\footnotetext{This was already shown by Bursztyn and Crainic in the presence of a moment map for the q-Poisson structure \cite{Bursztyn:2005te}.}

Suppose $K$ is the source simply connected groupoid integrating $T^*M$ (or use a local integration instead), and $H$ is the simply connected Lie group integrating $\h$. Let $\mu:K\to H$ denote the morphism of Lie groupoids integrating $\mu:T^*M\to \h$. We outline the main features of the relevant integration procedures in the table below.
\begin{center}
\begin{tabular}{p{.5\textwidth}|p{.5\textwidth}}
$\h\subseteq\mf{d}$ is Lagrangian. & $\h\subseteq\mf{d}$ is a quadratic ideal. \\ \hline \hline
$H$ is a Poisson Lie group \cite{Drinfeld83}. & $H$ carries a multiplicative Dirac structure, the \emph{Cartan Dirac} structure \cite{Severa:2001,Bursztyn03-1}.\\ \hline
The Poisson structure on $M$ determines a compatible symplectic structure on $K$ \cite{Mackenzie97,Karasev:1986vg,Weinstein:1987ua}. & The q-Poisson structure on $M$ determines a compatible quasi-symplectic structure on $K$ \cite{LiBland:2010wi}.\\ \hline
$\mu:K\to H$ is a moment map for the natural action of $\g$ on $K$ \cite{Xu95}. & $\mu:K\to H$ is a moment map for the natural action of $\g$ on $K$ \cite{LiBland:2010wi}.
\end{tabular}
\end{center}

In \cref{chp:Outlook}, we will describe an integration procedure for general q-Poisson $(\mf{d},\g)$-spaces, which unifies both of the procedures outlined above. 
Before summarizing our result, we make the following general remark:
Extrapolating from the two special cases above, such an integration procedure should result in a moment map $\mu:K\to H$ where $K$ is a Lie groupoid, and $H$ is some Lie group. Moreover, one expects that $H$ will carry a multiplicative Dirac structure. In \cite{LiBland:2010wi,LiBland:2011vqa} multiplicative Dirac structures\footnote{For this classification result, the authors  assume that multiplication is a strong forward Dirac morphism, a desirable property in our context. See \cite{Jotz:2009va,Ortiz:2008bd,Affane:2009dx} for a slightly different concept of Dirac Lie groups.} over a simply connected Lie group $H$ were classified in terms of triples $(\mf{d},\g;\h)$ of Lie algebras where:
\begin{itemize}
\item $\mf{d}$ is a quadratic Lie algebra,
\item $\g\subseteq \mf{d}$ is a Lagrangian Lie subalgebra,
\item $\h\subseteq \mf{d}$ is a Lie subalgebra which integrates to $H$, and
\item $\mf{d}=\g\oplus\h$ as a vector space.
\end{itemize}

Our main result in \cref{chp:Outlook} is the following: Given a q-Poisson $(\mf{d},\g)$-structure on $M$, together with a Lie subalgebra $\h\subseteq \mf{d}$ which is a vector space complement to $\g$, 
\newcounter{saveenum}
\begin{enumerate}
\item there exists a Lie algebroid structure on $T^*M$, and
\item the map $\mu:T^*M\to \h$ is a morphism of Lie algebroids.
  \setcounter{saveenum}{\value{enumi}}
\end{enumerate}
Let $K$ denote the source simply connected Lie groupoid integrating $T^*M$ (or use a local integration instead), then 
 \begin{enumerate}
  \setcounter{enumi}{\value{saveenum}}
 \item $K$ carries a q-Poisson $(\mf{d},\g)$-structure, and 
 \item $\mu:K\to H$ is a moment map,
 \end{enumerate}
  where the simply connected Lie group $H$ carries the multiplicative Dirac structure determined by the triple $(\mf{d},\g,\h)$.

The theory of pseudo-Dirac structures, which we will introduce in \cref{chp:TngProAndLieSub}, play an important role in determining the Lie algebroid structure on $T^*M$. As explained by Iglesias Ponte-Xu \cite{PonteXu:08} and Bursztyn-Iglesias Ponte-\v{S}evera \cite{Bursztyn:2009wi}, the q-Poisson $(\mf{d},\g)$ structure on $M$ defines a Courant morphism $$R:\mbb{T}M\dasharrow \mf{d}.$$ 

As we shall prove in \cref{sec:FBimage}, since $\h$ is a pseudo-Dirac structure in the Courant algebroid $\mf{d}$, the composition $$L:=\h\circ R\subseteq\mbb{T}M$$ is a pseudo-Dirac structure in $\mbb{T}M$.  $L$ is canonically isomorphic to $T^*M$, so this defines a Lie algebroid structure on $T^*M$. In general $L\subseteq\mbb{T}M$ will not be a Dirac structure. However, in the special case where $\h\subseteq \mf{d}$ is Lagrangian, $L\subseteq \mbb{T}M$ is the graph of the Poisson structure on $M$.

\begin{remark}
Suppose $D$ is a Lie group with Lie algebra $\mf{d}$ and $G\subseteq D$ is a closed Lie subgroup integrating the inclusion $\g\subseteq \mf{d}$. 
An equivariant map $\phi:M\to D/G$ satisfying certain compatibility conditions with the q-Poisson structure is called a \emph{moment map}.

The peculiar feature of the bracket \labelcref{eq:qPbracketInt} - that it is only well behaved on the subalgebra of admissible functions - is reminiscent of the Dirac bracket, discussed in \cref{sec:DirBrk} above. Since Dirac structures form the geometric framework for the Dirac bracket, it is reasonable to try to incorporate q-Poisson geometry into Dirac geometry. 
 There has been a substantial body of work in this direction. Some key papers include \cite{Bursztyn:2009vq,Bursztyn:2005te,PonteXu:08,Bursztyn:2009wi}. To summarize the relevant results: In the presence of a moment map $\phi:M\to D/G$, they construct a Dirac structure $$L\subseteq \mbb{E},$$ in some exact Courant algebroid $\mbb{E}$,  encoding the q-Poisson structure\footnote{This was first explained by Bursztyn and Crainic in \cite{Bursztyn:2005te} for the special case where $\mf{d}=\g\oplus\bar\g$, and in full generality in \cite{Bursztyn:2009wi}}. Integrating this Dirac structure via the techniques described in \cite{Bursztyn03-1,Ponte:2005txa}, one obtains a (local) Lie groupoid $G$ which carries a compatible (non-degenerate) q-Poisson structure. In the special case where $\mf{d}$ is trivial, this procedure specializes to the integration of Poisson structures to symplectic groupoids. 
 \end{remark}

\chapter{Preliminaries}\label{chp:prelim}

\section{Linear relations}\label{sec:LinRel}
We recall some background on linear relations.

A (linear) \emph{relation} $R\colon V_1\dasharrow V_2$ 
between vector spaces $V_1,\ V_2$ is a subspace $R\subseteq V_2\times V_1$. Write $v_1\sim_R v_2$ if $(v_2,v_1)\in R$. The graph of any linear map $A\colon V_1\to V_2$ defines a relation $\on{gr}(A)$. In particular, the identity map 
of $V$ defines the diagonal relation $\on{gr}(\on{id}_V)=V_\Delta\subseteq V\times V$. 

The \emph{transpose relation} $R^\top \colon V_2\dasharrow V_1$ consists of all $(v_1,v_2)$ such that 
$(v_2,v_1)\in R$. We define 
\[\ker(R)=\{v_1\in V_1|\ v_1\sim 0\},\ \ \  
\on{ran}(R)=\{v_2\in V_2|\ \exists v_1\in V_1\colon (v_2,v_1)\in R\}.\]
Given another relation $R'\colon V_2\dasharrow V_3$, the composition $R'\circ R\colon V_1\dasharrow V_3$ consists of all $(v_3,v_1)$ such that $v_1\sim_{R}v_2$ and $v_2\sim_{R'} v_3$ for some $v_2\in V_2$. 

We let $\ann^\natural(R)\colon V_1^*\dasharrow V_2^*$ be the relation such that $\mu_1\sim_{\ann^\natural(R)}\mu_2$ if $\la\mu_1,\,v_1\ra=\la\mu_2,\,v_2\ra$ whenever $v_1\sim_R v_2$. 
Thus $(\mu_2,\mu_1)\in \ann^\natural(R)\Leftrightarrow (\mu_2,-\mu_1)\in \ann(R)$. Note 
$\ann^\natural(V_\Delta)=(V^*)_\Delta$, and more generally 
\begin{equation}\label{eq:annRel} \ann^\natural(\on{gr}(A))=\on{gr}(A^*)^\top\end{equation}
for linear maps $A\colon V_1\to V_2$. 

We will frequently find the following Lemma useful.

\begin{lemma}\label{lem:ann}
For any relations $R\colon V_1\dasharrow	 V_2$ and $R'\colon V_2\dasharrow V_3$, 
one has  
$\ann^\natural(R'\circ R)=\ann^\natural(R')\circ \ann^\natural(R)$. 
\end{lemma}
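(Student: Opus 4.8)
Plan for proving Lemma 1.1 ($\ann^\natural(R'\circ R)=\ann^\natural(R')\circ \ann^\natural(R)$).

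The plan is to prove the two set inclusions separately, working directly from the definitions of composition and of $\ann^\natural$, since everything here is a statement about pairs of covectors and the pairing identities they satisfy. Throughout, write $R\colon V_1\dasharrow V_2$, $R'\colon V_2\dasharrow V_3$, so $R'\circ R\colon V_1\dasharrow V_3$, and recall $\mu_1\sim_{\ann^\natural(R)}\mu_2$ means $\la\mu_1,v_1\ra=\la\mu_2,v_2\ra$ for every pair with $v_1\sim_R v_2$.

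\emph{The easy inclusion} $\ann^\natural(R')\circ\ann^\natural(R)\subseteq\ann^\natural(R'\circ R)$. Suppose $\mu_1\sim_{\ann^\natural(R)}\mu_2$ and $\mu_2\sim_{\ann^\natural(R')}\mu_3$ for some $\mu_2\in V_2^*$. I must check $\la\mu_1,v_1\ra=\la\mu_3,v_3\ra$ whenever $v_1\sim_{R'\circ R} v_3$. By definition of composition there is $v_2\in V_2$ with $v_1\sim_R v_2$ and $v_2\sim_{R'} v_3$; then $\la\mu_1,v_1\ra=\la\mu_2,v_2\ra=\la\mu_3,v_3\ra$, as desired. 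This direction needs no extra hypotheses.

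\emph{The hard inclusion} $\ann^\natural(R'\circ R)\subseteq\ann^\natural(R')\circ\ann^\natural(R)$, which is where the real work lies. Given $\mu_1\sim_{\ann^\natural(R'\circ R)}\mu_3$, I need to produce a witness $\mu_2\in V_2^*$ with $\mu_1\sim_{\ann^\natural(R)}\mu_2$ and $\mu_2\sim_{\ann^\natural(R')}\mu_3$. The natural candidate is a covector on $V_2$ that simultaneously agrees with $\mu_1$ along $R$ and with $\mu_3$ along $R'$; the obstacle is showing such a $\mu_2$ exists. My approach is to use the correspondence $\ann^\natural(R)=\on{gr}(\text{something})^\top$-style reformulation together with the identity $(\mu_2,\mu_1)\in\ann^\natural(R)\Leftrightarrow(\mu_2,-\mu_1)\in\ann(R)$ to recast the whole statement in terms of annihilators, where I can invoke the standard linear-algebra fact that $\ann(R'\circ R)$ relates to $\ann(R')$ and $\ann(R)$ via the transpose/orthogonal-complement duality — concretely, I would realize $R'\circ R$ as the image of $R'\times R$ under a projection after intersecting with the diagonal of $V_2\times V_2$, and then dualize, using that $\ann$ turns intersections into sums and images into preimages (and vice versa). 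This reduces constructing $\mu_2$ to the elementary fact that if a functional is defined compatibly on two subspaces it extends to a functional on their span; the bookkeeping to line up the signs coming from $\ann^\natural$ versus $\ann$ is the only delicate point, and I expect that to be the main obstacle. Once the inclusion is verified in the $\ann$-picture, translating back through $(\mu,\nu)\in\ann^\natural\Leftrightarrow(\mu,-\nu)\in\ann$ gives the claim.
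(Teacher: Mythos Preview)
Your plan is correct. The paper actually states this lemma without proof, treating it as an elementary linear-algebra fact, so there is nothing to compare against; your outline---the direct verification of the inclusion $\ann^\natural(R')\circ\ann^\natural(R)\subseteq\ann^\natural(R'\circ R)$, followed by the reverse inclusion via writing $R'\circ R$ as the projection of $(R'\times R)\cap(V_3\times (V_2)_\Delta\times V_1)$ and using that $\ann$ takes images to preimages and intersections to sums---is exactly the standard argument, and the sign bookkeeping you flag is routine once you write $(\mu_3,0,-\mu_1)\in\ann(R')\times 0 + 0\times\ann(R)$ and read off the intermediate $\mu_2$.
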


More generally, a smooth relation $S\colon M_1\dasharrow M_2$ between manifolds is an immersed submanifold 
$S\subseteq M_2\times M_1$. We will write $$m_1\sim_S m_2,$$ if $(m_2,m_1)\in S\subseteq M_2\times M_1$, and for functions $f_i\in C^\infty(M_i)$, we will write $$f_1\sim_S f_2$$ if $f_2\oplus(-f_1)$ vanishes on $S$.
Given smooth relations $S\colon M_1\dasharrow M_2$ and 
$S'\colon M_2\dasharrow M_3$, the set-theoretic composition $S'\circ S$ is the image of 
\begin{equation}\label{eq:intersection}
 S'\diamond S=(S'\times S)\cap (M_3\times (M_2)_\Delta \times M_1)
 \end{equation}
under projection to $M_3\times M_1$. Here $(M_2)_\Delta\subset M_2\times M_2$ denotes the diagonal.

If $V_1,V_2$ are two vector bundles over $M_1$ and $M_2$ respectively, then a \emph{$\mc{VB}$-relation} $R:V_1\dasharrow V_2$ is a subbundle
$R\subseteq V_2\times V_1$  along a submanifold $S\subset M_2\times M_1$.

We define $\ker(R)\subseteq p_{M_1}^*V_1,\ \on{ran}(R)\subseteq p_{M_2}^*V_2$ to be the kernel and range of the bundle map $R\to p_{M_2}^*V_2,\ 
(v_2,v_1)\mapsto v_2$ (where $p_{M_i}\colon S\to M_i,\ (m_2,m_1)\mapsto m_i$). 
Finally, we define the $\mc{VB}$-relation $\ann^\natural(R):V_1^*\dasharrow V_2^*$ by $$\ann^\natural(R)=\{(\mu_2,-\mu_1)\mid (\mu_2,\mu_1)\in \ann(R)\}\subseteq V_2^*\times V_1^*.$$

If  $V_i\to M_i$ are vector bundles, $R:V_1\dasharrow V_2$ is a $\mc{VB}$-relation over $S$,  and $\sigma_i\in\Gamma(V_i)$, then we write $$\sigma_1\sim_R\sigma_2$$ whenever $(\sigma_2,\sigma_1)\rvert_S\in \Gamma(R)$.

\begin{example}\label{ex:GrAdd}
Let $p:V\to M$ be a vector bundle. Then the $\mc{VB}$-relation $$\gr(+):V\times V\dasharrow V$$ defined by $$(v_1,v_2)\sim_{\gr(+)} v_1+v_2,$$ whenever $p(v_1)=p(v_2)$ is called the \emph{graph of addition}. A (fibrewise) linear function $f \in C^\infty(V)$ satisfies  $$f\oplus f\sim_{\gr(+)}f$$ while a  fibrewise constant function satisfies $$f\oplus 0\sim_{\gr(+)}f.$$
\end{example}

%

\section{Lie algebroids and Courant algebroids}

\subsection{Poisson manifolds}
Poisson geometry is both a useful tool in the theory of Lie algebroids and Courant algebroids, and a source of important examples. For the reader's convenience, we summarize the basic concepts here. 

A manifold $M$ is called a \emph{Poisson manifold} if the space of functions $C^\infty(M)$ is equipped with a Lie bracket $$\{\cdot,\cdot\}:C^\infty(M)\times C^\infty(M)\to C^\infty(M)$$ satisfying the Leibniz rule, $$\{f,gh\}=\{f,g\}h+g\{f,h\}, \quad f,g,h\in C^\infty(M).$$
Consequently, for any function $f\in C^\infty(M)$, the operator $\{f,\cdot\}:C^\infty(M)\to C^\infty(M)$ is a derivation, and so it defines a vector field $X_f=\{f,\cdot\}$ on $M$ called the \emph{Hamiltonian vector field} associated to $f$. By skew symmetry the Poisson bracket is also a derivation in the first variable. 

\begin{remark}[Multivector fields]
We let $\mf{X}(M):=\Gamma(TM)$ denote the space of vector fields, and $\mf{X}^k(M)=\Gamma(\wedge^k TM)$ the space of $k$-vector fields (note $\mf{X}^0(M)=C^\infty(M)$). The space $$\mf{X}^\bullet(M):=\bigoplus_k\mf{X}^k(M)$$ of \emph{multivector fields} is a $\mbb{Z}$-graded algebra with respect to the wedge product, and carries a unique bilinear operation, called the Schouten bracket \cite{Schouten:1954uj,Nijenhuis:1955td}, which extends the Lie bracket for vector fields and satisfies
\begin{itemize}
\item $[X,Y]=-(-1)^{(k-1)(l-1)}[Y,X]$
\item $[X,Y\wedge Z]=[X,Y]\wedge Z+(-1)^{(k-1)l}Y\wedge[X,Z]$ (derivation)
\item $[X,[Y,Z]]=[[X,Y],Z]+(-1)^{(k-1)(l-1)}[Y,[X,Z]]$ (graded Jacobi identity)
\end{itemize}
for $X\in\mf{X}^k(M)$, $Y\in\mf{X}^l(M)$, and $Z\in\mf{X}^\ast(M)$.
\end{remark}

Since $\{f,g\}$ depends only on the differentials $df$ and $dg$, there exists a bivector field $\pi\in\mf{X}^2(M)$ such that \begin{equation}\label{eq:bivField}\{f,g\}=\pi(df,dg).\end{equation} We let $\pi^\sharp:T^*M\to TM$ be the associated skew-symmetric map, that is $\pi^\sharp(df)=X_f$. It is known \cite{Lichnerowicz:1977wp} that the bracket \labelcref{eq:bivField} satisfies the Jacobi identity if and only if the Schouten bracket  of $\pi$ with itself vanishes $$[\pi,\pi]=0.$$

A map between Poisson manifolds $\phi:M\to N$ is said to be a \emph{Poisson morphism} if the associated map $\phi^*:C^\infty(N)\to C^\infty(M)$ is a map of Lie algebras.

Let $C\subset M$ be a submanifold, and let $\ann(TC)=\{\alpha\in T^*M\rvert_C\text{ such that } \alpha\rvert_{TC}=0\}$ denote its \emph{conormal} bundle. Then $C\subset M$ is said to be a \emph{coisotropic submanifold} if $\pi^\sharp\big(\ann(TC)\big)\subset TC$ \cite{weinstein87}.

If $M$ is any Poisson manifold with Poisson bracket $\{\cdot,\cdot\}_M$, then we let $\bar M$ denote the manifold $M$ with Poisson bracket $\{\cdot,\cdot\}_{\bar M}=-\{\cdot,\cdot\}_M$.

\begin{example}
Suppose $\phi:M\to N$ is a smooth map between Poisson manifolds, and let $\on{gr}(\phi)\subset N\times \bar M$ denote its graph. Then $\phi$ is a morphism of Poisson manifolds if and only if $\on{gr}(\phi)\subset N\times \bar M$ is a coisotropic submanifold \cite{weinstein87}.
\end{example}

Generalizing this example, a \emph{coisotropic relation} $R:M\dasharrow N$ between Poisson manifolds is a relation such that $R\subseteq N\times \bar M$ is a coisotropic submanifold \cite{weinstein87}. This implies that 
\begin{equation}\label{eq:CoisoRelFP}
f_1\sim_R g_1, \quad f_2\sim_R g_2\Rightarrow \{f_1,f_2\}\sim_R \{g_1,g_2\}
\end{equation}
for any $f_i\in C^\infty(M)$ and $g_i\in C^\infty(N)$.

\begin{example}[Lie algebras]\label{ex:LieAlgPois1}
If $\g$ is a finite dimensional Lie algebra, then $\g^*$ carries the Kirillov Poisson structure \cite{Kirillov:1976ud}. 
We may identify $\g$ with the subspace of linear functions on $\g^*$, and the Poisson bracket restricts to the Lie bracket for $\g$ on this subspace. This property is enough to define the bivector field $\pi$, which in turn defines the Poisson bracket of two arbitrary functions by the formula \labelcref{eq:bivField}.

If $\h$ is a second Lie algebra, then a linear map $\psi:\g\to\h$ is a Lie algebra morphism if and only if $\phi=\psi^*:\h^*\to\g^*$ is a morphism of Poisson manifolds.

In this way, there is a one-to-one correspondence between finite-dimensional Lie algebras and \emph{linear} Poisson structures on vector spaces.
\end{example}


As a slight generalization of \cref{ex:LieAlgPois1}, we have the following.
\begin{definition}[Linear Poisson structures]
Let $V$ be a vector bundle. A Poisson structure on $V$ is called \emph{linear} if the graph of addition \begin{equation}\label{eq:LinPoisStr}\on{gr}(+): V\times V\dasharrow V\end{equation} is a coisotropic relation.

\begin{remark}


If $f,g\in C^\infty(V)$ satisfy $$f\oplus f\sim_{\gr(+)}f,$$ and $$g\oplus g\sim_{\gr(+)} g$$ then \cref{eq:CoisoRelFP,eq:LinPoisStr} imply that $$\{f,g\}\oplus\{f,g\}\sim_{\gr(+)}\{f,g\}.$$ 
In other words, $\{f,g\}$ is a linear function whenever $f,g\in C^\infty(V)$ are linear. Similarly, $\{f,g\}$ is a fibrewise constant function whenever $f\in C^\infty(V)$ is linear and $g\in C^\infty(V)$ is fibrewise constant (or vice versa), and $\{f,g\}=0$ whenever $f,g\in C^\infty(V)$ are both fibrewise constant. In particular, the bundle projection $V\to M$ is a Poisson morphism to the zero Poisson structure.

Since the Poisson bracket satisfies the Leibniz rule, linear Poisson structures are entirely determined by the Poisson brackets of linear functions. For instance the Poisson bracket between a linear function $f$ and a constant function $g$ is determined by the Leibniz rule, $$\{f,g\}h=\{f,gh\}-g\{f,h\},$$ where $h$ is some arbitrarily chosen linear function.
\end{remark}
\end{definition}

A Poisson manifold $M$ is said to be \emph{symplectic} whenever $\pi^\sharp:T^*M\to TM$ is an isomorphism.

\begin{example}[The cotangent bundle]\label{ex:SympCot}
If $N$ is any manifold, then the cotangent bundle, $T^*N$, is a symplectic manifold. Let $p:T^*N\to N$ denote the bundle projection. For a vector field $X\in\mf{X}(N)$, let $\la X,\cdot\ra$ denote the corresponding linear function on $T^*N$.
The linear Poisson bracket on $T^*N$ is defined by the equation $$\{\la X,\cdot\ra ,\la Y,\cdot\ra\}=\la[X,Y],\cdot\ra$$ for $X,Y\in \mf{X}(N)$. Note that for $f,g\in C^\infty(N)$, one has
\begin{align*}
 \{\la X,\cdot\ra,p^*f\}&=p^*(X\cdot f), \\ \{p^*f,p^*g\}&=0.
\end{align*}
\end{example}


\subsection{Lie algebroids}
Lie algebroids, which are a common generalization of Lie algebras and the tangent bundle, will be an important concept in this paper. We recall their definition and basic properties, and refer the reader to \cite{moerdijk03,Mackenzie05} for more comprehensive expositions.

\begin{definition}
A \emph{Lie algebroid} is a vector bundle $A\to M$ together with a Lie bracket $[ \cdot,\cdot ]$
on its space of sections $\Gamma(A)$ and a 
bundle map $\mbf{a}:A\to TM$ called the \emph{anchor map} 
%
such that the following Leibniz identity is satisfied:
$$[\sigma,f\tau]=(\mbf{a}(\sigma)\cdot f)\tau+f[\sigma,\tau],\quad \sigma,\tau\in\Gamma(A), f\in C^\infty(M).$$

\begin{remark}
Note that the anchor map is determined by the Lie bracket. Indeed $$(\mbf{a}(\sigma)\cdot f)\sigma=[\sigma,f\sigma], \quad \sigma\in\Gamma(A),f\in C^\infty(M).$$ Additionally, the anchor map intertwines the Lie brackets:
$$\mbf{a}[\sigma,\tau]=[\mbf{a}(\sigma),\mbf{a}(\tau)].$$
\end{remark}

\end{definition}

\begin{example}[The tangent bundle and the Lie algebroid of a foliation.]\label{ex:TangBundLie}
The tangent bundle $A=TM\to M$ is a Lie algebroid, where the bracket on $\Gamma(TM)=\mf{X}(M)$ is the Lie bracket of vector fields, and the anchor map $\mbf{a}:TM\to TM$ is the identity map.

Lie algebroids with injective anchor map are precisely the integrable distributions $F\subset TM$, 
i.e. sub-bundles for which $\Gamma(F)$ is a 
Lie subalgebra of $\Gamma(TM)$.  


\end{example}

\begin{example}[Lie algebras and action Lie algebroids]
Any Lie algebra $\g$, regarded as a vector bundle over the point, is a Lie algebroid. 

More generally, if $\g$ acts on a manifold $M$ via the Lie algebra morphism $\rho:\g\to \mf{X}(M)$, then $\g\times M$ is a Lie algebroid with bracket 
\begin{equation}
\label{eq:actionlie} [\xi_1,\xi_2]_{\g\times M}=[\xi_1,\xi_2]_\g+\Lied_{\rho(\xi_1)}\xi_2-\Lied_{\rho(\xi_2)}\xi_1,\quad \xi_1,\xi_1\in\Gamma(\g\times M)\cong C^\infty(M,\g).\end{equation} The anchor map $\mbf{a}:\g\times M\to TM$ is defined to extend the map $\rho:\g\to\mf{X}(M)$ on constant sections. With this structure, $\g\times M$ is called the \emph{action Lie algebroid} for the action of $\g$ on $M$.
\end{example}

\begin{example}[Cotangent Lie algebroid of a Poisson manifold]\label{ex:CotLieAlg}
If $M$ is a Poisson manifold, with bivector field $\pi$, then $T^*M\to M$ is a Lie algebroid. The anchor map is $\pi^\sharp:T^*M\to TM$, and the Lie algebroid bracket is the \emph{Koszul} bracket
\begin{equation}\label{eq:Koszul}[\alpha,\beta]_\pi=d\pi(\alpha,\beta)+\iota_{\pi^\sharp\alpha}d\beta-\iota_{\pi^\sharp\beta}d\alpha.\end{equation}
\end{example}

\begin{example}[Atiyah Lie algebroid]\label{eq:AtiyahLieAlg}
Let $G$ be a Lie group, and $P\to M$ a principal $G$ bundle. Then $TP/G\to M$ is a Lie algebroid, with the Lie bracket induced by the embedding $$\Gamma(TP/G)\cong \Gamma(TP)^G\subset \mf{X}(P)$$ of sections of $TP/G$ as $G$-invariant vector fields on $P$.


\end{example}

An observation of Courant \cite[Theorem 2.1.4]{Courant:1990uy} provides an alternative characterization of Lie algebroids in Poisson geometrical terms which generalizes \cref{ex:LieAlgPois1}. We will find this alternative characterization useful in several places, but most notably to define Lie subalgebroids and morphisms of Lie algebroids. For the reader's convenience, we restate it here.

\begin{theorem}[\!\!{\cite[Theorem 2.1.4]{Courant:1990uy}}]\label{thm:linPois} Let $A\to M$ be a vector bundle and $A^*\to M$ the dual vector bundle.
There is a one-to-one correspondence between Lie algebroid structures on $A$ and linear Poisson structures on $A^*$. 
\end{theorem}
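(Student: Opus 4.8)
The plan is to exhibit the correspondence explicitly and then check that each direction of the construction is well-defined and inverse to the other. Given a Lie algebroid structure on $A\to M$ with anchor $\mbf{a}$ and bracket $[\cdot,\cdot]$, I will define a bivector field $\pi$ on $A^*$ by prescribing the Poisson brackets of the two distinguished classes of functions on $A^*$: the fibrewise-linear functions, which are canonically sections $\sigma\in\Gamma(A)$, and the pullbacks $p^*f$ of functions $f\in C^\infty(M)$ along $p\colon A^*\to M$. Following the model of \cref{ex:LieAlgPois1} and \cref{ex:SympCot}, I set
\begin{align*}
\{\sigma,\tau\}&=[\sigma,\tau],\qquad \sigma,\tau\in\Gamma(A),\\
\{\sigma,p^*f\}&=p^*(\mbf{a}(\sigma)\cdot f),\qquad \sigma\in\Gamma(A),\ f\in C^\infty(M),\\
\{p^*f,p^*g\}&=0,\qquad f,g\in C^\infty(M).
\end{align*}
Since linear and basic functions generate $C^\infty(A^*)$ as an algebra (locally), the Leibniz rule forces a unique extension to all functions; I must check this extension is consistent, i.e. that the above assignments respect the two relations $C^\infty(M)$-linearity imposes, namely $\{\sigma, p^*f\cdot p^*g\}$ computed two ways, and $\{f\sigma,\tau\}$ versus the section $f\sigma$. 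This consistency is exactly where the Leibniz identity for the Lie algebroid bracket enters. The resulting bracket is then linear in the sense of the \emph{Linear Poisson structures} definition because it sends pairs of linear functions to linear functions and kills pairs of basic functions.

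Next I verify the Jacobi identity for $\{\cdot,\cdot\}$. Because the bracket is a derivation in each slot and the Jacobiator $J(u,v,w)=\{u,\{v,w\}\}+\text{cyclic}$ is a triderivation, it suffices to check $J$ vanishes on triples drawn from the generating set $\{\Gamma(A)\cup p^*C^\infty(M)\}$. There are (up to symmetry) four cases: three linear functions, two linear and one basic, one linear and two basic, three basic. The three-linear case reduces to the Jacobi identity for the Lie algebroid bracket $[\cdot,\cdot]$. The two-linear/one-basic case reduces to the compatibility $\mbf{a}[\sigma,\tau]=[\mbf{a}(\sigma),\mbf{a}(\tau)]$ of the anchor with the brackets (which, as noted in the remark after the definition, is automatic). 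The remaining two cases are immediate from $\{p^*f,p^*g\}=0$ and the fact that $\mbf{a}(\sigma)$ is a genuine vector field. So $\pi$ is a Poisson bivector, and it is linear.

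For the converse, suppose $A^*$ carries a linear Poisson structure. Linearity (via \cref{eq:CoisoRelFP} applied to the coisotropic relation $\gr(+)$, exactly as in the Remark following the definition of linear Poisson structures) guarantees that $\{\sigma,\tau\}$ is again a linear function whenever $\sigma,\tau$ are, hence defines a bracket on $\Gamma(A)$; that $\{\sigma,p^*f\}$ is basic, hence of the form $p^*(\text{something})$, defines — together with the Leibniz rule — a bundle map $\mbf{a}\colon A\to TM$ by $\mbf{a}(\sigma)\cdot f := $ that something; and $\{p^*f,p^*g\}=0$. The Leibniz rule for $[\cdot,\cdot]$ on $\Gamma(A)$ and the derivation property of $\mbf{a}$ follow from the Leibniz rule for the Poisson bracket together with linearity, and the Jacobi identity for $[\cdot,\cdot]$ follows by restricting the Jacobi identity for $\{\cdot,\cdot\}$ to linear functions. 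This shows the two constructions are mutually inverse, since each is determined by its effect on linear and basic functions, and the correspondence is established.

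The main obstacle I anticipate is the bookkeeping in the well-definedness step: one must be careful that the bracket of linear functions, defined a priori only on fibrewise-linear functions, is $C^\infty(M)$-bilinear up to the anchor correction — i.e. that the would-be bracket $\{f\sigma,\tau\}$ agrees with what the Leibniz rule predicts from $\{\sigma,\tau\}$ and $\{p^*f,\tau\}$ — and similarly that $\mbf{a}$ is tensorial (a bundle map, not merely a map of sections). All of this is forced by linearity of the Poisson structure, but it is the one place where a genuine argument, rather than a formal manipulation, is needed; everything else is the triderivation reduction of Jacobi plus translation of identities. I will also remark that both constructions are local over $M$, so it is enough to work in a local frame of $A$, which simplifies the verification that linear and basic functions generate.
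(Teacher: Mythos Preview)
Your proof is correct and follows the standard route. However, the paper does not actually prove this theorem: it is stated as a citation of \cite[Theorem~2.1.4]{Courant:1990uy}, and the paper supplies only the defining relation $\la[\sigma,\tau],\cdot\ra=\{\la \sigma,\cdot\ra,\la \tau,\cdot\ra\}$ immediately afterward, without argument. So there is nothing in the paper to compare against beyond noting that your first defining formula agrees with the one the paper records.

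One small wording issue worth tightening: linear and basic functions do not literally generate $C^\infty(A^*)$ as an algebra---they generate only the fibre-polynomial functions. What you actually need (and implicitly use) is that their differentials span $T^*A^*$ pointwise, so a bivector field is determined by its values on them; you might say this explicitly rather than invoking algebraic generation. Everything else---the consistency check reducing to the algebroid Leibniz rule, the triderivation reduction of Jacobi to the four generator cases, and the converse extraction of $(\mbf{a},[\cdot,\cdot])$ from linearity---is handled correctly.
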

Recall that any section $\sigma\in\Gamma(A)$ defines a linear function $\la \sigma,\cdot\ra$ on $A^*$.
The Lie bracket on $\Gamma(A)$ is related to the linear Poisson structure on $A^*$ by the formula $$\la[\sigma,\tau],\cdot\ra=\{\la \sigma,\cdot\ra,\la \tau,\cdot\ra\},\quad \sigma,\tau\in\Gamma(A).$$ 

\begin{example}
The correspondence described in \cref{thm:linPois} relates the canonical Lie algebroid structure on the tangent bundle (i.e. \cref{ex:TangBundLie}) to the canonical linear symplectic structure on the cotangent bundle (i.e. \cref{ex:SympCot}).
\end{example}

\begin{example}[Tangent lift of a Poisson structure]\label{ex:PoisTngPro}
Suppose that $M$ is a Poisson manifold, then \cref{ex:CotLieAlg} shows that $T^*M$ is a Lie algebroid. By \cref{thm:linPois}, $TM$ carries a linear Poisson structure, called the \emph{tangent prolongation} (or \emph{tangent lift}) of the Poisson structure on $M$. In fact, the tangent lift is a functor from the category of Poisson manifolds to the category of linear Poisson structures on vector bundles. This construction is originally due to Courant \cite{Courant:1999ho}, and independently to S\'{a}nchez de Alvarez\cite{SanchezdeAlvarez:1989vf}. See also Grabowski \cite{Grabowski:1999ce}.
\end{example}

Next we recall the notion of a Lie subalgebroid of a Lie algebroid, originally due to Higgins and Mackenzie \cite{Higgins:1990gq}, though we use the formulation due to Mackenzie and Xu \cite{Mackenzie-Xu94}.

\begin{definition}\label{def:SubLA}
Let $A\to M$ be a Lie algebroid. A subbundle 
$B\subseteq A$ over a submanifold $N\subseteq M$ is called a Lie subalgebroid if its annihilator $$\ann(B)\subseteq A^*\rvert_N$$ is a coisotropic submanifold of $A^*$ (with respect to the linear Poisson structure on $A^*$).
\end{definition}

\begin{definition}\label{def:MorphLA}
Suppose that $A_1\to M_1$ and $A_2\to M_2$ are two Lie algebroids.
\begin{itemize}
 \item A morphism of vector bundles $\Phi:A_1\to A_2$ is called a Lie algebroid morphism if its graph $\on{gr}(\Phi)\subset A_2\times A_1$ is a Lie subalgebroid.

\item Suppose $\Psi:\psi^* A_2\to A_1$ is a base-preserving map of vector bundles, where $\psi^*A_2$ is the pullback of $A_2$ along a map $\psi:M_1\to M_2$. If $\on{gr}(\Psi)\subset A_2\times A_1$ is a Lie subalgebroid, then the vector bundle relation $\gr(\Psi):A_1\dasharrow A_2$ is said to be a \emph{comorphism} of Lie algebroids.

\item More generally, a relation $R:A_1\dasharrow A_2$ is called an $\mc{LA}$ relation if $R\subseteq A_2\times A_1$ is a Lie subalgebroid.
\end{itemize}

%

\begin{remark}
\begin{itemize}
\item The definition of base preserving morphisms of Lie algebroids in terms of coisotropic calculus is due to Courant \cite{Courant:1990uy}, following the work of Kirillov for Lie algebras \cite{Kirillov:1976ud}. 

\item The general definition of a morphism of Lie algebroids is originally due to Higgins and Mackenzie \cite{Higgins:1990gq}, the definition above is due to Mackenzie and Xu \cite{Mackenzie-Xu94}.

\item The concept of a comorphism between Lie algebroids is due to Higgins and Mackenzie \cite{Higgins:1993bc}.
Note that comorphisms can be described more naturally as Poisson morphisms $A_1^*\to A_2^*$ between the corresponding linear Poisson structures.

\item 
If $R:A_1\dasharrow A_2$ is an $\mc{LA}$-relation, then for any $\sigma_i,\tau_i\in\Gamma(A_i)$ satisfying $\sigma_1\sim_R\sigma_2$ and $\tau_1\sim_R\tau_2$, we have
$$[\sigma_1,\tau_1]\sim_R[\sigma_2,\tau_2].$$
\end{itemize}
\end{remark}
\end{definition}

\begin{remark}[Supergeometric definition of a Lie algebroid]\label{rem:SupLie}
There is an elegant definition of Lie algebroids in terms of supergeometry, due to Va\u{\i}ntrob \cite{LieAlgebroidsH}. Recall that an \emph{$N$-manifold} is a supermanifold carrying an action of the multiplicative semigroup $\mbb{R}$ such that $-1$ acts as the parity operator \cite{Severa:2005vla}. The degree of a function (or vector field etc.) is the weight of this action, i.e. $t\cdot f=t^{\on{deg}(f)}f$, $t\in\mbb{R}$. An $N$-manifold is said to be of \emph{degree $k$} if the highest degree of a coordinate function is $k$. 

If $X$ is an $N$-manifold, then $0\cdot X$ is a regular smooth manifold called the \emph{base of $X$}. We may consider the sheaf of functions on $0\cdot X$ generated by elements of $C^\infty(X)$ of degree no greater than $k$. This sheaf of functions describes an $N$-manifold called the \emph{$k^{th}$ truncation of $X$}. For example, the $0^{th}$ truncation is $0\cdot X$. 

Finally, a \emph{homological vector field} is a degree 1 self-commuting vector field, usually denoted $Q$. Since $$2Q^2=[Q,Q]=0,$$ the operator $Q$ defines a differential of degree 1 on $C^\infty(X)$ which is also a derivation with respect to the multiplication (i.e. $C^\infty(X)$ is a \emph{differential graded algebra}). Supermanifolds (resp. $N$-manifolds) carrying a homological vector field are referred to as \emph{$Q$-manifolds} (resp. \emph{$NQ$-manifolds}) \cite{Schwarz:1993,Alexandrov:1997jj}.

 If $A\to M$ is a vector bundle, then the supermanifold $A[1]$ whose sheaf of functions is $C^\infty(A[1]):=\Gamma(\wedge^\bullet A^*)$, is a degree 1 $N$-manifold (where $\mbb{R}$ acts on the fibres by scalar multiplication).

As shown by Va\u{\i}ntrob \cite{LieAlgebroidsH}, a Lie algebroid structure on $A$ is equivalent to a \emph{homological} vector field, $Q$, on the supermanifold $A[1]$ (i.e. $\Gamma(\wedge^\bullet A^*)$ is a differential graded algebra).

If $\sigma\in \Gamma(A)$, then contraction with $\sigma$ defines a degree -1 derivation, $\iota_\sigma$, of $C^\infty(A[1]):=\Gamma(\wedge^\bullet A^*)$. In this way, degree -1 vector fields on $A[1]$ are in one-to-one correspondence with sections of $A$.
The Lie bracket of two sections $\sigma,\tau\in\Gamma(A)$ is given by the so-called \emph{derived bracket construction}
$$\iota_{[\sigma,\tau]}=[[\iota_\sigma,Q],\iota_\tau].$$

If $A\to M$ and $B\to N$ are two Lie algebroids, then a map $\phi:A[1]\to B[1]$ which intertwines the homological vector fields is equivalent to a morphism $A\to B$ of Lie algebroids.

\end{remark}

\subsection{Courant algebroids}
Dirac structures were introduced by T. Courant \cite{Courant:1990uy} as a unified framework from which to study constrained mechanical systems. Liu-Weinstein-Xu \cite{ManinTriplesBi}
generalized Courant's original set-up, replacing $\mbb{T}M$ with a more general notion of a \emph{Courant algebroid}
$\mbb{E}\to M$.  We recall the basic theory, and refer the reader to \cite{Dorfman:1993us,LetToWein,Roytenberg99,Severa:2005vla,Roytenberg:2002,Uchino02,Bursztyn:2009wi}.

\begin{definition}\label{def:CA}
A \emph{Courant algebroid} over a manifold $M$ is a vector bundle $\mbb{E}\to M$, together with a bundle 
map $\mbf{a}\colon \mbb{E}\to TM$ called the \emph{anchor}, a bundle metric\footnote{In this thesis, we take 
`metric' to mean a non-degenerate symmetric bilinear form.} $\la\cdot,\cdot\ra$, and 
 a bilinear bracket $\Cour{\cdot,\cdot}$ on its space of sections $\Gamma(\mbb{E})$. These are required 
to satisfy the following axioms, for all sections $\sigma_1,\sigma_2,\sigma_3\in\Gamma(\mbb{E})$:
\begin{enumerate}
\item[c1)] $\Cour{\sigma_1,\Cour{\sigma_2,\sigma_3}}=\Cour{\Cour{\sigma_1,\sigma_2},\sigma_3}
+\Cour{\sigma_2,\Cour{\sigma_1,\sigma_3}}$, 
\item[c2)] $\mbf{a}(\sigma_1)\la \sigma_2,\sigma_3\ra=\la \Cour{\sigma_1,\sigma_2},\,\sigma_3\ra+\la \sigma_2,\,\Cour{\sigma_1,\sigma_3}\ra$,
\item[c3)] $\Cour{\sigma_1,\sigma_2}+\Cour{\sigma_2,\sigma_1}=\mbf{a}^*(d \la \sigma_1,\sigma_2\ra)$.
\end{enumerate}
Here $\mbf{a}^*\colon T^*M\to \mbb{E}^*\cong\mbb{E}$ is the dual map to $\mbf{a}$. The axioms c1)-c3) imply various other properties, 
in particular
\begin{enumerate}
\item[c4)] $\Cour{\sigma_1,f\sigma_2}=f\Cour{\sigma_1,\sigma_2}+\mbf{a}(\sigma_1)(f)\sigma_2$,
\item[c5)] $\Cour{f\sigma_1,\sigma_2}=f\Cour{\sigma_1,\sigma_2}-\mbf{a}(\sigma_2)(f)\sigma_1+\la \sigma_1,\sigma_2\ra \mbf{a}^*(d f)$, 
\item[c6)] $\mbf{a}(\Cour{\sigma_1,\sigma_2})=[\mbf{a}(\sigma_1),\mbf{a}(\sigma_2)]$,
\end{enumerate}
for sections $\sigma_i\in\Gamma(\mbb{E})$ and functions $f\in C^\infty(M)$. We
will refer to the bracket $\Cour{\cdot,\cdot}$ as the \emph{Courant
bracket} (some authors refer to $\Cour{\cdot,\cdot}$ as the Dorfman bracket (after Dorfman \cite{Dorfman:1993us}, who introduced it) and its skew-symmetric part as the Courant bracket). 

For any Courant algebroid $\mbb{E}$, we denote by $\overline{\mbb{E}}$ the Courant algebroid with the same 
bracket and anchor, but with the bundle metric, $\la\cdot,\cdot\ra$, negated.\footnote{To see that axiom c3) holds for $\overline{\mbb{E}}$, it can be useful to rewrite it as $$\la\Cour{\sigma_1,\sigma_2}+\Cour{\sigma_2,\sigma_1},\sigma_3\ra=\mbf{a}(\sigma_3)\cdot\la \sigma_1,\sigma_2\ra,$$ an equation whose validity is manifestly preserved when the bundle metric is negated. Note that  we abused notion when writing the original equation, denoting  the composition $T^*M\xrightarrow{\mbf{a}^*}\mbb{E}^*\xrightarrow{\la\cdot,\cdot\ra}\mbb{E}$ simply by $\mbf{a}^*$. }

\end{definition}

A subbundle $E\subseteq \mbb{E}$ along a submanifold $S\subseteq M$ is called \emph{involutive} if it has the 
property 
\[ \sigma_1|_S,\ \sigma_2|_S\in\Gamma(E) 
\Rightarrow \Cour{\sigma_1,\sigma_2}|_S\in \Gamma(E),\]  
for any $\sigma_1,\sigma_2\in\Gamma(\mbb{E})$.
It is important to note that this does not define a bracket on sections of $E$, in general.

We let $L^\perp\subseteq \mbb{E}$ denote the orthogonal complement of $L$ with respect to the fibre metric.
A subbundle $L\subset \mbb{E}$ is called \emph{Lagrangian} if $L^\perp=L$, and coisotropic if $L^\perp\subseteq L$.
An involutive Lagrangian subbundle $E\subseteq \mbb{E}$  along $S\subseteq M$ is called
a \emph{Dirac structure along $S$}.   

A Dirac structure, $E$, along $S=M$ is
simply called a Dirac structure, and the pair $(\mbb{E},E)$ is called a \emph{Manin pair} \cite{PonteXu:08,Bursztyn:2009wi}. In this case, the restriction of the Courant bracket and the anchor map endows $E$ with the structure of a Lie algebroid \cite{Courant:1990uy,ManinTriplesBi}.
Dirac structures were introduced by
Courant \cite{Courant:1990uy} and Liu-Weinstein-Xu
\cite{ManinTriplesBi}. The notion of a Dirac structure along a
submanifold goes back to \v{S}evera \cite{LetToWein} and was developed
in \cite{Alekseev:2002tn,Bursztyn:2009wi,PonteXu:08}.

\begin{example}[The standard Courant algebroid]\label{ex:StdCourAlg}
The \emph{standard Courant algebroid} over $M$ is $\mbb{T} M=TM\oplus T^*M$ with anchor map the projection to the first factor and bilinear form 
$\la (X,\alpha),(Y,\beta)\ra=\la\beta,X\ra+\la \alpha,Y\ra$. The Courant bracket reads
\begin{equation}\label{eq:StdCourBrack} \Cour{(X,\alpha),(Y,\beta)}=([X,Y],\Lied_{X}\beta-\iota_{Y}d\alpha),\end{equation}
for vector fields $X,Y\in\mf{X}(M)$ and 1-forms $\alpha,\beta\in\Omega^1(M)$.

Both $TM$ and $T^*M$ are Dirac structures in $\mbb{T} M$. More generally, if $\pi\in\mf{X}^2(M)$ is a bivector field, then the graph $\on{gr}(\pi^\sharp)\subset \mbb{T}M$ of the associated skew symmetric map $\pi^\sharp:T^*M\to TM$ is a Dirac structure if and only if $\pi$ is Poisson. If $\omega\in \Omega^2(TM)$ is a 2-form, then the graph $\on{gr}(\omega^\flat)\subset\mbb{T} M$ of the associated skew symmetric map $\omega^\flat:TM\to T^*M$ is a Dirac structure if and only if $\omega$ is closed.

The standard Courant algebroid was introduced by Courant \cite{Courant:1990uy}.
\end{example}

\begin{example}[Exact Courant algebroids]The theory of \emph{exact Courant algebroids} was developed by \v{S}evera \cite{Severa:2001}.
A Courant algebroid $\mbb{E}\to M$ is called \emph{exact}, if the sequence $$0\to T^*M\xrightarrow{\mbf{a}^*}\mbb{E}\xrightarrow{\mbf{a}}TM\to0$$ is  exact. In this case, any Lagrangian splitting $s:TM\to\mbb{E}$ of this sequence defines a closed 3-form $\gamma\in\Omega^3(M)$ by the formula $$\gamma(X,Y,Z)=\la\Cour{s(X),s(Y)},s(Z)\ra,\quad X,Y,Z\in\Gamma(TM)$$
Additionally, the splitting defines a trivialization $$(\mbf{a}\times s^*):\mbb{E}\to TM\times T^*M,$$ which intertwines the bundle metric on $\mbb{E}$ with the natural pairing on $TM\oplus T^*M$. 

Under this identification, the Courant bracket becomes
$$ \Cour{(X,\alpha),(Y,\beta)}_\gamma=([X,Y],\Lied_{X}\beta-\iota_{Y}d\alpha+\iota_X\iota_Y\gamma),$$
for vector fields $X,Y\in\mf{X}(M)$ and 1-forms $\alpha,\beta\in\Omega^1(M)$.

 Up to isomorphism, exact Courant algebroids are classified by the de Rham cohomology class $[\gamma]\in H^3_{dR}(M)$, often referred to as the \emph{\v{S}evera class} of the Courant algebroid.
\end{example}

\begin{example}[Quadratic Lie algebras]\label{ex:ActCourAlg}
A Lie algebra together with an invariant metric is called a \emph{quadratic Lie algebra}.
Courant algebroids over a point correspond to \emph{quadratic Lie algebras}.

Suppose $\mf{d}$ is a quadratic Lie algebra,
acting on a manifold $M$. Let $\rho\colon \mf{d}\times M\to TM$ be the
action map. Let $\mbb{E}=\mf{d}\times M$ with anchor map $\mbf{a}=\rho$ and with
the bundle metric coming from the metric on $\mf{d}$. As shown in
\cite{LiBland:2009ul}, the Lie bracket on constant sections $\mf{d}\subseteq
C^\infty(M,\mf{d})=\Gamma(\mbb{E})$ extends to a Courant bracket if and only if
the action has coisotropic stabilizers $\ker(\rho_m)\subseteq
\mf{d}$. Explicitly, for $\sigma_1,\sigma_2\in \Gamma(\mbb{E})=C^\infty(M,\mf{d})$ the
Courant bracket reads (see \cite[$\mathsection$ 4]{LiBland:2009ul})
\begin{equation}
\label{eq:actioncourant} \Cour{\sigma_1,\sigma_2}=[\sigma_1,\sigma_2]+\Lied_{\rho(\sigma_1)}\sigma_2-\Lied_{\rho(\sigma_2)}\sigma_1+\rho^*\la d\sigma_1,\sigma_2\ra.\end{equation}
Here $\rho^*\colon T^*M\to \mf{d}\times M$ is the dual map to the action map, 
using the metric to identify $\mf{d}^*\cong \mf{d}$. 
Note that the first three terms give the Lie algebroid bracket \labelcref{eq:actionlie} for the action Lie algebroid $\mf{d}\times M$. The correction term \begin{equation}\label{eq:ActCourCorr}\rho^*\la d\sigma_1,\sigma_2\ra\end{equation} turns the Lie algebroid bracket into a 
Courant bracket. We refer to $\mf{d}\times M$ with bracket \labelcref{eq:actioncourant} as an \emph{action Courant algebroid}.
\end{example}

\begin{remark}[Supergeometric perspective]\label{rem:SupCour} We recall the one-to-one correspondence, due to Roytenberg \cite{Roytenberg:2002}, between vector bundles carrying a bundle metric, and $N$-manifolds carrying a symplectic form of degree 2 (this fact was discovered independently by \v{S}evera \cite{LetToWein, Severa:2005vla}).

Suppose that $V\to M$ is a vector bundle. Then a bundle metric $\la\cdot,\cdot\ra$ on $V$ defines a degree $-2$ Poisson structure on $V[1]$, as follows. Any degree 1 function on $V[1]$ is equal to $\la\sigma,\cdot\ra$ for some section $\sigma\in\Gamma(V)$. The Poisson bracket of two such functions is defined to be $$\{\la\sigma,\cdot\ra,\la\tau,\cdot\ra\}=\la\sigma,\tau\ra,\quad\sigma,\tau\in\Gamma(V),$$ and it extends to functions of arbitrary degree via the Leibniz rule. 
Let $X$ be the fibred product defined by the following diagram 
%
$$\begin{tikzpicture}
\mmat[2em]{m}
{X&T^*[2]V[1]\\
V[1]&(V\oplus V^*)[1]\\};
\path[->]
(m-1-1) 	edge  (m-1-2)
		edge  (m-2-1);
\path[->] (m-1-2) edge node {$p$} (m-2-2);
\path[->] (m-2-1) edge node {$i$} (m-2-2);
\end{tikzpicture}$$

 where $p$ is the canonical projection and $i$ is the embedding given by $$i:v\to v\oplus\frac{1}{2}\la v,\cdot\ra,\quad v\in V.$$ Then $X$ is a symplectic submanifold of $T^*[2]V[1]$, and is the minimal symplectic realization of $V[1]$.

Next, we recall that this correspondence extends to a one-to-one correspondence between Lagrangian subbundles $W\subseteq V$ and Lagrangian sub $N$-manifolds $L\subseteq X$, as observed by \v{S}evera \cite{LetToWein, Severa:2005vla}.
Let $W\subset V$ be a Lagrangian subbundle over $S\subseteq M$, then $\ann[2](TW[1])\subset T^*[2]V[1]$ is a Lagrangian submanifold. The corresponding Lagrangian submanifold $L\subseteq X$ is the intersection $L:=X\cap \ann[2](TW[1])$. 

It was discovered independently by Roytenberg and \v{S}evera \cite{Roytenberg:2002, LetToWein, Severa:2005vla} that Courant brackets on $V\to M$ are in one-to-one correspondence with homological vector fields on $X$ which preserve the symplectic structure, in which case $X$ is called a \emph{degree 2 symplectic $NQ$-manifold}. Moreover, as described by \v{S}evera \cite{LetToWein,Severa:2005vla}, Dirac structures along a submanifold are in one-to-one correspondence with Lagrangian submanifolds of $X$ which are tangent to the homological vector field.

\end{remark}

\subsubsection{Courant relations, and morphisms of Manin pairs}\label{sec:CARel}

\begin{definition}[Courant morphisms and relations]
Let $\mbb{E}_1,\mbb{E}_2$ be two Courant algebroids over $M_1$ and $M_2$, respectively. A \emph{Courant relation} $R:\mbb{E}_1\dasharrow\mbb{E}_2$ is a Dirac structure $R\subseteq \mbb{E}_2\times\overline{\mbb{E}_1}$ along a submanifold $S\subset M_2\times M_1$.
If $S$ is the graph of a map $M_1\to M_2$, then $R$ is called a \emph{Courant morphism}.
\end{definition}

As a consequence of the definition, if $\sigma_i\in \Gamma(\mbb{E}_i)$ and $\tau_i\in\Gamma(\mbb{E}_i)$ satisfy 
$\sigma_1\sim_R\sigma_2$, and $\tau_1\sim_R\tau_2$, then
\begin{equation*}\begin{split}
\Cour{\sigma_1,\tau_1}&\sim_R\Cour{\sigma_2,\tau_2},\\
\la\sigma_1,\tau_1\ra&\sim_R\la\sigma_2,\tau_2\ra.
\end{split}
\end{equation*}

If two Courant relations $R_1:\mbb{E}_1\dasharrow \mbb{E}_2$ and $R_2:\mbb{E}_2\dasharrow\mbb{R}_3$ compose \emph{cleanly}, then their composition $R_2\circ R_1:\mbb{E}_1\dasharrow \mbb{R}_3$ is a Courant relation (see \cite[Proposition~1.4]{LiBland:2011vqa}). In particular, since Dirac structures $E_i\subseteq \mbb{E}_i$ define Courant relations (to or from the trivial Courant algebroid), if the compositions $R_1\circ E_1\subseteq \mbb{E}_2$ and $E_2\circ R_1\subseteq \mbb{E}_1$ are clean, they define Dirac structures (with support).

\begin{example}\label{ex:diagDirStr}
Suppose $\mbb{E}$ is a Courant algebroid over $M$. Then the diagonal $\mbb{E}_\Delta\subseteq \mbb{E}\times\overline{\mbb{E}}$ is a Dirac structure with support along the diagonal $M_\Delta\subseteq M\times M$. The corresponding Courant relation, $$\mbb{E}_\Delta:\mbb{E}\dasharrow\mbb{E}$$ is just the identity map.
\end{example}


\begin{definition}[Morphisms of Manin pairs]\label{def:MorphMP}
Suppose $E_1\subset \mbb{E}_1$ and $E_2\subset\mbb{E}_2$ are two Dirac structures. A \emph{morphism of Manin pairs} 
$$R:(\mbb{E}_1,E_1)\dasharrow(\mbb{E}_2,E_2)$$ 
is a Courant morphism $R:\mbb{E}_1\dasharrow\mbb{E}_2$ such that
\begin{enumerate} 
\item[m1)] $E_1\cap \on{ker}(R)=0$, 
\item[m2)] $R\circ E_1\subseteq E_2$. 
\end{enumerate}
\end{definition}

Courant morphisms were introduced by Alekseev and Xu in  \cite{Alekseev:2002tn} while morphisms of Manin pairs were introduced by Bursztyn, Iglesias Ponte and \v{S}evera in \cite{Bursztyn:2009wi}.
We recall from \cite{Bursztyn:2009wi} some of the important properties of morphisms of Manin pairs. Let
$$R:(\mbb{E}_1,E_1)\dasharrow(\mbb{E}_2,E_2)$$ be a morphism of Manin pairs over the map $\phi:M_1\to M_2$. Then \cref{def:MorphMP} implies that $$R/\big((E_2\times E_1)\cap R\big)\subseteq (\mbb{E}_2/E_2)\times(\mbb{E}_1/E_1)$$ is the graph of a morphism $\mbb{E}_1/E_1\to\mbb{E}_2/E_2$, which we can identify with a map $$\Phi_R:E_1^*\to E_2^*.$$ In fact $\ann^\natural(\Phi_R)\subset E_2\times E_1$ is the graph of a comorphism $\phi^*E_2\to E_1$ of Lie algebroids.

Next, if $F_2\subset\mbb{E}_2$ is a Dirac structure which is transverse to $E_2$ (i.e. $F_2\oplus E_2=\mbb{E}_2$), then $F_1:=F_2\circ R\subset\mbb{E}_1$ is a Dirac structure transverse to $E_1$ \cite[Remark~2.12]{Bursztyn:2009wi} (see also \cite[Proposition~1.4]{LiBland:2011vqa}). 
Under the identification $F_i\cong E_i^*$ given by the pairing in $\mbb{E}_i$, the map $\Phi_R:E_1^*\to E_2^*$ can be identified with a map $$F_1\to F_2.$$ By\cite[Remark~2.12]{Bursztyn:2009wi}, this latter map is a morphism of Lie algebroids.
\begin{remark}
Another way to see this fact is as follows. Since $R$ is involutive, $\gr(\Phi_R)\cong R\cap(F_2\times F_1)$ is a Lie subalgebroid. Thus, by \cref{def:MorphLA}, $\Phi_R:F_1\to F_2$ is a morphism of Lie algebroids.
\end{remark}

As described in \cite{Bursztyn:2009wi}, the composition of morphisms of Manin pairs is defined to be the composition of the underlying relation, which is guaranteed to be clean by \cref{def:MorphMP}.

\begin{example}[Standard lift of a relation]\label{ex:StdDiracStr}
Let $S\subset M$ be an embedded submanifold. Then $TS\oplus N^{0}(S)\subseteq \mbb{T} M$ is a Dirac structure along $S$. Moreover, if $S:M_1\dasharrow M_2$ is a relation, then $R_S:=TS\oplus\ann^\natural(TS)\subseteq \mbb{T}M_2\times \overline{\mbb{T} M_1}$ defines a Courant relation $$R_S:\mbb{T}M_1\dasharrow \mbb{T}M_2.$$
\end{example}

\begin{example}[q-Poisson structures]\label{ex:qPstr}
Let $\mf{d}$ be a quadratic Lie algebra and $\g\subset\mf{d}$ a Lagrangian subalgebra. We will refer to a morphism of Manin pairs 
\begin{equation}\label{eq:qPstr}R:(\mbb{T}M,TM)\dasharrow (\mf{d},\g)\end{equation}
as a \emph{q-Poisson $(\mf{d},\g)$ structure on $M$}.

It was shown in \cite[\S~3.2]{Bursztyn:2009wi} that once a Lagrangian complement $\mf{p}\subset\mf{d}$ to $\g$ has been chosen, a q-Poisson $(\mf{d},\g)$-structure on $M$ defines a q-Poisson structure on $M$ in the sense of \cite{Alekseev99}, for the Manin quasi-triple $(\mf{d},\g,\mf{p})$.


If $\mf{p}\subset\mf{d}$ is a Dirac structure (i.e. a Lagrangian Lie subalgebra), then $F=\mf{p}\circ R\subset\mbb{T} M$ is a Dirac structure. Since $F$ is transverse to $TM$, it is isomorphic to $T^*M$ as a vector bundle. In this way, any choice of Lagrangian Lie subalgebra $\mf{p}\subset\mf{d}$ endows $T^*M$ with the structure of a Lie algebroid (cf. \cref{ex:qPCotLie}).
\end{example}

\begin{example}[{\!\!\cite[Example~1.6]{LiBland:2011vqa}}]\label{ex:canonicalmorphism}
For any Manin pair $(\mbb{E},E)$ over $M$, there is a morphism of Manin pairs 
$$ J_E\colon (\mbb{T}M,TM)\dasharrow (\mbb{E},E)$$
where $v+\mu\sim_R x$ if and only if $v=\mbf{a}(x)$ and $x-\mbf{a}^*(\mu)\in E$, where $v\in TM$, $\mu\in T^*M$ and $x\in \mbb{E}$.
\end{example}

\begin{remark}[Supergeometric perspective]\label{rem:SupMP}
We now describe an interpretation, due to \v{S}evera \cite{NonComDiffForm}, of Manin pairs in terms of supergeometry (see also \cite{LetToWein,Bursztyn:2009wi}).

Recall that a degree $k$ Poisson structure on a supermanifold $Y$ corresponds to a quadratic function $\pi$ of total degree $2-k$ on $T^*[1-k]Y$. The function $\pi$ is called the \emph{Poisson bivector field}. Let $\{\cdot,\cdot\}$ denote the canonical Poisson bracket on $T^*[1-k]Y$, as described in \cref{ex:SympCot}, and $p:T^*[1-k]Y\to Y$ the bundle projection. Then for $f,g\in C^\infty(Y)$, the Poisson bracket $\{f,g\}_\pi$ defined by $\pi$ is defined by the \emph{derived bracket construction}
$$p^*(\{f,g\}_\pi)=\{\{p^*f,\pi\},p^*g\}.$$ The bracket $\{\cdot,\cdot\}_\pi$ satisfies the Jacobi identity if and only if $\{\pi,\pi\}=0$.

As described in \cite{NonComDiffForm}, Manin pairs $(\mbb{E},E)$ are in one-to-one correspondence with principal $\mbb{R}[2]$ bundles $P\to E^*[1]$ carrying an $\mbb{R}[2]$ invariant Poisson structure of degree -1. To describe the correspondence, suppose $P\to E^*[1]$ is such a bundle and let $\pi$ denote the degree 3 function on $T^*[2]P$ corresponding to the Poisson bivector field. The cotangent lift of the $\mbb{R}[2]$ action is Hamiltonian, with moment map $\mu:T^*[2]P\to \mbb{R}$.
 Let $X=\mu^{-1}(1)/\mbb{R}[2]$ be the Marsden-Weinstein quotient taken at moment level 1. Since $\pi$ is an $\mbb{R}[2]$ invariant function satisfying $\{\pi,\pi\}=0$, it reduces to a degree 3 function $\Phi\in C^\infty(X)$ on the symplectic quotient satisfying $\{\Phi,\Phi\}=0$. Let $Q=\{\Phi,\cdot\}$ be the homological vector field corresponding to $\Phi$.  Since $X$ is an $NQ$-manifold carrying a symplectic form of degree 2, it corresponds to a Courant algebroid $\mbb{E}$ (see \cite{LetToWein,Roytenberg:2002} or \cref{rem:SupCour} for details).

The zero set of the ideal of functions of positive degree on $P$ is precisely the base $M$ of the supermanifold.
Since the Poisson structure on $P$ is of degree -1, it follows that $M$ is a coisotropic submanifold of $P$. Hence $\ann[2](TM)\subset T^*[2]P$ is a Lagrangian submanifold on which $\pi$ vanishes. Hence it reduces to define a Lagrangian submanifold $L\subset X$ tangent to the homological vector field $Q$. As explained in \cite{LetToWein,Severa:2005vla} or \cref{rem:SupCour}, this defines a Dirac structure in $\mbb{E}$, which is canonically isomorphic to $E$ as a vector bundle. In this way, one associates a Manin pair $(\mbb{E},E)$ to the Poisson principal $\mbb{R}[2]$ bundle $P\to E^*[1]$.

\v{S}evera observed  \cite{NonComDiffForm,PoissonActions,Bursztyn:2009wi} that trivializations $$P\to \mbb{R}[2]\times E^*[1]$$ of the $\mbb{R}[2]$ bundle are in one-to-one correspondence with Lagrangian complements to $E\subseteq\mbb{E}$.
Indeed, let $t:P\to\mbb{R}[2]$ be such a trivialization. The graph of $dt$ is a Lagrangian submanifold of $T^*[2]P$. Hence it reduces to a Lagrangian submanifold $L'$ of $X$ which, as explained in \cref{rem:SupCour}, corresponds to a Lagrangian complement $F\subset\mbb{E}$ to $E$. Moreover, if $\{t,t\}_\pi=0$, then $\pi$ vanishes on the graph of $dt$, and therefore $L'\subset X$ is tangent to the homological vector field, so $F\subset\mbb{E}$ is a Dirac structure \cite{NonComDiffForm,PoissonActions}.


Suppose $(\mbb{E}_1,E_1)$ and $(\mbb{E}_2,E_2)$ are two Manin pairs corresponding to the Poisson principal $\mbb{R}[2]$ bundles $P_1\to E_1^*[1]$ and $P_2\to E_2^*[1]$. Then, as explained by \v{S}evera \cite[Remark~2.12]{Bursztyn:2009wi}, there is a one-to-one correspondence between morphisms of Manin pairs $$K:(\mbb{E}_1,E_1)\dasharrow(\mbb{E}_2,E_2)$$ and $\mbb{R}[2]$ equivariant Poisson morphisms $\phi:P_1\to P_2$. If $t:P_2\to\mbb{R}[2]$ is a trivialization of the $\mbb{R}[2]$ bundle corresponding to a Lagrangian complement $F_2\subset\mbb{E}_2$ to $E_2$, then $\phi^*(t):P_1\to\mbb{R}[2]$ is a trivialization corresponding to the Lagrangian complement $F_1=F_2\circ K$. Since $\{\phi^*t,\phi^*t\}_{\pi_0}=\phi^*\{t,t\}_{\pi_1}$, $F_1:=F_2\circ K$ is a Dirac structure whenever $F_2$ is.
\end{remark}

\subsubsection{Coisotropic reduction and  Pull-backs of Courant algebroids}\label{sec:pullback}
Reduction of exact Courant algebroids was developed in \cite{Zambon:2008wj,Bursztyn:2007ko} (see also \cite{Stienon:2008cl,Vaisman:2007gg,Hu:2009wl,Lin:2006ku,Yoshimura:2007gw}). Some aspects of this construction were extended to arbitrary Courant algebroids in  \cite[Sections~2.1 and 2.2]{LiBland:2009ul}, as we recall now.
\begin{proposition}[\!\!{\cite[Proposition~2.1]{LiBland:2009ul}}]
Let $S\subset M$ be a submanifold, and $C\subset \mbb{E}|_S$ an involutive coisotropic subbundle
such that 
$$\mbf{a}(C)\subset TS,\ \mbf{a}(C^\perp)=0.$$ 
Then the anchor map, bracket and inner product on $C$ descend to
$\mbb{E}_C=C/C^\perp$, and make $\mbb{E}_C$ into a Courant algebroid over $S$.
The inclusion $\phi\colon S\hookrightarrow M$ lifts to a  
Courant   morphism 
\[ R_\phi\colon \mbb{E}_C\dasharrow \mbb{E},\ \  y\sim_{R_\phi} x\ \Leftrightarrow\ x\in C,\ y=p(x)\]
where $p\colon C\to \mbb{E}_C$ is the quotient map. 
\end{proposition}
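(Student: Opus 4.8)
The plan is to construct the Courant algebroid structure on $\mbb{E}_C:=C/C^\perp$ by hand and then verify that $R_\phi$ is a Dirac structure with support. Since the fibre metric is non-degenerate, $C^\perp\subseteq\mbb{E}|_S$ is a subbundle with $(C^\perp)^\perp=C$, and the coisotropy condition $C^\perp\subseteq C$ makes $\mbb{E}_C$ a well-defined vector bundle over $S$. The radical of $\la\cdot,\cdot\ra|_C$ is exactly $C^\perp$, so the metric descends to a non-degenerate pairing $\la\cdot,\cdot\ra_C$ on $\mbb{E}_C$; and since $\mbf{a}(C^\perp)=0$ while $\mbf{a}(C)\subseteq TS$, the anchor descends to a bundle map $\bar{\mbf{a}}\colon\mbb{E}_C\to TS$ with $\bar{\mbf{a}}\circ p=\mbf{a}|_C$. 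I would also record two elementary facts used repeatedly below: if $g\in C^\infty(M)$ vanishes on $S$ then $dg|_S\in\ann(TS)$; and $\mbf{a}^*\big(\ann(TS)\big)\subseteq C^\perp$, because $\la\mbf{a}^*\mu,c\ra=\la\mu,\mbf{a}(c)\ra=0$ for $\mu\in\ann(TS)$ and $c\in C$.

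The heart of the argument is descending the Courant bracket, for which I would first prove the following lemma: if $\sigma,\rho\in\Gamma(\mbb{E})$ satisfy $\sigma|_S\in\Gamma(C)$ and $\rho|_S\in\Gamma(C^\perp)$, then both $\Cour{\sigma,\rho}|_S$ and $\Cour{\rho,\sigma}|_S$ lie in $\Gamma(C^\perp)$. For $\Cour{\sigma,\rho}$, pair against an arbitrary $\nu\in\Gamma(\mbb{E})$ with $\nu|_S\in\Gamma(C)$ and use axiom c2): $\la\Cour{\sigma,\rho},\nu\ra=\mbf{a}(\sigma)\la\rho,\nu\ra-\la\rho,\Cour{\sigma,\nu}\ra$. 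On $S$ the function $\la\rho,\nu\ra$ vanishes (as $C^\perp\perp C$) and $\mbf{a}(\sigma)|_S$ is tangent to $S$, so the first term vanishes on $S$; the bracket $\Cour{\sigma,\nu}|_S$ lies in $\Gamma(C)$ by involutivity of $C$, so the second term vanishes on $S$ as well; letting $\nu|_S$ range over $\Gamma(C)$ and using $\Cour{\sigma,\rho}|_S\in\Gamma(C)$ (again by involutivity, since $C^\perp\subseteq C$) shows $\Cour{\sigma,\rho}|_S\in\Gamma(C^\perp)$. Then axiom c3), together with $\la\sigma,\rho\ra|_S=0$ and the two elementary facts, gives $\Cour{\sigma,\rho}|_S+\Cour{\rho,\sigma}|_S=\mbf{a}^*\big(d\la\sigma,\rho\ra\big)|_S\in\Gamma(C^\perp)$, so $\Cour{\rho,\sigma}|_S\in\Gamma(C^\perp)$ too.

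With this lemma in hand I would set $\Cour{\bar\sigma_1,\bar\sigma_2}_C:=p\big(\Cour{\sigma_1,\sigma_2}|_S\big)$ for any $\sigma_i\in\Gamma(\mbb{E})$ with $\sigma_i|_S\in\Gamma(C)$ and $p(\sigma_i|_S)=\bar\sigma_i$ (such lifts exist because $\Gamma(C)\to\Gamma(\mbb{E}_C)$ is surjective and sections of $\mbb{E}|_S$ extend locally to sections of $\mbb{E}$, the bracket being local). Involutivity of $C$ ensures $\Cour{\sigma_1,\sigma_2}|_S\in\Gamma(C)$, and applying the lemma to the differences of two choices of lift — which restrict to sections of $C^\perp=\ker p$ — shows the result is independent of the lifts. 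The axioms c1)--c3) for $\mbb{E}_C$ are then obtained by lifting the corresponding identities for $\mbb{E}$, restricting to $S$, and applying $p$: here one uses $\la\bar\sigma,\bar\tau\ra_C=\la\sigma,\tau\ra|_S$, the fact that $\bar{\mbf{a}}(\bar\sigma)$ acts on $C^\infty(S)$ in the same way $\mbf{a}(\sigma)|_S$ does (legitimate because $\mbf{a}(\sigma)|_S$ is tangent to $S$, so only sees the restriction of a function to $S$), and that $\mbf{a}^*$ descends to $\bar{\mbf{a}}^*$ — indeed $\mbf{a}^*\big(T^*M|_S\big)\subseteq(C^\perp)^\perp=C$ since $\mbf{a}^*\mu\perp C^\perp$ for all $\mu$ (as $\mbf{a}(C^\perp)=0$), and $\mbf{a}^*\big(\ann(TS)\big)\subseteq C^\perp$ so the descent is well defined.

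For the last claim, set $R_\phi=\{(x,p(x))\mid x\in C\}\subseteq\mbb{E}\times\overline{\mbb{E}_C}$, a subbundle over the graph $S'$ of $\phi$, which we identify with $S$. It is isotropic for the metric of $\mbb{E}\times\overline{\mbb{E}_C}$ since $\la(x,p(x)),(x',p(x'))\ra=\la x,x'\ra-\la p(x),p(x')\ra_C=0$, and a rank count using $\on{rk}C+\on{rk}C^\perp=\on{rk}\mbb{E}$ shows $R_\phi$ is half-dimensional, hence Lagrangian. Its anchor image is $\{(\mbf{a}(x),\bar{\mbf{a}}(p(x)))\mid x\in C\}=\{(v,v)\mid v\in\mbf{a}(C)\}$, which lies in $TS'$ because $\bar{\mbf{a}}\circ p=\mbf{a}|_C$ and $\mbf{a}(C)\subseteq TS$. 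Finally, involutivity of $R_\phi$ is immediate from the construction of the descended bracket: if $\sigma_i\in\Gamma(\mbb{E})$ lifts $\bar\tau_i\in\Gamma(\mbb{E}_C)$, then, since the Courant bracket on the product is the direct-sum bracket, $\Cour{(\sigma_1,\bar\tau_1),(\sigma_2,\bar\tau_2)}\big|_{S'}=\big(\Cour{\sigma_1,\sigma_2}|_S,\ p(\Cour{\sigma_1,\sigma_2}|_S)\big)$, which is a section of $R_\phi$. Thus $R_\phi\subseteq\mbb{E}\times\overline{\mbb{E}_C}$ is a Dirac structure along $S'$, i.e. a Courant morphism $\mbb{E}_C\dasharrow\mbb{E}$, and the compatibilities $y\sim_{R_\phi}x\Leftrightarrow x\in C,\,y=p(x)$ hold by construction. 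I expect the main obstacle to be the bracket descent — isolating and proving the key lemma, and then carefully tracking how the metric and $\mbf{a}^*$ interact with restriction to $S$ and passage to the quotient; everything else is bookkeeping with the Courant axioms.
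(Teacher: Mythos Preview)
The paper does not actually prove this proposition: it is quoted verbatim from \cite[Proposition~2.1]{LiBland:2009ul} and used as a black box, with no argument supplied in the thesis. So there is no proof here to compare against.

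That said, your argument is correct and is the standard direct verification. The key lemma---that $\Cour{\sigma,\rho}|_S$ and $\Cour{\rho,\sigma}|_S$ lie in $\Gamma(C^\perp)$ whenever $\sigma|_S\in\Gamma(C)$ and $\rho|_S\in\Gamma(C^\perp)$---is exactly the ingredient needed to make the bracket descend, and your proof of it (via axiom~c2) for one order and axiom~c3) together with $\mbf{a}^*(\ann(TS))\subseteq C^\perp$ for the other) is clean. The rank count for Lagrangianity and the involutivity check for $R_\phi$ are also fine. One small stylistic point: you don't need to verify separately that the anchor of $R_\phi$ is tangent to $\gr(\phi)$, since this follows automatically from involutivity of a Lagrangian subbundle along a submanifold (via the Leibniz rule), but it does no harm to note it.
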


As a special case of the coisotropic reduction procedure, we can define the pull back of a Courant algebroid along a map.
\begin{definition}\label{def:CAPullback}
Suppose $\phi\colon S\to M$ is a smooth map whose
differential $d\phi\colon TS\to TM$ is transverse to $\mbf{a}\colon \mbb{E}\to
TM$.  We define the \emph{pull-back Courant algebroid} $\phi^!\mbb{E}\to S$ as 
$$\phi^!\mbb{E}=C/C^\perp,$$ where 
$$C=\{(x;v,\mu)\in \mbb{E}\times \mbb{T} S\mid (d\phi)(v)=\mbf{a}(x)\}$$
is an involutive coisotropic subbundle of $\mbb{E}\times\mbb{T}S$ over $\gr(\phi)\subset M\times S$.
(The shriek notation is used to
distinguish $\phi^!A$ from the pull-back as a vector bundle.)
One has $\on{rk}(\phi^!\mbb{E})=\on{rk}(\mbb{E})-2(\dim M-\dim S)$. 
\end{definition}

  There is a canonical Courant morphism
\begin{equation}\label{eq:pphi} P_\phi\colon \phi^!\mbb{E}\dasharrow \mbb{E}\end{equation}
lifting $\phi\colon S\to M$. Explicitly, 
\begin{equation}\label{eq:ysimIx} y\sim_{P_\phi} x\ \Leftrightarrow\ \exists v\in TS\colon
\mbf{a}(x)=d\phi(v),\ (x;v,0)\in C \mbox{ maps to }y.\end{equation}
Note that as a space, $P_\phi$ is the fibred product of $\mbb{E}$ and $TS$
over $TM$. It is a smooth vector bundle over $S\cong \gr(\phi)$
since $\mbf{a}$ is transverse
to $d\phi$ by assumption.

%

For exact Courant algebroids, the pull-back can be computed quite simply using the following proposition.

\begin{proposition}[\!\!{\cite[Proposition~2.9]{LiBland:2009ul}}]
For any smooth map $\phi\colon S\to M$, one has a canonical
isomorphism 
\[ \phi^!(\mbb{T}_\eta M)=\mbb{T}_{\phi^*\eta} S.\]
\end{proposition}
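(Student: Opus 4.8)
The plan is to compute the pull-back $\phi^!(\mbb{T}_\eta M)$ directly from \cref{def:CAPullback} and exhibit an explicit isomorphism onto $\mbb{T}_{\phi^*\eta}S$. Recall that $\mbb{T}_\eta M$ denotes the exact Courant algebroid $TM\oplus T^*M$ with Courant bracket twisted by the closed $3$-form $\eta$; its anchor is the projection $\mbf{a}(v,\mu)=v$, and $\mbf{a}^*:T^*M\to \mbb{T}_\eta M$ is $\nu\mapsto(0,\nu)$. First I would write out the coisotropic subbundle
\[
 C=\{\,((x,\xi);v,\mu)\in \mbb{T}_\eta M\times\mbb{T}S \mid d\phi(v)=x\,\}\subseteq \mbb{T}_\eta M\times \mbb{T}S,
\]
sitting over $\gr(\phi)\subseteq M\times S$, and identify its fibrewise orthogonal complement with respect to the (difference of) pairings. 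The orthogonal complement $C^\perp$ should come out to be $\{((0,\xi);0,\mu)\mid \xi=-(d\phi)^*\mu \text{ suitably}\}$, i.e. the subbundle measuring the failure of $\xi\in T^*M$ and $\mu\in T^*S$ to be $\phi$-related; a clean bookkeeping of signs (using the convention $\mbb{E}_2\times\overline{\mbb{E}_1}$ from \cref{def:CAPullback}) is needed here.

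Next I would define the candidate isomorphism $\Psi:\phi^!(\mbb{T}_\eta M)\to \mbb{T}_{\phi^*\eta}S$ on representatives by
\[
 \big[((x,\xi);v,\mu)\big]\ \longmapsto\ \big(v,\ \mu+(d\phi)^*\xi\big),
\]
where $x=d\phi(v)$. I would check: (i) this is well-defined, i.e. it kills $C^\perp$ — which is exactly the point of the sign computation above; (ii) it is a bundle isomorphism over $S\cong\gr(\phi)$ — surjectivity and the rank count $\on{rk}(\phi^!\mbb{T}_\eta M)=\on{rk}(\mbb{T}_\eta M)-2(\dim M-\dim S)=2\dim S$ together force bijectivity; (iii) it intertwines anchors, since the anchor of a class $[((x,\xi);v,\mu)]$ in $\phi^!\mbb{E}$ is $v$ by construction of the pull-back, which matches the anchor of $(v,\mu+(d\phi)^*\xi)$ in $\mbb{T}_{\phi^*\eta}S$; (iv) it preserves the pairing, which reduces to the identity $\la(x,\xi),(x',\xi')\ra_M=\la(v,\mu+(d\phi)^*\xi),(v',\mu'+(d\phi)^*\xi')\ra_S$ whenever $x=d\phi(v),x'=d\phi(v')$ — a direct expansion.

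The main point is the bracket. I would verify that $\Psi$ intertwines the Courant brackets by using the defining property of the Courant morphism $P_\phi:\phi^!\mbb{E}\dasharrow\mbb{E}$ from \cref{eq:pphi,eq:ysimIx}: sections of $\phi^!\mbb{T}_\eta M$ that are $P_\phi$-related to sections of $\mbb{T}_\eta M$ have $P_\phi$-related brackets, since $P_\phi$ is a Courant relation. Concretely, a section $(v,\beta)\in\Gamma(\mbb{T}_{\phi^*\eta}S)$ corresponds under $\Psi^{-1}$ to a section of $\phi^!\mbb{T}_\eta M$ which is $P_\phi$-related to any $(X,\alpha)\in\Gamma(\mbb{T}_\eta M)$ with $X\sim_\phi v$ and $\phi^*\alpha=\beta$ (modulo the $\ann^\natural$ correction for the covectors); computing $\Cour{(X,\alpha),(X',\alpha')}_\eta=([X,X'],\Lied_X\alpha'-\iota_{X'}d\alpha+\iota_X\iota_{X'}\eta)$, pulling back along $\phi$, and using $\phi^*(\iota_X\iota_{X'}\eta)=\iota_v\iota_{v'}(\phi^*\eta)$, reproduces exactly the $\phi^*\eta$-twisted bracket on $S$. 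The only real obstacle is organizing the covector bookkeeping — keeping track of which combinations of $T^*M\rvert$-components and $T^*S$-components survive modulo $C^\perp$, and checking that the $\Lied_X\alpha'-\iota_{X'}d\alpha$ terms pull back correctly when $X,X'$ are merely $\phi$-related (not $\phi$-projectable in a strong sense) — but this is routine once $C^\perp$ and $\Psi$ are pinned down, and $\phi^*d=d\phi^*$ together with naturality of interior products handles it. Finally I would remark that the isomorphism is manifestly canonical, since every ingredient ($d\phi$, $(d\phi)^*$, $\phi^*\eta$) is.
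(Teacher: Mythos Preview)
The paper does not actually prove this proposition: it is quoted verbatim from \cite[Proposition~2.9]{LiBland:2009ul} as a preliminary result, with no proof supplied here. So there is no in-paper argument to compare against.

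Your direct-computation approach is the standard one and is essentially how the cited source proceeds: write down $C$ explicitly, identify
\[
C^\perp=\{\,((0,\xi);0,\mu)\mid \mu=-(d\phi)^*\xi\,\}
\]
(up to the sign convention coming from the implicit $\overline{\mbb{T}S}$), define $\Psi$ on representatives by $[((x,\xi);v,\mu)]\mapsto (v,\mu+(d\phi)^*\xi)$, and verify compatibility with the pairing, anchor, and bracket. The only place requiring care, as you correctly flag, is the sign bookkeeping on $C^\perp$; once that is pinned down your $\Psi$ is manifestly well-defined and bijective by the rank count. For the bracket it is marginally cleaner to lift sections of $\mbb{T}_{\phi^*\eta}S$ directly to sections of $C$ and compute in $\mbb{T}_\eta M\times\overline{\mbb{T}S}$ rather than routing through $P_\phi$, but your relational argument via $P_\phi$ also works, and the key identity $\phi^*(\iota_X\iota_{X'}\eta)=\iota_v\iota_{v'}(\phi^*\eta)$ is exactly what makes the twist transform correctly.
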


\section{Multiplicative Courant algebroids, and multiplicative Manin pairs}

%

For any groupoid $G\rightrightarrows G^{(0)}$ we let $s,t:G\to G^{(0)}$ and $\mbf{1}:G^{(0)}\to G$ denote the source, target and unit maps. 
Let
$$\on{gr}(\on{Mult}_G)=\{(g_1\circ g_2,g_1,g_2)\mid s(g_1)=t(g_2)\}\subseteq G^3$$ denote the graph of multiplication, which we regard as a relation $$\on{gr}(\on{Mult}_G):G\times G\dasharrow G.$$

\begin{definition}\label{def:multDef}Let $G\rightrightarrows G^{(0)}$ be a Lie groupoid.
\begin{itemize}
\item A $\mc{VB}$-groupoid over $G$ is a Lie groupoid $V\rightrightarrows V^{(0)}$ such that $V\to G$ is a vector bundle and $\on{gr}(\on{Mult}_V):V\times V\to V$ is a $\mc{VB}$-relation over $\on{gr}(\on{Mult}_G)$.
\item An $\mc{LA}$-groupoid over $G$ is a Lie groupoid $A\rightrightarrows A^{(0)}$ such that $A\to G$ is a Lie algebroid and $\on{gr}(\on{Mult}_A):A\times A\to A$ is an $\mc{LA}$-relation over $\on{gr}(\on{Mult}_G)$.
\item A $\mc{CA}$-groupoid (or multiplicative Courant algebroid) over $G$ is a Lie groupoid $\mbb{G}\rightrightarrows \mbb{G}^{(0)}$ such that $\mbb{G}\to G$ is a Courant algebroid and $\on{gr}(\on{Mult}_\mbb{G}): \mbb{G}\times\mbb{G}\dasharrow\mbb{G}$ is a Courant relation over $\on{gr}(\on{Mult}_G)$.
\item A Manin pair $(\mbb{G},E)$ is called \emph{multiplicative} if $\mbb{G}$ is a $\mc{CA}$-groupoid, and $E\subset\mbb{G}$ is a $\mc{VB}$-subgroupoid. In this case, $E$ is called a multiplicative Dirac structure.
\item
A Courant morphism (or relation) $R:\mbb{G}_1\dasharrow\mbb{G}_2$ between $\mc{CA}$-groupoids is called \emph{multiplicative} if $R\subset \mbb{G}_2\times\overline{\mbb{G}_1}$ is a $\mc{VB}$-subgroupoid.
\end{itemize}
\end{definition}

\begin{remark}
$\mc{VB}$ and $\mc{LA}$-groupoids were first defined by Pradines \cite{Pradines:1988td} and Mackenzie \cite{Mackenzie:2003wj,Mackenzie:2008tz}, respectively. The definitions above \cite{LiBland:2011vqa} are equivalent, but somewhat shorter.

The concepts of  multiplicative Courant algebroids and multiplicative Dirac structures were suggested in terms of supergeometry by Mehta \cite[Example~3.8]{Mehta:2009js} and Ortiz \cite[\S~7.2]{Ortiz:2009ux}, respectively. More precisely, they suggest that a $\mc{CA}$-groupoid should be a degree 2 symplectic $NQ$-groupoid, and a multiplicative Dirac structure should be a Lagrangian $NQ$-subgroupoid.
The above definitions are restatements of this without the use of supergeometry.
\end{remark}

\begin{example}[Standard $\mc{CA}$-groupoid]\label{ex:StdCAGr}
If $G\rightrightarrows G^{(0)}$ is a Lie groupoid, then applying the tangent functor to the structure maps endows $TG\rightrightarrows TG^{(0)}$ with the structure of an $\mc{LA}$-groupoid, with 
$$\on{gr}(\on{Mult}_{TG})=T\on{gr}(\on{Mult}_G):TG\times TG\dasharrow TG.$$
 As observed by Weinstein \cite{Weinstein:1987ua}, $T^*G\rightrightarrows \ann(TG^{(0)})$ also inherits the structure of a Lie groupoid over the conormal bundle of $G^{(0)}\subset G$, with 
 $$\on{gr}(\on{Mult}_{T^*G})=\ann^\natural(\on{gr}(\on{Mult}_{TG})):T^*G\times T^*G\dasharrow T^*G.$$

 Therefore $\mbb{T} G$ is naturally a Lie groupoid. The graph of multiplication is given explicitly as $$\on{gr}(\on{Mult}_{\mbb{T}G}):=T\on{gr}(\on{Mult}_G)\oplus\ann^\natural(T\on{gr}(\on{Mult}_G)).$$ By \cref{ex:StdDiracStr} it defines a Courant relation $$\on{gr}(\on{Mult}_{\mbb{T}G}):\mbb{T}G\times\mbb{T}G\dasharrow\mbb{T}G$$  over $\on{gr}(\on{Mult}_G)$, so $\mbb{T}G$ is a $\mc{CA}$-groupoid. This fact was first explicitly observed by Mehta \cite[Example~3.8]{Mehta:2009js}, though it was used implicitly by Bursztyn, Crainic, Weinstein and Zhu in \cite{Bursztyn03-1}.

Suppose $\pi$ is the bivector field of a Poisson structure on $G$. As explained by Ortiz \cite{Ortiz:2009ux}, following Mackenzie and Xu \cite{Mackenzie97}, $\on{gr}(\pi^\sharp)\subset \mbb{T}G$ is a multiplicative Dirac structure if and only if $\pi$ is a multiplicative bivector field on $G$, i.e. $\pi^\sharp:T^*G\to TG$ is a morphism of Lie groupoids. Similarly, if $\omega\in\Omega^2(G)$ is a closed 2-form on $G$, then $\on{gr}(\omega^\flat)\subset \mbb{T}G$ is a multiplicative Dirac structure if and only if $\omega$ is a multiplicative 2-form, i.e. $\omega^\flat:TG\to T^*G$ is a morphism of Lie groupoids.
\end{example}

\begin{example}[Cartan Dirac Structure]\label{ex:CartanDirac}
Let $\mf{d}$ be a quadratic Lie algebra, and $D$ any Lie group with Lie algebra $\mf{d}$ which preserves the quadratic form. Then $\mf{d}\oplus\bar{\mf{d}}$ acts on $D$ by $$\rho:(\xi_1,\xi_2)\to-\xi_1^R+\xi_2^L,\quad\xi_i\in\mf{d}$$
where $\xi^L,\xi^R$ are the left-,right- invariant vector fields on $D$ which are equal to $\xi\in\mf{d}$ at the unit.
At the unit, the stabilizer for this action is the diagonal subalgebra $\mf{d}_\Delta\subset\mf{d}\oplus\bar{\mf{d}}$, which is Lagrangian. Since the stabilizer at any other point in $D$ is conjugate to $\mf{d}_\Delta$, it follows that this action has coisotropic stabilizers.
Hence we may form the `action Courant algebroid' $$(\mf{d}\oplus\bar{\mf{d}})\times D,$$ as in \cref{ex:ActCourAlg}.

The Courant algebroid $(\mf{d}\oplus\bar{\mf{d}})\times D$ is actually a $\mc{CA}$-groupoid. As a groupoid, it is the cross product of the Lie group $D$ and the pair groupoid $\mf{d}\oplus\bar{\mf{d}}$. See \cite{Alekseev:2009tg,LiBland:2009ul,LiBland:2010wi} for details.

The diagonal $\mf{d}_\Delta\subset\mf{d}\oplus\bar{\mf{d}}$ is a Lagrangian subalgebra, so $\mf{d}_\Delta\times D$ is a Dirac structure. Since it is also a subgroupoid, it defines a multiplicative Dirac structure. This Dirac structure, known as the Cartan-Dirac structure,  was introduced independently by Alekseev, \v{S}evera and Strobl \cite{Alekseev:2009tg,Severa:2001,Kotov:2005fe}.
\end{example}

\section{Double structures}
In this section, we recall the concepts of a double vector bundle and an $\mc{LA}$-vector bundle, and we define a $\mc{CA}$-vector bundle. Each of these definitions is quite similar, and we give them here to highlight the relationships. In subsequent sections, we shall describe them in more detail.

\begin{definition}\label{def:DLACAVB}
Suppose that $D\to A$ and $B\to M$ are vector bundles, and
\begin{equation}\label{eq:DVB}\begin{tikzpicture}
\mmat{m}{
D&B\\
A&M\\
};
\path[->] (m-1-1) edge  (m-1-2);
\path[->] (m-1-1) edge  (m-2-1);
\path[->] (m-1-2) edge  (m-2-2);
\path[->] (m-2-1) edge  (m-2-2);
\end{tikzpicture}\end{equation}
is a morphism of vector bundles. We let $\gr(+_{D/A})\subseteq D\times D\times D$ denote the graph of addition for $D\to A$, as in \cref{ex:GrAdd}.
\begin{itemize}
\item $D$ is called a \emph{double vector bundle} if $D$ is a vector bundle over $B$ and $\gr(+_{D/A})$ is a vector subbundle of $D^3\to B^3$.
\item $D$ is called an \emph{$\mc{LA}$-vector bundle} if $D$ is a Lie algebroid over $B$ and $\gr(+_{D/A})\subseteq D^3$ is Lie subalgebroid.
\item $D$ is called an \emph{$\mc{CA}$-vector bundle} if $D$ is a Courant algebroid over $B$ and $\gr(+_{D/A})\subseteq D\times\overline{D\times D}$ is a Dirac structure with support.
\end{itemize}

\end{definition}
\begin{remark}
Notice the similarity between \cref{def:multDef,def:DLACAVB}. Indeed double vector bundles, $\mc{LA}$-vector bundles and  $\mc{CA}$-vector bundles are special cases of $\mc{VB}$-groupoids,  $\mc{LA}$-groupoids, and  $\mc{CA}$-groupoids.

\end{remark}

During the rest of this chapter, we shall recall some of the basic theory of double vector bundles and $\mc{LA}$-vector bundles. While none of the results in this chapter are new, the use of relations in some of the exposition and proofs is novel. 

\section{Double vector bundles}
Double vector bundles will be a frequent concept in this thesis, so we begin by describing some basic examples, and recalling their definition and basic properties. In essence, double vector bundles are vector bundles in the category of vector bundles. They were first introduced by Pradines \cite{Pradines:1974tc} and further studied in \cite{Mackenzie:2005tc,Konieczna:1999vh,Grabowski:2009dc}.

\begin{example}\label{ex:DirectSumDVB}
Suppose that $A$, $B$ and $C$ are all vector bundles over the manifold $M$. Then $A\times_M B\times _M C$ is the total space of a vector bundle over $A$ and of a vector bundle over $B$ with the respective additions given by
$$(a,b_1,c_1)+_{D/A}(a,b_2,c_2)=(a,b_1+b_2,c_1+c_2),$$ and
$$(a_1,b,c_1)+_{D/B}(a_2,b,c_2)=(a_1+a_2,b,c_1+c_2),$$ where $a,a_1,a_2\in A$, $b,b_1,b_2\in B$ and $c,c_1,c_2\in C$ all lie over the same point in $M$. With these vector bundle structures, 
$$\begin{tikzpicture}
\mmat{m}{
A\times_M B\times _M C&B\\
A&M\\
};
\path[->] (m-1-1) edge  (m-1-2);
\path[->] (m-1-1) edge  (m-2-1);
\path[->] (m-1-2) edge  (m-2-2);
\path[->] (m-2-1) edge  (m-2-2);
\end{tikzpicture}$$
is a double vector bundle.
\end{example}

\begin{example}[Tangent bundle of a vector bundle]\label{ex:TngDVB}
The following example, which will be of central importance to this thesis, is due to Pradines \cite{Pradines:1974tc}.
Suppose that $B\to M$ is a vector bundle, then 
\begin{equation}\label{eq:TngDVB}\begin{tikzpicture}
\mmat{m}{
TB&B\\
TM&M\\
};
\path[->] (m-1-1)	edge (m-1-2)
				edge (m-2-1);
\path[<-] (m-2-2)	edge  (m-1-2)
				edge (m-2-1);
\end{tikzpicture}\end{equation}
is a double vector bundle. The graph of the addition in the vector bundle $TB\to TM$ is obtained by applying the tangent functor $$\gr(+_{TB/TM})=T\gr(+{B/M})$$ to the graph of the addition in the vector bundle $B\to M$. We will revisit this example in more detail in \cref{ex:TngDVB2,ex:TngLAVB}  below.
\end{example}

It would appear from \cref{def:DLACAVB} that there is an asymmetry between the two vector bundle structures on $D$. This is not the case, as the following proposition shows.

\begin{proposition}\label{prop:DVBSym}
Let $D$ be a manifold which is canonically identified with the total space of two vector bundles over manifolds $A\subseteq D$ and $B\subseteq D$, respectively. We let 
\begin{align*}\gr(+_{D/A}):D\times D&\dasharrow D,\\
\gr(+_{D/B}):D\times D&\dasharrow D
\end{align*}
denote the graph of the two additions. Let $s_{(1324)}:D^4\to D^4$ denote the permutation $$s_{(1324)}(d_1,d_2,d_3,d_4)=(d_1,d_3,d_2,d_4),$$  and $M=A\cap B$. 
Then the following are equivalent. 
\begin{enumerate}
\item[E1)] The following diagram of relations commutes
\begin{equation}\label{eq:DVBComDiagRel}
\begin{tikzpicture}
\mmat[2em]{m}{D^4& &&D^2&&&\\
&&&&&&D\\
D^4&&&D^2&&&\\};
\path [dashed,<->] (m-1-1) edge node [swap] {$\gr(s_{(1324)})$} (m-3-1);
\path [dashed,->] (m-1-1) edge node {$\gr(+_{D/B})^2$} (m-1-4);
\path [dashed,->] (m-3-1) edge node [swap]{$\gr(+_{D/A})^2$} (m-3-4);
\path [dashed,<-] (m-2-7) edge node  {$\gr(+_{D/B})$} (m-3-4)
			edge node [swap] {$\gr(+_{D/A})$} (m-1-4);
\end{tikzpicture}
\end{equation}
\item[E2)] The following diagram is a double vector bundle
\begin{equation}\label{eq:DVB1}\begin{tikzpicture}
\mmat{m}{
D&B\\
A&M\\
};
\path[->] (m-1-1) edge node {$q_{D/B}$} (m-1-2);
\path[->] (m-1-1) edge node[swap] {$q_{D/A}$} (m-2-1);
\path[->] (m-1-2) edge node {$q_{B/M}$} (m-2-2);
\path[->] (m-2-1) edge node[swap] {$q_{A/M}$} (m-2-2);
\end{tikzpicture}\end{equation}
where $q_{B/M}:=(q_{D/A})\rvert_B$ and $q_{A/M}:=(q_{D/B})\rvert_A$.
\item[E3)] The following diagram (the `diagonal flip' of \labelcref{eq:DVB1}) is a double vector bundle
\begin{equation}\label{eq:DVBflip1}\begin{tikzpicture}
\mmat{m}{
D&A\\
B&M\\
};
\path[->] (m-1-1) edge node {$q_{D/A}$} (m-1-2);
\path[->] (m-1-1) edge node[swap] {$q_{D/B}$} (m-2-1);
\path[->] (m-1-2) edge node {$q_{A/M}$} (m-2-2);
\path[->] (m-2-1) edge node[swap] {$q_{B/M}$} (m-2-2);
\end{tikzpicture}\end{equation}
where $q_{B/M}:=(q_{D/A})\rvert_B$ and $q_{A/M}:=(q_{D/B})\rvert_A$.
\end{enumerate}
\end{proposition}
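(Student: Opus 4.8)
The plan is to prove the chain of equivalences E1) $\Leftrightarrow$ E2) $\Leftrightarrow$ E3), exploiting the fact that E3) is obtained from E2) simply by interchanging the roles of $A$ and $B$, so that it suffices to prove E1) $\Leftrightarrow$ E2) together with the observation that condition E1) is manifestly symmetric under exchanging $A$ and $B$ (the diagram \labelcref{eq:DVBComDiagRel} becomes its own mirror image once one also relabels the permutation, since $s_{(1324)}$ is an involution). Thus the real content is the single equivalence E1) $\Leftrightarrow$ E2), and E3) follows by applying that equivalence to the flipped data.

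The heart of E1) $\Leftrightarrow$ E2) is to unwind what it means for \labelcref{eq:DVBComDiagRel} to commute. Chasing an element $(d_1,d_2,d_3,d_4)\in D^4$ around the square: going right then down we first form $(d_1 +_{D/B} d_2,\ d_3 +_{D/B} d_4)\in D^2$ and then $(d_1 +_{D/B} d_2) +_{D/A} (d_3 +_{D/B} d_4)$; going down then right we first apply $s_{(1324)}$ to get $(d_1,d_3,d_2,d_4)$, then form $(d_1 +_{D/A} d_3,\ d_2 +_{D/A} d_4)$, then $(d_1 +_{D/A} d_3) +_{D/B} (d_2 +_{D/A} d_4)$. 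Both sides are only defined when the appropriate footpoints agree, so the commutativity of the diagram of \emph{relations} encodes two things at once: (i) the domains where the two composite relations are defined coincide — this is precisely the compatibility of the projections $q_{D/A}$, $q_{D/B}$, $q_{A/M}$, $q_{B/M}$ that makes \labelcref{eq:DVB1} a commuting square of vector bundles with the stated well-definedness of $q_{B/M}$ and $q_{A/M}$; and (ii) on that common domain the two results agree — this is exactly the interchange law
\begin{equation*}
(d_1 +_{D/B} d_2) +_{D/A} (d_3 +_{D/B} d_4) = (d_1 +_{D/A} d_3) +_{D/B} (d_2 +_{D/A} d_4),
\end{equation*}
which is the defining axiom of a double vector bundle once one also knows (from the vector bundle structures separately) that scalar multiplications interchange similarly. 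So the proof is essentially a careful bookkeeping of footpoint conditions: I would set up notation for the four base projections, verify that commutativity of \labelcref{eq:DVBComDiagRel} on footpoints forces $q_{A/M}\circ q_{D/A} = q_{B/M}\circ q_{D/B}$ (and the restriction identities defining $q_{B/M},q_{A/M}$), and then read off the interchange law from the fibre-level content.

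For the converse direction, E2) $\Rightarrow$ E1), I would recall the standard fact (going back to Pradines, and reproved in \cite{Mackenzie:2005tc,Grabowski:2009dc}) that in a double vector bundle the two additions satisfy the interchange law, that the two zero-sections are compatible, and that scalar multiplications commute appropriately; feeding these identities back into the element chase above shows both composite relations in \labelcref{eq:DVBComDiagRel} have the same domain and agree there, i.e. the diagram commutes. The main obstacle I anticipate is not conceptual but notational: keeping straight which of the two additions is being applied at each stage, and in particular making the argument genuinely one about relations (partially defined operations) rather than total maps, so that the "domains match" half of the statement is not silently assumed. Once the dictionary "commuting square of relations $\leftrightarrow$ (matching base projections) $+$ (interchange law)" is set up cleanly, E1) $\Leftrightarrow$ E2) is immediate, and then E2) $\Leftrightarrow$ E3) follows purely formally from the $A \leftrightarrow B$ symmetry of E1), since $s_{(1324)}$ is an involution and the diagram \labelcref{eq:DVBComDiagRel} is invariant under simultaneously swapping $A$ with $B$ and transposing.
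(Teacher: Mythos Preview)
Your overall strategy—reduce to E1) $\Leftrightarrow$ E2) by the $A\leftrightarrow B$ symmetry of E1)—matches the paper's. However, your argument for E1) $\Rightarrow$ E2) has a genuine gap.

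You equate E2) with ``the interchange law holds plus the footpoints match'', and then plan to read both of these off from the commutativity of \labelcref{eq:DVBComDiagRel}. But in this paper, the definition of a double vector bundle (\cref{def:DLACAVB}) is \emph{not} the interchange-law characterization: it requires that $D\to A$ and $B\to M$ are vector bundles, that \labelcref{eq:DVB} is a morphism of vector bundles, and that $\gr(+_{D/A})\subseteq D^3$ is a vector subbundle of $D^3\to B^3$. The interchange-law characterization is \cref{cor:AltDVBdef}, which is explicitly stated as a \emph{consequence} of this proposition together with \cite[Proposition~2.1]{gracia2010lie}. So invoking it here is circular.

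More concretely, in the hypotheses of the proposition you only have two vector bundle structures $D\to A$ and $D\to B$; you are not given that $B\to M$ is a vector bundle, nor that $q_{D/B}$ is a morphism of vector bundles over $q_{A/M}$. Your ``domains match $\Rightarrow$ compatibility of projections'' step shows at best that certain set-theoretic identities among the projections hold; it does not produce a vector bundle structure on $B\to M$. This is exactly the work the paper does: it first proves a lemma that E1) is equivalent to $\gr(+_{D/B})$ being a vector subbundle of $D^3\to A^3$ (a connected, additively closed, topologically closed subset of a vector bundle is a subbundle), and then uses relation compositions—$\gr(-_{D/B})\circ D_\Delta = B$ and $\gr(-_{D/B})\circ\gr(\Delta_D)=\gr(0_{D/B}\circ q_{D/B})$—to exhibit $B$ as a vector subbundle of $D\to A$ and $q_{D/B}$ as a vector bundle morphism. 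That relation-composition trick is the missing ingredient in your proposal.
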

Although the proof of this proposition is not complicated, we have placed it in \cref{app:dvbSymProof} for the sake of brevity.

 As an immediate consequence of \cref{prop:DVBSym}, if \labelcref{eq:DVB} is a double vector bundle, both $A\to M$ and $B\to M$ are vector bundles, which we call the \emph{horizontal} and \emph{vertical side bundles}, respectively.

Let $\cdot_{D/A}$ and $\cdot_{D/B}$ denote the scalar multiplication operations on $D$, viewed as a vector bundle over $A$ and $B$, respectively.
Moreover, we let $\tilde 0_{D/M}:M\to D$ be the composition of $0_{D/B}:B\to D$ with the zero section $M\to B$.

We let $D^{flip}$
denote the reflection of \labelcref{eq:DVB} across the diagonal, 
$$\begin{tikzpicture}
\mmat{m}{
D&A\\
B&M\\
};
\path[->] (m-1-1) edge node {$q_{D/A}$} (m-1-2);
\path[->] (m-1-1) edge node[swap] {$q_{D/B}$} (m-2-1);
\path[->] (m-1-2) edge node {$q_{A/M}$} (m-2-2);
\path[->] (m-2-1) edge node[swap] {$q_{B/M}$} (m-2-2);
\end{tikzpicture}$$
which, as a consequence of \cref{prop:DVBSym}, is also a double vector bundle.

As an immediate consequence of \cref{prop:DVBSym} and \cite[Proposition~2.1]{gracia2010lie}, we get the following corollary. 
\begin{corollary}\label{cor:AltDVBdef}
 $D$ is a double vector bundle if and only if \labelcref{eq:DVB} is a commutative diagram where all four sides are vector bundles and the following equations hold
\begin{align}\label{eq:DVBinterchg}(d_1+_{D/B} d_2)+_{D/A} (d_3+_{D/B} d_4)&=(d_1+_{D/A} d_3)+_{D/B}(d_2+_{D/A} d_4),\\
 t\cdot_{D/A}(d_1+_{D/B} d_2)&=t\cdot_{D/A} d_1+_{D/B} t\cdot_{D/A} d_2,\\
 t\cdot_{D/B}(d_1+_{D/A} d_3)&=t\cdot_{D/B} d_1 +_{D/A} t\cdot_{D/B} d_3,\end{align}
for any $d_1,d_2,d_3,d_4\in D$  satisfying $(d_1,d_2)\in D\times_B D$, $(d_3,d_4)\in D\times_B D$, and $(d_1,d_3)\in D\times_A D$, $(d_2,d_4)\in D\times_A D$.
\end{corollary}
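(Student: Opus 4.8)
The plan is to chase the definitions through the equivalence (E1)$\Leftrightarrow$(E2) established in Proposition \ref{prop:DVBSym}, together with the cited Proposition~2.1 of \cite{gracia2010lie}, which characterizes double vector bundles by the interchange law and the bilinearity of the two scalar multiplications. First I would observe that the forward direction is trivial: if \labelcref{eq:DVB} is a double vector bundle in the sense of Definition~\ref{def:DLACAVB}, then by Proposition~\ref{prop:DVBSym} all four arrows in \labelcref{eq:DVB1} are genuine vector bundle projections, the diagram commutes, and \cite[Proposition~2.1]{gracia2010lie} applied to this diagram yields \cref{eq:DVBinterchg} and the two scalar-multiplication identities verbatim. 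Conversely, suppose \labelcref{eq:DVB} is a commutative diagram of four vector bundles for which the three displayed equations hold. The remaining content of \cite[Proposition~2.1]{gracia2010lie} is precisely that these equations force the pair of structures to assemble into a double vector bundle in the classical (Pradines) sense, i.e. that $D$ is a vector bundle over $B$ for which $\gr(+_{D/A})\subseteq D^3$ is a subbundle over $B^3$; this is exactly condition (E2)$'$ / the classical definition, so Proposition~\ref{prop:DVBSym} then gives back Definition~\ref{def:DLACAVB}.

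The key steps, in order, are: (i) unwind Definition~\ref{def:DLACAVB} to see that ``$D$ is a double vector bundle'' means $D\to B$ is a vector bundle with $\gr(+_{D/A})$ a vector subbundle of $D^3\to B^3$, plus (implicitly, by Proposition~\ref{prop:DVBSym}) the companion structure on $A,B,M$; (ii) invoke Proposition~\ref{prop:DVBSym} to replace this by the purely diagrammatic statement that \labelcref{eq:DVB1} is a double vector bundle with compatible side projections; (iii) quote \cite[Proposition~2.1]{gracia2010lie} to translate ``\labelcref{eq:DVB1} is a double vector bundle'' into the algebraic identities \cref{eq:DVBinterchg} and the two scalar laws, under the standing hypothesis that the four sides are vector bundles and the square commutes; (iv) assemble (ii) and (iii) into the stated ``if and only if''. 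Since every implication is a citation of a result already stated in the excerpt, no genuine computation is required — one only needs to check that the domain conditions on $d_1,\dots,d_4$ (the two fibred-product constraints over $A$ and over $B$) match those in the cited proposition, which they do by construction.

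The one point needing care — and the main (mild) obstacle — is that Proposition~\ref{prop:DVBSym} is phrased for a manifold $D$ that is \emph{already} presented as the total space of two vector bundles over submanifolds $A,B\subseteq D$, whereas the Corollary starts merely from a commutative square of four vector bundles. I would address this by remarking that the hypotheses of the Corollary supply exactly the data $A\hookrightarrow D$, $B\hookrightarrow D$ (as zero sections) and $M=A\cap B$ needed to apply Proposition~\ref{prop:DVBSym}, so there is no circularity: the Corollary's hypothesis is the input side of (E2), and \cite[Proposition~2.1]{gracia2010lie} supplies the bridge between (E2) and the three algebraic identities. Everything else is bookkeeping, and I would simply state that the Corollary is ``an immediate consequence'' of combining these two results, exactly as the paper announces.
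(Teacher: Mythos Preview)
Your proposal is correct and follows essentially the same approach as the paper, which simply declares the corollary ``an immediate consequence of \cref{prop:DVBSym} and \cite[Proposition~2.1]{gracia2010lie}'' without further elaboration. Your expansion of how these two results combine---using \cref{prop:DVBSym} to pass between the relational definition and the diagrammatic form, and \cite[Proposition~2.1]{gracia2010lie} to translate the latter into the algebraic identities---is exactly the intended argument.
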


\begin{remark}
The alternative definition of a double vector bundle given by \cref{cor:AltDVBdef} is the standard one found in \cite{Pradines:1974tc,Mackenzie:2005tc,Mackenzie05}.
Also, note that \cref{eq:DVBinterchg} is equivalent to the commutativity of \labelcref{eq:DVBComDiagRel}.
\end{remark}

\begin{remark}[Euler vector fields]\label{rem:Euler}
One may also define a double vector bundle in terms of Euler vector fields, as is done by Grabowski and Rotkiewicz \cite{Grabowski:2009dc}. Suppose $D$ is a manifold which is the total space of two different vector bundles $D\to A$ and $D\to B$. Let $\epsilon_{D/A}$ and $\epsilon_{D/B}$ the respective Euler vector fields. Then $D$ is a double vector bundle if and only if $[\epsilon_{D/A},\epsilon_{D/B}]=0$.

One major advantage of this definition is that it extends easily to $n$-vector bundles. An $n$-vector bundle $D$ is manifold with $n$ vector bundle structures $D\to M_i$ (for $i=1,\dots n$) such that the respective Euler vector fields commute pairwise.

\end{remark}

\subsection{The core}
Consider the submanifold $C:=\on{ker}(q_{D/A})\cap\on{ker}(q_{D/B})$.
If in \cref{eq:DVBinterchg}, we let $d_1,d_4\in C$ and $d_2,d_3=\tilde 0_{D/M}$, then we get 
\begin{align*}
d_1+_{D/A} d_4 =& (d_1+_{D/B} \tilde 0_{D/M})+_{D/A}(\tilde 0_{D/M}+_{D/B} d_4)\\
=&(d_1+_{D/A} \tilde 0_{D/M})+_{D/B} (\tilde 0_{D/M}+_{D/A} d_4)\\
=&d_1+_{D/B} d_4.
\end{align*} 
So both $+_{D/A}$ and $+_{D/B}$ define the same additive structure on $C$. Similarly both $\cdot_{D/A}$ and $\cdot_{D/B}$ restrict to the same scalar multiplication on $C$. Therefore, with either choice of addition and scalar multiplication, $C$ is a vector bundle over $M$, called the \emph{core} of $D$. 

We may occasionally display the core explicitly in a double vector bundle diagram, as follows:
$$\begin{tikzpicture}
\mmat{m}{
D&B\\
A&M\\
};
\path[->] (m-1-1) edge  (m-1-2);
\path[->] (m-1-1) edge  (m-2-1);
\path[->] (m-1-2) edge  (m-2-2);
\path[->] (m-2-1) edge  (m-2-2);
\draw (0,0) node (c) {$C$} ;
\path[left hook->] (c) edge (m-1-1);
\end{tikzpicture}$$

\begin{example}
Suppose $A,B$ and $C$ are vector bundles over $M$. The core in the double vector bundle
$$\begin{tikzpicture}
\mmat{m}{
A\times_M B\times _M C&B\\
A&M\\
};
\path[->] (m-1-1) edge  (m-1-2);
\path[->] (m-1-1) edge  (m-2-1);
\path[->] (m-1-2) edge  (m-2-2);
\path[->] (m-2-1) edge  (m-2-2);
\end{tikzpicture}$$
 (c.f \cref{ex:DirectSumDVB}) is the subbundle $C\subseteq A\times_M B\times _M C$.
\end{example}

Note that the abelian groupoid $C$ acts on $D$, via the formula
\begin{equation}\label{eq:coreaction}d+c:=(d+_{D/A}0_{D/A})+_{D/B}(c+_{D/A}0_{D/B})=(d+_{D/B}0_{D/B})+_{D/A}(c+_{D/B}0_{D/A}).\end{equation}
 for any $(d,c)\in D\times_M C$.

We let $i:C\to D$ denote the inclusion and 
$i_A:q_{A/M}^*C\to D$
denote  the composition 
$$\begin{tikzpicture}
\mmat[5em]{m}{A\times_M C&D\times_B D&D.\\};
\path[->] (m-1-1) edge node {$ 0_{D/A}\times i$} (m-1-2);
\path[->] (m-1-2) edge node {$+_{D/B}$} (m-1-3);
\end{tikzpicture}$$
The exact sequence of vector bundles 
\begin{equation}\label{eq:iA}\begin{tikzpicture}
\mmat{m}{q_{A/M}^* C & D& q_{A/M}^* B\\ A& A& A\\};
\path[->] (m-1-1) 	edge node {$i_A$} (m-1-2)
				edge (m-2-1);
\path[->] (m-1-2)	edge (m-1-3)
				edge (m-2-2);
\path[->] (m-1-3)	edge (m-2-3);
\path[->] (m-2-1)	edge node[swap] {$\on{id}$} (m-2-2);
\path[->] (m-2-2)	edge node[swap] {$\on{id}$} (m-2-3);
\end{tikzpicture}\end{equation} is called the \emph{core sequence}.

Among the sections $\Gamma(D,A)$, there are two subspaces of sections: the \emph{core} sections $\Gamma_C(D,A)$, and the \emph{linear} sections $\Gamma_l(D,A)$. 
For any section $\sigma\in\Gamma(C)$, we let  the section $\sigma_{c_A}:A\to D$ be given by
$$\begin{tikzpicture}
\mmat{m}{A&&q_{A/M}^*C&D\\};
\path[->] (m-1-1) edge node {$q_{A/M}^*\sigma$} (m-1-3);
\path[->] (m-1-3) edge node {$i_A$} (m-1-4);
\end{tikzpicture}$$
The map $\sigma\to\sigma_{c_A}$ embeds $\Gamma(C)$ into $\Gamma(D,A)$, as the space $\Gamma_C(D,A)$ of core sections.

Meanwhile a section $\sigma\in\Gamma(D,A)$ is called linear, if there is a section $\sigma_0\in\Gamma(B)$ such that the horizontal map
$$\begin{tikzpicture}
\mmat{m}{
A&D\\
M&B\\
};
\path[->] (m-1-1) edge node {$\sigma$} (m-1-2);
\path[->] (m-1-1) edge (m-2-1);
\path[->] (m-1-2) edge (m-2-2);
\path[->] (m-2-1) edge node {$\sigma_0$} (m-2-2);
\end{tikzpicture}$$
is a morphism of vector bundles (in this case, $\sigma_0$ is unique). The subspace of linear sections is denoted by $\Gamma_l(D,A)\subset\Gamma(D,A)$, and the map $\sigma\to\sigma_0$ defines a linear surjection $\Gamma_l(D,A)\to\Gamma(B)$.

It is clear that there are analogous notions with the roles of $A$ and $B$ replaced.

\begin{remark}
Let $\epsilon_{D/A}$ and $\epsilon_{D/B}$ denote the Euler vector fields on the vector bundles $D\to A$ and $D\to B$, as in \cref{rem:Euler}.

Any section $\sigma\in\Gamma(D,A)$ defines a vertical vector field $\iota_\sigma$ on $D$ by fibrewise translation,\footnote{We use the notation $\iota_\sigma$ since it acts by contraction with $\sigma$ for fibrewise linear functions.} i.e, $$(\iota_\sigma f)(d)=\partial_t f(d +_{D/A} t\cdot_{D/A} \sigma)\rvert_{t=0},$$  such that $[\epsilon_{D/A},\iota_\sigma]=-\iota_\sigma$. Indeed there is a one to one correspondence between vector fields $X\in\mf{X}(D)$ such that $[\epsilon_{D/A},X]=-X$ and sections $\sigma\in\Gamma(D,A)$ given by the condition $X=\iota_\sigma$. The core sections $\Gamma_C(D,A)$ are precisely the sections $\sigma\in\Gamma(D,A)$ such that $[\epsilon_{D/B},\iota_\sigma]=-\iota_\sigma$, while the linear sections $\Gamma_l(D,A)$ are precisely those sections $\sigma\in\Gamma(D,A)$ such that $[\epsilon_{D/B},\iota_\sigma]=0$.
\end{remark}

\begin{example}[Tangent bundle of a vector bundle (cont.)]\label{ex:TngDVB2}
Suppose that $B\to M$ is a vector bundle, then as explained in \cref{ex:TngDVB},
\begin{equation}\label{eq:TngDVB}\begin{tikzpicture}
\mmat{m}{
TB&B\\
TM&M\\
};
\path[->] (m-1-1)	edge (m-1-2)
				edge (m-2-1);
\path[<-] (m-2-2)	edge node[swap] {$q_{B/M}$} (m-1-2)
				edge node {$q_{TM/M}$}(m-2-1);
\end{tikzpicture}\end{equation}
is a double vector bundle. For a point $x\in M$, the core fibre over $x$ consists of vectors based at $x$ which are tangent to the fibre, $q_{B/M}^{-1}(x)$, of $B$ over $x$ (see \cref{fig:TngCore}). So the core is canonically isomorphic to $B$:
$$\begin{tikzpicture}
\mmat{m}{
TB&B\\
TM&M\\
};
\path[->] (m-1-1)	edge (m-1-2)
				edge (m-2-1);
\path[<-] (m-2-2)	edge (m-1-2)
				edge (m-2-1);
\draw (0,0) node (c) {$B$};
\path[left hook->] (c) edge (m-1-1);
\end{tikzpicture}$$
If $\sigma\in\Gamma(B)$ is a section of $B$, then we have $\sigma_{c_{B}}=\iota_\sigma$, so we may denote $\sigma_{c_{TM}}$ by $\sigma_C$ to simplify notation. 

\begin{figure}
\begin{tabular}{cc}
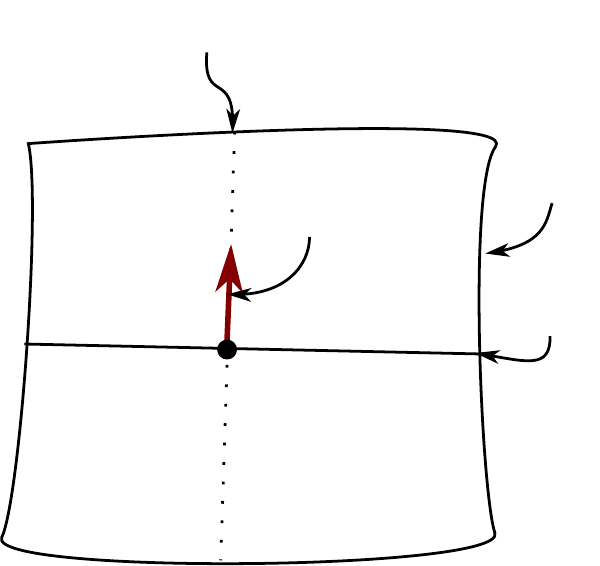 & 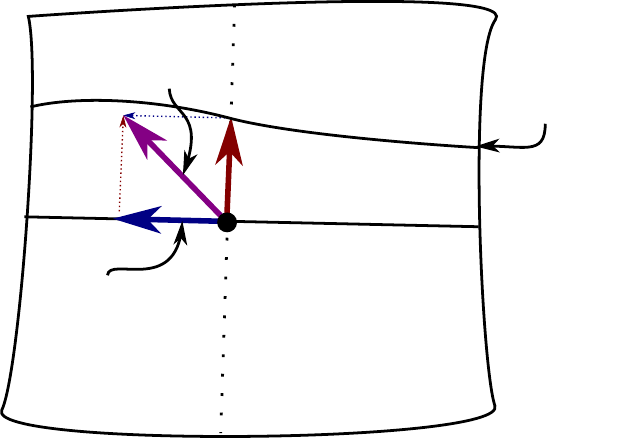
\end{tabular}
\caption{\label{fig:TngCore}The vector $\xi\in T_xB$ is tangent to the fibre $q_{B/M}^{-1}(x)$, and therefore defines a core element of $TB$ at $x\in M$. If $\sigma\in\Gamma(B)$ is such that $\xi=\sigma(x)$, then for any $X\in TM$, $\sigma_C(X)=X+_{TB/B}\xi$, (here the addition takes place in the vector space $T_xB$).}
\end{figure}

Meanwhile,  the differential $d\sigma:TM\to TB$ is a linear section of $TB$ over $TM$ canonically associated to the section $\sigma:M\to B$, which we call the tangent lift $\sigma_T$ of $\sigma$ (see \cref{fig:TngLiftSec}). Note that, while the tangent lift of any section is a linear section, the converse does not hold.

\begin{figure}
\begin{center}
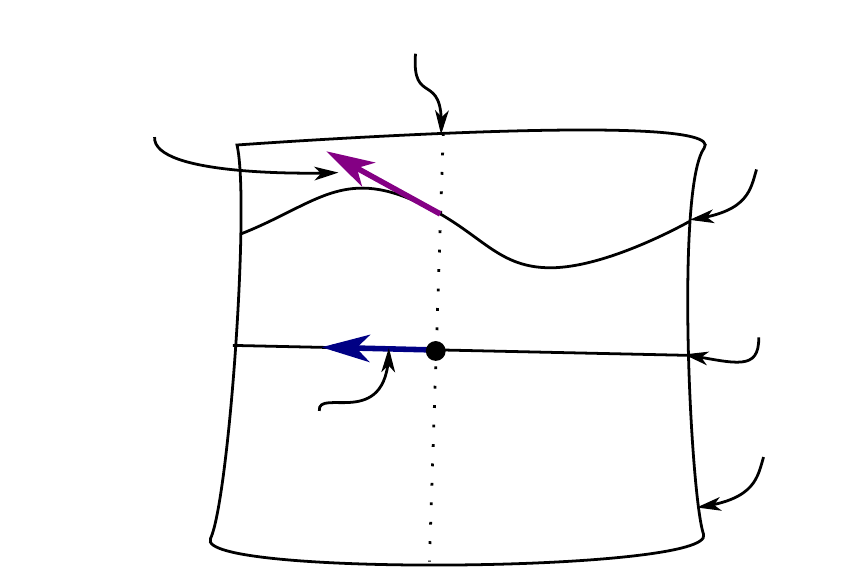
\end{center}
\caption{\label{fig:TngLiftSec}For any section $\sigma\in\Gamma(B)$, the tangent lift $\sigma_T\in\Gamma(TB,TM)$ of $\sigma$ takes $X\in TM$ to $d\sigma(X)\in TB$.}
\end{figure}

For $f\in C^\infty(M)$, we introduce the notation $f_C:=q_{TM/M}^*f\in C^\infty(TM)$ and $f_T:=df\in C^\infty(TM)$.\footnote{Here, we understand the 1-form, $df\in\Omega^1(M)$, as defining a function on $TM$.} Then we have the following rules 
\begin{subequations}\label{eq:TCDer}
\begin{align}
(f\cdot\sigma)_C&=f_C\cdot\sigma_C,\\
(f\cdot\sigma)_T&=f_T\cdot\sigma_C+f_C\cdot\sigma_T.
\end{align}\end{subequations}

Finally, we point out that if $\{\sigma^i\}\subset\Gamma(B)$ is a local basis of sections of $B$, then $\{\sigma^i_C,\sigma^i_T\}\subset\Gamma(TB,TM)$ is a local basis of sections of $TB$.

\end{example}

\subsection{Duals of double vector bundles}
A double vector bundle
\begin{equation}\label{eq:DVBdualpic}\begin{tikzpicture}
\mmat{m}{
D&B\\
A&M\\
};
\path[->] (m-1-1) edge  (m-1-2);
\path[->] (m-1-1) edge  (m-2-1);
\path[->] (m-1-2) edge  (m-2-2);
\path[->] (m-2-1) edge  (m-2-2);
\draw (0,0) node (c) {$C$} ;
\path[left hook->] (c) edge (m-1-1);
\end{tikzpicture}\end{equation}
is the total space for two ordinary vector bundles: 
$$\begin{tikzpicture}
\draw (-3,0) node (d) {$D$};
\draw (-2,0) node (b) {$B$};
\draw (2,.5) node (d') {$D$};
\draw (2,-.5) node (a) {$A$};
\draw (0,0) node {and};
\path[->] (d) edge (b);
\path[->] (d') edge (a);
\end{tikzpicture}$$
 We denote the duals of these two ordinary vector bundles by
$$\begin{tikzpicture}
\draw (-3,0) node (d) {$D^{*_{\!x}}$};
\draw (-2,0) node (b) {$B$};
\draw (2,.5) node (d') {$D^{*_{\!y}}$};
\draw (2,-.5) node (a) {$A$};
\draw (0,0) node {and};
\path[->] (d) edge (b);
\path[->] (d') edge (a);
\end{tikzpicture}$$
The notation $*_{\!x}$ and $*_{\!y}$, which we have adapted from \cite{GraciaSaz:2009ck},\footnote{Gracia-Saz and Mackenzie denote $*_{\!x}$ and $*_{\!y}$ by $X$ and $Y$, respectively.} identifies the arrow along which we dualize with the appropriate axis:
$$\begin{tikzpicture}
\path[->] (0,0) edge node {$x$} (1,0)
		edge node[swap] {$y$} (0,-1);
\end{tikzpicture}$$
We call $D^{*_{\!y}}$  the \emph{vertical dual} or \emph{dual over $A$}, and $D^{*_{\!x}}$ the \emph{horizontal dual} or \emph{dual over $B$}.

Pradines \cite{Pradines:1988td}, and later Konieczna-Urba\'{n}ski \cite{Konieczna:1999vh}, Mackenzie \cite{Mackenzie:2005tc,Mackenzie:1999vk} and  Grabowski-Rotkiewicz\cite{Grabowski:2009dc}, studied the duals $D^{*_{\!x}}$ and $D^{*_{\!y}}$, proving that they form the total space for double vector bundles themselves. We now explain this fact in terms of relations.

Note that the following relations are all $\mc{VB}$-relations, where $D$ is viewed as a vector bundle over $A$.
\begin{itemize}
\item $\gr(+_{D/B}):D\times D\dasharrow D$ (addition)
\item $B:\ast\dasharrow D$ (the unit, $\ast\sim_{B}0_{D/B}(b)$ for any $b\in B$)
\item $\gr(t\cdot_{D/B}):D\dasharrow D$ (scalar multiplication by $t\in\mbb{R}$)
\item $D_\Delta:D\dasharrow D$ (the identity map, $d\sim_{\Delta_D}d$)
\item $\gr(\on{Swap}):D\times D\dasharrow D\times D$ ($(d_1,d_2)\sim_{\gr(\on{Swap})}(d_2,d_1)$)
\end{itemize}
The defining axioms for $D$ to be a vector bundle over $B$ can be written entirely in terms of these relations. That is,
\begin{equation*}
\begin{split}
\gr(+_{D/B})\circ(\gr(+_{D/B})\times D_\Delta)&=\gr(+_{D/B})\circ(D_\Delta\times\gr(+_{D/B}))\text{ (associativity of addition)}\\
\gr(+_{D/B})\circ \gr(\on{Swap})&=\gr(+_{D/B}) \text{ (commutativity of addition)}\\
\gr(+_{D/B})\circ(D_\Delta\times B)&=D_\Delta \text{ (identity element of addition)}\\
\gr(+_{D/B})\circ(\gr(t\cdot_{D/B})\times \gr(t\cdot_{D/B}))&=\gr(t\cdot_{D/B})\circ\gr(+_{D/B}) \text{ (distributivity)}\\
etc.&
\end{split}
\end{equation*}

Since $D\to A$ decomposes as $C\oplus B$ along $M\subset A$ (embedded as the zero section), we have $C^*\cong \ann(B)\subset D^{*_{\!y}}$.
 We define \begin{align}\gr(+_{D^{*_{\!y}}/C^*})&:=\ann^\natural(\gr(+_{D/B})):D^{*_{\!y}}\times D^{*_{\!y}}\dasharrow D^{*_{\!y}}\\
\gr(t\cdot_{D^{*_{\!y}}/C^*})&:=\ann^\natural(\gr(t\cdot_{D/B})):D^{*_{\!y}}\dasharrow D^{*_{\!y}}.\end{align} An application of \cref{lem:ann} then shows that $D^{*_{\!y}}$ also satisfies the axioms to be a vector bundle over $C^*$, that is 
\begin{equation*}
\begin{split}
\gr(+_{D^{*_{\!y}}/C^*})\circ(\gr(+_{D^{*_{\!y}}/C^*})\times (D^{*_{\!y}})_\Delta)&=\gr(+_{D^{*_{\!y}}/C^*})\circ((D^{*_{\!y}})_\Delta\times\gr(+_{D^{*_{\!y}}/C^*}))\\
\gr(+_{D^{*_{\!y}}/C^*})\circ \gr(\on{Swap})&=\gr(+_{D^{*_{\!y}}/C^*})\\
\gr(+_{D^{*_{\!y}}/C^*})\circ((D^{*_{\!y}})_\Delta\times C^*)&=(D^{*_{\!y}})_\Delta\\
\gr(+_{D^{*_{\!y}}/C^*})\circ(\gr(t\cdot_{D^{*_{\!y}}/C^*})\times \gr(t\cdot_{D^{*_{\!y}}/C^*}))&=\gr(t\cdot_{D^{*_{\!y}}/C^*})\circ\gr(+_{D^{*_{\!y}}/C^*}) \\
etc.&
\end{split}
\end{equation*}

Since, by definition, $\gr(+_{D^{*_{\!y}}/C^*})$ is a vector subbundle of $(D^{*_{\!y}})^3\to A^3$, it follows from \cref{def:DLACAVB} that
\begin{equation}\label{eq:DVBDual}\begin{tikzpicture}
\mmat{m}{D^{*_{\!y}}&C^*\\ A& M\\};
\path[->] (m-1-1)	edge (m-1-2)
				edge (m-2-1);
\path[<-] (m-2-2)	edge (m-1-2)
				edge (m-2-1);
\draw (0,0) node (b) {$B^*$};
\path[left hook->] (b) edge (m-1-1);
\end{tikzpicture}\end{equation}
is a double vector bundle (see \cite{Mackenzie:2005tc,Grabowski:2009dc} for details). The core of $D^{*_{\!y}}$ is $B^*$, and the core sequence is the dual of \labelcref{eq:iA}. 

A similar discussion applies for the dual $D^{*_{\!x}}\to B$. 

\begin{example}[Cotangent bundle of a vector bundle]\label{ex:CotDVB}
Taking the dual of the tangent double vector bundle \labelcref{eq:TngDVB} over $B$, and using the identification $(TB)^{*_{\!x}}\cong T^*B$, we get the cotangent double vector bundle
\begin{equation}\label{eq:CotDVB}\begin{tikzpicture}
\mmat{m}{
T^*B&B\\
B^*&M\\
};
\path[->] (m-1-1)	edge (m-1-2)
				edge (m-2-1);
\path[<-] (m-2-2)	edge (m-1-2)
				edge (m-2-1);
\draw (0,0) node (c) {$T^*M$};
\path[left hook->] (c) edge (m-1-1);
\end{tikzpicture}\end{equation}

\end{example}

\begin{example}\label{ex:SecDualTng}
If we take the dual of the tangent double vector bundle \labelcref{eq:TngDVB} over $TM$ instead, the result is canonically isomorphic to 
$$\begin{tikzpicture}
\mmat{m}{
TB^*&B^*\\
TM&M\\
};
\path[->] (m-1-1)	edge (m-1-2)
				edge (m-2-1);
\path[<-] (m-2-2)	edge (m-1-2)
				edge (m-2-1);
\draw (0,0) node (c) {$B^*$};
\path[left hook->] (c) edge (m-1-1);
\end{tikzpicture}$$
\end{example}

\subsection{Total kernels and quotients of double vector bundles}

Suppose that the  map
\begin{equation}\label{eq:TotQuo}
\begin{tikzpicture}
\mmat{m1} at (-2,0){D&B\\ A& M\\};
\path[->] (m1-1-1)	edge (m1-1-2)
				edge (m1-2-1);
\path[<-] (m1-2-2)	edge (m1-1-2)
				edge (m1-2-1);
\mmat{m2} at (2,0) {Q&M\\ M& M\\};
\path[->] (m2-1-1)	edge (m2-1-2)
				edge (m2-2-1);
\path[<-] (m2-2-2)	edge node[swap]{$\on{id}$} (m2-1-2)
				edge node{$\on{id}$} (m2-2-1);
\path[->] (m1) edge node {$q$} (m2);
\draw (-2,0) node (c) {$C$};
\path[left hook->] (c) edge (m1-1-1);
\draw (2,0) node (c2) {$Q$};
\path[left hook->] (c2) edge (m2-1-1);
\end{tikzpicture}
\end{equation}
is a morphism of double vector bundles
such that the restriction of $q$ to the core $C$ of $D$ is a surjection.
Since $Q$ has a common zero section over both side bundles, it makes sense to define the \emph{total kernel} $$\on{ker}(q):=\{x\in D \mid q(x)=0\},$$
a double vector subbundle of $D$. 

On the other hand, suppose we have a double vector subbundle of $D$,
\begin{equation}\label{eq:TotKer}
\begin{tikzpicture}
\mmat{m1} at (-2,0){D'&B\\ A& M\\};
\path[->] (m1-1-1)	edge (m1-1-2)
				edge (m1-2-1);
\path[<-] (m1-2-2)	edge (m1-1-2)
				edge (m1-2-1);
\mmat{m2} at (2,0) {D&B\\ A& M\\};
\path[->] (m2-1-1)	edge (m2-1-2)
				edge (m2-2-1);
\path[<-] (m2-2-2)	edge (m2-1-2)
				edge (m2-2-1);
\draw (0,0) node {$\subseteq$} (m2);
\draw (-2,0) node (c) {$C'$};
\path[left hook->] (c) edge (m1-1-1);
\draw (2,0) node (c2) {$C$};
\path[left hook->] (c2) edge (m2-1-1);
\end{tikzpicture}
\end{equation}
%
containing both side bundles $A$ and $B$. Recall that the core $C$ acts on $D$ via \cref{eq:coreaction}. 
Consequently, we have a well defined map $q:D\to C/C'$, given by the condition $$q(d)=\tilde c\Leftrightarrow d+c\in D',$$ where $c\in C$ is any lift of $\tilde c$. The map $q:D\to C/C'$ is called the \emph{total quotient} of $D$ by $D'$.

It is clear that these two constructions invert each other, yielding the following proposition.

\begin{proposition}\label{prop:totkerVtotquo}
The operations of taking the total kernel or the total quotient define a one-to-one correspondence between surjective morphisms of double vector bundles of the form \labelcref{eq:TotQuo} and inclusions of the form \labelcref{eq:TotKer}.
\end{proposition}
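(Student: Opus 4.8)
The plan is to check directly that the total-kernel and total-quotient operations are mutually inverse; once the two well-definedness statements are in place, the rest is a short chase through the definitions, so I would pin those down first.

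Let $q$ be a surjective morphism as in \labelcref{eq:TotQuo}. Since $q$ is a morphism of double vector bundles covering the projections $A\to M$, $B\to M$ and $\on{id}_M$, it carries the zero sections of $D\to A$ and of $D\to B$ into the zero section of $Q$; hence the side bundles $A,B\subseteq D$ lie in $\ker(q)$. For $a\in A$ over $m\in M$, the restriction of $q$ to the fibre $D_a$ is already surjective onto $Q_m$, because it restricts to the surjection $q|_C$ on the core fibre $C_m\subseteq D_a$; thus $q$ is fibrewise surjective in each of its two vector-bundle structures, hence of locally constant rank, and $\ker(q)$ is a double vector subbundle of $D$ containing both side bundles. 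Its core is $\ker(q|_C)=:C'$, and $q|_C$ descends to an isomorphism $C/C'\to Q$. Conversely, given a double vector subbundle $D'\subseteq D$ as in \labelcref{eq:TotKer}, the only substantive point is that the core action $D'\times_M C\to D$, $(d',c)\mapsto d'+c$, is surjective. To see this, restrict the core sequence \labelcref{eq:iA} of $D$ over $A$ to $D'$: as $D'$ contains the side bundle $B$ and has core $C'$, one gets a short exact sequence $0\to q_{A/M}^*C'\to D'\to q_{A/M}^*B\to 0$ mapping into that of $D$, so $D/D'\cong q_{A/M}^*(C/C')$ over $A$ and the composite $q_{A/M}^*C\hookrightarrow D\to D/D'$ is the (surjective) pullback of the quotient map $C\to C/C'$. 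Hence every $d\in D$ can be written $d'+c$ with $d'\in D'$, $c\in C$, and two such $c$ differ by an element of $C\cap D'=C'$; this is exactly what makes the total quotient $q\colon D\to C/C'$ well defined, and linearity over both side bundles is inherited from bilinearity of the core action \labelcref{eq:coreaction}.

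With this in hand the two compositions are immediate. Starting from $D'$ and forming the total quotient $q\colon D\to C/C'$, its total kernel is $\{d\mid q(d)=\tilde 0\}=\{d\mid \exists\,c\in C':\ d+c\in D'\}=D'$, the last equality because $D'$ is stable under the core action by its own core $C'$. Starting instead from $q\colon D\to Q$, put $D'=\ker(q)$ and $C'=\ker(q|_C)$; the total quotient $\tilde q\colon D\to C/C'$ then satisfies $\tilde q(d)=\tilde c\Leftrightarrow d+c\in\ker(q)\Leftrightarrow q(d)+q(c)=0$, using that $q$ is additive for the core action and that on $Q$ — which has trivial side bundles — the core action is ordinary addition; composing with the isomorphism $C/C'\to Q$ induced by $q|_C$ then identifies $\tilde q$ with $q$ (up to the overall sign fixed by the convention in \labelcref{eq:coreaction}). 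So the two operations invert each other, which is the assertion of the proposition.

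I expect the only real obstacle to be the surjectivity of the core action for a double vector subbundle containing both side bundles — equivalently, the fact that such a $D'$ is controlled by its core in the strong sense that $D/D'$ is the pullback of $C/C'$. I would establish this via the core sequence as above; alternatively one can argue locally, choosing a decomposition $D\cong A\times_M B\times_M C$ as in \cref{cor:AltDVBdef}, in which $D'$ is forced to have the form $A\times_M B\times_M C'$. Everything else is a formal unwinding of the definitions of the core, the core action, the total kernel, and the total quotient.
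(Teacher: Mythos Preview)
Your proposal is correct and follows the same approach as the paper, which simply asserts ``It is clear that these two constructions invert each other'' without further argument. You have filled in exactly the details the paper omits: well-definedness of the total kernel as a double vector subbundle, surjectivity of the core action via the core sequence, and the explicit verification that the two operations are mutually inverse (including the harmless sign in the identification $C/C'\cong Q$).
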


\section{Triple vector bundles}
In this section we briefly summarize the theory of \emph{triple vector bundles}, first studied by Mackenzie \cite{Mackenzie:2005tc}. The following definition is an abbreviated\footnote{The abbreviations are due to the work of Grabowski and Rotkiewicz \cite{Grabowski:2009dc}: Suppose $E$ is the total space for $n$ vector bundles $E\to V_1,\dots, E\to V_n$. They show that $E$ is an $n$-vector bundle if and only if the $n$ Euler vector fields commute pairwise. As a consequence, an $n$-dimensional cube whose arrows (with correct orientations) are vector bundles is an $n$-vector bundle if each face is a double vector bundle.} version of the one found in \cite{Mackenzie:2005tc}.

\begin{definition}\label{def:TVB}A triple vector bundle is a cube with edges oriented as follows,
$$\begin{tikzpicture}[
        back line/.style={densely dotted},
        cross line/.style={preaction={draw=white, -,
           line width=6pt}}]
\mmat[1em]{m}{
	&E	&	&D_1\\
D_2	&		&V_3	&\\
	&D_3	&	&V_2\\
V_1	&		&M	&\\
}; 
\path[->]
        (m-1-2) edge (m-1-4)
                edge (m-2-1)
                edge [back line] (m-3-2)
        (m-1-4) edge (m-3-4)
                edge (m-2-3)
        (m-2-1) edge [cross line] (m-2-3)
                edge (m-4-1)
        (m-3-2) edge [back line] (m-3-4)
                edge [back line] (m-4-1)
        (m-4-1) edge (m-4-3)
        (m-3-4) edge (m-4-3)
        (m-2-3) edge [cross line] (m-4-3);
\end{tikzpicture}$$
such that
\begin{itemize}
\item each edge is a vector bundle, and 
\item each face is a double vector bundle.
\end{itemize}


\end{definition}


\begin{example}[\!\!\cite{Mackenzie:2005tc}]\label{ex:TngTVB}
Suppose that
$$\begin{tikzpicture}
\mmat{m}{
D&B\\
A&M\\
};
\path[->] (m-1-1)	edge (m-1-2)
				edge (m-2-1);
\path[<-] (m-2-2)	edge (m-1-2)
				edge (m-2-1);
\end{tikzpicture}$$
is a double vector bundle. Then the tangent bundle $TD$ is naturally a triple vector bundle,
$$\begin{tikzpicture}[
        back line/.style={densely dotted},
        cross line/.style={preaction={draw=white, -,
           line width=6pt}}]
\mmat[1em]{m}{
	&TD	&	&TB\\
D	&		&B	&\\
	&TA	&	&TM\\
A	&		&M	&\\
}; 
\path[->]
        (m-1-2) edge (m-1-4)
                edge (m-2-1)
                edge [back line] (m-3-2)
        (m-1-4) edge (m-3-4)
                edge (m-2-3)
        (m-2-1) edge [cross line] (m-2-3)
                edge (m-4-1)
        (m-3-2) edge [back line] (m-3-4)
                edge [back line] (m-4-1)
        (m-4-1) edge (m-4-3)
        (m-3-4) edge (m-4-3)
        (m-2-3) edge [cross line] (m-4-3);
\end{tikzpicture}$$
called the \emph{tangent triple vector bundle} of $D$.
\end{example}

\begin{example}[\!\!\cite{Mackenzie:2005tc}]\label{ex:CotTVB}
Suppose that
$$\begin{tikzpicture}
\mmat{m}{
D&B\\
A&M\\
};
\path[->] (m-1-1)	edge (m-1-2)
				edge (m-2-1);
\path[<-] (m-2-2)	edge (m-1-2)
				edge (m-2-1);
\end{tikzpicture}$$
is a double vector bundle. 
The three double vector bundles $D, D^{*_{\!y}}$ and $D^{*_{\!x}}$ form faces of the triple vector bundle $T^*D$,
$$\begin{tikzpicture}[
        back line/.style={densely dotted},
        cross line/.style={preaction={draw=white, -,
           line width=6pt}}]
\mmat[1em]{m}{
	&T^*D	&	&D^{*_{\!x}}\\
D	&		&B	&\\
	&D^{*_{\!y}}	&	&C^*\\
A	&		&M	&\\
}; 
\path[->]
        (m-1-2) edge (m-1-4)
                edge (m-2-1)
                edge [back line] (m-3-2)
        (m-1-4) edge (m-3-4)
                edge (m-2-3)
        (m-2-1) edge [cross line] (m-2-3)
                edge (m-4-1)
        (m-3-2) edge [back line] (m-3-4)
                edge [back line] (m-4-1)
        (m-4-1) edge (m-4-3)
        (m-3-4) edge (m-4-3)
        (m-2-3) edge [cross line] (m-4-3);
\end{tikzpicture}$$
called the \emph{cotangent triple vector bundle} of $D$.
Here the left, top and back faces are cotangent double vector bundles, as in \cref{ex:CotDVB}.\footnote{However, the back face can be viewed as a cotangent double vector bundle in two different ways: as $T^*(D^{*_{\!y}})$ or $T^*(D^{*_{\!x}})$. There is a canonical isomorphism $T^*(D^{*_{\!y}})\cong T^*(D^{*_{\!x}})$, which restricts to the identity map on both $D^{*_{\!x}}$ and $D^{*_{\!y}}$, and to minus the identity on the core $T^*C^*$. }
\end{example}

\subsection{Duality}\label{sec:TVBDual}
Since a triple vector bundle 
$$\begin{tikzpicture}[
        back line/.style={densely dotted},
        cross line/.style={preaction={draw=white, -,
           line width=6pt}}]
\mmat[1em]{m} at (-3,0){
	&E	&	&D_{2,3}\\
D_{1,3}	&	&V_3	&\\
	&D_{1,2}	&	&V_2\\
V_1	&		&M	&\\
}; 
\path[->]
        (m-1-2) edge (m-1-4)
                edge (m-2-1)
                edge [back line] (m-3-2)
        (m-1-4) edge (m-3-4)
                edge (m-2-3)
        (m-2-1) edge [cross line] (m-2-3)
                edge (m-4-1)
        (m-3-2) edge [back line] (m-3-4)
                edge [back line] (m-4-1)
        (m-4-1) edge (m-4-3)
        (m-3-4) edge (m-4-3)
        (m-2-3) edge [cross line] (m-4-3);

\path [->] (3,1) edge node {$x$} (5,1)
			edge node {$y$} (3,-1)
			edge node[swap] {$z$} (2,0);
\end{tikzpicture}$$
 carries three vector bundle structures, one can dualize $E$ in three different ways: along the $x$, $y$ and $z$ axis depicted above. Each of these duals is a triple vector bundle, which we denote by $E^{*_{\!x}}$, $E^{*_{\!y}}$ and $E^{*_{\!z}}$, respectively (adapting Gracia-Saz and Mackenzie's  \cite{GraciaSaz:2009ck} notation).
 
 \begin{example}\label{ex:dualX}
 Suppose 
$$\begin{tikzpicture}
\mmat{m}{
D&B\\
A&M\\
};
\path[->] (m-1-1)	edge (m-1-2)
				edge (m-2-1);
\path[<-] (m-2-2)	edge (m-1-2)
				edge (m-2-1);
\draw (0,0) node (c) {$C$};
\path[left hook->] (c) edge (m-1-1);
\end{tikzpicture}$$ is a double vector bundle. Consider the tangent triple vector bundle 
 $$\begin{tikzpicture}[
        back line/.style={densely dotted},
        cross line/.style={preaction={draw=white, -,
           line width=6pt}}]
\mmat[1em]{m}{
	&TD	&	&TB\\
D	&	&B	&\\
	&TA	&	&TM\\
A	&	&M	&\\
}; 
\path[->]
        (m-1-2) edge (m-1-4)
                edge (m-2-1)
                edge [back line] (m-3-2)
        (m-1-4) edge (m-3-4)
                edge (m-2-3)
        (m-2-1) edge [cross line] (m-2-3)
                edge (m-4-1)
        (m-3-2) edge [back line] (m-3-4)
                edge [back line] (m-4-1)
        (m-4-1) edge (m-4-3)
        (m-3-4) edge (m-4-3)
        (m-2-3) edge [cross line] (m-4-3);
\end{tikzpicture}$$
described in \cref{ex:TngTVB}. Dualizing along the $z$-axis yields the cotangent triple vector bundle described in \cref{ex:CotTVB}, i.e. $(TD)^{*_{\!z}}= T^*D$. On the other hand, dualizing $TD$ along the $x$-axis yields the triple vector bundle 
 $$\begin{tikzpicture}[
        back line/.style={densely dotted},
        cross line/.style={preaction={draw=white, -,
           line width=6pt}}]
\mmat[1em]{m}{
	&TD^{*_{\!x}}	&	&TB\\
D^{*_{\!x}}	&		&B	&\\
	&TC^*	&	&TM\\
C^*	&	&M	&\\
}; 
\path[->]
        (m-1-2) edge (m-1-4)
                edge (m-2-1)
                edge [back line] (m-3-2)
        (m-1-4) edge (m-3-4)
                edge (m-2-3)
        (m-2-1) edge [cross line] (m-2-3)
                edge (m-4-1)
        (m-3-2) edge [back line] (m-3-4)
                edge [back line] (m-4-1)
        (m-4-1) edge (m-4-3)
        (m-3-4) edge (m-4-3)
        (m-2-3) edge [cross line] (m-4-3);
\end{tikzpicture}$$
 That is $(TD)^{*_{\!x}}= T(D^{*_{\!x}})$.
 
 As a final example, dualizing the cotangent triple vector bundle $T^*D$ along the $x$-axis yields $T^*D^{*_{\!x}}= TD^{*_{\!z}*_{\!x}}$:
 $$\begin{tikzpicture}[
        back line/.style={densely dotted},
        cross line/.style={preaction={draw=white, -,
           line width=6pt}}]
\mmat[1em]{m}{
	&TD^{*_{\!x}}	&	&D^{*_{\!x}}\\
TB&		&B	&\\
	&TC^*		&	&C^*\\
TM	&		&M	&\\
}; 
\path[->]
        (m-1-2) edge (m-1-4)
                edge (m-2-1)
                edge [back line] (m-3-2)
        (m-1-4) edge (m-3-4)
                edge (m-2-3)
        (m-2-1) edge [cross line] (m-2-3)
                edge (m-4-1)
        (m-3-2) edge [back line] (m-3-4)
                edge [back line] (m-4-1)
        (m-4-1) edge (m-4-3)
        (m-3-4) edge (m-4-3)
        (m-2-3) edge [cross line] (m-4-3);
\end{tikzpicture}$$
 \end{example}

\section{$\mc{LA}$-vector bundles and double linear Poisson vector bundles}
\subsection{$\mc{LA}$-vector bundles}
$\mc{LA}$-vector bundles are a concept due to Mackenzie \cite{Mackenzie:1998te,Mackenzie:1998ge} which encode the notion of a vector bundle in the category of Lie algebroids, or equivalently a Lie algebroid in the category of vector bundles. In \cite{gracia2010lie} it was shown that $\mc{LA}$-vector bundles are the correct context from which to study 2-term representations of Lie algebroids `up to homotopy' (note that \cite{gracia2010lie} also provides a very nice summary of $\mc{LA}$-vector bundle theory).


Suppose that 
\begin{equation}\label{eq:LAvb}\begin{tikzpicture}
\mmat{m}{D&B\\ A& M\\};
\path[->] (m-1-1)	edge (m-1-2)
				edge node[swap] {$q^D_A$} (m-2-1);
\path[<-] (m-2-2)	edge (m-1-2)
				edge (m-2-1);
\end{tikzpicture}\end{equation}
is an $\mc{LA}$-vector bundle. That is, $D\to A$ and $B\to M$ are vector bundles, \labelcref{eq:LAvb} is a morphism of vector bundles, $D\to B$ is a Lie algebroid, and $\gr{+_{D/A}}\subset D^3$ is a Lie subalgebroid. 

\begin{proposition}\label{prop:AisLA}
There is a unique Lie algebroid structure on $A\to M$ such that the inclusion $0_{D/A}:A\to D$ and the projection $q_{D/A}:D\to A$ are both morphisms of Lie algebroids.
\end{proposition}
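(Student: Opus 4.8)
The plan is to obtain the Lie algebroid structure on $A\to M$ by transporting it from the distinguished sub-double-vector-bundle $0_{D/A}(A)\subseteq D$, using the fact established in \cref{prop:DVBSym,cor:AltDVBdef} that the two vector bundle structures on a double vector bundle are interchangeable. First I would observe that, since $\gr(+_{D/A})\subseteq D^3$ is a Lie subalgebroid of the Lie algebroid $D\to B$ (with $D^3\to B^3$), its total kernel and related constructions inherit Lie algebroid structures. Concretely, the zero section $0_{D/A}\colon A\to D$ has image $$A':=\{d\in D\mid d+_{D/A}d = d\} = \{d\in D\mid q_{D/B}(d)\in 0_{B/M}(M)\},$$ which is precisely the preimage under $q_{D/B}\colon D\to B$ of the zero section $M\hookrightarrow B$; equivalently $A'$ is the total kernel of the morphism of double vector bundles $D\to D/0_{D/A}(A)$. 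Because $0_{D/A}\colon A\to D$ is a morphism of vector bundles over $0_{B/M}\colon M\to B$ and $\gr(+_{D/A})$ is a Lie subalgebroid, one checks that $A'\subseteq D$ is a Lie subalgebroid of $D\to B$ supported on $M\subseteq B$: indeed $A'$ is cut out fibrewise-linearly and its annihilator in $D^{*_x}$ is coisotropic for the linear Poisson structure dual to $D\to B$, which is exactly \cref{def:SubLA}. Restricting the Courant-free Lie algebroid data — bracket and anchor — of $D\to B$ to sections supported on $A'$ then endows $A'$, hence $A\cong A'$, with a Lie algebroid structure over $M$.

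Next I would verify that with this structure $0_{D/A}\colon A\to D$ is a Lie algebroid morphism: its graph in $D\times D$ (over $A\times B$, but supported where the second factor is $0_{B/M}(M)$) is exactly $\gr(+_{D/A})$ intersected with the appropriate zero sections, which is a Lie subalgebroid by hypothesis and by the transversality/clean-intersection behaviour of Lie subalgebroids. For the projection $q_{D/A}\colon D\to A$, I would use the characterization of Lie algebroid morphisms via linear Poisson structures (\cref{thm:linPois}, \cref{def:MorphLA}): $q_{D/A}$ is a morphism iff its graph $\gr(q_{D/A})\subseteq A\times D$ is a Lie subalgebroid of $A\times D\to M\times B$, iff the dual relation is a coisotropic relation between the linear Poisson manifolds $A^*$ and $D^*$ (dual over $B$). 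But the map $q_{D/A}\colon D\to A$ dualizes (over $B$) to the core inclusion $C^*\hookrightarrow D^{*_x}$ twisted appropriately; more cleanly, $\gr(q_{D/A})$ is built from $\gr(+_{D/A})$ by composing with zero sections, and since $\gr(+_{D/A})$ is an $\mc{LA}$-relation its composites with the (trivially $\mc{LA}$) zero-section relations are again $\mc{LA}$-relations — this is where the $\mc{LA}$-vector bundle axiom does all the work.

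For uniqueness: if $A\to M$ carries any Lie algebroid structure making both $0_{D/A}$ and $q_{D/A}$ morphisms, then $q_{D/A}\circ 0_{D/A}=\on{id}_A$ forces the bracket and anchor on $A$ to be the restriction/corestriction of those on $D\to B$ along $A'=0_{D/A}(A)$; since a Lie algebroid structure is determined by its bracket on sections and its anchor, and $\Gamma(A)\hookrightarrow\Gamma(A',D)$ via $0_{D/A}$ identifies $\Gamma(A)$ with a subspace on which the $D$-bracket is forced to land (because $q_{D/A}$ is a morphism), the structure is pinned down. Concretely, for $\sigma,\tau\in\Gamma(A)$ one is forced to have $[\sigma,\tau]_A = q_{D/A}\circ[\,0_{D/A}\sigma,\,0_{D/A}\tau\,]_{D/B}$ and $\mbf{a}_A = \big(\text{anchor of }D/B\big)$ composed suitably, which is exactly the structure constructed above.

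\emph{Main obstacle.} The delicate point is the well-definedness in the first paragraph: one must show that the Lie bracket of two sections of $D\to B$ that lie in $A'$ (when restricted to $M\subseteq B$) again lies in $A'$, i.e. that $A'$ is genuinely involutive/a Lie subalgebroid, and symmetrically that $q_{D/A}$ sends the $D$-bracket into a well-defined $A$-bracket independent of choices of extension. This is where one cannot avoid unwinding the interchange law \labelcref{eq:DVBinterchg} together with the condition that $\gr(+_{D/A})\subseteq D^3$ is a Lie subalgebroid; I expect the cleanest route is to phrase everything in terms of $\mc{LA}$-relations and invoke the composition property of $\mc{LA}$-relations (the last bullet of the remark after \cref{def:MorphLA}) rather than to compute brackets of core and linear sections by hand, though the latter is the fallback if the clean-composition hypotheses need checking.
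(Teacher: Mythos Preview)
Your strategy---extract the Lie algebroid structure on $A$ by exhibiting $0_{D/A}(A)\subseteq D$ as a Lie subalgebroid built from $\gr(+_{D/A})$ via $\mc{LA}$-relation compositions---is exactly the paper's approach, and your ``main obstacle'' paragraph correctly identifies that the clean way through is to phrase everything as composites of $\mc{LA}$-relations. Your uniqueness argument via $q_{D/A}\circ 0_{D/A}=\on{id}_A$ is also fine.

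Where your execution drifts from the paper is in the specific relations you compose. You speak of building $0_{D/A}(A)$ and $\gr(q_{D/A})$ by ``composing with zero sections,'' and separately try to argue $A'$ is a Lie subalgebroid by checking annihilators are coisotropic. Neither of these is quite the right move, and the annihilator check in particular is what you flag as the delicate unfinished step. The paper sidesteps this entirely by using the \emph{diagonal} rather than zero sections: it rewrites the subtraction relation $\gr(-_{D/A})$ (a permutation of $\gr(+_{D/A})$, hence a Lie subalgebroid of $D^3$) and observes
\[
0_{D/A}(A)=\gr(-_{D/A})\circ D_\Delta,\qquad
\gr(0_{D/A}\circ q_{D/A})=\gr(-_{D/A})\circ\gr(\Delta_D),
\]
where $D_\Delta\subseteq D\times D$ is the diagonal (viewed as a relation $\ast\dasharrow D\times D$) and $\Delta_D\colon D\to D\times D$ is the diagonal map. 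Both diagonals are manifestly Lie subalgebroids, so both composites are $\mc{LA}$-relations on the nose; no annihilator computation or involutivity check is needed. This is the one idea your proposal is missing: once you replace ``zero sections'' by ``the diagonal,'' your obstacle evaporates and the proof is two lines.
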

\begin{proof}

We let $\gr(-_{D/A}):D\times D\dasharrow D$ be the relation defined by $( d_1, d_2)\sim_{\gr(-_{D/A})} d$ if and only if $ d+_{D/A} d_2= d_1$. Since $\on{gr}(-_{D/A})=\{( d; d_1, d_2)\mid d+_{D/A} d_2= d_1\}\subseteq D\times D\times D$ is obtained from $\on{gr}(+_{D/A})$ by permuting factors, it is a Lie subalgebroid of $D^3\to B^3$. Moreover, the diagonal $D_\Delta\subset D\times D$ is Lie subalgebroid of $D^2\to B^2$. Therefore, the composition $\gr(-_{D/A})\circ D_\Delta$ is a Lie subalgebroid of $D\to B$. Since $0_{D/A}:A\to D$ embeds $A$ as $\gr(-_{D/A})\circ D_\Delta$, there exists a unique Lie algebroid structure on $A$ such that $0_{D/A}:A\to D$ is a morphism of Lie algebroids.

Moreover, since diagonal embedding $\Delta_D:D\to D\times D$, given by $ d\to( d, d)$ is a morphism of Lie algebroids from $D\to B$ to $D^2\to B^2$, the composition of relations $\gr(-_{D/A})\circ \gr(\Delta_D):D\dasharrow D$ is a $\mc{LA}$-relation. However $\gr(-_{D/A})\circ \gr(\Delta_D)=\gr(0_{D/A}\circ q_{D/A})$, which shows that $q_{D/A}:D\to A$ is a morphism of Lie algebroids.
\end{proof}

\begin{remark}
\Cref{prop:AisLA} shows that the Lie algebroid structure on $A$ is completely determined by the Lie algebroid structure on $D$. Indeed, if $\sigma_D,\tau_D\in\Gamma(D,B)$ and $\sigma_A,\tau_A\in\Gamma(A)$ are such that $\sigma_A=q_{D/A}(\sigma_D)$ and $\tau_A=q_{D/A}(\tau_D)$, then $$[\sigma_A,\tau_A]=q_{D/A}[\sigma_D,\tau_D].$$
\end{remark}


\begin{example}[Tangent prolongation]\label{ex:TngLAVB}
If $B$ is any vector bundle then writing the structural equations for $B$ as diagrams and applying the tangent functor we get corresponding diagrams in the category of Lie algebroids. For example, applying the tangent functor to the addition relation $\gr(+_{B/M}):B\times B\dasharrow B$ yields addition for $TB\to TM$, $\gr(+_{TB/TM}):=T\gr(+_{B/M}):TB\times_{TM} TB\to TB$. Since $T\gr(+_{B/M})\subset (TB)^3$ is a subalgebroid, 
 it follows directly from \cref{def:DLACAVB} that then the tangent double vector bundle \labelcref{eq:TngDVB}, $TB$ is a $\mc{LA}$-vector bundle. 

Meanwhile, if $A\to M$ is a Lie algebroid, then the flip, $(TA)^{flip}$, of the tangent double vector bundle is naturally an $\mc{LA}$-vector bundle, called the \emph{tangent prolongation} (or \emph{tangent lift}) of $A$.
$$\begin{tikzpicture}
\mmat{m}{
TA&TM\\
A&M\\
};
\path[->] (m-1-1)	edge (m-1-2)
				edge (m-2-1);
\path[<-] (m-2-2)	edge (m-1-2)
				edge (m-2-1);
\end{tikzpicture}$$
The Lie bracket on $\Gamma(TA,TM)$ can be defined as follows. For $\sigma,\tau\in\Gamma(A)$ we define
\begin{subequations}\label{eq:LATngPro}
\begin{align}
[\sigma_T,\tau_T]&=[\sigma,\tau]_T,\\
[\sigma_T,\tau_C]&=[\sigma,\tau]_C,\\
[\sigma_C,\tau_C]&=0.
\end{align}\end{subequations}
The bracket of arbitrary sections is defined by means of the Leibniz rule.

As an example, when $A=\g$ is a Lie algebra, then $(TA)^{flip}=\g\ltimes\g$, as a Lie algebra.
\end{example}

\subsection{Double linear Poisson vector bundles}\label{sec:DLPV}
We recall the theory of double linear Poisson vector bundles, as introduced in \cite{gracia2010lie}.
Suppose that \labelcref{eq:LAvb} is a double vector bundle, and $D\to B$ is a Lie algebroid. Taking the horizontal dual, we obtain 
$$\begin{tikzpicture}
\mmat{m}{D^{*_{\!x}}&B\\ C^*& M\\};
\path[->] (m-1-1)	edge (m-1-2)
				edge (m-2-1);
\path[<-] (m-2-2)	edge (m-1-2)
				edge (m-2-1);
\end{tikzpicture}$$
Since $D\to B$ was a Lie algebroid, we recall from \cref{thm:linPois} that $D^{*_{\!x}}$ carries a linear Poisson structure. 
That is, $$\gr(+_{D^{*_{\!x}}/B}):D^{*_{\!x}}\times D^{*_{\!x}}\dasharrow D^{*_{\!x}}$$ is a coisotropic relation. 

Now $D$ is a $\mc{LA}$-vector bundle if and only if $\gr(+_{D/A}):D\times D\dasharrow D$ is an $\mc{LA}$-relation, which is equivalent to $$\gr(+_{D^{*_{\!x}}/C^*})=\ann^\natural(\gr(+_{D/A})):D^{*_{\!x}}\times D^{*_{\!x}}\dasharrow D^{*_{\!x}}$$ being a coisotropic relation.
 In other words, $D$ is a $\mc{LA}$-vector bundle if and only if the induced Poisson structure on $D^{*_{\!x}}$ is linear with respect to both vector bundle structures. Double vector bundles equipped with such a Poisson structure are called \emph{double linear Poisson vector bundles}.

It is clear that the definition of double linear Poisson vector bundles is symmetric with respect to the two vector bundle structures on the total space. More precisely, if $D^{*_{\!x}}$ is a double linear Poisson vector bundle, then so is $(D^{*_{\!x}})^{flip}$. Taking the horizontal dual of $(D^{*_{\!x}})^{flip}$, we get a second $\mc{LA}$-vector bundle. Using the canonical isomorphism $\big((D^{*_{\!x}})^{*_{\!y}}\big)^{flip}\cong D^{*_{\!y}}$, we see that 
$$\begin{tikzpicture}
\mmat{m}{D^{*_{\!y}}&C^*\\ A& M\\};
\path[->] (m-1-1)	edge (m-1-2)
				edge (m-2-1);
\path[<-] (m-2-2)	edge (m-1-2)
				edge (m-2-1);
\end{tikzpicture}$$
is canonically an $\mc{LA}$-vector bundle, called the dual $\mc{LA}$-vector bundle to $D$.

\begin{example}\label{ex:CotLAVB}
Suppose $A\to M$ is a Lie algebroid, then \cref{ex:TngLAVB} shows that the tangent prolongation, $TA^{flip}$, is an $\mc{LA}$-vector bundle. 
Taking the horizontal dual, we get 
$$\begin{tikzpicture}
\mmat{m}{TA^*&A^*\\ TM& M\\};
\path[->] (m-1-1)	edge (m-1-2)
				edge (m-2-1);
\path[<-] (m-2-2)	edge (m-1-2)
				edge (m-2-1);
\end{tikzpicture}$$
whose double linear Poisson structure is the tangent prolongation (i.e. \cref{ex:PoisTngPro}) of the linear Poisson structure on $A^*$ (see  \cref{thm:linPois} for details).

On the other hand, the vertical dual of $TA^{flip}$ is
$$\begin{tikzpicture}
\mmat{m}{T^*A&A^*\\ A& M\\};
\path[->] (m-1-1)	edge (m-1-2)
				edge (m-2-1);
\path[<-] (m-2-2)	edge (m-1-2)
				edge (m-2-1);
\end{tikzpicture}$$
which, as explained above is naturally an $\mc{LA}$-vector bundle. Indeed, under the isomorphism $T^*A\cong T^*A^*$, it is just the cotangent Lie algebroid for the linear Poisson structure on $A^*$.

Notice that the canonical symplectic structure on $T^*A$ is a double linear Poisson structure. The two duals of $T^*A$ are the tangent Lie algebroids $TA\to A$ and $TA^*\to A^*$.
\end{example}

%
%

\chapter{$\mc{VB}$-Courant algebroids}\label{chp:VBCour}

\section{$\mc{VB}$-Courant algebroids}
For the reader's convenience we recall the definition of a $\mc{CA}$-vector bundle, and its diagonal reflection, a $\mc{VB}$-Courant algebroid.
\begin{definition}\label{def:CAvb}
A \emph{$\mc{VB}$-Courant algebroid} is a double vector bundle
\begin{equation}\label{eq:CAvb}\begin{tikzpicture}
\mmat{m}{\mbb{E}&V\\ E& M\\};
\path[->] (m-1-1)	edge (m-1-2)
				edge (m-2-1);
\path[<-] (m-2-2)	edge (m-1-2)
				edge (m-2-1);
\end{tikzpicture}\end{equation}
such that $\mbb{E}\to E$ is a Courant algebroid and 
\begin{equation}\label{eq:VBCAcond}\gr(+_{\mbb{E}/V}):\mbb{E}\times\mbb{E}\dasharrow\mbb{E}\end{equation} is a Courant relation. 
The diagonal reflection, $\mbb{E}^{flip}$ is called a \emph{$\mc{CA}$-vector bundle}. 
\end{definition}

\begin{example}
Suppose $\mbb{E}\to M$ is a Courant algebroid. In \cite{Boumaiza:2009eg}  Boumaiza and  Zaalani showed that $T\mbb{E}\to TM$ is naturally a Courant algebroid. With this structure, the double vector bundle 
$$\begin{tikzpicture}
\mmat{m}{T\mbb{E}&\mbb{E}\\ TM& M\\};
\path[->] (m-1-1)	edge (m-1-2)
				edge (m-2-1);
\path[<-] (m-2-2)	edge (m-1-2)
				edge (m-2-1);
\end{tikzpicture}$$ is a $\mc{VB}$-Courant algebroid, as we shall show in \cref{chp:TngProAndLieSub}.
\end{example}

\begin{example}
Suppose $\mbb{E}\to M$ is a Courant algebroid, then the (trivial) double vector bundle
$$\begin{tikzpicture}
\mmat{m}{\mbb{E}&\mbb{E}\\ M& M\\};
\path[->] (m-1-1)	edge (m-1-2)
				edge (m-2-1);
\path[<-] (m-2-2)	edge (m-1-2)
				edge (m-2-1);
\end{tikzpicture}$$
is a $\mc{VB}$-Courant algebroid.
\end{example}

\begin{example}\label{ex:VBCAoverpt}
Let $\g$ be a Lie algebra. Then
$$\begin{tikzpicture}
\mmat{m}{\g\ltimes\g^*&\g\\ \ast&\ast\\};
\path[->] (m-1-1)	edge (m-1-2)
				edge (m-2-1);
\path[<-] (m-2-2)	edge (m-1-2)
				edge (m-2-1);
\end{tikzpicture}$$
is a $\mc{VB}$-Courant algebroid, where $\g\ltimes\g^*$ carries the quadratic form induced by the natural pairing and the double vector bundle structure is as in \cref{ex:DirectSumDVB}.
\end{example}

\begin{proposition}\label{prop:BasVBCAfacts}
Suppose that 
\begin{equation}\label{eq:CAvb2}\begin{tikzpicture}
\mmat{m}{\mbb{E}&V\\ E& M\\};
\path[->] (m-1-1)	edge (m-1-2)
				edge (m-2-1);
\path[<-] (m-2-2)	edge (m-1-2)
				edge (m-2-1);
\end{tikzpicture}\end{equation}
 is a $\mc{VB}$-Courant algebroid. Then the following facts hold.
\begin{enumerate}
\item The pairing on $\mbb{E}$ is linear. That is, the map 
\begin{equation}\label{eq:VBCALinPair}
\begin{tikzpicture}
\mmat{m1} at (-2,0) {\mbb{E}&V\\ E& M\\};
\path[->] (m1-1-1)	edge (m1-1-2)
				edge (m1-2-1);
\path[<-] (m1-2-2)	edge (m1-1-2)
				edge (m1-2-1);
\mmat{m2} at (2,0) {\mbb{E}^{*_{\!y}}&C^*\\ E& M\\};
\path[->] (m2-1-1)	edge (m2-1-2)
				edge (m2-2-1);
\path[<-] (m2-2-2)	edge (m2-1-2)
				edge (m2-2-1);
				
\path[->] (m1) edge node {$\cong$} (m2);
\draw (-2,0) node (c) {$C$};
\draw (2,0) node (v) {$V^*$};
\path[left hook->] (c) edge (m1-1-1);
\path[left hook->] (v) edge (m2-1-1);
\end{tikzpicture}
\end{equation}
 defined by the pairing $\la\cdot,\cdot\ra$ is an isomorphism of double vector bundles, where $C\to M$ is the core of $\mbb{E}$.

In particular, the core $C\to M$ of $\mbb{E}$ is canonically isomorphic to $V^*\to M$. 

\item The anchor map is linear. That is, 
\begin{equation}\label{eq:VBCALinAnc}
\begin{tikzpicture}
\mmat{m1} at (-2,0) {\mbb{E}&V\\ E& M\\};
\path[->] (m1-1-1)	edge (m1-1-2)
				edge (m1-2-1);
\path[<-] (m1-2-2)	edge (m1-1-2)
				edge (m1-2-1);
\mmat{m2} at (2,0) {TE&TM\\ E& M\\};
\path[->] (m2-1-1)	edge (m2-1-2)
				edge (m2-2-1);
\path[<-] (m2-2-2)	edge (m2-1-2)
				edge (m2-2-1);
				
\path[->] (m1) edge node {$\mbf{a}$} (m2);
\end{tikzpicture}
\end{equation}
 is a morphism of double vector bundles.

\item The Courant bracket is linear. That is 
\begin{subequations}\label[pluralequation]{eq:CBlin}
\begin{align}
\label{eq:VBCALinBrk1}\Cour{\Gamma_l(\mbb{E},E),\Gamma_l(\mbb{E},E)}&\subseteq\Gamma_l(\mbb{E},E),\\
\label{eq:VBCALinBrk2}\Cour{\Gamma_l(\mbb{E},E),\Gamma_C(\mbb{E},E)}&\subseteq\Gamma_C(\mbb{E},E),\\
\label{eq:VBCALinBrk3}\Cour{\Gamma_C(\mbb{E},E),\Gamma_C(\mbb{E},E)}&=0.
\end{align}\end{subequations}
\end{enumerate}

Conversely, if \labelcref{eq:CAvb2} is any double vector bundle such that $\mbb{E}$ is a Courant algebroid such that \cref{eq:CBlin} holds and \cref{eq:VBCALinPair,eq:VBCALinAnc} are morphisms of double vector bundles, then $\mbb{E}$ is a $\mc{VB}$-Courant algebroid.
\end{proposition}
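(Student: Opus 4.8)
The plan is to reduce every assertion to the three defining compatibilities of the Courant relation $\gr(+_{\mbb{E}/V})\colon\mbb{E}\times\mbb{E}\dasharrow\mbb{E}$ (over the base relation $\gr(+_{E/M})\colon E\times E\dasharrow E$) --- isotropy/Lagrangianity, involutivity, and compatibility of the anchor --- evaluated on a small collection of distinguished sections. The key bookkeeping input is the analogue of \cref{ex:GrAdd} at the level of sections of the double vector bundle: a section $\sigma\in\Gamma(\mbb{E},E)$ is \emph{linear} if and only if $(\sigma,\sigma)\sim_{\gr(+_{\mbb{E}/V})}\sigma$ (the homogeneity in the $V$-scaling being automatic from additivity for smooth $\sigma$), and $\sigma$ is a \emph{core section} if and only if $(\sigma,0_{\mbb{E}/E})\sim_{\gr(+_{\mbb{E}/V})}\sigma$, in which case also $(0_{\mbb{E}/E},\sigma)\sim_{\gr(+_{\mbb{E}/V})}\sigma$; moreover $(0_{\mbb{E}/E},0_{\mbb{E}/E})\sim_{\gr(+_{\mbb{E}/V})}0_{\mbb{E}/E}$, and $\Cour{\sigma,0_{\mbb{E}/E}}=0=\Cour{0_{\mbb{E}/E},\sigma}$ by axioms c4)--c5). (That $E$ and $V$ both sit inside $\mbb{E}$, that $0_{\mbb{E}/E}$ is a double vector bundle morphism, and the rank relation $\on{rk}(\mbb{E}/E)=\on{rk} C+\on{rk} V$ coming from the core sequence are supplied by \cref{prop:DVBSym} and \cref{cor:AltDVBdef}.)

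\textbf{Forward direction.} Since $\gr(+_{\mbb{E}/V})$ is a Courant relation inside $\mbb{E}\times\overline{\mbb{E}\times\mbb{E}}$ along $\gr(+_{E/M})$, related sections have related pairings, Courant brackets, and anchors. Feeding in the distinguished sections and reading off what ``related'' means over $\gr(+_{E/M})$: the pairing of two linear sections is additive along $\gr(+_{E/M})$, i.e. a linear function on $E$; of a linear and a core section it is the pull-back of a function on $M$; of two core sections it vanishes. Together with the Lagrangian rank count --- which forces $\on{rk} C=\on{rk} V$ --- and the vanishing of $\langle C,C\rangle$, this makes $\langle\cdot,\cdot\rangle\colon C\otimes V\to\mbb{R}$ a perfect pairing, which is exactly item~(1) and the identification $C\cong V^{*}$, \cref{eq:VBCALinPair}. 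Compatibility of the anchor says $\mbf{a}$ carries $\gr(+_{\mbb{E}/V})$ into $T\gr(+_{E/M})=\gr(+_{TE/TM})$, i.e. $\mbf{a}$ intertwines $+_{\mbb{E}/V}$ with $+_{TE/TM}$; as $\mbf{a}$ is already a bundle map over $E$, homogeneity follows by the usual $\mbb{Q}$-density argument, so $\mbf{a}$ respects both vector bundle structures and is a double vector bundle morphism, which is item~(2), \cref{eq:VBCALinAnc}. Finally, applying compatibility of the bracket to $(\sigma,\sigma)\sim\sigma,\ (\tau,\tau)\sim\tau$ gives that $\Cour{\sigma,\tau}$ is linear; to $(\sigma,\sigma)\sim\sigma,\ (\rho,0_{\mbb{E}/E})\sim\rho$ gives $(\Cour{\sigma,\rho},0_{\mbb{E}/E})\sim\Cour{\sigma,\rho}$, hence $\Cour{\sigma,\rho}$ is a core section; and to $(\sigma,0_{\mbb{E}/E})\sim\sigma,\ (0_{\mbb{E}/E},\tau)\sim\tau$ for core $\sigma,\tau$ gives $(0_{\mbb{E}/E},0_{\mbb{E}/E})\sim\Cour{\sigma,\tau}$, which forces $\Cour{\sigma,\tau}=0$. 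This is precisely \cref{eq:CBlin}.

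\textbf{Converse.} Assume $\langle\cdot,\cdot\rangle$ and $\mbf{a}$ are double vector bundle morphisms and \cref{eq:CBlin} holds; we verify that $R:=\gr(+_{\mbb{E}/V})$ is a Dirac structure with support along $\gr(+_{E/M})$. It is a subbundle of constant rank $\on{rk} V+2\on{rk} C$ by the double vector bundle axioms. Unpacking the morphism hypotheses into the pairing/anchor descriptions of the previous paragraph: $R$ is isotropic --- check on a generating family such as the $(\sigma,\sigma,\sigma)$ with $\sigma$ linear together with the core sections $(\rho,\rho,0_{\mbb{E}/E}),(\rho,0_{\mbb{E}/E},\rho)$, where $\langle\cdot,\cdot\rangle_{\mbb{E}\times\overline{\mbb{E}\times\mbb{E}}}$ vanishes on $\gr(+_{E/M})$ precisely because the relevant pairings are linear (resp. basic, resp. zero) functions on $E$ --- and since $\on{rk} C=\on{rk} V$ this matches $\tfrac32\on{rk}(\mbb{E}/E)$, so $R$ is Lagrangian; and $\mbf{a}(R)\subseteq T\gr(+_{E/M})$ because $\mbf{a}$ intertwines the additions. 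With $R$ Lagrangian and anchor-compatible, the Leibniz correction terms $\mbf{a}^{*}(df)$ occurring in c5) already lie in $R$ over the support (for a Lagrangian subbundle with tangent anchor, $\mbf{a}^{*}(\ann T\gr(+_{E/M}))\subseteq R$), so involutivity can be tested on the generating sections above; there it is immediate from \cref{eq:CBlin} together with $\Cour{\cdot,0_{\mbb{E}/E}}=0=\Cour{0_{\mbb{E}/E},\cdot}$ and the relational description of core sections.

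\textbf{Main obstacle.} No single computation is hard; the delicate points are two pieces of bookkeeping: (i) pinning down the relational (or Euler-vector-field) characterization of linear and core sections and how ``double vector bundle morphism'' for $\langle\cdot,\cdot\rangle$ and $\mbf{a}$ translates into the statements about sections used above --- routine, but it must be stated carefully for the arguments to be literally valid; and (ii) in the converse, the reduction of involutivity to a finite generating set, which is only legitimate \emph{after} $R$ has been shown Lagrangian and anchor-compatible, so the order of steps is essential. Finally, the strict vanishing $\Cour{\Gamma_C(\mbb{E},E),\Gamma_C(\mbb{E},E)}=0$ (rather than merely landing in $\Gamma_C(\mbb{E},E)$) is the one place forcing use of the fact that a core section is related to the target on \emph{both} factors, which pins the bracket down to the zero section.
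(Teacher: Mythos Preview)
Your proposal is correct and follows essentially the same approach as the paper: both arguments hinge on the relational characterization of linear and core sections ($\sigma\oplus\sigma\sim_{\gr(+_{\mbb{E}/V})}\sigma$ for linear, $\tau\oplus 0_{\mbb{E}/E}\sim_{\gr(+_{\mbb{E}/V})}\tau$ for core), and then read off the linearity of pairing, anchor, and bracket from the fact that $\gr(+_{\mbb{E}/V})$ is a Courant relation. The one genuine difference is in the forward direction of item~(1): the paper observes in one line that isotropy of $\gr(+_{\mbb{E}/V})$ means $\phi_{\la\cdot,\cdot\ra}^3(\gr(+_{\mbb{E}/V}))\subseteq\ann^\natural(\gr(+_{\mbb{E}/V}))=\gr(+_{\mbb{E}^{*_{\!y}}/C^*})$, which is already the statement that $\phi_{\la\cdot,\cdot\ra}$ is a double vector bundle morphism; you instead establish the section-level pairing identities (the paper records these as \cref{eq:CPlin} in the remark following the proposition) and then appeal to a Lagrangian rank count to force $\on{rk} C=\on{rk} V$. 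Both routes are valid---yours is perhaps more hands-on, the paper's slightly slicker---and for the converse and for items~(2)--(3) your argument matches the paper's almost verbatim, including the careful ordering (Lagrangian and anchor-compatible first, then involutivity on generators).
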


\begin{proof}
\begin{enumerate}
\item First we show that the pairing is linear. This is equivalent to showing that the map $\phi_{\la\cdot,\cdot\ra}:\mbb{E}\to\mbb{E}^{*_{\!y}}$ defined by the pairing satisfies  \begin{equation}\label{eq:VBCAprop1}\phi_{\la\cdot,\cdot\ra}^3(\gr(+_{\mbb{E}/V}))\subseteq \gr(+_{\mbb{E}^{*_{\!y}}/C^*}),\end{equation} where $C$ is the core of $\mbb{E}$. Since $\mbb{E}$ is a $\mc{VB}$-Courant algebroid, the relation \begin{equation}\label{eq:VBCArel}\gr(+_{\mbb{E}/V}):\mbb{E}\times\mbb{E}\dasharrow\mbb{E}\end{equation} is a Courant relation, in particular, it is isotropic. Hence $$\phi_{\la\cdot,\cdot\ra}^3(\gr(+_{\mbb{E}/V}))\subseteq\ann^\natural(\gr(+_{\mbb{E}/V})).$$ But $\gr(+_{\mbb{E}^{*_{\!y}}/C^*})=\ann^\natural(\gr(+_{\mbb{E}/V}))$, by definition, which implies \cref{eq:VBCAprop1}.

Since the pairing identifies $\mbb{E}$ with its dual double vector bundle $\mbb{E}^{*_{\!y}}$, it identifies the core of $\mbb{E}$ with the core $V^*\to M$ of $\mbb{E}^{*_{\!y}}$. In particular, this implies that 
\begin{equation}\label{eq:CorePairing}\la x,c+_{\mbb{E}/V}0_{\mbb{E}/E}\ra=\la q_{\mbb{E}/V}(x),c\ra\end{equation}
 for any $x\in\mbb{E}$ and $c\in C$.

\item Next, we show that the anchor map is linear. Let $$\on{gr}(+_{E/M}):E\times E\dasharrow E$$ denote the graph of addition for $E$, and $$\on{gr}(+_{TE/TM}):=T\big(\on{gr}(+_{E/M})\big):TE\times TE\dasharrow TE$$ denote the graph of addition for $TE$, viewed as a vector bundle over $TM$.
Since \labelcref{eq:VBCArel} is a Courant relation supported on the submanifold $\on{gr}(+_{E/M})\subseteq E^3$, it follows that $$\mbf{a}(\gr(+_{\mbb{E}/V}))\subseteq T\big(\on{gr}(+_{E/M})\big)=\on{gr}(+_{TE/TM}),$$ which in turn shows that $\mbf{a}:\mbb{E}\to TE$ is a morphism of double vector bundles.

\item Finally, we show that the Courant bracket is linear. A section $\sigma\in\Gamma(\mbb{E},E)$ is linear if and only if 
\begin{subequations}\label[pluralequation]{eq:LinCoreSects}
\begin{equation}\label{eq:LinSect}\sigma \oplus \sigma\sim_{\gr(+_{\mbb{E}/V})}\sigma.\end{equation}
 Since  $\gr(+_{\mbb{E}/V})$ is a Courant relation, the Courant bracket of two sections satisfying \cref{eq:LinSect} also satisfies \cref{eq:LinSect}, which implies \cref{eq:VBCALinBrk1}.
Meanwhile, a section $\tau\in\Gamma(\mbb{E},E)$ is a core section if and only if \begin{equation}\label{eq:CoreSec}\tau\oplus 0_{\mbb{E}/E}\sim_{\gr(+_{\mbb{E}/V})}\tau.\end{equation} Taking the Courant bracket of \labelcref{eq:LinSect} and \labelcref{eq:CoreSec}, we see that 
$$\Cour{\sigma,\tau}\oplus 0_{\mbb{E}/E}\sim_{\gr(+_{\mbb{E}/V})}\Cour{\sigma,\tau}.$$ 
Hence $\Cour{\sigma,\tau}$ is a core section, which proves \cref{eq:VBCALinBrk2}. Finally, from \cref{eq:CoreSec} and the commutativity of addition, we see that a second core section $\tau'\in\Gamma_C(\mbb{E},E)$ satisfies 
\begin{equation}0_{\mbb{E}/E}\oplus\tau'\sim_{\gr(+_{\mbb{E}/V})}\tau.\end{equation}\end{subequations}
 Taking the Courant bracket of this expression with \labelcref{eq:CoreSec}, we see that $$0_{\mbb{E}/E}\oplus 0_{\mbb{E}/E}\sim_{\gr(+_{\mbb{E}/V})}\Cour{\tau,\tau'}.$$ In other words, $\Cour{\tau,\tau'}=0_{\mbb{E}/E}$, which implies \cref{eq:VBCALinBrk3}.
 
 \end{enumerate}

Now we prove the converse. Suppose \labelcref{eq:CAvb2} is a double vector bundle and $\mbb{E}\to E$ is a Courant algebroid. Suppose further that \cref{eq:CBlin} holds and \cref{eq:VBCALinPair,eq:VBCALinAnc} are morphisms of double vector bundles. We must show that $$\gr(+_{\mbb{E}/V})\subseteq \mbb{E}\times\overline{\mbb{E}\times\mbb{E}}$$ is a Dirac structure with support on $\gr(+_{E/M})$.

As explained above, \cref{eq:VBCALinPair} is a morphism of double vector bundles if and only if 
\begin{equation}\label{eq:phiissubset}\phi_{\la\cdot,\cdot\ra}^3(\gr(+_{\mbb{E}/V}))\subseteq \gr(+_{\mbb{E}^{*_{\!y}}/C^*}):=\ann^\natural(\gr(+_{\mbb{E}/V})),\end{equation} holds where $\phi_{\la\cdot,\cdot\ra}:\mbb{E}\to\mbb{E}_E^*$ is the map defined by the pairing. Since the pairing is non-degenerate \cref{eq:phiissubset} must be an equality (rather than a strict inclusion). In particular, $$\gr(+_{\mbb{E}/V})\subseteq \mbb{E}\times\overline{\mbb{E}\times\mbb{E}}$$ is Lagrangian.

Now $\Gamma(\mbb{E},E)$ is spanned by the linear and core sections, while $\gr(+_{\mbb{E}/V})$ is spanned by linear and core sections of the forms found in \cref{eq:LinCoreSects}.
Hence \cref{eq:CBlin} show that sections of the form \labelcref{eq:LinCoreSects} are closed under the Courant bracket. Since the anchor map is linear, we have $$\mbf{a}\big(\gr(+_{\mbb{E}/V})\big)\subseteq T\gr(+_{E/M}),$$
 which shows that the $C^\infty$-linear span of  sections of the form \labelcref{eq:LinCoreSects} is also closed under the Courant bracket. Therefore $\gr(+_{\mbb{E}/V})$ is involutive with respect to the Courant bracket.
\end{proof}

\begin{remark}
Gracia-Saz and Mehta showed that morphisms of double vector bundles can be characterized in terms of linear and core sections (see \cite[Lemma~2.8]{gracia2010lie}). Thus \cref{eq:VBCALinPair} is a morphism of double vector bundles if and only if 
\begin{subequations}\label[pluralequation]{eq:CPlin}
\begin{align}
\label{eq:VBCALinP1}\la\Gamma_l(\mbb{E},E),\Gamma_l(\mbb{E},E)\ra&\subseteq C^\infty_l(E),\\
\label{eq:VBCALinP2}\la\Gamma_l(\mbb{E},E),\Gamma_C(\mbb{E},E)\ra&\subseteq C^\infty_C(E),\\
\label{eq:VBCALinP3}\la\Gamma_C(\mbb{E},E),\Gamma_C(\mbb{E},E)\ra&=0,
\end{align}\end{subequations} 
where $C^\infty_l(E):=\Gamma_l(\mbb{R}\times E,E)\cong \Gamma(E^*)$ denotes the set of linear functions on $E$, and $C^\infty_C(E):=\Gamma_C(\mbb{R}\times E,E)=q_{E/M}^* C^\infty(M)$ the set of fibrewise constant functions. Here $q_{E/M}:E\to M$ is the bundle projection.

Similarly, \cref{eq:VBCALinAnc} is a morphism of double vector bundles if and only if 
\begin{subequations}\label[pluralequation]{eq:ANClin}
\begin{align}
\label{eq:VBCALinA1}\mbf{a}(\Gamma_l(\mbb{E},E))&\subseteq \Gamma_l(TE,E),\\
\label{eq:VBCALinA2}\mbf{a}(\Gamma_C(\mbb{E},E))&\subseteq \Gamma_C(TE,E).
\end{align}\end{subequations}
\end{remark}

\begin{definition}
Suppose that \begin{equation*}\label{eq:VBDirWSup}\begin{tikzpicture}
\mmat{m1} at (-2,0){L&W\\ S& N\\};
\path[->] (m1-1-1)	edge (m1-1-2)
				edge (m1-2-1);
\path[<-] (m1-2-2)	edge (m1-1-2)
				edge (m1-2-1);
\mmat{m2} at (2,0) {\mbb{E}&V\\ E& M\\};
\path[->] (m2-1-1)	edge (m2-1-2)
				edge (m2-2-1);
\path[<-] (m2-2-2)	edge (m2-1-2)
				edge (m2-2-1);
\draw (0,0) node {$\subseteq$};
\end{tikzpicture}\end{equation*}
is a double vector subbundle of a $\mc{VB}$-Courant algebroid.
If $L\subset \mbb{E}$ is also a Dirac structure with support on $S$, we call it a \emph{$\mc{VB}$-Dirac structure with support on $S$}. When $S=E$, we simply call it a \emph{$\mc{VB}$-Dirac structure}.
\end{definition}

Every $\mc{VB}$-Courant algebroid has two distinguished $\mc{VB}$-Dirac structures (with support):
\begin{proposition}\label{prop:FreeVBDir} Suppose that $$\begin{tikzpicture}
\mmat{m}{\mbb{E}&V\\ E& M\\};
\path[->] (m-1-1)	edge node{$q_{\mbb{E}/V}$} (m-1-2)
				edge (m-2-1);
\path[<-] (m-2-2)	edge (m-1-2)
				edge (m-2-1);
\end{tikzpicture}$$ is a $\mc{VB}$-Courant algebroid. Then
\begin{enumerate}
\item $\mbb{E}_C:=\on{ker}(q_{\mbb{E}/V})$ is a Dirac structure. Moreover, for any $\sigma\in\Gamma_l(\mbb{E},E)$ and $\tau\in\Gamma_l(\mbb{E}_C,E)$, we have \begin{equation}\label{eq:ECalmostideal}\Cour{\sigma,\tau}\in\Gamma_l(\mbb{E}_C,E).\end{equation}
\item The subbundle $V\subseteq \mbb{E}$ is a Dirac structure with support on $M\subseteq E$. 
\end{enumerate}
\end{proposition}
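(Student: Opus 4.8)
The plan is to exploit the characterization of $\mc{VB}$-Courant algebroids in terms of linear and core sections given in \cref{prop:BasVBCAfacts}, together with the fact (from \cref{ex:diagDirStr} and the surrounding discussion) that a subbundle is Dirac precisely when it is Lagrangian and involutive. For item (1), first I would verify that $\mbb{E}_C = \on{ker}(q_{\mbb{E}/V})$ is Lagrangian in $\mbb{E}$. This is a fibrewise linear algebra statement: under the canonical identification of the core of $\mbb{E}$ with $V^*$ (item (1) of \cref{prop:BasVBCAfacts}), and using \cref{eq:CorePairing}, one sees that $\mbb{E}_C$ is the subbundle of vectors pairing trivially with all core elements, and counting ranks (the core $C\cong V^*$ and the side bundle $V$ are dual, so $\on{rk}(\mbb{E}) = \on{rk}(E) + \on{rk}(V) + \on{rk}(V^*)$ fibrewise, while $\on{rk}(\mbb{E}_C) = \on{rk}(E) + \on{rk}(V^*)$) shows $\mbb{E}_C^\perp = \mbb{E}_C$. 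For involutivity, note that $\Gamma(\mbb{E}_C, E)$ is spanned over $C^\infty(E)$ by linear sections of $\mbb{E}_C$ (i.e. linear sections of $\mbb{E}$ projecting to $0$ in $V$) together with core sections of $\mbb{E}$; then \cref{eq:VBCALinBrk1,eq:VBCALinBrk2,eq:VBCALinBrk3} close the bracket among these, once one checks that the bracket of two linear sections lying in $\mbb{E}_C$ again lies in $\mbb{E}_C$ — which follows because $q_{\mbb{E}/V}$ intertwines the Courant bracket on $\mbb{E}$ with the (Lie algebroid) bracket on the side, by linearity of the structure maps, and the relevant sides project to $0$. The auxiliary claim \cref{eq:ECalmostideal} is then immediate from \cref{eq:VBCALinBrk1} combined with the observation that $q_{\mbb{E}/V}$ sends $\Cour{\sigma,\tau}$ to the bracket of the images, and $\tau$ maps to $0$.

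For item (2), I would show that $V \subseteq \mbb{E}$, regarded as sitting over $M \subseteq E$ (via the zero section $0_{\mbb{E}/E}$ restricted to $V$, or more precisely as $\tilde 0_{\mbb{E}/M}$-translates), is a Dirac structure with support on $M$. Again this splits into Lagrangian and involutive. For the Lagrangian property: $V$ sits in $\mbb{E}\rvert_M$ as the second side bundle, and by \cref{eq:VBCALinP3} (vanishing of the pairing between core sections) together with the structure of the double vector bundle, the restriction of $\la\cdot,\cdot\ra$ to $V$ vanishes; a rank count ($\on{rk} V$ versus $\on{rk}(\mbb{E}\rvert_M) = \on{rk}(\mbb{E}) - 2\dim(\text{fibre of }E)$... — more carefully, using that over $M$ the Courant algebroid $\mbb{E}\rvert_M$ has rank $\on{rk}(V) + \on{rk}(C) + 0$ and $C \cong V^*$) shows $V^\perp = V$ inside $\mbb{E}\rvert_M$. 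For involutivity with support on $M$: sections of $\mbb{E}$ restricting to sections of $V$ along $M$ are precisely $C^\infty(M)$-combinations of linear sections (the "$\sigma_0$"-part of a linear section restricts along $M$ to a section of $V$) and the Courant bracket of two such, restricted to $M$, lands back in $V$ by \cref{eq:VBCALinBrk1} — indeed the bracket of linear sections is linear, hence restricts along $M$ to a section of $V$ — and one uses linearity of the anchor (\cref{eq:VBCALinA1,eq:VBCALinA2}) to ensure $\mbf{a}$ preserves tangency to $M$.

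The main obstacle I anticipate is the careful bookkeeping of ranks and the precise identification of which subbundle "$V \subseteq \mbb{E}$" means (it is the image of $V$ under the inclusion as a side bundle, supported on $M$, not on $E$), and correspondingly getting the Lagrangian conditions to come out as equalities rather than mere isotropy. The bracket-closure arguments are essentially formal once one grants \cref{prop:BasVBCAfacts}, but verifying that linear sections of $\mbb{E}$ whose $V$-projection vanishes form a bracket-closed space (for part (1)) requires knowing that $q_{\mbb{E}/V}\colon \mbb{E}\to V$ is compatible with the Courant bracket in the appropriate relational sense — which one should extract cleanly from the fact that $\gr(+_{\mbb{E}/V})$ is a Courant relation, exactly as in the proof of \cref{prop:BasVBCAfacts}, rather than re-deriving it. I would organize the write-up so that both items invoke a single lemma: a double vector subbundle of a $\mc{VB}$-Courant algebroid that is fibrewise Lagrangian and whose space of compatible sections is spanned by linear/core sections closed under the bracket is automatically a $\mc{VB}$-Dirac structure (with support).
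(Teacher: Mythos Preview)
For part (1), your argument for \cref{eq:ECalmostideal} has a genuine gap: you assert that $q_{\mbb{E}/V}$ intertwines the Courant bracket on $\mbb{E}$ with ``the (Lie algebroid) bracket on the side'' $V$, but no such bracket exists in general --- see the Remark following \cref{prop:VBDirIsLaVB}, which shows that the Courant bracket on linear sections projects to a well-defined bracket on $V$ only when the core anchor $\mbf{a}'\colon V^*\to E$ vanishes. The paper's argument avoids this by observing that any $\tau\in\Gamma_l(\mbb{E}_C,E)$ can be written as $\sum_i f_i\tau_i$ with each $f_i$ a \emph{linear} function on $E$ and each $\tau_i$ a \emph{core} section (this is what ``$\mbb{E}_C$ is spanned by core sections'' really buys), and then expands $\Cour{\sigma,\sum_i f_i\tau_i}$ via the Leibniz rule together with \cref{eq:VBCALinBrk2} and linearity of the anchor. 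The same observation also makes the Dirac-structure claim for $\mbb{E}_C$ a one-liner: core sections span $\mbb{E}_C$ over $C^\infty(E)$, and \cref{eq:VBCALinP3,eq:VBCALinBrk3} give isotropy and involutivity directly --- no rank count and no separate treatment of linear sections of $\mbb{E}_C$ is needed.

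For part (2), the paper takes a completely different and much shorter route than your direct verification. Since $\gr(+_{\mbb{E}/V})$ is a Courant relation, so is its permutation $\gr(-_{\mbb{E}/V})\colon \mbb{E}\times\overline{\mbb{E}}\dasharrow\mbb{E}$; composing this cleanly with the diagonal $\mbb{E}_\Delta\subseteq\mbb{E}\times\overline{\mbb{E}}$ (cf.\ \cref{ex:diagDirStr}) gives $V=\gr(-_{\mbb{E}/V})\circ\mbb{E}_\Delta$, which is therefore automatically a Dirac structure with support. This sidesteps all the rank and involutivity bookkeeping you anticipate. As a minor point, in your direct approach the isotropy of $V$ would come from \cref{eq:VBCALinP1} --- the pairing of linear sections is a linear function, hence vanishes along $M$ --- not from \cref{eq:VBCALinP3}, which concerns core sections and is relevant to $\mbb{E}_C$, not to $V$.
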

\begin{proof}
\begin{enumerate}
\item  Since $\mbb{E}_C\subseteq \mbb{E}$ is spanned by the core sections, the fact that it is a Dirac structure follows immediately from \cref{eq:VBCALinP3,eq:VBCALinBrk3}.

 Given $\sigma\in\Gamma_l(\mbb{E},E)$ and $\tau\in\Gamma_l(\mbb{E}_C,E)$, choose a decomposition $$\tau=\sum_i f_i\tau_i,$$ where each $f_i$ is a linear function on $E$ and each $\tau_i\in\Gamma_C(\mbb{E},E)$ is a core section. Then $$\Cour{\sigma,\tau}=\sum_i\big(f_i\Cour{\sigma,\tau_i}+(\mbf{a}(\sigma)f_i)\tau_i\big).$$ Since  $\tau_i$ is a core section, the second term on the right hand side certainly lies in $\Gamma_l(\mbb{E}_C,E)$. Moreover,  since \cref{eq:VBCALinBrk2} implies that $\Cour{\sigma,\tau_i}$ is a core section, we also have $f_i\Cour{\sigma,\tau_i}\in\Gamma_l(\mbb{E}_C,E)$.
 
\item  Let $\gr(-_{\mbb{E}/V})=\{(x,x_1,x_2)\mid x+_{\mbb{E}/V}x_1=x_2\}$ be the graph of subtraction in $\mbb{E}\to V$. Since it is just a permutation of $\gr(+_{\mbb{E}/V})$, it defines a Courant morphism $$\gr(-_{\mbb{E}/V}):\mbb{E}\times\overline{\mbb{E}}\dasharrow\mbb{E}.$$ Let $\mbb{E}_\Delta\subset\mbb{E}\times\overline{\mbb{E}}$ denote the diagonal, viewed as a Courant relation from the trivial Courant algebroid to $\mbb{E}\times\overline{\mbb{E}}$.
Since $V=\gr(-_{\mbb{E}/V})\circ\mbb{E}_\Delta$ is a clean composition of Courant relations, its graph  $V\subset \mbb{E}$ is a Dirac structure with support.

\end{enumerate}
\end{proof}

\begin{example}\label{ex:VBDSoverpt}
Let $\g$ be a Lie algebra and $\h\subseteq\g$ a subalgebra. Then
$$\begin{tikzpicture}
\mmat{m1} at (-3,0){\h\ltimes\on{ann}(\h)&\h\\ \ast& \ast\\};
\path[->] (m1-1-1)	edge (m1-1-2)
				edge (m1-2-1);
\path[<-] (m1-2-2)	edge (m1-1-2)
				edge (m1-2-1);
\mmat{m2} at (2,0) {\g\ltimes\g^*&\g\\ \ast& \ast\\};
\path[->] (m2-1-1)	edge (m2-1-2)
				edge (m2-2-1);
\path[<-] (m2-2-2)	edge (m2-1-2)
				edge (m2-2-1);
\draw (0,0) node {$\subseteq$};
\end{tikzpicture}$$
is a $\mc{VB}$-Dirac structure of the $\mc{VB}$-Courant algebroid described in \cref{ex:VBCAoverpt}.
\end{example}

$\mc{VB}$-Dirac structures have some important properties, which we collect in the following proposition.

\begin{proposition}\label{prop:VBDirIsLaVB}
Suppose that $$\begin{tikzpicture}
\mmat{m1} at (-2,0){L&W\\ E& M\\};
\path[->] (m1-1-1)	edge (m1-1-2)
				edge (m1-2-1);
\path[<-] (m1-2-2)	edge (m1-1-2)
				edge (m1-2-1);
\mmat{m2} at (2,0) {\mbb{E}&V\\ E& M\\};
\path[->] (m2-1-1)	edge (m2-1-2)
				edge (m2-2-1);
\path[<-] (m2-2-2)	edge (m2-1-2)
				edge (m2-2-1);
\draw (0,0) node {$\subseteq$};
\end{tikzpicture}$$
is a $\mc{VB}$-Dirac structure.
Then
\begin{enumerate}
\item $L^{flip}$ is an $\mc{LA}$-vector bundle. In particular, $L\to E$ and $W\to M$ are Lie algebroids, and $L\to W$ is a Lie algebroid morphism.
\item the core of $L$ is  $\ann(W)\subseteq V^*$.
\end{enumerate}
\end{proposition}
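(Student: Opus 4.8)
The plan is to deduce both statements from the general machinery of $\mc{VB}$-relations and the fact that a $\mc{VB}$-Dirac structure, being simultaneously a double vector subbundle and a Dirac structure, is closed under all the structural operations needed. For part (1), the key observation is that $L\to E$ is a Lie algebroid: since $L\subseteq\mbb{E}$ is a Dirac structure (with support on $S=E$), the restriction of the Courant bracket and the anchor of $\mbb{E}\to E$ endow $L\to E$ with the structure of a Lie algebroid (as recalled just after \cref{def:CA}, the Dirac-structure-is-a-Lie-algebroid fact of Courant and Liu--Weinstein--Xu). It then remains to show that $L\to E$, together with the double vector bundle structure
$$\begin{tikzpicture}
\mmat{m}{L&W\\ E& M\\};
\path[->] (m-1-1)	edge (m-1-2)
				edge (m-2-1);
\path[<-] (m-2-2)	edge (m-1-2)
				edge (m-2-1);
\end{tikzpicture}$$
makes $L^{flip}$ an $\mc{LA}$-vector bundle, i.e.\ (via \cref{def:DLACAVB}) that $\gr(+_{L/W})\subseteq L^3$ is a Lie subalgebroid of $L^3\to E^3$. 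But here is the point: $\gr(+_{\mbb{E}/V})\subseteq\mbb{E}\times\overline{\mbb{E}\times\mbb{E}}$ is a Courant relation (the defining property of a $\mc{VB}$-Courant algebroid), hence its restriction $\gr(+_{L/W})=\gr(+_{\mbb{E}/V})\cap(L\times L\times L)$, being the intersection of a Courant relation with a product of Dirac structures, is a Dirac structure with support in $\mbb{E}^3$ --- equivalently, it is an involutive isotropic subbundle of $L^3$. Since $L^3\to E^3$ is itself a Lie algebroid (a product of the Dirac-structure Lie algebroids), a coisotropic/Dirac subbundle of it that is also a subbundle over $E^3$ is precisely a Lie subalgebroid in the sense of \cref{def:SubLA}; so $\gr(+_{L/W})$ is a Lie subalgebroid, and $L^{flip}$ is an $\mc{LA}$-vector bundle. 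The remaining assertions --- $W\to M$ is a Lie algebroid, and $L\to W$ is a Lie algebroid morphism --- then follow immediately from \cref{prop:AisLA} applied to the $\mc{LA}$-vector bundle $L^{flip}$.

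For part (2), I would identify the core of $L$ inside the core of $\mbb{E}$. By \cref{prop:BasVBCAfacts}(1), the pairing on $\mbb{E}$ induces a canonical isomorphism identifying the core $C$ of $\mbb{E}$ with $V^*$, and under this identification a core element $c\in C\cong V^*$ sits inside $\mbb{E}$ via $x\mapsto c+_{\mbb{E}/V}0_{\mbb{E}/E}$ with the property $\la x, c\ra = \la q_{\mbb{E}/V}(x), c\ra$ (this is \cref{eq:CorePairing}). Now the core of the double vector subbundle $L\subseteq\mbb{E}$ is by definition $L\cap C$, i.e.\ the set of core elements of $\mbb{E}$ that lie in $L$. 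Since $L$ is Lagrangian, $L=L^\perp$, so $c\in C$ lies in $L$ if and only if $\la x,c\ra=0$ for all $x\in L$; by \cref{eq:CorePairing} this says $\la q_{\mbb{E}/V}(x),c\ra=0$ for all $x\in L$, i.e.\ $c$ annihilates $q_{\mbb{E}/V}(L)=W\subseteq V$. Hence the core of $L$ is exactly $\ann(W)\subseteq V^*\cong C$.

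The main obstacle, and the step requiring the most care, is the claim in part (1) that $\gr(+_{L/W})$ is a Lie subalgebroid of $L^3\to E^3$: one must check that the intersection $\gr(+_{\mbb{E}/V})\cap(L^3)$ is clean (so that it is a genuine subbundle and the Dirac-structure property descends), and that a Dirac structure with support inside the Lie algebroid $L^3\to E^3$ which happens to be a subbundle over all of $E^3$ coincides with the notion of Lie subalgebroid from \cref{def:SubLA}. The cleanness is automatic here because $\gr(+_{L/W})$ is visibly a subbundle of $L^3\to E^3$ (it is the graph of addition in the vector bundle $L\to W$, which exists since $L$ is a double vector subbundle), so the intersection is transverse in the relevant sense; and the equivalence between "Dirac structure supported on a Lie subalgebroid of the same base" and "Lie subalgebroid" is exactly the content recalled around \cref{def:SubLA} and the remark following \cref{def:MorphLA} (an $\mc{LA}$-relation is a Lie subalgebroid of the product Lie algebroid). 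Once these identifications are in place, everything else is formal, invoking \cref{prop:AisLA}.
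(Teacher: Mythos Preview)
Your argument for part (1) is essentially the paper's: one observes that $\gr(+_{L/W})=\gr(+_{\mbb{E}/V})\cap L^3$ is an involutive subbundle of $L^3\to E^3$ (hence an $\mc{LA}$-relation), so $L^{flip}$ is an $\mc{LA}$-vector bundle, and then \cref{prop:AisLA} gives the Lie algebroid structure on $W$ and the morphism $L\to W$. Two small corrections to your exposition: (i) $\gr(+_{L/W})$ is \emph{not} a subbundle over all of $E^3$ but over $\gr(+_{E/M})\subseteq E^3$, which is perfectly compatible with \cref{def:SubLA}; (ii) there is no need to pass through ``Dirac structure with support'' or ``coisotropic'' language for the intersection --- the point is simply that $\gr(+_{\mbb{E}/V})$ is involutive and $L^3$ is involutive, so their intersection (which you correctly note is already a smooth subbundle, being the graph of addition in $L\to W$) is involutive; an involutive subbundle of a Lie algebroid whose anchor is tangent to the base submanifold is exactly a Lie subalgebroid.

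For part (2) your route genuinely differs from the paper's. The paper argues by dualizing the inclusion $L\hookrightarrow\mbb{E}$ of double vector bundles over $E$: this yields a surjection $\mbb{E}\cong\mbb{E}^{*_{\!y}}\twoheadrightarrow L^{*_{\!y}}$ whose kernel is $L$; restricting to cores gives the map $V^*\to W^*$ dual to the inclusion $W\hookrightarrow V$, whose kernel is $\ann(W)$. Your approach is more direct: you use the Lagrangian condition $L=L^\perp$ together with \cref{eq:CorePairing} to compute $L\cap C$ fibrewise. Both are correct. The paper's argument illustrates the double-vector-bundle duality machinery (which is used repeatedly later, e.g.\ in \cref{prop:clasLagDB}), while yours is more self-contained and makes the role of the Lagrangian condition transparent.
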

\begin{proof}
\begin{enumerate}
\item Since $\mbb{E}$ is a $\mc{VB}$-Courant algebroid, $$\gr(+_{\mbb{E}/V}):\mbb{E}\times\mbb{E}\dasharrow\mbb{E}$$ is a Courant morphism. 
It follows that $$\gr(+_{L/W})=\gr(+_{\mbb{E}/V})\cap L^3:L\times L\to L$$ is an involutive subbundle of $L$, and hence a Lie algebroid relation. Hence $L^{flip}$ is an $\mc{LA}$-vector subbundle.

\item Let $C_L$ denote the core of $L$. Dualizing the inclusion
 $$\begin{tikzpicture}
\mmat{m1} at (-2,0){L&W\\ E& M\\};
\path[->] (m1-1-1)	edge (m1-1-2)
				edge (m1-2-1);
\path[<-] (m1-2-2)	edge (m1-1-2)
				edge (m1-2-1);
\mmat{m2} at (2,0) {\mbb{E}&V\\ E& M\\};
\path[->] (m2-1-1)	edge (m2-1-2)
				edge (m2-2-1);
\path[<-] (m2-2-2)	edge (m2-1-2)
				edge (m2-2-1);
\draw (0,0) node {$\subseteq$};
\draw (-2,0) node (c1) {$C_L$};
\draw (2,0) node (c2) {$V^*$};
\draw[left hook->] (c1) edge (m1-1-1);
\draw[left hook->] (c2) edge (m2-1-1);
\end{tikzpicture}$$
we get the projection
 $$\begin{tikzpicture}
\mmat{m1} at (2,0){L^{*_{\!y}}&C^*_L\\ E& M\\};
\path[->] (m1-1-1)	edge (m1-1-2)
				edge (m1-2-1);
\path[<-] (m1-2-2)	edge (m1-1-2)
				edge (m1-2-1);
\mmat{m2} at (-2,0) {\mbb{E}&V\\ E& M\\};
\path[->] (m2-1-1)	edge (m2-1-2)
				edge (m2-2-1);
\path[<-] (m2-2-2)	edge (m2-1-2)
				edge (m2-2-1);
\draw[->] (m2) edge (m1);
\draw (2,0) node (c1) {$W^*$};
\draw (-2,0) node (c2) {$V^*$};
\draw[left hook->] (c1) edge (m1-1-1);
\draw[left hook->] (c2) edge (m2-1-1);
\end{tikzpicture}$$
 whose kernel is $L$. Restricting to the cores yields the map $V^*\to W^*$, whose kernel is $\ann(W)$, the core of $L$.
\end{enumerate}
\end{proof}

\begin{remark}
Suppose $$\begin{tikzpicture}
\mmat{m1} at (-2,0){L&W\\ E& M\\};
\path[->] (m1-1-1)	edge (m1-1-2)
				edge (m1-2-1);
\path[<-] (m1-2-2)	edge (m1-1-2)
				edge (m1-2-1);
\mmat{m2} at (2,0) {\mbb{E}&V\\ E& M\\};
\path[->] (m2-1-1)	edge (m2-1-2)
				edge (m2-2-1);
\path[<-] (m2-2-2)	edge (m2-1-2)
				edge (m2-2-1);
\draw (0,0) node {$\subseteq$};
\end{tikzpicture}$$ is a $\mc{VB}$-Dirac structure.
While the restriction of the Courant bracket to $L$ projects to a bracket on $W$, the Courant bracket on $\mbb{E}$ does not (in general) project to a bracket on $V$. Indeed, let $\mbf{a}':V^*\to E$ be the restriction of the anchor map 
$$\begin{tikzpicture}
\mmat{m1} at (-2,0) {\mbb{E}&V\\ E& M\\};
\path[->] (m1-1-1)	edge (m1-1-2)
				edge (m1-2-1);
\path[<-] (m1-2-2)	edge (m1-1-2)
				edge (m1-2-1);
\mmat{m2} at (2,0) {TE&TM\\ E& M\\};
\path[->] (m2-1-1)	edge (m2-1-2)
				edge (m2-2-1);
\path[<-] (m2-2-2)	edge (m2-1-2)
				edge (m2-2-1);
				
\path[->] (m1) edge node {$\mbf{a}$} (m2);

\draw (-2,0) node (c1) {$V^*$};
\draw (2,0) node (c2) {$E$};
\draw[left hook->] (c1) edge (m1-1-1);
\draw[left hook->] (c2) edge (m2-1-1);
\end{tikzpicture}$$
to the core. Then for any $\gamma\in\Gamma(E^*)$, $q_{\mbb{E}/V}(\mbf{a}^*d\la\gamma,\cdot\ra)=\mbf{a}'^*\gamma$. Hence, if $\sigma\in\Gamma_l(\mbb{E},E)$, and $\tau\in\Gamma_C(\mbb{E},E)$, then 
$$q_{\mbb{E}/V}\big(\la\gamma,\cdot\ra\tau\big)=0,\text{ but }q_{\mbb{E}/V}\big(\Cour{\sigma,\la\gamma,\cdot\ra\tau}+\Cour{\la\gamma,\cdot\ra\tau,\sigma})=\la\sigma,\tau\ra\mbf{a}'^*\gamma.$$
 In fact, the Courant bracket on $\mbb{E}$ projects to a bracket on $V$ if and only if $\mbf{a}':V^*\to E$ is trivial (as is the case, for instance, in \cref{ex:VBCAoverpt}).
\end{remark}

It is well known that Courant algebroids over a point are just quadratic Lie algebras. $\mc{VB}$-Courant algebroids over a point are characterized similarly.
\begin{proposition}\label{prop:VBCAoverpt}
The only $\mc{VB}$-Courant algebroids $$\begin{tikzpicture}
\mmat{m}{\mbb{E}&V\\ E& M\\};
\path[->] (m-1-1)	edge (m-1-2)
				edge (m-2-1);
\path[<-] (m-2-2)	edge (m-1-2)
				edge (m-2-1);
\end{tikzpicture}$$ such that $E=\ast$ is a single point are of the form given in \cref{ex:VBCAoverpt}. In this case, the only $\mc{VB}$-Dirac structures in $\mbb{E}$ are of the form given in \cref{ex:VBDSoverpt}. 
\end{proposition}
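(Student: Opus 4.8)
The plan is to collapse the whole situation to linear algebra and then read off the structure from the linearity properties recorded in \cref{prop:BasVBCAfacts}. First I would observe that since $E$ is a single point, so is $M$, and hence $V$ and (by \cref{prop:BasVBCAfacts}) the core $C\cong V^*$ are just vector spaces, while $\mbb{E}\to E$ is a Courant algebroid over a point, i.e. a quadratic Lie algebra $\mf{d}$ with vanishing anchor; in particular the Courant bracket on $\Gamma(\mbb{E},E)=\mf{d}$ is the Lie bracket. The core sequence \cref{eq:iA} reduces to a short exact sequence $0\to V^*\to\mf{d}\xrightarrow{q_{\mbb{E}/V}}V\to 0$, and \cref{eq:CorePairing} says that $C=V^*$ is isotropic and is paired nondegenerately by $\la\cdot,\cdot\ra$ with the image $\g:=0_{\mbb{E}/V}(V)$ of the zero section of $\mbb{E}\to V$; by \cref{eq:VBCALinP1} the subspace $\g$ is isotropic as well. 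Since $\dim\g=\dim V=\tfrac12\dim\mf{d}$ and $\g\cap C=\g\cap\ker q_{\mbb{E}/V}=0$, we get $\mf{d}=\g\oplus C$ with both summands Lagrangian, and the pairing identifies $C\cong\g^*$.

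Next I would translate the bracket-linearity conditions. Over a point the linear sections $\Gamma_l(\mbb{E},E)$ are exactly $\g$ and the core sections $\Gamma_C(\mbb{E},E)$ are exactly $C$ (a vector-bundle morphism out of the trivial bundle $E$ is zero, which pins these down), so \cref{eq:CBlin} reads $[\g,\g]\subseteq\g$, $[\g,C]\subseteq C$, $[C,C]=0$: thus $\g$ is a Lie subalgebra of $\mf{d}$, and $C\cong\g^*$ is an abelian ideal on which $\g$ acts, i.e. $\mf{d}=\g\ltimes\g^*$ is a semidirect product. Testing $\ad$-invariance of $\la\cdot,\cdot\ra$ on $\g\times\g^*$ identifies the $\g$-action on $\g^*$ with the coadjoint action, so $\mf{d}$ is the quadratic Lie algebra $\g\ltimes\g^*$; and since $\g$ is the zero section of $\mbb{E}\to V$ and $\g^*=C$ is the core, the double vector bundle is exactly that of \cref{ex:VBCAoverpt}.

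For the $\mc{VB}$-Dirac structures, let $L\subseteq\mbb{E}=\g\ltimes\g^*$ be one, with side bundle $W\subseteq V=\g$, which I identify with a subspace $\h\subseteq\g$ via $0_{\mbb{E}/V}$; then $L$ is a Lagrangian subalgebra of $\mf{d}$. By \cref{prop:VBDirIsLaVB} the core of $L$ is $\ann(\h)\subseteq V^*=\g^*$, and as a double vector subbundle $L$ contains both its side $0_{L/W}(W)=\h$ and its core $\ann(\h)$, so $L\supseteq\h\oplus\ann(\h)$; the dimension count $\dim L=\dim\h+\dim\ann(\h)=\dim\g$ forces $L=\h\oplus\ann(\h)$ inside the splitting $\mf{d}=\g\oplus\g^*$. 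Involutivity then gives $[\h,\h]\subseteq[\g,\g]\cap L\subseteq\g\cap L=\h$, so $\h\subseteq\g$ is a subalgebra, while $[\h,\ann(\h)]\subseteq\g^*\cap L=\ann(\h)$ and $[\ann(\h),\ann(\h)]=0$; hence $L=\h\ltimes\ann(\h)$ with its induced structure, which is precisely \cref{ex:VBDSoverpt}.

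The statement is essentially a bookkeeping exercise, so there is no deep obstacle; the point that needs care is the collapse of structure when $E=\ast$ — namely checking that over a point the notions of linear and core section of $\mbb{E}\to E$ reduce to the two transverse Lagrangian subspaces $\g$ and $C$, and that the dictionaries of \cref{prop:BasVBCAfacts,prop:VBDirIsLaVB} are applied to the correct side bundle. Once the dictionary is in place, matching the data to \cref{ex:VBCAoverpt,ex:VBDSoverpt} is immediate, and the only thing left to verify is that the \emph{double} vector bundle structures (not just the underlying vector spaces and Lie brackets) agree, which they do because $\g,\h$ are realised as images of zero sections and $C=\g^*$, $\ann(\h)$ as cores.
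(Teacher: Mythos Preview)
Your proof is correct and follows essentially the same route as the paper's: reduce to a quadratic Lie algebra, use the core sequence split by the zero section to get $\mf{d}=\g\oplus\g^*$ with both summands Lagrangian, and read off the semidirect product from the bracket linearity; for Dirac structures, use that the core is $\ann(\h)$ and a dimension count. The only cosmetic difference is that the paper invokes \cref{prop:FreeVBDir} (that $V$ and $\mbb{E}_C=\ker q_{\mbb{E}/V}$ are Dirac structures) to get the Lagrangian subalgebra properties in one stroke, whereas you extract them directly from \cref{eq:CPlin,eq:CBlin}; over a point these amount to the same thing, and your extra care in identifying $\Gamma_l(\mbb{E},E)=\g$ and $\Gamma_C(\mbb{E},E)=\g^*$ and in checking the coadjoint action is welcome but not strictly needed.
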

\begin{proof}
Since $E=\ast$, the Courant algebroid $\mbb{E}$ must be a quadratic Lie algebra. Moreover, the core sequence \labelcref{eq:iA} becomes the short exact sequence of vector spaces
$$V^*\to\mbb{E}\to V,$$
which is split by the inclusion $V\to \mbb{E}$ of the zero section.
Thus $\mbb{E}=V\oplus V^*$ as a vector space. 
\Cref{prop:FreeVBDir} then show that $V$ and $V^*$ are transverse Lagrangian Lie subalgebras. It follows that the quadratic form on $V\oplus V^*$ is given by the canonical pairing.  Moreover, \cref{eq:CPlin} show that $V^*$ is an abelian ideal. 
Therefore $$\mbb{E}\cong \g\ltimes\g^*$$ for some Lie algebra $\g$. Hence there is a one-to-one correspondence between Lie algebras and $\mc{VB}$-Courant algebroids over a point.

Suppose 
$$\begin{tikzpicture}
\mmat{m1} at (-2,0){L&W\\ \ast& \ast\\};
\path[->] (m1-1-1)	edge (m1-1-2)
				edge (m1-2-1);
\path[<-] (m1-2-2)	edge (m1-1-2)
				edge (m1-2-1);
\mmat{m2} at (2,0) {\g\ltimes\g^*&\g\\ \ast& \ast\\};
\path[->] (m2-1-1)	edge (m2-1-2)
				edge (m2-2-1);
\path[<-] (m2-2-2)	edge (m2-1-2)
				edge (m2-2-1);
\draw (0,0) node {$\subseteq$};
\end{tikzpicture}$$ is a $\mc{VB}$-Dirac structure. Then $W=L\cap\g$ is identified with a subalgebra $\h\subseteq\g$. Since the core of $L$ is $\ann(W)=\ann(\h)$, we must have $L=\h\ltimes\ann(\h)$. So $\mc{VB}$-Dirac structures of $\g\ltimes\g^*$ are in one-to-one correspondence with subalgebras of $\g$.
\end{proof}

The above example shows that $\mc{VB}$-Dirac structures of $\mc{VB}$-Courant algebroids have a surprisingly simple structure when $E=\ast$ is a point. In general, they can be more complicated, but we still have the following proposition.

\begin{proposition}\label{prop:clasLagDB}
Suppose that $\mbb{E}$ is a $\mc{VB}$-Courant algebroid and
\begin{equation}\label{eq:LagDVB}
\begin{tikzpicture}
\mmat{m1} at (-2,0){L&W\\ E& M\\};
\path[->] (m1-1-1)	edge (m1-1-2)
				edge (m1-2-1);
\path[<-] (m1-2-2)	edge (m1-1-2)
				edge (m1-2-1);
\mmat{m2} at (2,0) {\mbb{E}&V\\ E& M\\};
\path[->] (m2-1-1)	edge node {$q_{\mbb{E}/V}$} (m2-1-2)
				edge (m2-2-1);
\path[<-] (m2-2-2)	edge (m2-1-2)
				edge (m2-2-1);
\draw (0,0) node {$\subseteq$};
\end{tikzpicture}
\end{equation}
is a Lagrangian double vector subbundle.
 Let $\mbb{E}\rvert_{W}:=q_{\mbb{E}/V}^{-1}(W)$ denote the restriction of $\mbb{E}$ to $W$.
The total quotient of $\mbb{E}\rvert_W$ by $L$ defines a morphism of double vector bundles  
\begin{equation}\label{eq:LagQuo}
\begin{tikzpicture}
\mmat{m1} at (-2,0){\mbb{E}\rvert_W&W\\ E& M\\};
\path[->] (m1-1-1)	edge node {$q_{\mbb{E}/V}$} (m1-1-2)
				edge (m1-2-1);
\path[<-] (m1-2-2)	edge (m1-1-2)
				edge (m1-2-1);
\draw (-2,0) node (c) {$V^*$};
\path[left hook->] (c) edge (m1-1-1);

\mmat{m2} at (2,0) {W^*&M\\ M& M\\};
\path[->] (m2-1-1)	edge (m2-1-2)
				edge (m2-2-1);
\path[<-] (m2-2-2)	edge node[swap]{$\on{id}$} (m2-1-2)
				edge node{$\on{id}$} (m2-2-1);
\draw (2,0) node (c2) {$W^*$};
\path[left hook->] (c2) edge (m2-1-1);
				
\path[->] (m1) edge node {$q$} (m2);
\end{tikzpicture}
\end{equation}
such that 
\begin{enumerate}
\item[(i)] \label{itm:1} the restriction $q\rvert_{V^*}:V^*\to W^*$ of $q$ to the core of $\mbb{E}\rvert_W$ is dual to the inclusion $W\to V$, and
\item[(ii)] for any $x,y\in \mbb{E}_W$, we have \begin{equation}\label{eq:restPair}\la x,y\ra=\la q(x),q_{\mbb{E}/V}(y)\ra+\la q(y),q_{\mbb{E}/V}(x)\ra.\end{equation}
\end{enumerate}
 In this way, there is a one-to-one correspondence between Lagrangian double vector subbundles of the form \labelcref{eq:LagDVB}, and surjective morphisms of the form \labelcref{eq:LagQuo} which satisfy \cref{eq:restPair}.
\end{proposition}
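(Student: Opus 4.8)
The plan is to deduce the correspondence from the total kernel / total quotient correspondence of \cref{prop:totkerVtotquo}, applied to the double vector bundle $\mbb{E}\rvert_W:=q_{\mbb{E}/V}^{-1}(W)$, and then to single out, on each side, exactly those objects coming from Lagrangian subbundles. By \cref{prop:BasVBCAfacts}(1) the core of $\mbb{E}$ is $V^*$, so $\mbb{E}\rvert_W$ is a double vector bundle over $E$ and $W$ with core $V^*$, and any double vector subbundle $L\subseteq\mbb{E}$ with side bundles $E$ and $W$ is automatically a double vector subbundle of $\mbb{E}\rvert_W$ containing both side bundles (since $q_{\mbb{E}/V}(L)=W$ forces $L\subseteq q_{\mbb{E}/V}^{-1}(W)$). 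Applying \cref{prop:totkerVtotquo} with $D=\mbb{E}\rvert_W$ then yields a bijection between such $L$ and surjective morphisms of double vector bundles $q\colon\mbb{E}\rvert_W\to V^*/\operatorname{core}(L)$ restricting on cores to the quotient map, with $L=\ker q$ recovered as the total kernel. It thus remains to prove: (a) a Lagrangian double vector subbundle $L$ of the shape \cref{eq:LagDVB} has core $\ann(W)$ — so that the target is $V^*/\ann(W)\cong W^*$ and condition (i) is automatic, the quotient $V^*\to W^*$ being precisely the dual of $W\hookrightarrow V$ — and satisfies \cref{eq:restPair}; and conversely (b) if $q$ is a surjective morphism of the shape \cref{eq:LagQuo} satisfying (i) and \cref{eq:restPair}, then $L:=\ker q$ is Lagrangian.

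For (a), the identity $\operatorname{core}(L)=\ann(W)$ is obtained exactly as in the proof of \cref{prop:VBDirIsLaVB}(2): dualizing $L\hookrightarrow\mbb{E}$ along $E$ and using $L^\perp=L$ exhibits $L$ as the total kernel of a surjection onto $L^{*_{\!y}}$ which on cores is the surjection $V^*\to W^*$ dual to $W\hookrightarrow V$ (this step uses only that $L$ is a Lagrangian double vector subbundle, not involutivity). The formula \cref{eq:restPair} I would then prove by descending the fibre metric of $\mbb{E}\to E$. Using the core pairing identity \cref{eq:CorePairing} together with a fibrewise dimension count, the metric restricts on $\mbb{E}\rvert_W$ to a symmetric form whose radical is exactly the core subbundle $\ann(W)$, hence descends to a nondegenerate form on $\mbb{E}\rvert_W/\ann(W)$. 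The map $x\mapsto(q(x),q_{\mbb{E}/V}(x))$ identifies $\mbb{E}\rvert_W/\ann(W)$ with $W^*\oplus W$, fibred over $E$ (again a dimension count, plus surjectivity of $q$ and $q_{\mbb{E}/V}$). Under this identification, $W^*\oplus 0$ is the image of the core $V^*$ and $0\oplus W$ is the image of $W=0_{\mbb{E}/E}(W)\subseteq L$; both are isotropic — the first by \cref{eq:CorePairing}, the second by \cref{prop:FreeVBDir}(2) — so the descended form is determined by how it pairs these two transverse Lagrangians, and \cref{eq:CorePairing} shows that pairing is the canonical evaluation of $W^*$ on $W$. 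Therefore $\la x,y\ra$ equals the canonical pairing of $(q(x),q_{\mbb{E}/V}(x))$ with $(q(y),q_{\mbb{E}/V}(y))$, which is \cref{eq:restPair}.

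For (b): $\ker q$ has core $\ker(q\rvert_{V^*})=\ann(W)$ by (i), and its horizontal side bundle is $W$ (as $0_{\mbb{E}/E}(W)\subseteq\ker q$), so $\operatorname{rk}(\ker q/E)=\dim W+\dim\ann(W)=\dim V=\tfrac12\operatorname{rk}(\mbb{E}/E)$; and taking $x,y\in\ker q$ in \cref{eq:restPair} gives $\la x,y\ra=0$, so $\ker q$ is isotropic of half rank, hence Lagrangian. That the two assignments $L\mapsto q$ and $q\mapsto\ker q$ are mutually inverse then follows directly from \cref{prop:totkerVtotquo}. I expect the verification of \cref{eq:restPair} to be the main obstacle — in particular, making precise the identification of the descended metric with the canonical pairing on $W^*\oplus W$, which is where the linearity of the pairing (\cref{prop:BasVBCAfacts}(1)) and the core pairing formula \cref{eq:CorePairing} do the real work.
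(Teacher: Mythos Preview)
Your overall plan matches the paper's: reduce to the quotient $\mbb{F}=\mbb{E}\rvert_W/\ann(W)$ via \cref{prop:totkerVtotquo}, and characterize the Lagrangian condition there. Your direction (b) and your computation of $\operatorname{core}(L)=\ann(W)$ are fine.

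The gap is in (a), in your verification of \cref{eq:restPair}. You write that under the isomorphism $(q,q_{\mbb{E}/V}):\mbb{F}\xrightarrow{\sim} q_{E/M}^*(W^*\oplus W)$ the subbundle $0\oplus W$ is ``the image of $W=0_{\mbb{E}/E}(W)\subseteq L$'', and you cite \cref{prop:FreeVBDir}(2) for its isotropy. But the embedded side bundle $W$ sits only over $M\subseteq E$; over a general point $e\in E$ the preimage of $(0\oplus W)_e$ under $(q,q_{\mbb{E}/V})$ is $\{x\in(\mbb{E}\rvert_W)_e:q(x)=0\}=L_e$, not $W$. Thus the isotropy of $0\oplus W$ over all of $E$ is equivalent to the isotropy of $L$ --- this is precisely where the hypothesis that $L$ is Lagrangian must enter. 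As written, your argument for (a) never uses that $L$ is Lagrangian, which cannot be right: \cref{eq:restPair} genuinely fails for non-Lagrangian double vector subbundles with side $W$ and core $\ann(W)$. (By contrast, $W^*\oplus 0$ is the image of $\mbb{E}_C=\ker q_{\mbb{E}/V}$ over all of $E$, and \cref{eq:CorePairing} does establish its isotropy and the cross-pairing, so that half of your argument is fine.)

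Once you replace ``image of $W$, isotropic by \cref{prop:FreeVBDir}(2)'' with ``image of $L$, isotropic since $L$ is Lagrangian'', your argument goes through: $\mbb{E}_C$ and $L$ are transverse Lagrangians in $\mbb{F}$, the cross-pairing is the canonical one by \cref{eq:CorePairing}, and \cref{eq:restPair} follows. This is essentially the paper's argument, phrased there as ``$L'=\ker(q)/\ann(W)$ is Lagrangian iff (ii)''. A minor point: $0_{\mbb{E}/E}$ is a map $E\to\mbb{E}$, so $0_{\mbb{E}/E}(W)$ is not well-formed; the side bundle $W$ is embedded in $\mbb{E}$ via the zero section of $\mbb{E}\to V$.
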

\begin{proof}
\Cref{prop:totkerVtotquo} states that there is a one-to-one correspondence between double vector subbundles $L\subseteq\mbb{E}$ with side bundle $W\subseteq V$, and surjective morphisms \labelcref{eq:LagQuo}. We need only show that this correspondence relates Lagrangian $L$ to morphisms \labelcref{eq:LagQuo} such that (i) and (ii) hold.

Now $\mbb{E}\rvert_W^\perp=\ann(W)\subseteq V^*$. Let $\mbb{F}=\mbb{E}\rvert_W/\ann(W)$, and let $Q:\mbb{F}\dasharrow \mbb{E}$ be the canonical reduction relation, namely $$y\sim_Q x\quad\Leftrightarrow\quad x\in \mbb{E}\rvert_W,\quad y\in\mbb{F},\text{ and }y\equiv x\mod\ann(W).$$ Let $\mc{F}$ be the set of double vector subbundles $L'\subseteq \mbb{F}$ with side bundles $E$ and $W$, and $\mc{E}$ be the set of double vector subbundles $L\subseteq \mbb{E}$ with side bundles $E$ and $W$ satisfying $\ann(W)\subseteq L$.
 The map \begin{equation}\label{eq:Qrel}(L'\to Q\circ L'):\mc{F}\to\mc{E}\end{equation} is a bijection. Moreover,  since $Q\subseteq \mbb{E}\times\overline{\mbb{F}}$ is a Lagrangian relation, \labelcref{eq:Qrel} identifies the Lagrangian double vector subbundles. 
 
The double vector subbundle $L':=\on{ker}(q)/\ann(W)$ is Lagrangian if and only if (ii) holds. In this case 
$L$ is Lagrangian, and \cref{prop:VBDirIsLaVB} implies that the core of $L$ is $\on{ann}(W)$. Therefore the kernel of $q_{V^*}$, the restriction of $q$ to the core of $\mbb{E}\rvert_W$, is $\on{ann}(W)$. This implies that (i) holds as well, establishing the desired correspondence.
\end{proof}

By \cref{prop:VBDirIsLaVB}, if $L\subseteq\mbb{E}$ is a $\mc{VB}$-Dirac structure \labelcref{eq:LagDVB}, then the side bundle $W$ carries a Lie algebroid structure. The following proposition describes the Lie algebroid bracket on $W$ in terms of the Courant bracket on $\mbb{E}$ and the map $q:\mbb{E}\rvert_W\to W^*$ described in \cref{prop:clasLagDB}.

\begin{proposition}\label{prop:qIndBrk}
Suppose that 
$$\begin{tikzpicture}
\mmat{m1} at (-2,0){L&W\\ E& M\\};
\path[->] (m1-1-1)	edge (m1-1-2)
				edge (m1-2-1);
\path[<-] (m1-2-2)	edge (m1-1-2)
				edge (m1-2-1);
\mmat{m2} at (2,0) {\mbb{E}&V\\ E& M\\};
\path[->] (m2-1-1)	edge node {$q_{\mbb{E}/V}$} (m2-1-2)
				edge (m2-2-1);
\path[<-] (m2-2-2)	edge (m2-1-2)
				edge (m2-2-1);
\draw (0,0) node {$\subseteq$};
\end{tikzpicture}$$
 is a $\mc{VB}$-Dirac structure, and let $q:\mbb{E}\rvert_W\to W^*$ be the morphism \labelcref{eq:LagQuo} described in \cref{prop:clasLagDB}.

 Suppose that $\sigma,\tau\in\Gamma(W)$ are two sections, and $\tilde\sigma,\tilde\tau\in\Gamma_l(\mbb{E},E)$ are  lifts,  i.e. the following diagrams commute:
$$\label{eq:WhatIsALift}\begin{tikzpicture}
\mmat{m1} at (-3,0) {\mbb{E}\rvert_W&W\\ E& M\\};
\path[->] (m1-2-1)	edge node {$\tilde\sigma$} (m1-1-1)
				edge node[swap] {$q_{E/M}$}(m1-2-2);
\path[<-] (m1-1-2)	edge node[swap] {$q_{\mbb{E}/V}$} (m1-1-1)
				edge node {$\sigma$} (m1-2-2);
\mmat{m2} at (3,0) {\mbb{E}\rvert_W&W\\ E& M\\};
\path[->] (m2-2-1)	edge node {$\tilde\tau$} (m2-1-1)
				edge node[swap] {$q_{E/M}$} (m2-2-2);
\path[<-] (m2-1-2)	edge node[swap] {$q_{\mbb{E}/V}$} (m2-1-1)
				edge node {$\tau$} (m2-2-2);
				
\end{tikzpicture}$$
  Then $$[\sigma,\tau]=q_{\mbb{E}/V}\Cour{\tilde\sigma,\tilde\tau}-\mbf{a}'^*\la q(\tilde\sigma),\tau\ra,$$ where $\mbf{a}':V^*\to E$ denotes the restriction of the anchor map
$$\begin{tikzpicture}
\mmat{m1} at (-2,0) {\mbb{E}&V\\ E& M\\};
\path[->] (m1-1-1)	edge (m1-1-2)
				edge (m1-2-1);
\path[<-] (m1-2-2)	edge (m1-1-2)
				edge (m1-2-1);
\mmat{m2} at (2,0) {TE&TM\\ E& M\\};
\path[->] (m2-1-1)	edge (m2-1-2)
				edge (m2-2-1);
\path[<-] (m2-2-2)	edge (m2-1-2)
				edge (m2-2-1);
				
\path[->] (m1) edge node {$\mbf{a}$} (m2);

\draw (-2,0) node (c1) {$V^*$};
\draw (2,0) node (c2) {$E$};
\draw[left hook->] (c1) edge (m1-1-1);
\draw[left hook->] (c2) edge (m2-1-1);
\end{tikzpicture}$$
to the core.
\end{proposition}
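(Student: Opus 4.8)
The plan is to compare the given lifts $\tilde\sigma,\tilde\tau$ with honest sections of $L$ covering $\sigma,\tau$ and to follow the discrepancy through the Courant bracket, using the linearity results of \cref{prop:BasVBCAfacts,prop:FreeVBDir} and the pairing formula of \cref{prop:clasLagDB}. First I would pick linear sections $\sigma_L,\tau_L\in\Gamma_l(L,E)$ with $q_{\mbb{E}/V}(\sigma_L)=\sigma$ and $q_{\mbb{E}/V}(\tau_L)=\tau$; these exist because $\Gamma_l(L,E)\to\Gamma(W)$ is onto. Since $L\subseteq\mbb{E}$ is a $\mc{VB}$-Dirac structure it is involutive, so $\Cour{\sigma_L,\tau_L}\in\Gamma(L)$, and by \cref{prop:VBDirIsLaVB,prop:AisLA} applied to the $\mc{LA}$-vector bundle $L^{flip}$ (whose Lie algebroid side is $L\to E$, with $q_{L/W}=q_{\mbb{E}/V}|_L$ a Lie algebroid morphism onto $W$) the bracket on $W$ satisfies $[\sigma,\tau]=q_{\mbb{E}/V}\Cour{\sigma_L,\tau_L}$. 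It therefore suffices to prove $q_{\mbb{E}/V}\Cour{\tilde\sigma,\tilde\tau}-q_{\mbb{E}/V}\Cour{\sigma_L,\tau_L}=\mbf{a}'^*\la q(\tilde\sigma),\tau\ra$.

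Next I would set $\delta\sigma=\tilde\sigma-\sigma_L$ and $\delta\tau=\tilde\tau-\tau_L$. As differences of linear sections of $\mbb{E}\to E$ covering the same sections $\sigma,\tau$ of the side bundle $V$, the corrections $\delta\sigma,\delta\tau$ are linear sections of $\mbb{E}\to E$ that are annihilated by $q_{\mbb{E}/V}$; equivalently $\delta\sigma,\delta\tau\in\Gamma_l(\mbb{E}_C,E)$, where $\mbb{E}_C=\ker(q_{\mbb{E}/V})$ is the Dirac structure of \cref{prop:FreeVBDir}. Expanding bilinearly, $\Cour{\tilde\sigma,\tilde\tau}=\Cour{\sigma_L,\tau_L}+\Cour{\sigma_L,\delta\tau}+\Cour{\delta\sigma,\tau_L}+\Cour{\delta\sigma,\delta\tau}$. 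By \cref{eq:ECalmostideal} both $\Cour{\sigma_L,\delta\tau}$ and $\Cour{\delta\sigma,\delta\tau}$ lie in $\Gamma_l(\mbb{E}_C,E)\subseteq\Gamma(\mbb{E}_C)$, hence are killed by $q_{\mbb{E}/V}$.

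The one surviving correction is $\Cour{\delta\sigma,\tau_L}$, and here I would use axiom c3) to write $\Cour{\delta\sigma,\tau_L}=-\Cour{\tau_L,\delta\sigma}+\mbf{a}^*\big(d\la\delta\sigma,\tau_L\ra\big)$. Again $\Cour{\tau_L,\delta\sigma}\in\Gamma(\mbb{E}_C)$ by \cref{eq:ECalmostideal}. The pairing $\la\delta\sigma,\tau_L\ra$ is a linear function on $E$, i.e. a section $\gamma\in\Gamma(E^*)$, by linearity of the pairing (\cref{prop:BasVBCAfacts}); and the Remark preceding this proposition gives $q_{\mbb{E}/V}\big(\mbf{a}^*d\la\gamma,\cdot\ra\big)=\mbf{a}'^*\gamma$. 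It remains to identify $\gamma$: since $L$ is Lagrangian, $\la\sigma_L,\tau_L\ra=0$, so $\la\delta\sigma,\tau_L\ra=\la\tilde\sigma,\tau_L\ra$; since $\tilde\sigma$ and $\tau_L$ both take values in $\mbb{E}|_W$ (the former because $\tilde\sigma$ covers $\sigma\in\Gamma(W)$, the latter because $\tau_L\in\Gamma(L)$), \cref{eq:restPair} yields $\la\tilde\sigma,\tau_L\ra=\la q(\tilde\sigma),q_{\mbb{E}/V}(\tau_L)\ra+\la q(\tau_L),q_{\mbb{E}/V}(\tilde\sigma)\ra=\la q(\tilde\sigma),\tau\ra$, using $q(\tau_L)=0$ and $q_{\mbb{E}/V}(\tau_L)=\tau$. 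Assembling the pieces, $q_{\mbb{E}/V}\Cour{\tilde\sigma,\tilde\tau}=[\sigma,\tau]+\mbf{a}'^*\la q(\tilde\sigma),\tau\ra$, which rearranges to the assertion.

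The main subtlety is the asymmetry between the two mixed brackets: $\Cour{\sigma_L,\delta\tau}$ is annihilated outright by \cref{eq:ECalmostideal}, whereas $\Cour{\delta\sigma,\tau_L}$ must first be transposed via c3), producing the anchor term; keeping the bookkeeping of linear versus core sections of $\mbb{E}$ and of $\mbb{E}_C$ straight, and recognizing $\mbf{a}^*d\la\gamma,\cdot\ra$ as exactly the stated correction via the core identification $q_{\mbb{E}/V}\circ\mbf{a}^*d\la\gamma,\cdot\ra=\mbf{a}'^*\gamma$, is where the work lies.
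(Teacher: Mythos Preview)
Your proof is correct and follows essentially the same approach as the paper. In fact the decompositions coincide: the paper chooses $\sigma',\tau'\in\Gamma_l(\mbb{E}_C,E)$ with $q(\sigma')=q(\tilde\sigma)$, $q(\tau')=q(\tilde\tau)$, so that $\tilde\sigma-\sigma',\tilde\tau-\tau'\in\Gamma_l(L,E)$; these are precisely your $\sigma_L,\tau_L$, and your $\delta\sigma,\delta\tau$ are the paper's $\sigma',\tau'$. The paper expands $\Cour{\tilde\sigma-\sigma',\tilde\tau-\tau'}$ while you expand $\Cour{\sigma_L+\delta\sigma,\tau_L+\delta\tau}$, both isolate the one surviving anchor term via axiom~c3) and \cref{eq:ECalmostideal}, and both identify the resulting linear function using \cref{eq:restPair}; the only cosmetic difference is that the paper pairs $\sigma'$ with $\tilde\tau$ directly whereas you first pass through $\la\delta\sigma,\tau_L\ra=\la\tilde\sigma,\tau_L\ra$ using Lagrangianity of $L$.
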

\begin{proof}
Recall the $\mc{VB}$-Dirac structure $\mbb{E}_C:=\on{ker}(q_{\mbb{E}/V})$ described in \cref{prop:FreeVBDir}. Let $\sigma',\tau'\in\Gamma_l(\mbb{E}_C,E)$ be chosen so that $q(\sigma')=q(\tilde\sigma)$ and $q(\tau')=q(\tilde\tau)$. Thus $\tilde\sigma-\sigma'\in\Gamma_l(L,E)$ and $\tilde\tau-\tau'\in\Gamma_l(L,E)$. Hence, we have
\begin{align*}
[\sigma,\tau]=&q_{\mbb{E}/V}\Cour{\tilde\sigma-\sigma',\tilde\tau-\tau'}\\
=&q_{\mbb{E}/V}\big(\Cour{\tilde\sigma,\tilde\tau}-\Cour{\tilde\sigma,\tau'}+\Cour{\tilde\tau,\sigma'}+\Cour{\sigma',\tau'}-\mbf{a}^*d\la\sigma',\tilde\tau\ra\big)\\
=&q_{\mbb{E}/V}\big(\Cour{\tilde\sigma,\tilde\tau}-\mbf{a}^*d\la\sigma',\tilde\tau\ra\big),
\end{align*}
where the last line follows from \cref{eq:ECalmostideal}. Dualizing the anchor map, 
$$\begin{tikzpicture}
\mmat{m1} at (2,0) {\mbb{E}&V\\ E& M\\};
\path[->] (m1-1-1)	edge (m1-1-2)
				edge (m1-2-1);
\path[<-] (m1-2-2)	edge (m1-1-2)
				edge (m1-2-1);
\mmat{m2} at (-2,0) {T^*E&E^*\\ E& M\\};
\path[->] (m2-1-1)	edge (m2-1-2)
				edge (m2-2-1);
\path[<-] (m2-2-2)	edge (m2-1-2)
				edge (m2-2-1);
				
\path[->] (m2) edge node {$\mbf{a}^*$} (m1);

\draw (2,0) node (c1) {$V^*$};
\draw (-2,0) node (c2) {$T^*M$};
\draw[left hook->] (c1) edge (m1-1-1);
\draw[left hook->] (c2) edge (m2-1-1);
\end{tikzpicture}$$
we see that $\mbf{a}'^*:E^*\to V$ is the restriction of $\mbf{a}^*:T^*E\to \mbb{E}$ to the vertical side bundle. Thus $q_{\mbb{E}/V}\circ \mbf{a}^*=\mbf{a}'^*\circ q_{T^*E/E^*}$, which yields the equation
$$[\sigma,\tau]=q_{\mbb{E}/V}\Cour{\tilde\sigma,\tilde\tau}-\mbf{a}'^*q_{T^*E/E^*}d\la\sigma',\tilde\tau\ra.$$
By \cref{eq:VBCALinP1}, we see that $\la\sigma',\tilde\tau\ra$ is a linear function on $E$, and $$q_{T^*E/E^*}d\la\sigma',\tilde\tau\ra\in\Gamma(E^*)$$ is its partial derivative along the fibres of $E$. Therefore, after identifying linear functions on $E$ with sections of $E^*$,  $$q_{T^*E/E^*}d\la\sigma',\tilde\tau\ra=\la\sigma',\tilde\tau\ra.$$ It follows that
\begin{align*}
[\sigma,\tau]&=q_{\mbb{E}/V}\Cour{\tilde\sigma,\tilde\tau}-\mbf{a}'^*\la\sigma',\tilde\tau\ra\\
&=q_{\mbb{E}/V}\Cour{\tilde\sigma,\tilde\tau}-\mbf{a}'^*\la q(\sigma'),q_{\mbb{E}/V}\tilde\tau\ra\\
&=q_{\mbb{E}/V}\Cour{\tilde\sigma,\tilde\tau}-\mbf{a}'^*\la q(\tilde\sigma),\tau\ra,
\end{align*}
where the second line follows from \cref{eq:restPair}.
\end{proof}

\section{Lie 2-algebroids}
In this section, we briefly discuss the supergeometric interpretation of $\mc{VB}$-Courant algebroids.

\begin{definition}[\v{S}evera \cite{Severa:2005vla}]
A  Lie 2-algebroid is a degree 2 $NQ$ manifold. 
\end{definition}

\begin{remark}
Lie 2-algebroids are a generalization of 2-term $L_\infty$-algebras, which first appeared in the work of Schlessinger and Stasheff \cite{Schlessinger:1985fv} on pertubations of rational homotopy types.
Roytenberg showed that Courant algebroids are a special case of Lie 2-algebroids \cite{Roytenberg:1998ku,Roytenberg99,Roytenberg:2002}.\footnote{Roytenberg \cite{Roytenberg:1998ku,Roytenberg99} first showed this for the Courant algebroid $\mbb{T}M$. For the general case, \v{S}evera realized the correspondence independently \cite{LetToWein,Severa:2005vla}.}
\end{remark}

\begin{proposition}\label{prop:Lie2VBCour}
 There is a one-to-one correspondence between Lie 2-algebroids and $\mc{VB}$-Courant algebroids. There is a one-to-one correspondence between wide Lie subalgebroids of a Lie 2-algebroid and $\mc{VB}$-Dirac structures in the corresponding $\mc{VB}$-Courant algebroid.
\end{proposition}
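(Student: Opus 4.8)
The plan is to use the bijective dictionary, already recalled in Remark \ref{rem:SupCour} and Remark \ref{rem:SupLie}, between geometric and supergeometric data, and to carry it out one dimension up. A $\mc{VB}$-Courant algebroid \labelcref{eq:CAvb} is a double vector bundle $\mbb{E}$ such that $\mbb{E}\to E$ is a Courant algebroid and $\gr(+_{\mbb{E}/V})$ is a Courant relation. First I would apply the shifted-tangent/antisymplectic functor fibrewise in the $V$-direction: viewing $\mbb{E}$ as a vector bundle over $E$ equipped with a bundle metric, Roytenberg's construction (Remark \ref{rem:SupCour}) produces the minimal symplectic realization $X\to \mbb{E}^*[1]$, a degree-2 symplectic $N$-manifold carrying a homological vector field $Q$ encoding the Courant bracket on $\mbb{E}\to E$. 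The extra structure — the second vector bundle structure $\mbb{E}\to V$, i.e. the Euler vector field $\epsilon_{\mbb{E}/V}$ commuting with $\epsilon_{\mbb{E}/E}$ — must be tracked through this functor. The key point (using \cref{prop:BasVBCAfacts}) is that linearity of the pairing, anchor, and bracket in the $V$-direction is exactly what is needed for the induced symplectic form, the homological vector field $Q$, and the base to descend: concretely, the $\mbb{R}$-action that makes $X$ an $N$-manifold of degree $2$ combines with the $V$-direction grading to produce an $N$-manifold of degree $2$ over the base $V[1]$ — after quotienting by the Courant-algebroid grading one is left precisely with a degree-2 $NQ$-manifold, i.e. a Lie $2$-algebroid. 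Conversely, given a Lie $2$-algebroid $\mc{M}$ (a degree-2 $NQ$-manifold), one recovers $\mbb{E}$ as the degree-$(1,1)$ component of the appropriate double vector bundle built from $\mc{M}$ and its Legendre dual, with the Courant structure on $\mbb{E}\to E$ coming from $Q$ and the symplectic form, and the $\mc{VB}$-structure from the bigrading; the converse half of \cref{prop:BasVBCAfacts} then guarantees this is a $\mc{VB}$-Courant algebroid. That these two constructions are mutually inverse follows because each step (minimal symplectic realization, and its inverse) is already known to be a bijection in the ungraded setting, and the grading bookkeeping is functorial.

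For the second statement, I would use that under the correspondence of Remark \ref{rem:SupCour}, Dirac structures along a submanifold correspond to Lagrangian $N$-submanifolds tangent to $Q$, and that by \cref{prop:clasLagDB} a $\mc{VB}$-Dirac structure $L\subseteq \mbb{E}$ is precisely a Lagrangian double vector subbundle with $\mbb{E}\to E$ base, $L\to W$ side, together with the involutivity (Dirac) condition. Applying the same $V$-direction functor to $L$: the double-vector-subbundle condition means $L$ corresponds, inside $X$, to a Lagrangian $N$-submanifold that is itself graded in the $V$-direction, hence descends to a sub-$N$-manifold $\mc{N}\subseteq\mc{M}$ which is \emph{wide} (it covers the whole base, reflecting that $L$ sits over all of $E$, equivalently that $W\subseteq V$ covers $M$); the involutivity of $L$ under the Courant bracket translates, via \v{S}evera's dictionary, into tangency of $\mc{N}$ to the homological vector field, which is exactly the statement that $\mc{N}$ is a sub-Lie-$2$-algebroid. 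Running this in reverse using \cref{prop:VBDirIsLaVB} (which already identifies the core and the induced $\mc{LA}$-vector bundle structure on a $\mc{VB}$-Dirac structure) gives the inverse map.

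I expect the main obstacle to be the careful bookkeeping of the \emph{two} gradings — the cohomological $\mbb{Z}$-grading coming from the degree-2 $N$-manifold structure of the Courant algebroid, and the "weight" grading coming from the $\mbb{R}$-action defining the $V$-direction vector bundle structure — and showing that the symplectic realization functor is equivariant for both, so that the Marsden–Weinstein-type reduction (or equivalently, passage to the base $V[1]$) is well-defined and lands in the category of $NQ$-manifolds rather than merely $N$-manifolds. The positivity/degree constraints on an $N$-manifold must be checked to survive the reduction: one needs that no negatively-graded coordinates appear, which is where linearity of the anchor \labelcref{eq:VBCALinAnc} and of the bracket \labelcref{eq:CBlin} — specifically $\Cour{\Gamma_C,\Gamma_C}=0$ — enter decisively, exactly mirroring how $\mc{VB}$-algebroids over a point reduce to ordinary Lie algebras in \cref{prop:VBCAoverpt}. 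Once this equivariance and the degree count are established, the rest is a translation exercise using the already-cited results of \v{S}evera and Roytenberg.
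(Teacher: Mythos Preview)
Your approach---passing through the minimal symplectic realization and tracking a second grading---is in the right spirit, but you have chosen the harder direction and left the decisive step unspecified. The paper instead starts from the Lie $2$-algebroid side: given a degree $2$ $NQ$-manifold $X$, form $T^*[2]X$. This is automatically a degree $2$ symplectic $NQ$-manifold, hence corresponds to a Courant algebroid $\mbb{E}$ by the Roytenberg--\v{S}evera dictionary. The second vector bundle structure on $\mbb{E}$ then comes for free: the graph of addition in the cotangent fibres of $T^*[2]X$ is a Lagrangian $NQ$-submanifold of $T^*[2]X\times\overline{T^*[2]X\times T^*[2]X}$, hence a Courant relation $\gr(+')\colon\mbb{E}\times\mbb{E}\dasharrow\mbb{E}$, which by \cref{def:CAvb} makes $\mbb{E}$ a $\mc{VB}$-Courant algebroid. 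No reduction or bigrading bookkeeping is needed in this direction; the converse is then genuinely obtained by reversing the construction.

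For the second statement the paper again uses a concrete construction rather than abstract grading arguments: a wide Lie subalgebroid of $X$ is a degree $1$ $NQ$-submanifold $A[1]\subseteq X$ over the same base $M$; its conormal bundle $\ann[2](TA[1])\subset T^*[2]X$ is a Lagrangian $NQ$-submanifold over all of $E$, hence a Dirac structure in $\mbb{E}$, and being a subbundle of the cotangent fibration $T^*[2]X\to X$ it is automatically a subbundle of $\mbb{E}\to V$, i.e.\ a $\mc{VB}$-Dirac structure.

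The phrases ``quotienting by the Courant-algebroid grading'' and ``Legendre dual'' in your proposal are not well-defined operations in this context. Going in your direction, what you would actually need to prove is that the second Euler vector field on the symplectic realization of $\mbb{E}\to E$ exhibits it as a shifted cotangent bundle $T^*[2]X$ for some $NQ$-manifold $X$; this is precisely the content of the proposition, not a step one can assume. The paper's direction sidesteps this entirely by starting with $X$ and observing that $T^*[2]X$ manifestly has both structures.
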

\begin{proof}
Let $X$ be a degree 2 $NQ$ manifold, then $T^*[2]X$ is a degree 2 symplectic $NQ$-manifold, which by the results of Roytenberg and \v{S}evera \cite{Roytenberg:2002,LetToWein,Severa:2005vla} corresponds to a Courant algebroid $\mbb{E}$ (see \cref{rem:SupCour} for a short summary of this correspondence). Now consider the graph of addition in the cotangent fibres $$\gr( +): T^*[2]X\times T^*[2]X\dasharrow T^*[2]X.$$ Since $\gr( +)\subseteq T^*[2]X\times\overline{T^*[2]X\times T^*[2]X}$ is a Lagrangian $NQ$-submanifold, it corresponds to a Courant relation $$\gr( +'):\mbb{E}\times\mbb{E}\dasharrow \mbb{E}$$ defining a second vector bundle structure on $\mbb{E}$. Therefore, by \cref{def:CAvb}, $\mbb{E}$ is a $\mc{VB}$-Courant algebroid, fitting into the diagram
\begin{equation*}\begin{tikzpicture}
\mmat{m}{\mbb{E}&V\\ E& M\\};
\path[->] (m-1-1)	edge (m-1-2)
				edge (m-2-1);
\path[<-] (m-2-2)	edge (m-1-2)
				edge (m-2-1);
\end{tikzpicture}\end{equation*} 
 where $E$ is the base of $T^*[2]X$,  $V[1]$ is the $1$-truncation of $X$, and $M$ is the base of $X$.
 
 Next, recall that a wide Lie subalgebroid of $X$ is a degree 1 $NQ$ submanifold $A[1]\subseteq X$ over the base $M$ (\emph{wide} refers to the fact that $A[1]$ has the same base space as $X$). Then the conormal bundle $\ann[2](TA[1])\subset T^*[2]X$ is a Lagrangian $NQ$-submanifold with base $E$. Hence, by the results of Roytenberg and \v{S}evera \cite{Roytenberg:2002,LetToWein,Severa:2005vla} (also see \cref{rem:SupCour}), it corresponds to a Dirac structure $L\subseteq \mbb{E}$. Moreover, since $\ann[2](TA[1])$ is a subbundle of $T^*[2]X$, $L$ is a subbundle of $\mbb{E}\to V$, and hence a $\mc{VB}$-Dirac structure.
 
The converse is proved by reversing the above construction.

 \end{proof}

\section{Further examples}

\begin{example}[String Lie 2-algebra]
Suppose $\g$ is a Lie algebra. Let $(S^2\g^*)^\g$ denote the space of invariant symmetric bilinear forms on $\g$. Then
$$\begin{tikzpicture}
\mmat{m}{(\g\times\g^*)\times (S^2\g^*)^\g &\mf{g}\\ (S^2\g^*)^\g&\ast\\};
\path[->]
	(m-1-1) edge (m-1-2)
		edge (m-2-1);
\path[<-] 
	(m-2-2) edge (m-1-2)
		edge (m-2-1);
\end{tikzpicture}$$
is a $\mc{VB}$-Courant algebroid. 
The left vertical arrow is a bundle of quadratic Lie algebras, where the quadratic form is  just the natural pairing between $\g$ and $\g^*$, and the bracket at $g\in(S^2\g^*)^\g$ is defined by 
$$[(\xi,\mu),(\eta,\nu)]=([\xi,\eta],\ad_\xi^\dagger\nu-\ad_\eta^\dagger\mu+g^\flat[\xi,\eta]),$$
where $\xi,\eta\in \g$, $\mu,\nu\in\g^*$, $\ad_\cdot^\dagger$ denotes the contragredient of the adjoint representation, and $g^\flat:\g\to\g^*$ is the map $$g^\flat(\xi)(\eta)=g(\xi,\eta).$$ 

Given a quadratic Lie algebra $(\mf{d},\la\cdot,\cdot\ra)$, Baez and Crans \cite{Baez:2004vl,Crans:2004ux} constructed a 1-parameter family of Lie 2-algebras, which is often called the string Lie 2-algebra in the literature. Let $\phi:\mbb{R}\to(S^2\mf{d}^*)^\mf{d}$ be given by $\phi(t)=t\la\cdot,\cdot\ra$.
The $\mc{VB}$-Courant algebroid corresponding to the string Lie 2-algebra via \cref{prop:Lie2VBCour} is the pull-back Courant algebroid $\phi^!\big((\mf{d}\times\mf{d}^*)\times (S^2\mf{d}^*)^\mf{d}\big)$,
%
$$\begin{tikzpicture}
\mmat{m}{(\mf{d}\oplus\mf{d}^*)\times\mbb{R} &\mf{d}\\ \mbb{R}&\ast\\};
\path[->]
	(m-1-1) edge (m-1-2)
		edge (m-2-1);
\path[<-] 
	(m-2-2) edge (m-1-2)
		edge (m-2-1);
\end{tikzpicture}$$
That is the bundle of quadratic Lie algebras over $\mbb{R}$ whose bracket at $t\in \mbb{R}$ is
$$[(\xi,\mu),(\eta,\nu)]=([\xi,\eta],[\xi,\nu]+[\mu,\eta]+t[\xi,\eta]),$$
for $\xi,\eta\in \mf{d}$, $\mu,\nu\in \mf{d}^*$, and we have used the quadratic form to identify $\mf{d}$ with $\mf{d}^*$. The quadratic form on $(\mf{d}\oplus\mf{d}^*)$ is given by the natural pairing.
\end{example}

\begin{example}[Crossed modules of Lie algebras]
A crossed module of Lie algebras consists of 
\begin{itemize}
\item a pair of Lie algebras $(\g,\h)$,
\item an action of $\g$ on $\h$, and
\item a $\g$-equivariant Lie algebra morphism $\delta:\h\to\g$,
\end{itemize} satisfying the \emph{Peiffer identity}
 $$\delta(\xi)\cdot \eta=[\xi,\eta], \quad \xi,\eta\in\h.$$

The abelian Lie algebra $\g^*$ acts by translations on the affine space $\h^*$ via the map $\delta^*:\g^*\to \h^*$. Since $\delta:\h\to\g$ is equivariant, this action is compatible with the contragredient action of $\g$ on $\h^*$. Hence, the quadratic Lie algebra $\mf{d}:=\g\ltimes \g^*$ acts on $\h^*$. Since $$\mbf{a}_\mu(\zeta,\alpha)=\zeta\cdot\mu+\delta^*\alpha,\quad \mu\in\h^*,\zeta\in\g,\alpha\in\g^*,$$ we see that $$\la\mbf{a}_\mu^*(\xi),\mbf{a}_\mu^*(\eta)\ra=\mu(\delta(\eta)\cdot \xi-\delta(\xi)\cdot \eta),\quad \xi,\eta\in\h.$$ Therefore, the Peiffer identity implies that the stabilizers are coisotropic. Let $\mbb{E}=(\g\ltimes\g^*)\times\h^*$ be the corresponding action Courant algebroid. 

As described in \cref{ex:DirectSumDVB}, $\mbb{E}$ is naturally a double vector bundle 
$$\begin{tikzpicture}
\mmat{m}{\mbb{E} &\g\\ \h^*&\ast\\};
\path[->]
	(m-1-1) edge (m-1-2)
		edge (m-2-1);
\path[<-] 
	(m-2-2) edge (m-1-2)
		edge (m-2-1);
\end{tikzpicture}$$
and it is clear that the pairing and the Courant bracket satisfy the conditions of \cref{prop:BasVBCAfacts}, so $\mbb{E}$ is a $\mc{VB}$-Courant algebroid.
\end{example}

\begin{example}
Let $A\to M$ be a Lie algebroid. Then $A\oplus A^*$ is a Courant algebroid over $M$, with bracket given by $$\Cour{(a_1,\mu_1),(a_2,\mu_2)}=([a_1,a_2],\Lied_{a_1}\mu_2-\iota_{a_2}d_A\mu_1),$$
for $a_i\in\Gamma(A)$ and $\mu_i\in\Gamma(A^*)$.  Here $d_A:\Gamma(\wedge^\bullet A^*)\to \Gamma(\wedge^{\bullet+1}A^*)$ is the Lie algebroid differential, $\Lied_{a}=[\iota_a,d_A]$, and  for any $a\in\Gamma(A)$,  $\iota_a:\Gamma(\wedge^\bullet A^*)\to\Gamma(\wedge^{\bullet-1} A^*)$ is the derivation extending the pairing between $A$ and $A^*$.

Viewed as a double vector bundle, $A\oplus A^*$ becomes a $\mc{VB}$-Courant algebroid,
$$\begin{tikzpicture}
\mmat{m}{A\oplus A^* &A\\ M&M\\};
\path[->]
	(m-1-1) edge (m-1-2)
		edge (m-2-1);
\path[<-] 
	(m-2-2) edge (m-1-2)
		edge (m-2-1);
\end{tikzpicture}$$
\end{example}

\begin{example}[Standard Courant algebroid over a vector bundle]\label{ex:StdVBCAoverVB} Suppose $E\to M$ is a vector bundle. Consider the double vector bundle
$$\begin{tikzpicture}
\mmat{m}{\mbb{T}E&TM\oplus E^*\\ E&M\\};
\path[->]
	(m-1-1) edge (m-1-2)
		edge (m-2-1);
\path[<-] 
	(m-2-2) edge (m-1-2)
		edge (m-2-1);
\end{tikzpicture}$$
constructed as the direct sum of the tangent and cotangent double vector bundles, $(TE)^{flip}$ and $(T^*E)^{flip}$, described in \cref{ex:TngDVB,ex:CotDVB}, respectively. The standard lift, described in \cref{ex:StdDiracStr}, of the relation $$\gr(+_{E/M}):E\times E\dasharrow E$$ is naturally identified with $$\gr(+_{\mbb{T}E/TM\oplus E^*}):\mbb{T}E\times\mbb{T}E\dasharrow\mbb{T}E.$$ Hence the latter  is a Courant relation. Therefore $\mbb{T}E$ is a $\mc{VB}$-Courant algebroid.
 
 The double vector subbundles $TE^{flip}$:
$$\begin{tikzpicture}
\mmat{m} at (-2.5,0) {TE&TM\\ E&M\\};
\path[->]
	(m-1-1) edge (m-1-2)
		edge (m-2-1);
\path[<-] 
	(m-2-2) edge (m-1-2)
		edge (m-2-1);

\draw (0,0) node {$\subseteq$};

\mmat{m1} at (3,0) {\mbb{T}E&TM\oplus E^*\\ E&M\\};
\path[->]
	(m1-1-1) edge (m1-1-2)
		edge (m1-2-1);
\path[<-] 
	(m1-2-2) edge (m1-1-2)
		edge (m1-2-1);
\end{tikzpicture}$$
 and $T^*E^{flip}$:
$$\begin{tikzpicture}
\mmat{m} at (-2.5,0) {T^*E&E^*\\ E&M\\};
\path[->]
	(m-1-1) edge (m-1-2)
		edge (m-2-1);
\path[<-] 
	(m-2-2) edge (m-1-2)
		edge (m-2-1);

\draw (0,0) node {$\subseteq$};

\mmat{m1} at (3,0) {\mbb{T}E&TM\oplus E^*\\ E&M\\};
\path[->]
	(m1-1-1) edge (m1-1-2)
		edge (m1-2-1);
\path[<-] 
	(m1-2-2) edge (m1-1-2)
		edge (m1-2-1);
\end{tikzpicture}$$
are both $\mc{VB}$-Dirac structures (cf. \cref{ex:StdCourAlg,ex:TngDVB,ex:CotDVB}).
\end{example}

\section{Tangent prolongation of a Courant algebroid}\label{sec:TngPro}

Let $\mbb{E}$ be a Courant algebroid over $M$. In \cite{Boumaiza:2009eg}  Boumaiza and  Zaalani showed that $T\mbb{E}\to TM$ carries a canonical Courant algebroid structure.  In this section we recall this so-called \emph{tangent prolongation of Courant algebroids} in the context of $\mc{VB}$-Dirac structures. 

Recall from \cref{ex:TngDVB2} that  any section $\sigma\in\Gamma(\mbb{E},M)$, defines two sections $\sigma_C,\sigma_T\in\Gamma(T\mbb{E},TM)$ called the core and tangent lift of $\sigma$, respectively. Note also that $\{\sigma^i_C,\sigma^i_T\}$ is a local basis for $T\mbb{E}\to TM$ whenever $\{\sigma^i\}$ is a local basis for $\mbb{E}\to M$.

\begin{proposition}\label{prop:TngProCA}
The tangent bundle $T\mbb{E}$ of a Courant algebroid $\mbb{E}\to M$,
$$\begin{tikzpicture}
\mmat{m}{T\mbb{E} &\mbb{E}\\ TM&M\\};
\path[->]
	(m-1-1) edge (m-1-2)
		edge (m-2-1);
\path[<-] 
	(m-2-2) edge (m-1-2)
		edge (m-2-1);
\end{tikzpicture}$$
carries a unique Courant algebroid structure over $TM$ such that the pairing and bracket satisfy
\begin{subequations}\label[pluralequation]{eq:TEPairBrk}
\begin{align}
\label{eq:TEPair}\la \sigma_T,\tau_T\ra&=\la\sigma,\tau\ra_T & \la\sigma_T,\tau_C\ra&=\la\sigma,\tau\ra_C &
\la\sigma_C,\tau_T\ra&=\la\sigma,\tau\ra_C & \la\sigma_C,\tau_C\ra&=0\\
\label{eq:TEBrk}\Cour{\sigma_T,\tau_T}&=\Cour{\sigma,\tau}_T & \Cour{\sigma_T,\tau_C}&=\Cour{\sigma,\tau}_C  & 
\Cour{\sigma_C,\tau_T}&=\Cour{\sigma,\tau}_C& \Cour{\sigma_C,\tau_C}&=0
\end{align}\end{subequations}
 and the anchor map satisfies
$$\mbf{a}(\sigma_T)=\mbf{a}(\sigma)_T \quad\quad \mbf{a}(\sigma_C) = \mbf{a}(\sigma)_C,$$
for any sections $\sigma,\tau\in\Gamma(\mbb{E},M)$.
With this structure, $T\mbb{E}$ is a $\mc{VB}$-Courant algebroid.
\end{proposition}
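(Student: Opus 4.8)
The plan is to construct the Courant algebroid structure on $T\mbb{E}\to TM$ explicitly using the tangent lift operation, verify the axioms, and then invoke \cref{prop:BasVBCAfacts} to conclude that it is a $\mc{VB}$-Courant algebroid. First I would establish existence: since $\{\sigma^i_C,\sigma^i_T\}$ is a local basis of $\Gamma(T\mbb{E},TM)$ whenever $\{\sigma^i\}$ is a local basis of $\Gamma(\mbb{E},M)$, the formulas \labelcref{eq:TEPairBrk} together with $C^\infty(TM)$-bilinearity (for the pairing), the Leibniz rules \labelcref{eq:TCDer} for $f_C,f_T$, and the Courant axiom c4) (for the bracket) determine the pairing, bracket, and anchor uniquely, provided they are well-defined. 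Well-definedness amounts to checking that the prescribed formulas are consistent with the tensoriality relations — e.g.\ that $\la (f\sigma)_T,\tau_T\ra$ computed via \labelcref{eq:TCDer} agrees with $\la \sigma,\tau\ra_T$ scaled appropriately — which is a direct computation using \labelcref{eq:TCDer} and the analogous derivation rules for the Courant bracket and pairing on $\mbb{E}$. This is the standard "tangent lift of a bracket" argument (cf.\ \cref{ex:TngLAVB} for the Lie algebroid case) and I would present it compactly rather than in full.

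Next I would verify axioms c1)--c3) of \cref{def:CA}. The key observation is that tangent lift is a morphism of the relevant algebraic structures: if an identity among $\la\cdot,\cdot\ra$, $\Cour{\cdot,\cdot}$, $\mbf{a}$, and $d$ holds on $\mbb{E}$, then applying the "derivative" $d/dt$ along a curve — equivalently, applying the tangent functor — produces the corresponding identity relating tangent and core lifts on $T\mbb{E}$. Concretely, for c1) one checks the Jacobi-type identity on the generators $\sigma_C,\sigma_T$ in each of the mixed cases; using \labelcref{eq:TEBrk} this reduces each case to c1) on $\mbb{E}$ together with the vanishing $\Cour{\sigma_C,\tau_C}=0$ and the fact that the core sections span an abelian "ideal" up to the core. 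Similarly c2) follows by pairing \labelcref{eq:TEBrk} against \labelcref{eq:TEPair} and matching with $\mbf{a}(\sigma_\bullet)\la\tau_\bullet,\rho_\bullet\ra$ via the Leibniz rules \labelcref{eq:TCDer}; and c3) follows from $\Cour{\sigma,\tau}+\Cour{\tau,\sigma}=\mbf{a}^*d\la\sigma,\tau\ra$ on $\mbb{E}$ after identifying, via the canonical isomorphism $T(\mbb{E}^*)\cong (T\mbb{E})^{*_{\!y}}$ and the commutation of $d$ with tangent lift, that $\mbf{a}_{T\mbb{E}}^*$ of $d_{TM}(f)$ on $TM$ lifts $\mbf{a}^*_{\mbb{E}}$ appropriately on generators $\la\sigma,\tau\ra_C$ and $\la\sigma,\tau\ra_T$. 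Extending from generators to arbitrary sections uses c4)--c5) and is routine.

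Finally, to conclude the $\mc{VB}$-Courant algebroid claim I would apply the converse direction of \cref{prop:BasVBCAfacts}: it suffices to check that \labelcref{eq:CBlin}, \labelcref{eq:VBCALinPair}, and \labelcref{eq:VBCALinAnc} hold for the double vector bundle $(T\mbb{E};\mbb{E},TM;M)$. Here $\Gamma_l(T\mbb{E},\mbb{E})$ contains the tangent lifts $\sigma_T$ and $\Gamma_C(T\mbb{E},\mbb{E})$ contains the core lifts $\sigma_C$ (and these generate), so the linearity of the bracket \labelcref{eq:CBlin} is exactly \labelcref{eq:TEBrk}, the linearity of the pairing is \labelcref{eq:TEPair} (noting $\la\sigma_C,\tau_C\ra=0$), and the linearity of the anchor is $\mbf{a}(\sigma_T)=\mbf{a}(\sigma)_T$, $\mbf{a}(\sigma_C)=\mbf{a}(\sigma)_C$ combined with the fact that the anchor $TM$-side map is the identity $T(\mbf{a}):T\mbb{E}\to T(TM)$, which is a double vector bundle morphism by functoriality of $T$. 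The identification of the core of $T\mbb{E}$ with $(TM)^* $... — here one uses \cref{ex:TngDVB2} identifying the core of $T\mbb{E}\to TM$ with $\mbb{E}$ itself and the pairing-induced isomorphism of \cref{prop:BasVBCAfacts}(1).

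The main obstacle I expect is the well-definedness and axiom-checking on generators being genuinely tedious: one must be careful that the "tangent functor commutes with everything" heuristic is applied correctly, particularly in c2) and c3) where the map $\mbf{a}^*$ and the de Rham differential $d$ interact, and where the identification $T(\mbb{E}^*)\cong (T\mbb{E})^{*_{\!y}}$ must be pinned down with the right signs (compare the sign subtlety in \cref{ex:CotTVB}). Everything else is bookkeeping, but that bookkeeping is where errors would hide, so I would organize it around the single principle that $(\sigma\mapsto\sigma_T)$ and $(\sigma\mapsto\sigma_C)$ define, respectively, a morphism and a module-derivation compatible with all the structure maps of $\mbb{E}$.
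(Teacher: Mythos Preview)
Your approach is correct and essentially the same as the paper's, with one organizational difference: the paper packages the entire ``define on generators, check axioms there, extend by Leibniz'' step into a standalone appendix result, \cref{prop:CAconst}, which takes a generating subspace $W\subseteq\Gamma(T\mbb{E},TM)$ (here the span of the $\sigma_T,\sigma_C$) and guarantees a unique Courant extension once c1), c2), c3), c6) and $\mbf{a}\circ\mbf{a}^*=0$ are verified on $W$. Your inline plan reproves this content by hand; note in particular that you should explicitly include c6) on generators (the anchor intertwines brackets), since the extension argument uses it---this is the one ingredient not visible in your list c1)--c3). The $\mc{VB}$-Courant conclusion via the converse of \cref{prop:BasVBCAfacts} is exactly what the paper does.
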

\begin{proof}
The fact that $T\mbb{E}\to TM$ is a Courant algebroid follows  from  \cref{prop:CAconst} in the Appendix. The fact that $T\mbb{E}$ is a $\mc{VB}$-Courant algebroid follows  from \cref{prop:BasVBCAfacts}.
\end{proof}
\begin{remark}
Let $X$ be the degree 2 symplectic $NQ$ manifold corresponding to the Courant algebroid $\mbb{E}$ via the equivalence described in \cite{Severa:2005vla,Roytenberg:2002} (the correspondence is also summarized in \cref{rem:SupCour}).
%
The symplectic structure on $X$ defines a bundle isomorphism $TX\cong T^*[2]X$. Since $T^*[2]X$ carries a canonical symplectic structure, this isomorphism endows $TX$ with a symplectic structure \cite{Courant:1999ho}.
 Moreover, the tangent lift of the $Q$ structure to $TX$ is compatible with this symplectic structure, so $TX$ is a degree 2 symplectic $NQ$ manifold. Under the equivalence described in \cite{Severa:2005vla, Roytenberg:2002}, it corresponds to the Courant algebroid $T\mbb{E}$.
\end{remark}

\begin{remark}\label{rem:TangLiftTngProDef}
One may define the pairing and anchor map without reference to core and tangent lift sections. If 
$$\begin{tikzpicture}
\mmat{m1} at (-1.5,0) {\mbb{E} \\ M\\};
\path[->]
	(m1-1-1) edge (m1-2-1);
\mmat{m2} at (1.5,0) {TM\\ M\\};
\path[->]
	(m2-1-1) edge (m2-2-1);
		
\path[->,bend left= 15] (m1-1-1) edge node {$\mbf{a}$} (m2-1-1);
\end{tikzpicture}$$
 is the anchor map for $\mbb{E}\to M$, the anchor map for $T\mbb{E}\to TM$ is obtained by applying the tangent functor,
$$
\begin{tikzpicture}[cross line/.style={preaction={draw=white, -,
           line width=6pt}},]
\mmat{m1} at (-2.5,0) {T\mbb{E}&\mbb{E}\\ TM& M\\};
\path[->] (m1-1-1)	edge (m1-1-2)
				edge (m1-2-1);
\path[<-] (m1-2-2)	edge (m1-1-2)
				edge (m1-2-1);
\mmat{m2} at (2.5,0) {T^2M&TM\\ TM& M\\};
\path[->] (m2-1-1)	edge (m2-1-2)
				edge (m2-2-1);
\path[<-] (m2-2-2)	edge (m2-1-2)
				edge (m2-2-1);
				
\path[->] (m1) edge node {$\mbf{a}_{T\mbb{E}}$} (m2);
\path[->, bend left =30] (m1-1-2) edge node {$\mbf{a}$} (m2-1-2);
\path[->, bend left =30] (m1-1-1) edge [cross line] node {$d\mbf{a}$} (m2-1-1);
\end{tikzpicture}
$$

Meanwhile, as reviewed in \cref{ex:SecDualTng}, $(T\mbb{E})^{*_{\!y}}\cong T(\mbb{E}^*)$. The fibre metric on $\mbb{E}$ defines an isomorphism $Q_{\la\cdot,\cdot\ra}:\mbb{E}\to \mbb{E}^*$, which lifts to an isomorphism of double vector bundles $$dQ_{\la\cdot,\cdot\ra}:T\mbb{E}\to T(\mbb{E}^*)\cong (T\mbb{E})^{*_{\!y}}$$ via the tangent functor. This latter isomorphism defines the metric on the fibres of $T\mbb{E}\to TM$.

\end{remark}

\begin{example}
Suppose that $\mbb{E}=\mf{d}$ is a quadratic Lie algebra. Then $T\mbb{E}=T\mf{d}:=\mf{d}\ltimes\mf{d}$, where the bracket and pairing are given by 
\begin{align*}
[(\xi,\xi'),(\eta,\eta')]_{\mf{d}\ltimes\mf{d}}&=([\xi,\eta]_{\mf{d}},[\xi,\eta']_{\mf{d}}+[\xi',\eta]_{\mf{d}})\\
\la(\xi,\xi'),(\eta,\eta')\ra_{\mf{d}\ltimes\mf{d}}&=\la \xi,\eta'\ra_{\mf{d}}+\la \xi',\eta\ra_{\mf{d}}
\end{align*}
for $\xi,\xi',\eta,\eta'\in\mf{d}$.
\end{example}

\begin{example}[Tangent Lift of a Dirac structure (with support)]\label{ex:DirStrTngLf}
Suppose that $R\subseteq\mbb{E}$ is a Dirac structure with support on a submanifold $S\subset M$, then it follows immediately from \cref{eq:TEPair,eq:TEBrk} that $TR\subseteq T\mbb{E}$ is a $\mc{VB}$-Dirac structure with support on $TS\subseteq TM$.
\end{example}

\begin{example}\label{ex:BCGred}Bursztyn, Cavalcanti and Gualtieri \cite{Bursztyn:2007ko} described a reduction procedure for an exact Courant algebroid $\mbb{E}\to M$. 
One may interpret part of their construction as defining a $\mc{VB}$-Dirac structure  $L\subseteq T\mbb{E}$ (with support), as we shall now explain.

Let $\g$ be the Lie algebra of a Lie group $G$ acting on $M$. Suppose there is a given vector space $\mf{a}$ carrying a bilinear bracket $[\cdot,\cdot]:\mf{a}\times\mf{a}\to \mf{a}$, together with a surjective map $\mbf{a}:\mf{a}\to \g$ satisfying
\begin{itemize}
\item $[a_1,[a_2,a_3]]=[[a_1,a_2],a_3]+[a_2,[a_1,a_3]],\quad$ for $a_1,a_2,a_3\in\mf{a}$
\item $\mbf{a}([a_1,a_2])=[\mbf{a}(a_1),\mbf{a}(a_2)],\quad$  for $a_1,a_2\in\mf{a}$, and
\item $[a_1,a_2]=0,\quad$ for $a_1,a_2\in \on{ker}(\mbf{a})$.
\end{itemize}
The pair $(\mf{a},[\cdot,\cdot])$ is called an \emph{exact Courant algebra} over $\g$ in \cite{Bursztyn:2007ko}. 
We denote the abelian Lie algebra $\on{ker}(\mbf{a})\subseteq \mf{a}$ by $\h$, and note that $\g\cong \mf{a}/\h$ acts naturally on $\h$.
Suppose further that there are given maps $\tilde \rho:\mf{a}\to\Gamma(\mbb{E})$ and $\nu:\h\to\Omega^1(M)$ such that the following diagram commutes
\begin{equation}\label{eq:CalgebraAct}\begin{tikzpicture}
\mmat{m}{0&\h &\mf{a}&\g&0\\0& \Omega^1(M)&\Gamma(\mbb{E})&\mf{X}^1(M)&0\\};
\path[->]
	(m-1-1) edge (m-1-2)
	(m-1-2) edge (m-1-3)
		edge node {$\nu$} (m-2-2)
	(m-1-3) edge (m-1-4)
		edge node {$\tilde \rho$}(m-2-3)
	(m-1-4) edge (m-1-5)
		edge node {$\rho$} (m-2-4)
	(m-2-1) edge (m-2-2)
	(m-2-2) edge (m-2-3)
	(m-2-3) edge (m-2-4)
	(m-2-4) edge (m-2-5);
\end{tikzpicture}\end{equation}
where $\rho:\g\to\mf{X}^1(M)$ is defined by the $G$ action, and $\tilde\rho:\mf{a}\to\Gamma(\mbb{E})$ intertwines the brackets.

As explained in \cite{Bursztyn:2007ko}, axioms c1) and c2) for the Courant bracket (\cref{def:CA}) imply that sections $\sigma\in\Gamma(\mbb{E})$ act on $\mbb{E}$ via the `adjoint action': $$\sigma\to\Cour{\sigma,\cdot}.$$
Thus \labelcref{eq:CalgebraAct} defines an action of the Courant algebra $\mf{a}$ on $\mbb{E}$.
Suppose that $\h$ acts trivially, that is $$\Cour{\nu(\xi),\cdot}=0$$ for all $\xi\in\h$ (or equivalently $\nu$ takes values in closed forms, i.e. $\nu(\h)\subseteq \Omega_{cl}^1(M)$), and the induced action of the Lie algebra $\g=\mf{a}/\h$ on $\mbb{E}$ integrates to an action of $G$. In \cite{Bursztyn:2007ko}, such data is called an \emph{extension of the action of $\g$ on $M$ to $\mbb{E}$}.

Let $K\subset\mbb{E}$ be the distribution spanned by $\tilde\rho(\mf{a})$, and  $K^\perp$ its orthogonal. For simplicity we shall assume that $K$ and $K^\perp$ are both of constant rank. Suppose $P$ is a leaf of the distribution $\mbf{a}(K+K^\perp)\subseteq TM$ on which $G$ acts freely and properly, and suppose that $\nu(\h)$ has constant rank along $P$. 

Let $F\subseteq TP$ be the integrable distribution spanned by $\rho(\g)$. Let 
$$
\begin{tikzpicture}
\mmat{m1} at (-2,0) {L&K^\perp\\ F& P\\};
\path[->] (m1-1-1)	edge (m1-1-2)
				edge (m1-2-1);
\path[<-] (m1-2-2)	edge (m1-1-2)
				edge (m1-2-1);
\mmat{m2} at (2,0) {T\mbb{E}&\mbb{E}\\ TM& M\\};
\path[->] (m2-1-1)	edge (m2-1-2)
				edge (m2-2-1);
\path[<-] (m2-2-2)	edge (m2-1-2)
				edge (m2-2-1);
				
\draw (0,0) node {$\subseteq$};
\end{tikzpicture}
$$
be the double vector subbundle which is spanned by the $G$-invariant linear sections $$\{\sigma_T\mid\sigma\in\Gamma(K^\perp\rvert_P)^G\}$$ and the core sections $$\{\tau_C\mid \tau\in\Gamma(K)\}.$$  Since $K^\perp\rvert_P$ is a $G$-invariant bundle of constant rank, it follows that $$\on{rank}(L,F)=\on{rank}(K)+\on{rank}(K^\perp)=\on{rank}(\mbb{E})=\frac{1}{2}\on{rank}(T\mbb{E},TM)$$ where $\on{rank}(L,F)$ and $\on{rank}(T\mbb{E},TM)$ denote the rank of the vector bundles $L\to F$ and $T\mbb{E}\to TM$.
As we shall explain, the proof of \cite[Theorem~3.3]{Bursztyn:2007ko} shows that $L$ is a $\mc{VB}$-Dirac structure with support on $F\subseteq TP\subseteq TM$.

First of all, $L$ is isotropic: it is clear that $\la\sigma_T,\tau_C\ra=\la\sigma,\tau\ra_C=0$ for any $\sigma\in\Gamma(K^\perp\rvert_P)^G$ and $\tau\in\Gamma(K)$. Moreover, if $\xi\in\g$, $x\in P$ and $\sigma,\sigma'\in\Gamma(K^\perp\rvert_P)^G$ then $$\la\sigma_T,\sigma'_T\ra(\rho(\xi)\rvert_x)=\rho(\xi)\cdot \la\sigma,\sigma'\ra\rvert_x=0,$$ by $G$-invariance. Since $F$ is spanned by $\rho(\g)$ and $L\to F$ is a half-rank subbundle of $T\mbb{E}\to TM$, it follows that $L$ is Lagrangian.

Next, we show that $L$ is involutive. Suppose that $\xi\in\mf{a}$ and $\tilde\sigma,\tilde\sigma'\in\Gamma(\mbb{E})$  extend $G$-invariant sections of $K^\perp\rvert_P$. Then 
\begin{align*}
\la \tilde\rho(\xi),\Cour{\tilde\sigma,\tilde\sigma'}\ra&=-\la\Cour{\tilde\sigma,\tilde\rho(\xi)},\tilde\sigma'\ra+\mbf{a}(\tilde\sigma)\la \tilde\rho(\xi),\tilde\sigma'\ra\\
&=\la\Cour{\tilde\rho(\xi),\tilde\sigma},\tilde\sigma'\ra+\mbf{a}(\tilde\sigma')\la\tilde\rho(\xi),\tilde\sigma\ra+\mbf{a}(\tilde\sigma)\la\tilde\rho(\xi),\tilde\sigma'\ra
\end{align*}
which vanishes along $P$ since $\tilde\sigma\rvert_P$ is $G$ invariant and $\la\tilde\rho(\xi),\sigma\ra\rvert_P=\la\tilde\rho(\xi),\sigma'\ra\rvert_P=0$ (as explained in \cite{Bursztyn:2007ko}). Hence $\Cour{\sigma,\sigma'}\rvert_P$ is again a $G$-invariant section of $K^\perp$. It follows that $$\Cour{\sigma_T,\sigma'_T}\rvert_F=\Cour{\sigma,\sigma'}_T\rvert_F$$ is a section of $L$. Moreover, since $\tilde\sigma\rvert_P$ is $G$-invariant, $$\Cour{\tilde \rho(\xi)_C,\tilde\sigma_T}\rvert_F=\Cour{\rho(\xi),\tilde\sigma}_C\rvert_F=0.$$ But $\Gamma(K\rvert_P)$ is spanned by the sections $\tilde\rho(\xi)\rvert_P$ for $\xi\in\mf{a}$. It follows that $L$ is involutive.

Bursztyn, Cavalcanti and Gualtieri prove \cite[Theorem~3.3]{Bursztyn:2007ko} that $L\cap TK^\perp$ defines a foliation of $K^\perp$, and that the leaf space naturally carries the structure of a Courant algebroid. This suggests that $\mc{VB}$-Dirac structures are closely related to the reduction of Courant algebroids. We shall explore this in more detail in Section~\ref{sec:reduc}.
\end{example}

\chapter{pseudo-Dirac structures}\label{chp:TngProAndLieSub}


\section{pseudo-Dirac structures}\label{sec:LieSubAlg}
In this section, we relate $\mc{VB}$-Dirac structures in $T\mbb{E}$ to structures in $\mbb{E}$ itself, which we call \emph{pseudo-Dirac structures}.

Let 
\begin{equation}\label{eq:VBDirTng}
\begin{tikzpicture}
\mmat{m1} at (-2,0) {L&W\\ TM& M\\};
\path[->] (m1-1-1)	edge (m1-1-2)
				edge (m1-2-1);
\path[<-] (m1-2-2)	edge (m1-1-2)
				edge (m1-2-1);
\mmat{m2} at (2,0) {T\mbb{E}&\mbb{E}\\ TM& M\\};
\path[->] (m2-1-1)	edge (m2-1-2)
				edge (m2-2-1);
\path[<-] (m2-2-2)	edge (m2-1-2)
				edge (m2-2-1);
				
\draw (0,0) node {$\subseteq$};
\end{tikzpicture}
\end{equation}
be a Lagrangian double vector subbundle of $T\mbb{E}$, and denote by $q:T\mbb{E}\rvert_W\to W^*$  the morphism  \labelcref{eq:LagQuo} of double vector bundles defined in \cref{prop:clasLagDB}. Define the map $$\nabla:\Omega^0(M,W)\to \Omega^1(M, W^*)$$ by \begin{equation}\label{eq:NabDef}\nabla\sigma=q(\sigma_T), \quad\sigma\in\Gamma(W).\end{equation}

\begin{proposition}\label{lem:qIsNab}
The construction above defines a one-to-one correspondence between Lagrangian double vector subbundles \labelcref{eq:VBDirTng} and pairs $(W,\nabla)$ consisting of a subbundle $W\subseteq \mbb{E}$ together with a map $$\nabla:\Omega^0(M,W)\to \Omega^1(M,W^*)$$ satisfying
\begin{subequations}\label[pluralequation]{eq:pseudoConProp}
\begin{align} 
\label{eq:NabDerProp}\nabla(f\sigma)&=f\nabla\sigma+df\otimes \la\sigma,\cdot\ra,\\
\notag d\la\sigma,\tau\ra&=\la\nabla\sigma,\tau\ra+\la\sigma,\nabla\tau\ra,
\end{align}
\end{subequations}
for any $\sigma,\tau\in\Gamma(W)$  and  $f\in C^\infty(M)$.
%
%
%
%
\end{proposition}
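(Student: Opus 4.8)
The plan is to exploit the general correspondence already established in \cref{prop:clasLagDB} between Lagrangian double vector subbundles $L\subseteq\mbb{E}$ (here with $\mbb{E}$ replaced by the $\mc{VB}$-Courant algebroid $T\mbb{E}$) and surjective morphisms of double vector bundles $q:T\mbb{E}\rvert_W\to W^*$ satisfying the pairing compatibility \cref{eq:restPair}. Since the horizontal side bundle of $T\mbb{E}$ is $TM$ and its core is $\mbb{E}^*\cong\mbb{E}$ (identified via the metric, cf. \cref{prop:BasVBCAfacts}), the target double vector bundle of $q$ has side bundles $TM$ and $M$, and core $W^*$, so $q$ is determined by its action on the linear and core sections of $T\mbb{E}\rvert_W\to TM$. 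By \cref{ex:TngDVB2} these are exactly the tangent lifts $\sigma_T$ and core lifts $\sigma_C$ of sections $\sigma\in\Gamma(W)$ together with their $C^\infty(M)$-multiples, so the whole content of $q$ is encoded in the value $q(\sigma_T)=:\nabla\sigma\in\Gamma(T^*M\otimes W^*)=\Omega^1(M,W^*)$ on tangent lifts, once we pin down what $q$ does on core sections. The first step, then, is to show $q(\sigma_C)$ is forced: it lands in the core $W^*$ of the target, and by part (i) of \cref{prop:clasLagDB} the restriction $q\rvert_{\mbb{E}^*}:\mbb{E}^*\to W^*$ is dual to the inclusion $W\hookrightarrow\mbb{E}$, i.e. $q(\sigma_C)=\la\sigma,\cdot\ra\rvert_W$ for $\sigma\in\Gamma(W)\subseteq\Gamma(\mbb{E})$.

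Next I would translate the two constraints from \cref{prop:clasLagDB} — that $q$ is a morphism of double vector bundles, and that \cref{eq:restPair} holds — into the two displayed identities \cref{eq:pseudoConProp}. For the first: a morphism of double vector bundles is characterized by its compatibility with linear and core sections (as recalled in the remark after \cref{prop:BasVBCAfacts}), and the Leibniz-type rules \cref{eq:TCDer} for tangent and core lifts, namely $(f\sigma)_T=f_T\sigma_C+f_C\sigma_T$ and $(f\sigma)_C=f_C\sigma_C$, show that $q$ being $C^\infty(TM)$-linear on $\Gamma(T\mbb{E}\rvert_W,TM)$ forces
\[
\nabla(f\sigma)=q\big((f\sigma)_T\big)=f_T\,q(\sigma_C)+f_C\,q(\sigma_T)=df\otimes\la\sigma,\cdot\ra+f\,\nabla\sigma,
\]
which is precisely \cref{eq:NabDerProp}; conversely this Leibniz rule together with the fixed value on core sections is exactly what is needed for $q$ to extend to a well-defined bundle morphism. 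For the second: applying the pairing identity \cref{eq:restPair} to $x=\sigma_T$, $y=\tau_T$ and using the tangent-lift pairing formulas \cref{eq:TEPair} — specifically $\la\sigma_T,\tau_T\ra=\la\sigma,\tau\ra_T=d\la\sigma,\tau\ra$ as a linear function on $TM$, while $q_{T\mbb{E}/\mbb{E}}(\sigma_T)=\sigma$ — gives
\[
d\la\sigma,\tau\ra=\la q(\sigma_T),\tau\ra+\la q(\tau_T),\sigma\ra=\la\nabla\sigma,\tau\ra+\la\sigma,\nabla\tau\ra,
\]
the second identity of \cref{eq:pseudoConProp}. I would also check the other instances of \cref{eq:restPair} (with one or both arguments a core section) reduce to the already-known identity $q(\sigma_C)=\la\sigma,\cdot\ra\rvert_W$ and to $\la\sigma_C,\tau_C\ra=0$, so that no further conditions arise.

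Finally I would assemble the bijection. Given $L$, \cref{prop:clasLagDB} produces $q$, and the above shows $W$ together with $\nabla$ defined by \cref{eq:NabDef} satisfies \cref{eq:pseudoConProp}. Conversely, given $(W,\nabla)$, I would define $q$ on linear sections by $q(\sigma_T):=\nabla\sigma$, on core sections by $q(\sigma_C):=\la\sigma,\cdot\ra\rvert_W$, extend $C^\infty(M)$-linearly, and verify using \cref{eq:NabDerProp} that this is consistent (the Leibniz rule is exactly the cocycle condition making the extension well-defined, since $\{\sigma^i_C,\sigma^i_T\}$ is a local frame of $T\mbb{E}\to TM$ for a local frame $\{\sigma^i\}$ of $\mbb{E}$, and only the tangent lifts of a frame of $W$ plus the core lifts of a frame of $\mbb{E}$ matter over $W$), that it defines a surjective morphism of double vector bundles onto $W^*$ with core restriction dual to $W\hookrightarrow\mbb{E}$, and that \cref{eq:restPair} holds using the second identity of \cref{eq:pseudoConProp}; then \cref{prop:clasLagDB} returns a Lagrangian double vector subbundle $L$, and the two constructions are mutually inverse by construction. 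The main obstacle I anticipate is purely bookkeeping: carefully checking that the assignment on linear-plus-core sections really does glue to a globally well-defined double-vector-bundle morphism (i.e. that \cref{eq:NabDerProp} is precisely the compatibility needed, no more and no less), and keeping the identifications $\mbb{E}^*\cong\mbb{E}$, $(T\mbb{E})^{*_{\!y}}\cong T(\mbb{E}^*)$, and ``$df$ as a linear function on $TM$'' straight throughout; the conceptual work is already done by \cref{prop:clasLagDB} and the lift formulas \cref{eq:TCDer,eq:TEPair}.
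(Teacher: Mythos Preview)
Your proposal is correct and follows essentially the same approach as the paper's proof: both directions are reduced to \cref{prop:clasLagDB}, with \cref{eq:NabDerProp} obtained from the Leibniz rule \cref{eq:TCDer} for tangent lifts together with the fact that $q$ restricts on the core to the dual of the inclusion $W\hookrightarrow\mbb{E}$, and the metric compatibility obtained from \cref{eq:restPair} evaluated on $\sigma_T,\tau_T$ using \cref{eq:TEPair}. The paper's treatment of the converse is slightly terser than yours (it simply states that \cref{eq:NabDerProp} allows one to extend the core map uniquely via \cref{eq:NabDef}, then verifies \cref{eq:restPair} on the three types of pairs), but the content is the same.
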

\begin{proof}
Suppose that $\nabla$ is defined by \cref{eq:NabDef}, and $\sigma\in\Gamma(W)$. Then
\begin{subequations}\label{eq:NabIsQ1}\begin{align}
\notag\nabla(f\sigma)&=q((f\sigma)_T)\\
\label{eq:UseTDer}&= q(f_C\sigma_T+f_T\sigma_C)\\
\notag&=fq(\sigma_T)+df\;q(\sigma_C)\\
\label{eq:UseRestCore}&=f\nabla\sigma+df\otimes \la\sigma,\cdot\ra
\end{align}\end{subequations}
where \labelcref{eq:UseTDer} follows from \cref{eq:TCDer}, and the equality in \labelcref{eq:UseRestCore} holds since the restriction of $q$ to the core, $q\rvert_\mbb{E}:\mbb{E}\to W^*$ is dual to the inclusion $W\subseteq \mbb{E}$. 

Next, suppose that $\sigma,\tau\in\Gamma(W)$. Then by \cref{eq:restPair}, we have
\begin{subequations}\label{eq:NabIsQ2}
\begin{align}
\label{eq:NabPDer1a}d\la\sigma,\tau\ra&=\la\sigma_T,\tau_T\ra\\
\label{eq:NabPDer1b}&=\la q(\sigma_T),\tau\ra+\la\sigma,q(\tau_T)\ra\\
\notag&=\la \nabla\sigma,\tau\ra+\la\sigma,\nabla\tau\ra
\end{align}\end{subequations}
where \labelcref{eq:NabPDer1a} follows from \cref{eq:TEPair}, while \labelcref{eq:NabPDer1b} follows from \cref{eq:restPair}. Thus, we have shown that \cref{eq:pseudoConProp} holds.

Conversely, suppose that $\nabla:\Omega^0(M,W)\to\Omega^1(M, W^*)$ satisfies \cref{eq:pseudoConProp}. We will apply \cref{prop:clasLagDB} to construct the Lagrangian subbundle $L\subseteq T\mbb{E}$:

Since \cref{eq:NabDerProp} holds, the calculation \labelcref{eq:NabIsQ1} shows we can extend the core map $\mbb{E}\to W^*$ (dual to the inclusion $W\subseteq \mbb{E}^*$)
 uniquely to a map $q:T\mbb{E}\rvert_W\to W^*$ via \cref{eq:NabDef}.

Next we, show that \cref{eq:restPair} holds. The calculation \labelcref{eq:NabIsQ2} shows that  \cref{eq:restPair} holds when $x=\sigma_T$ and $y=\tau_T$. Next, the fact that $q$ extends the core map $$\mbb{E}\xrightarrow{x\to\la x,\cdot\ra} W^*$$ implies that \cref{eq:restPair} holds when $x=\sigma_T$ and $y=\tau_C$. Finally, \cref{eq:restPair} holds trivially when $x=\sigma_C$ and $y=\tau_C$.

Thus the assumptions of \cref{prop:clasLagDB} are satisfied, so there exists a unique Lagrangian subbundle $L\subseteq T\mbb{E}$ corresponding to the map of double vector bundles $q:T\mbb{E}\rvert_W\to W^*$. Moreover,  by construction, $\nabla$ satisfies \cref{eq:NabDef} for the map $q:T\mbb{E}\rvert_W\to W^*$ corresponding to this $L$.
\end{proof}

\begin{definition}\label{def:pseuCon}
Suppose $\mbb{E}\to M$ is a vector bundle with a bundle metric $\la\cdot,\cdot\ra$, and $W\subseteq\mbb{E}$ is a subbundle. A map $\nabla:\Omega^0(M,W)\to \Omega^1(M, W^*)$ satisfying 
\begin{subequations}\label[pluralequation]{eq:pseudoCon}
\begin{align} 
\label{eq:NabDer}\nabla(f\sigma)&=f\nabla\sigma+df\otimes \la\sigma,\cdot\ra,\\
\label{eq:NabPairDer}d\la\sigma,\tau\ra&=\la\nabla\sigma,\tau\ra+\la\sigma,\nabla\tau\ra,
\end{align}
\end{subequations}
 is called a \emph{pseudo-connection} for $W$. 
 
 Given a smooth map $\phi:N\to M$,  the \emph{pull-back pseudo-connection} $\phi^*\nabla$ for $\phi^*W\subseteq \phi^*\mbb{E}$ is defined by $$(\phi^*\nabla)_X\phi^*\sigma=\phi^*(\nabla_{d\phi(X)}\sigma),$$ where $\sigma\in\Gamma(W)$ and $X\in TN$.
\end{definition}

\begin{remark} Let $E\to M$ be a fibre bundle.
The idea of interpreting connections on $E$ as subbundles of $TE$ goes back to Ehresmann \cite{Ehresmann:1995ts}. In the special case where $E\to M$ is a vector bundle,  Pradines \cite{Pradines:1974tc} (Ehresmann's student) 
noted that there was a second vector bundle structure on $TE$, and studied certain double vector subbundles of it (providing applications to the study of jets). Later, Konieczna
and Urba\'{n}ski \cite{Konieczna:1999vh}  interpreted certain double vector subbundles of $TE$ as linear connections on $E\to M$, and explored the compatibility of the connection with a metric on $E\to M$
\end{remark}

For a subbundle $W\subseteq\mbb{E}$ of a Courant algebroid, modifying the Courant bracket using a pseudo-connection, 
\begin{equation}\label{eq:pcModBrk}[\sigma,\tau]:=\Cour{\sigma,\tau}-\mbf{a}^*\la \nabla\sigma,\tau\ra,\quad\sigma,\tau\in\Gamma(W),\end{equation} 
defines a  $\Gamma(\mbb{E})$-valued bracket on sections of $W$.

\begin{lemma}\label{lem:ModBrkProp1}
Suppose that $\mbb{E}\to M$ is a Courant algebroid and $\nabla$ is a pseudo-connection  for the subbundle $W\subseteq \mbb{E}$. Then the modified bracket \labelcref{eq:pcModBrk} is skew symmetric.
\end{lemma}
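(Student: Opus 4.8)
The claim is that $[\sigma,\tau] := \Cour{\sigma,\tau} - \mbf{a}^*\la\nabla\sigma,\tau\ra$ is skew symmetric. The plan is to compute $[\sigma,\tau] + [\tau,\sigma]$ and show it vanishes, using axiom c3) of \cref{def:CA} together with the second defining property \cref{eq:NabPairDer} of a pseudo-connection. The key observation is that c3) and \cref{eq:NabPairDer} have exactly complementary shapes: c3) says $\Cour{\sigma,\tau} + \Cour{\tau,\sigma} = \mbf{a}^*(d\la\sigma,\tau\ra)$, while \cref{eq:NabPairDer} says $d\la\sigma,\tau\ra = \la\nabla\sigma,\tau\ra + \la\sigma,\nabla\tau\ra$.

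First I would add the two expressions:
\begin{equation*}
[\sigma,\tau] + [\tau,\sigma] = \big(\Cour{\sigma,\tau} + \Cour{\tau,\sigma}\big) - \mbf{a}^*\la\nabla\sigma,\tau\ra - \mbf{a}^*\la\nabla\tau,\sigma\ra.
\end{equation*}
Here I should note that $\la\cdot,\cdot\ra$ restricted to $W$ is symmetric, so $\la\nabla\sigma,\tau\ra$ and $\la\nabla\tau,\sigma\ra$ are the two terms appearing on the right-hand side of \cref{eq:NabPairDer} (after using symmetry of the ambient metric to identify $\la\sigma,\nabla\tau\ra$ with $\la\nabla\tau,\sigma\ra$ as a $1$-form valued pairing). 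Then by c3), the parenthesized term equals $\mbf{a}^*(d\la\sigma,\tau\ra)$, and by \cref{eq:NabPairDer} together with linearity of $\mbf{a}^*$,
\begin{equation*}
\mbf{a}^*(d\la\sigma,\tau\ra) = \mbf{a}^*\la\nabla\sigma,\tau\ra + \mbf{a}^*\la\nabla\tau,\sigma\ra,
\end{equation*}
so the whole expression cancels and $[\sigma,\tau] + [\tau,\sigma] = 0$.

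The argument is essentially a one-line algebraic manipulation, so there is no real obstacle; the only point requiring a moment of care is the bookkeeping of where the ambient metric's symmetry is used to match the two $\nabla$-correction terms against the two summands of \cref{eq:NabPairDer} — that is, making sure the pairing $\la\nabla\sigma,\tau\ra \in \Omega^1(M)$ coincides with the contraction of $\nabla\sigma \in \Omega^1(M,W^*)$ against $\tau \in \Gamma(W)$, and similarly with the roles reversed, and that these are precisely the two terms in $d\la\sigma,\tau\ra$. Once that identification is spelled out, the cancellation is immediate and the proof is complete.
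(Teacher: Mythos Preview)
Your proof is correct and is essentially identical to the paper's own proof: both compute $[\sigma,\tau]+[\tau,\sigma]$, apply axiom c3) to replace $\Cour{\sigma,\tau}+\Cour{\tau,\sigma}$ by $\mbf{a}^*d\la\sigma,\tau\ra$, and then use \cref{eq:NabPairDer} to cancel against the two $\nabla$-correction terms.
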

\begin{proof}
 Axioms (c3) for the Courant bracket (see \cref{def:CA}) implies that, 
for any $\sigma,\tau\in\Gamma(W)$, we have
\begin{align*}
[\sigma,\tau]+[\tau,\sigma]&=\Cour{\sigma,\tau}+\Cour{\tau,\sigma}-\mbf{a}^*\la \nabla\sigma,\tau\ra-\mbf{a}^*\la \nabla\tau,\sigma\ra\\
&=\mbf{a}^*d\la\sigma,\tau\ra-\mbf{a}^*\la \nabla\sigma,\tau\ra-\mbf{a}^*\la \nabla\tau,\sigma\ra\\
&=0.
\end{align*}
Here the final equality follows from \cref{eq:NabPairDer}.
\end{proof}

Using the modified bracket, we define a `torsion' tensor for the pseudo-connection, \begin{equation}\label{eq:torsTens}T(\sigma,\tau,\upsilon)=\la\nabla_{\mbf{a}(\sigma)}\tau-\nabla_{\mbf{a}(\tau)}\sigma-[\sigma,\tau],\upsilon\ra,\quad\sigma,\tau,\upsilon\in\Gamma(W).\end{equation} 
\begin{proposition}
\Cref{eq:torsTens} defines a skew symmetric tensor. That is, $T\in\Gamma(\wedge^3W^*)$.
\end{proposition}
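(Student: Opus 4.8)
The claim is that $T$ defined by \cref{eq:torsTens} is $C^\infty(M)$-linear in each argument and totally skew. The plan is to verify these two properties separately. Skew symmetry in the first two slots $\sigma,\tau$ is essentially built in: the expression $\nabla_{\mbf{a}(\sigma)}\tau-\nabla_{\mbf{a}(\tau)}\sigma$ is manifestly antisymmetric in $\sigma\leftrightarrow\tau$, and $[\sigma,\tau]$ is antisymmetric by \cref{lem:ModBrkProp1}; hence $T(\sigma,\tau,\upsilon)=-T(\tau,\sigma,\upsilon)$ is immediate. So the real content is (a) tensoriality, i.e. $C^\infty(M)$-linearity in $\sigma$, in $\tau$, and in $\upsilon$; and (b) skew symmetry under the swap $\tau\leftrightarrow\upsilon$ (which, combined with the first antisymmetry, generates the full $S_3$-action, giving $T\in\Gamma(\wedge^3 W^*)$).

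Linearity in $\upsilon$ is obvious since $\upsilon$ enters only through the pairing $\la\,\cdot\,,\upsilon\ra$, which is $C^\infty(M)$-bilinear. For linearity in the first argument, I would compute $T(f\sigma,\tau,\upsilon)$ for $f\in C^\infty(M)$. Each of the three terms $\nabla_{\mbf{a}(f\sigma)}\tau$, $\nabla_{\mbf{a}(\tau)}(f\sigma)$, and $[f\sigma,\tau]$ picks up a derivative-of-$f$ correction: from $\mbf{a}(f\sigma)=f\mbf{a}(\sigma)$ there is no correction on the first; from \cref{eq:NabDer} one gets $\nabla_{\mbf{a}(\tau)}(f\sigma)=f\nabla_{\mbf{a}(\tau)}\sigma+(\mbf{a}(\tau)f)\la\sigma,\cdot\ra$; and from axiom c5) in \cref{def:CA} together with \cref{eq:pcModBrk} one gets $[f\sigma,\tau]=f[\sigma,\tau]-(\mbf{a}(\tau)f)\sigma+\la\sigma,\tau\ra\mbf{a}^*(df) - (\text{a term from }\mbf{a}^*\la\nabla(f\sigma),\tau\ra\text{ correction})$; pairing everything with $\upsilon$ and using that $\la\mbf{a}^*(df),\upsilon\ra=\la df,\mbf{a}(\upsilon)\ra$ while simultaneously tracking the $\mbf{a}^*\la df\otimes\la\sigma,\cdot\ra,\tau\ra=\la\sigma,\tau\ra\mbf{a}^*(df)$ term coming out of $[f\sigma,\tau]$ via \cref{eq:NabDer}, the anomalous terms should cancel in pairs, leaving $T(f\sigma,\tau,\upsilon)=fT(\sigma,\tau,\upsilon)$. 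By the antisymmetry in the first two slots, linearity in $\tau$ follows at once.

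The main obstacle is the remaining symmetry: showing $T(\sigma,\tau,\upsilon)=-T(\sigma,\upsilon,\tau)$, equivalently $T(\sigma,\tau,\upsilon)+T(\sigma,\upsilon,\tau)=0$. Writing this out, one needs
\[
\la\nabla_{\mbf{a}(\sigma)}\tau-\nabla_{\mbf{a}(\tau)}\sigma-[\sigma,\tau],\upsilon\ra
+\la\nabla_{\mbf{a}(\sigma)}\upsilon-\nabla_{\mbf{a}(\upsilon)}\sigma-[\sigma,\upsilon],\tau\ra=0.
\]
The terms $\la\nabla_{\mbf{a}(\sigma)}\tau,\upsilon\ra+\la\tau,\nabla_{\mbf{a}(\sigma)}\upsilon\ra$ combine, via the compatibility \cref{eq:NabPairDer} evaluated on $\mbf{a}(\sigma)$, into $\mbf{a}(\sigma)\la\tau,\upsilon\ra$. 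On the other side, $-\la[\sigma,\tau],\upsilon\ra-\la\tau,[\sigma,\upsilon]\ra = -\la\Cour{\sigma,\tau},\upsilon\ra-\la\tau,\Cour{\sigma,\upsilon}\ra + \la\mbf{a}^*\la\nabla\sigma,\tau\ra,\upsilon\ra + \la\tau,\mbf{a}^*\la\nabla\sigma,\upsilon\ra\ra$; the first two of these equal $-\mbf{a}(\sigma)\la\tau,\upsilon\ra$ by Courant axiom c2), canceling the $\nabla_{\mbf{a}(\sigma)}$ contribution, while $\la\mbf{a}^*\la\nabla\sigma,\tau\ra,\upsilon\ra=\la\la\nabla\sigma,\tau\ra,\mbf{a}(\upsilon)\ra=\la\nabla_{\mbf{a}(\upsilon)}\sigma,\tau\ra$, which precisely cancels the remaining $-\la\nabla_{\mbf{a}(\upsilon)}\sigma,\tau\ra$ (and symmetrically for the $\tau\leftrightarrow\upsilon$ partner). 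So the identity reduces to a bookkeeping exercise combining Courant axioms c2), c3) and the pseudo-connection axiom \cref{eq:NabPairDer}; the subtlety is keeping straight the two faces of $\mbf{a}^*$ (as a map $T^*M\to\mbb{E}^*$ versus the composition with $\la\cdot,\cdot\ra$ landing in $\mbb{E}$, as flagged in the footnote to \cref{def:CA}) and making sure each pairing is interpreted consistently. Once both the swap-antisymmetry $\tau\leftrightarrow\upsilon$ and the tensoriality are in hand, totality of the skew symmetry follows because $(12)$ and $(23)$ generate $S_3$, so $T\in\Gamma(\wedge^3W^*)$.
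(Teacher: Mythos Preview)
Your proposal is correct, but it takes a somewhat different route from the paper. The paper first proves that $T$ is \emph{cyclic}, $T(\sigma,\tau,\upsilon)=T(\upsilon,\sigma,\tau)$, by a direct computation using axiom~c2) and \cref{eq:NabPairDer}; combined with the (12)-antisymmetry (from \cref{lem:ModBrkProp1}) this gives total skew symmetry, and then tensoriality in all slots follows for free from the manifest tensoriality in the last slot. You instead establish (12)-antisymmetry, prove tensoriality in $\sigma$ by a direct $f\sigma$-computation (the $\la\sigma,\tau\ra\mbf{a}^*df$ terms from c5) and from $\nabla(f\sigma)$ do indeed cancel, leaving $[f\sigma,\tau]=f[\sigma,\tau]-(\mbf{a}(\tau)f)\sigma$, whose extra term cancels against the one from $\nabla_{\mbf{a}(\tau)}(f\sigma)$), and then prove (23)-antisymmetry directly. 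Both arguments rest on the same ingredients---axiom~c2) and \cref{eq:NabPairDer}---but the paper's organization is more economical: by establishing cyclicity first it avoids the explicit $f\sigma$-Leibniz check entirely. Your approach has the minor advantage of making the role of each axiom more transparent at each step.
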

\begin{proof}
First we show that $T$ is cyclic in its arguments. Combining \cref{eq:pcModBrk,eq:torsTens} yields
\begin{align*}
T(\sigma,\tau,\upsilon)&=\la\nabla_{\mbf{a}(\sigma)}\tau-\nabla_{\mbf{a}(\tau)}\sigma-[\sigma,\tau],\upsilon\ra\\
&=\la\nabla_{\mbf{a}(\sigma)}\tau-\nabla_{\mbf{a}(\tau)}\sigma-\Cour{\sigma,\tau} +\mbf{a}^*\la\nabla\sigma,\tau\ra,\upsilon\ra\\
&=\la\nabla_{\mbf{a}(\sigma)}\tau-\nabla_{\mbf{a}(\tau)}\sigma+\mbf{a}^*\la\nabla\sigma,\tau\ra,\upsilon\ra  -\mbf{a}(\sigma)\la\tau,\upsilon\ra+\la\tau,\Cour{\sigma,\upsilon}\ra,
\end{align*}
where we used axiom (c2) for the Courant bracket in the last line. Hence
\begin{align*}
T(\sigma,\tau,\upsilon)
&=\la\nabla_{\mbf{a}(\sigma)}\tau-\nabla_{\mbf{a}(\tau)}\sigma,\upsilon\ra+\la\nabla_{\mbf{a}(\upsilon)}\sigma,\tau\ra  -\mbf{a}(\sigma)\la\tau,\upsilon\ra+\la\tau,\Cour{\sigma,\upsilon}\ra\\
&=\la\nabla_{\mbf{a}(\sigma)}\tau-\nabla_{\mbf{a}(\tau)}\sigma,\upsilon\ra+\la\nabla_{\mbf{a}(\upsilon)}\sigma,\tau\ra  -\la\nabla_{\mbf{a}(\sigma)}\tau,\upsilon\ra-\la\tau,\nabla_{\mbf{a}(\sigma)}\upsilon\ra+\la\tau,\Cour{\sigma,\upsilon}\ra\\
&=\la\nabla_{\mbf{a}(\upsilon)}\sigma-\nabla_{\mbf{a}(\sigma)}\upsilon+\Cour{\sigma,\upsilon},\tau\ra-\la\nabla_{\mbf{a}(\tau)}\sigma,\upsilon\ra,
\end{align*}
where we used \cref{eq:NabPairDer} in the second line. Finally, using \cref{eq:pcModBrk} again, we get
\begin{align*}
T(\sigma,\tau,\upsilon)
&=\la\nabla_{\mbf{a}(\upsilon)}\sigma-\nabla_{\mbf{a}(\sigma)}\upsilon+[\sigma,\upsilon],\tau\ra\\
&=\la\nabla_{\mbf{a}(\upsilon)}\sigma-\nabla_{\mbf{a}(\sigma)}\upsilon-[\upsilon,\sigma],\tau\ra\\
&=T(\upsilon,\sigma,\tau),
\end{align*}
where we used \cref{lem:ModBrkProp1} to obtain the second equality.
 
\Cref{lem:ModBrkProp1} implies that $T$ is skew symmetric in the first two arguments, and since it is also cyclic, it must be totally skew symmetric. Moreover, it is manifestly tensorial in the last argument, and hence tensorial in all its arguments.
\end{proof}

\begin{lemma}\label{lem:ModBrkProp2}
Suppose that $\mbb{E}\to M$ is a Courant algebroid and $\nabla$ is a pseudo-connection  for the subbundle $W\subseteq \mbb{E}$. Let $L\subseteq T\mbb{E}$ be the  Lagrangian double vector subbundle corresponding to $(W,\nabla)$ via \cref{lem:qIsNab}.
The modified bracket \labelcref{eq:pcModBrk} takes values in $\Gamma(W)$ if and only if $$\Cour{\Gamma_l(L,E),\Gamma_C(L,E)}\subseteq\Gamma(L,E).$$

\end{lemma}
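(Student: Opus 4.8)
The plan is to translate the condition ``the modified bracket \labelcref{eq:pcModBrk} takes values in $\Gamma(W)$'' into a statement about the Courant bracket of sections of $L$ inside $T\mbb{E}$, using the dictionary of \cref{lem:qIsNab} together with the description of the Courant bracket on $W$ given in \cref{prop:qIndBrk}. First I would fix $\sigma,\tau\in\Gamma(W)$ and examine the tangent lifts $\sigma_T,\tau_T\in\Gamma_l(T\mbb{E},\mbb{E})$, which by construction of $L$ are \emph{not} in general sections of $L$; rather, the Lagrangian subbundle $L$ is spanned by the linear sections $\sigma_T-(\text{something built from }q(\sigma_T)=\nabla\sigma)$ and the core sections $\tau_C$ for $\tau\in\Gamma(W)$. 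More precisely, following the proof of \cref{prop:qIndBrk}, I choose for each $\sigma\in\Gamma(W)$ a linear section $\sigma'\in\Gamma_l((T\mbb{E})_C,\mbb{E})$ with $q(\sigma')=q(\sigma_T)=\nabla\sigma$, so that $\tilde\sigma:=\sigma_T-\sigma'\in\Gamma_l(L,\mbb{E})$, and similarly $\tilde\tau$. Then $\Cour{\tilde\sigma,\tilde\tau}$ is computed exactly as in \cref{prop:qIndBrk}, and by \cref{eq:TEBrk} together with \cref{eq:ECalmostideal} one finds
\begin{equation*}
q_{T\mbb{E}/\mbb{E}}\Cour{\tilde\sigma,\tilde\tau}=[\sigma,\tau]_{\mbb{E}},\qquad q_{T\mbb{E}/\mbb{E}}\Cour{\tilde\sigma,\tilde\tau}-\mbf{a}'^*\la q(\tilde\sigma),\tau\ra=[\sigma,\tau],
\end{equation*}
where $[\cdot,\cdot]$ on the right is the modified bracket \labelcref{eq:pcModBrk} and $[\cdot,\cdot]_{\mbb{E}}$ is the Courant bracket restricted (so that these two differ by the $\mbf{a}'^*$-term).

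The key observation is then the following: $L$ is spanned over $C^\infty(\mbb{E})$ by the linear sections $\tilde\sigma$ and the core sections $\tau_C$ (for $\sigma,\tau\in\Gamma(W)$), because its core is $\ann(W)$ by \cref{prop:VBDirIsLaVB} and $\tau_C$ ranges over lifts of sections of $W$ modulo that core — wait, more carefully: the core sections of $L$ are precisely $\{\tau_C:\tau\in\Gamma(W)\}$ viewed inside $(T\mbb{E})_C\cong\mbb{E}$ with $\mbb{E}$ pulled back, and the linear sections are spanned by the $\tilde\sigma$'s. By \cref{eq:VBCALinBrk2} applied to $T\mbb{E}$, $\Cour{\Gamma_l(T\mbb{E},\mbb{E}),\Gamma_C(T\mbb{E},\mbb{E})}\subseteq\Gamma_C(T\mbb{E},\mbb{E})$ automatically; the content is whether it lands in $\Gamma_C(L,\mbb{E})$, equivalently (identifying core sections of $T\mbb{E}$ with $\Gamma(\mbb{E})$, core sections of $L$ with $\Gamma(W)$) whether $\Cour{\tilde\sigma,\tau_C}$ corresponds to a section of $W$. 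Using \cref{eq:TEBrk}, $\Cour{\sigma_T,\tau_C}=\Cour{\sigma,\tau}_C$, and the correction term $\Cour{\sigma',\tau_C}$ is computed via \cref{eq:VBCALinBrk3} and the explicit formula for $\sigma'$: one gets that $\Cour{\tilde\sigma,\tau_C}$, read as an element of $\Gamma(\mbb{E})$, equals precisely the modified bracket $[\sigma,\tau]$ from \labelcref{eq:pcModBrk} — the $\mbf{a}'^*$-term arising exactly from $\Cour{\sigma',\tau_C}$ through axiom (c5) / the core-pairing identity \cref{eq:CorePairing}. Hence $\Cour{\tilde\sigma,\tau_C}\in\Gamma(L,\mbb{E})$ if and only if $[\sigma,\tau]\in\Gamma(W)$, which is the claim.

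The remaining step is to check that it suffices to test the condition $\Cour{\Gamma_l(L,\mbb{E}),\Gamma_C(L,\mbb{E})}\subseteq\Gamma(L,\mbb{E})$ on sections of the special form $\tilde\sigma$ and $\tau_C$ — i.e. that $C^\infty$-linearity of the failure term lets us reduce to a spanning set. This is routine: by \cref{eq:NabDer} and the Leibniz rule (c4) for the Courant bracket, both sides of the putative inclusion are well-behaved under multiplication by $f\in C^\infty(M)$ in the appropriate slot (the ``failure'' $[\sigma,f\tau]-f[\sigma,\tau]-(\mbf{a}(\sigma)f)\tau$ vanishes identically by direct expansion using (c4) and \cref{eq:NabDer}), so the condition for all of $\Gamma_l(L,\mbb{E})\times\Gamma_C(L,\mbb{E})$ follows from the condition on the generators. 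The main obstacle I anticipate is purely bookkeeping: correctly tracking the correction section $\sigma'$ and confirming that the $\mbf{a}'^*\la q(\tilde\sigma),\tau\ra$ term produced by $\Cour{\sigma',\tau_C}$ matches the $-\mbf{a}^*\la\nabla\sigma,\tau\ra$ term in \labelcref{eq:pcModBrk} — this is essentially the same computation already carried out in the proof of \cref{prop:qIndBrk}, so I would cite that proof rather than redo it, and the rest is a clean ``if and only if''.
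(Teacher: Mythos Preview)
There is a genuine gap: you have misidentified the core of $L$. By \cref{prop:VBDirIsLaVB}(2) the core of a $\mc{VB}$-Dirac structure over side bundle $W\subseteq V$ is $\ann(W)\subseteq V^*$. In the present case $V=\mbb{E}$, and under the metric identification $V^*\cong\mbb{E}$ this becomes $W^\perp$, \emph{not} $W$. Hence the core sections $\Gamma_C(L,E)$ (here $E=TM$) are the $\gamma_C$ for $\gamma\in\Gamma(W^\perp)$, not the $\tau_C$ for $\tau\in\Gamma(W)$. Your sections $\tau_C$ with $\tau\in\Gamma(W)$ are in general not sections of $L$ at all, so computing $\Cour{\tilde\sigma,\tau_C}$ and asking whether it lies in $\Gamma(L)$ tests nothing about the inclusion $\Cour{\Gamma_l(L,E),\Gamma_C(L,E)}\subseteq\Gamma(L,E)$.

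This error also propagates into your second claim: the identity ``$\Cour{\tilde\sigma,\tau_C}$, read as an element of $\Gamma(\mbb{E})$, equals $[\sigma,\tau]$'' is false. The paper instead takes $\gamma\in\Gamma(W^\perp)$, notes that $\Cour{\tilde\sigma,\gamma_C}$ is a core section (by \cref{eq:VBCALinBrk2}), and tests membership in the Lagrangian $L$ by pairing with $\tau_T$ for $\tau\in\Gamma(W)$ (the pairing with core sections and with the $\tau'$ part of $\tilde\tau$ vanish automatically). A direct computation using \cref{eq:restPair}, \cref{eq:TEBrk}, the definition $q(\sigma_T)=\nabla\sigma$, and axiom~(c2) gives
\[
\la\Cour{\tilde\sigma,\gamma_C},\tau_T\ra=-\la\gamma,[\sigma,\tau]\ra,
\]
which vanishes for all $\gamma\in\Gamma(W^\perp)$ and $\tau\in\Gamma(W)$ precisely when $[\sigma,\tau]\in\Gamma(W)$. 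Your overall strategy (build $\tilde\sigma=\sigma_T-\sigma'$ and reduce to generators) is sound; the fix is to pair the bracket with the correct core section $\gamma_C$, $\gamma\in\Gamma(W^\perp)$, rather than trying to read it off directly against $\tau_C$ with $\tau\in\Gamma(W)$.
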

\begin{proof}

Given $\sigma,\tau\in\Gamma(W)$, suppose $\tilde\sigma\in\Gamma_l(L,E)$ is $q_{L/W}$-related to $\sigma$. Since the core of $L$ is  $W^\perp$, any core section of $L\to E$ is of the form $\gamma_C\in\Gamma_C(L,E)$ for $\gamma\in\Gamma(W^\perp)$. Note that $\Cour{\tilde\sigma,\gamma_C}\in\Gamma(L,E)$ if and only if $\la\Cour{\tilde\sigma,\gamma_C},\tau_T\ra=0$. But
\begin{subequations}\label{eq:LinCoreInv}
\begin{align}
\label{eq:Wperp1}\la\Cour{\tilde\sigma,\gamma_C},\tau_T\ra&=\la q(\Cour{\tilde \sigma,\gamma_C},\tau\ra\\
\notag&=\la q(\Cour{\sigma,\gamma}_C+\Cour{\tilde\sigma-\sigma_T,\gamma_C}),\tau\ra\\
\notag&=\la \Cour{\sigma,\gamma}+\Lied_{\mbf{a}(\gamma)_C}q(\sigma_T-\tilde\sigma),\tau\ra\\
\label{eq:Wperp2}&=\la \Cour{\sigma,\gamma}+\Lied_{\mbf{a}(\gamma)_C}q(\sigma_T),\tau\ra\\
\notag&=\la \Cour{\sigma,\gamma}+\nabla_{\mbf{a}(\gamma)}\sigma,\tau\ra\\
\notag&=\la \gamma,-\Cour{\sigma,\tau}+\mbf{a}^*\la\nabla\sigma,\tau\ra\ra\\
\notag&=-\la \gamma,[\sigma,\tau]\ra
\end{align}\end{subequations}
where \labelcref{eq:Wperp1} follows from  \cref{eq:VBCALinP2,eq:restPair}, and the equality \labelcref{eq:Wperp2} holds since $\tilde\sigma\in\Gamma(L,E)$. So $\Cour{\Gamma_l(L,E),\Gamma_C(L,E)}\subseteq\Gamma(L,E)$ if and only if \labelcref{eq:pcModBrk} is a well defined bracket on $\Gamma(W)$.
\end{proof}

\begin{proposition}\label{lem:PsiProp}
Suppose that $\nabla$ is a pseudo-connection  for the subbundle $W\subseteq \mbb{E}$, and that the modified bracket \labelcref{eq:pcModBrk} takes values in $\Gamma(W)$. 
Let $L\subseteq T\mbb{E}$ be the Lagrangian double vector subbundle corresponding to $(W,\nabla)$ via \cref{lem:qIsNab}.
\begin{itemize}
\item
The following expression: \begin{equation*}\begin{split}
\Psi(\sigma,\tau,\upsilon)=&\la[\sigma,\tau],\nabla\upsilon\ra+\la[\upsilon,\sigma],\nabla\tau\ra+\la[\tau,\upsilon],\nabla\sigma\ra\\
&+\iota_{\mbf{a}(\sigma)}d\la\nabla\tau,\upsilon\ra+\iota_{\mbf{a}(\upsilon)}d\la\nabla\sigma,\tau\ra+\iota_{\mbf{a}(\tau)}d\la\nabla\upsilon,\sigma\ra\\
&+d T(\sigma,\tau,\upsilon),
\end{split}\end{equation*}
for $\sigma,\tau,\upsilon\in\Gamma(W)$, defines a skew symmetric tensor on $W$. That is,  $$\Psi\in\Omega^1(M, \wedge^3 W^*).$$
\item $L$ is a Dirac structure if and only if $\Psi=0$.
\end{itemize}
\end{proposition}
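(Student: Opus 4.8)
The statement has two parts: that $\Psi$ is a tensor, and that $L$ is Dirac iff $\Psi$ vanishes. I would prove the second part first and extract the tensoriality as a byproduct. Recall from \cref{lem:qIsNab} that $L$ is already Lagrangian, so by the definition of a Dirac structure (with support) it is Dirac precisely when $L\subseteq T\mbb{E}$ is involutive with respect to the Courant bracket on $T\mbb{E}\to TM$. Since $L$ is a double vector subbundle and $T\mbb{E}$ is a $\mc{VB}$-Courant algebroid, $\Gamma(L,E)$ is spanned over $C^\infty(E)$ by linear and core sections, and by \cref{eq:CBlin} the only brackets that need checking are $\Cour{\Gamma_l(L,E),\Gamma_l(L,E)}$ and $\Cour{\Gamma_l(L,E),\Gamma_C(L,E)}$; the latter is already controlled — it lies in $\Gamma(L,E)$ by \cref{lem:ModBrkProp2}, using the hypothesis that the modified bracket \labelcref{eq:pcModBrk} takes values in $\Gamma(W)$. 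So the crux is the linear–linear bracket.

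\textbf{Key computation.} Given $\sigma,\tau\in\Gamma(W)$ with lifts $\tilde\sigma,\tilde\tau\in\Gamma_l(L,E)$ (which exist by \cref{prop:clasLagDB} and \cref{prop:qIndBrk}), I would compute $\la\Cour{\tilde\sigma,\tilde\tau},\tilde\upsilon\ra$ for a third lifted section $\tilde\upsilon$. Using the linearity of the pairing (\cref{eq:VBCALinP1,eq:VBCALinP2}), this function on $E$ decomposes into a fibrewise-constant part and a fibrewise-linear part; the fibrewise-constant part is governed by \cref{prop:TngProCA} applied to the tangent lifts $\sigma_T,\tau_T$, while the fibrewise-linear part $q_{T^*E/E^*}d\la\Cour{\tilde\sigma,\tilde\tau},\tilde\upsilon\ra$ — which measures the failure of $\Cour{\tilde\sigma,\tilde\tau}$ to lie in $L$ — is what I want to identify with $\Psi(\sigma,\tau,\upsilon)$. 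Concretely, writing $\tilde\sigma=\sigma_T-\sigma'$ with $\sigma'\in\Gamma_l(\mbb{E}_C,E)$ as in the proof of \cref{prop:qIndBrk} (so $q(\sigma')=q(\sigma_T)=\nabla\sigma$), I would expand $\Cour{\sigma_T-\sigma',\tau_T-\tau'}$ using the Leibniz axioms (c4), (c5), the pairing axiom (c2), the tangent-lift formulas \cref{eq:TEPairBrk}, and \cref{eq:ECalmostideal}. The terms $\la[\sigma,\tau],\nabla\upsilon\ra$ and cyclic permutations arise from the cross terms $\Cour{\sigma_T,\tau'}$ etc.; the terms $\iota_{\mbf{a}(\sigma)}d\la\nabla\tau,\upsilon\ra$ and permutations come from pushing derivatives through (c2); and the $dT$ term emerges after using the torsion identity \labelcref{eq:torsTens} to re-express $\nabla_{\mbf{a}(\sigma)}\tau-\nabla_{\mbf{a}(\tau)}\sigma$ in terms of $[\sigma,\tau]$ and $T$. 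Then $L$ is involutive iff this linear part vanishes for all $\sigma,\tau,\upsilon$, i.e. iff $\Psi=0$.

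\textbf{Tensoriality.} Since $\Psi(\sigma,\tau,\upsilon) = q_{T^*E/E^*}d\la\Cour{\tilde\sigma,\tilde\tau},\tilde\upsilon\ra$ is, up to the already-tensorial pieces, the obstruction to $\Cour{\tilde\sigma,\tilde\tau}\in\Gamma(L,E)$, and since that obstruction depends only on the values of $\sigma,\tau,\upsilon$ (the non-tensorial terms in $\Cour{f\tilde\sigma,\tilde\tau}$ land in $\Gamma(L,E)$ because $L$ is a subbundle closed under multiplication by $C^\infty(M)$-functions and under $\mbf{a}^*d$ of such — here one uses \cref{lem:ModBrkProp2} and the hypothesis), $\Psi$ is $C^\infty(M)$-linear in each slot. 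Skew-symmetry in the first two arguments follows from \cref{lem:ModBrkProp1} (skew-symmetry of the modified bracket) together with the observation that replacing $\Cour{\cdot,\cdot}$ by its skew part changes $\la\Cour{\tilde\sigma,\tilde\tau},\tilde\upsilon\ra$ only by $\tfrac12\mbf{a}(\tilde\upsilon)\la\tilde\sigma,\tilde\tau\ra$, whose linear part cancels in $\Psi$; alternatively one checks directly from the displayed formula for $\Psi$ using \cref{eq:NabPairDer} that swapping $\sigma\leftrightarrow\tau$ reverses sign. Cyclic symmetry is then manifest from the cyclic form of the defining expression, so $\Psi\in\Omega^1(M,\wedge^3W^*)$. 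The main obstacle is the bookkeeping in the key computation: carefully tracking which terms are fibrewise-constant versus fibrewise-linear and confirming that every term of the claimed formula for $\Psi$ — and no others — appears, which requires disciplined use of \cref{eq:TEPairBrk,eq:ECalmostideal} and the Courant axioms (c2), (c4), (c5).
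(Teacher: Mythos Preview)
Your proposal is correct and follows the paper's approach: both identify $-\Psi(\sigma,\tau,\upsilon)$ with the linear function $\la\Cour{\tilde\sigma,\tilde\tau},\tilde\upsilon\ra$ on $E=TM$, obtain tensoriality from the observation that $\Gamma_l(L_C,E)$ (with $L_C=L\cap T\mbb{E}_C$) is preserved under Courant bracket with $\Gamma_l(L,E)$ (a consequence of \cref{lem:ModBrkProp2}), and then compute explicitly by writing $\tilde\sigma=\sigma_T+\sigma'$ with $\sigma'\in\Gamma_l(L_C,E)$ and $q(\sigma')=-\nabla\sigma$. One small correction: because $\tilde\sigma,\tilde\tau,\tilde\upsilon$ are all linear sections, $\la\Cour{\tilde\sigma,\tilde\tau},\tilde\upsilon\ra$ is already a purely fibrewise-linear function on $TM$ by \cref{eq:VBCALinP1,eq:VBCALinBrk1}, so there is no fibrewise-constant part to track.
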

\begin{proof}
Let $L_C=T\mbb{E}_C\cap L$, where $T\mbb{E}_C$ is the vertical subbundle of $T\mbb{E}\to TM$, as defined in \cref{prop:FreeVBDir}. Then \cref{lem:ModBrkProp2} implies that $\Cour{\Gamma_l(L,E),\Gamma_C(L,E)}\subseteq \Gamma_l(L_C,E)$, and hence $\Cour{\Gamma_l(L,E),\Gamma_l(L_C,E)}\subseteq \Gamma_l(L_C,E)$. Since $L$ is Lagrangian, we also have $\Cour{\Gamma_l(L_C,E),\Gamma_l(L,E)}\subseteq \Gamma_l(L_C,E)$. Therefore, for $\tilde\sigma,\tilde\tau\in\Gamma_l(L,E)$, the expression $q_{L/W}\Cour{\tilde\sigma,\tilde\tau}$ only depends on $q_{L/W}\circ\tilde\sigma$ and $q_{L/W}\circ\tilde\tau$. Consequently, if $\tilde\sigma,\tilde\tau,\tilde\upsilon\in\Gamma_l(L,E)$ are $q_{L/W}$ related to $\sigma,\tau,\upsilon\in\Gamma(W)$,  the expression 
\begin{equation}\label{eq:LinvLinTens}
-\la\Cour{\tilde\sigma,\tilde\tau},\tilde\upsilon\ra\in \Gamma_l(TM\times \mbb{R},TM)\cong\Omega^1(M)
\end{equation}
 only depends on $\sigma,\tau,\upsilon\in\Gamma(W)$. Hence \labelcref{eq:LinvLinTens} defines a tensor  $\Psi\in\Omega^1(M, \wedge^3 W^*)$ measuring the involutivity of $L$, which we shall now calculate directly. To simplify notation, we let $\sigma'=\tilde\sigma-\sigma_T$, $\tau'=\tilde\tau-\tau_T$ and $\upsilon'=\tilde\upsilon-\upsilon_T$, and remark that $\sigma',\tau',\upsilon'\in\Gamma_l(L_C,E)$. Moreover, since $q(\tilde\sigma)=q(\tilde\tau)=q(\tilde\upsilon)=0$, by \cref{eq:NabDef} we have
\begin{align}\label{eq:qsig'}
q(\sigma')&=-\nabla(\sigma), &
q(\tau')&=-\nabla(\tau),&
q(\upsilon')&=-\nabla(\upsilon).
\end{align}
Plugging these into $\Psi$, we get
\begin{align}
-\Psi(\sigma,\tau,\upsilon)
=&\la\Cour{\tilde\sigma,\tilde\tau},\upsilon'+\upsilon_T\ra\notag\\
=&-\la[\sigma,\tau],\nabla\upsilon\ra+\la\Cour{\sigma,\tau}_T+\Cour{\sigma_T,\tau'}+\Cour{\sigma',\tau_T}+\Cour{\sigma',\tau'},\upsilon_T\ra\notag\\
=&-\la[\sigma,\tau],\nabla\upsilon\ra+\la\Cour{\sigma,\tau}_T,\upsilon_T\ra-\la\Cour{\sigma,\upsilon}_T,\tau'\ra+\la\Cour{\tau,\upsilon}_T,\sigma'\ra\notag\\
&+\mbf{a}(\sigma_T)\la\tau',\upsilon_T\ra-\mbf{a}(\tau_T)\la\sigma',\upsilon_T\ra+\mbf{a}(\upsilon_T)\la\sigma',\tau_T\ra+\la\Cour{\sigma',\tau'},\upsilon_T\ra. \label{eq:PsiLstLine1}
\end{align}
To obtain the last line we rearranged terms using axioms (c2) and (c3) for a Courant algebroid (see \cref{def:CA}). Using (c2) and (c3) again, we notice that $\la\Cour{\sigma',\tau'},\upsilon_T\ra=\la\mbf{a}^*d\la\tau',\upsilon_T\ra,\sigma'\ra-\la\mbf{a}^*d\la\sigma',\upsilon_T\ra,\tau'\ra+\la\tau',\Cour{\upsilon_T,\sigma'}\ra$, and the third term vanishes since $\Cour{\upsilon_T,\sigma'}\in\Gamma(T\mbb{E}_C,E)$. Next, using \cref{eq:restPair,eq:qsig'} notice that $\la\sigma',\upsilon_T\ra=\la\nabla\sigma,\upsilon\ra$ and hence $\mbf{a}^*d\la\sigma',\upsilon_T\ra=\mbf{a}^*\la\nabla\sigma,\upsilon\ra_T$.
Substituting the corresponding expressions for various permutations of $\sigma,\tau$ and $\upsilon$ into \cref{eq:PsiLstLine1}, we get
\begin{align*}
-\Psi(\sigma,\tau,\upsilon)=&-\la[\sigma,\tau],\nabla\upsilon\ra+\la\Cour{\sigma,\tau}_T,\upsilon_T\ra-\la\Cour{\sigma,\upsilon}_T,\tau'\ra+\la\Cour{\tau,\upsilon}_T,\sigma'\ra\\
&-\mbf{a}(\sigma_T)\la\nabla\tau,\upsilon\ra+\mbf{a}(\tau_T)\la\nabla\sigma,\upsilon\ra-\mbf{a}(\upsilon_T)\la\nabla\sigma,\tau_T\ra\\
&-\la\mbf{a}^*\la\nabla\tau,\upsilon\ra_T,\sigma'\ra+\la\mbf{a}^*\la\nabla\sigma,\upsilon\ra_T,\tau'\ra.
\end{align*}
Using the definition \labelcref{eq:pcModBrk} of the bracket, we get
\begin{align}
-\Psi(\sigma,\tau,\upsilon)=&-(\la[\sigma,\tau],\nabla\upsilon\ra+\la[\upsilon,\sigma],\nabla\tau\ra+\la[\tau,\upsilon],\nabla\sigma\ra)\notag\\
&+d\la[\sigma,\tau],\upsilon\ra+d\la\nabla_{\mbf{a}(\upsilon)}\sigma,\tau\ra\notag\\
&-\mbf{a}(\sigma_T)\la\nabla\tau,\upsilon\ra+\mbf{a}(\tau_T)\la\nabla\sigma,\upsilon\ra-\mbf{a}(\upsilon_T)\la\nabla\sigma,\tau\ra.\label{eq:PsiLstLine2}
\end{align}

Now $\mbf{a}(\sigma_T)\la\nabla\tau,\upsilon\ra=\Lied_{\mbf{a}(\sigma)}\la\nabla\tau,\upsilon\ra=d\la\nabla_{\mbf{a}(\sigma)}\tau,\upsilon\ra+\iota_{\mbf{a}(\sigma)}d\la\nabla\tau,\upsilon\ra$. Substituting the corresponding expression for various permutations of $\sigma,\tau$ and $\upsilon$ into \cref{eq:PsiLstLine2}, we get
%
%
\begin{equation}\label{eq:Psi_La}
\begin{split}
\Psi(\sigma,\tau,\upsilon)=&\la[\sigma,\tau],\nabla\upsilon\ra+\la[\upsilon,\sigma],\nabla\tau\ra+\la[\tau,\upsilon],\nabla\sigma\ra\\
&+\iota_{\mbf{a}(\sigma)}d\la\nabla\tau,\upsilon\ra+\iota_{\mbf{a}(\upsilon)}d\la\nabla\sigma,\tau\ra+\iota_{\mbf{a}(\tau)}d\la\nabla\upsilon,\sigma\ra\\
&+d T(\sigma,\tau,\upsilon),
\end{split}\end{equation}
\end{proof}

\begin{remark} If $W\subseteq\mbb{E}$ is quadratic, then a pseudo-connection is simply a metric connection on $W$. Using the identity $$d\la\nabla\tau,\upsilon\ra=\la R\tau,\upsilon\ra-\la\nabla\tau\wedge\nabla\upsilon\ra,$$ where $R\in\Omega^2(M,\mf{o}(W))$ is the curvature tensor, one may rewrite \cref{eq:Psi_La} as
\begin{equation}\label{eq:PsiMet}
\Psi(\sigma,\tau,\upsilon)=\iota_{\mbf{a}(\sigma)}\la R\tau,\upsilon\ra+\iota_{\mbf{a}(\upsilon)}\la R\sigma,\tau\ra+\iota_{\mbf{a}(\tau)}\la R\upsilon,\sigma\ra
+(\nabla T)(\sigma,\tau,\upsilon).
\end{equation}
So, in this case, $\Psi\in\Omega^1(M,\wedge^3 W^*)$ can be expressed entirely in terms of the curvature and torsion of the connection.
\end{remark}

\begin{remark} Suppose that $W\subseteq\mbb{E}$ is a subbundle carrying a pseudo-connection such that the modified bracket \labelcref{eq:pcModBrk} takes values in $\Gamma(W)$. The tensor $\Psi$ can be understood as an obstruction to the modified bracket \labelcref{eq:pcModBrk} satisfying the Jacobi identity. Indeed, for $\sigma,\tau,\upsilon\in\Gamma(W)$, we have
$$[\sigma,[\tau,\upsilon]]+[\tau,[\upsilon,\sigma]]+[\upsilon,[\sigma,\tau]]=-\mbf{a}^*\Psi(\sigma,\tau,\upsilon).$$

\end{remark}

\begin{definition}\label{def:LieSubalg}
Suppose $\mbb{E}\to M$ is a Courant algebroid.
A pair, $(W,\nabla)$, consisting of a subbundle $W\subseteq \mbb{E}$ together with pseudo-connection $\nabla$ for $W\subseteq \mbb{E}$ (cf. \cref{def:pseuCon}) is called a \emph{pseudo-Dirac structure} in $\mbb{E}$
if
\begin{itemize}
\item the modified bracket
\labelcref{eq:pcModBrk} takes values in $\Gamma(W)$, and
\item  the tensor $\Psi\in\Omega^1(M, \wedge^3W^*)$ defined by \begin{equation}\label{eq:Psi_L}
\begin{split}
\Psi(\sigma,\tau,\upsilon)=&\la[\sigma,\tau],\nabla\upsilon\ra+\la[\upsilon,\sigma],\nabla\tau\ra+\la[\tau,\upsilon],\nabla\sigma\ra\\
&+\iota_{\mbf{a}(\sigma)}d\la\nabla\tau,\upsilon\ra+\iota_{\mbf{a}(\upsilon)}d\la\nabla\sigma,\tau\ra+\iota_{\mbf{a}(\tau)}d\la\nabla\upsilon,\sigma\ra\\
&+d T(\sigma,\tau,\upsilon),
\end{split}\end{equation}
for $\sigma,\tau,\upsilon\in\Gamma(W)$ vanishes.
\end{itemize}
\end{definition}

The terminology in \cref{def:LieSubalg} will be justified by the following theorem, which summarizes many of the results above.
\begin{theorem}\label{thm:LieSubIsVBDir}
Suppose $\mbb{E}\to M$ is a Courant algebroid.
\begin{itemize}
\item If $(W,\nabla)$ is a pseudo-Dirac structure for $\mbb{E}$, then 
$$[\sigma,\tau]:=\Cour{\sigma,\tau}-\mbf{a}^*\la \nabla\sigma,\tau\ra,\quad\sigma,\tau\in\Gamma(W),$$
 defines a Lie algebroid bracket on $W$.
\item There is a one-to-one correspondence between $\mc{VB}$-Dirac structures 
\begin{equation}\label{eq:VBDirTngThm}
\begin{tikzpicture}
\mmat{m1} at (-2,0) {L&W\\ TM& M\\};
\path[->] (m1-1-1)	edge node {$q_{L/W}$} (m1-1-2)
				edge (m1-2-1);
\path[<-] (m1-2-2)	edge (m1-1-2)
				edge (m1-2-1);
\mmat{m2} at (2,0) {T\mbb{E}&\mbb{E}\\ TM& M\\};
\path[->] (m2-1-1)	edge (m2-1-2)
				edge (m2-2-1);
\path[<-] (m2-2-2)	edge (m2-1-2)
				edge (m2-2-1);
				
\draw (0,0) node {$\subseteq$};
\end{tikzpicture}
\end{equation} 
and pseudo-Dirac structures in $\mbb{E}$. 
Under this correspondence, the map $q_{L/W}:L\to W$ is a Lie algebroid morphism.
\end{itemize}

\end{theorem}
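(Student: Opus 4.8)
The plan is to leverage the machinery already built up in Chapter~\ref{chp:VBCour}, reducing everything to statements about $\mc{VB}$-Dirac structures in the tangent prolongation $T\mbb{E}$. The key observation is that \cref{lem:qIsNab} already establishes a bijection between Lagrangian double vector subbundles $L\subseteq T\mbb{E}$ (over $TM$, with side bundle $W$) and pairs $(W,\nabla)$ where $\nabla$ is a pseudo-connection. So what remains is to identify, on both sides of this bijection, the extra conditions that cut out $\mc{VB}$-Dirac structures on the one hand and pseudo-Dirac structures on the other, and then to read off the Lie algebroid bracket.

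First I would prove the bracket formula. Assuming $(W,\nabla)$ is a pseudo-Dirac structure, \cref{lem:ModBrkProp1} gives skew symmetry of $[\sigma,\tau]:=\Cour{\sigma,\tau}-\mbf{a}^*\la\nabla\sigma,\tau\ra$, and the first defining condition in \cref{def:LieSubalg} (via \cref{lem:ModBrkProp2}) guarantees $[\cdot,\cdot]$ takes values in $\Gamma(W)$; the second condition ($\Psi=0$) together with the remark following \cref{lem:PsiProp} gives the Jacobi identity, since $[\sigma,[\tau,\upsilon]]+\text{cyc}=-\mbf{a}^*\Psi(\sigma,\tau,\upsilon)=0$. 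The Leibniz rule follows from axiom (c4) for the Courant bracket combined with \cref{eq:NabDer}: expanding $[\sigma,f\tau]=\Cour{\sigma,f\tau}-\mbf{a}^*\la\nabla\sigma,f\tau\ra = f\Cour{\sigma,\tau}+\mbf{a}(\sigma)(f)\tau-f\mbf{a}^*\la\nabla\sigma,\tau\ra$, and $\mbf{a}$ restricted to $W$ serves as the anchor (one checks $\mbf{a}(W)\subseteq TM$ lands correctly because $\mbf{a}^*\la\nabla\sigma,\tau\ra$ lies in $\mbf{a}^*(T^*M)$, which has trivial image under $\mbf{a}$ by c6). Thus $(W,[\cdot,\cdot],\mbf{a}|_W)$ is a Lie algebroid.

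For the bijection, the point is to show that the $\mc{VB}$-Dirac condition on $L$ translates exactly into the two conditions of \cref{def:LieSubalg}. Since $L$ is automatically a Lagrangian double vector subbundle, being a $\mc{VB}$-Dirac structure amounts to involutivity of $L$ under the Courant bracket on $T\mbb{E}$. I would decompose involutivity according to the linear/core section decomposition: involutivity of $\Gamma_l(L,E)$ with $\Gamma_C(L,E)$ is, by \cref{lem:ModBrkProp2}, equivalent to the modified bracket taking values in $\Gamma(W)$; involutivity of $\Gamma_l(L,E)$ with itself is, by the computation in \cref{lem:PsiProp} (culminating in \cref{eq:LinvLinTens,eq:Psi_La}), equivalent to $\Psi=0$; and core-core brackets vanish automatically by \cref{eq:VBCALinBrk3}. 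Since linear and core sections span $\Gamma(L,E)$ as a $C^\infty(E)$-module and the anchor is linear, closure on these spanning sections implies full involutivity (as in the converse half of \cref{prop:BasVBCAfacts}). This establishes the correspondence. Finally, the statement that $q_{L/W}:L\to W$ is a Lie algebroid morphism: $L$ carries the Lie algebroid structure from \cref{prop:VBDirIsLaVB}(1) (as an $\mc{LA}$-vector bundle over $W$), and \cref{prop:qIndBrk} computes the bracket on $W$ induced via $q_{L/W}$ to be exactly $[\sigma,\tau]=q_{\mbb{E}/V}\Cour{\tilde\sigma,\tilde\tau}-\mbf{a}'^*\la q(\tilde\sigma),\tau\ra$, which upon unwinding the identification $V=\mbb{E}$, $V^*=$ core$=W^\perp$, and using \cref{eq:NabDef} to recognize $q(\tilde\sigma)$ in terms of $\nabla$, matches the formula $\Cour{\sigma,\tau}-\mbf{a}^*\la\nabla\sigma,\tau\ra$; hence $q_{L/W}$ intertwines brackets and anchors.

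\textbf{Main obstacle.} The genuinely delicate part is the bookkeeping in matching \cref{prop:qIndBrk}'s formula against \cref{eq:pcModBrk}: one must carefully track the sign conventions in the core identifications ($C_L=\ann(W)\subseteq V^*=\mbb{E}$, the dualities $*_x$ versus $*_y$), verify that the anchor $\mbf{a}':V^*\to E$ appearing there is indeed $\mbf{a}^*$ composed with the metric identification on $\mbb{E}$, and confirm that $\la q(\tilde\sigma),\tau\ra = \la\nabla\sigma,\tau\ra$ as a $1$-form. Everything else is an essentially formal reorganization of results already proven; the real content was front-loaded into \cref{lem:qIsNab}, \cref{lem:ModBrkProp2}, and \cref{lem:PsiProp}.
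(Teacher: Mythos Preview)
Your proposal is correct and follows essentially the same route as the paper, invoking the same chain of results (\cref{lem:qIsNab}, \cref{lem:ModBrkProp2}, \cref{lem:PsiProp}, \cref{prop:VBDirIsLaVB}, \cref{prop:qIndBrk}). The only organizational difference is that you verify the Lie algebroid axioms on $W$ directly (skew symmetry, Jacobi from $\Psi=0$, Leibniz), whereas the paper deduces them structurally: since $L$ is a $\mc{VB}$-Dirac structure, \cref{prop:VBDirIsLaVB} already gives that $W$ carries a Lie algebroid bracket for which $q_{L/W}$ is a morphism, and one then only needs to \emph{identify} that bracket via \cref{prop:qIndBrk}. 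Your direct verification is therefore slightly redundant, though not wrong.

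On your ``main obstacle'': the paper avoids most of the bookkeeping you anticipate by choosing the specific lifts $\tilde\sigma=\sigma_T$, $\tilde\tau=\tau_T$ in \cref{prop:qIndBrk}. With this choice, $q(\tilde\sigma)=q(\sigma_T)=\nabla\sigma$ holds \emph{by definition} (\cref{eq:NabDef}), and $q_{T\mbb{E}/\mbb{E}}\Cour{\sigma_T,\tau_T}=q_{T\mbb{E}/\mbb{E}}\Cour{\sigma,\tau}_T=\Cour{\sigma,\tau}$ by \cref{eq:TEBrk}; the identification of $\mbf{a}'^*$ with $\mbf{a}^*$ is then immediate from the description of the anchor of $T\mbb{E}$ in \cref{rem:TangLiftTngProDef}. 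So the matching is cleaner than your discussion suggests once the right lifts are chosen.
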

\begin{proof}
\Cref{lem:qIsNab,lem:PsiProp} establish a one-to-one correspondence between $\mc{VB}$-Dirac structures of the form \labelcref{eq:VBDirTngThm}, and pseudo-Dirac structures $(W,\nabla)$ in $\mbb{E}$. 

Now, suppose that $L\subseteq T\mbb{E}$ is a $\mc{VB}$-Dirac structure. \Cref{prop:VBDirIsLaVB} implies that $L$ is a $\mc{VB}$-Lie algebroid, and that there exists a unique Lie algebroid structure on $W\to M$ such that the map $q_{L/W}:L\to W$ is a Lie algebroid morphism. Let $$[\cdot,\cdot]:\Gamma(W)\times \Gamma(W)\to\Gamma(W)$$ denote the corresponding Lie bracket on sections of $W$.
For any sections $\sigma,\tau\in\Gamma(W)$, the sections $\sigma_T,\tau_T\in\Gamma(T\mbb{E})$ are $q_{L/W}$-related to $\sigma$ and $\tau$, respectively. Thus, by \cref{prop:qIndBrk},
\begin{align*}
[\sigma,\tau]&=q_{L/W}\Cour{\sigma_T,\tau_T}-\mbf{a}^*\la q\sigma_T,\tau_T\ra\\
&=q_{L/W}\Cour{\sigma,\tau}_T-\mbf{a}^*\la \nabla\sigma,\tau\ra\\
&=\Cour{\sigma,\tau}-\mbf{a}^*\la \nabla\sigma,\tau\ra,
\end{align*}
where  $q:L\to W^*$ is the map \labelcref{eq:LagQuo} defined by \cref{prop:clasLagDB}, and $\nabla$ is the pseudo-connection defined by \cref{eq:NabDef}.
Therefore \cref{eq:pcModBrk} defines a Lie algebroid structure on $W$, and $q_{L/W}:L\to W$ is a Lie algebroid morphism.
%
%
%
%
%
%
\end{proof}

\begin{remark}
Suppose $A$ is a Lie algebroid (with bracket $[\cdot,\cdot]$ and anchor $\mbf{a}$),  $\la\cdot,\cdot\ra$ is a metric on the fibres of $A$, and $\nabla$ is a metric connection on $A$. Then
$$\Cour{\sigma,\tau}:=[\sigma,\tau]+\mbf{a}^*\la \nabla\sigma,\tau\ra,\quad\sigma,\tau\in\Gamma(A),$$
 defines a Courant bracket on $A$ if and only if
\begin{itemize}
\item $\mbf{a}\circ\mbf{a}^*=0$ ($A$ acts with coisotropic stabilizers),
\item the `torsion' tensor \labelcref{eq:torsTens} is skew symmetric, and
\item $\mbf{a}^*\Psi=0$, (where $\Psi$  is defined  in terms of the curvature and torsion by \cref{eq:PsiMet}).
\end{itemize}
Indeed axiom (c3) for a Courant bracket (see \cref{def:CA}) holds since $\nabla$ is a metric connection, axiom (c2) holds if and only if the `torsion' tensor is skew symmetric, and axiom (c1) holds if and only if $\mbf{a}\circ\mbf{a}^*=0$ and $\mbf{a}^*\Psi=0$.

It is perhaps more natural to require that $\Psi=0$, in which case $(A,\nabla)$ is embedded as a pseudo-Dirac structure in the corresponding Courant algebroid.
\end{remark}

\begin{proposition}\label{prop:fltSec}
Suppose that $(W,\nabla)$ is a pseudo-Dirac structure  in the Courant algebroid $\mbb{E}$. If $\sigma,\tau\in\Gamma(W)$ satisfy $\nabla\sigma=\nabla\tau=0$  then 
\begin{itemize}
\item $\nabla\Cour{\sigma,\tau}=0$, and
\item $\Cour{\sigma,\tau}=[\sigma,\tau]$.
\end{itemize} That is, the `flat' sections of $W$ form a Lie algebra with respect to the Courant bracket.
\end{proposition}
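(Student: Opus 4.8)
The plan is to prove both bullet points simultaneously, since the second follows essentially for free once the first is established. The key observation is that the modified bracket \labelcref{eq:pcModBrk} reads $[\sigma,\tau]=\Cour{\sigma,\tau}-\mbf{a}^*\la\nabla\sigma,\tau\ra$, so when $\nabla\sigma=0$ we immediately get $[\sigma,\tau]=\Cour{\sigma,\tau}$; this disposes of the second bullet provided we already know (from \cref{def:LieSubalg}, since $(W,\nabla)$ is a pseudo-Dirac structure) that the modified bracket takes values in $\Gamma(W)$. So $\Cour{\sigma,\tau}=[\sigma,\tau]\in\Gamma(W)$, and the only real content is to show $\nabla\Cour{\sigma,\tau}=\nabla[\sigma,\tau]=0$.

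For the first bullet, I would compute $\nabla[\sigma,\tau]$ by pairing it against an arbitrary section $\upsilon\in\Gamma(W)$ and contracting with $\mbf{a}(\cdot)$ as needed, and invoke the vanishing of the tensor $\Psi$ from \cref{eq:Psi_L}. Since $\nabla\sigma=\nabla\tau=0$, every term in $\Psi(\sigma,\tau,\upsilon)$ containing $\nabla\sigma$, $\nabla\tau$, $\la\nabla\sigma,\tau\ra$, $\la\nabla\sigma,\upsilon\ra$, $\la\nabla\tau,\upsilon\ra$, etc. drops out; likewise by \cref{eq:torsTens} the torsion becomes $T(\sigma,\tau,\upsilon)=\la\nabla_{\mbf{a}(\upsilon)}\sigma-\nabla_{\mbf{a}(\tau)}\text{(wait—)}\dots$ — more carefully, $T(\sigma,\tau,\upsilon)=\la\nabla_{\mbf{a}(\sigma)}\tau-\nabla_{\mbf{a}(\tau)}\sigma-[\sigma,\tau],\upsilon\ra=-\la[\sigma,\tau],\upsilon\ra$ since $\nabla\sigma=\nabla\tau=0$. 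Feeding this into $\Psi=0$, the surviving terms are $\la[\tau,\upsilon],\nabla\sigma\ra+\la[\upsilon,\sigma],\nabla\tau\ra+\iota_{\mbf{a}(\upsilon)}d\la\nabla\sigma,\tau\ra + dT(\sigma,\tau,\upsilon)=0$, i.e. $0+0+0-d\la[\sigma,\tau],\upsilon\ra=0$... but that only gives $d\la[\sigma,\tau],\upsilon\ra=0$, which is not quite what I want. The correct route: use \cref{eq:NabPairDer}, namely $d\la[\sigma,\tau],\upsilon\ra=\la\nabla[\sigma,\tau],\upsilon\ra+\la[\sigma,\tau],\nabla\upsilon\ra$, together with $\la[\sigma,\tau],\nabla\upsilon\ra$ — which does appear as the first term of $\Psi$. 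So I should \emph{not} prematurely set $\upsilon$ flat; keeping $\upsilon$ general, $\Psi(\sigma,\tau,\upsilon)=\la[\sigma,\tau],\nabla\upsilon\ra+\iota_{\mbf{a}(\upsilon)}d\la\nabla\sigma,\tau\ra+dT(\sigma,\tau,\upsilon)$ after killing the $\nabla\sigma=\nabla\tau=0$ terms, and $\la\nabla\sigma,\tau\ra=0$ kills the middle term, while $dT(\sigma,\tau,\upsilon)=-d\la[\sigma,\tau],\upsilon\ra=-\la\nabla[\sigma,\tau],\upsilon\ra-\la[\sigma,\tau],\nabla\upsilon\ra$ by \cref{eq:NabPairDer}. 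Hence $\Psi(\sigma,\tau,\upsilon)=\la[\sigma,\tau],\nabla\upsilon\ra-\la\nabla[\sigma,\tau],\upsilon\ra-\la[\sigma,\tau],\nabla\upsilon\ra=-\la\nabla[\sigma,\tau],\upsilon\ra$. Since $\Psi=0$ for a pseudo-Dirac structure, this gives $\la\nabla[\sigma,\tau],\upsilon\ra=0$ for all $\upsilon\in\Gamma(W)$, hence $\nabla[\sigma,\tau]=0$ as desired.

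The main obstacle — and the step that needs care rather than cleverness — is the bookkeeping in the previous paragraph: tracking exactly which of the six bracket-pairing terms and three $d$-of-pairing terms in $\Psi$ survive the hypotheses $\nabla\sigma=\nabla\tau=0$, and correctly unpacking $dT(\sigma,\tau,\upsilon)$ via the compatibility identity \cref{eq:NabPairDer}. Once that is done, the conclusion $\nabla\Cour{\sigma,\tau}=\nabla[\sigma,\tau]=0$ is immediate, and combined with $\Cour{\sigma,\tau}=[\sigma,\tau]$ (noted above) this shows the flat sections are closed under the Courant bracket and that it agrees there with the Lie algebroid bracket from \cref{thm:LieSubIsVBDir}; in particular they form a Lie algebra, being a subspace of the Lie algebra $(\Gamma(W),[\cdot,\cdot])$ closed under the operation.
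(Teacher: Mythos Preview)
Your argument is correct, with one bookkeeping point you glossed over: when you list the surviving terms of $\Psi(\sigma,\tau,\upsilon)$, you silently drop $\iota_{\mbf{a}(\tau)}d\la\nabla\upsilon,\sigma\ra$. This term does not obviously vanish just from $\nabla\sigma=\nabla\tau=0$, since it involves $\nabla\upsilon$. The fix is exactly the compatibility \labelcref{eq:NabPairDer} you invoke later: with $\nabla\sigma=0$ one has $\la\nabla\upsilon,\sigma\ra=d\la\upsilon,\sigma\ra$, hence $d\la\nabla\upsilon,\sigma\ra=0$. Once that is noted, your computation $\Psi(\sigma,\tau,\upsilon)=-\la\nabla[\sigma,\tau],\upsilon\ra$ goes through and the conclusion follows.

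The paper's proof takes a genuinely different, more geometric route. Rather than computing with the explicit formula for $\Psi$, it passes through the correspondence of \cref{thm:LieSubIsVBDir}: the pseudo-Dirac structure $(W,\nabla)$ corresponds to a $\mc{VB}$-Dirac structure $L\subset T\mbb{E}$, and by \cref{eq:NabDef} one has $\nabla\sigma=0$ if and only if $\sigma_T\in\Gamma(L)$. Then involutivity of $L$ gives $\Cour{\sigma,\tau}_T=\Cour{\sigma_T,\tau_T}\in\Gamma(L)$, i.e.\ $\nabla\Cour{\sigma,\tau}=0$, in one line. Your approach has the advantage of being entirely self-contained within the algebraic definition of a pseudo-Dirac structure (no appeal to the tangent prolongation), and it makes explicit exactly how the vanishing of $\Psi$ is used; the paper's approach is shorter and explains \emph{why} the result is true --- flat sections are precisely those whose tangent lifts lie in the Dirac structure $L$, and Dirac structures are closed under the bracket.
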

\begin{proof}
First, if $\nabla\sigma=0$, then $[\sigma,\tau]:=\Cour{\sigma,\tau}-\mbf{a}^*\la\nabla\sigma,\tau\ra=\Cour{\sigma,\tau}$.

Next, let $L\subset T\mbb{E}$ be the $\mc{VB}$-Dirac structure corresponding to $(W,\nabla)$. By \cref{eq:NabDef}, we see that $\nabla\sigma=0$ if and only if $\sigma_T\in\Gamma(L,E)$. Therefore, $\sigma_T,\tau_T\in\Gamma(L,E)$, and thus  $\Cour{\sigma_T,\tau_T}=\Cour{\sigma,\tau}_T\in \Gamma(L,E)$. In turn, this shows that $\nabla\Cour{\sigma,\tau}=0$.
\end{proof}

\subsection{Examples}

\begin{example}
If $W\subseteq\mbb{E}$ is a Dirac structure, then after endowing $W$ with the trivial pseudo-connection, $(W,0)$ is a pseudo-Dirac structure corresponding to the $\mc{VB}$-Dirac structure $L=TW\subseteq T\mbb{E}$ (see \cref{ex:DirStrTngLf}).
\end{example}

\begin{example}\label{ex:BarLieSub}
If $(W,\nabla)$ is a pseudo-Dirac structure for the Courant algebroid $\mbb{E}$, then $(W,-\nabla)$ is a pseudo-Dirac structure for $\overline{\mbb{E}}$.
\end{example}

\begin{example}
If $\mf{d}$ is a quadratic Lie algebra, then any Lie subalgebra $\g\subset\mf{d}$ is a pseudo-Dirac structure.
\end{example}


\begin{example}[Action Courant algebroids]\label{ex:LieSubAct}
Suppose $\mf{d}$ is a quadratic Lie algebra which acts on a manifold $M$ with coisotropic stabilizers. In this case, as explained by Meinrenken and the author in \cite{LiBland:2009ul}, the bundle $\mf{d}\times M$ is naturally a Courant algebroid (see \cref{ex:ActCourAlg} for details). If $\h\subseteq\mf{d}$ is any subalgebra, then $(\h\times M,d)$ is a pseudo-Dirac structure in $\mf{d}\times M$, where $d$ is the standard connection on the trivial bundle $\mf{d}\times M$. As a Lie algebroid, $(\h\times M,d)$ is isomorphic to the action Lie algebroid, as can be seen by comparing \labelcref{eq:pcModBrk} with \labelcref{eq:actioncourant}.
\end{example}

The following is a special case of the last example.
\begin{example}[Dirac Lie groups]
Dirac Lie groups for which multiplication is a morphism of Manin pairs were classified in \cite{LiBland:2010wi, LiBland:2011vqa}  (see also \cite{Ortiz:2008bd, Jotz:2009va} for a different setting). There it was shown that the underlying Courant algebroid can be canonically trivialized as an action Courant algebroid  $\mbb{A}=\mf{q}\times H$ (see \cref{ex:ActCourAlg}), and the Dirac structure is a constant subbundle  $E=\g\times H$ under this trivialization.

As such, $\mf{d}\times H$ and $\g\times H$ both define pseudo-Dirac structures in $\mbb{A}$. Moreover, if $\mf{r}\subset\mf{q}$ is the Lie subalgebra transverse to $\g$ described in \cite[Section~3.2]{LiBland:2011vqa}, then $\mf{r}\times H$ describes a pseudo-Dirac structure which does not correspond to any Dirac structure in $\mbb{A}$. 
\end{example}

\begin{example}[Cotangent Lie algebroids]
Suppose that $T^*M$ carries the structure of a Lie algebroid with anchor map $\mbf{a}':T^*M\to TM$. Then $W=\gr(\mbf{a}')\subset \mbb{T}M$ is a pseudo-Dirac structure, where the pseudo-connection is defined by \cref{eq:pcModBrk}. That is
$$\la\nabla\alpha,\beta\ra=\Lied_{\mbf{a}'(\alpha)}\beta-\iota_{\mbf{a}'(\beta)}d\alpha-[\alpha,\beta].$$

Since $T^*M$ is a Lie algebroid, $TM$ carries a linear Poisson structure with a bivector field $\pi\in\mf{X}^2(TM)$. The graph of $\pi^\sharp:T^*TM\to TTM$, $$L=\gr(\pi^\sharp)\in\mbb{T}TM\cong T\mbb{T}M$$ is the $\mc{VB}$-Dirac structure corresponding to the pseudo-Dirac structure $(T^*M,\nabla)$.

This example arises in q-Poisson geometry. \v{S}evera and the author \cite{LiBland:2010wi} showed that given any q-Poisson $(\g\oplus\bar\g,\g_\Delta)$-structure on a manifold $M$ (see \cref{ex:qPstr} for details), $T^*M$ carries the structure of a Lie algebroid. Of significance is that $T^*M$ cannot generally be identified with a Dirac structure in any exact Courant algebroid, which contrasts the case of a Poisson structure on $M$. However, both Poisson and q-Poisson structures on $M$ endow $T^*M$ with the structure of pseudo-Dirac structure, as we shall soon explain in more detail in \cref{ex:qPCotLie}.
\end{example}

\subsection{Forward and backward images of  pseudo-Dirac structures}\label{sec:FBimage}
The properties of Dirac structures which allow you to compose them with Courant relations extend to pseudo-Dirac structures, as we shall explain in this section. Indeed, morally, 
any procedure for Dirac structures carries over to the more general pseudo-Dirac structures, since the latter \emph{are in fact just Dirac structures} in the tangent prolongation of the Courant algebroid.
Special cases of composing with Courant relations include forward and backward Dirac maps, or restricting a Dirac structure to a submanifold. 

Recall from \cite{LiBland:2011vqa} (or \cref{sec:CARel}) that a Courant relation $R:\mbb{E}\dasharrow \mbb{F}$ between two Courant algebroids $\mbb{E}\to M$ and $\mbb{F}\to N$ is a Dirac structure $R\subseteq \mbb{F}\times\overline{\mbb{E}}$ with support along a submanifold. In particular, a Dirac structure $L\subseteq \mbb{E}$ is a Courant relation $L:\ast\dasharrow\mbb{E}$, where $\ast$ is the trivial Courant algebroid.

If two Courant relations $R_1:\mbb{E}_1\dasharrow \mbb{E}_2$ and $R_2:\mbb{E}_2\dasharrow\mbb{R}_3$ compose \emph{cleanly}, then their composition $R_2\circ R_1:\mbb{E}_1\dasharrow \mbb{R}_3$ is a Courant relation (see \cite[Proposition~1.4]{LiBland:2011vqa}). Furthermore: 

\begin{proposition}[\!\!{\cite[Proposition~1.4]{LiBland:2011vqa}}]
Suppose that $\mbb{E}\to M$ and $\mbb{F}\to N$ are Courant algebroids and
$R:\mbb{E}\dasharrow\mbb{F}$ is a Courant relation.
\begin{itemize}
\item If $E\subseteq \mbb{E}$ is a Dirac structure which composes cleanly with $R$ and the subbundle $F\subseteq \mbb{F}$ supported on all of $N$ satisfies $F=R\circ E$, then $F\subseteq \mbb{F}$ is a Dirac structure.
\item If $F\subseteq \mbb{F}$ is a Dirac structure which composes cleanly with $R$ and the subbundle $E\subseteq \mbb{E}$ supported on all of $M$ satisfies $E=F\circ R$, then $E\subseteq \mbb{E}$ is a Dirac structure.
\end{itemize}
\end{proposition}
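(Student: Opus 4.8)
The plan is to verify the two defining properties of a Dirac structure for the composed subbundle: that it is \emph{Lagrangian} over the whole base, and that it is \emph{involutive} under the Courant bracket. I will carry out the first (forward) bullet in full and then reduce the second bullet to it. For the reduction, note that the transpose $R^\top\subseteq \mbb{E}\times\overline{\mbb{F}}$ is again a Courant relation: swapping the two factors negates the difference metric, and both the Lagrangian condition and the bracket-compatibility $\sigma_1\sim_R\tau_1,\ \sigma_2\sim_R\tau_2\Rightarrow\Cour{\sigma_1,\sigma_2}\sim_R\Cour{\tau_1,\tau_2}$ are insensitive to this swap and to the overall sign of the metric. Since $E=F\circ R=R^\top\circ F$, the backward image is the forward image along $R^\top$, so it suffices to treat $F=R\circ E$.

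The Lagrangian property is pointwise linear algebra. Fix $(n,m)\in S$ (where $S\subseteq N\times M$ is the support of $R$) and regard $E_m\subseteq\mbb{E}_m$ as a Lagrangian relation $\ast\dasharrow\mbb{E}_m$, so that $F_n=R_{(n,m)}\circ E_m$. I would invoke the metric linear-algebra fact that a composition of Lagrangian relations is Lagrangian: the composition is realized as the reduction $(L\cap C+C^\perp)/C^\perp$ of the Lagrangian subspace $R_{(n,m)}\oplus E_m$ by the coisotropic subspace $C$ that identifies the two copies of $\mbb{E}_m$, and the reduction of a Lagrangian by a coisotropic is always Lagrangian (no transversality needed). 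Hence each fibre $F_n$ is Lagrangian in $\mbb{F}_n$. By hypothesis the composition is clean and $F$ is a subbundle supported on all of $N$; this both assembles the fibres into a genuine Lagrangian subbundle and forces $p_N\colon S\to N$ to be surjective (otherwise $R\circ E$ would be undefined over points outside $p_N(S)$).

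The essential step is involutivity, and it is the only place the cleanness hypothesis is really used. Since involutivity is checked pointwise, fix $n_0\in N$, choose $(n_0,m_0)\in S$, and let $\tau_1,\tau_2\in\Gamma(F)$. Writing $R\diamond E=R\cap(\mbb{F}\times E)|_S$ for the clean fibre product over $S$, clean composition guarantees that the projection $\on{pr}_{\mbb{F}}\colon R\diamond E\to p_N^*F$ is a fibrewise surjective bundle map over $S$. I would then lift $p_N^*\tau_i$ locally to a section $\zeta_i=(p_N^*\tau_i,\eta_i)\in\Gamma(R\diamond E)$, where $\eta_i\in\Gamma(p_M^*E)$ and $(p_N^*\tau_i,\eta_i)|_S\in\Gamma(R)$. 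Because $E\subseteq\mbb{E}$ is a subbundle over all of $M$ and $p_M|_S$ is (locally) an immersion, I extend $\eta_i$ to a local section $\sigma_i\in\Gamma(E)$ with $\sigma_i\sim_R\tau_i$. The bracket-compatibility of the Courant relation $R$ now gives $\Cour{\sigma_1,\sigma_2}\sim_R\Cour{\tau_1,\tau_2}$, while involutivity of $E$ gives $\Cour{\sigma_1,\sigma_2}|_M\in\Gamma(E)$. Evaluating at $(n_0,m_0)$ shows that $\Cour{\tau_1,\tau_2}(n_0)$ is $R_{(n_0,m_0)}$-related to an element of $E_{m_0}$, hence lies in $R_{(n_0,m_0)}\circ E_{m_0}\subseteq F_{n_0}$. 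As $n_0$ ranges over $N=p_N(S)$, the subbundle $F$ is involutive, and being also Lagrangian it is a Dirac structure.

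The hard part is precisely the lifting-and-extension step: producing honest sections $\sigma_i\in\Gamma(E)$ with $\sigma_i\sim_R\tau_i$ out of the given $\tau_i\in\Gamma(F)$. This needs $R\diamond E$ to be a smooth subbundle with $\on{pr}_{\mbb{F}}$ of locally constant rank surjecting onto $F$, and it needs $p_M|_S$ to be regular enough that a section of $E$ along $S$ extends to a neighbourhood in $M$ — exactly the data packaged into the word \emph{clean}. Everything after that is the formal consequence of the relation identity for brackets together with the involutivity of $E$, so once cleanness is used to secure the lift, the proof closes with no further analytic input.
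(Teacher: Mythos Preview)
The paper does not prove this proposition directly; it cites \cite[Proposition~1.4]{LiBland:2011vqa} for the general fact that a clean composition of Courant relations is again a Courant relation, and observes that the proposition is the special case where one of the relations is a Dirac structure (a Courant relation to or from the trivial Courant algebroid over a point). So there is no in-paper argument to compare against; your attempt is a direct proof where the paper simply invokes the more general result.

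Your Lagrangian step and the reduction of the second bullet to the first via $R^\top$ are fine. The gap is in the involutivity step. You need $\sigma_i\in\Gamma(E)$ with $\sigma_i\sim_R\tau_i$ on a neighbourhood of $(n_0,m_0)$ in $S$, and you assert this follows because ``$p_M|_S$ is (locally) an immersion'' and that this regularity is ``exactly the data packaged into the word \emph{clean}.'' Neither claim is correct. For a concrete failure, take $M=N=\mbb{R}$, $\mbb{E}=\mbb{T}M$, $\mbb{F}=\mbb{T}N$, $S=N\times M$ (so $p_M$ is a submersion with one-dimensional fibres, certainly not an immersion), and let $R\subseteq\mbb{T}N\times\overline{\mbb{T}M}$ correspond under the canonical identification with $\mbb{T}\mbb{R}^2$ to the graph of the Poisson structure $\partial_n\wedge\partial_m$; explicitly $(Y,\beta)\sim_R(\beta,Y)$. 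With $E=TM$ one finds $F=T^*N$ and the composition is clean, yet $\sigma\sim_R\tau$ for $\tau=f(n)\,dn$ forces $\sigma(m)=f(n)\,\partial_m$ for all $(n,m)\in S$, impossible unless $f$ is constant. You correctly flag the lifting as the crux; the mistake is believing cleanness provides a section of $E$ over $M$, when it only provides a section of $p_M^*E$ over $S$.

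The repair is to compute in the product Courant algebroid $\mbb{F}\times\overline{\mbb{E}}$ without insisting that the $\mbb{E}$-component be pulled back from $M$. Extend the clean lift $(p_N^*\tau_i,\eta_i)\in\Gamma(R\diamond E)$ to $(\hat\tau_i,\hat\sigma_i)\in\Gamma(\mbb{F}\times\overline{\mbb{E}})$ over a neighbourhood of $(n_0,m_0)$ in $N\times M$, with $\hat\tau_i=p_N^*\tau_i$ and $\hat\sigma_i\in\Gamma(p_M^*E)$ allowed to depend on $n$. Involutivity of $R$ gives $\Cour{(\hat\tau_1,\hat\sigma_1),(\hat\tau_2,\hat\sigma_2)}|_S\in\Gamma(R)$. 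A short Leibniz-rule expansion---using that the $\hat\tau_i$ are honest pullbacks and that $E$ is Lagrangian so the $\la\hat\sigma_1,\hat\sigma_2\ra\,\mbf{a}^*d(\cdot)$ corrections vanish---shows this bracket has $\mbb{F}$-component $p_N^*\Cour{\tau_1,\tau_2}$ and $\overline{\mbb{E}}$-component still in $p_M^*E$. Evaluating at $(n_0,m_0)$ gives $\Cour{\tau_1,\tau_2}(n_0)\in F_{n_0}$.
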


%

Suppose now that $(W,\nabla)$ is a pseudo-Dirac structure in the Courant algebroid $\mbb{E}$, and $R:\mbb{E}\dasharrow \mbb{F}$ is a Courant relation with support on $S:M\dasharrow N$. Let $L\subset T\mbb{E}$ be the $\mc{VB}$-Dirac structure corresponding to $(W,\nabla)$ (c.f. \cref{thm:LieSubIsVBDir}). From \cref{ex:DirStrTngLf} we see that $TR:T\mbb{E}\dasharrow T\mbb{F}$ is a $\mc{VB}$-Dirac structure with support on $TS:TM\dasharrow TN$. 
\begin{definition}
We say that $R:\mbb{E}\dasharrow \mbb{F}$ \emph{composes cleanly} with the pseudo-Dirac structure $(W,\nabla)$ if $TR:T\mbb{E}\dasharrow T\mbb{F}$ composes cleanly with $L$.
\end{definition}
Assume that the composition $TR\circ L$ is clean, and equal to a subbundle $L'\subseteq T\mbb{F}$ supported on all of $TN$. Then the Proposition above shows that $L'$ is a $\mc{VB}$-Dirac structure, which in turn corresponds to a pseudo-Dirac structure $(W',\nabla')$ in $\mbb{F}$ (c.f. \cref{thm:LieSubIsVBDir}). In this case, we write $$(W',\nabla')=R\circ(W,\nabla),$$ and call it the \emph{forward image} along $R:\mbb{E}\dasharrow \mbb{F}$.

Conversely, suppose we instead start with a pseudo-Dirac structure $(W',\nabla')$ in $\mbb{E}$ corresponding to a $\mc{VB}$-Dirac structure $L'\subseteq T\mbb{F}$ (c.f. \cref{thm:LieSubIsVBDir}). 
\begin{definition}
We say that $R:\mbb{E}\dasharrow \mbb{F}$ \emph{composes cleanly} with the pseudo-Dirac structure $(W',\nabla')$ if $TR:T\mbb{E}\dasharrow T\mbb{F}$ composes cleanly with $L'$.
\end{definition}
Assume that the composition $L'\circ TR$ is clean, and equal to a subbundle $L\subseteq T\mbb{E}$ supported on all of $TM$. Then the Proposition above shows that $L$ is a $\mc{VB}$-Dirac structure, which in turn corresponds to a pseudo-Dirac structure $(W,\nabla)$ in $\mbb{E}$ (c.f. \cref{thm:LieSubIsVBDir}). In this case, we write $$(W,\nabla)=(W',\nabla')\circ R,$$ and call it the \emph{backward image} along $R:\mbb{E}\dasharrow \mbb{F}$.

The following proposition describes a useful equation satisfied by the pseudo-connections associated  to forward and backward images of a pseudo-Dirac structure along a Courant relation.

\begin{proposition}
Suppose $(W,\nabla)$ and $(W',\nabla')$ are pseudo-Dirac structures in the Courant algebroids $\mbb{E}\to M$ and $\mbb{F}\to N$, respectively.
 Let $L\subseteq T\mbb{E}$ and $L'\subseteq T\mbb{F}$ be the respective $\mc{VB}$-Dirac structures corresponding to $(W,\nabla)$ and $(W',\nabla')$ (c.f. \cref{thm:LieSubIsVBDir}).
Suppose $R:\mbb{E}\dasharrow\mbb{F}$ is a Courant relation over $S:M\dasharrow N,$ and that the intersection $$(L'\times L)\cap TR$$ is clean.
Then 
for any section $(\sigma',\sigma)\in\Gamma(W'\times W)$ satisfying 
 $$(\sigma',\sigma)\rvert_S\in\Gamma\big((W'\times W)\cap R\big),$$ and any  element $$\big(X;(\lambda',\lambda)\big)\in TS\times_M \big((W'\times W)\cap R\big),$$ the following equation holds:
\begin{equation}\label{eq:nablaComposition}\la (p_M^*\nabla')_{X}\sigma',\lambda'\ra=\la (p_N^*\nabla)_{X}\sigma,\lambda\ra.\end{equation} Here $p_N^*\nabla$ and $p_M^*\nabla'$ are the pull-backs of the respective pseudo-connections along the natural projections $p_N:N\times M\to N$ and $p_M:N\times M\to M$ (c.f. \cref{def:pseuCon}).

\end{proposition}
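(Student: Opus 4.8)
The plan is to reduce everything to the corresponding statement about $\mc{VB}$-Dirac structures in tangent prolongations, where the pairing and bracket are already known to behave well under Courant relations. Recall from \cref{thm:LieSubIsVBDir,lem:qIsNab} that the pseudo-connection $\nabla$ is recovered from the $\mc{VB}$-Dirac structure $L\subseteq T\mbb{E}$ via $\nabla\sigma=q_L(\sigma_T)$, where $q_L:T\mbb{E}\rvert_W\to W^*$ is the map of \cref{prop:clasLagDB}, and similarly $\nabla'\sigma'=q_{L'}(\sigma'_T)$. So the left-hand side $\la(p_M^*\nabla')_X\sigma',\lambda'\ra$ should be rewritten, after pulling back along $p_M:S\to M$ (so that all objects live over $S$), as a pairing in $T\mbb{F}$ between $q_{L'}$ applied to a tangent-lift section and a core element, using \cref{eq:restPair}: namely $\la q_{L'}(\sigma'_T),\lambda'\ra=\la\sigma'_T,(\lambda')_C\ra_{T\mbb{F}}$ evaluated at the point $X'\in TN$ covered by $X$ under $dp_N\colon TS\to TN$ — no, more precisely one uses that $\la\sigma'_T,(\lambda')_C\ra_{T\mbb{F}}=\la\sigma',\lambda'\ra_C$ has the contraction with $d p_N(X)$ built in. The symmetric statement holds on the right for $T\mbb{E}$.

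The key steps, in order, are as follows. First I would observe that $TR:T\mbb{E}\dasharrow T\mbb{F}$ is a $\mc{VB}$-Dirac structure with support on $TS:TM\dasharrow TN$ (\cref{ex:DirStrTngLf}), hence a Courant relation, so the relation properties in \cref{sec:CARel} apply: if $\xi_1\sim_{TR}\xi_2$ and $\eta_1\sim_{TR}\eta_2$ then $\la\xi_1,\eta_1\ra_{T\mbb{E}}\sim_{TR}\la\xi_2,\eta_2\ra_{T\mbb{F}}$, i.e. these two functions agree on $TS$. Second, I would produce the right $TR$-related sections: given the hypothesis $(\sigma',\sigma)\rvert_S\in\Gamma((W'\times W)\cap R)$, the tangent lifts satisfy $\sigma_T\sim_{TR}\sigma'_T$ along $TS$, because the tangent functor applied to the relation $\sigma\rvert_S\sim_R\sigma'\rvert_S$ (meaning $(\sigma',\sigma)\rvert_S\in\Gamma(R)$) gives exactly $(\sigma'_T,\sigma_T)\rvert_{TS}\in\Gamma(TR)$; here one uses that $TR=T(R)$ so that $T$-relatedness is literally the tangent lift of $R$-relatedness. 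Similarly the hypothesis $(\lambda',\lambda)\in(W'\times W)\cap R$ over a point $m=p_M(s)$ gives, via the core embedding $W\hookrightarrow\mbb{E}$ and naturality of the core construction, that the core elements $(\lambda')_C\sim_{TR}\lambda_C$ — one must check that the core of $R\subseteq\mbb{F}\times\overline{\mbb{E}}$ is $R$ itself pushed into cores, which follows because $R$ is a subbundle and the core of $TR$ is $R$ (this is the content of \cref{ex:TngDVB2} applied to the relation). Third, combine: $\la\sigma'_T,(\lambda')_C\ra_{T\mbb{F}}$ and $\la\sigma_T,\lambda_C\ra_{T\mbb{E}}$ are $TR$-related functions, hence agree on $TS$; and evaluating at the specified point $(X;(\lambda',\lambda))$ of $TS\times_M((W'\times W)\cap R)$ gives the claimed equality after unwinding $\la\sigma_T,\lambda_C\ra_{T\mbb{E}}=\la\sigma,\lambda\ra_C$ and the definition of the pulled-back pseudo-connection (\cref{def:pseuCon}), noting $\la\sigma,\lambda\ra_C$ evaluated at $X$ is $\iota_{d p_N(X)}d\la\sigma,\lambda\ra=\la\nabla_{d p_N(X)}\sigma,\lambda\ra+\la\sigma,\nabla_{d p_N(X)}\lambda\ra$ — wait, one must be careful: the needed identity is rather that $q_L(\sigma_T)$ paired with $\lambda$ equals $\la\sigma_T,\lambda_C\ra$ by \cref{eq:restPair}, which is exactly $\la(p_N^*\nabla)_X\sigma,\lambda\ra$ once we track that $\lambda_C$ at $X$ encodes contraction with $d p_N(X)$.

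The main obstacle I anticipate is bookkeeping the base manifolds correctly: $\nabla'$ lives over $N$, $\nabla$ over $M$, but the asserted equation is over $S\subseteq N\times M$, so one genuinely needs the pull-back pseudo-connections $p_N^*\nabla$ and $p_M^*\nabla'$ and must verify that the relatedness $\sigma_T\sim_{TR}\sigma'_T$ holds not merely pointwise but as an identity of functions on the immersed submanifold $TS\subseteq TN\times TM$, which requires knowing that the sections $\sigma,\sigma'$ are $R$-related \emph{along all of $S$}, precisely the hypothesis given. A secondary subtlety is the cleanness assumption $(L'\times L)\cap TR$ — this is needed to guarantee that the tensorial identity we derive makes sense (that there exist enough related sections to probe all of $(W'\times W)\cap R$), though for the equation as stated, which is quantified over given related sections and given elements, cleanness is used only to ensure the configuration space $TS\times_M((W'\times W)\cap R)$ is nonempty/smooth and that linear lifts of $\sigma,\sigma'$ through $L,L'$ exist; I would invoke \cref{prop:VBDirIsLaVB} and \cref{prop:clasLagDB} for the existence of such lifts. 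Everything else is a direct consequence of the functoriality of the tangent prolongation and the fact, already established, that $\mc{VB}$-Dirac structures compose as Courant relations.
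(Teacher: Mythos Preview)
There is a genuine gap: your key identity ``$\la q_L(\sigma_T),\lambda\ra=\la\sigma_T,\lambda_C\ra$ by \cref{eq:restPair}'' is false. Applying \cref{eq:restPair} with $x=\sigma_T$ and $y=\lambda_C$ gives
\[
\la\sigma_T,\lambda_C\ra=\la q(\sigma_T),\,q_{T\mbb{E}/\mbb{E}}(\lambda_C)\ra+\la q(\lambda_C),\,q_{T\mbb{E}/\mbb{E}}(\sigma_T)\ra
=\la\nabla\sigma,\,0\ra+\la\,\la\lambda,\cdot\ra\rvert_W,\,\sigma\ra
=\la\sigma,\lambda\ra,
\]
since a core element projects to zero under $q_{T\mbb{E}/\mbb{E}}$ and $q$ restricted to the core is dual to the inclusion $W\hookrightarrow\mbb{E}$ (\cref{prop:clasLagDB}). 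So your argument, carried through, proves only $\la\sigma',\lambda'\ra=\la\sigma,\lambda\ra$ along $S$ --- true, but not the desired statement about $\nabla$ and $\nabla'$.

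The missing idea is that to isolate $\la q(\sigma_T),\lambda\ra=\la\nabla\sigma,\lambda\ra$ via \cref{eq:restPair} you must pair $\sigma_T$ against an element $\tilde\lambda$ with $q_{T\mbb{E}/\mbb{E}}(\tilde\lambda)=\lambda$ \emph{and} $q(\tilde\lambda)=0$, i.e.\ $\tilde\lambda\in L$. The core element $\lambda_C$ fails the second condition (the core of $L$ is $W^\perp$, not $W$). To compare the two sides via the Lagrangian property of $TR$, one needs a \emph{simultaneous} lift $(\tilde\lambda',\tilde\lambda)\in (L'\times L)\cap TR$ sitting over both $X\in TS$ and $(\lambda',\lambda)\in(W'\times W)\cap R$. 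The existence of such a lift is exactly what the clean-intersection hypothesis provides --- it is not a mere smoothness technicality as you suggest, but the substantive input. Once you have this lift, $\la(\sigma'_T,\sigma_T),(\tilde\lambda',\tilde\lambda)\ra=0$ by Lagrangianness of $TR$, and \cref{eq:restPair} (with the second term vanishing since $q(\tilde\lambda',\tilde\lambda)=0$) gives $\la q(\sigma'_T,\sigma_T),(\lambda',\lambda)\ra=0$, which unpacks to the claim.
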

\begin{proof}
%
%
%

%
%

 Let $$q:T(\mbb{F}\times\overline{\mbb{E}})\rvert_{W'\times W}\to (W'\times W)^*$$ be the map defined in \cref{prop:clasLagDB} corresponding to the Lagrangian double vector subbundle $L'\times L\subseteq T(\mbb{F}\times\overline{\mbb{E}})$.
Now the $\mc{VB}$-Dirac structure $L'\times L$ corresponds to the pseudo-Dirac structure $\big(W'\times W,(-\nabla')\oplus\nabla\big)$ (c.f. \cref{thm:LieSubIsVBDir,ex:BarLieSub}). Thus, by \cref{eq:NabDef}, for any section $(\sigma',\sigma)\in\Gamma(W'\times W)$ and any $X\in T(N\times M)$, we have \begin{equation}\label{eq:qLieSubCom}q(\sigma'_T,\sigma_T)\rvert_{X}=\big((-\nabla')\oplus\nabla\big)_{X}(\sigma',\sigma)=\big(-(p_N^*\nabla')_{X}\sigma',(p_M^*\nabla)_{X}\sigma\big).\end{equation}

Next, by the clean intersection assumption, $$\begin{tikzpicture} \mmat{m2}{\big(L'\times L\big)\cap TR &(W'\times W)\cap R\\ TS&S\\};
\path[->]
	(m2-1-1) edge  (m2-1-2)
		edge (m2-2-1);
\path[<-] 
	(m2-2-2) edge  (m2-1-2)
		edge (m2-2-1);
\end{tikzpicture}$$ is a double vector subbundle of $T(\mbb{F}\times\overline{\mbb{E}})$. Thus for any $$\big(X;(\lambda',\lambda)\big)\in TS\times_M \big((W'\times W)\cap R\big),$$ there exists a pair $(\tilde\lambda',\tilde\lambda)\in (L'\times L)\cap TR$ which is simultaneously mapped to both $X$ and $(\lambda',\lambda)$ by the respective bundle maps, as pictured in the following diagram:
$$\begin{tikzpicture}
\mmat{m1} at (-3,0) {(\tilde\lambda',\tilde\lambda) &(\lambda',\lambda)\\ X&\\};
\path[->]
	(m1-1-1) edge  (m1-1-2)
		edge (m1-2-1);

\draw (0,0) node {$\in$};

\mmat{m2} at (4,0) {\big(L'\times L\big)\cap TR &(W'\times W)\cap R\\ TS&S\\};
\path[->]
	(m2-1-1) edge  (m2-1-2)
		edge (m2-2-1);
\path[<-] 
	(m2-2-2) edge  (m2-1-2)
		edge (m2-2-1);
\end{tikzpicture}$$
Additionally, for any $(\sigma',\sigma)\in\Gamma(W'\times W)$ satisfying $(\sigma',\sigma)\rvert_S\in\Gamma\big((W'\times W)\cap R\big)$,  $$(\sigma'_T,\sigma_T)\rvert_{TS}\in\Gamma(TR,TS).$$ Consequently, since $TR$ is Lagrangian, \begin{equation}\label{eq:LieSubCompZero}\la(\sigma'_T,\sigma_T),(\tilde\lambda',\tilde\lambda)\ra=0.\end{equation}

 Since $(\tilde\lambda',\tilde\lambda)\in (L'\times L)$, we have $q(\tilde\lambda',\tilde\lambda)=0$. Hence, \cref{eq:LieSubCompZero,eq:restPair} imply that 
$$\la q(\sigma'_T,\sigma_T),(\lambda',\lambda)\ra=0.$$
 
 Combining this equality with \cref{eq:qLieSubCom} yields
 $$\la\big(-(p_N^*\nabla')_{X}\sigma',(p_M^*\nabla)_{X}\sigma\big), (\lambda',\lambda)\ra=0,$$
 which proves the proposition.
\end{proof}

Now we specialize to the case where $R:\mbb{E}\dasharrow \mbb{F}$ is a Courant morphism supported on the graph of a map $\phi:M\to N$. Suppose that $(W',\nabla')$ is a pseudo-Dirac structure in $\mbb{F}$ which composes cleanly with $R:\mbb{E}\dasharrow\mbb{F}$, and $\on{ran}(R)+ \phi^*W'^\perp=\phi^*\mbb{F}$. 

Since $(W',\nabla')$ composes cleanly with $R$, the intersection $\phi^*W'\cap \on{ran}(R)\subseteq \phi^*\mbb{F}$ is a smooth subbundle. Since $R$ is supported on the graph of a map $\phi:M\to N$, the subbundle
$$W:=W'\circ R\subseteq \mbb{E}$$ is well defined with support on all of $M$.
\begin{lemma}
There is a bundle map $\Psi:W\to \phi^* W'$, uniquely determined by the equation $$(\Psi(\lambda),\lambda)\in R_{(\phi(x),x)},$$ for every $\lambda\in W_x.$
\end{lemma}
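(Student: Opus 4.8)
The plan is to define $\Psi$ pointwise using the relation $R$ and then check that the assignment is well-defined, linear, and smooth (hence a bundle map). Fix $x\in M$ and $\lambda\in W_x$. By the definition $W=W'\circ R$, the fact that $\lambda\in W_x$ means precisely that there exists some $\mu\in (\phi^*W')_x = W'_{\phi(x)}$ with $(\mu,\lambda)\in R_{(\phi(x),x)}$. I would set $\Psi(\lambda):=\mu$ and verify this is forced: if $(\mu_1,\lambda),(\mu_2,\lambda)\in R_{(\phi(x),x)}\cap(\phi^*W'\times W)_x$, then since $R$ is a linear subspace, $(\mu_1-\mu_2,0)\in R$, so $\mu_1-\mu_2\in\on{ker}(R)\cap \phi^*W'$. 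The transversality hypothesis $\on{ran}(R)+\phi^*W'^\perp=\phi^*\mbb{F}$ is exactly what forces this intersection to vanish: taking orthogonals, $\on{ker}(\ann^\natural(R))\cap\phi^*W' = \on{ran}(R)^\perp\cap\phi^*W'$; more directly, $\mu_1-\mu_2\in\on{ker}(R)$ is orthogonal to $\on{ran}(R)$ (since $R$ is Lagrangian in $\mbb{F}\times\overline{\mbb{E}}$, its kernel annihilates its range), and it also lies in $\phi^*W'$, hence is orthogonal to $\phi^*W'^\perp$; by the transversality hypothesis these two subspaces span $\phi^*\mbb{F}$, so $\mu_1-\mu_2$ is orthogonal to everything, i.e. $\mu_1-\mu_2=0$ by non-degeneracy of the metric. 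This gives uniqueness of $\mu$, hence well-definedness of $\Psi$.

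Linearity over $\mbb{R}$ at each fibre is immediate from linearity of $R$: if $\Psi(\lambda_i)=\mu_i$ then $(\mu_1+\mu_2,\lambda_1+\lambda_2)\in R$ and $(t\mu_1,t\lambda_1)\in R$, so by uniqueness $\Psi(\lambda_1+\lambda_2)=\mu_1+\mu_2$ and $\Psi(t\lambda_1)=t\mu_1$. The map covers $\phi$ by construction (the base point of $\Psi(\lambda)$ is $\phi(x)$).

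For smoothness I would argue as follows: $R\cap(\phi^*W'\times W)$, as a subbundle of $\phi^*W'\times W$ over $M$ (using that the composition is clean, so this intersection has locally constant rank and is a smooth subbundle supported on all of $M$), projects isomorphically onto its second factor $W$ — injectivity of this projection is exactly the uniqueness just established, and surjectivity is the definition of $W = W'\circ R$. A clean intersection of vector subbundles is a smooth subbundle, and a fibrewise-bijective bundle morphism between vector bundles of equal rank over the same base is a bundle isomorphism (its inverse is smooth, e.g. by Cramer's rule in local trivializations). Composing the inverse of the projection-to-$W$ with the projection-to-$\phi^*W'$ gives $\Psi$ as a composition of smooth bundle maps.

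The main obstacle is the well-definedness step, i.e. extracting from the transversality condition $\on{ran}(R)+\phi^*W'^\perp=\phi^*\mbb{F}$ the vanishing of $\on{ker}(R)\cap\phi^*W'$; this is a short linear-algebra argument about the Lagrangian subspace $R\subseteq\mbb{F}\times\overline{\mbb{E}}$, using that for a Lagrangian relation $\on{ker}(R)=\on{ran}(R)^{\perp}$ inside the metric space $\mbb{F}$ (with the induced metric), which I would spell out carefully. Everything else (linearity, the smoothness packaging via clean intersection and \cref{prop:totkerVtotquo}-style reasoning) is routine.
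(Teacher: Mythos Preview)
Your proposal is correct and follows essentially the same approach as the paper: set $R' = R \cap (\phi^* W' \times W)$ and show the projection $R' \to W$ is a fibrewise bijection, with surjectivity immediate from $W = W' \circ R$ and injectivity from the orthogonal computation $(R \cap (\phi^* W' \times 0))^\perp = (\on{ran}(R) + \phi^* W'^\perp) \times \overline{\mbb{E}} = \mbb{F} \times \overline{\mbb{E}}$, which is exactly your ``more directly'' argument unpacked. The only quibble is notational: what you call $\ker(R)$ (the set of $\mu\in\mbb{F}$ with $(\mu,0)\in R$) is, in the paper's conventions, $\ker(R^\top)$, since $\ker(R)$ is defined to live in the source $\mbb{E}$.
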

\begin{proof}
The proof is entirely analogous to that of \cite[Lemma 7.2]{Bursztyn03-1} for Dirac realizations.
Consider the relation $R':W\dasharrow W'$, supported on the graph of $\phi:M\to N$, defined by
$$R':=R\cap (\phi^* W'\times W).$$ We will show that $R'$ is the graph of a bundle map $\Psi:W\to \phi^*W'$, i.e.  the natural projection $R'\to W$ is an isomorphism.

First, we establish surjectivity: let $\lambda\in W_x$. Since $W:=W'\circ R$, there exists some $\lambda'\in W'_{\phi(x)}$ such that the pair $(\lambda',\lambda)\in R_{(\phi(x),x)}$.

Next we establish injectivity. Suppose there exists $\lambda''\in W'_{\phi(x)}$ such that the pair $(\lambda'',\lambda)\in R_{(\phi(x),x)}$. Then $(\lambda''-\lambda',0)\in R\cap (\phi^*W'\times 0)$. However, 
$$\big(R\cap (\phi^*W'\times 0)\big)^\perp=(\on{ran}(R)+ \phi^*W'^\perp)\times \overline{\mbb{E}}=\mbb{F}\times\overline{\mbb{E}}.$$
Hence $R\cap (\phi^*W'\times 0)=0$ and $\lambda''=\lambda'$.
\end{proof}

%

In this case, \cref{eq:nablaComposition} specializes to 
$$\nabla=\Psi^*\circ(\phi^*\nabla')\circ\Psi.$$ 

Let $L\subseteq T\mbb{E}$ and $L'\subseteq T\mbb{F}$ be the $\mc{VB}$-Dirac structures corresponding to the pseudo-Dirac structures $(W,\nabla)$ and $(W',\nabla')$. As an intersection of involutive subbundles, it is clear that $$TR\cap (L'\times L)=\gr(T\Psi)$$ is a Lie subalgebroid of $L'\times L$. In particular, the intersection $\gr(\Psi)=\gr(T\Psi)\cap(W'\times W)$ is a Lie subalgebroid of $W'\times W$. Thus $$\Psi:W\to W'$$ is a morphism of Lie algebroids.

\begin{example}[q-Poisson structures and Cotangent Lie algebroids]\label{ex:qPCotLie}
Suppose that $\mf{d}$ is a quadratic Lie algebra, $\g\subset\mf{d}$ is a Lagrangian Lie subalgebra, and $$R:(\mbb{T}M,TM)\dasharrow (\mf{d},\g)$$ is a morphism of Manin pairs. That is to say, $R$ defines a q-Poisson $(\mf{d},\g)$-structure on $M$, as in \cref{ex:qPstr}. In particular, $R$ defines an action $\rho:\g\times M\to TM$ of $\g$ on $M$.

If $\h\subset\mf{d}$ is any subalgebra transverse to $\g$, then $\on{ran}(R)+\h^\perp=\mf{d}$, so $F=\h\circ R\subseteq \mbb{T}M$ is a subbundle transverse to $TM$. Moreover, $(F,\nabla)$ is a pseudo-Dirac structure, where
$$\nabla\sigma=\rho d\rho^*\sigma,\quad\sigma\in\Gamma(F),$$ and we have used the metric to identify $\h\cong \g^*$.
Thus, any choice of Lie subalgebra $\h\subset\mf{d}$ transverse to $\g$ endows $T^*M\cong F$ with the structure of a Lie algebroid so that both 
\begin{align*}
\rho^*:T^*M&\to \h,\\
\rho:\g\times M&\to TM
\end{align*}
are morphisms of Lie algebroids.

This extends \cite[Proposition~6.1]{Xu95} (see also \cite{Bursztyn:2009wi}) for the case where $\h$ is Lagrangian.
Notice that, if $\h$ is Lagrangian, then $\nabla=0$ and $F$ is Lagrangian, so $F$ is a Dirac structure. Moreover, since $F$ is transverse to $TM$, $F=\gr(\pi^\sharp)$ for a Poisson bivector field $\pi\in\mf{X}^2(M)$. In addition to this, $(\mf{d},\g,\h)$ forms a Manin triple, whence we see that $\g$ is a Lie-bialgebra. In fact, the action $\rho:\g\times M\to TM$ defines a Lie-bialgebra action on the Poisson manifold $M$, as shown in \cite{Xu95,Bursztyn:2009wi}.

This also extends \cite[Theorem~1]{LiBland:2010wi}, where $\mf{k}$ is a quadratic Lie algebra, $\mf{d}=\mf{k}\oplus\overline{\mf{k}}$, $\g=\mf{k}_\Delta$ is the diagonal subalgebra, and $\h=0\oplus\overline{\mf{k}}$. 
\end{example}

\begin{remark}[Supergeometric Interpretation]\label{rem:SupPsDir}
Recall from \cite{NonComDiffForm,Bursztyn:2009wi} that the category of Manin pairs is equivalent to to the category of Poisson principal $\mbb{R}[2]$ bundles, $P\to E^*[1]$, over degree 1 $N$-manifolds (see \cref{rem:SupMP} for a short summary).

The following table lists some one-to-one correspondences between structures associated with  a Manin pair $(\mbb{E},E)$ and the corresponding Poisson principal $\mbb{R}[2]$ bundle $P\to E^*[1]$.
\begin{center}
\begin{tabular}{p{.45\textwidth}|p{.45\textwidth}}
Manin pair: $(\mbb{E},E)$ & Poisson principal $\mbb{R}[2]$ bundle: $P\to E^*$\\\hline\hline
Lagrangian subbundles $F\subseteq \mbb{E}$ transverse to $E$. & Flat connections on $P$.\\\hline
Dirac subbundles $F\subseteq \mbb{E}$ transverse to $E$. & Flat connections on $P$ such that the connection one-form $\theta\in \Omega^1(P,\mbb{R}[2])$ satisfies $$[\theta,\theta]=0,$$ (where the bracket is the graded Koszul bracket \cref{eq:Koszul}).\\\hline
 pseudo-Dirac structures $(F,\nabla)\subseteq\mbb{E}$ transverse to $E$. & Connections on $P$ such that the connection one-form $\theta\in \Omega^1(P,\mbb{R}[2])$ satisfies $$[\theta,\theta]=0.$$
\end{tabular}\end{center}
The first two correspondences are explained in \cite{NonComDiffForm,Bursztyn:2009wi}. For the third correspondence, suppose that the section $\theta:P\to T^*[2]P$ is a connection one-form. Then $[\theta,\theta]=0$ if and only if the image of the section $$\tilde F:=\theta(P)\subseteq T^*[2]P$$ is $Q$ invariant. In this case, the reduced submanifold 
$$F[1]:=\tilde F/\mbb{R}[2]\subseteq T^*[2]P/\!\!/_1\mbb{R}[2]$$
 is $Q$ invariant, i.e. $F[1]$ corresponds to a pseudo-Dirac structure via \cref{prop:Lie2VBCour}. It is transverse to $E$ since $F[1]$ is the graph of a map $E^*[1]\to T^*[2]P/\!\!/_1\mbb{R}[2]$. 
 
 Conversely, suppose $\theta_0:E^*[1]\to T^*[2]P/\!\!/_1\mbb{R}[2]$ is a section whose image is $Q$-invariant. Let $\theta:P\to T^*[2]P$ be the unique lift to an $\mbb{R}[2]$ invariant section at moment level $1$. It is clear that the image of $\theta$ is $Q$ invariant, so it defines a connection satisfying $[\theta,\theta]=0$.
 
 Suppose that $\theta$ is a connection one-form on $P\to E^*[1]$ satisfying $[\theta,\theta]=0$. Then the corresponding  Hamiltonian vector field $X_\theta$ is an $\mbb{R}[2]$ invariant homological vector field. The induced $Q$ structure on $E^*[1]$ corresponds to the Lie algebroid structure on $F\cong E^*$. 
 Meanwhile, the curvature $d\theta\in\Omega^2(E^*[1],\mbb{R}[2])$ defines a metric on the fibres of $E^*$ via $$\la\sigma,\tau\ra=\iota_\sigma\iota_\tau d\theta, \quad \sigma,\tau\in\Gamma(E^*),$$
 where $\iota_\sigma\in\mf{X}(E^*[1])$ is the vector field defined in \cref{rem:SupLie}.
  This is precisely the restriction of the fibre metric on $\mbb{E}$ to  $F\cong E^*$.
  Finally, the pseudo-connection $\nabla:\Omega^0(M,E^*)\to \Omega^1(M,E)$ is given by the formula
  $$\la\nabla\sigma,\tau\ra=\iota_\tau d\iota_\sigma d\theta.$$

Suppose that 
$$\begin{tikzpicture}
\mmat[2em]{m}{P&P'\\ E^*[1]&E'^*[1]\\};
\path[->] (m-1-1) edge node {$\tilde\Phi$} (m-1-2)
			edge (m-2-1);
\path[<-] (m-2-2) edge node {$\Phi$} (m-2-1)
			edge (m-1-2);
\end{tikzpicture}$$
is a morphism of Poisson principal $\mbb{R}[2]$ bundles corresponding to the morphism of Manin pairs $$R:(\mbb{E},E)\dasharrow (\mbb{E}',E').$$
Additionally, suppose that $\theta'\in\Omega^1(P',\mbb{R}[2])$ is a connection one-form satisfying $$[\theta',\theta']=0,$$ corresponding to a pseudo-Dirac structure $(W',\nabla')$ in $\mbb{E}'$ transverse to $E'$.

Then $\theta:=\tilde\Phi^*\theta$ is a connection one-form on $P$ satisfying $[\theta,\theta]=0$. It corresponds to the pseudo-Dirac structure $(F,\nabla):=(F',\nabla')\circ R$.

\end{remark}

\begin{remark}
Recall that pseudo-Dirac structures of a Courant algebroid define $\mc{VB}$-Dirac structures in the tangent prolongation. This is analogous to the fact that if $\theta$ is a connection one-form on $P\to E^*[1]$, then $\theta$ defines a degree 2 function $t$ on $TP$ (a flat connection on $TP$). Moreover $[\theta,\theta]=0$ if and only if $[dt,dt]=0$, i.e. the Lagrangian subbundle of $T\mbb{E}$ corresponding to the function $t$ is involutive.
\end{remark}

\chapter{$\mc{LA}$-Courant algebroids}\label{chp:LACour}
\section{$\mc{LA}$-Courant algebroids}

Suppose that $\mbb{E}$ is a Courant algebroid and consider the tangent prolongation $T\mbb{E}$ of $\mbb{E}$. From \cref{ex:TngLAVB} we see that $T\mbb{E}$ is an $\mc{LA}$-vector bundle. In this section, we explore $\mc{VB}$-Courant algebroids which abstract this property. 

Suppose that 
\begin{equation}\label{eq:LACA}\begin{tikzpicture}
\mmat{m}{\mbb{A}&V\\ A& M\\};
\path[->] (m-1-1)	edge (m-1-2)
				edge (m-2-1);
\path[<-] (m-2-2)	edge (m-1-2)
				edge (m-2-1);
\end{tikzpicture}\end{equation} 
is both a $\mc{VB}$-Courant algebroid and an $\mc{LA}$-vector bundle. Since it is an $\mc{LA}$-vector bundle, dualizing $\mbb{A}$ over $V$ yields a double linear Poisson structure on 
\begin{equation}\label{eq:LACADLP}\begin{tikzpicture}
\mmat{m}{\mbb{A}^{*_{\!x}}&V\\ V& M\\};
\path[->] (m-1-1)	edge (m-1-2)
				edge (m-2-1);
\path[<-] (m-2-2)	edge (m-1-2)
				edge (m-2-1);
\end{tikzpicture}\end{equation}

Since the Poisson structure is double linear, the anchor map \begin{equation}\label{eq:LACApi}\pi^\sharp:T^*(\mbb{A}^{*_{\!x}})=T\mbb{A}^{*_{\!x}*_{\!z}}\to T\mbb{A}^{*_{\!x}}\end{equation} defines the morphism of triple vector bundles
\begin{equation}\label{eq:piAsharp}\begin{tikzpicture}[
        back line/.style={densely dotted},
        cross line/.style={preaction={draw=white, -,
           line width=6pt}}]
        
\mmat[1em]{T^*m} at (-3.5,0){
	&T^*\mbb{A}^{*_{\!x}}	&	&\mbb{A}\\
\mbb{A}^{*_{\!x}}	&	&V	&\\
	&\mbb{A}	&	&A\\
V	&	&M	&\\
};

\path[->]
        (T^*m-1-2) edge (T^*m-1-4)
        		edge (T^*m-2-1)
                edge [back line] (T^*m-3-2)
        (T^*m-1-4) edge (T^*m-3-4)
        		edge (T^*m-2-3)
        (T^*m-2-1) edge [cross line] (T^*m-2-3)
                edge (T^*m-4-1)
        (T^*m-3-2) edge [back line] (T^*m-3-4)
        		edge [back line] (T^*m-4-1)
        (T^*m-4-1) edge (T^*m-4-3)
        (T^*m-3-4) edge (T^*m-4-3)
        (T^*m-2-3) edge [cross line] (T^*m-4-3);
  
\mmat[1em]{Tm} at (3.5,0){
	&T\mbb{A}^{*_{\!x}}	&	&TV\\
\mbb{A}^{*_{\!x}}	&	&V	&\\
	&TV	&	&TM\\
V	&	&M	&\\
};

\path[->]
        (Tm-1-2) edge (Tm-1-4)
        		edge (Tm-2-1)
                edge [back line] (Tm-3-2)
        (Tm-1-4) edge (Tm-3-4)
        		edge (Tm-2-3)
        (Tm-2-1) edge [cross line] (Tm-2-3)
                edge (Tm-4-1)
        (Tm-3-2) edge [back line] (Tm-3-4)
        		edge [back line] (Tm-4-1)
        (Tm-4-1) edge (Tm-4-3)
        (Tm-3-4) edge (Tm-4-3)
        (Tm-2-3) edge [cross line] (Tm-4-3);
        
  \path[->] (T^*m) edge node{$\pi^\sharp$} (Tm);
        
\path [->] (8,.5) edge node {$x$} (9,.5)
			edge node {$y$} (8,-.5)
			edge node[swap] {$z$} (7.5,0);
\end{tikzpicture}\end{equation}
%

Dualizing the two triple vector bundles, $T\mbb{A}^{*_{\!x}*_{\!z}}$ and $T\mbb{A}^{*_{\!x}}$ (pictured in \labelcref{eq:piAsharp}), along the $x$-axis yields $T\mbb{A}^{*_{\!x}*_{\!z}*_{\!x}}$ and $T\mbb{A}$, respectively (cf. \cref{ex:dualX}):
%
\begin{equation}\label{eq:PiA}\begin{tikzpicture}[
        back line/.style={densely dotted},
        cross line/.style={preaction={draw=white, -,
           line width=6pt}}]
        
\mmat[1em]{T^*m} at (-3.5,0){
	&T\mbb{A}	&	&\mbb{A}\\
TV	&	&V	&\\
	&TA	&	&A\\
TM	&	&M	&\\
};

\path[->]
        (T^*m-1-2) edge (T^*m-1-4)
        		edge (T^*m-2-1)
                edge [back line] (T^*m-3-2)
        (T^*m-1-4) edge (T^*m-3-4)
        		edge (T^*m-2-3)
        (T^*m-2-1) edge [cross line] (T^*m-2-3)
                edge (T^*m-4-1)
        (T^*m-3-2) edge [back line] (T^*m-3-4)
        		edge [back line] (T^*m-4-1)
        (T^*m-4-1) edge (T^*m-4-3)
        (T^*m-3-4) edge (T^*m-4-3)
        (T^*m-2-3) edge [cross line] (T^*m-4-3);

\mmat[1em]{Tm} at (3.5,0){
	&T\mbb{A}	&	&TV\\
\mbb{A}	&	&V	&\\
	&TA	&	&TM\\
A	&	&M	&\\
};

\path[->]
        (Tm-1-2) edge (Tm-1-4)
        		edge (Tm-2-1)
                edge [back line] (Tm-3-2)
        (Tm-1-4) edge (Tm-3-4)
        		edge (Tm-2-3)
        (Tm-2-1) edge [cross line] (Tm-2-3)
                edge (Tm-4-1)
        (Tm-3-2) edge [back line] (Tm-3-4)
        		edge [back line] (Tm-4-1)
        (Tm-4-1) edge (Tm-4-3)
        (Tm-3-4) edge (Tm-4-3)
        (Tm-2-3) edge [cross line] (Tm-4-3);

  \path[dashed,->] (T^*m) edge node{$\Pi_\mbb{A}$} (Tm);
  
\path [->] (8,.5) edge node {$x$} (9,.5)
			edge node {$y$} (8,-.5)
			edge node[swap] {$z$} (7.5,0);
\end{tikzpicture}\end{equation}
%
Meanwhile, by considering the annihilator, $\Pi_\mbb{A}:=\ann^\natural\big(\gr(\pi^\sharp)\big)$, of the morphism \labelcref{eq:LACApi} (pictured in \labelcref{eq:piAsharp}), one obtains the relation  of triple vector bundles
$$\Pi_\mbb{A}:T\mbb{A}^{*_{\!x}*_{\!z}*_{\!x}}\dasharrow T\mbb{A}.$$

\begin{definition}[$\mc{LA}$-Courant algebroids]
The $\mc{VB}$-Courant algebroid $\mbb{A}$ pictured in \labelcref{eq:LACA}
is called an \emph{$\mc{LA}$-Courant algebroid} if
 the relation of triple vector bundles $$\Pi_\mbb{A}:T\mbb{A}^{*_{\!x}*_{\!z}*_{\!x}}\dasharrow T\mbb{A}$$ pictured in \labelcref{eq:PiA} is a Courant relation, where $T\mbb{A}$ is the tangent prolongation of $\mbb{A}$ (cf. \cref{ex:TngTVB}).

A $\mc{VB}$-Dirac structure $L\subseteq\mbb{A}$ (with support) is called an \emph{$\mc{LA}$-Dirac structure (with support)} if it is also a Lie subalgebroid of the Lie algebroid $\mbb{A}$.  A Courant relation $$R:\mbb{A}_1\dasharrow \mbb{A}_2$$ between two $\mc{LA}$-Courant algebroids is called an $\mc{LA}$-Courant relation if $R\subseteq \mbb{A}_2\times\overline{\mbb{A}_1}$ is an $\mc{LA}$-Dirac structure with support.

If $L$ has support on all of $A$, the pair $(\mbb{A},L)$ is called an \emph{$\mc{LA}$-Manin pair}.
A morphism $$R:(\mbb{A}_1,L_1)\dasharrow(\mbb{A}_2,L_2)$$ of Manin pairs is called a morphism of $\mc{LA}$-Manin pairs if $(\mbb{A}_i,L_i)$ are $\mc{LA}$-Manin pairs and $R$ is an $\mc{LA}$-Courant relation. In particular, $R$ is supported on the graph of a Lie algebroid morphism. 
\end{definition}
\begin{remark}
The compatibility condition  between the Lie algebroid and Courant algebroid structures in the definition above is based quite closely on the compatibility condition for Lie bialgebroids described in \cite[Theorem~6.3]{Mackenzie97}.
\end{remark}

\begin{proposition}\label{prop:LieMetComp}
Suppose that $\mbb{A}$ is an $\mc{LA}$-Courant algebroid. Then the map $\mbb{A}\to \mbb{A}^*_A$ induced by the fibre metric is a morphism of Lie algebroids.
\end{proposition}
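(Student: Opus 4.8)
The plan is to unwind the definitions so that the claim becomes a statement about the relation $\Pi_{\mbb{A}}$ restricted to the appropriate subbundles. Recall that the fibre metric on the Courant algebroid $\mbb{A}\to A$ gives an isomorphism $\phi_{\la\cdot,\cdot\ra}\colon\mbb{A}\to\mbb{A}^*_A$ of vector bundles over $A$; since $\mbb{A}$ is a $\mc{VB}$-Courant algebroid, \cref{prop:BasVBCAfacts} tells us this map is in fact a morphism of double vector bundles, so at the level of double vector bundles it is already known to be well behaved. What must be added is that it also intertwines the Lie algebroid structures on $\mbb{A}\to V$ and on $\mbb{A}^*_A\to ?$. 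So first I would make precise the Lie algebroid structure on $\mbb{A}^*_A$: it is the one obtained by dualizing $\mbb{A}\to V$ in the two ways permitted by the double linear Poisson structure on $\mbb{A}^{*_{\!x}}$, as reviewed in \cref{sec:DLPV} and \cref{ex:CotLAVB}; concretely, $\mbb{A}^*_A=\mbb{A}^{*_{\!y}}$ (the vertical dual), which by \cref{prop:BasVBCAfacts}(1) is canonically $\mbb{A}$ again as a double vector bundle but carries its \emph{own} dual $\mc{LA}$-vector bundle structure.

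Next I would translate the definition of $\mc{LA}$-Courant algebroid into a usable form. The hypothesis is that $\Pi_{\mbb{A}}=\ann^\natural(\gr(\pi^\sharp))\colon T\mbb{A}^{*_{\!x}*_{\!z}*_{\!x}}\dasharrow T\mbb{A}$ is a Courant relation, where $\pi^\sharp$ is the anchor of the double linear Poisson structure on $\mbb{A}^{*_{\!x}}$. The key observation is that the fibre metric isomorphism, together with the canonical identifications $(T\mbb{A})^{*_{\!y}}\cong T(\mbb{A}^{*_{\!y}})$ from \cref{ex:SecDualTng} and \cref{rem:TangLiftTngProDef}, realizes $\gr(\phi_{\la\cdot,\cdot\ra})$ as the intersection of $\Pi_{\mbb{A}}$ (or a suitable face/restriction thereof) with a distinguished pair of double vector subbundles. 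More precisely, I would identify $\gr(\phi_{\la\cdot,\cdot\ra})\subseteq \mbb{A}\times\overline{\mbb{A}^{*_{\!y}}}$ with the subbundle of $T\mbb{A}\times\overline{T\mbb{A}^{*_{\!x}*_{\!z}*_{\!x}}}$ cut out by $\Pi_{\mbb{A}}$, using that $\phi_{\la\cdot,\cdot\ra}$ is the "core" of the tangent-lifted metric isomorphism $d\phi_{\la\cdot,\cdot\ra}\colon T\mbb{A}\to T\mbb{A}^{*_{\!y}}$ described in \cref{rem:TangLiftTngProDef}. Since $\Pi_{\mbb{A}}$ is a Courant relation by hypothesis, and since the graph of $d\phi_{\la\cdot,\cdot\ra}$ is obtained from $\Pi_{\mbb{A}}$ by a clean composition with the graphs of the defining maps (an argument of the same flavour as \cref{prop:AisLA}, where one writes the desired morphism as a composition of $\mc{LA}$-relations and then invokes \cref{def:MorphLA}), one concludes that $\gr(d\phi_{\la\cdot,\cdot\ra})\subseteq T\mbb{A}\times\overline{T\mbb{A}^{*_{\!y}}}$ is an involutive (in fact $\mc{LA}$-Dirac) subbundle; restricting to cores then gives that $\gr(\phi_{\la\cdot,\cdot\ra})\subseteq\mbb{A}\times\overline{\mbb{A}^{*_{\!y}}}$ is a Lie subalgebroid, i.e.\ $\phi_{\la\cdot,\cdot\ra}$ is a Lie algebroid morphism by \cref{def:MorphLA}.

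Assembling the steps: (i) recall $\phi_{\la\cdot,\cdot\ra}$ is a double vector bundle morphism from \cref{prop:BasVBCAfacts}(1); (ii) spell out the $\mc{LA}$-vector bundle structure on $\mbb{A}^{*_{\!y}}$ via \cref{sec:DLPV}; (iii) using \cref{ex:SecDualTng}, \cref{rem:TangLiftTngProDef} and the definition of $\Pi_{\mbb{A}}$, exhibit $\gr(d\phi_{\la\cdot,\cdot\ra})$ as a clean composition of $\Pi_{\mbb{A}}$ with graphs of structure maps, hence as a Courant relation / involutive subbundle; (iv) pass to cores (as in \cref{prop:VBDirIsLaVB}(2)) to deduce that $\gr(\phi_{\la\cdot,\cdot\ra})$ is a Lie subalgebroid, which is exactly the assertion. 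The main obstacle will be step (iii): getting the triple-vector-bundle bookkeeping right — tracking which axis each $*$ dualizes, checking that the relevant compositions are genuinely clean, and verifying that the face of $\Pi_{\mbb{A}}$ one lands on really is $\gr(d\phi_{\la\cdot,\cdot\ra})$ rather than some twist of it by signs coming from $T^*(D^{*_{\!y}})\cong T^*(D^{*_{\!x}})$ (the sign subtlety flagged in the footnote to \cref{ex:CotTVB}). I expect that once the identification is set up carefully the involutivity is automatic from the hypothesis, so essentially all the work is in the diagram chase.
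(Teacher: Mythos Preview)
Your overall instinct—reduce the statement to a property of $\Pi_{\mbb{A}}$—is right, but the mechanism you propose in steps (iii) and (iv) does not work as stated, and the paper's argument takes a different route.

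The gap is in step (iv). Invoking \cref{prop:VBDirIsLaVB}(2) to ``pass to cores'' does not yield what you need: that proposition only identifies the core of a $\mc{VB}$-Dirac structure as $\ann(W)$. It does not give a way of reading off that $\gr(\phi_{\la\cdot,\cdot\ra})$ is a \emph{Lie subalgebroid} of $\mbb{A}^{*_{\!y}}\times\mbb{A}$ for the Lie algebroid structure over $C^*\times V$. More seriously, in step (iii) you are attempting to use \emph{Courant involutivity} of $\Pi_{\mbb{A}}$ (the bracket part of being a Courant relation) to conclude a statement about the Lie algebroid $\mbb{A}\to V$. These are different structures: the Courant bracket lives on $\mbb{A}\to A$, while the Lie algebroid morphism question concerns $\mbb{A}\to V$ and is encoded in the double linear Poisson structure on $\mbb{A}^{*_{\!x}}$. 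There is no obvious way to ``compose $\Pi_{\mbb{A}}$ with graphs of structure maps'' so as to produce $\gr(d\phi_{\la\cdot,\cdot\ra})$ sitting inside a Courant algebroid where involutivity would be meaningful ($T\mbb{A}^{*_{\!y}}$ is not the tangent prolongation of a Courant algebroid).

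The paper's proof instead uses the duality operations $*_{\!x},*_{\!y},*_{\!z}$ of Gracia-Saz--Mackenzie as \emph{functors} on the Wehrheim--Woodward category of triple vector bundles and relations. One first rewrites ``$Q\colon\mbb{A}\to\mbb{A}^{*_{\!y}}$ is a Lie algebroid morphism'' as commutativity of a square of relations involving $\pi^\sharp$ and $TQ^{*_{\!x}}$ (this uses that both Lie algebroid structures come from the single Poisson structure $\pi$ on $\mbb{A}^{*_{\!x}}$, as in \cref{sec:DLPV}). Applying the functor $*_{\!x}$ to that square and using $(\pi^\sharp)^{*_{\!x}}=\Pi_{\mbb{A}}$ together with $((\pi^\sharp)^{*_{\!x}})^{*_{\!y}}=\ann^\natural(\Pi_{\mbb{A}})=\Pi_{\mbb{A}}^\perp$, the square collapses to the condition $\Pi_{\mbb{A}}=\Pi_{\mbb{A}}^\perp$, i.e.\ $\Pi_{\mbb{A}}$ is Lagrangian. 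Thus only the \emph{Lagrangian} part of the hypothesis ``$\Pi_{\mbb{A}}$ is a Courant relation'' is used, not the involutivity you were reaching for. The functorial dualities are doing real work here, not just bookkeeping; your proposal bypasses them and is left without a bridge between the Courant and Lie algebroid sides.
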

\begin{proof}
The proof, which makes use of the duality functors defined by Gracia-Saz and Mackenzie \cite{GraciaSaz:2009ck}, is found in \cref{app:PairIsLAPair}.
\end{proof}

\subsection{Examples}

\begin{example}[Tangent Prolongation of a Courant algebroid]\label{ex:TngProIsLACA}
Suppose that $\mbb{E}$ is a Courant algebroid over $M$, then the tangent prolongation $\mbb{A}=T\mbb{E}$ is a $\mc{LA}$-Courant algebroid. 

Indeed this amounts to showing that the canonical isomorphism $S:TT\mbb{E}\to (TT\mbb{E})^{flip}$ which identifies the double vector bundle 
$$\begin{tikzpicture}
\mmat{m}{TT\mbb{E}&T\mbb{E}\\ T\mbb{E}& \mbb{E}\\};
\path[->] (m-1-1)	edge (m-1-2)
				edge (m-2-1);
\path[<-] (m-2-2)	edge (m-1-2)
				edge (m-2-1);
\end{tikzpicture}$$
with its diagonal reflection, is an automorphism of the Courant algebroid $TT\mbb{E}\to TTM$.

For any $\sigma\in\Gamma(\mbb{E})$, $S$ preserves the sections $\sigma_{TT},\sigma_{CC}\in\Gamma(TT\mbb{E},TTM)$ and interchanges the sections $\sigma_{TC}$ with $\sigma_{CT}$. By \cref{prop:TngProCA}, these sections determine the Courant algebroid structure for the double tangent prolongation  $TT\mbb{E}\to TTM$. Consequently, \cref{eq:TEBrk} implies that $S$ preserves the Courant bracket on $TT\mbb{E}\to TTM$; while \cref{eq:TEPair} implies that $S$ is an isomorphism of quadratic vector bundles. It follows that $S$ is an automorphism  of the Courant algebroid $TT\mbb{E}\to TTM$.

If $L\subseteq \mbb{E}$ is a Dirac structure with support on $S\subseteq M$,  then 
$$\begin{tikzpicture}
\mmat{m} at (-2,0) {TL&L\\ TS& S\\};
\path[->] (m-1-1)	edge (m-1-2)
				edge (m-2-1);
\path[<-] (m-2-2)	edge (m-1-2)
				edge (m-2-1);
		
\draw (0,0) node {$\subseteq$};
				
\mmat{m2} at (2,0) {T\mbb{E}&\mbb{E}\\ TM& M\\};
\path[->] (m2-1-1)	edge (m2-1-2)
				edge (m2-2-1);
\path[<-] (m2-2-2)	edge (m2-1-2)
				edge (m2-2-1);
\end{tikzpicture}$$
is a $\mc{VB}$-Dirac structure with support on $TS\subseteq TM$ (cf. \cref{ex:DirStrTngLf}). Since $TL\subseteq T\mbb{E}$ is also a Lie subalgebroid, it follows that $TL$ is a $\mc{LA}$-Dirac structure (with support).
\end{example}

\begin{example}\label{ex:AtiyahLACA}
Suppose a Lie group $G$ acts freely and properly by automorphisms on a Courant algebroid $\mbb{E}$. Then $G$ acts freely and properly by automorphisms on the $\mc{LA}$-Courant algebroid
$$\begin{tikzpicture}
\mmat{m2}{T\mbb{E}&\mbb{E}\\ TM& M\\};
\path[->] (m2-1-1)	edge (m2-1-2)
				edge (m2-2-1);
\path[<-] (m2-2-2)	edge (m2-1-2)
				edge (m2-2-1);
\end{tikzpicture}$$
 So the quotient,
$$\begin{tikzpicture}
\mmat{m2}{T\mbb{E}/G&\mbb{E}/G\\ TM/G& M/G\\};
\path[->] (m2-1-1)	edge (m2-1-2)
				edge (m2-2-1);
\path[<-] (m2-2-2)	edge (m2-1-2)
				edge (m2-2-1);
\end{tikzpicture}$$
is an $\mc{LA}$-Courant algebroid. Here $T\mbb{E}/G\to \mbb{E}/G$ and $TM/G\to M/G$ are the Atiyah Lie algebroids corresponding to the principal $G$ bundles $\mbb{E}$ and $M$ (cf. \cref{eq:AtiyahLieAlg}).
\end{example}

\begin{example}\label{ex:FreeLADir}
Suppose that
$$\begin{tikzpicture}
\mmat{m}{\mbb{A}&V\\ A& M\\};
\path[->] (m-1-1)	edge node {$q_{\mbb{A}/V}$} (m-1-2)
				edge (m-2-1);
\path[<-] (m-2-2)	edge (m-1-2)
				edge (m-2-1);
\end{tikzpicture}$$
is a $\mc{LA}$-Courant algebroid. \Cref{prop:FreeVBDir} shows that $V\subseteq\mbb{A}$ is a $\mc{VB}$-Dirac structure. Since it is also (trivially) a Lie subalgebroid, $V\subseteq\mbb{A}$ is an $\mc{LA}$-Dirac structure.

\Cref{prop:FreeVBDir} also shows that $\mbb{A}_C:=\on{ker}(q_{\mbb{A}/V})$ is a $\mc{VB}$-Dirac structure. However, in general, $\mbb{A}_C\subseteq\mbb{A}$ is not a Lie subalgebroid, so it is not an $\mc{LA}$-Dirac structure.
\end{example}

\begin{example}[$\mc{LA}$-Courant algebroids over a point]\label{ex:LACApt}
Suppose $\g$ is a Lie algebra and $\beta\in S^2(\g)^\g$ is an invariant symmetric bilinear form on $\g^*$. 
Let 
\begin{equation}\label{eq:LACApt1}\begin{tikzpicture}
\mmat{m}{\g\ltimes\g^*&\g\\ \ast&\ast\\};
\path[->] (m-1-1)	edge (m-1-2)
				edge (m-2-1);
\path[<-] (m-2-2)	edge (m-1-2)
				edge (m-2-1);
\end{tikzpicture}\end{equation}
be the $\mc{VB}$-Courant algebroid
from \cref{ex:VBCAoverpt}.
The vector space $\g^*$ acts on the affine space $\g$ via the map  $$\beta^\sharp:\g^*\to \g$$ defined by $$\beta^\sharp(\mu):=\beta(\mu,\cdot),\quad\mu\in\g^*.$$
Thus $\g\ltimes\g^*\to\g$ carries the structure of an action Lie algebroid. With these structures, \labelcref{eq:LACApt1} is a $\mc{LA}$-Courant algebroid.

Suppose $\mf{h}\subseteq\g$ is a Lie subalgebra, and 
\begin{equation}\label{eq:LADSpt1}\begin{tikzpicture}
\mmat{m1} at (-3,0){\h\ltimes\on{ann}(\h)&\h\\ \ast& \ast\\};
\path[->] (m1-1-1)	edge (m1-1-2)
				edge (m1-2-1);
\path[<-] (m1-2-2)	edge (m1-1-2)
				edge (m1-2-1);
\mmat{m2} at (2,0) {\g\ltimes\g^*&\g\\ \ast& \ast\\};
\path[->] (m2-1-1)	edge (m2-1-2)
				edge (m2-2-1);
\path[<-] (m2-2-2)	edge (m2-1-2)
				edge (m2-2-1);
\draw (0,0) node {$\subseteq$};
\end{tikzpicture}\end{equation}
is the $\mc{VB}$-Dirac structure described in \cref{ex:VBDSoverpt}. The Lie subalgebra $\h$ is said to be \emph{$\beta$-coisotropic} \cite{LiBland:2011vqa} if $$\beta^\sharp(\ann(\h))\subseteq\h.$$  In this case, $$\h\ltimes\on{ann}(\h)\to \h$$ is a Lie subalgebroid of the action Lie algebroid $\g\ltimes\g^*\to\g$; and \labelcref{eq:LADSpt1} is an $\mc{LA}$-Dirac structure.
\end{example}

\begin{remark}
An invariant symmetric element $\beta\in S^2(\g)^\g$ is called a quasi-triangular structure for $\g$ and the pair $(\g,\beta)$ is called a \emph{quasi-triangular Lie algebra}.
Drinfel'd \cite{Drinfeld:1989tu} showed that quasi-triangular Lie algebras are in one-to-one correspondence with Manin pairs $(\mf{d},\g)$ together with a Lie algebra ideal $\h\subset\mf{d}$ complementary to $\g$ (i.e. $\mf{d}=\g\oplus\h$ as a vector space). 
\end{remark}

\begin{proposition}
$\mc{LA}$-Courant algebroids $$\begin{tikzpicture}
\mmat{m}{\mbb{A}&V\\ \ast& \ast\\};
\path[->] (m-1-1)	edge (m-1-2)
				edge (m-2-1);
\path[<-] (m-2-2)	edge (m-1-2)
				edge (m-2-1);
\end{tikzpicture}$$ over a point are all of the form described in \cref{ex:LACApt}. Thus there is a one-to-one correspondence between $\mc{LA}$-Courant algebroids over a point and quasi-triangular Lie algebras $(\g,\beta)$.

Similarly, any $\mc{LA}$-Dirac structure
$$\begin{tikzpicture}
\mmat{m1} at (-2,0){L&W\\ \ast& \ast\\};
\path[->] (m1-1-1)	edge (m1-1-2)
				edge (m1-2-1);
\path[<-] (m1-2-2)	edge (m1-1-2)
				edge (m1-2-1);
\mmat{m2} at (2,0) {\g\ltimes\g^*&\g\\ \ast& \ast\\};
\path[->] (m2-1-1)	edge (m2-1-2)
				edge (m2-2-1);
\path[<-] (m2-2-2)	edge (m2-1-2)
				edge (m2-2-1);
\draw (0,0) node {$\subseteq$};
\end{tikzpicture}$$
is of the form described in \cref{ex:LACApt} for a $\beta$-coisotropic Lie subalgebra $\h\subseteq\g$.
\end{proposition}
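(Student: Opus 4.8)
The plan is to reduce everything to the $\mc{VB}$-Courant algebroid structure, which was already classified in \cref{prop:VBCAoverpt}, and then bolt on the $\mc{LA}$-vector bundle structure. First I would observe that, since $\mc{LA}$-Courant algebroids over a point are in particular $\mc{VB}$-Courant algebroids over a point, \cref{prop:VBCAoverpt} already forces $\mbb{A}\cong\g\ltimes\g^*$ as a $\mc{VB}$-Courant algebroid for a unique Lie algebra $\g$, with the core identified with $V^*=\g^*$ and the metric given by the canonical pairing. So the only thing left to pin down is the extra datum: a Lie algebroid structure on $\mbb{A}\to V=\g$. Since $\mbb{A}\to \g$ is a double vector bundle with side bundle the point, it is just the trivial bundle $\g\ltimes\g^*\to\g$ with core $\g^*$; a Lie algebroid structure making the double vector bundle an $\mc{LA}$-vector bundle is, by \cref{def:DLACAVB}, one for which $\gr(+_{\mbb{A}/A})=\gr(+_{\mbb{A}/\ast})$ — i.e. addition over the point — is a Lie subalgebroid, equivalently the induced Poisson structure on $\mbb{A}^{*_{\!x}}$ is linear over both bases. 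I would show that this, combined with \cref{prop:AisLA} (which gives a Lie algebroid structure on the side bundle $\ast$, forcing it to be trivial) and the requirement that $0_{\mbb{A}/A}:\ast\to\mbb{A}$ and $q_{\mbb{A}/A}:\mbb{A}\to\ast$ be Lie algebroid morphisms, forces $\mbb{A}\to\g$ to be an \emph{action Lie algebroid} for a linear action of the abelian Lie algebra $\g^*$ on the affine space $\g$, i.e. given by a linear map $\beta^\sharp:\g^*\to\g$.

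Next I would extract the two remaining constraints on $\beta^\sharp$. The compatibility of the $\mc{LA}$-structure with the metric is exactly \cref{prop:LieMetComp}: the map $\mbb{A}\to\mbb{A}^*_A$ induced by the fibre metric must be a Lie algebroid morphism. Unwinding what this says over a point — the metric identification swaps $\g$ and $\g^*$ appropriately — should force $\beta^\sharp$ to be symmetric, i.e. $\beta\in S^2(\g)$. Then the condition that $\Pi_\mbb{A}$ be a Courant relation (the defining axiom of an $\mc{LA}$-Courant algebroid) unwinds, over a point, to the statement that $\beta^\sharp$ intertwines the coadjoint-type action appropriately with the bracket, which is precisely $\g$-invariance, $\beta\in S^2(\g)^\g$. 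Conversely, \cref{ex:LACApt} already verifies that for any $\beta\in S^2(\g)^\g$ the resulting structure \emph{is} an $\mc{LA}$-Courant algebroid, so this establishes the bijection in both directions. One should double-check that distinct pairs $(\g,\beta)$ give non-isomorphic $\mc{LA}$-Courant algebroids, which is clear since $\g$ is recovered as the Lie algebra attached to the underlying $\mc{VB}$-Courant algebroid and $\beta^\sharp$ is recovered as the anchor $\mbb{A}\to T\g = \g\times\g$ restricted to the core $\g^*$.

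For the statement about $\mc{LA}$-Dirac structures: by \cref{prop:VBCAoverpt}, any $\mc{VB}$-Dirac structure $L\subseteq\g\ltimes\g^*$ is of the form $\h\ltimes\ann(\h)$ for a subalgebra $\h\subseteq\g$, with side bundle $W=\h$. The additional requirement for $L$ to be an $\mc{LA}$-Dirac structure is that $L\subseteq\mbb{A}$ be a Lie subalgebroid of $\mbb{A}\to\g$. Restricting the action Lie algebroid structure, $\h\ltimes\ann(\h)$ is a subbundle over $\h\subseteq\g$, and it is a Lie subalgebroid precisely when the anchor $\beta^\sharp$ maps $\ann(\h)$ into $T\h$, i.e. $\beta^\sharp(\ann(\h))\subseteq\h$ — exactly the $\beta$-coisotropy condition — and when $\Gamma(\ann(\h))$ is closed under the (abelian) bracket, which is automatic. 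Again \cref{ex:LACApt} supplies the converse direction. So the main obstacle is purely bookkeeping: carefully unwinding the triple-vector-bundle Courant relation condition on $\Pi_\mbb{A}$ and the morphism condition in \cref{prop:LieMetComp} in the very concrete case where the outer base is a point, and matching the resulting algebraic identities with symmetry and invariance of $\beta$. Everything else is an appeal to the already-established classifications in \cref{prop:VBCAoverpt} and the examples.
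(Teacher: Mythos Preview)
Your plan is correct and tracks the paper's proof closely. Both arguments first invoke \cref{prop:VBCAoverpt} to reduce to $\mbb{A}\cong\g\ltimes\g^*$, then observe that the remaining $\mc{LA}$-vector bundle structure forces $\mbb{A}\to\g$ to be an action Lie algebroid for a linear map $\beta^\sharp:\g^*\to\g$, and finally identify symmetry and $\g$-invariance of $\beta$ as the two conditions singled out by the $\mc{LA}$-Courant axiom; the $\mc{LA}$-Dirac classification via $\beta$-coisotropy is handled identically.

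The one packaging difference: for symmetry you propose to invoke \cref{prop:LieMetComp} and unwind it, whereas the paper writes out $\Pi_{\mbb{A}}$ explicitly and checks the Lagrangian condition by hand (finding $\la\beta^\sharp\mu,\nu\ra=\la\mu,\beta^\sharp\nu\ra$). These are the same computation, since the proof of \cref{prop:LieMetComp} shows precisely that the metric map being a Lie algebroid morphism is equivalent to $\Pi_{\mbb{A}}$ being Lagrangian. The paper's direct computation has the advantage of giving both directions at once (``$\Pi_{\mbb{A}}$ is a Courant relation if and only if $\beta$ is symmetric and invariant''), whereas your plan leans on \cref{ex:LACApt} for the converse --- which is fine, but note that the example is stated there without proof, so you would still need to do the explicit check somewhere.
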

The proof can be found in \cref{app:PairIsLAPair}.

\begin{example}[Standard Courant algebroid over a Lie algebroid]\label{ex:StdLAC}
Let $A$ be a Lie algebroid. Consider the $\mc{VB}$-Courant algebroid $\mbb{A}:=\mbb{T}A$ described in \cref{ex:StdVBCAoverVB}. From \cref{ex:TngLAVB,ex:CotLAVB} we see that both $TA^{flip}$ and $(T^*A)^{flip}$ are $\mc{LA}$-vector bundles, so their direct sum,
$$\begin{tikzpicture}
\mmat{m}{\mbb{T}A&TM\oplus A^*\\ A& M\\};
\path[->] (m-1-1)	edge (m-1-2)
				edge (m-2-1);
\path[<-] (m-2-2)	edge (m-1-2)
				edge (m-2-1);
\end{tikzpicture}$$
is naturally both an $\mc{LA}$-vector bundle and a $\mc{VB}$-Courant algebroid. 

Since $A$ is a Lie algebroid, $A^*$ carries a  linear Poisson structure. Let 
$$\begin{tikzpicture}
\mmat{m} at (-3,0) {T^*A^*&A\\ A^*& M\\};
\path[->] (m-1-1)	edge (m-1-2)
				edge (m-2-1);
\path[<-] (m-2-2)	edge (m-1-2)
				edge (m-2-1);

\mmat{m2} at (3,0) {TA^*&TM\\ A^*& M\\};
\path[->] (m2-1-1)	edge (m2-1-2)
				edge (m2-2-1);
\path[<-] (m2-2-2)	edge (m2-1-2)
				edge (m2-2-1);
  \path[->] (m) edge node{$\pi^\sharp$} (m2);
\end{tikzpicture}$$
denote the anchor map for the corresponding cotangent Lie algebroid (cf. \cref{ex:CotLieAlg}). Taking the horizontal dual yields the relation
$$\begin{tikzpicture}
\mmat{m} at (-3,0) {TA&A\\ TM& M\\};
\path[->] (m-1-1)	edge (m-1-2)
				edge (m-2-1);
\path[<-] (m-2-2)	edge (m-1-2)
				edge (m-2-1);

\mmat{m2} at (3,0) {TA&TM\\ A& M\\};
\path[->] (m2-1-1)	edge (m2-1-2)
				edge (m2-2-1);
\path[<-] (m2-2-2)	edge (m2-1-2)
				edge (m2-2-1);
  \path[dashed,->] (m) edge node{$\Pi_0$} (m2);
\end{tikzpicture}$$
Finally, taking $\Pi_{\mbb{T}A}:=R_{\Pi_0}$ to be the standard lift of $\Pi_0$ (cf. \cref{ex:StdDiracStr}) defines the Courant relation \labelcref{eq:PiA}
$$\begin{tikzpicture}[
        back line/.style={densely dotted},
        cross line/.style={preaction={draw=white, -,
           line width=6pt}}]
        
\mmat[.5em]{T^*m} at (-4.5,0) {
	&T\mbb{T}A	&	&\mbb{T}A\\
T(TM\oplus A^*)	&	&TM\oplus A^*	&\\
	&TA	&	&A\\
TM	&	&M	&\\
};

\path[->]
        (T^*m-1-2) edge (T^*m-1-4)
        		edge (T^*m-2-1)
                edge [back line] (T^*m-3-2)
        (T^*m-1-4) edge (T^*m-3-4)
        		edge (T^*m-2-3)
        (T^*m-2-1) edge [cross line] (T^*m-2-3)
                edge (T^*m-4-1)
        (T^*m-3-2) edge [back line] (T^*m-3-4)
        		edge [back line] (T^*m-4-1)
        (T^*m-4-1) edge (T^*m-4-3)
        (T^*m-3-4) edge (T^*m-4-3)
        (T^*m-2-3) edge [cross line] (T^*m-4-3);

\mmat[.5em]{Tm} at (4.5,0) {
	&T\mbb{T}A	&	&T(TM\oplus A^*)\\
\mbb{T}A	&	&TM\oplus A^*	&\\
	&TA	&	&TM\\
A	&	&M	&\\
};

\path[->]
        (Tm-1-2) edge (Tm-1-4)
        		edge (Tm-2-1)
                edge [back line] (Tm-3-2)
        (Tm-1-4) edge (Tm-3-4)
        		edge (Tm-2-3)
        (Tm-2-1) edge [cross line] (Tm-2-3)
                edge (Tm-4-1)
        (Tm-3-2) edge [back line] (Tm-3-4)
        		edge [back line] (Tm-4-1)
        (Tm-4-1) edge (Tm-4-3)
        (Tm-3-4) edge (Tm-4-3)
        (Tm-2-3) edge [cross line] (Tm-4-3);

  \path[dashed,->] (T^*m) edge node{$\Pi_{\mbb{T}A}$} (Tm);
\end{tikzpicture}$$
Thus the relation \labelcref{eq:PiA} (for $\mbb{A}=\mbb{T}A$) is a Courant relation, which shows that $\mbb{T}A$ is an $\mc{LA}$-Courant algebroid.

The double vector subbundles $TA^{flip}$:
$$\begin{tikzpicture}
\mmat{m} at (-2.5,0) {TA&TM\\ A&M\\};
\path[->]
	(m-1-1) edge (m-1-2)
		edge (m-2-1);
\path[<-] 
	(m-2-2) edge (m-1-2)
		edge (m-2-1);

\draw (0,0) node {$\subseteq$};

\mmat{m1} at (3,0) {\mbb{T}A&TM\oplus A^*\\ A&M\\};
\path[->]
	(m1-1-1) edge (m1-1-2)
		edge (m1-2-1);
\path[<-] 
	(m1-2-2) edge (m1-1-2)
		edge (m1-2-1);
\end{tikzpicture}$$
 and $T^*A^{flip}$:
$$\begin{tikzpicture}
\mmat{m} at (-2.5,0) {T^*A&A^*\\ A&M\\};
\path[->]
	(m-1-1) edge (m-1-2)
		edge (m-2-1);
\path[<-] 
	(m-2-2) edge (m-1-2)
		edge (m-2-1);

\draw (0,0) node {$\subseteq$};

\mmat{m1} at (3,0) {\mbb{T}A&TM\oplus A^*\\ A&M\\};
\path[->]
	(m1-1-1) edge (m1-1-2)
		edge (m1-2-1);
\path[<-] 
	(m1-2-2) edge (m1-1-2)
		edge (m1-2-1);
\end{tikzpicture}$$
are both $\mc{LA}$-Dirac structures (cf. \cref{ex:TngLAVB,ex:CotLAVB,ex:StdVBCAoverVB}).
\end{example}

\begin{example}\label{ex:LADirMan1}
Suppose that $(\mf{d},\beta)$ is a quasi-triangular Lie algebra and $\g\subseteq \mf{d}$ is a $\beta$-coisotropic Lie subalgebra. As explained by Meinrenken and the author in \cite[Section 3.2]{LiBland:2011vqa}, there exists a unique Manin pair $(\mf{q},\g)$ together with a morphism of Lie algebras $f:\mf{q}\to \mf{d}$ such that
$f$ restricts to the identity map on $\g\subseteq\mf{q}$, and
\begin{equation}\label{eq:fPoisMap}f(\gamma)=\beta,\end{equation} where $\gamma\in S^2(\mf{q})^\mf{q}$ is the dual metric on $\mf{q}^*$.

We now describe a $\mc{LA}$-Courant algebroid structure on 
\begin{equation}\label{eq:DiracManUnRes}\begin{tikzpicture}
\mmat[1em]{m}{T\mf{q}\times \mf{d}&\mf{q}\\ \mf{d}& \ast\\};
\path[->] (m-1-1)	edge (m-1-2)
				edge (m-2-1);
\path[<-] (m-2-2)	edge (m-1-2)
				edge (m-2-1);
\end{tikzpicture}\end{equation}
for which
$$\begin{tikzpicture}
\mmat[1em]{m1} at (-2,0) {T\mf{g}\times \mf{d}&\mf{g}\\ \mf{d}& \ast\\};
\path[->] (m1-1-1)	edge (m1-1-2)
				edge (m1-2-1);
\path[<-] (m1-2-2)	edge (m1-1-2)
				edge (m1-2-1);

\draw (0,0) node {$\subseteq$};

\mmat[1em]{m} at (2,0) {T\mf{q}\times \mf{d}&\mf{q}\\ \mf{d}& \ast\\};
\path[->] (m-1-1)	edge (m-1-2)
				edge (m-2-1);
\path[<-] (m-2-2)	edge (m-1-2)
				edge (m-2-1);
\end{tikzpicture}$$ is an $\mc{LA}$-Dirac structure.



First we describe the $\mc{VB}$-Courant algebroid structure on \labelcref{eq:DiracManUnRes}. The tangent lifts, $\beta_T\in S^2(T\mf{d})^{T\mf{d}}$ and $\gamma_T\in S^2(T\mf{q})^{T\mf{q}}$,  of $\beta$ and $\gamma$, are determined by the equations 
\begin{align}\label{eq:TngJLift}\beta_T^\sharp&=d\beta^\sharp:T\mf{d}^*\to T\mf{d},  &\gamma_T^\sharp=d\gamma^\sharp:T\mf{q}^*\to T\mf{q}.\end{align} They satisfy 
\begin{equation}\label{eq:TfPoisMap}df(\gamma_T)=\beta_T.\end{equation}

Let $D$ be the simply connected Lie group with Lie algebra $\mf{d}$. The stabilizer of the natural $T\mf{d}$ action on $\mf{d}\cong TD/D$ at the origin is $\mf{d}\subset T\mf{d}$.  \Cref{eq:TngJLift} implies that $$\beta_T^\sharp(\on{ann}(\mf{d}))\subseteq \mf{d},$$ so the stabilizer at the origin is $\beta_T$-coisotropic. Since all the stabilizers are conjugate, they are all $\beta_T$-coisotropic. As a result, \cref{eq:TfPoisMap} implies that the induced action of $T\mf{q}$ on $\mf{d}\cong TD/D$ via the map $df:T\mf{q}\to T\mf{d}$ has coisotropic stabilizers. 
%
%
%
%
In conclusion,  \labelcref{eq:DiracManUnRes}
is naturally an action Courant algebroid (cf. \cref{ex:ActCourAlg}), whose anchor map we denote by $\mbf{a}$.


Next we describe the compatible $\mc{LA}$-vector bundle structure on \labelcref{eq:DiracManUnRes}: both
$$\begin{tikzpicture}
\mmat[1em]{m}{T\mf{q}&\mf{q}\\ \ast& \ast\\};
\path[->] (m-1-1)	edge (m-1-2)
				edge (m-2-1);
\path[<-] (m-2-2)	edge (m-1-2)
				edge (m-2-1);
\end{tikzpicture}$$
(cf. \cref{ex:LACApt}) and the trivial $\mc{VB}$-Courant algebroid over the vector space $\mf{d}$,
$$\begin{tikzpicture}
\mmat[1em]{m}{\mf{d}&\ast\\ \mf{d}& \ast\\};
\path[->] (m-1-1)	edge (m-1-2)
				edge (m-2-1);
\path[<-] (m-2-2)	edge (m-1-2)
				edge (m-2-1);
\end{tikzpicture}$$
 are $\mc{LA}$-Courant algebroids. Thus their direct product, \labelcref{eq:DiracManUnRes}, carries a natural $\mc{LA}$-vector bundle structure. Moreover, the action of $T\mf{q}$ on $\mf{d}$ is compatible with this $\mc{LA}$-vector bundle structure, that is $(\mbf{a}\times\mbf{a})(\Pi_{T\mf{q}})\subseteq T(\Pi_{\mf{d}})$. 
\end{example}

\begin{example}[$\mc{LA}$-Courant algebroids associated to Dirac Manin triples]\label{ex:LADirMan}
A Dirac Manin triple \cite{LiBland:2011vqa,LiBland:2010wi} is a triple $(\mf{d},\g;\h)_\beta$ of Lie algebras together with a symmetric invariant element $\beta\in S^2(\mf{d})^\mf{d}$ such that 
\begin{itemize}
\item $\g$ is $\beta$-coisotropic, (i.e. $\beta^\sharp(\ann(\g))\subseteq\mf{g}$), and
\item $\g,\h\subset\mf{d}$ are complementary Lie subalgebras (i.e. $\mf{d}=\g\oplus\h$ as a vector space).
\end{itemize}
Let $(\mf{q},\g)$ be the Manin pair and $f:\mf{q}\to\mf{d}$ be the morphism of Lie algebras discussed in \cref{ex:LADirMan1} and described in  \cite[Section 3.2]{LiBland:2011vqa}. 
As explained in \cref{ex:LADirMan1},
$T\mf{q}\times \mf{d}$ is naturally an $\mc{LA}$-Courant algebroid, and
$$\begin{tikzpicture}
\mmat[1em]{m1} at (-2,0) {T\mf{g}\times \mf{d}&\mf{g}\\ \mf{d}& \ast\\};
\path[->] (m1-1-1)	edge (m1-1-2)
				edge (m1-2-1);
\path[<-] (m1-2-2)	edge (m1-1-2)
				edge (m1-2-1);

\draw (0,0) node {$\subseteq$};

\mmat[1em]{m} at (2,0) {T\mf{q}\times \mf{d}&\mf{q}\\ \mf{d}& \ast\\};
\path[->] (m-1-1)	edge (m-1-2)
				edge (m-2-1);
\path[<-] (m-2-2)	edge (m-1-2)
				edge (m-2-1);
\end{tikzpicture}$$ is an $\mc{LA}$-Dirac structure.

We now construct an $\mc{LA}$-Courant algebroid over $\h$, using the pull-back construction from Section~\ref{sec:pullback}.
Let $i:\h\to\mf{d}$ be the inclusion, and let 
$$\mbb{A}_{(\mf{d},\g;\h)_\beta}:=i^!(T\mf{q}\times \mf{d}),\quad\quad E_{(\mf{d},\g;\h)_\beta}:=(T\g\times\mf{d})\circ P_i$$ be the pull-back to $\h$, where $$P_i:i^!(T\mf{q}\times \mf{d})\dasharrow T\mf{q}\times\mf{d}$$ is the canonically associated Courant relation.

In summary, we have associated to the Dirac Manin triple $(\mf{d},\g;\h)_\beta$ the $\mc{LA}$-Manin pair $$(\mbb{A}_{(\mf{d},\g;\h)_\beta},E_{(\mf{d},\g;\h)_\beta})$$ over the Lie algebra $\h$. When $\beta$ is non-degenerate, this $\mc{LA}$-Manin pair will be central to our integration procedure for q-Poisson $(\mf{d},\g)$-structures. More generally, as we shall explain in \cref{ex:DirLieInt}, this $\mc{LA}$-Manin pair is the infinitesimal version of the Dirac Lie groups associated to $(\mf{d},\g;\h)_\beta$, described in \cite{LiBland:2010wi,LiBland:2011vqa}.

\end{example}


\begin{proposition}\label{prop:CoLieSub}
Let $\mbb{E}$ be a Courant algebroid.
 $\mc{LA}$-Dirac structures  $L\subseteq T\mbb{E}$ are in one-to-one correspondence with pseudo-Dirac structures $(W,\nabla)$ in $\mbb{E}$ such that $W$ is \emph{coisotropic} (i.e. $W^\perp\subseteq W$) and $\nabla$ restricts to a flat connection on $W/W^\perp$.
\end{proposition}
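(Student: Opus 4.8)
The plan is to use the equivalence of \cref{thm:LieSubIsVBDir} between $\mc{VB}$-Dirac structures $L\subseteq T\mbb{E}$ and pseudo-Dirac structures $(W,\nabla)$ in $\mbb{E}$, and to match the extra condition — namely that $L$ be a Lie subalgebroid of the $\mc{LA}$-vector bundle $T\mbb{E}\to\mbb{E}$ — against the condition that $W$ be coisotropic and that $\nabla$ descend to a flat connection on $W/W^\perp$. Recall that $T\mbb{E}$ is an $\mc{LA}$-Courant algebroid (\cref{ex:TngProIsLACA}), and that the Lie algebroid structure on $T\mbb{E}\to\mbb{E}$ is the tangent prolongation: on the tangent and core lifts it is given by \cref{eq:LATngPro}, $[\sigma_T,\tau_T]=[\sigma,\tau]_T$, $[\sigma_T,\tau_C]=[\sigma,\tau]_C$, $[\sigma_C,\tau_C]=0$, with the anchor $\mbf{a}_{T\mbb{E}/\mbb{E}}=d\,\mbf{a}_\mbb{E}$ (interpreting $T\mbb{E}\to\mbb{E}$ as the prolongation of the Lie algebroid-like bracket, or rather reading $T\mbb{E}\to\mbb{E}$ in the flipped picture). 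The key point is that $L\subseteq T\mbb{E}$, being a $\mc{VB}$-Dirac structure, is spanned over $C^\infty(\mbb{E})$ by the linear sections $\sigma_T$ with $\nabla\sigma=0$ (\cref{prop:fltSec}) together with the core sections $\gamma_C$ for $\gamma\in\Gamma(W^\perp)$, since the core of $L$ is $\ann(W)\subseteq \mbb{E}^*\cong\mbb{E}$, which under the metric identification is $W^\perp$ (\cref{prop:VBDirIsLaVB}).

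First I would establish the ``only if'' direction. Suppose $L$ is a Lie subalgebroid of $T\mbb{E}\to\mbb{E}$. Testing closure of $\Gamma(L,\mbb{E})$ under the prolongation bracket on core sections: for $\gamma,\gamma'\in\Gamma(W^\perp)$ we have $[\gamma_C,\gamma'_C]=0$, which gives nothing; but the subbundle $L\to\mbb{E}$ must be a \emph{subbundle over $\mbb{E}$} whose intersection with the core $\mbb{E}\subseteq T\mbb{E}$ (the core of $T\mbb{E}\to\mbb{E}$ is $\mbb{E}$ itself) is a sub-vector-bundle — and requiring this intersection $W^\perp$ to sit inside $W$ is exactly the coisotropy condition $W^\perp\subseteq W$; I would extract this from the structure of the double vector subbundle $L$ together with the requirement that $L\to\mbb{E}$ is a Lie subalgebroid with the correct core, comparing cores via \cref{prop:VBDirIsLaVB} applied now to the flipped picture $L^{flip}\subseteq (T\mbb{E})^{flip}$. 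Next, for a flat section $\sigma$ (i.e. $\nabla\sigma=0$, so $\sigma_T\in\Gamma(L,\mbb{E})$) and a core section $\gamma_C$ with $\gamma\in\Gamma(W^\perp)$, closure under the bracket forces $[\sigma_T,\gamma_C]=[\sigma,\gamma]_{C}\in\Gamma(L,\mbb{E})$, which by the description of the core means $[\sigma,\gamma]\in\Gamma(W^\perp)$; and closure of $[\sigma_T,\tau_T]=[\sigma,\tau]_T$ forces $[\sigma,\tau]$ to again be flat, i.e. $\nabla[\sigma,\tau]=0$ (this also follows from \cref{prop:fltSec}). The content is then: flat sections span $W$ modulo $W^\perp$ locally, $W^\perp$ is $\nabla$-stable under brackets with flat sections, and the induced bracket on $W/W^\perp$ of flat classes is again flat — which is precisely the assertion that $\nabla$ descends to $W/W^\perp$ and the descended connection is flat (its curvature measured against flat sections vanishes).

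For the ``if'' direction I would run the argument in reverse: assume $(W,\nabla)$ is a pseudo-Dirac structure with $W^\perp\subseteq W$ and $\bar\nabla$ flat on $W/W^\perp$, let $L\subseteq T\mbb{E}$ be the associated $\mc{VB}$-Dirac structure, and verify that the $C^\infty(\mbb{E})$-span of $\{\sigma_T:\nabla\sigma=0\}\cup\{\gamma_C:\gamma\in\Gamma(W^\perp)\}$ is closed under the prolongation bracket and under multiplication by functions via the Leibniz rule. Flatness of $\bar\nabla$ guarantees that locally there is a basis of flat sections of $W/W^\perp$; lifting these to flat-modulo-$W^\perp$ sections of $W$ and using that $W^\perp$ is preserved, one checks the three bracket relations \cref{eq:LATngPro} stay inside $L$: $[\sigma_T,\tau_T]=[\sigma,\tau]_T$ is in $L$ because $[\sigma,\tau]$ is flat mod $W^\perp$ and one corrects by a core section in $\Gamma(W^\perp)$; $[\sigma_T,\gamma_C]=[\sigma,\gamma]_C$ is in $L$ because $[\sigma,\gamma]\in\Gamma(W^\perp)$; and $[\gamma_C,\gamma'_C]=0$. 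Then $L$ is involutive as a subbundle of the Lie algebroid $T\mbb{E}\to\mbb{E}$, hence a Lie subalgebroid (using \cref{def:SubLA}, i.e. coisotropy of $\ann(L)$, which follows from involutivity of $L$ together with $L$ being a subbundle), so $L$ is an $\mc{LA}$-Dirac structure.

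The main obstacle I anticipate is bookkeeping the two different ``cores'': $T\mbb{E}$ carries the core $\mbb{E}$ when regarded as $T\mbb{E}\to TM$ with side $\mbb{E}$, but for the $\mc{LA}$-vector bundle structure the relevant double vector bundle is the flip $(T\mbb{E})^{flip}$, with side bundles $\mbb{E}$ and $TM$; the condition that $L^{flip}$ be a Lie subalgebroid of $(T\mbb{E})^{flip}$ involves the intersection of $L$ with the \emph{$\mbb{E}$-side} zero-section rather than with the core, and translating ``$L$ is a sub-double-vector-bundle that is also a Lie subalgebroid over $\mbb{E}$'' into a clean statement about $W$, $W^\perp$, and $\nabla$ requires carefully tracking which bracket relation in \cref{eq:LATngPro} tests which piece of the data. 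Making precise that ``$\bar\nabla$ flat on $W/W^\perp$'' is exactly equivalent to ``$[\cdot,\cdot]$ of flat sections is flat, and $W^\perp$ is a $[\cdot,\cdot]$-ideal-like subbundle with $\nabla$-parallel transport,'' rather than merely implied by it, is where the real work lies; I expect to need the explicit formula \cref{eq:pcModBrk} for the modified bracket and the torsion/curvature identities from \cref{lem:PsiProp} and the subsequent remark, specialized to flat sections where $\nabla\sigma=\nabla\tau=0$, to pin down the equivalence.
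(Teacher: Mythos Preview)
Your approach misidentifies the Lie algebroid structure that ``$\mc{LA}$-Dirac'' tests. In the $\mc{LA}$-Courant algebroid
\[
\begin{tikzpicture}
\mmat{m}{T\mbb{E}&\mbb{E}\\ TM& M\\};
\path[->] (m-1-1)	edge (m-1-2)
				edge (m-2-1);
\path[<-] (m-2-2)	edge (m-1-2)
				edge (m-2-1);
\end{tikzpicture}
\]
the Courant algebroid is $T\mbb{E}\to TM$, but the Lie algebroid is $T\mbb{E}\to\mbb{E}$ --- the ordinary \emph{tangent bundle} Lie algebroid, with the Lie bracket of vector fields and identity anchor. The brackets \labelcref{eq:LATngPro} you invoke describe the tangent prolongation $TA\to TM$ of a Lie algebroid $A\to M$; they do not apply here, since $\mbb{E}$ carries no Lie algebroid bracket, and in any case they live on the wrong side of the double vector bundle. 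The sections $\sigma_T,\gamma_C\in\Gamma(T\mbb{E},TM)$ you propose to close under a bracket are sections over $TM$, not vector fields on $\mbb{E}$; so your calculation never actually touches the Lie subalgebroid condition. (Involutivity of $L$ under the \emph{Courant} bracket on $T\mbb{E}\to TM$ is already part of being a $\mc{VB}$-Dirac structure.)

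The paper's proof exploits that, for the tangent Lie algebroid, ``Lie subalgebroid over $W\subseteq\mbb{E}$'' means precisely $L\subseteq TW$ together with involutivity of $L$ as a distribution on $W$. From $L\subseteq TW$ the core inclusion gives $W^\perp\subseteq W$ at once. Involutivity forces $TW^\perp\subseteq L$, hence $\nabla\sigma=0$ for $\sigma\in\Gamma(W^\perp)$; the metric property \labelcref{eq:NabPairDer} then shows $\nabla\tau$ kills $W^\perp$ for all $\tau\in\Gamma(W)$, so $\nabla$ descends to $W/W^\perp$. Finally, an involutive subbundle of $TW$ containing the vertical $TW^\perp$ is exactly the horizontal distribution of a flat connection on $W\to W/W^\perp$. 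The converse is the same observation read backwards: the preimage of the horizontal distribution $H\subseteq T(W/W^\perp)$ in $TW$ is involutive and equals $L$. So once you identify the correct Lie algebroid, the argument is almost pure connection geometry; the pseudo-Dirac bracket \labelcref{eq:pcModBrk} and the curvature tensor $\Psi$ play no role.
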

\begin{proof}
Suppose first that a $\mc{VB}$-Dirac structure 
$$\begin{tikzpicture}
\mmat{m1} at (-2,0) {L&W\\ TM& M\\};
\path[->] (m1-1-1)	edge (m1-1-2)
				edge (m1-2-1);
\path[<-] (m1-2-2)	edge (m1-1-2)
				edge (m1-2-1);
\draw (-2,0) node (c) {$W^\perp$};
\path[left hook->] (c) edge (m1-1-1);

\draw (0,0) node {$\subseteq$};

\mmat{m} at (2,0) {T\mbb{E}&\mbb{E}\\ TM& M\\};
\path[->] (m-1-1)	edge (m-1-2)
				edge (m-2-1);
\path[<-] (m-2-2)	edge (m-1-2)
				edge (m-2-1);
\draw (2,0) node (E) {$\mbb{E}$};
\path[left hook->] (E) edge (m-1-1);
\end{tikzpicture}$$ 
is also a Lie subalgebroid over $W\subseteq \mbb{E}$. Equivalently, $L\subseteq TW$, 
$$\begin{tikzpicture}
\mmat{m1} at (-2,0) {L&W\\ TM& M\\};
\path[->] (m1-1-1)	edge (m1-1-2)
				edge (m1-2-1);
\path[<-] (m1-2-2)	edge (m1-1-2)
				edge (m1-2-1);
\draw (-2,0) node (c) {$W^\perp$};
\path[left hook->] (c) edge (m1-1-1);

\draw (0,0) node {$\subseteq$};

\mmat{m} at (2,0) {TW&W\\ TM& M\\};
\path[->] (m-1-1)	edge (m-1-2)
				edge (m-2-1);
\path[<-] (m-2-2)	edge (m-1-2)
				edge (m-2-1);
\draw (2,0) node (W) {$W$};
\path[left hook->] (W) edge (m-1-1);
\end{tikzpicture}$$ 
and $L$
 is an involutive distribution.
 
 Since the core of $L$ is $W^\perp$ and the core of $TW$ is $W$, we must have $W^\perp\subseteq W$. Moreover, since $L$ is involutive, $TW^\perp\subset L$. Hence for any $\sigma\in\Gamma(W^\perp)$, $\sigma_T\in\Gamma(L)$, and therefore $\nabla(\sigma)=0$. If $\tau\in\Gamma(W)$ then $$\la\nabla\tau,\sigma\ra=\la\nabla\tau,\sigma\ra+\la\tau,\nabla\sigma\ra=d\la\tau,\sigma\ra=0,$$ so $\nabla\tau\in\Omega(M,W/W^\perp)$. Therefore $\nabla$ descends to a connection on $W/W^\perp$. Finally, since $L\subset TW$ is involutive, the connection must be flat. This proves one direction.

Conversely, suppose that $(W,\nabla)$ is a pseudo-Dirac structure, that $W$ is coisotropic, and that the restriction of $\nabla$ to $W/W^\perp$ is flat. Let $H\subseteq T(W/W^\perp)$ be the horizontal distribution. Then $L$ must be the preimage of $H$ along the map $TW\to T(W/W^\perp)$. As such, it defines an involutive distribution in $TW$, which implies that $L\subset T\mbb{E}$ is an $\mc{LA}$-Dirac structure.
\end{proof}


\begin{remark}
From \cref{prop:fltSec}, we see that, if $M$ is simply connected, the flat sections of $W/W^\perp$ form a quadratic Lie algebra. That is to say, $(W,\nabla)$ enables us to reduce the Courant algebroid $\mbb{E}$ to a Courant algebroid over a point. More generally, in \cref{sec:reduc} we will see that $\mc{LA}$-Dirac structures in $T\mbb{E}$ (with support provide (infinitesimal) reduction data for $\mbb{E}$.
\end{remark}

\section{Poisson Lie 2-algebroids and multiplicative Courant algebroids}\label{sec:Integr}
In this section, we briefly discuss the supergeometric interpretation of $\mc{LA}$-Courant algebroids.

\begin{definition}
A Poisson Lie 2-algebroid is a degree 2 $NQ$ manifold carrying a compatible Poisson structure of degree -2. (Compatibility means the flow of the homological vector field, $Q$, preserves the Poisson structure).
\end{definition}

\begin{proposition}\label{prop:PoisLie2LACour}
 There is a one-to-one correspondence between Poisson Lie 2-algebroids and $\mc{LA}$-Courant algebroids; and a one-to-one correspondence between coisotropic $NQ$ submanifolds of a Poisson Lie 2-algebroid and $\mc{LA}$-Dirac structures with support in the corresponding $\mc{LA}$-Courant algebroid. Moreover, the coisotropic $NQ$-submanifold is a wide Lie subalgebroid if and only if the corresponding $\mc{LA}$-Dirac structure has full support.
\end{proposition}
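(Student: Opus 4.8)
<br>

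The plan is to leverage the two equivalences already established in the excerpt and to carefully track how the various "degree $2$ symplectic $NQ$" and "coisotropic" conditions translate back and forth. The essential tool is \cref{prop:Lie2VBCour}, which gives the correspondence between Lie $2$-algebroids (degree $2$ $NQ$-manifolds) and $\mc{VB}$-Courant algebroids, together with its companion statement that wide Lie subalgebroids correspond to $\mc{VB}$-Dirac structures. To prove \cref{prop:PoisLie2LACour}, I would first recall that a Poisson Lie $2$-algebroid is a degree $2$ $NQ$-manifold $X$ with a compatible degree $-2$ Poisson structure, and then identify what "compatible Poisson structure" means on the $\mc{VB}$-Courant algebroid side.

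First I would unwind the Poisson structure. Given a degree $2$ $NQ$-manifold $X$, the degree $2$ $NQ$-manifold $T^*[2]X$ corresponds, via \cite{Roytenberg:2002,LetToWein,Severa:2005vla} (see \cref{rem:SupCour}), to the $\mc{VB}$-Courant algebroid $\mbb{A}$ sitting over $E = $ base of $T^*[2]X$, with $V[1]$ the $1$-truncation of $X$ and $M$ the base of $X$; this is exactly the setup of the proof of \cref{prop:Lie2VBCour}. A Poisson bivector of degree $-2$ on $X$ is, by the derived-bracket description recalled in \cref{rem:SupMP}, a degree $0$ function $\pi$ on $T^*[2](X)$ — but more to the point, I would use that a Poisson structure on $X$ is equivalent to a vector bundle morphism $\pi^\sharp\colon T^*X \to TX$, which (since everything is an $N$-manifold with an extra scalar action) respects the grading and hence yields a morphism of the corresponding triple vector bundles. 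Applying the $\mc{VB}$-Courant$\leftrightarrow$Lie $2$-algebroid dictionary "with parameters," this morphism becomes precisely the relation of triple vector bundles $\pi^\sharp$ in diagram \labelcref{eq:piAsharp} on the $\mbb{A}$ side, so that $\mbb{A}$ acquires the $\mc{LA}$-vector bundle structure \labelcref{eq:LACADLP}. The compatibility condition — that the flow of $Q$ preserves the Poisson structure — translates into the statement that the homological vector field on $T^*[2]X$ is compatible with $\pi^\sharp$, which is exactly the condition that the annihilator relation $\Pi_\mbb{A}\colon T\mbb{A}^{*_{\!x}*_{\!z}*_{\!x}}\dasharrow T\mbb{A}$ of \labelcref{eq:PiA} is preserved by the $Q$-structures, i.e.\ is a Courant relation. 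This is the content of the definition of an $\mc{LA}$-Courant algebroid, giving one direction; the converse is obtained by running the construction backwards, since each step (the symplectic realization $T^*[2]X$, the triple-vector-bundle dualizations, the passage to annihilator relations) is invertible.

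For the second part, I would invoke the correspondence between wide Lie subalgebroids of $X$ (degree $1$ $NQ$-submanifolds $A[1]\subseteq X$) and $\mc{VB}$-Dirac structures $L\subseteq \mbb{A}$ from \cref{prop:Lie2VBCour}. A \emph{coisotropic} $NQ$-submanifold $C\subseteq X$ — with support on a submanifold of the base, not necessarily all of $M$ — is, by the supergeometric dictionary for Manin pairs and reduction recalled in \cref{rem:SupMP,sec:pullback}, exactly the data that makes the associated $L$ into a Lie subalgebroid of $\mbb{A}\to V$ in addition to being a $\mc{VB}$-Dirac structure; concretely, coisotropy of $C$ w.r.t.\ the Poisson structure on $X$ says that $\ann[2](TC) \subseteq T^*[2]X$ is coisotropic w.r.t.\ the reduced Poisson structure, which is precisely the condition \cref{def:SubLA} that $L\subseteq\mbb{A}$ is a Lie subalgebroid. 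So coisotropic $NQ$-submanifolds biject with $\mc{LA}$-Dirac structures with support. Finally, the support of $L$ is all of $A$ precisely when the base of $\ann[2](TC)$ is all of $E$, which happens precisely when $C$ is \emph{wide}, i.e.\ has base $M$ — establishing the last clause.

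The main obstacle I anticipate is making the "with parameters" version of the Roytenberg--\v{S}evera correspondence fully rigorous: namely, verifying that the triple-vector-bundle morphism $\pi^\sharp$ of \labelcref{eq:piAsharp} on the $\mbb{A}$ side corresponds, under the equivalence of \cref{prop:Lie2VBCour}, exactly to a degree $-2$ Poisson bivector on $X$, and that $Q$-compatibility on one side matches the Courant-relation condition on $\Pi_\mbb{A}$ on the other. This requires carefully checking that the dualization operations on triple vector bundles (along the $x$-, $y$-, $z$-axes, as in \cref{sec:TVBDual}) intertwine correctly with the symplectic realization functor $X\mapsto T^*[2]X$ and with taking annihilator relations (using \cref{lem:ann}), and that degrees are bookkept correctly throughout. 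Once this bookkeeping is in place, the remaining verifications — that coisotropy corresponds to the Lie subalgebroid condition, and that wideness corresponds to full support — are direct consequences of the definitions \cref{def:SubLA} and the reduction picture, and should be routine.
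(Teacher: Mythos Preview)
Your approach is essentially the paper's: both leverage \cref{prop:Lie2VBCour}, identify the degree $-2$ Poisson structure on $X$ with the $\mc{LA}$-vector-bundle structure on $\mbb{A}$ via the cotangent Lie algebroid $T^*[2]X$ (the paper makes this explicit through the truncation $\mbb{A}[1]=(T^*[2]X)_{(1)}$), and then match $Q$-compatibility of the Poisson structure with the Courant-relation condition on $\Pi_{\mbb{A}}$ by a trigraded identification of $\ann^\natural(\gr(\pi^\sharp))$ with $\Pi_{\mbb{A}}$ --- exactly the bookkeeping you flag as the main obstacle.

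One slip to fix in your coisotropic argument: you write that $C$ coisotropic in $X$ means $\ann[2](TC)\subseteq T^*[2]X$ is \emph{coisotropic}. That is not the statement you want; $T^*[2]X$ is symplectic and the conormal bundle is automatically Lagrangian there. The correct statement (and the one the paper uses) is that $C\subseteq X$ is coisotropic for the Poisson structure on $X$ if and only if $\ann[2](TC)$ is a \emph{Lie subalgebroid} of the cotangent Lie algebroid $T^*[2]X\to X$. It is this Lie-subalgebroid condition that, via the identification $\mbb{A}[1]\cong(T^*[2]X)_{(1)}$, matches the requirement that $L\subseteq\mbb{A}$ be a Lie subalgebroid of $\mbb{A}\to V$, hence an $\mc{LA}$-Dirac structure.
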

\begin{proof}
Suppose first that 
$$\begin{tikzpicture}
\mmat[1em]{m}{\mbb{A}&V\\ A& M\\};
\path[->] (m-1-1)	edge (m-1-2)
				edge (m-2-1);
\path[<-] (m-2-2)	edge (m-1-2)
				edge (m-2-1);
\end{tikzpicture}$$ 
 is an $\mc{LA}$-Courant algebroid.   Let 
$$\begin{tikzpicture}
\mmat[1em]{m}{\mbb{A}[1,1]&V[0,1]\\ A[1,0]& M\\};
\path[->] (m-1-1)	edge (m-1-2)
				edge (m-2-1);
\path[<-] (m-2-2)	edge (m-1-2)
				edge (m-2-1);
\end{tikzpicture}$$ 
 denote the double vector bundle whose horizontal and vertical fibre degrees are shifted by $(1,0)$ and $(0,1)$ respectively (see \cite{Grabowski:2009dc,Mehta06} for details). (Note that the degree of the core fibres is shifted by $(1,1)$). As explained in \cref{rem:SupCour}, the metric on the fibres of $\mbb{A}\to A$ defines a degree $(-1,-2)$ Poisson structure on $\mbb{A}[1,1]$. 

Since $\mbb{A}$ is a $\mc{VB}$-Courant algebroid it corresponds to a degree (0,2) $NQ$-manifold $X$, by \cref{prop:Lie2VBCour}.
By construction, the Courant algebroid $\mbb{A}$ corresponds to the degree $(0,2)$ symplectic $NQ$-manifold $T^*[0,2]X$, and $T^*[0,2]\to\mbb{A}[0,1]$ is a minimal symplectic realization  (see \cite{Roytenberg:2002, Severa:2005vla} or \cref{rem:SupCour} for a details). Taking the degree shift into consideration, we see that the map $T^*[1,2]X\to \mbb{A}[1,1]$ is a minimal symplectic realization. 

Now, the Lie algebroid structure on $\mbb{A}$ induces a degree $(1,0)$ homological vector field, $Q_{\pi}'$, on $\mbb{A}[1,1]$ (see \cite{LieAlgebroidsH, Alexandrov:1997jj} or  \cref{rem:SupLie} for details).
Now, by \cref{prop:LieMetComp} the Lie algebroid structure on $\mbb{A}\to V$ is compatible with the metric on the fibres of $\mbb{A}\to A$, that is $Q_{\pi}'$ preserves the Poisson structure on $\mbb{A}[1,1]$. Since the correspondence between the symplectic manifold $T^*[1,2]X$ and $\mbb{A}$ is natural, this implies that $Q_{\pi}$ lifts canonically to a degree $(1,0)$ homological vector field on $T^*[1,2]X$, compatible with the symplectic structure. The corresponding Hamiltonian $\pi\in C^\infty(T^*[1,2])$ is a degree $(2,2)$ function satisfying $$\{\pi,\pi\}=0,$$ namely it defines a degree $-2$ Poisson structure on $X$.

It remains to show that the Poisson structure on $X$ is compatible with the homological vector field on $X$. To keep track of the gradings, we view $X$ as a degree $(0,2,0)$ $NQ$-manifold. We need to show that the anchor map for the cotangent Lie algebroid $$\pi^\sharp:T^*[0,2,1]X\to T[0,0,1]X$$ is an $NQ$-map, with respect to the canonical lifts of the $Q$ structure on $X$.
Equivalently the relation $$\pi_R:=\ann\big(\on{gr}(\pi^\sharp)\big)\subseteq (T^*[1,2,1]T[0,0,1]X)\times\overline{(T^*[1,2,1]T^*[0,2,1]X)}$$ is an $NQ$-submanifold (with respect to the canonical lifts of the $Q$ structure). However, $T^*[1,2,1]T^*[0,2,1]X\cong T[1,0,0]T^*[0,2,1]X$ while $T^*[1,2,1]T[0,0,1]X\cong T[0,0,1]T^*[1,2,0]X$. Visualizing the first, second, and third gradings as the $x$, $y$ and $z$ axis, we have
$$\begin{tikzpicture}[
        back line/.style={densely dotted},
        cross line/.style={preaction={draw=white, -,
           line width=6pt}}]
        
\mmat[0.5em]{T^*m}{
	&	&\scriptscriptstyle{T[1,0,0]T^*[0,2,1]X}	&	&\scriptscriptstyle{T^*[0,2,1]X}\\
	&\scriptscriptstyle{T[1,0,0](T^*[0,2,1]X)_{(0,1,1)}}	&	&\scriptscriptstyle{(T^*[0,2,1]X)_{(0,1,1)}}&\\
\scriptscriptstyle{T[1,0,0](T^*[0,2,1]X)_{(0,0,1)}}	&	&\scriptscriptstyle{(T^*[0,2,1]X)_{(0,0,1)}}	&	&\scriptscriptstyle{X}\\
	&\scriptscriptstyle{T[1,0,0]X_{(0,1,0)}}	&	&\scriptscriptstyle{X_{(0,1,0)}}&\\
\scriptscriptstyle{T[1,0,0]X_{(0,0,0)}}	&	&\scriptscriptstyle{X_{(0,0,0)}}	&&\\
};

\path[->]
	(T^*m-1-3) edge [dotted] (T^*m-2-2)
			edge (T^*m-1-5)
	(T^*m-1-5) edge [dotted] (T^*m-2-4)
			edge (T^*m-3-5)
	(T^*m-3-5) edge [dotted] (T^*m-4-4)
        (T^*m-2-2) edge (T^*m-2-4)
        		edge (T^*m-3-1)
                edge [back line] (T^*m-4-2)
        (T^*m-2-4) edge (T^*m-4-4)
        		edge (T^*m-3-3)
        (T^*m-3-1) edge [cross line] (T^*m-3-3)
                edge (T^*m-5-1)
        (T^*m-4-2) edge [back line] (T^*m-4-4)
        		edge [back line] (T^*m-5-1)
        (T^*m-5-1) edge (T^*m-5-3)
        (T^*m-4-4) edge (T^*m-5-3)
        (T^*m-3-3) edge [cross line] (T^*m-5-3);
        
  \path[dashed,->] (0,-70pt) edge node{$\pi_R$} (0,-110pt);

\mmat[0.5em]{Tm}[yshift=-180pt]{
	&	&\scriptscriptstyle{T[0,0,1]T^*[1,2,0]X}	&	&\scriptscriptstyle{T[0,0,1]X}\\
	&\scriptscriptstyle{T[0,0,1](T^*[1,2,0]X)_{(1,1,0)}}	&	&\scriptscriptstyle{T[0,0,1]X_{(0,1,0)}}&\\
\scriptscriptstyle{T[0,0,1](T^*[1,2,0]X)_{(1,0,0)}}	&	&\scriptscriptstyle{T[0,0,1]X_{(0,0,0)}}	&	&\scriptscriptstyle{X}\\
	&\scriptscriptstyle{(T^*[1,2,0]X)_{(1,1,0)}}	&	&\scriptscriptstyle{X_{(0,1,0)}}&\\
\scriptscriptstyle{(T^*[1,2,0]X)_{(1,0,0)}}	&	&\scriptscriptstyle{X_{(0,0,0)}}	&&\\
};

\path[->]
	(Tm-1-3) edge [dotted] (Tm-2-2)
			edge (Tm-1-5)
	(Tm-1-5) edge [dotted] (Tm-2-4)
			edge (Tm-3-5)
	(Tm-3-5) edge [dotted] (Tm-4-4)
        (Tm-2-2) edge (Tm-2-4)
        		edge (Tm-3-1)
                edge [back line] (Tm-4-2)
        (Tm-2-4) edge (Tm-4-4)
        		edge (Tm-3-3)
        (Tm-3-1) edge [cross line] (Tm-3-3)
                edge (Tm-5-1)
        (Tm-4-2) edge [back line] (Tm-4-4)
        		edge [back line] (Tm-5-1)
        (Tm-5-1) edge (Tm-5-3)
        (Tm-4-4) edge (Tm-5-3)
        (Tm-3-3) edge [cross line] (Tm-5-3);
\end{tikzpicture}$$
Where $Y_{(i,j,k)}$ denotes the truncation of $Y$ to an $(i,j,k)$ degree manifold. The identifications 
\begin{align*}
\mbb{A}[1,1,0]&\cong (T^*[1,2,0]X)_{(1,1,0)}&V[0,1,0]&\cong X_{(0,1,0)}\\
A[1,0,0]&\cong(T^*[1,2,0]X)_{(1,0,0)}& M&\cong X_{(0,0,0)}
\end{align*}
and
\begin{align*}
\mbb{A}[0,1,1]&\cong (T^*[0,2,1]X)_{(0,1,1)}&V[0,1,0]&\cong X_{(0,1,0)}\\
A[0,0,1]&\cong(T^*[0,2,1]X)_{(0,0,1)}& M&\cong X_{(0,0,0)}
\end{align*}
identify $\pi_R$ with the relation $\Pi_{\mbb{A}}$ shown in \labelcref{eq:PiA} under the correspondence described in \cite{Severa:2005vla} (and summarized in \cref{rem:SupCour}). In particular, $\pi_R$ is a Lagrangian $NQ$ submanifold if and only if $\Pi_{\mbb{A}}$ is a Courant relation. This proves the first direction.

On the other hand, suppose $X$ is a Poisson $NQ$-manifold, and let $\mbb{A}$ be the $\mc{VB}$-Courant algebroid which the equivalence described in \cref{prop:Lie2VBCour} associates to it.
The cotangent Lie algebroid $T^*[2]X$ is a graded Lie algebroid. Thus $\mbb{A}[1]=(T^*[2]X)_{(1)}$ is a graded Lie subalgebroid over $V[1]$. 
Equivalently,  $\mbb{A}$ is a $\mc{LA}$-vector bundle. The compatibility between the Lie algebroid and Courant algebroid structures on $\mbb{A}$ follows from the same argument as above.

Next, suppose that 
$$\begin{tikzpicture}
\mmat[1em]{m}{L&W\\ A'& N\\};
\path[->] (m-1-1)	edge (m-1-2)
				edge (m-2-1);
\path[<-] (m-2-2)	edge (m-1-2)
				edge (m-2-1);
\end{tikzpicture}$$
  is a $\mc{VB}$-Dirac structure in $\mbb{A}$ with support on $A'\subseteq A$. As explained in \cite{Severa:2005vla} (see also \cref{rem:SupCour}), $L$ corresponds to a Lagrangian $NQ$ submanifold $\tilde Y\subseteq T^*[2]X$. Since $L\subseteq \mbb{A}$ is a double vector subbundle, $\tilde Y\subseteq T^*[2]X$ must be a subbundle, which implies that it is the conormal bundle $$\tilde Y=\ann[2](TY)\subseteq T^*[2]X$$ for some $NQ$-submanifold $Y\subseteq X$. Now, $L$ is a Lie subalgebroid of the Lie algebroid $\mbb{A}\to V$ if and only if $\ann[2](TY)\subseteq T^*[2]X$ is a Lie subalgebroid of the cotangent Lie algebroid. Equivalently, $Y\subseteq X$ is coisotropic.
  
  Finally, the last statement follows from \cref{prop:Lie2VBCour}.
  
\end{proof}

\begin{corollary}\label{cor:LACourIntMultCour}
There is a one-to-one correspondence between integrable $\mc{LA}$-Courant algebroids and source-simply-connected multiplicative Courant algebroids; and a one-to-one correspondence between $\mc{LA}$-Dirac structures (with support) and multiplicative Dirac structures (with support) in the corresponding multiplicative Courant algebroid. Moreover, morphisms of $\mc{LA}$-Manin pairs integrate to morphisms of multiplicative Manin pairs via this correspondence.
\end{corollary}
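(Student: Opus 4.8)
The plan is to deduce \cref{cor:LACourIntMultCour} from \cref{prop:PoisLie2LACour} together with an integration step and the Mehta--Ortiz characterization of $\mc{CA}$-groupoids (recalled in the remark after \cref{def:multDef}): a source-simply-connected multiplicative Courant algebroid is the same thing as a source-simply-connected degree $2$ symplectic $NQ$-groupoid. By ``integrable'' I take the natural meaning that the underlying Lie algebroid $\mbb{A}\to V$ of the $\mc{LA}$-Courant algebroid \labelcref{eq:LACA} is an integrable Lie algebroid. So, given an integrable $\mc{LA}$-Courant algebroid $\mbb{A}$, I would first integrate the Lie algebroid $\mbb{A}\to V$ to its source-simply-connected Lie groupoid $\mbb{G}\rightrightarrows V$, and integrate the induced base Lie algebroid $A\to M$ of \cref{prop:AisLA} to $G\rightrightarrows M$.

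The key mechanism is that \emph{every} structure map recorded in \cref{def:DLACAVB,def:CAvb,prop:BasVBCAfacts} is an $\mc{LA}$-relation between products of copies of $\mbb{A}$, hence integrates. Concretely: fibrewise addition and scalar multiplication for $\mbb{A}\to A$ are $\mc{LA}$-morphisms (this is precisely the $\mc{LA}$-vector bundle condition), so they integrate to make $\mbb{G}\to G$ a $\mc{VB}$-groupoid; the linear pairing $\mbb{A}\cong\mbb{A}^{*_{\!y}}$ is a Lie algebroid isomorphism by \cref{prop:LieMetComp}, and the anchor $\mbb{A}\to TE$ is linear, so they integrate; and $\gr(+_{\mbb{E}/V})$ is a Courant relation which, being a double vector subbundle closed under the bracket, is an $\mc{LA}$-Dirac structure with support in $\mbb{A}\times\overline{\mbb{A}\times\mbb{A}}$, hence integrates to a multiplicative Dirac structure with support in $\mbb{G}\times\overline{\mbb{G}\times\mbb{G}}$, i.e.\ to a multiplicative Courant relation $\gr(+_{\mbb{G}/V})$. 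Applying Lie's second theorem over the source-simply-connected products $\mbb{A}^{2},\mbb{A}^{3}$ and invoking the groupoid analogue of \cref{prop:BasVBCAfacts}, these integrated data assemble into a $\mc{CA}$-groupoid structure on $\mbb{G}$ over $G$. Equivalently, and more transparently, one may pass through \cref{prop:PoisLie2LACour}: the corresponding Poisson Lie $2$-algebroid $X$ has cotangent $T^{*}[2]X$ a graded Lie algebroid carrying a symplectic form and a degree $1$ homological Hamiltonian that are multiplicative infinitesimally; integrating this single graded Lie algebroid (via the path/Weinstein-groupoid construction, which is what ``integrable'' guarantees) yields a degree $2$ symplectic $NQ$-groupoid, which is exactly a multiplicative Courant algebroid. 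Conversely, applying the Lie functor to a source-simply-connected $\mc{CA}$-groupoid and checking that the infinitesimal form of multiplicativity is precisely the compatibility condition of \labelcref{eq:PiA} (the point alluded to in the remark after the definition of $\mc{LA}$-Courant algebroids, modelled on \cite{Mackenzie97}) produces an integrable $\mc{LA}$-Courant algebroid, and the two constructions are mutually inverse.

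For the remaining assertions: an $\mc{LA}$-Dirac structure $L\subseteq\mbb{A}$ with support is, in particular, a Lie subalgebroid of $\mbb{A}\to V$, so it integrates to a source-simply-connected Lie subgroupoid $\mc{L}\subseteq\mbb{G}$; since it is also a double vector subbundle that is isotropic and involutive for the Courant structure over $A$, its integral is again a $\mc{VB}$-subgroupoid which is Dirac with support, i.e.\ a multiplicative Dirac structure with support, and conversely such a structure differentiates to a $\mc{VB}$-Dirac structure that is a Lie subalgebroid, hence an $\mc{LA}$-Dirac structure. Finally, a morphism of $\mc{LA}$-Manin pairs $R\colon(\mbb{A}_1,L_1)\dasharrow(\mbb{A}_2,L_2)$ is by definition an $\mc{LA}$-Courant relation, i.e.\ an $\mc{LA}$-Dirac structure with support in the $\mc{LA}$-Courant algebroid $\mbb{A}_2\times\overline{\mbb{A}_1}$ supported on the graph of a Lie algebroid morphism; applying the previous point to $\mbb{A}_2\times\overline{\mbb{A}_1}$ (whose integration is $\mbb{G}_2\times\overline{\mbb{G}_1}$) integrates $R$ to a multiplicative Dirac structure with support supported on the graph of the integrating Lie groupoid morphism, which is exactly a morphism of multiplicative Manin pairs in the sense of \cref{def:MorphMP} refined by multiplicativity; conditions m1) and m2) transport because they are closed $\mc{VB}$-relational conditions preserved by $\mathrm{Lie}$ and its inverse on source-simply-connected objects.

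The main obstacle I expect is the bookkeeping in the first step: showing that the source-simply-connected integration of $\mbb{A}\to V$ genuinely fibres as a $\mc{VB}$-groupoid over $G\rightrightarrows M$ with the right core and side bundles, and that the Courant bracket on $\mbb{G}\to G$ obtained from the integrated structure maps satisfies the global Courant axioms. This amounts to checking that integration, restricted to source-simply-connected objects, is a functor on $\mc{LA}$-relations that respects finite products and clean compositions, and rests on the integration theory for $\mc{LA}$-groupoids and double structures together with Lie's second theorem applied to the iterated products $\mbb{A}^{2},\mbb{A}^{3}$ and $\mbb{A}_2\times\overline{\mbb{A}_1}$; the supergeometric route via \cref{prop:PoisLie2LACour} defuses much of this by reducing everything to integrating the single graded Lie algebroid $T^{*}[2]X$ and quoting the dictionary for degree $2$ symplectic $NQ$-groupoids, at the cost of needing an integration theory in the graded setting.
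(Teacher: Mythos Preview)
Your supergeometric route via \cref{prop:PoisLie2LACour} is exactly the paper's proof, and the paper takes it directly without your classical detour. The paper argues: by \cref{prop:PoisLie2LACour} an $\mc{LA}$-Courant algebroid is a Poisson Lie $2$-algebroid $X$; the source-simply-connected symplectic groupoid $\mc{G}$ integrating $X$ is a degree $2$ symplectic $N$-manifold; since the homological vector field on $X$ preserves the Poisson structure, it integrates to a symplectic $\mbb{R}[-1]$-action on $\mc{G}$ by groupoid automorphisms, i.e.\ a compatible $Q$-structure; by the Roytenberg--\v{S}evera dictionary $\mc{G}$ is then a multiplicative Courant algebroid. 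Coisotropic $NQ$-submanifolds of $X$ integrate to Lagrangian $NQ$-subgroupoids of $\mc{G}$ by general Poisson integration theory, and \cref{prop:PoisLie2LACour} translates this into the Dirac correspondence. For the morphism statement the paper simply cites \cite[Proposition~11]{LiBland:2010wi}, whereas you spell out the argument (viewing $R$ as an $\mc{LA}$-Dirac structure in $\mbb{A}_2\times\overline{\mbb{A}_1}$ and applying the Dirac case to that product).

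Your primary, classical approach --- integrating the Lie algebroid $\mbb{A}\to V$ directly and then separately integrating each structure map (addition, pairing, anchor, the Courant relation $\gr(+_{\mbb{E}/V})$) as an $\mc{LA}$-relation --- is a genuinely different strategy that the paper does not pursue. It may well work, but the bookkeeping you flag is real: you need the integration theory for $\mc{LA}$-vector bundles to know that the source-simply-connected integration of $\mbb{A}\to V$ even fibres as a $\mc{VB}$-groupoid over the integration $G$ of $A\to M$, and then that the integrated data satisfy the Courant axioms globally. The supergeometric route packages all of this into the single step of integrating one Poisson supermanifold, which is why the paper prefers it. One caveat: in the paper's framing, ``integrable'' means integrability of $X$ as a Poisson manifold (equivalently of its cotangent Lie algebroid), not a priori integrability of $\mbb{A}\to V$; you conflate these without argument.
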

\begin{proof}
As described in \cite{Ortiz:2009ux}, this follows directly from \cref{prop:PoisLie2LACour} and the general integration theory for Poisson manifolds. In more detail, suppose $X$ is an integrable Poisson Lie-2-algebroid. The source simply connected groupoid $\mc{G}$ integrating $X$ is a degree 2 symplectic manifold. Since the flow of the homological vector field on $X$ preserves the Poisson structure, it integrates to a  symplectic $\mbb{R}[-1]$ action on $\mc{G}$ by groupoid automorphisms, namely a compatible $Q$ structure. By the correspondence described in \cite{Roytenberg:2002,Severa:2005vla} between degree 2 symplectic $NQ$ manifolds and Courant algebroids, $\mc{G}$ corresponds to a multiplicative Courant algebroid (see \cite[Section~5.1]{LiBland:2010wi} for details).

Similarly, the general integration theory for Poisson manifolds shows that coisotropic $NQ$-submanifolds of $X$ integrate to Lagrangian $NQ$-subgroupoids of $\mc{G}$, i.e. multiplicative Dirac structures.

Finally, the proof of the last statement is found in \cite[Proposition~11]{LiBland:2010wi}.

\begin{definition}
In this case, we say that the corresponding multiplicative Courant algebroid $\mbb{G}$ \emph{integrates} the $\mc{LA}$-Courant algebroid $\mbb{A}$, or \emph{$\mbb{G}$ differentiates to $\mbb{A}$}. Similarly terminology is used for corresponding $\mc{LA}$-Dirac structures and multiplicative Dirac structures.
\end{definition}
\end{proof}

\begin{example}[Standard Courant algebroids]\label{ex:StdCourInt}
Let $G\rightrightarrows M$ be a (source simply connected) Lie groupoid, and $A\to M$ the corresponding Lie algebroid. Then the multiplicative Courant algebroid $\mbb{T}G$ from \cref{ex:StdCAGr} integrates the $\mc{LA}$-Courant algebroid $\mbb{T}A$ from \cref{ex:StdLAC}.
\end{example}

\begin{example}[Dirac Lie groups]\label{ex:DirLieInt}
Let $(\mf{d},\g;\h)_\beta$ be a Dirac Manin triple, and $(\mbb{A}_{(\mf{d},\g;\h)_\beta},E_{(\mf{d},\g;\h)_\beta})$ the $\mc{LA}$-Manin pair from \cref{ex:LADirMan}. Then $(\mbb{A}_{(\mf{d},\g;\h)_\beta},E_{(\mf{d},\g;\h)_\beta})$ integrates to the simply connected Dirac Lie group described in \cite{LiBland:2011vqa} corresponding to $(\mf{d},\g;\h)_\beta$.

In more detail, the $\mc{LA}$-Courant algebroid 
\labelcref{eq:DiracManUnRes}
$$\begin{tikzpicture}
\mmat[1em]{m}{T\mf{q}\times \mf{d}&\mf{q}\\ \mf{d}& \ast\\};
\path[->] (m-1-1)	edge (m-1-2)
				edge (m-2-1);
\path[<-] (m-2-2)	edge (m-1-2)
				edge (m-2-1);
\end{tikzpicture}$$
integrates to the action Courant algebroid $(\mf{q}\oplus\overline{\mf{q}})\times D$ where $D$ is the simply connected Lie groupoid integrating $\mf{d}$ and $(\mf{q}\oplus\overline{\mf{q}})$ is the pair groupoid. $(\mf{q}\oplus\overline{\mf{q}})$ acts on $D$ via
$$(\xi,\eta)\to f(\eta)^L-f(\xi)^R,\quad \xi,\eta\in\mf{q}$$ where $f(\xi)^L$ and $f(\xi)^R$ are the left and right invariant vector fields on $D$ with values $f(\xi)$ at the identity.

Meanwhile, the $\mc{LA}$-Dirac structure $T\g\times\mf{d}$ integrates to the multiplicative Dirac structure $$(\g\oplus\g)\times D\subseteq (\mf{q}\oplus\overline{\mf{q}})\times D.$$

Finally, the $\mc{LA}$-Manin pair $(\mbb{A}_{(\mf{d},\g;\h)_\beta},E_{(\mf{d},\g;\h)_\beta})$ integrates to the pullback
$$(\mbb{H}_{(\mf{d},\g;\h)_\beta}, F_{(\mf{d},\g;\h)_\beta}):=\bigg(i^!\big((\mf{q}\oplus\overline{\mf{q}})\times D\big),\big((\g\oplus\g)\times D\big)\circ P_i\bigg),$$ where $i:H\to D$ is the morphism of simply connected Lie groups integrating the inclusion $\h\to \mf{d}$ and $P_i$ and $i^!$ are as described in Section~\ref{sec:pullback}.
\end{example}

\begin{example}[Tangent prolongation of a Courant algebroid]\label{ex:TngProInt}
The tangent prolongation $T\mbb{E}$ of a Courant algebroid $\mbb{E}$ integrates to the multiplicative Courant algebroid $$\mbb{E}\times\overline{\mbb{E}},$$ endowed with the pair groupoid structure (or, more precisely, to the `fundamental groupoid', the source simply connected cover of $\mbb{E}\times\overline{\mbb{E}}$).

Suppose 
$$\begin{tikzpicture}
\mmat[1em]{m}{L&W\\ TM& M\\};
\path[->] (m-1-1)	edge (m-1-2)
				edge (m-2-1);
\path[<-] (m-2-2)	edge (m-1-2)
				edge (m-2-1);
\end{tikzpicture}$$
 is an $\mc{LA}$-Dirac structure in $T\mbb{E}$ such that  $L\subseteq TW$ is the foliation associated to a surjective submersion $W\to W_0$. Let $R:=W\times_{W_0} W$ be the equivalence relation.
Then $R\subseteq \mbb{E}\times\overline{\mbb{E}}$ is the multiplicative Dirac structure integrating $L$.
\end{example}

\begin{example}
Suppose that a Lie group $G$ acts freely and properly on the Courant algebroid $\mbb{E}$, by automorphisms. Then the Courant algebroid $$(\mbb{E}\times\overline{\mbb{E}})/G,$$ where $G$ acts diagonally, 
 is a multiplicative Courant algebroid over $\mbb{E}/G$. Here the multiplication is 
$$[e_1,e_2]\cdot [e_2,e_3]=[e_1,e_3]$$ 
for any $(e_1,e_2),(e_2,e_3)\in\mbb{E}\times\overline{\mbb{E}}$, where $[e_1,e_2]\in(\mbb{E}\times\overline{\mbb{E}})/G$ and $[e_2,e_3]\in(\mbb{E}\times\overline{\mbb{E}})/G$ denote the respective equivalence classes.

The corresponding $\mc{LA}$-Courant algebroid is $T\mbb{E}/G$, as described in \cref{ex:AtiyahLACA}.
\end{example}

\begin{remark}[A second supergeometric perspective]\label{rem:MPalg}
$\mc{LA}$-Manin pairs are also equivalent to MP-algebroids, as introduced in \cite{LiBland:2010wi}. We recall their definition here.
\begin{definition}[MP-algebroid]\label{MPA}
An MP-algebroid is a graded Lie algebroid $P$, such that $P$ is also an MP-manifold, and
\begin{enumerate}
\renewcommand{\labelenumi}{MPA-\arabic{enumi}}
\item the Poisson structure on $P$ is linear, defining a Lie algebroid structure on $P^*$ (see \cite{LieAlgebroidsH}), 
\item the Lie algebroid structures on $P$ and $P^*$ are compatible, so that $P$ is a Lie bialgebroid (see \cite{Mackenzie-Xu94,Voronov:2002wl,Voronov:2006wh,Voronov:2007tf}), and
\item the action map $P\times\mathbb{R}[2]\to P$ is a Lie algebroid morphism, where $\mathbb{R}[2]$ is viewed as a trivial Lie algebra.
\end{enumerate}
Morphisms of MP-algebroids are morphisms of Lie algebroids which are also morphisms of MP-manifolds.
\end{definition}
\end{remark}

\chapter{Outlook}\label{chp:Outlook}

\section{Reduction of Courant algebroids}\label{sec:reduc}
In this section, we apply the theory of $\mc{LA}$-Dirac structures and $\mc{VB}$-Dirac structures to the reduction of Courant algebroids. Reduction of Courant algebroids was first studied in \cite{Bursztyn:2007ko,Stienon:2008cl,Vaisman:2007gg,Hu:2009wl, Lin:2006ku,Zambon:2008wj,Yoshimura:2007gw}, and further studied in \cite{Calvo:2010bj,Jotz:2011kt, Jotz:2011cz, Goldberg:2010wj, Baird:2010ge,Vaisman:2010bt}. 

Recently, a very general and novel approach to reduction in terms of supergeometry has emerged in the work of  Bursztyn,  Cattaneo,  Mehta, and  Zambon \cite{Mehta:2010ux,Cattaneo:2010th,Cattaneo:2010vr,Bursztyn:gcFguuB1}. From such a supergeometric perspective, in \cref{sec:LAred} we reduce by coisotropic $NQ$-submanifolds of the degree 2-symplectic $NQ$-manifold corresponding to the Courant algebroid,  while in \cref{sec:VBred} we reduce by presymplectic $NQ$-submanifolds. 

%
%
%



\subsection{Reduction via $\mc{LA}$-Dirac structures in $T\mbb{E}$}\label{sec:LAred}
Reduction is a process which lowers the dimensions of a given system through a combination of imposing constraints and quotienting out by symmetries. For a Courant algebroid $\mbb{E}$, constraints correspond to specifying a subbundle $W\subseteq \mbb{E}$, while symmetries correspond to specifying a linear foliation of $W$: that is, an $\mc{LA}$-subbundle\footnote{By the Frobenius theorem.},
$$\begin{tikzpicture}
\mmat[1em]{m} at (-2,0) {L&W\\ F& S\\};
\path[->] (m-1-1)	edge (m-1-2)
				edge (m-2-1);
\path[<-] (m-2-2)	edge (m-1-2)
				edge (m-2-1);
				
\draw (0,0) node {$\subseteq$};
				
\mmat[1em]{m2} at (2,0) {T\mbb{E}&\mbb{E}\\ TM& M\\};
\path[->] (m2-1-1)	edge (m2-1-2)
				edge (m2-2-1);
\path[<-] (m2-2-2)	edge (m2-1-2)
				edge (m2-2-1);
\end{tikzpicture}$$
(Here $L\subseteq T\mbb{E}\rvert_W$ is the subbundle tangent to the leaves of the foliation). We say that $L$ is \emph{regular} if the leaf space, $\mbb{F}$, is a manifold and the quotient map, $p:W\to \mbb{F}$, is a surjective submersion. In this section, we show that when $L$ is an $\mc{LA}$-Dirac structure (with support on $F$), $\mbb{F}$ is naturally a Courant algebroid. Moreover,  the composition
\begin{equation}\label{eq:CRR1}\gr(p)\circ \gr(i)^\top:\mbb{E}\dasharrow \mbb{F}\end{equation} is a Courant relation, where $i:W\to \mbb{E}$ denotes the inclusion and $\gr(i)^\top:\mbb{E}\dasharrow W$ is the transpose relation (cf. \cref{sec:LinRel}).


%
%

%

Before proceeding, we make the following definition, abstracting the properties of the Courant relation \labelcref{eq:CRR1}

\begin{definition}
Suppose $\mbb{E}$ and $\mbb{F}$ are Courant algebroids, and let $p_\mbb{E}:\mbb{F}\times\overline{\mbb{E}}\to\overline{\mbb{E}}$ and $p_\mbb{F}:\mbb{F}\times\overline{\mbb{E}}\to\mbb{F}$ denote the two projections.
A Courant relation $$Q:\mbb{E}\dasharrow\mbb{F}$$ is called a \emph{Courant reduction relation} if $p_{\mbb{F}}\rvert_Q:Q\to\mbb{F}$ is a surjective submersion, while $p_{\mbb{E}}\rvert_Q:Q\to\overline{\mbb{E}}$ is an embedding. 

We say that $Q$ has \emph{connected fibres} if the fibres of $p_{\mbb{F}}\rvert_Q:Q\to\mbb{F}$ are connected.
\end{definition}

\begin{remark}
If two Courant reduction relations $Q_1:\mbb{E}_1\dasharrow\mbb{E}_2$ and $Q_2:\mbb{E}_2\dasharrow\mbb{E}_3$ compose cleanly, then their composition $$Q_2\circ Q_1:\mbb{E}_1\dasharrow\mbb{E}_2$$ is a Courant reduction relation.
\end{remark}

\begin{proposition}\label{prop:LAred}
Suppose $$\begin{tikzpicture}
\mmat[1em]{m} at (-2,0) {L&W\\ F& S\\};
\path[->] (m-1-1)	edge (m-1-2)
				edge (m-2-1);
\path[<-] (m-2-2)	edge (m-1-2)
				edge (m-2-1);
				
\draw (0,0) node {$\subseteq$};
				
\mmat[1em]{m2} at (2,0) {T\mbb{E}&\mbb{E}\\ TM& M\\};
\path[->] (m2-1-1)	edge (m2-1-2)
				edge (m2-2-1);
\path[<-] (m2-2-2)	edge (m2-1-2)
				edge (m2-2-1);
\end{tikzpicture}$$
is a regular $\mc{LA}$-Dirac structure (with support). Let $\mbb{F}$ denote the leaf space, and $i:W\to \mbb{E}$ and $p:W\to \mbb{F}$ denote the inclusion and the quotient map, respectively. Then there is a unique Courant algebroid structure on $\mbb{F}$ such that
\begin{equation}\label{eq:QL}Q_L:=\gr(p)\circ \gr(i)^\top:\mbb{E}\dasharrow \mbb{F}\end{equation} is a Courant reduction relation with connected fibres. This defines a one-to-one correspondence between regular $\mc{LA}$-Dirac structures (with support) and Courant reduction relations with connected fibres.
\end{proposition}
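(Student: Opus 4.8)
The plan is to leverage the dictionary already established in this excerpt between $\mc{VB}$-Dirac structures in $T\mbb{E}$ and pseudo-Dirac structures in $\mbb{E}$ (\cref{thm:LieSubIsVBDir}), and between $\mc{LA}$-Dirac structures in $T\mbb{E}$ and coisotropic pseudo-Dirac structures with flat reduced pseudo-connection (\cref{prop:CoLieSub}), and to show that passing to the leaf space is exactly the operation of composing the canonical Courant relation with the reduction data. I would structure the argument in four steps.

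\emph{Step 1: Construct the Courant algebroid structure on $\mbb{F}$.} Given a regular $\mc{LA}$-Dirac structure $L\subseteq T\mbb{E}$ with support on $F\subseteq TM$, by \cref{prop:CoLieSub} (suitably extended to the with-support case, which is the routine part) we have $W^\perp\subseteq W$ and $\nabla$ descends to a flat connection on $W/W^\perp$ along $S$. The leaf space $\mbb{F}$ is canonically identified with $W/W^\perp$ reduced along the foliation of the base $S$ by $F$; equivalently, $\mbb{F}$ is obtained by coisotropic reduction (\cref{sec:pullback}) followed by taking flat sections along the leaves, so $\mbb{F}$ inherits a metric (from the restriction of $\la\cdot,\cdot\ra$ to $W$, which is non-degenerate on $W/W^\perp$), an anchor, and a bracket. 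The cleanest way to obtain these and to verify the Courant algebroid axioms at once is supergeometric: by \cref{prop:PoisLie2LACour}, $L$ corresponds to a coisotropic $NQ$-submanifold $Y\subseteq X$ of the degree 2 symplectic $NQ$-manifold $X$ dual to $\mbb{E}$; the symplectic reduction $X_Y$ of $X$ along $Y$ is again a degree 2 symplectic $NQ$-manifold (the homological vector field descends because $Y$ is coisotropic and $Q$-invariant), hence corresponds to a Courant algebroid, and this is $\mbb{F}$. The regularity hypothesis is precisely what guarantees $X_Y$ is smooth.

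\emph{Step 2: Identify $Q_L$ and show it is a Courant reduction relation.} I would show $Q_L = \gr(p)\circ\gr(i)^\top\subseteq \mbb{F}\times\overline{\mbb{E}}$ is the composition, in the sense of Courant relations, of the reduction relation $R_C\colon\mbb{E}\dasharrow\mbb{E}_C$ attached to the coisotropic $C=W$ (using \cref{prop:FreeVBDir}-type reasoning, i.e. the proposition on coisotropic reduction in \cref{sec:pullback}) with the relation coming from the foliation $L$; since $i$ is an embedding and $p$ a surjective submersion, $p_{\mbb{E}}|_{Q_L}$ is an embedding and $p_{\mbb{F}}|_{Q_L}$ a surjective submersion, so $Q_L$ is a Courant reduction relation. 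That $Q_L$ is an \emph{isotropic}, hence Lagrangian, subbundle follows from \cref{eq:restPair} and the fact that $p$ kills $W^\perp$: for $x,y\in W$ one has $\la p(x),p(y)\ra_{\mbb{F}} = \la x,y\ra_{\mbb{E}}$, so the graph is isotropic of the right rank. Involutivity of $Q_L$ with respect to the Courant bracket follows because $L\subseteq TW$ is involutive and the Courant bracket on $T\mbb{E}\to TM$ is computed from tangent and core lifts by \cref{prop:TngProCA}; this is essentially \cref{prop:fltSec} packaged relationally. Connectedness of the fibres of $p_{\mbb{F}}|_{Q_L}$ is immediate since the fibres are the leaves of the foliation $L$, which are connected by definition of leaf space.

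\emph{Step 3: Inverse construction.} Conversely, given a Courant reduction relation $Q\colon\mbb{E}\dasharrow\mbb{F}$ with connected fibres, I would set $W:=\on{ran}\big((p_{\mbb{E}}|_Q)\big)=\{x\in\mbb{E}\mid \exists\, y\in\mbb{F},\ (y,x)\in Q\}$ (a subbundle since $p_{\mbb{E}}|_Q$ is an embedding), supported on $S:=$ the image of the base map; $Q$ being Lagrangian forces $W^\perp=\ker$ of the induced map $W\to\mbb{F}$, so $W/W^\perp\cong p_{\mbb{F}}(Q)=\mbb{F}$ fibrewise, and the connected fibres of $p_{\mbb{F}}|_Q$ give a foliation of $W$. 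Define $L\subseteq TW\subseteq T\mbb{E}$ to be the distribution tangent to this foliation (the preimage of the horizontal bundle of the induced flat connection on $W/W^\perp$, as in the proof of \cref{prop:CoLieSub}). Involutivity of $Q$ translates, via \cref{prop:TngProCA} and \cref{eq:restPair}, into involutivity of $L$ for the Courant bracket on $T\mbb{E}$, and $L$ is Lagrangian of half rank; thus $L$ is a $\mc{VB}$-Dirac structure, and being contained in $TW$ it is an $\mc{LA}$-Dirac structure by \cref{prop:CoLieSub}. Regularity of $L$ is exactly the statement that $p_{\mbb{F}}|_Q$ is a surjective submersion.

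\emph{Step 4: The two constructions are mutually inverse.} Here one checks $Q_{L(Q)}=Q$ and $L(Q_L)=L$. The first is a diagram chase: the range of $L(Q)$ recovers $W$ and its foliation recovers the fibres of $Q$, and connectedness of the fibres of $Q$ is needed so that the leaf space of $L(Q)$ is $\mbb{F}$ on the nose rather than a cover. The second is immediate from the constructions. I would also remark that the Courant algebroid structure on $\mbb{F}$ making $Q_L$ a Courant relation is unique, because the metric, anchor and bracket on $\mbb{F}$ are determined by $p$-relatedness to those on a coisotropic subbundle of $\mbb{E}$ (this is the uniqueness clause, and it follows from $p$ being a surjective submersion together with the coisotropic reduction proposition of \cref{sec:pullback}).

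\textbf{Main obstacle.} The hard part is Step 2, specifically the verification that $Q_L$ is genuinely \emph{involutive} (a Dirac structure) rather than merely isotropic, and that the bracket, anchor and metric descend consistently to $\mbb{F}$ — i.e. that the leaf-space data really assemble into a Courant algebroid. The slick route is the supergeometric one of Step 1 (symplectic reduction of the degree 2 $NQ$-manifold along a coisotropic $Q$-invariant submanifold), which reduces everything to the standard fact that coisotropic reduction preserves the symplectic structure and that $Q$-invariance lets the homological vector field descend; the only real content left is smoothness, which is handed to us by the regularity assumption. If one instead wants a bracket-level proof avoiding supergeometry, the obstacle becomes bookkeeping with tangent and core lifts and the identities of \cref{prop:TngProCA,prop:clasLagDB,prop:qIndBrk}, together with a clean-composition check for $\gr(p)\circ\gr(i)^\top$ — routine but lengthy, so I would present the supergeometric argument as the main proof and relegate the hands-on version to a remark.
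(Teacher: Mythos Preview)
Your proposal is sound but diverges from the paper on both halves of the correspondence. For the forward direction you construct $\mbb{F}$ by supergeometric symplectic reduction along the coisotropic $NQ$-submanifold associated to $L$; the paper instead either defers existence to the later \cref{prop:VBred} (a direct bracket-level argument adapted from \cite{Bursztyn:2007ko}) or argues via integration, observing that $L$ integrates to the multiplicative Dirac structure $R=W\times_{\mbb{F}}W\subseteq\mbb{E}\times\overline{\mbb{E}}$ (\cref{ex:TngProInt}), whose involutivity forces brackets and pairings of $Q_L$-related sections to descend consistently to $\mbb{F}$. Your route is arguably the most conceptual of the three; the paper's stay closer to the Courant-relation formalism already in place and avoid invoking smoothness of a supergeometric quotient.

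The sharper difference is in the inverse construction (your Step~3). You build $L$ directly as the distribution tangent to the fibres of $W_Q\to\mbb{F}$ and then must verify Courant-involutivity in $T\mbb{E}$ --- the step you correctly flag as the main obstacle. The paper sidesteps this with a one-liner: set $L_Q:=\mbb{F}\circ TQ$, the composition of the tangent lift $TQ:T\mbb{E}\dasharrow T\mbb{F}$ (an $\mc{LA}$-Courant relation by \cref{ex:TngProIsLACA}) with the zero section $\mbb{F}\subseteq T\mbb{F}$ (an $\mc{LA}$-Dirac structure with support by \cref{ex:FreeLADir}). Clean composition makes $L_Q$ automatically an $\mc{LA}$-Dirac structure with support, and $L_Q=\ker(dp)$ follows on sight. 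This trick dissolves exactly the bracket bookkeeping you anticipated, and it also makes the mutual-inverse check (your Step~4) essentially formal once combined with the separate uniqueness lemma the paper proves beforehand (any Courant reduction relation determines the target Courant structure, since surjectivity of $p_{\mbb{F}}|_Q$ lets one lift sections).
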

\begin{lemma}\label{lem:CARRdetCAStr}
Suppose that $Q:\mbb{E}\dasharrow\mbb{F}$ is a Courant reduction relation. Then the Courant algebroid structure on $\mbb{F}$ is uniquely determined by the Courant algebroid structure on $\mbb{E}$ and the underlying vector bundle relation $Q:\mbb{E}\dasharrow\mbb{F}$.
\end{lemma}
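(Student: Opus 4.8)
\textbf{Proof plan for \Cref{lem:CARRdetCAStr}.}

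The plan is to recover, one at a time, the anchor, the pairing, and the Courant bracket on $\mbb{F}$ from the data $(\mbb{E},Q)$, using the two defining properties of a Courant reduction relation: that $p_{\mbb{E}}\rvert_Q\colon Q\to\overline{\mbb{E}}$ is an embedding and $p_{\mbb{F}}\rvert_Q\colon Q\to\mbb{F}$ is a surjective submersion. The first observation I would record is a purely relational consequence of these two properties: for every $y\in\mbb{F}$ there exists $x\in\mbb{E}$ with $x\sim_Q y$ (surjectivity), and if $x\sim_Q y$ and $x\sim_Q y'$ then $y=y'$, because $(y,x)$ and $(y',x)$ both lie in $Q$ and $p_{\mbb{E}}\rvert_Q$ is injective. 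In other words, $Q$ is the graph of a (set-theoretic, in fact smooth bundle) map on its image; more importantly, any section $\tau\in\Gamma(\mbb{F})$ can locally be written as $\tau = p\circ\sigma$ for a suitable $\sigma\in\Gamma(\mbb{E})$ with $\sigma\sim_Q\tau$, since $p_{\mbb{F}}\rvert_Q$ is a surjective submersion (so it admits local sections, which then compose with $p_{\mbb{E}}\rvert_Q$ to give $\sigma$). This is the device that will let me express all structure on $\mbb{F}$ in terms of pushed-forward structure from $\mbb{E}$.

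Next I would invoke the compatibility properties of Courant relations recalled in \cref{sec:CARel}: since $Q\subseteq\mbb{F}\times\overline{\mbb{E}}$ is a Dirac structure (with support on a submanifold $S$ of the base), for $\sigma_i\in\Gamma(\mbb{E})$, $\tau_i\in\Gamma(\mbb{F})$ with $\sigma_i\sim_Q\tau_i$ one has
\begin{equation*}
\Cour{\sigma_1,\sigma_2}\sim_Q\Cour{\tau_1,\tau_2},\qquad
\la\sigma_1,\sigma_2\ra\sim_Q\la\tau_1,\tau_2\ra,
\end{equation*}
and moreover $\mbf{a}_\mbb{F}(\tau_1)$ is $T\phi$-related to $\mbf{a}_\mbb{E}(\sigma_1)$, where $\phi$ is the base map of $Q$. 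Combining this with the first paragraph gives the uniqueness: given $y_1,y_2\in\mbb{F}$ over a common base point in the image of $\phi$, choose $x_i\in\mbb{E}$ with $x_i\sim_Q y_i$; then $\la y_1,y_2\ra$ is forced to equal $\la x_1,x_2\ra$ (using that this latter quantity is constant along the fibres of $p_{\mbb{F}}\rvert_Q$, which follows from the isotropy of $Q$ applied to the difference of two lifts together with surjectivity of $p_{\mbb{F}}$); similarly $\mbf{a}_\mbb{F}(y_1)$ is determined as the unique tangent vector $T\phi$-related to $\mbf{a}_\mbb{E}(x_1)$; and for sections the bracket $\Cour{\tau_1,\tau_2}$ is determined by picking lifts $\sigma_i\sim_Q\tau_i$ and pushing $\Cour{\sigma_1,\sigma_2}$ through $Q$ — again well-defined because any two lifts differ by a section mapping to $0$ under $p_{\mbb{F}}\rvert_Q$, and such a section lies in $Q\cap(0\times\mbb{E})$, so the isotropy and involutivity of $Q$ force the bracket of a lift with it to again lie in $Q\cap(0\times\mbb{E})$. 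Since surjectivity of $p_{\mbb{F}}$ guarantees that $\Gamma(\mbb{F})$ is spanned (over $C^\infty$ of the base, locally) by such push-forwards $p\circ\sigma$, all three operations on $\mbb{F}$ are pinned down.

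The main obstacle I anticipate is not the existence of the push-forward formulas but the bookkeeping required to show the resulting operations are \emph{well defined independently of the chosen lift} — this is where the precise hypotheses "$p_{\mbb{E}}\rvert_Q$ an embedding" and "$p_{\mbb{F}}\rvert_Q$ a surjective submersion" must be used in tandem with the Lagrangian (hence isotropic and involutive) nature of $Q$, exactly as in the standard proof that a clean composition of a Dirac structure with a Courant relation is again Dirac (\cite[Proposition~1.4]{LiBland:2011vqa}). Concretely: if $\sigma,\sigma'\in\Gamma(\mbb{E})$ both satisfy $\sigma\sim_Q\tau$ and $\sigma'\sim_Q\tau$, then $(0,\sigma-\sigma')\rvert_S\in\Gamma(Q)$, and for any further lift $\sigma_2\sim_Q\tau_2$ one computes $\Cour{\sigma_2,\sigma-\sigma'}\sim_Q\Cour{\tau_2,0}=0$, so $\Cour{\sigma_2,\sigma}$ and $\Cour{\sigma_2,\sigma'}$ have the same image under $p$; symmetrising the argument in both slots, $\Cour{\tau_1,\tau_2}:=p\big(\Cour{\sigma_1,\sigma_2}\big)$ is unambiguous. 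Once this is in place, uniqueness of the anchor and pairing is immediate from the analogous (easier) computations, and the lemma follows; I would not reprove that these operations \emph{satisfy} the Courant axioms, since that is part of \cref{prop:LAred} and is not needed for the uniqueness statement at hand.
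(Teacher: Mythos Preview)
Your approach is correct and essentially the same as the paper's: identify $Q$ with a subbundle $W_Q\subseteq\mbb{E}$ via the embedding $p_{\mbb{E}}\rvert_Q$, use the surjective submersion $p:W_Q\to\mbb{F}$ to produce lifts $\tilde\sigma\sim_Q\sigma$ of arbitrary sections of $\mbb{F}$, and then read off $\Cour{\sigma,\tau}$ and $\la\sigma,\tau\ra$ from the compatibility $\Cour{\tilde\sigma,\tilde\tau}\sim_Q\Cour{\sigma,\tau}$, $\la\tilde\sigma,\tilde\tau\ra\sim_Q\la\sigma,\tau\ra$ together with the single-valuedness of $\sim_Q$ in the $\mbb{F}$-direction.

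One comment: your third paragraph, about showing the push-forward operations are \emph{well defined independently of the chosen lift}, is unnecessary for this lemma. The statement is a uniqueness statement: you are assuming a Courant algebroid structure on $\mbb{F}$ already exists making $Q$ a Courant relation, and showing it is determined. Under that hypothesis, $\Cour{\tilde\sigma,\tilde\tau}\sim_Q\Cour{\sigma,\tau}$ holds for \emph{every} choice of lift, and the right-hand side is the a priori given bracket on $\mbb{F}$; there is nothing to check about independence of lift. The paper's proof accordingly stops after the second paragraph of your argument. The well-definedness computation you sketch (using involutivity and isotropy of $Q$ to show the bracket with a section in $\ker p$ lands in $\ker p$) is exactly what is needed for the \emph{existence} direction in \cref{prop:LAred}, and the paper defers it to there.
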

\begin{proof}
By definition, 
the restriction of $p_\mbb{E}$ to $Q$ defines an isomorphism $$j:=p_\mbb{E}\rvert_Q:Q\to W_Q$$ where the subbundle $W_Q\subseteq\mbb{E}$ is the image of $Q$ under $p_\mbb{E}$. Similarly, the restriction of $p_\mbb{F}$ to $Q$ defines a surjective submersion $$p:=p_{\mbb{F}}\rvert_Q\circ j^{-1}:W_Q\to \mbb{F}.$$

Suppose that $\sigma,\tau\in\Gamma(\mbb{F})$ are any two sections, and let $\tilde\sigma,\tilde\tau\in\Gamma(\mbb{E})$ be any extensions of $p^*\sigma,p^*\tau\in\Gamma(W_Q/W_Q^\perp)$ to all of $\mbb{E}$ i.e.
	$$ \tilde\sigma\sim_Q\sigma,\quad \tilde\tau\sim_Q\tau.$$
Since $Q$ is a Courant relation, 
\begin{align*} \Cour{\tilde\sigma,\tilde\tau}&\sim_Q\Cour{\sigma,\tau},\\ \la\tilde\sigma,\tilde\tau\ra&\sim_Q\la\sigma,\tau\ra,\end{align*}
which shows that the Courant bracket and metric on $\mbb{F}$ are uniquely determined by the corresponding structures on $\mbb{E}$.
\end{proof}
\begin{proof}[Proof of \cref{prop:LAred}]
%

\begin{description}
\item[$\Rightarrow$] In light of \cref{lem:CARRdetCAStr}, it suffices to show that there exists a Courant algebroid structure on $\mbb{F}$ such that \labelcref{eq:QL} is a Courant reduction relation. We prove this fact in \cref{prop:VBred}. However, here we offer an alternative proof which takes advantage of the theory presented in \cref{sec:Integr}:

Let $N$ denote the base of the vector bundle $\mbb{F}\to N$. $L$ integrates to the multiplicative Dirac structure $R=W\times_{\mbb{F}} W\subseteq \mbb{E}\times\overline{\mbb{E}}$ with support on $S\times_{N} S\subset M\times M$.

Let $\sigma,\tau\in\Gamma(\mbb{F})$ be any two sections, and suppose that $\tilde\sigma_1,\tilde\sigma_2,\tilde\tau_1,\tilde\tau_2\in \Gamma(\mbb{E})$ are such that their restrictions to $S$ are $p:W\to \mbb{F}$ related to $\sigma$ and $\tau$, respectively, i.e. 
$$\tilde\sigma_i\sim_{Q_L}\sigma,\quad \tilde\tau_i\sim_{Q_L}\tau, \quad (i=1,2).$$
Then $(\tilde\sigma_1,\tilde\sigma_2)\rvert_{S\times_{N} S}\in\Gamma(R)$ and $(\tilde\tau_1,\tilde\tau_2)\rvert_{S\times_{N} S}\in\Gamma(R)$. Hence
$$(\Cour{\tilde\sigma_1,\tilde\tau_1},\Cour{\tilde\sigma_2,\tilde\tau_2})\rvert_{S\times_{N} S}\in\Gamma(R)$$
namely, $\Cour{\tilde\sigma_1,\tilde\tau_1}\rvert_S$ and $\Cour{\tilde\sigma_2,\tilde\tau_2}\rvert_S$ are both $p:W\to \mbb{F}$ related to the same section, which we denote by $\Cour{\sigma,\tau}\in\Gamma(\mbb{F})$. In particular, $\Cour{\sigma,\tau}$ doesn't depend on the extensions of $\sigma,\tau\in\Gamma(\mbb{F})$ to sections of $\mbb{E}$.

 Similarly, 
$$\la (\tilde\sigma_1,\tilde\sigma_2),(\tilde\tau_1,\tilde\tau_2)\ra\rvert_{S\times_{N} S}=0,$$
 so there exists a unique function, which we denote by $\la\sigma,\tau\ra\in C^\infty(N)$, such that 
 $$\la\tilde\sigma_i,\tilde\tau_i\ra\rvert_S=p^*\la\sigma,\tau\ra$$
  for $i=1,2$. In particular, $\la\sigma,\tau\ra$ doesn't depend on the extensions of $\sigma,\tau\in\Gamma(\mbb{F})$ to sections of $\mbb{E}$.

Since $\mbb{E}$ satisfies the axioms of a Courant algebroid, it follows directly that $\mbb{F}$ endowed with this Courant bracket and fibre metric does too. Moreover, by construction, \labelcref{eq:QL} is a Courant reduction relation.

\item[$\Leftarrow$] Suppose next that $$Q:\mbb{E}\dasharrow \mbb{F}$$ is a Courant reduction relation with connected fibres. Let $p:W_Q\to \mbb{F}$ be the surjective submersion described in the proof of \cref{lem:CARRdetCAStr}.

Taking the tangent lift of $Q$ yields the $\mc{LA}$-Courant relation $TQ:T\mbb{E}\dasharrow T\mbb{F}$ (cf. \cref{ex:TngProIsLACA}). The zero section $\mbb{F}\subseteq T\mbb{F}$ is an $\mc{LA}$-Dirac structure with support (cf. \cref{ex:FreeLADir}). Thus the composition $L_Q:=\mbb{F}\circ TQ$ defines an $\mc{LA}$-Dirac structure with support,
$$\begin{tikzpicture}
\mmat[1em]{m} at (-2,0) {L_Q&W_Q\\ F_Q& S_Q\\};
\path[->] (m-1-1)	edge (m-1-2)
				edge (m-2-1);
\path[<-] (m-2-2)	edge (m-1-2)
				edge (m-2-1);
				
\draw (0,0) node {$\subseteq$};
				
\mmat[1em]{m2} at (2,0) {T\mbb{E}&\mbb{E}\\ TM& M\\};
\path[->] (m2-1-1)	edge (m2-1-2)
				edge (m2-2-1);
\path[<-] (m2-2-2)	edge (m2-1-2)
				edge (m2-2-1);
\end{tikzpicture}$$

Since $L_Q$ is defined as the composition of $TQ$ with the zero section of $T\mbb{F}\to \mbb{F}$, we have $$L_Q=\on{ker}(dp)\subseteq TW_Q.$$ Namely, $L_Q$ is the subbundle tangent to the fibres of the surjective submersion $$p:W_Q\to\mbb{F}.$$
By assumption, this submersion has connected fibres, so $\mbb{F}$ is canonically identified with the leaf space of $L_Q\subseteq TW_Q$.

\end{description}
 By construction, the $\mc{LA}$-vector subbundle $L\subseteq T\mbb{E}$ entirely determines the vector bundle relation $Q_L:\mbb{E}\dasharrow\mbb{F}$, and vice versa. Thus \cref{lem:CARRdetCAStr} shows that the constructions above invert each other, establishing the one-to-one correspondence.
\end{proof}

\begin{example}
Suppose $L\subseteq T\mbb{E}$ is supported on all of $TM$, and let  $(W,\nabla)$ be the corresponding pseudo-Dirac structure in $\mbb{E}$ (cf. \cref{thm:LieSubIsVBDir}). \Cref{prop:CoLieSub} shows that $\nabla$ defines a flat connection on the pseudo-euclidean vector bundle $W/W^\perp$. Moreover if $\nabla$ defines a trivialization of $W/W^\perp$ with typical fibre $\mf{d}$,  then $$W/W^\perp\cong \mf{d}\times M$$ is isomorphic to an action Courant algebroid (cf. \cref{ex:ActCourAlg}) for some quadratic Lie algebra structure on $\mf{d}$.

In particular, the corresponding reduction of $\mbb{E}$ is the quadratic Lie algebra $\mf{d}$.
\end{example}

The following example is originally due to Courant \cite{Courant:1990uy}, but was extended to arbitrary Courant algebroids in \cite{Bursztyn:2003ud,LiBland:2009ul}.
\begin{example}
Suppose $\mbb{E}\to M$ is a Courant algebroid, and  $N\subseteq M$ is an embedded submanifold transverse to the anchor map. Then $C=\mbf{a}^{-1}(TN)\subseteq\mbb{E}$ is a well defined subbundle. 
Let $i:N\to M$ denote the embedding, and consider the Courant morphism $$P_i:i^!\mbb{E}\dasharrow \mbb{E},$$ canonically associated to the pull-back Courant algebroid $i^!\mbb{E}\cong C/C^\perp$ \cite{LiBland:2009ul} (cf. \cref{def:CAPullback}). 

The transpose relation 
\begin{equation}\label{eq:RiCourant}P_i^\top:\mbb{E}\dasharrow i^!\mbb{E}\end{equation} is a Courant reduction relation. 

The corresponding $\mc{LA}$-Dirac structure is
$$\begin{tikzpicture}
\mmat{m} at (-2,0) {L&C\\ N& N\\};
\path[->] (m-1-1)	edge (m-1-2)
				edge (m-2-1);
\path[<-] (m-2-2)	edge (m-1-2)
				edge (m-2-1);
\draw (-2,0) node (c) {$C^\perp$};
\path[left hook->] (c) edge (m-1-1);
				
\draw (0,0) node {$\subseteq$};
				
\mmat{m2} at (2,0) {T\mbb{E}&\mbb{E}\\ TM& M\\};
\path[->] (m2-1-1)	edge (m2-1-2)
				edge (m2-2-1);
\path[<-] (m2-2-2)	edge (m2-1-2)
				edge (m2-2-1);
\draw (2,0) node (e) {$\mbb{E}$};
\path[left hook->] (e) edge (m2-1-1);
\end{tikzpicture}$$
where $L$ is the tangent bundle to the fibres of the quotient map $C\to C/C^\perp$.
\end{example}

\begin{example}
Suppose that a Lie group $G$ acts freely and properly by automorphisms on the Courant algebroid $\mbb{E}\to M$. Let $\mbb{F}=\mbb{E}/G$ denote the quotient space. Then the quotient map $$p:\mbb{E}\dasharrow\mbb{F}$$ is a Courant reduction relation. The corresponding $\mc{LA}$-Dirac structure,
$$\begin{tikzpicture}
\mmat{m} at (-2,0) {L&\mbb{E}\\ F& M\\};
\path[->] (m-1-1)	edge (m-1-2)
				edge (m-2-1);
\path[<-] (m-2-2)	edge (m-1-2)
				edge (m-2-1);
				
\draw (0,0) node {$\subseteq$};
				
\mmat{m2} at (2,0) {T\mbb{E}&\mbb{E}\\ TM& M\\};
\path[->] (m2-1-1)	edge (m2-1-2)
				edge (m2-2-1);
\path[<-] (m2-2-2)	edge (m2-1-2)
				edge (m2-2-1);
\end{tikzpicture}$$
is the tangent bundle of the $G$-orbits.
\end{example}

The following example is due to Stienon and Xu \cite{Stienon:2008cl} in the context of reducing generalized complex structures.
\begin{example}
Suppose that $G$ is a Lie group acting on a manifold $M$, and $M_0\subseteq M$ is an embedded submanifold on which $G$ acts freely and properly. Let $N:=M_0/G$ denote the quotient space. Let 
$$R_i:\mbb{T}M_0\dasharrow \mbb{T}M$$ and $$R_p:\mbb{T}M_0\dasharrow\mbb{T}N$$  denote the standard lifts of the embedding and the quotient maps,
$i:M_0\to M$ and $p:M_0\to N$ (cf. \cref{ex:StdDiracStr}). 
Then their composition, 
$$R_p\circ R_i^\top:\mbb{T}M\dasharrow\mbb{T}N,$$
 is a Courant reduction relation. 
\end{example}

\subsection{Reduction via $\mc{VB}$-Dirac structures in $T\mbb{E}$}\label{sec:VBred}

In the last section we considered a reduction procedure for a Courant algebroid $\mbb{E}$ based on looking at a $\mc{LA}$-subbundle $L\subset T\mbb{E}$, and described a sufficient condition which guaranteed that the leaf space inherit the structure of a Courant algebroid. However, this set up is not general enough for certain applications. For example, the reduction procedure described by Bursztyn, Cavalcanti and Gualtieri \cite{Bursztyn:2007ko} does not fit into this framework.

In this section we will describe a slightly more general framework, the analogue of Marsden-Ratiu reduction \cite{Marsden:1986vs} for Poisson structures. Consider a double vector subbundle $L\subseteq T\mbb{E}$:
\begin{equation}\label{eq:VBDirForRed}
\begin{tikzpicture}
\mmat[1em]{m} at (-2,0) {L&W\\ E& S\\};
\path[->] (m-1-1)	edge (m-1-2)
				edge (m-2-1);
\path[<-] (m-2-2)	edge (m-1-2)
				edge (m-2-1);
				

\draw (0,0) node {$\subseteq$};
				
\mmat[1em]{m2} at (2,0) {T\mbb{E}&\mbb{E}\\ TM& M\\};
\path[->] (m2-1-1)	edge (m2-1-2)
				edge (m2-2-1);
\path[<-] (m2-2-2)	edge (m2-1-2)
				edge (m2-2-1);
\end{tikzpicture}\end{equation}
Unlike in the previous section, we will no longer assume that $L$ is a Lie subalgebroid (i.e. describes a foliation of $W$). Instead we will assume that  $L\cap TW$ is a constant rank involutive subbundle, and the corresponding leaf space, $\mbb{F}$, is a smooth vector bundle with base $N$. We  let $p:W\to \mbb{F}$ and $p_0:S\to N$ denote the quotient maps.  As we shall show, if $L$ is a $\mc{VB}$-Dirac structure, then $\mbb{F}$ naturally inherits the structure of a Courant algebroid.

\begin{proposition}\label{prop:VBred}

Let $\sigma,\tau\in\Gamma(\mbb{F})$ and suppose $\tilde\sigma,\tilde\tau\in\Gamma(\mbb{E})$ are extensions of $p^*\sigma$ and $p^*\tau$ respectively, which are tangent to $L\subset\mbb{E}$  i.e.
 \begin{subequations}\label[pluralequation]{eq:Lext}
\begin{equation}\label{eq:extCond}p\circ\tilde\sigma\rvert_S=\sigma\circ p_0, \quad p\circ\tilde\tau\rvert_S=\tau\circ p_0,\end{equation}
  and 
\begin{equation}\label{eq:TangencyCond}\tilde\sigma_T\rvert_E,\tilde\tau_T\rvert_E\in\Gamma(L,E).\end{equation}
\end{subequations}

If $\mc{L}$ is a $\mc{VB}$-Dirac structure, then there exists a unique section $\Cour{\sigma,\tau}\in\Gamma(\mbb{F})$ and a unique function $\la\sigma,\tau\ra\in C^\infty (N)$ such that 
\begin{subequations}
\begin{equation}\label{eq:RedCourCond}p\circ \Cour{\tilde\sigma,\tilde\tau}\rvert_S=\Cour{\sigma,\tau}\circ p_0\end{equation} 
and 
\begin{equation}\label{eq:RedCourBrkCond}p\circ \la\tilde\sigma,\tilde\tau\ra=\la\sigma,\tau\ra\circ p_0.\end{equation}
\end{subequations}
Moreover, neither $\Cour{\sigma,\tau}$ nor $\la\sigma,\tau\ra$ depend on the choice of the extensions $\tilde\sigma,\tilde\tau\in\Gamma(\mbb{E})$. Finally, the resulting bracket and pairing on $\Gamma(\mbb{F})$ endow $\mbb{F}$ with the structure of a Courant algebroid.
\end{proposition}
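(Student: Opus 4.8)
The plan is to reduce \cref{prop:VBred} to \cref{prop:LAred} by passing to a double quotient, thereby avoiding a direct verification of the Courant algebroid axioms on $\mbb{F}$. The key observation is that although $L\subseteq T\mbb{E}$ is only assumed to be a $\mc{VB}$-Dirac structure (not an $\mc{LA}$-Dirac structure), the subbundle $L\cap TW$ is assumed to be a constant rank involutive subbundle, and $W\subseteq\mbb{E}$ is coisotropic along $S$ in the appropriate sense. First I would form the restriction $\mbb{E}\rvert_S$ and pass to the quotient bundle $\mbb{E}_W := (W\cap W^\perp{}^{\mbox{\scriptsize(suitable)}})$ — more precisely, I would first restrict to an auxiliary Courant algebroid supported on $S$ using a coisotropic reduction (as in \cref{sec:pullback}, since $W+W^\perp$ and $W\cap W^\perp$ are controlled by the hypothesis that $L$ is Lagrangian and $\mbf{a}(L)\subseteq TS$), and then observe that $L\cap TW$ descends to an $\mc{LA}$-Dirac structure in the tangent prolongation of that reduced Courant algebroid. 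Then \cref{prop:LAred} applies to give $\mbb{F}$ the structure of a Courant algebroid together with a Courant reduction relation, and the bracket and pairing produced there are exactly those characterized by \cref{eq:RedCourCond,eq:RedCourBrkCond}.

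More concretely, the steps I would carry out are: (1) Check that the hypotheses on $L$ — Lagrangian, double vector subbundle, $L\cap TW$ involutive of constant rank with smooth leaf space $\mbb{F}$ — force $W\subseteq\mbb{E}\rvert_S$ to be a subbundle with $\mbf{a}(W)\subseteq TS$ and $\mbf{a}(W^\perp)=0$, using \cref{prop:VBCAoverpt}-type reasoning fibrewise together with linearity of the anchor (\cref{prop:BasVBCAfacts}) and the description of the core of $L$ as $\ann(W)\subseteq V^*$ (\cref{prop:VBDirIsLaVB}). This lets one perform the coisotropic reduction $\mbb{E}\rightsquigarrow \mbb{E}_C$ with $C = W$, $C^\perp = W^\perp$, obtaining a Courant algebroid $\mbb{E}_W := W/W^\perp$ over $S$ and a Courant morphism $R_\phi:\mbb{E}_W\dasharrow\mbb{E}$. (2) Show that $L\cap TW$, viewed inside $T\mbb{E}_W$ via $TR_\phi$, is a \emph{regular} $\mc{LA}$-Dirac structure with support in $T\mbb{E}_W$: it is involutive by hypothesis, it is a double vector subbundle, and it is Lagrangian in $T\mbb{E}_W$ because the pairing on $\mbb{E}_W$ descends from that of $\mbb{E}$ via \cref{eq:restPair}-type identities, combined with $L$ being Lagrangian in $T\mbb{E}$. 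The leaf space of $L\cap TW$ inside $W$ is $\mbb{F}$ by construction. (3) Invoke \cref{prop:LAred} to conclude $\mbb{F}$ is a Courant algebroid with $Q_{L\cap TW}:\mbb{E}_W\dasharrow\mbb{F}$ a Courant reduction relation; composing with $R_\phi^\top$ (equivalently $\gr(p)\circ\gr(i)^\top$) gives the Courant relation $\mbb{E}\dasharrow\mbb{F}$, and unwinding the definitions of the reduced bracket and metric there produces precisely the characterizations \cref{eq:RedCourCond,eq:RedCourBrkCond}. The uniqueness of $\Cour{\sigma,\tau}$ and $\la\sigma,\tau\ra$ follows since $p$ and $p_0$ are surjective submersions.

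A separate, more hands-on route — which I would present as an alternative in case the reduction-of-reduction bookkeeping is delicate — is to verify directly that the formulas \cref{eq:RedCourCond,eq:RedCourBrkCond} are well defined and then quote the axioms for $\mbb{E}$. Well-definedness amounts to: if $\tilde\sigma$ is an $L$-tangent extension of $p^*\sigma$ vanishing on $W$ after projection (i.e. $p\circ\tilde\sigma\rvert_S = 0$), then $p\circ\Cour{\tilde\sigma,\tilde\tau}\rvert_S=0$ and $\la\tilde\sigma,\tilde\tau\ra$ is $p_0$-basic. This is exactly the content of the tangency condition \cref{eq:TangencyCond}: the condition $\tilde\sigma_T\rvert_E\in\Gamma(L,E)$ together with $L$ being Lagrangian and involutive controls $\la\Cour{\tilde\sigma,\gamma_C},\tau_T\ra$ for $\gamma\in\Gamma(W^\perp)$ — this is precisely the calculation \labelcref{eq:LinCoreInv} appearing in the proof of \cref{lem:ModBrkProp2}, now run with the extensions being sections of $L$. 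Once well-definedness is in hand, axioms (c1)–(c3) for $\mbb{F}$ follow by applying $p$ and $p_0$ to the corresponding identities for $\mbb{E}$, because $p$ and $p_0$ are surjective submersions and every term is $L$-tangent.

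The main obstacle I anticipate is step (2): proving that $L\cap TW$ genuinely descends to a Lagrangian (hence, being involutive and a subbundle, an $\mc{LA}$-Dirac) subbundle of $T\mbb{E}_W$, and in particular that the constant-rank and regularity hypotheses on $L\cap TW$ upstairs translate into the same hypotheses downstairs. One must check carefully that $W^\perp$ (the core of $L$, equal to $\ann(W)\subseteq V^*$ by \cref{prop:VBDirIsLaVB}) is compatible with the coisotropic reduction $C^\perp$, i.e. that the two notions of orthogonal complement — the fibre metric on $\mbb{E}$ and the double-vector-bundle core structure — agree under the identifications of \cref{prop:BasVBCAfacts}. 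This is essentially a compatibility-of-duals bookkeeping, not a deep point, but it is where the proof must be written with care. Everything else is an application of \cref{prop:LAred}, \cref{thm:LieSubIsVBDir}, and the coisotropic reduction recalled in \cref{sec:pullback}.
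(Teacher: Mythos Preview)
Your primary approach has a genuine gap: the coisotropic reduction in step (1) does not go through. You claim that the hypotheses ``force $W\subseteq\mbb{E}\rvert_S$ to be a subbundle with $\mbf{a}(W)\subseteq TS$ and $\mbf{a}(W^\perp)=0$,'' but neither $W^\perp\subseteq W$ nor $\mbf{a}(W^\perp)=0$ holds in general for a $\mc{VB}$-Dirac structure. The core of $L$ is indeed $W^\perp$ (by \cref{prop:VBDirIsLaVB}, under the metric identification of the core of $T\mbb{E}$ with $\mbb{E}$), but there is no reason for the core of $L$ to sit inside the side bundle $W$ --- that containment is equivalent to $L\subseteq TW$, which is precisely the $\mc{LA}$-Dirac condition the proposition deliberately drops. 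In the motivating example of \cref{ex:BCGred}, $W=K^\perp$ and $W^\perp=K$, and $K$ is neither isotropic (so $W$ is not coisotropic) nor contained in $\ker\mbf{a}$ (indeed $\mbf{a}(K)$ spans the $G$-orbit directions). Consequently $W/W^\perp$ is not a Courant algebroid over $S$, and the reduction-to-\cref{prop:LAred} strategy collapses at the first step. The correct quotient is $W/(W\cap W^\perp)$, and this is only a vector bundle over $S$, not a Courant algebroid --- the passage to a Courant algebroid structure on the leaf space $\mbb{F}$ is exactly what the proposition is asserting.

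Your alternative ``hands-on'' route is essentially what the paper does (following the argument of \cite[Theorem~3.3]{Bursztyn:2007ko}): one shows that the tangency condition $\tilde\sigma_T\rvert_E\in\Gamma(L,E)$ together with involutivity of $L$ forces $\Cour{\tilde\sigma,\tilde\tau}$ to again be tangent to $L$ (hence to $L\cap TW$, hence $p$-basic), and that Lagrangianity of $L$ forces $\la\tilde\sigma,\tilde\tau\ra_T\rvert_E=0$. Independence of extensions is then checked by showing that if $\hat\tau:=\tilde\tau-\tilde\tau'$ satisfies $\hat\tau\rvert_S\in\Gamma(W\cap W^\perp)$, then $\Cour{\tilde\sigma,\hat\tau}\rvert_S\in\Gamma(W\cap W^\perp)$; this uses axiom (c2) and $\mbf{a}(W)\subseteq TS$, not the calculation \labelcref{eq:LinCoreInv} you cite. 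Your sketch of this part is correct in spirit but should be developed as the main argument rather than an afterthought.
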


In this reduction procedure, the double vector subbundle $L\cap TW\subseteq \mbb{E}$ serves to define the foliation whose leaf space, $\mbb{F}$, interests us. The purpose of the larger double vector subbundle $L\subseteq \mbb{E}$ is to control how one extends sections of $\mbb{F}$ to sections of $\mbb{E}$. 

The following proof of \cref{prop:VBred} is an adaptation of the proof of \cite[Theorem~3.3]{Bursztyn:2007ko}.

\begin{proof}
We call any section $\tilde\sigma\in\Gamma(\mbb{E})$ an $L$-controlled extension of $\sigma\in\Gamma(\mbb{F})$ if \cref{eq:Lext} hold.

First, we show that a section $\Cour{\sigma,\tau}\in\Gamma(\mbb{F})$ satisfying \cref{eq:RedCourCond} exists. The sections $\tilde\sigma,\tilde\tau\in\Gamma(\mbb{E})$ are tangent to $L$ if and only if their respective tangent lifts are sections of $L\to E$, i.e. \cref{eq:TangencyCond} holds.
Consequently, since $L$ is a $\mc{VB}$-Dirac structure, \cref{eq:TangencyCond} implies $$\Cour{\tilde\sigma,\tilde\tau}_T\rvert_E:=\Cour{\tilde\sigma_T,\tilde\tau_T}\rvert_E\in\Gamma(L,E).$$ Since $\Cour{\tilde\sigma,\tilde\tau}$ is tangent to $L$ (and thus also to $L\cap TW$), it follows that there exists a unique section $\Cour{\sigma,\tau}\in\Gamma(\mbb{F})$ satisfying \cref{eq:RedCourCond}.

Similarly, since $L$ is Lagrangian, \cref{eq:TangencyCond} implies  $$\la\tilde\sigma,\tilde\tau\ra_T\rvert_E:=\la\tilde\sigma_T,\tilde\tau_T\ra\rvert_E=0.$$ In particular, $d\la\tilde\sigma,\tilde\tau\ra=\la\tilde\sigma,\tilde\tau\ra_T$ vanishes on $E\cap TS$, so there exists a unique function $\la\sigma,\tau\ra\in C^\infty(N)$ satisfying \cref{eq:RedCourBrkCond}.

Next, we need to show that $\Cour{\sigma,\tau}$ and $\la\sigma,\tau\ra$ do not depend on the choice of $L$-controlled extensions $\tilde\sigma,\tilde\tau\in\Gamma(\mbb{E})$. Suppose that $\tilde\tau'\in\Gamma(\mbb{E})$ satisfies \cref{eq:Lext}. Since $L$ is Lagrangian, \cref{prop:VBDirIsLaVB} shows its core must be $W^\perp$:
$$\begin{tikzpicture}
\mmat{m} at (-2,0){L&W\\ E& S\\};
\path[->] (m-1-1)	edge (m-1-2)
				edge (m-2-1);
\path[<-] (m-2-2)	edge (m-1-2)
				edge (m-2-1);
\draw (-2,0) node (c) {$W^\perp$};
\path[left hook->] (c) edge (m-1-1);
\draw (0,0) node {$\subseteq$};
				
\mmat{m2} at (2,0) {T\mbb{E}&\mbb{E}\\ TM& M\\};
\path[->] (m2-1-1)	edge (m2-1-2)
				edge (m2-2-1);
\path[<-] (m2-2-2)	edge (m2-1-2)
				edge (m2-2-1);
\draw (2,0) node (e) {$\mbb{E}$};
\path[left hook->] (e) edge (m2-1-1);
\end{tikzpicture}$$
Meanwhile the core of $TW$ is $W$, the subbundle of vertical vectors along $S\subseteq W$ (cf. \cref{ex:TngDVB2,fig:TngCore}):
$$\begin{tikzpicture}
\mmat{m} at (-2,0){TW&W\\ TS& S\\};
\path[->] (m-1-1)	edge (m-1-2)
				edge (m-2-1);
\path[<-] (m-2-2)	edge (m-1-2)
				edge (m-2-1);
\draw (-2,0) node (c) {$W$};
\path[left hook->] (c) edge (m-1-1);
\draw (0,0) node {$\subseteq$};
				
\mmat{m2} at (2,0) {T\mbb{E}&\mbb{E}\\ TM& M\\};
\path[->] (m2-1-1)	edge (m2-1-2)
				edge (m2-2-1);
\path[<-] (m2-2-2)	edge (m2-1-2)
				edge (m2-2-1);
\draw (2,0) node (e) {$\mbb{E}$};
\path[left hook->] (e) edge (m2-1-1);
\end{tikzpicture}$$
 Thus the core of $L\cap TW$ is $W\cap W^\perp$. It follows that $p^*\mbb{F}\cong W/(W\cap W^\perp)$. Hence $\hat\tau:=\tilde\tau-\tilde\tau'$ satisfies $\hat\tau\rvert_S\in\Gamma(W\cap W^\perp)$ and $\hat\tau_T\in\Gamma(L,E)$. Let $\upsilon\in\Gamma(\mbb{E})$ be any section such that $\upsilon\rvert_S\in\Gamma(W)$ and $\upsilon_T\in\Gamma(L,E)$. Then 
\begin{equation}\label{eq:MstVanishRed}\la\Cour{\tilde\sigma,\hat\tau},\upsilon\ra\rvert_S=\mbf{a}(\tilde\sigma) \la\hat\tau,\upsilon\ra\rvert_S-\la\hat\tau,\Cour{\tilde\sigma,\upsilon}\ra\rvert_S.\end{equation}
 Now $\mbf{a}(W)\subseteq TS$ and $\la\hat\tau,\upsilon\ra\rvert_S=0$, so the first term on the right hand side of \cref{eq:MstVanishRed} vanishes. Furthermore, since $\Cour{\tilde\sigma,\upsilon}_T\rvert_E:=\Cour{\tilde\sigma_T,\upsilon_T}\rvert_E\in\Gamma(L,E)$, it follows that $\Cour{\tilde\sigma,\upsilon}\rvert_S\in\Gamma(W)$, thus the last term in \cref{eq:MstVanishRed} vanishes. Since the right hand side of \cref{eq:MstVanishRed} vanishes, we see that 
 $$\Cour{\tilde\sigma,\hat\tau}\rvert_S\in\Gamma(W^\perp\cap W).$$
 Since the map $p:W\to \mbb{F}$ factors through the map $W\to W/(W\cap W^\perp)$, we conclude that $$p\circ\Cour{\tilde\sigma,\tilde\tau}\rvert_S=p\circ\Cour{\tilde\sigma,\tilde\tau'}\rvert_S.$$ This shows that $\Cour{\sigma,\tau}$ doesn't depend on the choice of $L$-controlled extension $\tilde\tau$.
 
 To show that $\Cour{\sigma,\tau}$ doesn't depend on the choice $\tilde\sigma$, suppose that $\tilde\sigma'\in\Gamma(\mbb{E})$ satisfies  \cref{eq:Lext}. Let $\hat\sigma=\tilde\sigma-\tilde\sigma'$, so that $\hat\sigma\rvert_S\in\Gamma(W^\perp\cap W)$.  Then $$\la \upsilon,\mbf{a}^* d\la\tilde\tau,\hat\sigma\ra\ra\rvert_S=\mbf{a}(\upsilon)\la\tilde\tau,\hat\sigma\ra\rvert_S=0$$ for any section $\upsilon\in\Gamma(\mbb{E})$ satisfying $\upsilon\rvert_S\in\Gamma(W)$. This shows that $\mbf{a}^* d\la\tilde\tau,\hat\sigma\ra\rvert_S\in\Gamma(W^\perp)$.  Now the left hand side of  $$\Cour{\hat\sigma,\tilde\tau}\rvert_S=-\Cour{\tilde\tau,\hat\sigma}\rvert_S+\mbf{a}^* d\la\tilde\tau,\hat\sigma\ra\rvert_S$$ lies in $\Gamma(W)$ while both terms on the right hand side lie in $\Gamma(W^\perp)$, so $$\Cour{\hat\sigma,\tilde\tau}\rvert_S\in\Gamma(W\cap W^\perp).$$  We conclude that $$p\circ\Cour{\tilde\sigma,\tilde\tau}\rvert_S=p\circ\Cour{\tilde\sigma',\tilde\tau}\rvert_S.$$ This shows that $\Cour{\sigma,\tau}$ doesn't depend on the choice of $L$-controlled extension $\tilde\sigma$.
 
A similar argument also shows that $\la\sigma,\tau\ra$ doesn't depend on the choice of $L$-controlled extensions $\tilde\sigma$ or $\tilde\tau$.

We have shown that $\Gamma(\mbb{F})$ carries both a well defined bracket and a well defined pairing. It remains to show that these structures endow $\mbb{F}$ with the structure of a Courant algebroid. Since $p^*\mbb{F}\cong W/(W\cap W^\perp)$, the bundle metric on $\mbb{F}$ is non-degenerate. To show that the bracket on $\mbb{F}$ satisfies (c1) of \cref{def:CA} (the Jacobi identity), note that for any $L$-controlled extensions $\tilde\sigma$ and $\tilde\tau$ of $\sigma\in\Gamma(\mbb{F})$ and $\tau\in\Gamma(\mbb{F})$, respectively, $$\Cour{\tilde\sigma,\tilde\tau}_T\rvert_E=\Cour{\tilde\sigma_T,\tilde\tau_T}\rvert_E\in\Gamma(L,E)$$ so that $\Cour{\tilde\sigma,\tilde\tau}$ is an $L$-controlled extension of $\Cour{\sigma,\tau}$. Since the Jacobi identity holds for sections of $\mbb{E}$, it holds (in particular) for $L$-controlled extensions of sections of $\mbb{F}$.
In turn, since it holds for their $L$-controlled extensions, the Jacobi identity must hold for sections of $\mbb{F}$. Similar arguments establish (c2) and (c3) of \cref{def:CA}.
 \end{proof}
 
 \begin{remark}[Supergeometric perspective on \cref{prop:VBred}]
 $\mc{VB}$-Dirac structures in $T\mbb{E}$ are in one-to-one correspondence with $NQ$-submanifolds of the degree 2 symplectic $NQ$-manifold $X$ corresponding to $\mbb{E}$ (see \cref{prop:PoisLie2LACour} for details). An analogous argument to the one Cattaneo and Zambon developed for degree 1 symplectic $NQ$-manifolds \cite{Cattaneo:2010th,Cattaneo:2010vr} should show that presymplectic $NQ$-submanifolds of $X$ are in one-to-one correspondence with $\mc{VB}$-Dirac structures of the form \cref{eq:VBDirForRed} such that $L\cap TW$ is an involutive subbundle. Thus the reduction procedure described in this section can be interpreted as presymplectic reduction of supermanifolds, as is done in \cite{Cattaneo:2010th,Cattaneo:2010vr} for the reduction of Poisson structures.
 
 The supergeometric perspective on the reduction of Courant algebroids is developed in great generality by Bursztyn, Cattaneo, Mehta and Zambon \cite{Bursztyn:gcFguuB1}.
 \end{remark}
 
 \begin{example}
 Recall from \cref{ex:BCGred} that Bursztyn, Cavalcanti and Gualtieri's reduction procedure \cite{Bursztyn:2007ko} for an exact Courant algebroid $\mbb{E}$ defines a $\mc{VB}$-Dirac structure $L\subseteq T\mbb{E}$. \Cref{prop:VBred} shows that the resulting quotient, $$\big(K^\perp/(K\cap K^\perp)\big)/G,$$ inherits the structure of a Courant algebroid. (In fact, as mentioned above, the proof of \cref{prop:VBred} is just an adaptation of their proof of \cite[Theorem~3.3]{Bursztyn:2007ko}).
 \end{example}

\section{Integration of q-Poisson $(\mf{d},\g)$-structures}
There is a one-to-one correspondence between integrable Poisson structures and source-simply-connected symplectic groupoids \cite{Weinstein:1987ua,Coste:1987ui,Crainic02,Lie-Algebroids,Mackenzie97}. 
q-Poisson $(\mf{d},\g)$-structures (see \cref{ex:qPstr}) are a slight generalization of Poisson structures, and it is natural to ask if they integrate to some generalization of a symplectic groupoid. For the special case of q-Poisson $(\mf{d}\oplus\overline{\mf{d}},\mf{d}_\Delta)$-structures, this was accomplished in \cite{LiBland:2010wi}. In this section, we will treat the general case, but first we develop some background. 

\subsection{A canonical morphism}
Suppose $(\mbb{E},A)$ is a Manin pair. There are two $\mc{LA}$-Manin pairs canonically associated to it, $(\mbb{T} A,TA^{flip})$ (cf. \cref{ex:StdLAC}):
$$\begin{tikzpicture}
\mmat[1em]{m} at (-1.5,0) {TA&TM\\ A& M\\};
\path[->] (m-1-1)	edge (m-1-2)
				edge (m-2-1);
\path[<-] (m-2-2)	edge (m-1-2)
				edge (m-2-1);
				
\draw (0,0) node {$\subseteq$};
				
\mmat[1em]{m2} at (2,0) {\mbb{T}A&TM\oplus A^*\\ A& M\\};
\path[->] (m2-1-1)	edge (m2-1-2)
				edge (m2-2-1);
\path[<-] (m2-2-2)	edge (m2-1-2)
				edge (m2-2-1);
\end{tikzpicture}$$
and $(T\mathbb{E},TA)$ (cf. \cref{ex:TngProIsLACA}):
$$\begin{tikzpicture}
\mmat[1em]{m} at (-1.5,0) {TA&A\\ TM& M\\};
\path[->] (m-1-1)	edge (m-1-2)
				edge (m-2-1);
\path[<-] (m-2-2)	edge (m-1-2)
				edge (m-2-1);
				
\draw (0,0) node {$\subseteq$};
				
\mmat[1em]{m2} at (1.5,0) {T\mbb{E}&\mbb{E}\\ TM& M\\};
\path[->] (m2-1-1)	edge (m2-1-2)
				edge (m2-2-1);
\path[<-] (m2-2-2)	edge (m2-1-2)
				edge (m2-2-1);
\end{tikzpicture}$$

\begin{lemma}\label{lem:BurszLem1}

For any Manin pair $(\mathbb{E},A)$, there is a canonical $\mc{LA}$-Courant morphism
$$\begin{tikzpicture}
\mmat[1em]{m} at (-3,0) {\mbb{T}A&TM\oplus A^*\\ A& M\\};
\path[->] (m-1-1)	edge (m-1-2)
				edge (m-2-1);
\path[<-] (m-2-2)	edge (m-1-2)
				edge (m-2-1);

\mmat[1em]{m2} at (3,0) {T\mbb{E}&\mbb{E}\\ TM& M\\};
\path[->] (m2-1-1)	edge (m2-1-2)
				edge (m2-2-1);
\path[<-] (m2-2-2)	edge (m2-1-2)
				edge (m2-2-1);
\path[->,bend right=30] (m-2-1) edge node{$\mbf{a}\rvert_A$} (m2-2-1);
\path[dashed,->] (m) edge node {$K$}(m2);
\end{tikzpicture}$$
over the restriction of the anchor map $\mathbf{a}:\mbb{E}\to TM$ to $A$, such that
 \begin{equation}\label{eq:MPM1}K:(\mbb{T} A,TA^{flip})\dasharrow (T\mathbb{E},TA),\end{equation}  is a  morphism of $\mathcal{LA}$-Manin pairs.
\end{lemma}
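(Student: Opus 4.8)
The goal is to construct a canonical $\mathcal{LA}$-Courant morphism $K\colon \mbb{T}A\dasharrow T\mbb{E}$ over $\mathbf{a}|_A$, and verify that it is a morphism of $\mathcal{LA}$-Manin pairs $(\mbb{T}A, TA^{flip})\dasharrow (T\mbb{E}, TA)$. The plan is to build $K$ explicitly on points first, then check the three layers of structure it must respect (double-vector-bundle, Courant, and Lie algebroid), reducing each to known results from the excerpt.

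First I would write down the candidate relation explicitly. An element of $\mbb{T}A = TA\oplus T^*A$ over $a\in A$ lying over $(v,\mu)\in TM\oplus A^*$ should be sent to an element of $T\mbb{E}$ over $\mathbf{a}(a)$ lying over $d\mathbf{a}(v)\in TM$. The natural guess, modeled on $J_E$ from \cref{ex:canonicalmorphism} (the canonical morphism $(\mbb{T}M,TM)\dasharrow(\mbb{E},E)$, here lifted by the tangent functor) combined with the inclusion $A\hookrightarrow\mbb{E}$, is: $(\xi,\alpha)\sim_K X$ (for $\xi\in TA$, $\alpha\in T^*A$, $X\in T\mbb{E}$) if and only if $d\mathbf{a}(\xi) = Tq_{\mbb{E}/M}(X)$ in $T(TM)$ and $X - \text{(lift of }\alpha\text{ via }d\mathbf{a}^*) \in TA$. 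More precisely, $A\subseteq\mbb{E}$ being a Dirac structure, the morphism $J_A\colon(\mbb{T}M,TM)\dasharrow(\mbb{E},A)$ of \cref{ex:canonicalmorphism} composed with the comorphism data of the Dirac structure $A$ gives a canonical $\mc{VB}$-relation $\mbb{T}A\dasharrow\mbb{E}$; applying the tangent functor and using $T(\mbb{T}A) \cong \mbb{T}A$-type identifications and $\mbf{a}^*\colon T^*M\to \mbb{E}$ dualized, one lands on $K\subseteq T\mbb{E}\times\overline{\mbb{T}A}$. I would verify on fibre-coordinates that $K$ is a subbundle along $\gr(\mbf{a}|_A)\subset\mbb{E}\times M$ of the expected half-rank, and that it restricts on the core bundles correctly so that it is a double vector subbundle — this is the routine part and reduces to \cref{prop:clasLagDB} once we know $K$ is Lagrangian with respect to the pairing on $T\mbb{E}\times\overline{\mbb{T}A}$.

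The heart of the argument splits into three checks, each of which I would reduce to an already-established fact. (i) \emph{$K$ is a Courant morphism.} Since $T(\cdot)$ is a functor sending Dirac structures (with support) to $\mc{VB}$-Dirac structures (\cref{ex:DirStrTngLf}), and since $J_A$ is a morphism of Manin pairs (hence $\gr(J_A)$ is a Courant relation), I expect $K$ to arise as $T(\gr J_A)$ post-composed with a canonical identification $T\mbb{E}\cong T\mbb{E}$ and a $\mc{VB}$-version of the isomorphism $T(\mbb{T}A)\cong \mbb{T}A\times_{\text{base}}(\cdots)$; the involutivity of $K$ then follows from that of $\gr(J_A)$ via \cref{eq:TEPairBrk} applied to tangent and core lifts. (ii) \emph{$K$ is an $\mc{LA}$-relation.} Here I would invoke that $TE^{flip}$, $T^*E^{flip}$ and $\mbb{T}A$ are $\mc{LA}$-vector bundles (\cref{ex:TngLAVB}, \cref{ex:CotLAVB}, \cref{ex:StdLAC}) and that the tangent prolongation $T\mbb{E}$ is an $\mc{LA}$-Courant algebroid (\cref{ex:TngProIsLACA}); then $K\subseteq T\mbb{E}\times\overline{\mbb{T}A}$ being a Lie subalgebroid should follow from the fact that $\gr(J_A)$, viewed appropriately, is a comorphism of Lie algebroids and the tangent lift of a comorphism is a comorphism (the $\mc{LA}$-relation analogue of \cref{def:MorphLA}). (iii) \emph{$K$ maps the marked Dirac structures correctly}, i.e. $K$ restricts to a relation $TA^{flip}\dasharrow TA$ satisfying (m1) and (m2) of \cref{def:MorphMP}; this is the tangent lift of the statement that $J_A$ sends $TM\subseteq\mbb{T}M$ into $A\subseteq\mbb{E}$, combined with \cref{ex:DirStrTngLf}.

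The main obstacle, I expect, is bookkeeping rather than conceptual: keeping straight the many double- and triple-vector-bundle identifications (which ``flip'' is in play, which axis one is dualizing along in $T\mbb{E}$, $\mbb{T}A$, and their cores) and checking that the relation $K$ defined naively on points actually assembles into a double vector \emph{sub}bundle of the correct rank along $\gr(\mbf{a}|_A)$. I would handle this by choosing a Lagrangian complement to $A$ in $\mbb{E}$ (equivalently a splitting, à la \cref{rem:SupMP}), writing everything in the resulting trivialization $\mbb{E}\cong A\oplus A^*$, and performing the verification in coordinates; the splitting-independence is then automatic since the final statement is splitting-free. Once $K$ is in hand as a Lagrangian double vector subbundle, parts (i)--(iii) are each a short appeal to the functoriality results cited above, so the proof will be essentially a verification that the tangent prolongation functor carries the canonical morphism $J_A$ of Manin pairs to the desired $\mc{LA}$-Manin pair morphism $K$.
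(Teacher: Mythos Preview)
Your approach is genuinely different from the paper's, and there is a gap in your candidate for $K$.

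The paper's proof is a one-paragraph supergeometric argument: to the Manin pair $(\mbb{E},A)$ one associates the Poisson principal $\mbb{R}[2]$-bundle $P\to A^*[1]$ of \cref{rem:SupMP}. The Poisson bivector $\pi_P$ then gives an $\mbb{R}[0,2]$-equivariant morphism of Lie bialgebroids $\pi_P^\sharp\colon T^*[1,1]P\to T[1,0]P$; passing to the $\mbb{R}[0,2]$-quotient and using the Legendre transform $T^*[1,1]P/\!\!/_0\mbb{R}[0,2]\cong T^*[1,1]A[1,0]$, one obtains a morphism of MP-algebroids in the sense of \cref{rem:MPalg}, which is exactly a morphism of $\mc{LA}$-Manin pairs $(\mbb{T}A,TA^{flip})\dasharrow(T\mbb{E},TA)$. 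No explicit formula for $K$ is needed at this stage; the explicit description is worked out only afterwards in \cref{prop:formOfK}.

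Your direct route could in principle succeed, but your candidate for $K$ is not correct. The morphism $K$ is \emph{not} the tangent lift of $J_A$ up to identifications: $TJ_A$ is a Courant relation $T\mbb{T}M\dasharrow T\mbb{E}$ over the identity of $TM$, whereas $K$ lives over $\mbf{a}|_A\colon A\to TM$, a genuinely different base. There is no natural $\mc{VB}$-relation $\mbb{T}A\dasharrow T\mbb{T}M$ that would let you factor $K$ through $TJ_A$ in the way you sketch. If you look at the explicit form of $K$ in \cref{prop:formOfK}, the $TA$-component is given by the vector fields $\on{Drf}_\sigma+\on{Trl}_\tau$ on $\mbb{E}$ (encoding the adjoint action of $\sigma$ via the Courant bracket), restricted to $A$; this ``Dorfman'' content is invisible in your description and is not produced by any tangent-lift-of-$J_A$ construction. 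Your fallback plan of choosing a Lagrangian complement $B$ and computing in the trivialization $\mbb{E}\cong A\oplus A^*$ is exactly how the paper verifies the explicit formula in \cref{prop:formOfK}, but even there one must \emph{start} from the correct formula for $K$ and then check that the associated comorphism and complementary Lagrangian match those predicted by the supergeometric $K'$. Without the right candidate, your three verification steps (i)--(iii) have nothing to verify.
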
 

\begin{proof}
Let $P$ be the Poisson principal $\mbb{R}[2]$ bundle corresponding to the Manin pair $(\mbb{E},A)$, as described in \cite{NonComDiffForm,Bursztyn:2009wi} (or see \cref{rem:SupMP}).

The Poisson structure $\pi_P$ on $P$ defines an $\mathbb{R}[0,2]$-equivariant morphism of Lie bialgebroids $\pi_P^\sharp:T^*[1,1]P\to T[1,0]P$. Therefore, \begin{equation}\label{eq:piP}\pi_P^\sharp:T^*[1,1]P/\!\!/_0 \mathbb{R}[0,2]\to T[1,0]P/\mathbb{R}[0,2]\end{equation} is a morphism of Lie bialgebroids. But \begin{equation}\label{eq:Leg}T^*[1,1]P/\!\!/_0 \mathbb{R}[0,2]\cong T^*[1,1]A[1,0]\end{equation} by the Legendre transform. Therefore, \labelcref{eq:piP} is is equivalent to a morphism of $\mathcal{LA}$-Manin pairs $$(\mbb{T} A,TA^{flip})\dasharrow (T\mathbb{E},TA),$$
as described in \cref{rem:MPalg}.
\end{proof}

\begin{remark}
The global version of \cref{lem:BurszLem1} is explained in \cite[\S~4.2]{Bursztyn:2009wi}, following \cite{Bursztyn03-1,Ponte:2005txa}. Namely, if the Lie algebroid $A$ integrates to a source simply connected groupoid $G$, then there is a morphism of multiplicative Manin pairs $$(\mbb{T}G,TG)\dasharrow (\mathbb{E},A)\times(\bar{\mathbb{E}},A)$$ over $(t,s):G\to M\times M$, where $s$ and $t$ are the source and target maps respectively.
\end{remark}

It will be useful to describe this morphism of $\mathcal{LA}$-Manin pairs in classical terms, which we shall do in \cref{prop:formOfK} after introducing some new notation in the next section.

\subsubsection{Some vector fields}

For any $f\in C^\infty(M)$, $\tau\in\Gamma(\mathbb{E})$,  the functions $q_{\mbb{E}/M}^*f$ and $\la \tau,\cdot\ra$ are respectively constant and linear along the fibres of $\mathbb{E}$. Any section $\sigma\in\Gamma(\mathbb{E})$ defines two infinitesimal flows $\on{Drf}_\sigma,\on{Trl}_\sigma\in\mf{X}(\mathbb{E})$ on $\mathbb{E}$ \cite{Stienon:2010ws,Roytenberg:2002} (see also \cite{Hu:2007wq, Bursztyn:2007ko, Hu:2009wl, Stienon:2008cl}), by 
\begin{equation}\label[pluralequation]{eq:DrfTrlDef}\begin{array}{rcl}
\on{Drf}_\sigma\cdot q_{\mbb{E}/M}^*f=q_{\mbb{E}/M}^*(\mbf{a}(\sigma)\cdot f),&&
\on{Drf}_\sigma\cdot \la \tau,\cdot\ra=\la\Cour{\sigma,\tau},\cdot\ra,\\
\on{Trl}_\sigma\cdot q_{\mbb{E}/M}^*f=0, &\text{ and }&
\on{Trl}_\sigma\cdot \la\tau,\cdot\ra=q_{\mbb{E}/M}^*\la\sigma,\tau\ra.
\end{array}\end{equation}
(The notation $\on{Drf}$ refers to the Dorfman bracket \cite{Dorfman:1993us} and $\on{Trl}$ refers to fibrewise translation). For $f\in C^\infty(M)$, we have \begin{equation}\label{eq:DrfCinfty}\on{Trl}_{f\sigma}=q_{\mbb{E}/M}^*f\on{Trl}_\sigma, \text{ and } \on{Drf}_{f\sigma}=q^*_{\mbb{E}/M}f\on{Drf}_\sigma+\la\mbf{a}^*df,\cdot\ra\on{Trl}_\sigma-\la\sigma,\cdot\ra\on{Trl}_{\mbf{a}^*df},\end{equation} as can be checked from \cref{def:CA,eq:DrfTrlDef}.

\begin{lemma}\label{lem:DrfTrlCom}
The vector fields $\on{Drf}_\tau$ and $\on{Trl}_\tau$ satisfy the following commutation relations
\begin{align}
[\on{Drf}_\sigma,\on{Drf}_\tau]&=\on{Drf}_{\Cour{\sigma,\tau}} & [\on{Drf}_\sigma,\on{Trl}_\tau]&=\on{Trl}_{\Cour{\sigma,\tau}}\\
[\on{Trl}_\sigma,\on{Drf}_\tau]&=\on{Trl}_{(\Cour{\sigma,\tau}-\mbf{a}^*d\la\sigma,\tau\ra)}& [\on{Trl}_\sigma,\on{Trl}_\tau]&=0
\end{align}
for any $\sigma,\tau\in\Gamma(\mathbb{E})$.
\end{lemma}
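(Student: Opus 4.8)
\textbf{Proof proposal for \cref{lem:DrfTrlCom}.}

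The plan is to verify the four commutation identities by testing both sides against a spanning set of functions on the total space $\mbb{E}$, namely the fibrewise-constant functions $q_{\mbb{E}/M}^*f$ for $f\in C^\infty(M)$ and the fibrewise-linear functions $\la\upsilon,\cdot\ra$ for $\upsilon\in\Gamma(\mbb{E})$. Since $C^\infty(\mbb{E})$ is generated as an algebra by these two families, and since a vector field on $\mbb{E}$ is determined by its action on generators (all the vector fields in question are derivations), it suffices to check that each asserted identity holds after applying it to $q_{\mbb{E}/M}^*f$ and to $\la\upsilon,\cdot\ra$. The defining formulas \cref{eq:DrfTrlDef} give the action of $\on{Drf}_\sigma$ and $\on{Trl}_\sigma$ on exactly these generators, so every computation reduces to bookkeeping with the Courant algebroid axioms (c1)--(c3) of \cref{def:CA}.

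The key steps, carried out for each bracket in turn: First, $[\on{Trl}_\sigma,\on{Trl}_\tau]=0$ is immediate, since $\on{Trl}_\sigma$ kills fibrewise-constant functions and sends $\la\upsilon,\cdot\ra$ to the fibrewise-constant function $q_{\mbb{E}/M}^*\la\sigma,\upsilon\ra$, which $\on{Trl}_\tau$ then kills; hence both composites vanish on generators. Second, for $[\on{Drf}_\sigma,\on{Trl}_\tau]$: on $q_{\mbb{E}/M}^*f$ both composites vanish, while on $\la\upsilon,\cdot\ra$ one computes
\begin{align*}
\on{Drf}_\sigma\on{Trl}_\tau\la\upsilon,\cdot\ra&=\on{Drf}_\sigma\, q_{\mbb{E}/M}^*\la\tau,\upsilon\ra=q_{\mbb{E}/M}^*\big(\mbf{a}(\sigma)\la\tau,\upsilon\ra\big),\\
\on{Trl}_\tau\on{Drf}_\sigma\la\upsilon,\cdot\ra&=\on{Trl}_\tau\la\Cour{\sigma,\tau}\!,\cdot\ra\text{? }\; -\!\!
\end{align*}
— more carefully, $\on{Trl}_\tau\on{Drf}_\sigma\la\upsilon,\cdot\ra=\on{Trl}_\tau\la\Cour{\sigma,\upsilon},\cdot\ra=q_{\mbb{E}/M}^*\la\tau,\Cour{\sigma,\upsilon}\ra$, and the difference is $q_{\mbb{E}/M}^*\big(\mbf{a}(\sigma)\la\tau,\upsilon\ra-\la\tau,\Cour{\sigma,\upsilon}\ra\big)=q_{\mbb{E}/M}^*\la\Cour{\sigma,\tau},\upsilon\ra$ by axiom (c2); this matches $\on{Trl}_{\Cour{\sigma,\tau}}\la\upsilon,\cdot\ra$. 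Third, $[\on{Trl}_\sigma,\on{Drf}_\tau]=-[\on{Drf}_\tau,\on{Trl}_\sigma]=-\on{Trl}_{\Cour{\tau,\sigma}}$, which equals $\on{Trl}_{(\Cour{\sigma,\tau}-\mbf{a}^*d\la\sigma,\tau\ra)}$ by axiom (c3) together with linearity of $\upsilon\mapsto\on{Trl}_\upsilon$. Fourth, for $[\on{Drf}_\sigma,\on{Drf}_\tau]=\on{Drf}_{\Cour{\sigma,\tau}}$: on $q_{\mbb{E}/M}^*f$ this is the statement $[\mbf{a}(\sigma),\mbf{a}(\tau)]=\mbf{a}(\Cour{\sigma,\tau})$, i.e.\ axiom (c6); on $\la\upsilon,\cdot\ra$ it is the statement $\Cour{\sigma,\Cour{\tau,\upsilon}}-\Cour{\tau,\Cour{\sigma,\upsilon}}=\Cour{\Cour{\sigma,\tau},\upsilon}$, i.e.\ axiom (c1).

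I do not expect a genuine obstacle here; the lemma is essentially a reformulation of the Courant algebroid axioms in terms of the derivation-action on $C^\infty(\mbb{E})$, and the only care needed is to confirm that checking on the generating functions $q_{\mbb{E}/M}^*f$ and $\la\upsilon,\cdot\ra$ genuinely pins down each vector field (which follows because these functions separate points and directions on $\mbb{E}$, or more cleanly because any vector field annihilating all of them is zero). One should also note in passing the compatibility of $\on{Drf}$ and $\on{Trl}$ with the $C^\infty(M)$-module structure recorded in \cref{eq:DrfCinfty}, though for the bare commutation relations above it is not strictly needed. If desired, one can alternatively phrase the whole argument supergeometrically: $\on{Drf}_\sigma$ and $\on{Trl}_\sigma$ are the derived-bracket Hamiltonian vector fields $\{\{\sigma,\Theta\},\cdot\}$ and $\{\sigma,\cdot\}$ on the degree-2 symplectic $NQ$-manifold $X$ of \cref{rem:SupCour}, and the four identities follow from the graded Jacobi identity and $\{\Theta,\Theta\}=0$; but the elementary verification above is self-contained and is what I would write.
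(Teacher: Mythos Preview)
Your proposal is correct and follows essentially the same approach as the paper: both verify the identities by testing on the generating functions $q_{\mbb{E}/M}^*f$ and $\la\upsilon,\cdot\ra$, invoking (c1) and (c6) for $[\on{Drf}_\sigma,\on{Drf}_\tau]$, (c2) for $[\on{Drf}_\sigma,\on{Trl}_\tau]$, and then deducing $[\on{Trl}_\sigma,\on{Drf}_\tau]$ from the previous identity via (c3). The only differences are cosmetic (order of cases, and your explicit remark on why checking on generators suffices).
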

\begin{proof}
Let $f\in C^\infty(M)$ and $\upsilon\in\Gamma(\mathbb{E})$.
$$\begin{array}{rcl}
[\on{Drf}_\sigma,\on{Drf}_\tau](\la\upsilon,\cdot\ra+q_{\mbb{E}/M}^*f)&=&\la \Cour{\sigma,\Cour{\tau,\upsilon}},\cdot\ra-\la \Cour{\tau,\Cour{\sigma,\upsilon}},\cdot\ra\\
&&+q_{\mbb{E}/M}^*[\mbf{a}(\sigma),\mbf{a}(\tau)]f\\
&=&\la \Cour{ \Cour{\sigma,\tau},\upsilon},\cdot\ra+q_{\mbb{E}/M}^*\mbf{a}\big(\Cour{\sigma,\tau}\big)f\\
&=&\on{Drf}_{\Cour{\sigma,\tau}}(\la\upsilon,\cdot\ra+q_{\mbb{E}/M}^*f)
\end{array}$$

$$\begin{array}{rcl}
[\on{Drf}_\sigma,\on{Trl}_\tau](\la\upsilon,\cdot\ra+q_{\mbb{E}/M}^*f)
&=&q_{\mbb{E}/M}^*\big(\mbf{a}(\sigma)\la\upsilon,\tau\ra-\la\Cour{\sigma,\upsilon},\tau\ra\big)\\
&=&q_{\mbb{E}/M}^*\la\upsilon,\Cour{\sigma,\tau}\ra\\
&=&\on{Trl}_{\Cour{\sigma,\tau}}(\la\upsilon,\cdot\ra+q_{\mbb{E}/M}^*f)
\end{array}$$
From this it follows that
$$[\on{Trl}_\sigma,\on{Drf}_\tau]=\on{Trl}_{-\Cour{\tau,\sigma}}=\on{Trl}_{(\Cour{\sigma,\tau}-\mbf{a}^*d\la\sigma,\tau\ra)}.$$
And the fact that  $[\on{Trl}_\sigma,\on{Trl}_\tau]=0$ is immediate from \cref{eq:DrfTrlDef}.
\end{proof}

\subsubsection{The morphism of Manin pairs $(\mathbb{T}A,TA^{flip})\dasharrow (T\mathbb{E},TA)$}

\begin{proposition}\label{prop:formOfK}
The morphism of Manin pairs \begin{equation}\label{eq:MPM2}K:(\mbb{T} A,TA^{flip})\dasharrow (T\mathbb{E},TA),\end{equation} described in \cref{lem:BurszLem1} is given explicitly by a Dirac structure $K\subset T\mathbb{E}\times\overline{\mbb{T} A}$ with support on $\gr_{\mbf{a}\rvert_A}$, the graph of the anchor map $\mbf{a}\rvert_A:A\to TM$. For any $a\in A$, the fibre of $K$ at the point $(\mbf{a}(a),a)$ is spanned by the elements
\begin{equation}\label{eq:FormOfK}
\big((\mbf{a}^*\alpha)_C;(0,\alpha)\big),\text{ and }
\big(\sigma_T+\tau_C;(\on{Drf}_\sigma+\on{Trl}_\tau,d\la\sigma,\cdot\ra\rvert_A)\big)\rvert_{(\mbf{a}(a),a)}
\end{equation}
subject to the requirement $(\on{Drf}_\sigma+\on{Trl}_\tau)\rvert_a\in T_aA$. Here $\alpha\in T^*_{\mathbf{a}(a)}M$, and $\sigma,\tau\in\Gamma(\mathbb{E})$.
\end{proposition}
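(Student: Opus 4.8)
The strategy is to unwind the supergeometric construction of \Cref{lem:BurszLem1} into classical data and then verify directly that the spanning set \labelcref{eq:FormOfK} does indeed describe the resulting Dirac structure $K \subseteq T\mbb{E} \times \overline{\mbb{T}A}$. The key point is that $K$ is the $\mc{LA}$-Courant relation $\ann^\natural\big(\gr(\pi_P^\sharp)\big)$ reduced from $T^*[1,1]P$, where $P \to A^*[1]$ is the Poisson principal $\mbb{R}[2]$-bundle attached to $(\mbb{E},A)$ as in \cref{rem:SupMP}. After the identifications \labelcref{eq:piP} and \labelcref{eq:Leg}, the fibre of $K$ over a point is exactly the annihilator of the graph of the Poisson anchor $\pi_P^\sharp$, so the generators of $K$ correspond, under the dictionaries of \cref{rem:SupMP} and \cref{rem:SupCour}, to pairs $(\xi_{T\mbb{E}}, \xi_{\mbb{T}A})$ where $\xi_{\mbb{T}A}$ is an arbitrary "form-like" element and $\xi_{T\mbb{E}}$ is its image under $d\pi_P^\sharp$. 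Translating $d\pi_P^\sharp$ back to the Courant algebroid picture is precisely what produces the Dorfman-type vector field $\on{Drf}_\sigma + \on{Trl}_\tau$ and the core/tangent lift decomposition appearing in \labelcref{eq:FormOfK}; the vector fields $\on{Drf}_\sigma$, $\on{Trl}_\sigma$ of \cref{eq:DrfTrlDef} are exactly the classical shadows of the Hamiltonian vector fields of the linear and degree-$1$ functions on $P$.

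\textbf{Carrying it out.} First I would fix notation: recall that the Poisson bivector $\pi_P$ on $P$ is the degree-$3$ function on $T^*[2]P$ whose Hamiltonian flow integrates $Q$, and that its anchor $\pi_P^\sharp : T^*[1,1]P \to T[1,0]P$ sends $\d h$ to $\{\pi_P, h\}$. Next, I would spell out the reduction $T^*[1,1]P /\!\!/_0 \mbb{R}[0,2] \cong T^*[1,1]A[1,0]$ and $T[1,0]P/\mbb{R}[0,2]$ on the level of functions, noting that the $\mbb{R}[2]$-basic functions on $T^*[1,1]P$ of the relevant degrees are spanned by (the pullbacks of) $C^\infty(M)$, elements $\la\sigma,\cdot\ra$ for $\sigma\in\Gamma(\mbb{E})$, and $\mbf{a}^*\alpha$ for $\alpha\in\Omega^1(M)$ — this is the classical content of the equivalence between Manin pairs and Poisson $\mbb{R}[2]$-bundles. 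Then, a direct computation of $\{\pi_P, \cdot\}$ on these generators, using \cref{def:CA} axioms c1)--c3) to identify the resulting brackets, shows that the Hamiltonian vector field of $\la\sigma,\cdot\ra$ reduces to $\on{Drf}_\sigma$, that of a fibre-constant function reduces to $\on{Trl}$, and that of $\mbf{a}^*\alpha$ reduces to the core vector field $(\mbf{a}^*\alpha)_C$; \cref{lem:DrfTrlCom,eq:DrfCinfty} are then exactly the classical reflections of $\{\pi_P,\pi_P\}=0$. The relation $K = \ann^\natural(\gr(\pi_P^\sharp))$ thus has fibre, at $(\mbf{a}(a),a)$, spanned by pairs whose $\mbb{T}A$-component is a generic covector-part $(\on{Trl}\text{-piece}, \d\la\sigma,\cdot\ra\rvert_A)$ and $(0,\alpha)$, and whose $T\mbb{E}$-component is the image under the reduced anchor, namely $\on{Drf}_\sigma + \on{Trl}_\tau$ lifted to $T\mbb{E}$ as $\sigma_T + \tau_C$, respectively $(\mbf{a}^*\alpha)_C$. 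Imposing that $K$ lie over $\gr_{\mbf{a}\rvert_A}$ forces the tangency condition $(\on{Drf}_\sigma + \on{Trl}_\tau)\rvert_a \in T_a A$, which is precisely the constraint in the statement. Finally I would do a dimension/rank count to confirm that \labelcref{eq:FormOfK} spans all of $K$ (not just a subbundle), using that $\on{rk}(K) = \on{rk}(T\mbb{E})$ since $K$ is a Courant relation supported on a graph.

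\textbf{Main obstacle.} The genuinely delicate part is matching the two sign/degree conventions: the Legendre transform identification \labelcref{eq:Leg}, the identification $(T\mbb{E})^{*_{\!y}} \cong T(\mbb{E}^*)$ from \cref{ex:SecDualTng}, and the convention $\ann^\natural(\gr(A)) = \gr(A^*)^\top$ from \cref{eq:annRel} all introduce signs that must be tracked consistently so that the $\on{Drf}_\sigma$ (as opposed to $\on{Drf}_{-\sigma}$ or its "opposite" partner appearing in \cref{lem:DrfTrlCom}) ends up paired with $\d\la\sigma,\cdot\ra\rvert_A$ and $\sigma_T$, rather than with their negatives. A clean way to sidestep sign-chasing through supergeometry entirely is to instead \emph{take} \labelcref{eq:FormOfK} as a candidate $K_0$, verify by a classical computation — using \cref{eq:TEPair,eq:TEBrk} for the tangent prolongation and the defining equations \labelcref{eq:DrfTrlDef} — that $K_0$ is isotropic of the correct rank in $T\mbb{E}\times\overline{\mbb{T}A}$, that it is involutive (here \cref{lem:DrfTrlCom} does the work), and that it is a double vector subbundle which is moreover a Lie subalgebroid of $T\mbb{E} \times \overline{\mbb{T}A}$ over $V$; and then invoke \cref{lem:CARRdetCAStr}-style uniqueness, or rather the fact that a morphism of $\mc{LA}$-Manin pairs over a given base map with a given underlying Courant relation is unique, to conclude $K_0 = K$. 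I would present the proof in this second, more self-contained form, relegating the supergeometric identification to a remark, since it makes the verification mechanical and keeps the sign bookkeeping local.
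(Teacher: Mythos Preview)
Your first strategy---unwinding the supergeometric construction by identifying the Hamiltonian vector fields of the generating functions on $P$ with $\on{Drf}_\sigma$ and $\on{Trl}_\sigma$---is exactly the paper's starting point, and indeed the paper verifies $(\on{Trl}_\sigma,0)\sim_{K'}\sigma_C$ and $(\on{Drf}_\sigma,0)\sim_{K'}\sigma_T$ for $\sigma\in\Gamma(A)$ just as you sketch. But this only pins down the \emph{comorphism} $TA^{flip}\dasharrow TA$ induced by $K'$. The paper's key organizational move, which your proposal does not mention, is to invoke the structure theory of morphisms of Manin pairs: after choosing a Lagrangian complement $B\subset\mbb{E}$ to $A$, a morphism of Manin pairs $(\mbb{T}A,TA^{flip})\dasharrow(T\mbb{E},TA)$ is determined by the comorphism \emph{together with} the transverse Lagrangian $TB\circ K\subset\mbb{T}A$. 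The paper then computes $TB\circ K'=\gr(\Pi_B)$ supergeometrically (via the trivialization function $s$ determined by $B$) and $TB\circ K=\gr(\Pi_K)$ classically from the explicit spanning set \labelcref{eq:FormOfK}, and matches the two bivectors. This second half of the comparison is where most of the work lies, and it requires the auxiliary technical \cref{lem:tecLemTA} about tangency of $\on{Drf}_\sigma+\on{Trl}_{\Cour{\sigma,\tilde a}}$ to $A$.

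Your alternative ``self-contained'' strategy has a genuine gap. Verifying that the candidate $K_0$ from \labelcref{eq:FormOfK} is a Courant relation and an $\mc{LA}$-Dirac structure does not by itself force $K_0=K'$: morphisms of $\mc{LA}$-Manin pairs $(\mbb{T}A,TA^{flip})\dasharrow(T\mbb{E},TA)$ over a fixed base map $\mbf{a}\rvert_A$ are \emph{not} unique---different choices correspond, roughly, to different bivectors $\Pi$ on $A$ compatible with the quasi-Lie-bialgebroid structure. The ``uniqueness'' you invoke from \cref{lem:CARRdetCAStr} is about the target Courant structure being determined by a reduction relation, which is a different statement; and saying ``a morphism with a given underlying Courant relation is unique'' is vacuous, since the morphism \emph{is} the relation. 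To close this gap you would still need to check that $K_0$ and $K'$ agree on enough data---and the comorphism-plus-complement decomposition is precisely the minimal such data, bringing you back to the paper's argument.
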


\begin{proof}
Let $K'$ denote the Dirac structure with support on $\gr(\mbf{a})$ defined in \cref{lem:BurszLem1}. Once a Lagrangian subbundle $B\subset \mathbb{E}$ transverse to $A$ has been chosen,  the morphisms of Manin pairs \labelcref{eq:MPM1,eq:MPM2} are both determined by a comorphism of Lie algebroids \begin{equation}\label{eq:comorph}TA^{flip}\dasharrow TA\end{equation} together  with the respective Lagrangian subbundles \begin{equation}\label{eq:complement}TB\circ K' \subset \mbb{T} A \text{ and } TB\circ K\subset \mbb{T} A\end{equation} transverse to $TA^{flip}$.

We first show that $K'$ determines the same comorphism \labelcref{eq:comorph} as $K$. Let $\sigma\in\Gamma(A)$. For the sake of brevity, we identify $\sigma$ with the degree $(0,1)$ function $\la\sigma,\cdot\ra$ on $P\to A^*[0,1]$. Let $p:T[1,0]P\to P$ and $p':T^*[1,1]P\to P$ denote the canonical projections. Then the pull back of $p^*\sigma$ under \labelcref{eq:piP} is just $$(\pi^\sharp_P)^*p^*\sigma=p'^*\sigma,$$ which  is identified with the vector field $\on{Trl}_\sigma$ by the Legendre transform \labelcref{eq:Leg}. Meanwhile, the degree $(0,1)$ function $p^*\sigma$ is identified with $\sigma_C\in\Gamma_C(T\mbb{E},TM)$.
So, we have \begin{equation}\label{eq:K'com_C}(\on{Trl}_\sigma,0)\sim_{K'}\sigma_C.\end{equation}
On the other hand, the pull back under \labelcref{eq:piP} of the degree $(1,1)$ function $d\sigma$ is the Hamiltonian vector field $$\{\sigma,\cdot\}_{\pi_P}:=(\pi^\sharp_P)^*d\sigma,$$ which under the Legendre transform \labelcref{eq:Leg} is identified with the vector field $\on{Drf}_\sigma$. Meanwhile the degree $(1,1)$ function $d\sigma$ is identified with $\sigma_T\in\Gamma_l(T\mbb{E},TM)$, so we have \begin{equation}\label{eq:K'com_T}(\on{Drf}_\sigma,0)\sim_{K'}\sigma_T.\end{equation}

It follows from \cref{eq:K'com_C,eq:K'com_T,eq:FormOfK} that $K$ and $K'$ both define the same comorphism of Lie algebroids $TA^{flip}\dasharrow TA$.

Next we show that $TB\circ K' =TB\circ K$. Choosing the Lagrangian complement $B\subset \mathbb{E}$ endows $A$ with the structure of a Lie quasi-bialgebroid. This determines the following data:
\begin{itemize}
\item A trivialization $\on{triv}:P\to \mathbb{R}[0,2]\times A^*[0,1]$ (cf.  \cite{NonComDiffForm,Bursztyn:2009wi} or \cref{rem:SupPsDir}). If $t$ is the standard coordinate on $\mathbb{R}[0,2]$, let $s:=\on{triv}^*t$ be the corresponding degree $(0,2)$ function on $P$.
\item The degree $(0,1)$ vector field $Q:=\{s,\cdot\}_{\pi_P}$ on $A^*[0,1]$ (see \cite{NonComDiffForm,Bursztyn:2009wi}).
\item A bivector field $\Pi_B$ on $A$, which is identified with $Q$ under the Legendre transform \labelcref{eq:Leg}. Hence
\begin{subequations}\label[pluralequation]{eq:PiB}
\begin{align*}
\Pi_B(d\la\sigma,\cdot\ra,d\la\tau,\cdot\ra)&=\la\Cour{\sigma,\tau},\cdot\ra,&&\sigma,\tau\in\Gamma(B)\\
\Pi_B(df,d\la\sigma,\cdot\ra)&=-\mbf{a}(\sigma)\cdot f,&&f\in C^\infty(M),\sigma\in\Gamma(B)\\
\Pi_B(df,dg)&=0&&f,g\in C^\infty(M).
\end{align*}
\end{subequations}
(See Roytenberg's original paper on quasi-Lie bialgebroids \cite{Roytenberg02}, or \cite[Section~3.2]{Bursztyn:2009wi})
\end{itemize}
The function $ds:T[1,0]P/\mathbb{R}[0,2]\to\mathbb{R}[1,2]$ trivializes the MP-algebroid $T[1,0]P/\mathbb{R}[0,2]$ and pulls back to $$(\pi_P^\sharp)^*ds=\{s,\cdot\}=Q,$$ which is identified with $\Pi_B$ under the Legendre transform \labelcref{eq:Leg}. Therefore 
\begin{equation}\label{eq:BK'}TB\circ K'=\gr_{\Pi_B}:=\{\big(\Pi_B^\sharp(\alpha),\alpha\big)\mid \alpha\in T^*A\}\subset \mbb{T} A.\end{equation}

We must calculate $TB\circ K$ to compare the two. Let $\on{pr}_A:\mathbb{E}\to A$ and $\on{pr}_B:\mathbb{E}\to B$ denote the two projections along $B$ and $A$ respectively. Then from \cref{eq:FormOfK} we see that for any $f\in C^\infty(M)$, \begin{subequations}\begin{equation}\label{eq:BcK1}(-\on{Trl}_{\on{pr}_A(\mbf{a}^*df)},df)\sim_K \on{pr}_B(\mbf{a}^*df)_C.\end{equation} 

To proceed further, we need the following Lemma.
\begin{lemma}\label{lem:tecLemTA}
For any $\sigma\in\Gamma(\mathbb{E})$, and any extension of $a\in A$ to a section $\tilde a\in\Gamma(A)$, the vector field $\on{Drf}_\sigma+\on{Trl}_{\Cour{\sigma,\tilde a}}$ is tangent to $A$ at $a$.
 \end{lemma}
\begin{proof}
To show this, let $\tau\in\Gamma(A)$, then
$$\begin{array}{rcl}
(\on{Drf}_\sigma+\on{Trl}_{\Cour{\sigma,\tilde a}})\la \tau,\cdot\ra\rvert_a
&=&\la \Cour{\sigma,\tau},a\ra+\la \tau,\Cour{\sigma,\tilde a}\ra\\
&=&\mbf{a}(\sigma)\la \tau,\tilde a\ra\\
&=&0
\end{array}$$
In the last line we used the fact that $A$ is a Lagrangian subbundle of $\mathbb{E}$. Since the ideal of functions vanishing on $A\subset\mathbb{E}$ is generated by the functions $\la \tau,\cdot\ra$ for $\tau\in\Gamma(A)$, the claim follows.
\end{proof}
It follows from \cref{lem:tecLemTA} and \labelcref{eq:FormOfK} that for any $\sigma\in\Gamma(B)$
\begin{equation}\label{eq:BcK2}(\on{Drf}_{\sigma}+\on{Trl}_{\on{pr}_B\Cour{\sigma,\tilde a}},d\la\sigma,\cdot\ra)\sim_K\sigma_T+\big(\on{pr}_B\Cour{\sigma,\tilde a}\big)_C.\end{equation} 
(Notice that $\on{pr}_B\Cour{\sigma,\tilde a}\rvert_{q_{\mbb{E}/M}(a)}$ does not depend on the extension $\tilde a\in\Gamma(A)$ of $a\in A$).

Since the right hand side of \cref{eq:BcK1,eq:BcK2} are in $TB$, we may deduce that $(TB\circ K)_a$ is spanned by elements of the form
\begin{equation}\label{eq:BcK}(-\on{Trl}_{\on{pr}_A(\mbf{a}^*df)},df),\text{ and }(\on{Drf}_{\sigma}+\on{Trl}_{\on{pr}_B\Cour{\sigma,\tilde a}},d\la\sigma,\cdot\ra).\end{equation}\end{subequations}
This shows that $B\circ K$ is the graph of a bivector $\Pi_K$ satisfying 
\begin{align*}
\Pi_K(d\la\sigma,\cdot\ra,d\la\tau,\cdot\ra)&:=(\on{Drf}_{\sigma}+\on{Trl}_{\on{pr}_B\Cour{\sigma,\tilde a}})\la\tau,\cdot\ra\rvert_a=
\la\Cour{\sigma,\tau},a\ra,&& \sigma,\tau\in\Gamma(B)\\
\Pi_K(df,d\la\sigma,\cdot\ra)&:=-\on{Trl}_{\on{pr}_A(\mbf{a}^*df)}\la\sigma,\cdot\ra\rvert_a
=-\mbf{a}(\sigma)\cdot f,&& f\in C^\infty(M), \sigma\in\Gamma(B)\\
\Pi_K(df,dg)&:=-\on{Trl}_{\on{pr}_A(\mbf{a}^*df)}dg\rvert_a=0,&& f,g\in C^\infty(M).
\end{align*}
Comparison with \cref{eq:PiB} shows that $\Pi_K=\Pi_B$ and thus $B\circ K=B\circ K'$, which concludes the proof.
\end{proof}

\subsection{Transfer of data}Let $\mbb{F}\to N$ be a Courant algebroid. The Manin pair $(\mbb{T} M,TM)$ is canonically associated to the manifold $M$. Consequently, in a morphism of Manin pairs 
\begin{equation}\label{eq:trivLS}K:(\mbb{T} M,TM)\dasharrow (\mbb{F},A)\end{equation}
over the map $\phi:M\to N$,
   all the interesting information is stored in the morphism $K$ itself. However, we can reorganize the information stored in \labelcref{eq:trivLS}, and replace it with \begin{equation}\label{eq:trivMorp}P_\phi:(\phi^!\mbb{F},K\rvert_{\gr(\phi)})\dasharrow(\mbb{F},A).\end{equation} The advantage of \labelcref{eq:trivMorp} is that the morphism $P_\phi$ is canonically associated to the map $\phi$ and the Courant algebroid $\mbb{F}$ (see Section~\ref{sec:pullback} for details). Therefore, in \labelcref{eq:trivMorp},  all the interesting information is stored in the Manin pair $(\phi^!\mbb{F},K\rvert_{\gr(\phi)})$.

%
%

In more detail, assume that $d\phi:TM\to TN$ is transverse to the anchor map $\mbf{a}:\mbb{F}\to TN$. Let $\phi^!\mbb{F}$ denote the pull-back of $\mbb{F}$ along $\phi$   (\cite{LiBland:2009ul}, cf. \cref{def:CAPullback}),
and $$P_\phi:\phi^!\mbb{F}\dasharrow \mbb{F}$$ denote the canonically associated Courant morphism over the graph of $\phi$ (\cite{LiBland:2009ul}, cf. \cref{sec:pullback}).

Recall from \cref{sec:pullback} that $\phi^!\mbb{F}:=C/C^\perp$, where \begin{equation}\label{eq:Ctransfer}C=\mbf{a}^{-1}(T\gr(\phi))\subseteq \mbb{E}\times\overline{\mbb{T}M}.\end{equation}
Since $K$ has support on the graph of $\phi$, \begin{equation}\label{eq:Ktransfer}K\rvert_{\gr(\phi)}:=K/C^\perp\subseteq \phi^!\mbb{F}\end{equation} is 
a Dirac structure \cite[Proposition~1.4]{LiBland:2011vqa}.

\begin{lemma}\label{BurszLem2}

\begin{equation}\label{eq:PphiTransfer}P_\phi:(\phi^!\mbb{F},K\rvert_{\gr(\phi)})\dasharrow(\mbb{F},F)\end{equation} is a morphism of Manin pairs. 
\end{lemma}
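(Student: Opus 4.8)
The plan is to reduce this statement to facts already established in the excerpt about pull-back Courant algebroids and their interaction with morphisms of Manin pairs. Recall from \cref{sec:pullback} that $P_\phi:\phi^!\mbb{F}\dasharrow \mbb{F}$ is the canonical Courant morphism \labelcref{eq:pphi} lifting $\phi:M\to N$, and that $\phi^!\mbb{F}=C/C^\perp$ with $C=\mbf{a}^{-1}(T\gr(\phi))\subseteq \mbb{F}\times\overline{\mbb{T}M}$ as in \cref{eq:Ctransfer}. Since $K:(\mbb{T}M,TM)\dasharrow(\mbb{F},A)$ is a morphism of Manin pairs over $\phi$, in particular $K\subseteq \mbb{F}\times\overline{\mbb{T}M}$ is a Dirac structure with support on $\gr(\phi)$, and one checks that $K\subseteq C$ (because the anchor condition defining $C$ is exactly the compatibility forcing $v+\mu\sim_K x \Rightarrow \mbf{a}(x)=d\phi(v)$, which holds since $K$ is supported on a graph and the $TM$-component of the anchor of any element of $\mbb{T}M$ is its vector part). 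Moreover $C^\perp\subseteq K$ since $C^\perp\subseteq K^\perp=K$ using that $K$ is Lagrangian and $C^\perp\subseteq C$. Hence $K\rvert_{\gr(\phi)}:=K/C^\perp\subseteq \phi^!\mbb{F}$ is well-defined, and by \cite[Proposition~1.4]{LiBland:2011vqa} (clean composition of the reduction relation with a Dirac structure) it is a Dirac structure in $\phi^!\mbb{F}$.

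Next I would verify the two axioms m1) and m2) of \cref{def:MorphMP} for $P_\phi:(\phi^!\mbb{F},K\rvert_{\gr(\phi)})\dasharrow(\mbb{F},A)$. For m2), $P_\phi\circ (K\rvert_{\gr(\phi)})\subseteq A$: since $P_\phi$ is the reduction relation and $K\rvert_{\gr(\phi)}=K/C^\perp$, the composition $P_\phi\circ(K\rvert_{\gr(\phi)})$ is computed as the image of $K$ under the partial projection $C\to \mbb{F}$ (forgetting the $\mbb{T}M$ factor), intersected appropriately; but this image is precisely $K\circ TM = K\circ TM$, and since $K$ is a morphism of Manin pairs $(\mbb{T}M,TM)\dasharrow(\mbb{F},A)$, axiom m2) for $K$ gives $K\circ TM\subseteq A$. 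The key observation is that composing with $P_\phi$ followed by the zero section of $\mbb{T}M$ recovers exactly the operation $K\circ(-)$ applied to $TM\subseteq\mbb{T}M$; I would spell this out as a clean composition of relations using \cref{lem:ann}-style bookkeeping. For m1), $K\rvert_{\gr(\phi)}\cap\ker(P_\phi)=0$: an element of $\ker(P_\phi)$ is represented by some $(x;0,\mu)\in C$ with $x\in\ann(TM)$-type condition, i.e. $x=\mbf{a}^*\mu'$ for suitable $\mu'$; if it also lies in $K\rvert_{\gr(\phi)}$, lift to $(x;0,\mu)\in K$, and then $m1)$ for $K$ (namely $TM\cap\ker(K)=0$, or rather the dual statement that $K$ has the transversality built into being a morphism of Manin pairs) forces the class to vanish in $C/C^\perp$. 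This is where I would be most careful.

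The main obstacle I anticipate is m1): disentangling exactly what $\ker(P_\phi)$ is and checking its trivial intersection with $K\rvert_{\gr(\phi)}$ requires tracking the quotient by $C^\perp$ carefully, since elements are equivalence classes rather than honest vectors. Concretely, $\ker(P_\phi)$ consists of classes $[x;v,\mu]$ with $\mbf{a}(x)=d\phi(v)$ such that the reduced element maps to $0\in\mbb{F}$ under $P_\phi$; by \cref{eq:ysimIx} this means one can choose a representative with $x=0$, so the class is represented by $(0;v,\mu)\in C$ with $d\phi(v)=0$. For such a class to lie in $K\rvert_{\gr(\phi)}$ there must exist $(x';v,\mu)\in K$ with $x'\in C^\perp$; but $C^\perp$ (computed from \cref{eq:Ctransfer}) consists of elements of the form $(\mbf{a}^*\nu; 0, -d\phi^*\nu)$, forcing $v=0$, and then $x'\in K\cap C^\perp \subseteq K\cap\mbf{a}^*(\ann(TM)\cdots)$, which by the morphism-of-Manin-pairs property of $K$ (transversality of $K$ to $TM$ inside $\mbb{T}M$, i.e. $\on{ran}(K)+\cdots$) must be zero. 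I would present this as a short diagram chase. Finally, cleanness of the composition $P_\phi$ with the various Dirac structures is guaranteed by \cref{def:CAPullback} (the transversality of $d\phi$ to $\mbf{a}$, which is assumed), so no further analytic hypotheses are needed; with m1) and m2) in hand, \cref{def:MorphMP} is satisfied and the lemma follows.
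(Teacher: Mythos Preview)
Your strategy—verify m1) and m2) for $P_\phi$ directly—is sound and more hands-on than the paper's. The paper instead introduces a provisional notion of \emph{weak morphism of Manin pairs} (conditions m1), m2) for a mere vector-bundle relation) and factors $P_\phi=I\circ Q^\top$: here $Q^\top$ is the transpose of the quotient $C\to C/C^\perp$, viewed as a relation $(\phi^!\mbb{F},K\rvert_{\gr(\phi)})\dasharrow(\mbb{B},K)$ with $\mbb{B}:=(\mbb{F}\times\overline{\mbb{T}M})\rvert_{\gr(\phi)}$, and $I$ is the restriction to $\mbb{B}$ of the projection $\mbb{F}_\Delta\times TM:\mbb{F}\times\overline{\mbb{T}M}\dasharrow\mbb{F}$. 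The paper checks wm1), wm2) for each factor (the first is trivial since $\ker(Q^\top)=0$ and $Q^\top\circ K\rvert_{\gr(\phi)}=K$; the second reduces immediately to m1), m2) for $K$) and concludes because $P_\phi$ is already known to be a Courant morphism. Your m2) is correct: using $C^\perp\subseteq K$ one finds $P_\phi\circ(K\rvert_{\gr(\phi)})=K\circ TM$, which lies in the target Dirac structure by m2) for $K$.

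Your m1), however, does not go through as written. First, the description of $\ker(P_\phi)$ is off: by \eqref{eq:ysimIx}, $y\sim_{P_\phi}0$ exactly when $y=[(0;v,0)]$ for some $v\in TM$ with $d\phi(v)=0$; there is no free $\mu$. More seriously, the step ``$C^\perp$ \dots\ forcing $v=0$'' is false: that elements of $C^\perp$ have vanishing $TM$-component only tells you every representative of $y$ carries the \emph{same} $v$, not that $v=0$. The clean argument is the one you allude to and then abandon: since $C^\perp\subseteq K$, the class $[(0;v,0)]$ lies in $K\rvert_{\gr(\phi)}=K/C^\perp$ if and only if $(0;v,0)\in K$, i.e.\ $(v,0)\in TM\cap\ker(K)$, which vanishes by m1) for $K$. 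That finishes m1); the explicit computation of $C^\perp$ (which also carries a sign error) is not needed.
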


\begin{proof}

Note, that both conditions m1) and m2) in \cref{def:MorphMP} make sense for arbitrary quadratic vector bundles. Thus, we make the following provisional definition:
Suppose that $\mbb{E}_1$ and $\mbb{E}_2$ are two quadratic vector bundles over $M_1$ and $M_2$, respectively, $E_1\subset \mbb{E}_1$ and $E_2\subset\mbb{E}_2$ are two Lagrangian subbundles. A \emph{weak morphism of Manin pairs} 
$$R:(\mbb{E}_1,E_1)\dasharrow(\mbb{E}_2,E_2)$$ 
is a vector bundle relation $R:\mbb{E}_1\dasharrow\mbb{E}_2$, supported on the graph of a map $M_1\to M_2$, such that
\begin{enumerate} 
\item[wm1)] $E_1\cap \on{ker}(R)=0$, 
\item[wm2)] $R\circ E_1\subseteq E_2$. 
\end{enumerate}

%


Consider the  direct product $$\tilde I:= \mbb{F}_\Delta\times TM\subseteq \mbb{F}\times\overline{\mbb{F}}\times \mbb{T} M,$$ of the diagonal Dirac structure (with support) $\mbb{F}_\Delta\subseteq\mbb{F}\times\overline{\mbb{F}}$ and the Dirac structure $TM\subseteq \mbb{T}M$  (cf. \cref{ex:diagDirStr,ex:StdCourAlg}). It defines a Courant morphism $$\tilde I:\mbb{F}\times\overline{\mbb{T} M}\dasharrow \mbb{F}$$ over the projection $N\times M\to N$. 



Let $I:=\tilde I\rvert_{ N\times\gr(\phi)}$ and $\mbb{B}=(\mbb{F}\times\overline{\mbb{T}M})\rvert_{\gr(\phi)}$.

\begin{claim}
$I:(\mbb{B},K)\dasharrow (\mbb{F},F)$ is a weak morphism of Manin pairs
\end{claim}
First we show that (wm1) holds: $$(\{0\}\times K)\cap  I= (\{0\}\times K)\cap (\mbb{F}_\Delta\times TM)
= \{0\}\times \big(K\cap (\{0\}\times TM)\big).$$
 Since $K:(\mbb{T} M,TM)\dasharrow (\mbb{F},F)$ is a morphism of Manin pairs, $K\cap (\{0\}\times TM)=\{0\}$, and therefore $(\{0\}\times K)\cap \tilde I=\{0\}.$ This shows that (wm1) holds.
 
 Next we show that (wm2) holds:
 $$I\circ K=\mbb{F}_\Delta\circ K\circ TM=K\circ TM.$$ Since $K:(\mbb{T} M,TM)\dasharrow (\mbb{F},F)$ is a morphism of Manin pairs, $K\circ TM\subseteq F$. Hence $$I\circ K\subseteq F,$$ which shows that (wm2) holds.

Let $Q:\mbb{B}\to \phi^! \mbb{F}$ be the Lagrangian relation defined by the quotient map $C\to C/C^\perp=:\phi^!\mbb{F}$, where $C\subseteq \mbb{F}\times\overline{\mbb{T}M}$ is given in \cref{eq:Ctransfer}.
\begin{claim}
$Q^\top:(\phi^!\mbb{F},K\rvert_{\gr(\phi)})\dasharrow (\mbb{B},K)$ is a weak morphism of Manin pairs.
\end{claim}
Since $K$ is supported on the graph of $\phi:M\to N$, $K\subseteq C$, and hence $C^\perp\subseteq K$. It follows that $$Q^\top\circ K\rvert_{\gr(\phi)}=K,$$ so (wm2) holds. Moreover, $\on{ker}(Q^\top)=0$, so (wm1) holds.

From \cref{sec:pullback}, we have 
$P_\phi=I\circ Q^\top.$ Therefore, $$P_\phi:(\phi^!\mbb{F},K\rvert_{\gr(\phi)})\dasharrow(\mbb{F},F)$$ is a weak morphism of Manin pairs. Since $P_\phi\subseteq \mbb{F}\times\overline{\phi^!\mbb{F}}$ is also a Dirac structure (with support), \cref{eq:PphiTransfer} is a morphism of Manin pairs.

\end{proof}
\begin{remark}
In the special case where $\mbb{F}$ is exact, \cref{BurszLem2} was proven in \cite[Theorem~3.7]{Bursztyn:2009wi}.
\end{remark}

If one wishes to recover the \labelcref{eq:trivLS} from \labelcref{eq:trivMorp}, one may use the following lemma:
\begin{lemma}
Let $(\mbb{F},F)$ be a Manin pair over the manifold $N$, and let $$K:(\mbb{T} M, TM)\dasharrow (\mbb{F},F)$$ be a morphism of Manin pairs over $\phi:M\to N$. Then $K=P_\phi\circ J$, where $P_\phi$ is as Section~\ref{sec:pullback} and $J:=J_{K\rvert_{\gr(\phi)}}$ is described in \cref{ex:canonicalmorphism}.
\end{lemma}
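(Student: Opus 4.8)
The statement asks us to recover the morphism $K:(\mbb{T}M,TM)\dasharrow(\mbb{F},F)$ over $\phi:M\to N$ as the composition $P_\phi\circ J$, where $J=J_{K\rvert_{\gr(\phi)}}:(\mbb{T}M,TM)\dasharrow(\phi^!\mbb{F},K\rvert_{\gr(\phi)})$ is the canonical morphism of \cref{ex:canonicalmorphism} and $P_\phi:\phi^!\mbb{F}\dasharrow\mbb{F}$ is the canonical Courant morphism of \cref{sec:pullback}. The plan is to verify the identity directly by unwinding the explicit descriptions of $J$, $P_\phi$, and the pull-back Courant algebroid $\phi^!\mbb{F}=C/C^\perp$, where $C=\{(x;v,\mu)\in\mbb{F}\times\mbb{T}M\mid d\phi(v)=\mbf{a}(x)\}$ as in \cref{def:CAPullback}.

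First I would spell out what it means to compose these relations. By \cref{eq:ysimIx}, $y\sim_{P_\phi}x$ iff there exists $v\in TM$ with $\mbf{a}(x)=d\phi(v)$ and $(x;v,0)\in C$ descending to $y$. By \cref{ex:canonicalmorphism} applied to the Manin pair $(\phi^!\mbb{F},K\rvert_{\gr(\phi)})$, an element $v+\mu\in\mbb{T}M$ satisfies $v+\mu\sim_J y$ iff $v=\mbf{a}_{\phi^!\mbb{F}}(y)$ and $y-\mbf{a}_{\phi^!\mbb{F}}^*(\mu)\in K\rvert_{\gr(\phi)}$. Composing, $v+\mu\sim_{P_\phi\circ J}x$ iff there is some $y\in\phi^!\mbb{F}$ with $v+\mu\sim_J y$ and $y\sim_{P_\phi}x$. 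The task is then to show this holds precisely when $v+\mu\sim_K x$, i.e. when $(x;v,\mu)\in K\subseteq\mbb{F}\times\overline{\mbb{T}M}$.

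The key steps are: (i) unwind the anchor map of $\phi^!\mbb{F}$, namely that $\mbf{a}_{\phi^!\mbb{F}}$ takes the class of $(x;v,\mu)\in C$ to $v\in TM$ — this uses that $\phi^!\mbb{F}$ has base $M$ and its anchor is induced by projection to the $TM$-component; (ii) identify $\mbf{a}_{\phi^!\mbb{F}}^*:T^*M\to\phi^!\mbb{F}$ with the class of $(\mbf{a}^*\eta', \cdot)$-type elements, more precisely show $\mbf{a}_{\phi^!\mbb{F}}^*(\mu)$ is represented by some $(x_0;0,\mu)\in C$ with $x_0\in\mbf{a}^{-1}(0)$; (iii) observe that $K\rvert_{\gr(\phi)}=K/C^\perp$, so $y\in K\rvert_{\gr(\phi)}$ iff $y$ has a representative in $K\subseteq C$; (iv) assemble: given $v+\mu\sim_K x$, so $(x;v,\mu)\in K\subseteq C$, take $y=[(x;v,\mu)]\in\phi^!\mbb{F}$; then $\mbf{a}_{\phi^!\mbb{F}}(y)=v$ and $y-\mbf{a}_{\phi^!\mbb{F}}^*(\mu)=[(x;v,0)]$, which lies in $C$, anchors to $v$ with $d\phi(v)=\mbf{a}(x)$ automatically, hence $[(x;v,0)]\sim_{P_\phi}x$; also $y\in K\rvert_{\gr(\phi)}$ and $y-\mbf{a}_{\phi^!\mbb{F}}^*(\mu)\in K\rvert_{\gr(\phi)}$ since $\mbf{a}_{\phi^!\mbb{F}}^*(\mu)\in K\rvert_{\gr(\phi)}$ (as $K$ is Lagrangian it contains the image of the dual anchor). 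This gives $v+\mu\sim_{P_\phi\circ J}x$. For the reverse inclusion, a dimension count suffices: both $K$ and $P_\phi\circ J$ are Lagrangian subbundles of $\mbb{F}\times\overline{\mbb{T}M}$ with support on $\gr(\phi)$ (the latter because a clean composition of Courant relations is a Courant relation, per \cref{sec:CARel}), and the inclusion of one full-rank Lagrangian into another forces equality.

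\textbf{Main obstacle.} The delicate point is the bookkeeping in step (ii)–(iv): tracking how elements of $\mbb{T}M$, elements of $C\subseteq\mbb{F}\times\overline{\mbb{T}M}$, and their classes mod $C^\perp$ correspond under the various anchor and dual-anchor maps, and in particular verifying that $\mbf{a}_{\phi^!\mbb{F}}^*(\mu)$ lands in $K\rvert_{\gr(\phi)}$ — this is where one uses that $K$, being a Dirac structure in $\mbb{F}\times\overline{\mbb{T}M}$, satisfies $\mbf{a}_K^*(T^*(\gr\phi))\subseteq K$, together with compatibility of the reduction $C\to C/C^\perp$ with dual anchor maps. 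Everything else is a routine unraveling of definitions, and the clean-composition/Lagrangian-rank argument closes the gap without further computation.
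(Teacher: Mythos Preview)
Your overall strategy---establish the inclusion $K\subseteq P_\phi\circ J$ by explicit element chasing, then close by a Lagrangian rank count---is exactly the paper's approach. However, step~(iv) as written has a genuine error that breaks the argument.

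The claim ``$\mbf{a}_{\phi^!\mbb{F}}^*(\mu)\in K\rvert_{\gr(\phi)}$ (as $K$ is Lagrangian it contains the image of the dual anchor)'' is false. A Dirac structure need not contain the image of $\mbf{a}^*$: already $TM\subset\mbb{T}M$ is a Dirac structure that does not contain $T^*M=\mbf{a}^*(T^*M)$. So your justification for $[(x;v,0)]\in K\rvert_{\gr(\phi)}$ collapses. Relatedly, you have conflated the intermediate element: you set $y=[(x;v,\mu)]$, verify $(v,\mu)\sim_J y$, but then establish $[(x;v,0)]\sim_{P_\phi}x$---these are different elements, so you have not exhibited a single $z$ with $(v,\mu)\sim_J z\sim_{P_\phi}x$.

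The fix is to take $z=[(x;v,0)]$ as the intermediate throughout. Then $z\sim_{P_\phi}x$ is immediate from \cref{eq:ysimIx}. For the $J$-condition you need $z-\mbf{a}_{\phi^!\mbb{F}}^*(\mu)\in K\rvert_{\gr(\phi)}$. Here the sign matters: the paper observes that because $C\subseteq\mbb{F}\times\overline{\mbb{T}M}$, one has $\widetilde{(0;0,\mu)}=-\mbf{a}_{\phi^!\mbb{F}}^*(\mu)$, so
\[
z-\mbf{a}_{\phi^!\mbb{F}}^*(\mu)=[(x;v,0)]+[(0;0,\mu)]=[(x;v,\mu)]\in K\rvert_{\gr(\phi)},
\]
which holds precisely because $(x;v,\mu)\in K$. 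No appeal to $\mbf{a}^*(T^*M)\subseteq K\rvert_{\gr(\phi)}$ is needed or available. With this correction your argument goes through and matches the paper's proof.
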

\begin{proof}
Let $C=\{(x;v,\mu)\in \mbb{F}\times \overline{\mbb{T} M}\mid (d\phi)(v)=\mbf{a}_{\mbb{F}}(x)\}$, where $\mbf{a}_{\mbb{F}}:\mbb{F}\to TN$ is the anchor map.

Suppose that $$(v,\mu)\sim_K x$$ for $x\in \mbb{F}$ and $(v,\mu)\in\mbb{T} M$. Suppose that $(x;v,\mu)\in K\subset C$ maps to $$\widetilde{(x,v,\mu)}\in K\rvert_{\gr(\phi)}\subset \phi^!\mbb{F}=C/C^\perp,$$ while
$(x;v,0),(0;0,\mu)\in C$ map to the elements $\widetilde{(x;v,0)},\widetilde{(0;0,\mu)}\in \phi^!\mbb{F}$. By \labelcref{eq:ysimIx}, we have
\begin{equation}\label{eq:xisimy}\widetilde{(x;v,0)}\sim_{P_\phi} x.\end{equation}

Furthermore,  
$$\widetilde{(x,v,\mu)}=\widetilde{(x;v,0)}+\widetilde{(0;0,\mu)}.$$
But $\widetilde{(0;0,\mu)}=-\mbf{a}_{\phi^!\mbb{F}}^*\mu$ (where the minus sign comes from the fact that we are considering $\mbb{F}\times\overline{\mbb{T} M}$ and not $\mbb{F}\times\mbb{T} M$).
This gives us 
$$\widetilde{(x;v,0)}=\widetilde{(x,v,\mu)}+\mbf{a}_{\phi^!\mbb{F}}^*\mu,\quad\text{ with }\widetilde{(x,v,\mu)}\in K\rvert_{\gr(\phi)}$$
Since $\mbf{a}_{\phi^!\mbb{F}}(\widetilde{(x,v,\mu)})=v$, by the definition of $J:=J_{K\rvert_{\gr(\phi)}}$ (described in \cref{ex:canonicalmorphism}), we have $$(v,\mu)\sim_J\widetilde{(x;v,0)}.$$ Combining this with \labelcref{eq:xisimy}, we get $$(v,\mu)\sim_{P_\phi\circ J} x.$$ Therefore $K\subset P_\phi\circ J$, which concludes the proof.
\end{proof}

\subsection{The morphism of Manin pairs $(\mbb{T} L, TL^{flip})\dasharrow (\mf{d}\times T\mf{d},\mf{d}\times T\g)$}

Let $$L:(\mbb{T} M, TM)\dasharrow (\mf{d},\g)$$ be a morphism of Manin pairs defined by a Dirac structure $L\subset \mf{d}\times\overline{\mbb{T} M}$. Then the projection $\mf{d}\times\overline{\mbb{T}M}\to\mf{d}$ restricts to $L$ to define a morphism of Lie algebroids $$\mu:L\to\mf{d}.$$

\begin{proposition}\label{prop:IntegrationProp}
Let $$L:(\mbb{T} M, TM)\dasharrow (\mf{d},\g)$$ be a morphism of Manin pairs defined by a Dirac structure $L\subset \mf{d}\times\overline{\mbb{T} M}$. 
Then there is a morphism of $\mathcal{LA}$-Manin pairs \begin{equation}\label{eq:IntegrationProp}R:(\mbb{T} L, TL^{flip})\dasharrow (T\mf{d}\times\mf{d},T\g\times \mf{d}),\end{equation} over the Lie algebroid morphism $\mu:L\to \mf{d}$, where $T\mf{d}\times \mf{d}$ is the $\mc{LA}$-Courant algebroid described in \cref{eq:DiracManUnRes}
\end{proposition}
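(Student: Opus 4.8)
The plan is to build the morphism $R$ in \labelcref{eq:IntegrationProp} by composing the two canonical constructions developed in the preceding subsections, applied to the Manin pair $(\mf{d},\g)$ regarded as living over a point, together with the tangent-prolongation functor. First I would observe that the morphism of Manin pairs $L\colon(\mbb{T}M,TM)\dasharrow(\mf{d},\g)$ is, by definition, a Dirac structure with support $L\subseteq\mf{d}\times\overline{\mbb{T}M}$; applying \cref{lem:BurszLem1} to the Manin pair $(\mf{d},\g)$ (over the point $\ast$) produces the canonical $\mc{LA}$-Courant morphism $K_{\mf{d}}\colon(\mbb{T}\g,T\g^{flip})\dasharrow(T\mf{d},T\g)$ over the anchor $\mbf{a}|_\g\colon\g\to T\ast=\ast$. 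Since $\g$ sits over a point, $\mbb{T}\g=\g\ltimes\g^*$, $T\g^{flip}$ and $T\g$ are the $\mc{LA}$-vector bundles of \cref{ex:LACApt,ex:TngProIsLACA}, and $T\mf{d}=\mf{d}\ltimes\mf{d}$ is the quadratic Lie algebra of the Example following \cref{rem:TangLiftTngProDef}. The point of \cref{ex:LADirMan1} is precisely that the ambient $\mc{LA}$-Courant algebroid $T\mf{q}\times\mf{d}$ — here with $\mf{q}=\mf{d}$, so $T\mf{d}\times\mf{d}$ — together with its $\mc{LA}$-Dirac structure $T\g\times\mf{d}$, packages this canonical morphism; concretely I would use the explicit description of $K$ in \cref{prop:formOfK} (with $A=\g$, $\mbb{E}=\mf{d}$) to identify the target of the composite with $(T\mf{d}\times\mf{d},\,T\g\times\mf{d})$.

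Next I would transfer this morphism along $\mu\colon L\to\mf{d}$, the Lie algebroid morphism obtained by composing the inclusion $L\hookrightarrow\mf{d}\times\overline{\mbb{T}M}$ with the projection to $\mf{d}$ (which is a morphism of Lie algebroids because $L$ is a Dirac structure in the $\mc{LA}$-Courant algebroid $\mf{d}\times\overline{\mbb{T}M}$, cf.\ the discussion after \cref{def:MorphMP} and the remark preceding \cref{prop:VBDirIsLaVB}). Since $\mu$ is transverse to the anchor $\mbf{a}\colon T\mf{d}\to T\ast$ trivially, \cref{BurszLem2} (applied with $\phi=\mu$, $\mbb{F}=T\mf{d}\times\mf{d}$, $N=\ast$ appropriately interpreted, or more directly the pull-back construction of \cref{sec:pullback} applied to the $\mc{LA}$-Courant algebroid $T\mf{d}\times\mf{d}$) yields the canonical Courant morphism $P_\mu\colon \mu^!(T\mf{d}\times\mf{d})\dasharrow T\mf{d}\times\mf{d}$, which by \cref{ex:LADirMan} is an $\mc{LA}$-Courant morphism, and identifies $\mu^!(T\mf{d}\times\mf{d})$ — together with the Dirac structure obtained by pulling back $T\g\times\mf{d}$ — with $(\mbb{A}_{(\ldots)},E_{(\ldots)})$-type data. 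Composing $K_{\mf{d}}$ with $L$-induced data and then with $P_\mu$, and using that $\mu^!(T\mf{d})$ over $L$ is canonically $\mbb{T}L$ when combined with the tangent prolongation $T\mbb{E}\to TM$ for $\mbb{E}=$ (the exact Courant algebroid underlying $\mbb{T}M$), gives a composite $\mc{LA}$-Courant relation from $\mbb{T}L$ to $T\mf{d}\times\mf{d}$; I would then check that its source restricts to $(\mbb{T}L,TL^{flip})$ and its target to $(T\mf{d}\times\mf{d},T\g\times\mf{d})$, yielding \labelcref{eq:IntegrationProp}. Throughout, the fact that the various compositions are \emph{clean} — needed to invoke \cite[Proposition~1.4]{LiBland:2011vqa} and the composition of morphisms of Manin pairs — follows because all the relations in sight are obtained from Dirac structures and graphs of (co)morphisms, which compose cleanly by \cref{def:MorphMP} and \cref{ex:DirStrTngLf}, \cref{ex:TngProIsLACA}.

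The main obstacle, I expect, will be the bookkeeping that identifies the \emph{source} Manin pair of the composite correctly as $(\mbb{T}L,TL^{flip})$: on the nose the construction produces a pull-back of $\mbb{T}\g$ (resp.\ $T\g^{flip}$) along $\mu$, and one must show this is canonically $\mbb{T}L$ (resp.\ $TL^{flip}$). Here the key input is that $L$ is transverse to $TM$ inside $\mf{d}\times\overline{\mbb{T}M}$ (since $L$ defines a morphism of Manin pairs with $\g$ Lagrangian and the canonical morphism $J_E$ of \cref{ex:canonicalmorphism} realizes $L$ as a "graph"), so that $L\cong T^*M$ as a vector bundle and the pull-back Courant algebroid $\mu^!(T\mf{d})$ along $\mu\colon L\to\mf{d}$ is exactly the standard Courant algebroid $\mbb{T}L$ over the manifold $L$ — but wait, $L$ here is a \emph{vector bundle over $M$}, not a bare manifold, so one must be careful: the correct statement is that $\mbb{T}L$ in \labelcref{eq:IntegrationProp} is the standard Courant algebroid over the \emph{total space} of the vector bundle $L\to M$, and $TL^{flip}$ is the tangent-prolongation $\mc{VB}$-structure on it. I would resolve this by using the functoriality of $\mbb{T}(-)$ and of the tangent prolongation together with \cref{ex:StdLAC} and \cref{ex:TngProIsLACA}, which show $\mbb{T}L$ carries a natural $\mc{LA}$-Courant algebroid structure and $TL^{flip}$ an $\mc{LA}$-Dirac structure, and then verify that the composite relation $R$ respects these. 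The remaining verifications — that $R$ is Lagrangian, involutive, and a Lie subalgebroid, hence an $\mc{LA}$-Courant relation — are then automatic from \cite[Proposition~1.4]{LiBland:2011vqa} since each factor is, and composition preserves these properties; I would relegate the explicit formula for $R$ (analogous to \cref{eq:FormOfK}) to a remark rather than grind through it here.
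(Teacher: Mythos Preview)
Your proposal has a genuine gap: you apply \cref{lem:BurszLem1} to the wrong Manin pair. Applying it to $(\mf{d},\g)$ gives a morphism with source $(\mbb{T}\g,\,T\g^{flip})$, not $(\mbb{T}L,\,TL^{flip})$, and there is no natural way to pass from one to the other. The paper's key move is to first use \cref{BurszLem2} to transfer the data of $L$ to a Manin pair $(\mf{d}\times\overline{\mbb{T}M},\,L)$ in which $L$ itself is the Dirac structure, and only then apply \cref{lem:BurszLem1} to this pair. Since \cref{lem:BurszLem1} produces a morphism out of $(\mbb{T}A,\,TA^{flip})$ for whatever Dirac structure $A$ appears in the Manin pair, this immediately gives the correct source $(\mbb{T}L,\,TL^{flip})$. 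One then composes with the tangent prolongation $TP_\ast$ of the canonical Courant morphism $P_\ast:\mf{d}\times\overline{\mbb{T}M}\dasharrow\mf{d}$ to land in $(T\mf{d},T\g)$.

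Your attempt to bypass this via the pull-back $\mu^!(T\mf{d}\times\mf{d})$ and \cref{BurszLem2} is circular: \cref{BurszLem2} takes as \emph{input} a morphism of Manin pairs $K:(\mbb{T}M,TM)\dasharrow(\mbb{F},F)$ and re-expresses it as $P_\phi$; it does not manufacture such a morphism from scratch. Even granting an isomorphism $\mu^!(T\mf{d}\times\mf{d})\cong\mbb{T}L$ of exact Courant algebroids, you would still need to identify the pulled-back Dirac structure with $TL^{flip}$, and that identification is precisely what encodes the morphism you are trying to build. Finally, you omit the substantive second half of the argument: having built $R:(\mbb{T}L,TL^{flip})\dasharrow(T\mf{d},T\g)$, one must still check the compatibility $\mu_*X=\rho(y)$ between $R$ and the $T\mf{d}$-action on $\mf{d}$, which is what allows the extension to the action Courant algebroid $T\mf{d}\times\mf{d}$; this requires the explicit formula of \cref{prop:formOfK} and a direct computation, not just functoriality.
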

\begin{proof}
Let $P_\ast:\mf{d}\times \overline{\mbb{T} M}\dasharrow \mf{d}$ be the Courant morphism over the trivial map $M\to\ast$, defined by \cref{eq:ysimIx}.
By \cref{BurszLem2}, $$P_\ast:(\mf{d}\times \overline{\mbb{T} M},L)\dasharrow (\mf{d},\g)$$ is a morphism of Manin pairs. Taking the tangent prolongation yields a morphism of $\mc{LA}$-Manin pairs (cf. \cref{ex:TngProIsLACA}),
 $$T{P_\ast}:(T(\mf{d}\times\overline{\mbb{T} M}),TL)\dasharrow (T\mf{d},T\g).$$ 

Let  $$R_1:(\mbb{T} L,TL^{flip})\dasharrow (T(\mf{d}\times\overline{\mbb{T} M}),TL)$$
denote the morphism of $\mc{LA}$-Manin pairs defined in \cref{lem:BurszLem1}.
   
   We will be interested in the morphism of $\mc{LA}$-Manin pairs
   \begin{equation}\label{eq:R}R:=T{P_\ast}\circ R_1:(\mbb{T} L,TL^{flip})\dasharrow(T\mf{d},T\g).\end{equation}

Recall the $\mc{LA}$-Courant algebroid $T\mf{d}\times \mf{d}$ associated to the quadratic Lie algebra $\mf{d}$ in \cref{ex:LADirMan1}. $T\mf{d}\times \mf{d}$ is defined as an action Courant algebroid. Therefore if we can show that $R$ and $\mu$ are compatible with this action, then \begin{equation}\label{eq:muR}R\times\mu:(\mbb{T} L,TL^{flip})\dasharrow(T\mf{d}\times \mf{d} ,T\g\times \mf{d})\end{equation} will be a morphism of Manin pairs.

 More precisely, if $(X,\alpha)\sim_R y$ for sections $(X,\alpha)\in\Gamma(\mbb{T} L)$ and $y\in T\mf{d}$, then we need to show that \begin{equation}\label{eq:muEquRho}\mu_*X=\rho(y).\end{equation} Here $\rho:T\mf{d}\to\mathfrak{X}(\mf{d})$ is the Lie algebra morphism associated with the natural action of $T\mf{d}$ on the homogeneous space $TD/D\cong \mf{d}$ (where $D$ is any Lie group integrating $\mf{d}$, cf. \cref{ex:LADirMan1}). Thus $$\rho(\xi_T+\eta_C)=\on{Drf}_\xi+\on{Trl}_\eta.$$
 
 Recall from \cref{eq:ysimIx} that the relation $TP_\ast$, is given by $$(\xi_T+\eta_C;(X,0)_T+(Y,0)_C)\sim_{T{P_\ast}} \xi_T+\eta_C$$ for $\xi,\eta\in\mf{d}$ and $X,Y\in\mathfrak{X}(M)$. Meanwhile, by \cref{prop:formOfK}, 
  $R_1$ describes the relation \begin{multline*}(\on{Drf}_{(\xi;(X,\alpha))}+\on{Trl}_{(\eta;(Y,\beta))},d\langle(\xi;(X,\alpha)),\cdot\rangle+\gamma)\\ 
 \sim_{R_1}(\xi_T+\eta_C;(X,\alpha)_T+(Y,\beta)_C+(0,\gamma)_C),\end{multline*} 
 for $\xi,\eta,\in\mf{d}$, $X,Y\in\mathfrak{X}(M)$ and $\alpha,\beta,\gamma\in\Omega^1(M)$. 
  
 Therefore, 
 $R$ describes the relation
 $$(\on{Drf}_{(\xi;(X,0))}+\on{Trl}_{(\eta;(Y,\beta))},d\langle(\xi;(X,0)),\cdot\rangle-\beta)\sim_R(\xi_T+\eta_C).$$ 
 
 Now, on the one hand, $\rho(\xi_T+\eta_C)=\on{Drf}_\xi+\on{Trl}_\eta$. On the other hand, $$\mu_*(\on{Drf}_{(\xi;(X,0))}+\on{Trl}_{(\eta;(Y,\beta))})=\on{Drf}_{\mu(\xi;(X,0))}+\on{Trl}_{\mu(\eta;(Y,\beta))}=\on{Drf}_\xi+\on{Trl}_\eta,$$ where $\mu:\mf{d}\times\overline{\mbb{T} M}\to \mf{d}$ is the natural projection. Hence \cref{eq:muEquRho} holds. If follows that \labelcref{eq:muR} is a morphism of Manin pairs.

Next, we recall that as a Lie algebroid, $T\mf{d}\times \mf{d}$ is just the cross product of  the tangent bundle Lie algebroid $T\mf{d}:=T\mf{d}$ with the Lie algebra $\mf{d}$. Since the map $\mu:L\to \mf{d}$ is a morphism of Lie algebroids, and $R$ is a morphism of $\mc{LA}$-Manin pairs, the cross product $R$ is automatically a morphism of $\mc{LA}$-Manin pairs.
\end{proof}

\subsection{Integration corollaries}

From \cref{prop:IntegrationProp}, we immediately get the following set of Corollaries. 

\begin{corollary}\label{cor:basicInt}
Let $L\subset \mf{d}\times\overline{\mbb{T} M}$ be the Dirac structure corresponding to a morphism of Manin pairs $$(\mbb{T} M, TM)\dasharrow (\mf{d},\g),$$ and $\mu:L\to\mf{d}$ the natural projection. If $L$ integrates to a source simply connected groupoid $\Gamma$, then there exists a morphism of Manin pairs
\begin{equation}\label{eq:basicInt}K:(\mbb{T} \Gamma,T\Gamma)\dasharrow \big((\mf{d}\oplus\overline{\mf{d}})\times D,(\g\oplus\g)\times D\big),\end{equation} where the latter multiplicative Manin pair is described in \cref{ex:DirLieInt}.
\end{corollary}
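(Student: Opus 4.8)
The plan is to derive \cref{cor:basicInt} directly from \cref{prop:IntegrationProp} together with the integration theory developed in \cref{sec:Integr}, in particular \cref{cor:LACourIntMultCour}. First I would recall that \cref{prop:IntegrationProp} supplies a morphism of $\mc{LA}$-Manin pairs
$$R:(\mbb{T} L, TL^{flip})\dasharrow (T\mf{d}\times\mf{d}, T\g\times\mf{d})$$
over the Lie algebroid morphism $\mu:L\to\mf{d}$. The domain $\mbb{T}L$ is the standard $\mc{LA}$-Courant algebroid over the Lie algebroid $L$ (\cref{ex:StdLAC}), and $TL^{flip}$ is the $\mc{LA}$-Dirac structure in it described there; by \cref{ex:StdCourInt} this $\mc{LA}$-Courant algebroid integrates to the standard multiplicative Courant algebroid $\mbb{T}\Gamma$ over the source simply connected groupoid $\Gamma$ integrating $L$, and the $\mc{LA}$-Dirac structure $TL^{flip}$ integrates to the multiplicative Dirac structure $T\Gamma\subseteq \mbb{T}\Gamma$. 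On the target side, \cref{ex:DirLieInt} identifies the integration of $(T\mf{d}\times\mf{d}, T\g\times\mf{d})$ — which is exactly the $\mc{LA}$-Manin pair $\mbb{A}_{(\mf{d},\g;\h)_\beta}$ of \cref{ex:LADirMan1} in the degenerate/untruncated guise appearing in \cref{eq:DiracManUnRes} with $\mf{q}=\mf{d}$, $f=\on{id}$, $\beta$ trivial — with the multiplicative Manin pair $\big((\mf{d}\oplus\overline{\mf{d}})\times D, (\g\oplus\g)\times D\big)$.

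Second, I would invoke the last sentence of \cref{cor:LACourIntMultCour}: morphisms of $\mc{LA}$-Manin pairs integrate to morphisms of multiplicative Manin pairs. Applying this to $R\times\mu$, and using the two integration identifications just described, yields a morphism of multiplicative Manin pairs
$$K:(\mbb{T}\Gamma, T\Gamma)\dasharrow \big((\mf{d}\oplus\overline{\mf{d}})\times D, (\g\oplus\g)\times D\big),$$
which is \cref{eq:basicInt}. One should note that \cref{cor:LACourIntMultCour} requires the relevant $\mc{LA}$-Courant algebroids to be integrable: $\mbb{T}L$ is integrable precisely because $L$ is assumed to integrate to $\Gamma$ (the standard $\mc{LA}$-Courant algebroid $\mbb{T}A$ is integrable whenever $A$ is, cf. \cref{ex:StdCourInt}), and the target $\mc{LA}$-Courant algebroid $T\mf{d}\times\mf{d}$ is integrable by \cref{ex:DirLieInt}; its integration was constructed explicitly there. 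This verifies the hypotheses, so the integrated morphism $K$ exists and is supported on the graph of the groupoid morphism $\Gamma\to D$ integrating $\mu:L\to\mf{d}$.

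The main obstacle I anticipate is not the existence of $K$ — that is essentially formal once \cref{prop:IntegrationProp} and \cref{cor:LACourIntMultCour} are in hand — but rather making precise the identification of the target. One must check that the $\mc{LA}$-Manin pair $(T\mf{d}\times\mf{d}, T\g\times\mf{d})$ appearing in \cref{prop:IntegrationProp} coincides with the one called $\mbb{A}_{(\mf{d},\g;\h)}$ in \cref{ex:LADirMan}/\cref{ex:LADirMan1} in the special case at hand, so that its integration is exactly the multiplicative Manin pair of \cref{ex:DirLieInt}. This is a matter of unwinding the action-Courant-algebroid descriptions on both sides and matching the anchor maps $\mbf{a}$, using \cref{eq:TngJLift} and \cref{eq:TfPoisMap} with $f=\on{id}$ and $\beta$ trivial; it is routine but needs to be stated carefully. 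I would present this matching as a brief remark rather than a computation, citing \cref{ex:LADirMan1} and \cref{ex:DirLieInt}, and then conclude.

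Finally, for completeness I would observe that this corollary specializes, when $\mf{d}=0$, to the classical statement that an integrable Dirac structure transverse to $TM$ (i.e.\ a Poisson structure) integrates to a symplectic groupoid, recovering the result of Karasev--Weinstein and Mackenzie--Xu discussed in the introduction; and when $\mf{d}=\mf{k}\oplus\overline{\mf{k}}$, $\g=\mf{k}_\Delta$, it recovers \cite[Theorem~2]{LiBland:2010wi}. This places \cref{cor:basicInt} within the integration framework advertised in \cref{chp:Outlook} and confirms that the pseudo-Dirac structure $L$ (as in \cref{ex:qPCotLie}) is the correct infinitesimal object to integrate.
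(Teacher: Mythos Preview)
Your approach is correct and essentially identical to the paper's one-line proof: apply \cref{cor:LACourIntMultCour} to the morphism of $\mc{LA}$-Manin pairs furnished by \cref{prop:IntegrationProp}, using \cref{ex:StdCourInt} and \cref{ex:DirLieInt} to identify the integrated source and target. One small inaccuracy: in your target identification you write ``$\beta$ trivial'', but in fact the relevant specialization is that $\mf{d}$ is already quadratic with $\g$ Lagrangian, so $(\mf{d},\g)$ is itself the Manin pair and hence $\mf{q}=\mf{d}$, $f=\on{id}$; the element $\beta$ in the sense of \cref{ex:LADirMan1} is then the dual of the quadratic form, not zero. This does not affect the argument.
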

\begin{proof}
\labelcref{eq:basicInt} follows from \cref{cor:LACourIntMultCour} by integrating the morphism of $\mathcal{LA}$-Manin pairs \labelcref{eq:IntegrationProp} defined in \cref{prop:IntegrationProp}.
\end{proof}

\begin{corollary}
Let $\h\subset\mf{d}$ be a Lie subalgebra transverse to $\g$, and assume that $\h$ integrates to a closed subgroup $H\subset D$ transverse to $\mu:\Gamma\to D$. Let $\Gamma_H:=\mu^{-1}(H)$, then the restriction of \labelcref{eq:basicInt} to $\Gamma_H$ defines a morphism of multiplicative Manin pairs
$$K\rvert_{H\times\Gamma_H}:(\mbb{T}\Gamma_H,T\Gamma_H)\dasharrow (\mbb{H}_{(\mf{d},\g;\h)}, F_{(\mf{d},\g;\h)}),$$ where $(\mbb{H}_{(\mf{d},\g;\h)}, F_{(\mf{d},\g;\h)})$ is the Dirac Lie group described in \cref{ex:DirLieInt}.
\end{corollary}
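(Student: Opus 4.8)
The plan is to obtain the restricted morphism by composing the integrated morphism from \cref{cor:basicInt} with the pull-back construction of \cref{sec:pullback}, running in parallel the argument of \cref{ex:LADirMan,ex:DirLieInt} that builds the Dirac Lie group $(\mbb{H}_{(\mf{d},\g;\h)},F_{(\mf{d},\g;\h)})$ out of the ambient multiplicative Manin pair $\big((\mf{d}\oplus\overline{\mf{d}})\times D,(\g\oplus\g)\times D\big)$. First I would note that since $H\subset D$ is a closed subgroup transverse to $\mu:\Gamma\to D$, the preimage $\Gamma_H=\mu^{-1}(H)$ is a smooth (wide, but not base-fixing) Lie subgroupoid of $\Gamma$, and transversality guarantees that $\mbb{T}\Gamma_H\to \Gamma_H$ is the pull-back Courant algebroid $j^!(\mbb{T}\Gamma)$ along the inclusion $j:\Gamma_H\to\Gamma$ (invoking \cref{def:CAPullback}, since the anchor of $\mbb{T}\Gamma$ is surjective so certainly transverse to $dj$). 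Likewise $T\Gamma_H\subset \mbb{T}\Gamma_H$ is the pull-back of the Dirac structure $T\Gamma$, as in \cref{ex:StdDiracStr}. On the target side, the inclusion $i:H\to D$ is transverse to the anchor of $(\mf{d}\oplus\overline{\mf{d}})\times D$ (the action has all of $TD$ in its image pointwise by the Cartan-Dirac description in \cref{ex:CartanDirac}), so $\mbb{H}_{(\mf{d},\g;\h)}:=i^!\big((\mf{d}\oplus\overline{\mf{d}})\times D\big)$ and $F_{(\mf{d},\g;\h)}:=\big((\g\oplus\g)\times D\big)\circ P_i$ are exactly as in \cref{ex:LADirMan,ex:DirLieInt}.

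Next I would produce the morphism itself as a composition of the three Courant morphisms
\begin{equation*}
K\rvert_{H\times\Gamma_H}:=P_i^{\top}\circ K\circ P_j,
\end{equation*}
where $P_j:\mbb{T}\Gamma_H=j^!(\mbb{T}\Gamma)\dasharrow \mbb{T}\Gamma$ is the canonical Courant morphism of \cref{eq:pphi} attached to the pull-back, $K$ is the morphism of multiplicative Manin pairs from \cref{cor:basicInt}, and $P_i:\mbb{H}_{(\mf{d},\g;\h)}\dasharrow (\mf{d}\oplus\overline{\mf{d}})\times D$ is the corresponding canonical morphism on the target side, whose transpose appears because we are restricting the \emph{target} rather than pushing forward. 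By \cref{BurszLem2} (applied on each side: with $\mbb{F}=\mbb{T}\Gamma$, $\phi=j$ on the source, and with $\mbb{F}=(\mf{d}\oplus\overline{\mf{d}})\times D$, $\phi=i$ on the target), both $P_j$ and $P_i$ are morphisms of Manin pairs once we equip $\mbb{T}\Gamma_H$ with $K\rvert_{\gr(j)}$-restricted Dirac data; here transversality of $\mu$ to $H$ is precisely what makes the relevant compositions $K\circ P_j$ and $P_i^\top\circ(-)$ clean, so that \cite[Proposition~1.4]{LiBland:2011vqa} (recalled in \cref{sec:CARel}) guarantees the composite is again a Courant relation, and conditions m1), m2) of \cref{def:MorphMP} are preserved under clean composition of morphisms of Manin pairs. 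Multiplicativity is inherited because all four pieces ($\Gamma_H\subset\Gamma$, $H\subset D$, and the $\mc{VB}$-subgroupoid structures) are subgroupoids, so each of $P_j$, $K$, $P_i$ is a $\mc{VB}$-subgroupoid of the appropriate product, hence so is their composition, which is the content of \cref{def:multDef}.

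The cleanest way to organize the proof, though, is to avoid re-proving cleanness by hand and instead differentiate: the statement is the integrated shadow of the infinitesimal fact already established, namely that the morphism of $\mc{LA}$-Manin pairs $R\times\mu$ of \cref{prop:IntegrationProp} restricts, via the pull-back along $i:\h\to\mf{d}$ described in \cref{ex:LADirMan}, to a morphism of $\mc{LA}$-Manin pairs
\begin{equation*}
\big(\mbb{T} L_H,TL_H^{flip}\big)\dasharrow \big(\mbb{A}_{(\mf{d},\g;\h)},E_{(\mf{d},\g;\h)}\big),
\end{equation*}
where $L_H=\mu^{-1}(\h)\subseteq L$. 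One then applies \cref{cor:LACourIntMultCour}, which says morphisms of $\mc{LA}$-Manin pairs integrate to morphisms of multiplicative Manin pairs, together with \cref{ex:DirLieInt}, which identifies the integration of $(\mbb{A}_{(\mf{d},\g;\h)},E_{(\mf{d},\g;\h)})$ with $(\mbb{H}_{(\mf{d},\g;\h)},F_{(\mf{d},\g;\h)})$. Since source-simply-connected integration is functorial and compatible with pull-back along the morphism $i:H\to D$ integrating $i:\h\to\mf{d}$, the integrated morphism is exactly the restriction $K\rvert_{H\times\Gamma_H}$ of \cref{eq:basicInt} to $\Gamma_H=\mu^{-1}(H)$, which is what the Corollary asserts.

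I expect the main obstacle to be verifying the cleanness of the compositions involving $P_j$, $K$ and $P_i^\top$ — i.e. that $\mu:\Gamma\to D$ being transverse to $H\subset D$ really does force the diamond $S'\diamond S$ of \cref{eq:intersection} to be a clean intersection at the level of the supporting submanifolds and of the Dirac structures simultaneously. This is where the transversality hypothesis does all the work: it ensures $\Gamma_H$ is a submanifold of the expected dimension, that $\mbb{T}\Gamma_H$ has the expected rank as a pull-back Courant algebroid, and that the ranks add up so the set-theoretic composition is already a smooth subbundle of the right rank with support on $\gr(\mu\rvert_{\Gamma_H})$. Once cleanness is in hand, everything else — that the result is Lagrangian, involutive, a $\mc{VB}$-subgroupoid, and satisfies m1)–m2) — follows formally from the cited propositions, so I would keep the cleanness verification as the single substantive lemma and treat the rest as bookkeeping.
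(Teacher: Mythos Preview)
Your proposal is correct, and in fact considerably more detailed than the paper, which states this corollary without proof, treating it as an immediate consequence of restricting \cref{eq:basicInt} along the inclusions $j:\Gamma_H\hookrightarrow\Gamma$ and $i:H\hookrightarrow D$ via the pull-back construction of \cref{sec:pullback} (together with \cref{ex:DirLieInt} identifying the target).

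One small caveat on your first approach: your invocation of \cref{BurszLem2} ``on each side'' is not quite literal, since that lemma is stated specifically for morphisms out of $(\mbb{T}M,TM)$, not for arbitrary Manin pairs; what you actually need on the source side is just that $P_j:(\mbb{T}\Gamma_H,T\Gamma_H)\dasharrow(\mbb{T}\Gamma,T\Gamma)$ is the standard lift $R_j$ of the embedding $j$ (cf.\ \cref{ex:StdDiracStr}), which is automatically a morphism of Manin pairs, and on the target side the content of \cref{BurszLem2} applied to the composite $K\circ P_j$ over $\mu\rvert_{\Gamma_H}:\Gamma_H\to D$, followed by the observation that this composite has range inside $H$ so factors through $P_i$. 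Your second approach---restricting the $\mc{LA}$-morphism of \cref{prop:IntegrationProp} along $i:\h\to\mf{d}$ as in \cref{ex:LADirMan} and then integrating via \cref{cor:LACourIntMultCour}---sidesteps this bookkeeping entirely and is the cleanest justification; it is also closest in spirit to how the paper has set things up, since \cref{ex:DirLieInt} already identifies the integration of $(\mbb{A}_{(\mf{d},\g;\h)},E_{(\mf{d},\g;\h)})$ with $(\mbb{H}_{(\mf{d},\g;\h)},F_{(\mf{d},\g;\h)})$.
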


\begin{remark}
In the special case where $\h\subseteq\mf{d}$ is a Lagrangian Lie subalgebra, $(\mbb{H}_{(\mf{d},\g;\h)}, F_{(\mf{d},\g;\h)})$ is a Poisson Lie group corresponding to the Manin triple $(\mf{d},\g,\h)$. Moreover, $K\rvert_{H\times\Gamma_H}$ describes a multiplicative Hamiltonian action of the Lie bialgebra, $\g$. In particular, one recovers \cite[Theorem 6.2]{Xu95}.

On the other hand, if $\h\subseteq\mf{d}$ is a quadratic ideal, then $\mf{d}\cong\h\oplus\overline{\h}$, $\g\cong\h_\Delta$ is the diagonal subalgebra, and $\h\cong\h\oplus 0$. So $(\mbb{H}_{(\mf{d},\g;\h)}, F_{(\mf{d},\g;\h)})$ is the Cartan Dirac structure (see \cref{ex:CartanDirac}). Thus, $K\rvert_{H\times\Gamma_H}$ describes a multiplicative quasi-Hamiltonian $\h$-structure on $\Gamma$ \cite{Alekseev:2009tg,Bursztyn03-1,Xu03,Bursztyn:2005te,Alekseev97,Behrend03}.
Thus we recover a result found in \cite{LiBland:2010wi}.
\end{remark}

\begin{corollary}
Suppose that $\g\subset \mf{d}$ integrates to a closed subgroup $G$ of $D$, and the right action of $G$ on $\Gamma$ is free and proper. Then the quotient of \labelcref{eq:basicInt} by the right action of $G$ defines a morphism of Manin pairs
$$\big(\mbb{T}(\Gamma/G),T(\Gamma/G)\big)\dasharrow \big( \mf{d}\times (D/G),\g\times(D/G)\big),$$ where $\mf{d}\times (D/G)$ is the natural action Courant algebroid.
\end{corollary}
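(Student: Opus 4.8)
The plan is to obtain this statement by a straightforward quotient/reduction argument applied to the previous corollary, exactly parallel to how the preceding two corollaries were derived. First I would invoke the morphism of multiplicative Manin pairs
\begin{equation*}
K:(\mbb{T}\Gamma,T\Gamma)\dasharrow \big((\mf{d}\oplus\overline{\mf{d}})\times D,(\g\oplus\g)\times D\big)
\end{equation*}
from \cref{cor:basicInt}, which integrates the morphism of $\mc{LA}$-Manin pairs of \cref{prop:IntegrationProp}, and which is supported on the graph of $(t,s):\Gamma\to D\times D$ (that is, on the graph of the map $\mu\times\mu$ composed with the groupoid source/target). The hypothesis that $\g\subset\mf{d}$ integrates to a closed subgroup $G\subseteq D$ with the right $G$-action on $\Gamma$ free and proper means $\Gamma/G$ is a smooth manifold, $\mbb{T}(\Gamma/G)$ is an (exact) Courant algebroid, and $T(\Gamma/G)\subseteq\mbb{T}(\Gamma/G)$ is its tautological Dirac structure; likewise $D/G$ is a homogeneous space, $\mf{d}\times(D/G)$ is the action Courant algebroid associated to the $\mf{d}$-action on $D/G$ (with coisotropic stabilizers since $\g=\g^\perp$, so \cref{ex:ActCourAlg} applies), and $\g\times(D/G)$ is a Dirac structure inside it.

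The key step is to check that $K$ is equivariant for the appropriate $G$-actions so that it descends to the quotient. The group $G$ acts on $\Gamma$ on the right by groupoid multiplication; correspondingly $T\Gamma$ and $\mbb{T}\Gamma$ carry induced actions by Courant algebroid automorphisms, and $G$ acts on $(\mf{d}\oplus\overline{\mf{d}})\times D$ through the second factor of $\mf{d}\oplus\overline{\mf{d}}$ — i.e. via the right action of $G$ on $D$ combined with the adjoint action on the Lie algebra factor — which is again by automorphisms (this is the action used implicitly in \cref{ex:DirLieInt,ex:CartanDirac}). Because $K$ was built functorially out of tangent prolongations, the canonical morphism $R_1$ of \cref{lem:BurszLem1}, and the pullback morphism $P_\ast$ — all of which are natural — $K$ intertwines these two $G$-actions. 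Granting this, $K$ restricts to a $G$-invariant relation, and since the $G$-action on $\Gamma$ (hence on $\mbb{T}\Gamma$) is free and proper, the quotient
\begin{equation*}
K/G:\big(\mbb{T}\Gamma/G,\,T\Gamma/G\big)\dasharrow \big(((\mf{d}\oplus\overline{\mf{d}})\times D)/G,\,((\g\oplus\g)\times D)/G\big)
\end{equation*}
is a morphism of Manin pairs. One then identifies the reduced objects: on the source side $\mbb{T}\Gamma/G\cong\mbb{T}(\Gamma/G)$ and $T\Gamma/G\cong T(\Gamma/G)$ (the reduction of the standard Courant algebroid of a free proper quotient, cf. the examples in \cref{sec:reduc} and \cite{Bursztyn:2007ko}); on the target side, quotienting $(\mf{d}\oplus\overline{\mf{d}})\times D$ by the second-factor copy of $G$ collapses $D$ to $D/G$ and, because the stabilizers of the $\mf{d}$-action on $D/G$ are Lagrangian, the surviving Courant algebroid is precisely the action Courant algebroid $\mf{d}\times(D/G)$, with $(\g\oplus\g)\times D$ descending to $\g\times(D/G)$. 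This yields the asserted morphism of Manin pairs.

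The main obstacle — and the one place where care is genuinely needed — is the identification of the quotient of $(\mf{d}\oplus\overline{\mf{d}})\times D$ by the (diagonal-on-$D$, second-factor-on-$\mf{d}$) $G$-action with the action Courant algebroid $\mf{d}\times(D/G)$, together with the matching claim for the Dirac structures. Concretely one must show that the distribution spanned by the infinitesimal $\g$-action inside $(\mf{d}\oplus\overline{\mf{d}})\times D$ is coisotropic, that its orthogonal reduction (in the sense of the coisotropic reduction \cref{sec:pullback}, or equivalently the $\mc{VB}$/$\mc{LA}$-reduction of \cref{prop:LAred}) produces $\mf{d}\times(D/G)$ with the expected bracket, and that under this identification the image of $(\g\oplus\g)\times D$ is the constant subbundle $\g\times(D/G)$. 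This is essentially a Lie-algebraic computation using $\mf{d}=\g\oplus\g^{\perp}$-type decompositions along the fibres of $D\to D/G$ and the invariance of the metric; I would handle it by writing the reduction as a clean composition of Courant relations $\big((\mf{d}\oplus\overline{\mf{d}})\times D\big)\dasharrow \big(\mf{d}\times(D/G)\big)$ and appealing to \cref{prop:LAred} or the pullback formalism rather than computing brackets by hand. Once that identification is in place, the remaining verifications (that $K/G$ satisfies m1) and m2) of \cref{def:MorphMP}) follow automatically from the corresponding properties of $K$, since passing to a free proper quotient preserves transversality of kernels and containment of images.
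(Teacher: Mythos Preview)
The paper states this corollary without proof, treating it as immediate from \cref{cor:basicInt} together with the reduction machinery of \cref{sec:reduc}; your outline is precisely the implied approach.

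One point needs tightening. The identifications you write as $\mbb{T}\Gamma/G\cong\mbb{T}(\Gamma/G)$ and $T\Gamma/G\cong T(\Gamma/G)$ are false as naive vector-bundle quotients: the ranks differ by $2\dim G$ and $\dim G$ respectively (indeed $T\Gamma/G$ is the Atiyah algebroid of $\Gamma\to\Gamma/G$, not $T(\Gamma/G)$). Your parenthetical ``the reduction of the standard Courant algebroid'' shows you understand that what is really meant is coisotropic reduction: take the isotropic subbundle $K\subset\mbb{T}\Gamma$ spanned by the infinitesimal $\g$-generators, form $K^\perp/K$, and then quotient by $G$ to obtain $\mbb{T}(\Gamma/G)$. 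The same distinction applies on the target side, where it is the coisotropic reduction of $(\mf{d}\oplus\overline{\mf{d}})\times D$ along $K=0\oplus\g$ (with $K^\perp=\mf{d}\oplus\g$ since $\g$ is Lagrangian, hence $K^\perp/K\cong\mf{d}$), followed by the $G$-quotient on $D$, that yields $\mf{d}\times(D/G)$. The paper's own phrase ``the quotient of \labelcref{eq:basicInt} by the right action of $G$'' uses ``quotient'' in this same loose sense, so you are in good company, but in a written-out proof the distinction should be made explicit. With that correction in place your argument is sound.
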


\begin{remark}
The Manin pair $\big( \mf{d}\times (D/G),\g\times (D/G)\big)$ was first introduced in \cite{Alekseev:2002tn,LetToWein}. In \cite{Bursztyn:2009vq,Bursztyn:2009wi} morphisms of Manin pairs taking values in $\big( \mf{d}\times (D/G),\g\times (D/G)\big)$ were studied, and shown to be equivalent to the Hamiltonian spaces described in \cite{Alekseev99}. Thus, this result can be interpreted as the analogue of \cite[Theorem 6.2]{Xu95} for the case where one has a Manin pair rather than a Manin triple.
\end{remark}

\begin{appendix}
\chapter{Technical proof for Courant algebroids}
We have the following Proposition, which we will use on occasion to define Courant algebroids.
\begin{proposition}\label{prop:CAconst}
Let $\mbb{E}\to M$ be a vector bundle, $\mbf{a}\colon \mbb{E}\to TM$ be a bundle map, $\la\cdot,\cdot\ra$ a bundle metric on $\mbb{E}$, and let $W\subseteq\Gamma(\mbb{E})$ be a subspace of sections which generates $\Gamma(\mbb{E})$ as a $C^\infty(M)$ module. Suppose that $\Cour{\cdot,\cdot}:W\to W$ is a bracket which satisfies
\begin{enumerate}
\item[c1)] $\Cour{\sigma_1,\Cour{\sigma_2,\sigma_3}}=\Cour{\Cour{\sigma_1,\sigma_2},\sigma_3}
+\Cour{\sigma_2,\Cour{\sigma_1,\sigma_3}}$, 
\item[c2)] $\mbf{a}(\sigma_1)\la \sigma_2,\sigma_3\ra=\la \Cour{\sigma_1,\sigma_2},\,\sigma_3\ra+\la \sigma_2,\,\Cour{\sigma_1,\sigma_3}\ra$,
\item[c3)] $\Cour{\sigma_1,\sigma_2}+\Cour{\sigma_2,\sigma_1}=\mbf{a}^*(d \la \sigma_1,\sigma_2\ra)$,
\item[c6)] $\mbf{a}(\Cour{\sigma_1,\sigma_2})=[\mbf{a}(\sigma_1),\mbf{a}(\sigma_2)]$,
\end{enumerate}
for any $\sigma_i\in W$, and that $\mbf{a}\circ\mbf{a}^*=0$. Then there is a unique extension of $\Cour{\cdot,\cdot}$ to a Courant bracket on all of $\Gamma(\mbb{E})$.
\end{proposition}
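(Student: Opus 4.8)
The plan is to extend the bracket from $W$ to all of $\Gamma(\mbb{E})$ by forcing the Leibniz rules c4) and c5) to hold, and then to check that the resulting bracket satisfies all the axioms. Concretely, since $W$ generates $\Gamma(\mbb{E})$ over $C^\infty(M)$, every section is a finite sum $\sum_i f_i\sigma_i$ with $\sigma_i\in W$, $f_i\in C^\infty(M)$. First I would \emph{define} $\Cour{\sigma_1,f\sigma_2}:=f\Cour{\sigma_1,\sigma_2}+(\mbf{a}(\sigma_1)f)\sigma_2$ for $\sigma_1,\sigma_2\in W$, and then extend in the first slot using the formula dictated by c3), namely $\Cour{f\sigma_1,\sigma_2}:=\mbf{a}^*d\la f\sigma_1,\sigma_2\ra-\Cour{\sigma_2,f\sigma_1}$; expanding with c3) for elements of $W$ shows this equals $f\Cour{\sigma_1,\sigma_2}-(\mbf{a}(\sigma_2)f)\sigma_1+\la\sigma_1,\sigma_2\ra\mbf{a}^*df$, i.e. c5). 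Bilinearity over sums then gives a well-defined $\Cour{\cdot,\cdot}$ on $\Gamma(\mbb{E})\times\Gamma(\mbb{E})$, \emph{provided} one checks the definition is independent of how a section is written as a $C^\infty(M)$-combination of elements of $W$ — this consistency check is where one uses that c4) and c5) are compatible with each other and with the $C^\infty(M)$-module relations, and it will require the identity $\mbf{a}\circ\mbf{a}^*=0$.

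Next I would verify the axioms c1)--c3) (and c6)) for the extended bracket. The strategy throughout is the standard one: both sides of each identity are, after fixing all but one argument, either $C^\infty(M)$-linear or first-order differential operators in the remaining argument, so it suffices to verify each identity when all arguments lie in $W$ (where it holds by hypothesis) and then to track the correction terms that appear upon multiplying an argument by a function $f\in C^\infty(M)$. For c2), a direct computation using c4), c5) and the Leibniz rule for $\mbf{a}(\sigma_1)$ acting on $f\la\sigma_2,\sigma_3\ra$ reduces the $f$-twisted case to the untwisted one together with $\mbf{a}(\mbf{a}^*(df))=0$. For c3) the extension was \emph{designed} to satisfy it on $W$ and the module extension preserves it by a short calculation. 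Axiom c6) extends similarly, using $[\mbf{a}(\sigma_1),f\mbf{a}(\sigma_2)]=f[\mbf{a}(\sigma_1),\mbf{a}(\sigma_2)]+(\mbf{a}(\sigma_1)f)\mbf{a}(\sigma_2)$ and $\mbf{a}\circ\mbf{a}^*=0$.

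The main obstacle — and the step I expect to take the most care — is the Jacobi/Leibniz identity c1) for the extended bracket. Here one must handle three separate twistings (multiplying $\sigma_1$, $\sigma_2$, or $\sigma_3$ by a function), and the bracket $\Cour{f\sigma_1,\cdot}$ is genuinely more complicated than $\Cour{\cdot,f\sigma_1}$ because of the $\la\sigma_1,\sigma_2\ra\mbf{a}^*df$ term in c5). The cleanest route is to first establish the derived consequences c4), c5), c6) for the extension (as above), and then prove c1) by induction on the number of functional factors, using c4) and c5) to peel functions off and c2), c3), c6) to reorganize the lower-order terms; the cross terms involving $\mbf{a}^*d(\cdot)$ cancel precisely because of $\mbf{a}\circ\mbf{a}^*=0$, which guarantees $\Cour{\mbf{a}^*df,\cdot}$ and $\Cour{\cdot,\mbf{a}^*df}$ behave as expected (indeed $\mbf{a}^*$ of an exact form pairs trivially under $\mbf{a}$). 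Uniqueness is immediate: any Courant bracket extending $\Cour{\cdot,\cdot}|_W$ must satisfy c4) and c5), which force the formulas above on $C^\infty(M)$-combinations of elements of $W$, and such combinations exhaust $\Gamma(\mbb{E})$.
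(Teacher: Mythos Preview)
Your plan is essentially the paper's. The one place where the paper is sharper is the well-definedness step: rather than checking directly that two $C^\infty(M)$-representations of a section give the same answer, the paper pairs the putative bracket against an arbitrary $\tau\in W$ and uses c2) on $W$ to show that $\la\Cour{\sigma,f^i\sigma_i},\tau\ra$ depends only on the section $f^i\sigma_i$, not on its decomposition; c3) then transfers this to the first slot. In particular, well-definedness uses only c2) and c3), not $\mbf{a}\circ\mbf{a}^*=0$ as you suggest. The paper also isolates the intermediate identity $\Cour{\sigma_1,\mbf{a}^*df}=\mbf{a}^*d(\mbf{a}(\sigma_1)f)$ (derived from the extended c2) together with c6) on $W$) before attacking c1); this is exactly the lemma that makes the cross-terms you anticipate collapse when you twist the middle argument. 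Your outline of the Jacobi verification is otherwise accurate.
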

\begin{lemma}\label{lem:TechCAconst}
Suppose the assumptions of \cref{prop:CAconst} are satisfied, and for some $f^i\in C^\infty(M)$, $\sigma_i\in W$, the section $f^i\sigma_i\in W$ (where we use Einstein's summation convention). Then for any $\sigma\in W$, we have
\begin{align}
\label{eq:TechCAconst1}\Cour{\sigma,f^i\sigma_i}&=f^i\Cour{\sigma,\sigma_i}+(\mbf{a}(\sigma)f^i)\sigma_i,\text{ and }\\
\label{eq:TechCAconst2}\Cour{f^i\sigma_i,\sigma}&=f^i\Cour{\sigma_i,\sigma}-(\mbf{a}(\sigma)f^i)\sigma_i+\la\sigma_i,\sigma\ra\mbf{a}^*df^i.
\end{align}
In other words, $\Cour{\cdot,\cdot}$ extends to a unique bilinear map from $\Gamma(\mbb{E})\times\Gamma(\mbb{E})$ to $\Gamma(\mbb{E})$ satisfying conditions 
\begin{enumerate}
\item[c4)] $\Cour{\sigma_1,f\sigma_2}=f\Cour{\sigma_1,\sigma_2}+\mbf{a}(\sigma_1)(f)\sigma_2$, and
\item[c5)] $\Cour{f\sigma_1,\sigma_2}=f\Cour{\sigma_1,\sigma_2}-\mbf{a}(\sigma_2)(f)\sigma_1+\la \sigma_1,\sigma_2\ra \mbf{a}^*(d f)$ 
\end{enumerate} from \cref{def:CA}.

\end{lemma}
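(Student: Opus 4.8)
\textbf{Proof proposal for Lemma~\ref{lem:TechCAconst}.}

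The plan is to verify the two identities \labelcref{eq:TechCAconst1} and \labelcref{eq:TechCAconst2} directly, pairing both sides against an arbitrary test section and using the non-degeneracy of the metric to conclude. More precisely, since $W$ generates $\Gamma(\mbb{E})$ over $C^\infty(M)$, it suffices to check that for every $\tau\in W$ the pairing $\la \Cour{\sigma, f^i\sigma_i}, \tau\ra$ agrees with the pairing of $\tau$ against the proposed right-hand side $f^i\Cour{\sigma,\sigma_i}+(\mbf{a}(\sigma)f^i)\sigma_i$, and similarly for \labelcref{eq:TechCAconst2}. First I would compute $\la \Cour{\sigma, f^i\sigma_i},\tau\ra$ by applying axiom (c2) with the first slot $\sigma$: this rewrites it as $\mbf{a}(\sigma)\la f^i\sigma_i,\tau\ra - \la f^i\sigma_i, \Cour{\sigma,\tau}\ra$. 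Expanding $\mbf{a}(\sigma)(f^i\la\sigma_i,\tau\ra)$ by the Leibniz rule for vector fields produces $(\mbf{a}(\sigma)f^i)\la\sigma_i,\tau\ra + f^i\,\mbf{a}(\sigma)\la\sigma_i,\tau\ra$, and applying (c2) again to $f^i\,\mbf{a}(\sigma)\la\sigma_i,\tau\ra$ gives $f^i\la\Cour{\sigma,\sigma_i},\tau\ra + f^i\la\sigma_i,\Cour{\sigma,\tau}\ra$. The terms $\pm f^i\la\sigma_i,\Cour{\sigma,\tau}\ra$ cancel, leaving exactly $f^i\la\Cour{\sigma,\sigma_i},\tau\ra + (\mbf{a}(\sigma)f^i)\la\sigma_i,\tau\ra = \la f^i\Cour{\sigma,\sigma_i}+(\mbf{a}(\sigma)f^i)\sigma_i,\,\tau\ra$. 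Since this holds for all $\tau\in W$ and $W$ spans $\Gamma(\mbb{E})$ over $C^\infty(M)$ (and the metric is non-degenerate), \labelcref{eq:TechCAconst1} follows.

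For \labelcref{eq:TechCAconst2}, I would use (c3) to write $\Cour{f^i\sigma_i,\sigma} = -\Cour{\sigma, f^i\sigma_i} + \mbf{a}^*\big(d\la f^i\sigma_i,\sigma\ra\big)$. The first summand is handled by \labelcref{eq:TechCAconst1} just proved. For the second, expand $d\la f^i\sigma_i,\sigma\ra = d(f^i\la\sigma_i,\sigma\ra) = \la\sigma_i,\sigma\ra\, df^i + f^i\, d\la\sigma_i,\sigma\ra$, apply $\mbf{a}^*$, and then use (c3) again in the form $f^i\,\mbf{a}^*d\la\sigma_i,\sigma\ra = f^i\Cour{\sigma_i,\sigma} + f^i\Cour{\sigma,\sigma_i}$. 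Collecting terms, the $f^i\Cour{\sigma,\sigma_i}$ contributions cancel and one is left with $f^i\Cour{\sigma_i,\sigma} - (\mbf{a}(\sigma)f^i)\sigma_i + \la\sigma_i,\sigma\ra\,\mbf{a}^*df^i$, which is \labelcref{eq:TechCAconst2}. Finally, to get the last sentence of the lemma: given any two sections of $\Gamma(\mbb{E})$, write each as a $C^\infty(M)$-combination of elements of $W$; the identities \labelcref{eq:TechCAconst1} and \labelcref{eq:TechCAconst2} (applied with the roles of the fixed $W$-section and the combination as needed) show that the value of $\Cour{\cdot,\cdot}$ on such combinations is forced by (c4) and (c5), hence the extension is unique and well defined, and by construction it satisfies (c4) and (c5). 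One should note that consistency (independence of the chosen presentation of a section as a $C^\infty(M)$-combination) is exactly what \labelcref{eq:TechCAconst1}, \labelcref{eq:TechCAconst2} guarantee: whenever $f^i\sigma_i$ happens to already lie in $W$, the formula forced by (c4)/(c5) agrees with the given bracket on $W$.

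The only real subtlety — the step I expect to be the main obstacle — is bookkeeping the well-definedness of the extension: one must make sure that if a section $\eta$ admits two different expressions $f^i\sigma_i = g^j\tau_j$ with $\sigma_i,\tau_j\in W$, the resulting formulas for $\Cour{\rho,\eta}$ and $\Cour{\eta,\rho}$ agree. This is precisely where \labelcref{eq:TechCAconst1} and \labelcref{eq:TechCAconst2} do the work: they assert that the right-hand-side expressions depend only on $\eta$ (through $\mbf{a}(\rho)$ acting on it and the pairing), not on the chosen coefficients, because we proved them as genuine identities in $\Gamma(\mbb{E})$ valid whenever $f^i\sigma_i$ lies in $W$. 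Everything else is a routine application of axioms (c2) and (c3) together with the Leibniz rule for the action of vector fields on functions, and the argument never uses (c1) or (c6), which are only needed later to check that the extended bracket still satisfies the full list of Courant algebroid axioms.
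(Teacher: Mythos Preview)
Your proof is correct and follows essentially the same approach as the paper's: pair $\Cour{\sigma,f^i\sigma_i}$ against an arbitrary $\tau\in W$, apply (c2) twice together with the Leibniz rule to obtain \labelcref{eq:TechCAconst1}, and then derive \labelcref{eq:TechCAconst2} from \labelcref{eq:TechCAconst1} via (c3). Your write-up is in fact more detailed than the paper's (which dispatches \labelcref{eq:TechCAconst2} in a single sentence), and your additional paragraph on well-definedness of the extension is a helpful clarification that the paper leaves implicit.
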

\begin{proof}
Let $\tau$ be an arbitrary element of $W$. Then, by (c2), we have
$$\mbf{a}(\sigma)\la f^i\sigma_i,\tau\ra=\la\Cour{\sigma,f^i\sigma_i},\tau\ra+\la f^i\sigma_i,\Cour{\sigma,\tau}\ra.$$ 
Applying the Leibniz rule and then (c2) to the left hand side, we get 
\begin{multline*}\mbf{a}(\sigma)\la f^i\sigma_i,\tau\ra=\la(\mbf{a}(\sigma)f^i)\sigma_i,\tau\ra+f^i\mbf{a}(\sigma)\la \sigma_i,\tau\ra
\\=\la(\mbf{a}(\sigma)f^i)\sigma_i,\tau\ra+f_i\la\Cour{\sigma,\sigma_i},\tau\ra+f_i\la\sigma_i,\Cour{\sigma,\tau}\ra.\end{multline*} Taking the difference of these two equations and rearranging, we obtain
$$ \la\Cour{\sigma,f^i\sigma_i},\tau\ra=\la f_i\Cour{\sigma,\sigma_i}+(\mbf{a}(\sigma)f^i)\sigma_i,\tau\ra.$$ And \labelcref{eq:TechCAconst1} follows since $\tau\in W$ was arbitrary. Finally, \labelcref{eq:TechCAconst2} follows from \labelcref{eq:TechCAconst1} and (c3).
\end{proof}
\begin{proof}[Proof of Proposition]
Uniqueness follows from \cref{lem:TechCAconst}. It remains to prove that the unique extension of $\Cour{\cdot,\cdot}$ to all of $\Gamma(\mbb{E})$ defined in \cref{lem:TechCAconst} satisfies the axioms of \cref{def:CA}.

First we show that (c2) is satisfied for arbitrary sections of $\mbb{E}$. Indeed let $f\in C^\infty(M)$, and $\sigma_i\in W$. Then
\begin{subequations}
\begin{align}
\notag&\la\Cour{f\sigma_1,\sigma_2},\sigma_3\ra+\la\sigma_2,\Cour{f\sigma_1,\sigma_3}\ra\\
\label{eq:c2a1}=&\la f\Cour{\sigma_1,\sigma_2}-(\mbf{a}(\sigma_2)f)\sigma_1+\la\sigma_1,\sigma_2\ra\mbf{a}^*df,\sigma_3\ra\\
\notag&+\la\sigma_2, f\Cour{\sigma_1,\sigma_3}-(\mbf{a}(\sigma_3)f)\sigma_1+\la\sigma_1,\sigma_3\ra\mbf{a}^*df\ra\\
\notag=&\la f\Cour{\sigma_1,\sigma_2},\sigma_3\ra+\la\sigma_2, f\Cour{\sigma_1,\sigma_3}\ra\\
\label{eq:c2a2}=&f\mbf{a}(\sigma_1)\la\sigma_2,\sigma_3\ra
\end{align}
\end{subequations}
where \labelcref{eq:c2a1} follows by (c5) and \labelcref{eq:c2a2} follows since (c2) holds for elements of $W$. With similar manipulations, one shows that $$\mbf{a}(\sigma_1)\la f\sigma_2,\sigma_3\ra=\la\Cour{\sigma_1,f\sigma_2},\sigma_3\ra+\la f\sigma_2,\Cour{\sigma_1,\sigma_3}\ra.$$ Therefore, c2) holds for arbitrary elements $\sigma_i\in \Gamma(\mbb{E})$.


Next we show that (c3) holds for arbitrary sections of $\mbb{E}$. Let $f\in C^\infty(M)$, and $\sigma_i\in W$. Then, 
\begin{subequations}
\begin{align}
\label{eq:c31}\Cour{f\sigma_1,\sigma_2}+\Cour{\sigma_2,f\sigma_1}=&f\Cour{\sigma_1,\sigma_2}-\big(\mbf{a}(\sigma_2)f\big)\sigma_1+\la\sigma_1,\sigma_2\ra\mbf{a}^*df+f\Cour{\sigma_2,\sigma_1}+\big(\mbf{a}(\sigma_2)f\big)\sigma_1\\
\label{eq:c32}=&f\mbf{a}^*d\la\sigma_1,\sigma_2\ra+\la\sigma_1,\sigma_2\ra\mbf{a}^*df\\
\notag=&\mbf{a}^*d\la f\sigma_1,\sigma_2\ra,
\end{align}\end{subequations}
where \labelcref{eq:c31} follows from (c4) and (c5), and \labelcref{eq:c32} follows since (c3) holds for elements of $W$.

Finally, we show that (c1) holds for arbitrary sections. As before, let $f\in C^\infty(M)$, and $\sigma_i\in W$. Then (c4) implies that
$$\Cour{\sigma_i,\Cour{\sigma_j,f\sigma_k}}=f\Cour{\sigma_i,\Cour{\sigma_j,\sigma_k}}+(\mbf{a}(\sigma_i)f)\Cour{\sigma_j,\sigma_k}+(\mbf{a}(\sigma_j)f)\Cour{\sigma_i,\sigma_k}+\big(\mbf{a}(\sigma_i)\mbf{a}(\sigma_j)f\big)\sigma_k.$$
Meanwhile, (c4) and (c6) imply
$$\Cour{\Cour{\sigma_1,\sigma_2},f\sigma_3}=f\Cour{\Cour{\sigma_1,\sigma_2},\sigma_3}+([\mbf{a}(\sigma_1),\mbf{a}(\sigma_2)]f)\sigma_3.$$
Combining these equations, we get
\begin{equation}\label{eq:c1case3}
\begin{split}
&\Cour{\sigma_1,\Cour{\sigma_2,f\sigma_3}}-\Cour{\Cour{\sigma_1,\sigma_2},f\sigma_3}-\Cour{\sigma_2,\Cour{\sigma_1,f\sigma_3}}\\
=&f\Cour{\sigma_1,\Cour{\sigma_2,\sigma_3}}+(\mbf{a}(\sigma_1)f)\Cour{\sigma_2,\sigma_3}+(\mbf{a}(\sigma_2)f)\Cour{\sigma_1,\sigma_3}+\big(\mbf{a}(\sigma_1)\mbf{a}(\sigma_2)f\big)\sigma_3\\
&-f\Cour{\Cour{\sigma_1,\sigma_2},\sigma_3}-([\mbf{a}(\sigma_1),\mbf{a}(\sigma_2)]f)\sigma_3\\
&-f\Cour{\sigma_2,\Cour{\sigma_1,\sigma_3}}-(\mbf{a}(\sigma_2)f)\Cour{\sigma_1,\sigma_3}-(\mbf{a}(\sigma_1)f)\Cour{\sigma_2,\sigma_3}-\big(\mbf{a}(\sigma_2)\mbf{a}(\sigma_1)f\big)\sigma_3\\
=&f\Cour{\sigma_1,\Cour{\sigma_2,\sigma_3}}-f\Cour{\Cour{\sigma_1,\sigma_2},\sigma_3}-f\Cour{\sigma_2,\Cour{\sigma_1,\sigma_3}}\\
=&0
\end{split}\end{equation}
Where the last line follows since (c1) holds for sections of $W$.

Next, suppose $\sigma_i\in W$ and $f\in C^\infty(M)$. By assumption, we have
$[\mbf{a}(\sigma_1),\mbf{a}(\sigma_2)]f=\la \Cour{\sigma_1,\sigma_2},\mbf{a}^*df\ra$. Since (c2) holds for arbitrary sections of $\mbb{E}$, we have
$$[\mbf{a}(\sigma_1),\mbf{a}(\sigma_2)]f = \mbf{a}(\sigma_1)\la\sigma_2,\mbf{a}^*df\ra-\la\sigma_2,\Cour{\sigma_1,\mbf{a}^*df}\ra.$$ Thus $$\la\sigma_2,\Cour{\sigma_1,\mbf{a}^*df}\ra=\mbf{a}(\sigma_1)\mbf{a}(\sigma_2)f-[\mbf{a}(\sigma_1),\mbf{a}\sigma_2]f=\la\sigma_2,\mbf{a^*}d(\mbf{a}(\sigma_1)f)\ra.$$ Since $\sigma_2\in W$ was arbitrary, we have \begin{equation}\label{eq:CAconstTmp}\Cour{\sigma_1,\mbf{a}^*df}=\mbf{a^*}d(\mbf{a}(\sigma_1)f).\end{equation}

Next we see that
\begin{subequations}
\begin{align}
\notag&\Cour{\sigma_1,\Cour{f\sigma_2,\sigma_3}}-\Cour{\Cour{\sigma_1,f\sigma_2},\sigma_3}-\Cour{f\sigma_2,\Cour{\sigma_1,\sigma_3}}\\
\notag=&-\Cour{\sigma_1,\Cour{\sigma_3,f\sigma_2}}+\Cour{\sigma_3,\Cour{\sigma_1,f\sigma_2}}+\Cour{\Cour{\sigma_1,\sigma_3},f\sigma_2}\\
\label{eq:c1case2CAconstTmp}&+\mbf{a}^*d(\mbf{a}(\sigma_1)\la f\sigma_2,\sigma_3\ra-\la\Cour{\sigma_1,f\sigma_2},\sigma_3\ra-\la f\sigma_2,\Cour{\sigma_1,\sigma_3}\ra)\\
\notag=&0.
\end{align}\end{subequations}
Here the first equality follows by applying (c3) three times, and the first term on line \labelcref{eq:c1case2CAconstTmp} follows from \labelcref{eq:CAconstTmp}. The second equality follows from \labelcref{eq:c1case3} and (c2).

Finally, a similar calculation shows that 
$$\Cour{f\sigma_1,\Cour{\sigma_2,\sigma_3}}-\Cour{\Cour{f\sigma_1,\sigma_2},\sigma_3}-\Cour{\sigma_2,\Cour{f\sigma_1,\sigma_3}}=0,$$
which concludes the proof.
\end{proof}

\chapter{Technical proof for double vector bundles}\label{app:dvbSymProof}
This appendix is devoted to the proof of the following.
\begin{proposition}\label{prop:DVBSymB}
Let $D$ be a manifold which is canonically identified with the total space of two vector bundles over manifolds $A\subseteq D$ and $B\subseteq D$, respectively. We let 
\begin{align*}\gr(+_{D/A}):D\times D&\dasharrow D,\\
\gr(+_{D/B}):D\times D&\dasharrow D
\end{align*}
denote the graph of the two additions. Let $s_{(1324)}:D^4\to D^4$ denote the permutation $$s_{(1324)}(d_1,d_2,d_3,d_4)=(d_1,d_3,d_2,d_4),$$  and $M=A\cap B$. 
Then the following are equivalent. 
\begin{enumerate}
\item[E1)] The following diagram of relations commutes
\begin{equation}\label{eq:DVBComDiagRelB}
\begin{tikzpicture}
\mmat[2em]{m}{D^4& &&D^2&&&\\
&&&&&&D\\
D^4&&&D^2&&&\\};
\path [dashed,<->] (m-1-1) edge node [swap] {$\gr(s_{(1324)})$} (m-3-1);
\path [dashed,->] (m-1-1) edge node {$\gr(+_{D/B})^2$} (m-1-4);
\path [dashed,->] (m-3-1) edge node [swap]{$\gr(+_{D/A})^2$} (m-3-4);
\path [dashed,<-] (m-2-7) edge node  {$\gr(+_{D/B})$} (m-3-4)
			edge node [swap] {$\gr(+_{D/A})$} (m-1-4);
\end{tikzpicture}
\end{equation}
\item[E2)] The following diagram   is a double vector bundle
\begin{equation}\label{eq:DVB1B}\begin{tikzpicture}
\mmat{m}{
D&B\\
A&M\\
};
\path[->] (m-1-1) edge node {$q_{D/B}$} (m-1-2);
\path[->] (m-1-1) edge node[swap] {$q_{D/A}$} (m-2-1);
\path[->] (m-1-2) edge node {$q_{B/M}$} (m-2-2);
\path[->] (m-2-1) edge node[swap] {$q_{A/M}$} (m-2-2);
\end{tikzpicture}\end{equation}
where $q_{B/M}:=(q_{D/A})\rvert_B$ and $q_{A/M}:=(q_{D/B})\rvert_A$.
\item[E3)] The following diagram (the `diagonal flip' of \labelcref{eq:DVB1B}) is a double vector bundle
\begin{equation}\label{eq:DVBflipB}\begin{tikzpicture}
\mmat{m}{
D&A\\
B&M\\
};
\path[->] (m-1-1) edge node {$q_{D/A}$} (m-1-2);
\path[->] (m-1-1) edge node[swap] {$q_{D/B}$} (m-2-1);
\path[->] (m-1-2) edge node {$q_{A/M}$} (m-2-2);
\path[->] (m-2-1) edge node[swap] {$q_{B/M}$} (m-2-2);
\end{tikzpicture}\end{equation}
where $q_{B/M}:=(q_{D/A})\rvert_B$ and $q_{A/M}:=(q_{D/B})\rvert_A$.
\end{enumerate}
%
\end{proposition}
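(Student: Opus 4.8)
The plan is to prove the chain of equivalences by reducing everything to the interchange law. The key observation is that \textbf{E1)} is, unwound, precisely the assertion that for all $d_1,d_2,d_3,d_4\in D$ with the appropriate pairs lying over common points in $A$ and $B$, one has
\begin{equation*}
(d_1 +_{D/B} d_2) +_{D/A} (d_3 +_{D/B} d_4) = (d_1 +_{D/A} d_3) +_{D/B} (d_2 +_{D/A} d_4),
\end{equation*}
together with the compatibility of the zero sections and projections implicit in the fact that both sides of \labelcref{eq:DVBComDiagRelB} are honestly defined relations (so that $q_{D/A}$ restricts to $B$ and $q_{D/B}$ restricts to $A$, matching up on $M = A\cap B$). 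So the first step is bookkeeping: carefully spell out what it means for the square of relations \labelcref{eq:DVBComDiagRelB} to commute, tracking the domains of definition (which pairs are composable), and extract from it exactly the interchange identity above plus the statement that $q_{D/A}(B)\subseteq M$, $q_{D/B}(A)\subseteq M$.

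Next I would invoke \cref{cor:AltDVBdef}, which characterizes double vector bundles (in the form \labelcref{eq:DVB1B}) as a commutative square of vector bundles satisfying \cref{eq:DVBinterchg} and the two scalar-compatibility identities. The remark following \cref{cor:AltDVBdef} already notes that \cref{eq:DVBinterchg} \emph{is} the commutativity of \labelcref{eq:DVBComDiagRelB}; so the content of \textbf{E1)} $\Leftrightarrow$ \textbf{E2)} is that the interchange law for addition already forces the two scalar-multiplication compatibility identities. This is the step I expect to be the main obstacle, but it is a known and elementary trick: one recovers scalar multiplication from iterated addition on the fibres where it makes sense, and more robustly, one uses the interchange law to show that each $+_{D/A}$-fibrewise structure map is a morphism for the $+_{D/B}$-bundle structure. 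Concretely, to get $t\cdot_{D/A}(d_1 +_{D/B} d_2) = t\cdot_{D/A} d_1 +_{D/B} t\cdot_{D/A} d_2$ one first proves it for $t$ a nonnegative integer by induction using interchange with repeated copies, then for rationals by the group structure, then for all reals by continuity (smoothness of all structure maps); the compatibility of the two zero sections over $M$ handles $t=0$. I would present this as a short lemma rather than grinding through it.

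Finally, \textbf{E2)} $\Leftrightarrow$ \textbf{E3)} is immediate: the diagram \labelcref{eq:DVBflipB} is obtained from \labelcref{eq:DVB1B} by literally swapping the roles of $A$ and $B$, and the interchange law \cref{eq:DVBinterchg} is symmetric under simultaneously swapping $(+_{D/A},+_{D/B})$ and permuting $(d_2,d_3)$ — which is exactly what conjugating by $s_{(1324)}$ in \labelcref{eq:DVBComDiagRelB} does. So one can either observe that \textbf{E1)} is manifestly invariant under the $A\leftrightarrow B$ swap (the diagram \labelcref{eq:DVBComDiagRelB} maps to itself under this swap composed with the reflection $d_i\mapsto$ transposed tuple), hence \textbf{E2)} and \textbf{E3)} are both equivalent to the same condition \textbf{E1)}; or one runs the \textbf{E1)} $\Leftrightarrow$ \textbf{E2)} argument verbatim with $A$ and $B$ interchanged. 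I would use the symmetry observation, since it is cleaner and makes the ``diagonal flip'' remark after \cref{prop:DVBSym} transparent. Assembling: \textbf{E1)} $\Rightarrow$ \textbf{E2)} and \textbf{E2)} $\Rightarrow$ \textbf{E1)} via \cref{cor:AltDVBdef} and the scalar-compatibility lemma, then \textbf{E1)} $\Leftrightarrow$ \textbf{E3)} by the $A\leftrightarrow B$ symmetry of \textbf{E1)} together with the already-established \textbf{E1)} $\Leftrightarrow$ \textbf{E2)} applied to the flipped data.
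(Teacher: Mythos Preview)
Your proposal has a circularity: you invoke \cref{cor:AltDVBdef} to prove \textbf{E1)}~$\Leftrightarrow$~\textbf{E2)}, but in the paper \cref{cor:AltDVBdef} is stated explicitly \emph{as a consequence of} \cref{prop:DVBSym} (together with the cited result of Gracia-Saz--Mehta). Since \cref{prop:DVBSymB} is the appendix restatement of \cref{prop:DVBSym}, you are assuming what you want to prove. Even if one could establish \cref{cor:AltDVBdef} independently, your plan glosses over a genuine obligation: to conclude \textbf{E2)} in the sense of \cref{def:DLACAVB} you must show that $B\to M$ is a vector bundle and that \labelcref{eq:DVB1B} is a morphism of vector bundles. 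Your remark that this is ``implicit in the fact that both sides are honestly defined relations'' does not do this work; the hypotheses only give you vector bundles $D\to A$ and $D\to B$, not a vector bundle structure on $B$ over $M=A\cap B$.

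The paper's route is quite different and avoids the scalar-compatibility detour entirely. It first proves a lemma that \textbf{E1)} is equivalent to the single condition that $\gr(+_{D/A})\subseteq D^3$ is a vector subbundle of $D^3\to B^3$ (the argument: the interchange law makes this subset closed under $\pm_{D/B}$, and a closed connected subset closed under addition and subtraction in a vector bundle is a subbundle). Then for \textbf{E1)}~$\Rightarrow$~\textbf{E2)} it works with relation compositions: one identifies $B$ as $\gr(-_{D/B})\circ D_\Delta$, which is a composition of $\mc{VB}$-relations over $A$ and hence a vector subbundle of $D\to A$; similarly $\gr(-_{D/B})\circ\gr(\Delta_D)=\gr(0_{D/B}\circ q_{D/B})$ exhibits $q_{D/B}$ as a morphism of vector bundles. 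This gives all the data required by \cref{def:DLACAVB} directly, with no need to recover the scalar identities by a density argument. Your observation that \textbf{E1)} is symmetric under $A\leftrightarrow B$, so that \textbf{E2)}~$\Leftrightarrow$~\textbf{E3)} follows once \textbf{E1)}~$\Leftrightarrow$~\textbf{E2)} is established, is correct and is exactly how the paper handles that part.
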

\begin{lemma}\label{lem:SymDVB}
The following are equivalent.
\begin{enumerate}
\item[e1)] The diagram \labelcref{eq:DVBComDiagRelB} of relations commutes. \label{enum:DVBcom1}
\item[e2)] $\gr(+_{D/A})\subseteq D^3$ is a vector subbundle of $D^3\to B^3$. \label{enum:DVBcom2}
\item[e3)] $\gr(+_{D/B})\subseteq D^3$ is a vector subbundle of $D^3\to A^3$. \label{enum:DVBcom3}
\end{enumerate}
\end{lemma}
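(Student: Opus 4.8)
\textbf{Proof plan for \cref{lem:SymDVB}.}

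The plan is to prove the three-way equivalence by establishing the cycle e1) $\Rightarrow$ e2) $\Rightarrow$ e3) $\Rightarrow$ e1), exploiting the complete symmetry of the setup under swapping the roles of $A$ and $B$ (which swaps e2) with e3) and leaves e1) invariant after transposing the commuting square). Actually, since e1) is manifestly symmetric in $A \leftrightarrow B$ while e2) and e3) are interchanged by that symmetry, it suffices to prove e1) $\Leftrightarrow$ e2); then e1) $\Leftrightarrow$ e3) follows by applying that equivalence to the $A$--$B$-swapped data, and the lemma is complete.

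For e1) $\Rightarrow$ e2): the hypothesis that \labelcref{eq:DVBComDiagRelB} commutes is, unravelled, precisely the interchange law
\begin{equation*}
(d_1 +_{D/B} d_2) +_{D/A} (d_3 +_{D/B} d_4) = (d_1 +_{D/A} d_3) +_{D/B} (d_2 +_{D/A} d_4)
\end{equation*}
for all compatible $d_i$, together with the implicit compatibility-of-domains statement built into equality of relations. I would first record that both routes around the square, read as relations $D^4 \dasharrow D$, have the same domain; this forces the base-level compatibility $q_{D/A}\rvert_B = (q_{D/B})\rvert_A$ etc., so that the side maps assemble into a commuting square of sets $A, B, M = A \cap B$. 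Then I would check that $\gr(+_{D/A}) \subseteq D^3$ is closed under the fibrewise operations of $D^3 \to B^3$: closure under $+_{D/B}$ (applied diagonally, i.e. coordinatewise in the three $D$-slots) is exactly the interchange law above with the triples of arguments chosen appropriately, and closure under scalar multiplication $\cdot_{D/B}$ is the analogous identity $t \cdot_{D/B}(d_1 +_{D/A} d_3) = t\cdot_{D/B} d_1 +_{D/A} t\cdot_{D/B} d_3$, which is itself a consequence of the commuting square applied with a scalar-multiplication relation in place of one copy of $\gr(+_{D/B})$ — alternatively it is a standard consequence of biadditivity plus continuity. Local triviality of $\gr(+_{D/A}) \to B^3$ then follows since it is a subbundle cut out fibrewise-linearly from the (locally trivial) bundle $D^3 \to B^3$; this is where I would invoke that a fibrewise-linear embedded submanifold of a vector bundle is a subbundle.

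For e2) $\Rightarrow$ e1): assume $\gr(+_{D/A}) \subseteq D^3$ is a vector subbundle over $B^3$. I would argue that the two composite relations in \labelcref{eq:DVBComDiagRelB} are both vector subbundle relations $D^4 \dasharrow D$ (compositions and transposes of subbundle relations over compatible bases are subbundle relations, given clean composition, which holds here for dimension reasons), and that they agree on the "linear" generators — e.g. on core and linear sections in the sense of the double vector bundle structure that e2) supplies — hence agree everywhere by $C^\infty$-linearity. Concretely, the interchange law need only be verified when the $d_i$ range over a spanning family, and the subbundle hypothesis guarantees the requisite spanning family is available; this reduces the identity to the associativity and commutativity axioms for the two additions, which hold by hypothesis.

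\textbf{Main obstacle.} The genuinely delicate point is not any single algebraic identity — those are routine once set up — but rather the bookkeeping that "equality of relations" in \labelcref{eq:DVBComDiagRelB} simultaneously encodes (i) the base-space compatibility making $A, B, M$ into a commuting square, (ii) the interchange law on the overlap of domains, and (iii) the matching of domains themselves, and that conversely the subbundle condition in e2) is strong enough to recover all three. I would handle this by being scrupulous about domains at the very first step: write out explicitly which quadruples $(d_1,d_2,d_3,d_4)$ lie in the domain of each composite (those with $q_{D/B}(d_1)=q_{D/B}(d_2)$, $q_{D/B}(d_3)=q_{D/B}(d_4)$, and the $+_{D/B}$-sums compatible for $+_{D/A}$, versus the transposed conditions), show these two subsets of $D^4$ coincide using only e2), and only then compare the values. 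With that in hand, the remaining verifications are exactly \cref{cor:AltDVBdef}'s equations, so the equivalence with e1) and hence (by the $A \leftrightarrow B$ symmetry) with e3) drops out. This then feeds directly into \cref{prop:DVBSymB}, since E2) and E3) are by definition the statements that \labelcref{eq:DVB1B} respectively \labelcref{eq:DVBflipB} are double vector bundles, i.e. (via \cref{cor:AltDVBdef}) precisely e2) respectively e3) above together with the already-established commuting square of side bundles.
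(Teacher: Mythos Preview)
Your overall architecture matches the paper's exactly: use the $A\leftrightarrow B$ symmetry of e1) to reduce to proving e1) $\Leftrightarrow$ e2). For e1) $\Rightarrow$ e2) the paper takes a slightly different route---it observes that the interchange law makes $\gr(+_{D/A})$ closed under both $+_{D/B}^3$ and $-_{D/B}^3$, and then invokes that a topologically closed, connected submanifold closed under fibrewise addition and subtraction is a subbundle---but your approach via addition, scalar multiplication, and fibrewise linearity is equally valid.

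Where you go astray is e2) $\Rightarrow$ e1). Your proposed argument via spanning families is both unnecessary and confused: the interchange law is a pointwise identity between two operations, not a multilinear map, so it cannot be ``verified on generators and extended by $C^\infty$-linearity,'' and it certainly does not reduce to associativity and commutativity of the individual additions. Moreover, invoking ``core and linear sections in the sense of the double vector bundle structure that e2) supplies'' is circular, since the double vector bundle structure is what we are trying to establish. The correct argument---which is the paper's---is one line: e2) says precisely that $\gr(+_{D/A})$ is preserved by the coordinatewise addition of $D^3\to B^3$, and unpacking ``$(d_1,d_3,d_1+_{D/A}d_3) +_{D/B}^3 (d_2,d_4,d_2+_{D/A}d_4) \in \gr(+_{D/A})$'' \emph{is} the interchange law. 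Your domain-matching concern is legitimate but also resolves immediately: being a subbundle over $B^3$ forces $\gr(+_{D/A})$ to sit over $\gr(+_{B/M})\subseteq B^3$, which is exactly the base compatibility you need.
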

\begin{proof}
Note that e1) can also be written as the point-set equality \cref{eq:DVBinterchg}. Also, since the diagram \labelcref{eq:DVBComDiagRelB} is symmetric with respect to the interchange of $A$ and $B$, it is sufficient to prove that e1) and e2) are equivalent.
\begin{description}
\item[e1)$\Rightarrow$e2)] Without loss of generality, we assume that $D$ is connected. Since e1) holds, it follows that $\gr(+_{D/A})\subseteq D^3$ is a subset of the vector bundle $D^3\to B^3$, which is closed under both addition and subtraction. Since it is also topologically closed and connected, it must be a vector subbundle.
\item[e1)$\Leftarrow$e2)] Since $\gr(+_{D/A})\subseteq D^3$ is a vector subbundle of $D^3\to B^3$, the two additions $+_{D/A}$ and $+_{D/B}$ commute, and hence e1) holds.
\end{description}
\end{proof}

\begin{proof}[Proof of \cref{prop:DVBSymB}]
Since the diagram \labelcref{eq:DVBComDiagRelB} is symmetric with respect to the interchange of $A$ and $B$, it is sufficient to show that E1)$\Leftrightarrow$E2).
\begin{description}
\item[E1)$\Rightarrow$E2)]  If \labelcref{eq:DVBComDiagRelB} commutes, then by \cref{lem:SymDVB}, $\gr(+_{D/B})\subseteq D^3$ is a vector subbundle of $D^3\to A^3$. Since $$\on{gr}(-_{D/B})=\{(d;d_1,d_2)\mid d+_{D/B}d_2=d_1\}\subseteq D^3$$ is obtained from $\on{gr}(+_{D/A})$ by permuting factors, it is a vector subbundle of $D^3\to A^3$. Moreover, the diagonal $D_\Delta\subset D\times D$ is vector subbundle of $D^2\to A^2$. Therefore, the composition $$\gr(-_{D/B})\circ D_\Delta$$ is a vector subbundle of $D\to A$. But we have the equality $$\gr(-_{D/B})\circ D_\Delta=\{d\in D\mid d=(d'-_{D/B}d') \text{ for some }d'\in D\}=B,$$ which shows that $B$ is a vector subbundle of $D\to A$.
It is clear that the base space of $B$ is just $M=A\cap B$, the intersection of $B$ with the base space of $D\to A$.

Next, since the diagonal embedding $\Delta_D:D\to D\times D$, given by $d\to(d,d)$ is a morphism of vector bundles from $D\to A$ to $D^2\to A^2$, the composition of relations $$\gr(-_{D/B})\circ \gr(\Delta_D):D\dasharrow D$$ is a $\mc{VB}$-relation. However $$\gr(-_{D/B})\circ \gr(\Delta_D)=\gr(0_{D/B}\circ q_{D/B}),$$ which shows that $q_{D/B}:D\to B$ is a morphism of vector bundles. This concludes the proof that \labelcref{eq:DVB1B} is a double vector bundle.
\item[E1)$\Leftarrow$E2)] This follows directly from \cref{def:DLACAVB} and \cref{lem:SymDVB}.
\end{description}
\end{proof}

\chapter{Technical proofs for $\mc{LA}$-Courant algebroids}\label{app:PairIsLAPair}
\begin{proposition}
Suppose that $\mbb{A}$ is an $\mc{LA}$-Courant algebroid. Then the map $\mbb{A}\to \mbb{A}^{*_{\!y}}$ induced by the fibre metric is a morphism of Lie algebroids.
\end{proposition}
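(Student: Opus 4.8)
The statement we must prove is \cref{prop:LieMetComp}: for an $\mc{LA}$-Courant algebroid $\mbb{A}$, the fibre-metric isomorphism $\phi_{\la\cdot,\cdot\ra}\colon\mbb{A}\to\mbb{A}^{*_{\!y}}$ (where we dualize the Lie algebroid $\mbb{A}\to V$) is a morphism of Lie algebroids. By \cref{def:MorphLA}, this amounts to showing that $\gr(\phi_{\la\cdot,\cdot\ra})\subseteq\mbb{A}^{*_{\!y}}\times\mbb{A}$ is a Lie subalgebroid, equivalently, by \cref{def:SubLA}, that its annihilator is a coisotropic submanifold of the dual of $\mbb{A}^{*_{\!y}}\times\mbb{A}$ with respect to the linear Poisson structure there. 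The cleanest route, however, is to reformulate everything at the level of the cotangent triple vector bundle and exploit the defining condition of an $\mc{LA}$-Courant algebroid, namely that the relation of triple vector bundles $\Pi_\mbb{A}\colon T\mbb{A}^{*_{\!x}*_{\!z}*_{\!x}}\dasharrow T\mbb{A}$ in \labelcref{eq:PiA} is a Courant relation.

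\textbf{Key steps.} First I would translate the statement ``$\phi_{\la\cdot,\cdot\ra}\colon\mbb{A}\to\mbb{A}^{*_{\!y}}$ is a Lie algebroid morphism'' into a statement about the Poisson anchor $\pi^\sharp$ of the double linear Poisson structure on $\mbb{A}^{*_{\!x}}$ (this is the structure whose linearity, in both directions, is what it means for $\mbb{A}$ to be an $\mc{LA}$-vector bundle). The fibre metric gives an isomorphism $\mbb{A}\cong\mbb{A}^{*_{\!y}}$, and, using the identification $\mbb{A}^{*_{\!y}}\cong\big((\mbb{A}^{*_{\!x}})^{*_{\!y}}\big)^{flip}$ recalled in \cref{sec:DLPV}, the assertion that this isomorphism is a Lie algebroid morphism over $V$ becomes an assertion that a certain graph inside a dual of $\mbb{A}^{*_{\!x}}$ is coisotropic for the linear Poisson structure. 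Second, I would use the duality functors of Gracia-Saz--Mackenzie \cite{GraciaSaz:2009ck} (as the remark following \cref{prop:LieMetComp} indicates) to move this coisotropy condition into the cotangent triple vector bundle $T^*\mbb{A}^{*_{\!x}}\cong T\mbb{A}^{*_{\!x}*_{\!z}}$ pictured in \labelcref{eq:piAsharp}: a sub-double-vector-bundle of $\mbb{A}^{*_{\!x}}$ is coisotropic iff its conormal is Lagrangian and tangent to the Hamiltonian/homological data, and the relevant conormal here is precisely (a face of) $\gr(\pi^\sharp)$. Third, I would observe that the $\mc{LA}$-Courant axiom says exactly that $\Pi_\mbb{A}=\ann^\natural(\gr(\pi^\sharp))$, after dualizing along the $x$-axis, is a \emph{Courant} relation — in particular a Dirac structure, hence Lagrangian and involutive. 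Unwinding the $x$-dualization (using \cref{ex:dualX} and the identifications $T^*\mbb{A}^{*_{\!x}*_{\!x}}\cong T\mbb{A}$, etc.), the Lagrangian-and-isotropic part of ``Courant relation'' forces $\gr(\pi^\sharp)$ itself to have the symmetry that makes $\phi_{\la\cdot,\cdot\ra}$ metric-preserving at the Lie-algebroid level, and the involutivity part forces the bracket-compatibility. Concretely, one checks that for sections $\sigma,\tau\in\Gamma_l(\mbb{A},V)\cup\Gamma_C(\mbb{A},V)$ the identity $\phi_{\la\cdot,\cdot\ra}[\sigma,\tau]_{\mbb{A}/V}=[\phi_{\la\cdot,\cdot\ra}\sigma,\phi_{\la\cdot,\cdot\ra}\tau]_{\mbb{A}^{*_{\!y}}/V}$ is equivalent to an equation among linear and core functions on $\mbb{A}^{*_{\!x}}$ that is read off from $\Pi_\mbb{A}$ being a Courant relation; since linear and core sections generate $\Gamma(\mbb{A},V)$ over $C^\infty(V)$ and both sides are derivations, this suffices.

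\textbf{Main obstacle.} The genuine difficulty is bookkeeping: one is working inside a triple vector bundle with three dualization axes, and the claim is really a compatibility between the $y$-dual (the metric), the $x$-dual (the two vector-bundle structures), and the $z$-dual (the cotangent/Poisson structure). The conceptual content is short — ``$\Pi_\mbb{A}$ is a Courant relation'' packages both the isotropy giving metric-preservation and the involutivity giving bracket-preservation — but making the identifications $T^*\mbb{A}^{*_{\!x}}\cong T\mbb{A}^{*_{\!x}*_{\!z}}$, $(TD)^{*_{\!x}}\cong T(D^{*_{\!x}})$, and the behaviour of $\ann^\natural$ under composition (\cref{lem:ann}) interact correctly, and keeping track of the signs introduced by $\ann^\natural$ versus $\ann$, is where care is needed. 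I would therefore structure the written proof as: (i) a lemma identifying $\gr(\pi^\sharp)$ as the conormal of $\gr(\phi_{\la\cdot,\cdot\ra})$ under the appropriate duality functor; (ii) invoke the $\mc{LA}$-Courant hypothesis to conclude $\gr(\pi^\sharp)$ (equivalently $\Pi_\mbb{A}$) is Lagrangian and involutive; (iii) conclude via \cref{def:SubLA} and the Gracia-Saz--Mackenzie dictionary that $\gr(\phi_{\la\cdot,\cdot\ra})$ is a Lie subalgebroid, i.e.\ $\phi_{\la\cdot,\cdot\ra}$ is a Lie algebroid morphism.
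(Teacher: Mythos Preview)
Your overall instinct---use the Gracia-Saz--Mackenzie duality functors on triple vector bundles to relate the morphism condition on $\phi_{\la\cdot,\cdot\ra}$ back to the defining relation $\Pi_\mbb{A}$---is exactly the paper's strategy. But you have one genuine misconception and one unnecessary detour.

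\textbf{The misconception.} You claim that the Lagrangian part of ``$\Pi_\mbb{A}$ is a Courant relation'' gives metric compatibility while the involutivity part gives bracket compatibility. This is wrong: the paper's proof uses \emph{only} that $\Pi_\mbb{A}$ is Lagrangian. The key observation is that $Q\colon\mbb{A}\to\mbb{A}^{*_{\!y}}$ is a Lie algebroid morphism if and only if a single square of relations
\[
\begin{array}{ccc}
T\mbb{A}^{*_{\!x}*_{\!z}} & \xrightarrow{TQ^{*_{\!x}*_{\!z}}} & \big((T\mbb{A}^{*_{\!x}})^{*_{\!x}*_{\!y}*_{\!x}}\big)^{*_{\!z}}\\
\downarrow{\pi^\sharp} & & \downarrow{(\pi^\sharp)^{*_{\!x}*_{\!y}*_{\!x}}}\\
T\mbb{A}^{*_{\!x}} & \xrightarrow{TQ^{*_{\!x}}} & (T\mbb{A}^{*_{\!x}})^{*_{\!x}*_{\!y}*_{\!x}}
\end{array}
\]
commutes. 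Hitting this square with the endofunctor $*_{\!x}$ (which sends a relation $R$ to $\ann^\natural(R)$) produces a square whose verticals are $\Pi_\mbb{A}$ and $(\Pi_\mbb{A})^{*_{\!y}}=\Pi_\mbb{A}^\perp$ and whose horizontals are $TQ$ and its dual. That square commutes precisely when $\Pi_\mbb{A}=\Pi_\mbb{A}^\perp$, i.e.\ when $\Pi_\mbb{A}$ is Lagrangian. No involutivity enters.

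\textbf{The detour.} Your step of checking $\phi_{\la\cdot,\cdot\ra}[\sigma,\tau]=[\phi_{\la\cdot,\cdot\ra}\sigma,\phi_{\la\cdot,\cdot\ra}\tau]$ on linear and core sections is unnecessary once you have the categorical reformulation above. The whole point of the Wehrheim--Woodward setup with the three duality endofunctors is that ``$Q$ is a Lie algebroid morphism'' becomes a commutativity statement for relations, which can then be transported wholesale by $*_{\!x}$; there is no need to unpack it on generators. If you try to carry out your section-by-section check you will likely succeed, but you will essentially be rediscovering the diagram by hand.

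So: drop the involutivity claim, and replace your plan (i)--(iii) with the single move ``apply $*_{\!x}$ to the commutative square encoding the Lie algebroid morphism condition, and identify the result with $\Pi_\mbb{A}=\Pi_\mbb{A}^\perp$.''
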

\begin{proof}
Following \cite{GraciaSaz:2009ck}, we consider the Wehrheim-Woodward category whose objects are triple vector bundles, and whose morphisms are relations of triple vector bundles \cite{Wehrheim:2010cg, Weinstein:2010wm}\footnote{i.e. if $T$ and $T'$ are triple vector bundles, a morphism $R:T\dasharrow T'$ is a sub-triple vector bundle $R\subseteq T'\times T$}. 

Suppose that
$$\begin{tikzpicture}[
        back line/.style={densely dotted},
        cross line/.style={preaction={draw=white, -,
           line width=6pt}}]
        
\mmat[1em]{T^*m}{
	&T	&	&A\\
C	&	&D	&\\
	&B	&	&F\\
E	&	&M	&\\
};

\path[->]
        (T^*m-1-2) edge (T^*m-1-4)
        		edge (T^*m-2-1)
                edge [back line] (T^*m-3-2)
        (T^*m-1-4) edge (T^*m-3-4)
        		edge (T^*m-2-3)
        (T^*m-2-1) edge [cross line] (T^*m-2-3)
                edge (T^*m-4-1)
        (T^*m-3-2) edge [back line] (T^*m-3-4)
        		edge [back line] (T^*m-4-1)
        (T^*m-4-1) edge (T^*m-4-3)
        (T^*m-3-4) edge (T^*m-4-3)
        (T^*m-2-3) edge [cross line] (T^*m-4-3);
        
  \path[dashed,->] (100pt,0) edge node{$R$} (140pt,0);
  
\mmat[1em]{Tm}[xshift=240pt]{
	&T'	&	&A'\\
C'	&	&D'	&\\
	&B'	&	&F'\\
E'	&	&M'	&\\
};

\path[->]
        (Tm-1-2) edge (Tm-1-4)
        		edge (Tm-2-1)
                edge [back line] (Tm-3-2)
        (Tm-1-4) edge (Tm-3-4)
        		edge (Tm-2-3)
        (Tm-2-1) edge [cross line] (Tm-2-3)
                edge (Tm-4-1)
        (Tm-3-2) edge [back line] (Tm-3-4)
        		edge [back line] (Tm-4-1)
        (Tm-4-1) edge (Tm-4-3)
        (Tm-3-4) edge (Tm-4-3)
        (Tm-2-3) edge [cross line] (Tm-4-3);
\end{tikzpicture}$$
 is a relation of triple vector bundles. Take the horizontal duals of both $T$ and $T'$, and consider the relation $R^{*_{\!x}}:T^{*_{\!x}}\dasharrow T'^{*_{\!x}}$ between them, defined by 
 $$R^{*_{\!x}}:=\ann^\natural(R)=\{(\beta,\alpha)\in T'^{*_{\!x}}\times T^{*_{\!x}}\text{ such that } \la \beta,y\ra=\la \alpha,x\ra \text{ for all } (y,x)\in R\},$$
 as in \cref{sec:LinRel}.
  Thus, as explained by Gracia-Saz and Mackenzie \cite{GraciaSaz:2009ck}, dualizing along the $x$-axis defines an endofunctor $*_{\!x}$ on our category. Similarly dualizing along the $y$ and $z$-axes defines endofunctors $*_{\!y}$ and $*_{\!z}$, respectively \cite{GraciaSaz:2009ck}.

The fibrewise metric on $\mbb{A}\to A$ defines an isomorphism $Q:\mbb{A}\to \mbb{A}^{*_{\!y}}$, which we shall prove to be a morphism of Lie algebroids. Applying the functor $*_{\!x}$, we 
get an isomorphism $Q^{*_{\!x}}:\mbb{A}^{*_{\!x}}\to \mbb{A}^{*_{\!y}*_{\!x}}\cong (\mbb{A}^{*_{\!x}})^{*_{\!x}*_{\!y}*_{\!x}}.$ Let $\pi$ be the bivector field for the double linear Poisson structure on $\mbb{A}^{*_{\!x}}$. As explained in \cref{sec:DLPV}, $\pi$ also defines the Lie algebroid structure on $\mbb{A}^{*_{\!y}}$ under the canonical isomorphism $\mbb{A}^{*_{\!y}*_{\!x}}\cong (\mbb{A}^{*_{\!x}})^{flip}$. So $Q$ is a morphism of Lie algebroids if and only if the following diagram commutes,
\begin{equation}\label{eq:TQdiagcom}\begin{tikzpicture}
\mmat[4em]{m}{T\mbb{A}^{*_{\!x}*_{\!z}}&\big((T\mbb{A}^{*_{\!x}})^{*_{\!x}*_{\!y}*_{\!x}}\big)^{*_{\!z}}\\ T\mbb{A}^{*_{\!x}}& (T\mbb{A}^{*_{\!x}})^{*_{\!x}*_{\!y}*_{\!x}}\\};
\path[dashed,<->] (m-1-1)	edge node {$TQ^{*_{\!x}*_{\!z}}$} (m-1-2);
\path[dashed,<->] (m-2-1)	edge node {$TQ^{*_{\!x}}$} (m-2-2);
\path[dashed,->] (m-1-1)	edge node {$\pi^\sharp$} (m-2-1);
\path[dashed,->] (m-1-2)	edge node {$(\pi^\sharp)^{*_{\!x}*_{\!y}*_{\!x}}$} (m-2-2);
\end{tikzpicture}\end{equation} 
where $T\mbb{A}$ is the triple vector bundle 
$$\begin{tikzpicture}[
        back line/.style={densely dotted},
        cross line/.style={preaction={draw=white, -,
           line width=6pt}}]
        
\mmat[1em]{Tm}{
	&T\mbb{A}	&	&TV\\
\mbb{A}	&	&V	&\\
	&TA	&	&TM\\
A	&	&M	&\\
};

\path[->]
        (Tm-1-2) edge (Tm-1-4)
        		edge (Tm-2-1)
                edge [back line] (Tm-3-2)
        (Tm-1-4) edge (Tm-3-4)
        		edge (Tm-2-3)
        (Tm-2-1) edge [cross line] (Tm-2-3)
                edge (Tm-4-1)
        (Tm-3-2) edge [back line] (Tm-3-4)
        		edge [back line] (Tm-4-1)
        (Tm-4-1) edge (Tm-4-3)
        (Tm-3-4) edge (Tm-4-3)
        (Tm-2-3) edge [cross line] (Tm-4-3);
\end{tikzpicture}$$
Applying $*_{\!x}$ to \labelcref{eq:TQdiagcom} we get 
\begin{equation}\label{eq:TQdiagcom2}\begin{tikzpicture}
\mmat[4em]{m}{T\mbb{A}^{*_{\!x}*_{\!z}*_{\!x}}&(T\mbb{A}^{*_{\!y}})^{*_{\!x}*_{\!z}*_{\!x}}\\ T\mbb{A}& T\mbb{A}^{*_{\!y}}\\};
\path[dashed,<->] (m-1-1)	edge node {$TQ^{*_{\!x}*_{\!z}*_{\!x}}$} (m-1-2);
\path[dashed,<->] (m-2-1)	edge node {$TQ$} (m-2-2);
\path[dashed,->] (m-1-1)	edge node {$(\pi^\sharp)^{*_{\!x}}$} (m-2-1);
\path[dashed,->] (m-1-2)	edge node {$(\pi^\sharp)^{*_{\!x}*_{\!y}}$} (m-2-2);
\end{tikzpicture}\end{equation}
However, by definition $(\pi^\sharp)^{*_{\!x}}=\Pi_{\mbb{A}}$, and $TQ:T\mbb{A}\to T\mbb{A}^{*_{\!y}}$ is the isomorphism defined by the fibrewise metric $T\mbb{A}\to TA$ (see \cref{rem:TangLiftTngProDef}). Meanwhile $$\big((\pi^\sharp)^{*_{\!x}}\big)^{*_{\!y}}=\ann^\natural(\Pi_{\mbb{A}})=\Pi_{\mbb{A}}^\perp\subseteq (T\mbb{A}\times \overline{T\mbb{A}^{*_{\!x}*_{\!z}*_{\!x}}}).$$
So \labelcref{eq:TQdiagcom2} commutes if and only if $$\Pi_{\mbb{A}}^\perp=\Pi_{\mbb{A}},$$ namely $\Pi_{\mbb{A}}\subseteq T\mbb{A}\times \overline{T\mbb{A}^{*_{\!x}*_{\!z}*_{\!x}}}$ is Lagrangian. 

\end{proof}

\begin{proposition}
$\mc{LA}$-Courant algebroids $$\begin{tikzpicture}
\mmat{m}{\mbb{A}&V\\ \ast& \ast\\};
\path[->] (m-1-1)	edge (m-1-2)
				edge (m-2-1);
\path[<-] (m-2-2)	edge (m-1-2)
				edge (m-2-1);
\end{tikzpicture}$$ over a point are all of the form described in \cref{ex:LACApt}. Thus they are in one-to-one correspondence with pairs $(\g,\beta)$ consisting of a Lie algebra $\g$ together with an invariant symmetric bilinear form, $\beta\in S^2(\g)^\g$, on $\g^*$.

Similarly, any $\mc{LA}$-Dirac structure
$$\begin{tikzpicture}
\mmat{m1} at (-3,0){L&W\\ \ast& \ast\\};
\path[->] (m1-1-1)	edge (m1-1-2)
				edge (m1-2-1);
\path[<-] (m1-2-2)	edge (m1-1-2)
				edge (m1-2-1);
\mmat{m2} at (2,0) {\g\ltimes\g^*&\g\\ \ast& \ast\\};
\path[->] (m2-1-1)	edge (m2-1-2)
				edge (m2-2-1);
\path[<-] (m2-2-2)	edge (m2-1-2)
				edge (m2-2-1);
\draw (0,0) node {$\subseteq$};
\end{tikzpicture}$$
is of the form described in \cref{ex:LACApt} for a $\beta$-coisotropic Lie subalgebra $\h\subseteq\g$.
\end{proposition}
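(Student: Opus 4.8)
The plan is to build on the already-established structural results for $\mc{VB}$-Courant algebroids over a point (\cref{prop:VBCAoverpt}) and then layer in the Lie algebroid structure. First I would recall that \cref{prop:VBCAoverpt} already identifies the underlying $\mc{VB}$-Courant algebroid $\mbb{A}\to V$ over a point with $\g\ltimes\g^*\to\g$ for a unique Lie algebra $\g$, with $V=\g$ and core $V^*=\g^*$. So the only additional data in an $\mc{LA}$-Courant algebroid structure is the Lie algebroid structure on $\mbb{A}\to V=\g$, subject to the compatibility condition that $\Pi_\mbb{A}$ be a Courant relation. The task is thus to show: (a) the $\mc{LA}$-vector bundle structure on $\g\ltimes\g^*\to\g$ forces the Lie algebroid $\g\ltimes\g^*\to\g$ to be an action Lie algebroid for an action of the abelian Lie algebra $\g^*$ on the affine space $\g$, i.e. determined by a linear map $\beta^\sharp:\g^*\to\g$; (b) the $\mc{LA}$-Courant compatibility condition forces $\beta^\sharp$ to be symmetric (so it comes from $\beta\in S^2\g$) and $\g$-invariant.

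For step (a): since $\mbb{A}=\g\ltimes\g^*$ is an $\mc{LA}$-vector bundle with $\mbb{A}\to V=\g$ a Lie algebroid, \cref{prop:AisLA} endows the side bundle $V=\g$ over the point with a Lie algebroid structure — but a Lie algebroid over a point is just a Lie algebra, and I would check it must coincide with the given $\g$ (this follows because the zero section $0_{\mbb{A}/\g}:\g\to\mbb{A}$ and projection $q_{\mbb{A}/\g}$ are Lie algebroid morphisms, and on the quadratic Lie algebra $\mbb{A}$ the relevant brackets are the ones from $\g\ltimes\g^*$). Next, the core $\g^*$ of $\mbb{A}\to\g$ (which is the core of the double vector bundle, sitting inside the fibre of $\mbb{A}\to\g$ over $0\in\g$) is an abelian Lie algebra acting on $\mbb{A}$; restricting to the fibre over $0\in\g$ and using that $\mbb{A}\to\g$ is a vector bundle over the affine space $\g$, linearity of the Lie bracket (the $\mc{LA}$-vector bundle condition, equivalently double linearity of the dual Poisson structure) forces the anchor $\mbb{A}\to T\g$ and bracket to be those of an action Lie algebroid $\g^*\ltimes_{\beta^\sharp}\g$ for some linear map $\beta^\sharp:\g^*\to\g$. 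Here I would lean on the fact that linear vector fields on the affine space $\g$ with prescribed commutation relations over the abelian algebra $\g^*$ are parametrized by a single linear map.

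For step (b): I would unwind the $\mc{LA}$-Courant condition, i.e. that the triple vector bundle relation $\Pi_\mbb{A}=\ann^\natural(\gr(\pi^\sharp)):T\mbb{A}^{*_{\!x}*_{\!z}*_{\!x}}\dasharrow T\mbb{A}$ is a Courant relation, over a point. Over a point, $T\mbb{A}$ is the tangent prolongation $T(\g\ltimes\g^*)=(\g\ltimes\g^*)\ltimes(\g\ltimes\g^*)$ as in \cref{ex:TngProIsLACA}, and the double linear Poisson structure on $\mbb{A}^{*_{\!x}}=\g^*\ltimes\g$ over $\g$ is the one whose bivector field comes from $\beta^\sharp$. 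The Courant-relation condition — being Lagrangian and involutive — reduces to two algebraic constraints: the Lagrangian (isotropy + half-dimensionality) part forces $\beta^\sharp$ to be \emph{symmetric}, hence $\beta\in S^2(\g)$, and the involutivity part (compatibility of the Lie bracket with the metric, which by \cref{prop:LieMetComp} says $\mbb{A}\to\mbb{A}^{*_{\!y}}$ is a Lie algebroid morphism, applied over a point) forces $\beta$ to be $\ad$-invariant, i.e. $\beta\in S^2(\g)^\g$. I expect the \emph{main obstacle} is exactly this last computation: translating the involutivity of $\Pi_\mbb{A}$ into the invariance condition $\ad_\xi^\dagger\beta=0$, since it requires carefully matching the derived-bracket / tangent-prolongation formulas (\cref{eq:TEBrk}, \cref{eq:actioncourant}) against the linear Poisson data, though over a point everything is finite-dimensional linear algebra and should be manageable. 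Conversely, given $(\g,\beta)$ with $\beta\in S^2(\g)^\g$, \cref{ex:LACApt} already exhibits the corresponding $\mc{LA}$-Courant algebroid, so the correspondence is bijective.

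For the statement about $\mc{LA}$-Dirac structures: by \cref{prop:VBCAoverpt} the underlying $\mc{VB}$-Dirac structure is $L=\h\ltimes\ann(\h)$ for a unique Lie subalgebra $\h\subseteq\g$ (with $W=L\cap\g=\h$ and core $\ann(\h)$). The extra condition is that $L\subseteq\mbb{A}=\g\ltimes\g^*$ be a Lie subalgebroid of the action Lie algebroid $\g^*\ltimes_{\beta^\sharp}\g\to\g$. Using the description of Lie subalgebroids via coisotropic submanifolds of the dual (\cref{def:SubLA}), or more directly by checking that the anchor $\beta^\sharp$ maps the conormal directions of $W=\h$ inside $\h$ and that the bracket closes, I would show that $L$ is a Lie subalgebroid if and only if $\beta^\sharp(\ann(\h))\subseteq\h$, which is precisely the condition that $\h$ be $\beta$-coisotropic. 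This is the content of \cref{ex:LACApt}, so the two descriptions coincide, completing the proof. Finally I would invoke Drinfel'd's remark preceding the proposition to phrase the result in terms of quasi-triangular Lie algebras, though that reformulation is immediate once the $(\g,\beta)$ correspondence is established.
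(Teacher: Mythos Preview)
Your proposal is correct and follows essentially the same route as the paper: reduce to the $\mc{VB}$-Courant classification via \cref{prop:VBCAoverpt}, identify the linear Lie algebroid structure on $\g\ltimes\g^*\to\g$ as an action algebroid for some $\beta^\sharp:\g^*\to\g$, and then check that $\Pi_\mbb{A}$ being Lagrangian forces $\beta$ symmetric while involutivity forces $\beta$ to be $\g$-invariant (and similarly for the $\mc{LA}$-Dirac part). One small correction: your parenthetical invocation of \cref{prop:LieMetComp} is misplaced---the proof of that proposition shows that the metric map $\mbb{A}\to\mbb{A}^{*_{\!y}}$ being a Lie algebroid morphism is equivalent to $\Pi_\mbb{A}$ being \emph{Lagrangian}, not to its involutivity, so it belongs with the symmetry step rather than the invariance step; but this does not affect your overall argument, which correctly pairs Lagrangian with symmetry and involutivity with invariance.
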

\begin{proof}
Suppose that $$\begin{tikzpicture}
\mmat{m}{\mbb{A}&V\\ \ast& \ast\\};
\path[->] (m-1-1)	edge (m-1-2)
				edge (m-2-1);
\path[<-] (m-2-2)	edge (m-1-2)
				edge (m-2-1);
\end{tikzpicture}$$ is an $\mc{LA}$-Courant algebroid over a point. Then by \cref{prop:VBCAoverpt}, we see that $V\cong\g$ for some Lie algebra $\g$ and $\mbb{A}\cong\g^*\rtimes\g$, where $\g^*$ is the core of $\mbb{A}$.
Since $\g^*\rtimes\g$ is a linear Lie algebroid over $\g$, it must be an action Lie algebroid for an affine action of $\g^*$ on $\g$ given by a linear map $\beta^\sharp:\g^*\to \g$. 

Let $\beta\in\g\otimes\g$ be the element defined by $\nu\big(\beta^\sharp(\mu)\big)=\beta(\mu,\nu)$. Let $\rho:\g\to\mf{X}(\g)$ be the map which takes an element of $\g$ to the corresponding constant vector field. Then $$\pi:=(\rho\otimes\rho)(\beta)\in \mf{X}(\g)\otimes\mf{X}(\g)\subseteq \mf{X}^2(\g\times\g)$$ is the bivector field defining the double linear Poisson structure on $\mbb{A}^*_V=\g\times\g$. 

In this case, the map $\pi^\sharp:T^*\g\times T^*\g\to T\g\times T\g$ shown in \labelcref{eq:piAsharp} becomes 
$$\begin{tikzpicture}[
        back line/.style={densely dotted},
        cross line/.style={preaction={draw=white, -,
           line width=6pt}}]
        
\mmat[.5em]{T^*m}{
	&\big((\mu;\xi),(\nu;\eta)\big)	&	&(\nu;\eta)\\
(\xi,\eta)	&	&\eta	&\\
	&(\mu;\xi)	&	&\ast\\
\xi	&	&\ast	&\\
};

\path[->]
        (T^*m-1-2) edge (T^*m-1-4)
        		edge (T^*m-2-1)
                edge [back line] (T^*m-3-2)
        (T^*m-1-4) edge (T^*m-3-4)
        		edge (T^*m-2-3)
        (T^*m-2-1) edge [cross line] (T^*m-2-3)
                edge (T^*m-4-1)
        (T^*m-3-2) edge [back line] (T^*m-3-4)
        		edge [back line] (T^*m-4-1)
        (T^*m-4-1) edge (T^*m-4-3)
        (T^*m-3-4) edge (T^*m-4-3)
        (T^*m-2-3) edge [cross line] (T^*m-4-3);
        
  \path[->] (85pt,0) edge node{$\pi^\sharp$} (100pt,0);
  
\mmat[.5em]{Tm}[xshift=240pt]{
	&\big((-(\beta^\sharp)^*\mu;\xi),((\beta^\sharp\nu);\eta\big)	&	&(\beta^\sharp\nu;\eta)\\
(\xi,\eta)	&	&\eta	&\\
	&(-(\beta^\sharp)^*\mu;\xi)	&	&\ast\\
\xi	&	&\ast	&\\
};

\path[->]
        (Tm-1-2) edge (Tm-1-4)
        		edge (Tm-2-1)
                edge [back line] (Tm-3-2)
        (Tm-1-4) edge (Tm-3-4)
        		edge (Tm-2-3)
        (Tm-2-1) edge [cross line] (Tm-2-3)
                edge (Tm-4-1)
        (Tm-3-2) edge [back line] (Tm-3-4)
        		edge [back line] (Tm-4-1)
        (Tm-4-1) edge (Tm-4-3)
        (Tm-3-4) edge (Tm-4-3)
        (Tm-2-3) edge [cross line] (Tm-4-3);
\end{tikzpicture}$$
for $\xi,\eta\in\g$ and $\mu,\nu\in\g^*$
Hence the relation $\Pi:T\g^*\rtimes T\g \dasharrow T\g^*\rtimes T\g$ depicted in \labelcref{eq:PiA} is given by 
$$\begin{tikzpicture}[
        back line/.style={densely dotted},
        cross line/.style={preaction={draw=white, -,
           line width=6pt}}]
        
\mmat[1em]{T^*m}{
	&\big((\upsilon,\nu);(\beta^\sharp\mu,\eta)\big)	&	&(\nu;\eta)\\
(\beta^\sharp\mu;\eta)	&	&\eta	&\\
	&\ast	&	&\ast\\
\ast	&	&\ast	&\\
};

\path[->]
        (T^*m-1-2) edge (T^*m-1-4)
        		edge (T^*m-2-1)
                edge [back line] (T^*m-3-2)
        (T^*m-1-4) edge (T^*m-3-4)
        		edge (T^*m-2-3)
        (T^*m-2-1) edge [cross line] (T^*m-2-3)
                edge (T^*m-4-1)
        (T^*m-3-2) edge [back line] (T^*m-3-4)
        		edge [back line] (T^*m-4-1)
        (T^*m-4-1) edge (T^*m-4-3)
        (T^*m-3-4) edge (T^*m-4-3)
        (T^*m-2-3) edge [cross line] (T^*m-4-3);
        
  \path[->] (100pt,0) edge node{$\Pi$} (115pt,0);
  
\mmat[1em]{Tm}[xshift=240pt]{
	&\big((\upsilon,\mu);(\beta^\sharp\nu,\eta)\big)	&	&(\beta^\sharp\nu;\eta)\\
(\mu;\eta)	&	&\eta	&\\
	&\ast	&	&\ast\\
\ast	&	&\ast	&\\
};

\path[->]
        (Tm-1-2) edge (Tm-1-4)
        		edge (Tm-2-1)
                edge [back line] (Tm-3-2)
        (Tm-1-4) edge (Tm-3-4)
        		edge (Tm-2-3)
        (Tm-2-1) edge [cross line] (Tm-2-3)
                edge (Tm-4-1)
        (Tm-3-2) edge [back line] (Tm-3-4)
        		edge [back line] (Tm-4-1)
        (Tm-4-1) edge (Tm-4-3)
        (Tm-3-4) edge (Tm-4-3)
        (Tm-2-3) edge [cross line] (Tm-4-3);
\end{tikzpicture}$$
for $\eta\in\g$ and $\mu,\nu,\upsilon\in\g^*$. Where the pairing is the tangent lift of the canonical pairing between $\g$ and $\g^*$.
Thus, $\Pi$   is compatible with the pairing if and only if, for any $\mu,\nu\in\g^*$,
$$\la \big((0,0);(\beta^\sharp\mu,0)\big),\big((0,\nu);(0,0)\big)\ra=\la\big((0,\mu);(0,0)\big),\big((0,0);(\beta^\sharp\nu,0)\big)\ra.$$
That is, $\la \beta^\sharp\mu,\nu\ra=\la\mu,\beta^\sharp\nu\ra$.
 Namely, $\beta\in\g\otimes\g$ is symmetric.
 
Similarly, the relation $\Pi$ is compatible with the Courant bracket if and only if for any $\eta\in\g$ and $\nu\in\g^*$, the Lie bracket
$$\big((0,[\eta,\nu]);(0,0\big)$$
of $\big((0,0);(0,\eta)\big)$ with $\big((0,\nu);(0,0)\big)$ 
 is $\Pi$ related to the Lie bracket
 $$\big((0,0);([\eta,\beta^\sharp\nu],0\big)$$
 of $\big((0,0);(0,\eta)\big)$ with $\big((0,0);(\beta^\sharp\nu,0)\big)$.
 Equivalently, $\beta^\sharp[\eta,\nu]=[\eta,\beta^\sharp\nu]$,
 or $\beta$ is $\g$ invariant.
In conclusion, $\Pi$ defines a Courant relation if and only if $\beta$ is symmetric and  $\g$ invariant.

Thus there is a one-to-one correspondence between $\mc{LA}$-Courant algebroids over a point and Lie algebras $\g$ together with a symmetric invariant element $\beta\in (\g\otimes\g)$. 
 
Recall from \cref{prop:VBCAoverpt} that any $\mc{VB}$-Dirac structure
$$\begin{tikzpicture}
\mmat{m1} at (-2,0){L&W\\ \ast& \ast\\};
\path[->] (m1-1-1)	edge (m1-1-2)
				edge (m1-2-1);
\path[<-] (m1-2-2)	edge (m1-1-2)
				edge (m1-2-1);
\mmat{m2} at (2,0) {\g\ltimes\g^*&\g\\ \ast& \ast\\};
\path[->] (m2-1-1)	edge (m2-1-2)
				edge (m2-2-1);
\path[<-] (m2-2-2)	edge (m2-1-2)
				edge (m2-2-1);
\draw (0,0) node {$\subseteq$};
\end{tikzpicture}$$
must be of the form $L=\h\ltimes\on{ann}(\h)$ for some Lie subalgebra $\mf{h}\subset\g$. However, $$\h\ltimes\on{ann}(\h)\to \h$$ is a Lie subalgebroid of the action Lie algebroid $$\g\ltimes\g^*\to \g$$ if and only if $\beta^\sharp(\on{ann}(\h))\subseteq \h$.
Thus $\mc{LA}$-Dirac structures in $\mbb{A}$ are in one-to-one correspondence with \emph{$\beta$-coisotropic} Lie subalgebras $\mf{h}\subset\g$.
\end{proof}

\end{appendix}

\addcontentsline{toc}{chapter}{Bibliography}
\bibliographystyle{plain}
\bibliography{basicbib.bib}


\end{document}